\theoremstyle{definition}
\newtheorem{theorem}{Theorem}[section]
\newtheorem{prop}{Proposition}[section]
\newtheorem{definition}[theorem]{Definition}
\newtheorem{example}[theorem]{Example}
\newtheorem{lem}{Lemma}[section]
\theoremstyle{remark}
\newtheorem{remark}[theorem]{Remark}
\theoremstyle{cor}
\newtheorem{cor}[theorem]{Corollary}
\numberwithin{equation}{section}
\def \HR{(\textbf{HR})}
\def \HRp{(\textbf{HR$_+$})}
\def \HST{(\textbf{HST})}
\def \HE{(\textbf{HE})}
\def \tr{\mbox{trace}}
\def \mH{\mathcal{H}}
\def\my_c{c_\infty}
\newcommand{\mynewtheorem}[2]{
  \newaliascnt{#1}{dummy}
  \newtheorem{#1}[#1]{#2}
  \aliascntresetthe{#1}
  \expandafter\def\csname #1autorefname\endcsname{#2}
}
\newcommand{\be}{\begin{equation}}
\newcommand{\ee}{\end{equation}}
\newcommand{\bde}{\begin{displaymath}}
\newcommand{\ede}{\end{displaymath}}
\newcommand{\beq}{\begin{eqnarray*}}
\newcommand{\eeq}{\end{eqnarray*}}
\newcommand{\beqa}{\begin{eqnarray}}
\newcommand{\eeqa}{\end{eqnarray}}
\newcommand{\bel }{\left\{\begin{array}{ll}}
\newcommand{\eel}{\cr \end{array} \right.}
\newcommand{\seq}[1]{{\lbrace #1 \rbrace}}
\newcommand{\dcb}{\begin{array}{lll}}
\newcommand{\dce}{\end{array}}
\newcommand{\ebe}{\begin{enumerate}\setlength{\baselineskip}{13pt}\setlength{\parskip}{0pt}}
\newcommand{\dbe}{\end{enumerate}}
\newcommand{\E}{\mathcal{E}}
\def\pp{{\cal P}}
\def\rr{{\mathbb R}}
\def\P{{\mathbb P}}
\def\E{\mathbb{E}}
\def \pp{{\mathcal{P}_2(\mathbb{R}^d)}}
\def\I{\mathsf{1}}
\def \supp{\mbox{Supp} }
\def\0{{\mathbf{0}}}
\def\d{{{\rm det}}}
\begin{document}

\setcounter{tocdepth}{1}

\title{Well-posedness for some non-linear SDEs and related PDE on the Wasserstein space}

\author{Paul-Eric Chaudru de Raynal}
  \address{Paul-\'Eric Chaudru de Raynal, Universit\'e de Savoie Mont Blanc, CNRS, LAMA, F-73000 Chamb\'ery, France}
\email{pe.deraynal@univ-smb.fr}

\author{Noufel Frikha}
\address{Noufel Frikha, Universit\'e de Paris, Laboratoire de Probabilit\'es, Statistique et Mod\'elisation (LPSM), F-75013 Paris, France}
\email{frikha@math.univ-paris-diderot.fr}




\subjclass[2000]{Primary 60H10, 93E03; Secondary 60H30, 35K40}

\date{\today}

\keywords{McKean-Vlasov SDEs; weak uniqueness; martingale problem; parametrix expansion; density estimates}

\begin{abstract} In this paper, we investigate the well-posedness of the martingale problem associated to non-linear stochastic differential equations (SDEs) in the sense of McKean-Vlasov under mild assumptions on the coefficients as well as classical solutions for a class of associated linear partial differential equations (PDEs) defined on $[0,T] \times \rr^d \times \mathcal{P}_2(\rr^d)$, for any $T>0$, $\mathcal{P}_2(\rr^d)$ being the Wasserstein space (\emph{i.e.} the space of probability measures on $\rr^d$ with a finite second-order moment). In this case, the derivative of a map along a probability measure is understood in the Lions' sense. The martingale problem is addressed by a fixed point argument on a suitable complete metric space, under some mild regularity assumptions on the coefficients that covers a large class of interaction. Also, new well-posedness results in the strong sense are obtained from the previous analysis. Under additional assumptions, we then prove the existence of the associated density and investigate its smoothness property. In particular, we establish some Gaussian type bounds for its derivatives. We eventually address the existence and uniqueness for the related linear Cauchy problem with irregular terminal condition and source term.

\end{abstract}

\maketitle

\tableofcontents

\section{Introduction}\label{introduction}
In this work, we are interested in some non-linear Stochastic Differential Equations (SDEs for short):
\begin{equation}
\label{SDE:MCKEAN}
X^{\xi}_t = \xi + \int_0^t b(s, X^{\xi}_s, [X^{\xi}_s]) ds + \int_0^t \sigma(s, X^{\xi}_s, [X^{\xi}_s]) dW_s, \quad [\xi] = \mu \in \mathcal{P}(\mathbb{R}^d),
\end{equation}

\noindent driven by a $q$-dimensional $W=(W^1,\cdots, W^q)$ Brownian motion with coefficients $b: \mathbb{R}_+ \times \mathbb{R}^d \times \mathcal{P}(\mathbb{R}^d)\rightarrow \mathbb{R}^d$ and $\sigma: \mathbb{R}_+ \times \mathbb{R}^d \times \mathcal{P}(\mathbb{R}^d) \rightarrow \mathbb{R}^d \otimes \mathbb{R}^q$. Here and throughout, we denote by $[\theta]$ the law of the random variable $\theta$. This kind of dynamics are also referred to as distribution dependent SDEs or mean-field or McKean-Vlasov SDEs as it describes the limiting behaviour of an individual particle evolving within a large system of particles interacting through its empirical measure, as the size of the population grows to infinity. More generally, the behaviour of the particle system is ruled by the so-called propagation of chaos phenomenon as originally studied by McKean \cite{mckean1967propagation} and then investigated by Sznitman \cite{Sznitman}. Roughly speaking, it says that if the initial conditions of a finite subset of the original system of particles become independent of each other, as the size of the whole system grows to infinity, then the dynamics of the particles of the finite subset synchronize and also become independent.

Since the original works of Kac \cite{kac1956} in kinetic theory and of McKean \cite{McKean:1966} in non-linear parabolic partial differential equations (PDEs for short), many authors have investigated theoretical and numerical aspects of McKean-Vlasov SDEs under various settings such as: the well-posedness of related martingale problem, the propagation of chaos phenomenom and other limit theorems, probabilistic representations to non-linear parabolic PDEs and their numerical approximation schemes. We refer to Tanaka \cite{tanaka1978probabilistic}, Gärtner \cite{gartner}, \cite{Sznitman} among others.\\

\noindent\textbf{On the well posedness of \eqref{SDE:MCKEAN}.} Well-posedness in the weak or strong sense of McKean-Vlasov SDEs have been intensively investigated under various settings by many authors during the last decades, see e.g. Funaki \cite{Funaki1984}, Oelschl\"ager \cite{oelschlager1984}, \cite{gartner}, \cite{Sznitman}, Jourdain \cite{jourdain:1997}, and more recently, Li and Min \cite{Li:min:2}, Chaudru de Raynal \cite{CHAUDRUDERAYNAL2019}, Mishura and Veretenikov \cite{mishura:veretenikov}, Lacker \cite{la:18} and Hammersley et al. \cite{HSSzpruch:18} for a short sample. 

Classical well-posedness results usually rely on the Cauchy-Lipschitz theory when both coefficients $b$ and $\sigma$ are Lipschitz continuous on $\rr^d \times \mathcal{P}_p(\rr^d)$ equipped with the product metric, the distance on $\mathcal{P}_p(\rr^d)$ being the Wasserstein distance of order $p$, see \emph{e.g.} \cite{Sznitman}. 

It actually turns out to be a challenging question to go beyond the aforementioned framework. Indeed, as it has been highlighted by Scheutzow in \cite{scheutzow_1987}, uniqueness may fail for a simple version of \eqref{SDE:MCKEAN}: when $p=q=1$, $\sigma \equiv 0$, for all $(t, x, m)$ in $\rr_+\times \rr^d\times \mathcal P(\rr^d),\, b(t, x, m ) = \int \overline b(y) dm(y)$, for some bounded and locally Lipschitz function $\overline b:\rr \to \rr$, the SDE \eqref{SDE:MCKEAN} with random initial condition have several solutions. Note that in this case the drift, seen as a function of the law, is only  Lipschitz with respect to the total variation distance. Nevertheless, still in this setting, it has been shown by Shiga and Tanaka in \cite{Shiga1985} that pathwise uniqueness holds when $\sigma \equiv 1$. In that case, one may also relax the local Lispchitz assumption of the function $\overline b$ and only assume that it is bounded and measurable. Such a result has been extended by Jourdain \cite{jourdain:1997} where uniqueness is shown to hold for more  general measurable and bounded drift $b$ satisfying only a Lipschitz assumption with respect to the total variation distance and diffusion coefficient $\sigma$ independent of the measure argument. These results have been recently revisited and extended to other non degenerate frameworks (allowing the diffusion coefficient to depend on the time and space variables) in Mishura and Veretenikov \cite{mishura:veretenikov}, Lacker \cite{la:18} and to possibly singular interaction of first order type by R\"ockner and Zhang in \cite{2018arXiv180902216R}. We importantly emphasize that in all the aforementioned works, the diffusion coefficient only depends on the time and space variables and that the Lipschitz assumption of the drift coefficient with respect to the total variation distance as well as the non-degeneracy of the noise play a crucial role.

We start our work by revisiting the problem of the unique solvability of the SDE \eqref{SDE:MCKEAN} by tackling the corresponding formulation of the martingale problem. Our main idea consists in a fixed point argument applied on a suitable complete metric space. To do so, we rely on a mild formulation of the transition density of the unique weak solution to the SDE \eqref{SDE:MCKEAN} with coefficients frozen with respect to the measure argument. This formulation may be seen as the first step of a perturbation method for Markov semigroups, known as the parametrix technique, such as exposed in Friedman \cite{friedman:64}, McKean and Singer \cite{mcke:sing:67}. We also refer to Konakov and Mammen \cite{kona:mamm:00}, for the expansion in infinite series of a transition density and Delarue and Menozzi \cite{dela:meno:10} or Frikha and Li \cite{frikha:li} for some extensions of this technique in other directions. 

Compared to the aforementioned results, our approach allows to deal with coefficients satisfying mild regularity assumption with respect to the space and measure variables. In particular, the diffusion coefficient may not be Lipschitz with respect to the Wasserstein distance which, to the best of our knowledge, appear to be new. Let us however mention the recent work \cite{CHAUDRUDERAYNAL2019} of the first author where such a framework is handled for a particular class of interaction (of scalar type) and under stronger regularity assumptions on the coefficients. Then, by adding a Lipschitz continuity assumption in space on the diffusion coefficient, we derive through usual strong uniqueness results on linear SDE the well-posedness in the strong sense of the SDE \eqref{SDE:MCKEAN}.

\vspace*{.3cm}

\noindent\textbf{Existence of a density for \eqref{SDE:MCKEAN} and associated Cauchy problem on the Wasserstein space.}

The well-posedness of the martingale problem then allows us to investigate in turn the regularity properties of the transition density associated to equation \eqref{SDE:MCKEAN} and to establish some Gaussian type estimates for its derivatives. Some partial results related to the smoothing properties of McKean-Vlasov SDEs have been obtained by Chaudru de Raynal \cite{CHAUDRUDERAYNAL2019}, Ba\~nos \cite{banos2018}, Crisan and McMurray \cite{Crisan2017}. In \cite{CHAUDRUDERAYNAL2019}, such type of bounds have been obtained in a regularized framework for McKean-Vlasov SDE (uniformly on the regularization procedure) with scalar interaction only. In \cite{banos2018}, a Bismut-Elworthy-Li formula is proved for a similar equation (with scalar type interaction) under the assumption that both the drift and the diffusion matrix are continuously differentiable with bounded Lipschitz derivatives in both variables and the diffusion matrix is uniformly elliptic. In \cite{Crisan2017}, in the uniform elliptic setting, using Malliavin calculus techniques, the authors proved several integration by parts formulae for the decoupled dynamics associated to the equation \eqref{SDE:MCKEAN} from which stem several estimates on the associated density and its derivatives when the coefficients $b, \, \sigma$ are smooth and when the initial law in \eqref{SDE:MCKEAN} is a Dirac mass.

Here, we will investigate the regularity properties of the density of both random variables $X^{\xi}_t$ and $X^{x, [\xi]}_t$ (given by the unique weak solution of the associated decoupled flow once the well-posedness for \eqref{SDE:MCKEAN} has been established) under mild assumptions on the coefficients, namely $b$ and $a=\sigma \sigma^{*}$ are assumed to be continuous, bounded and H\"older continuous in space and $a$ is uniformly elliptic. In this case, both the drift and diffusion coefficients are also assumed to have two bounded and H\"older continuous linear functional (or flat) derivatives with respect to its measure argument. We briefly present this notion of differentiation in Section \ref{wasserstein:differentiation} and refer to Carmona and Delarue \cite{carmona2018probabilistic} and Cardaliaguet \& al. \cite{cardaliaguet:delarue:lasry:lions} for more details. Within this framework, we are able to take advantage of the smoothing property of the underlying heat kernel and to bring to light the regularity properties of the density with respect to its measure argument for a coarser topology. Namely, the coefficients admit two linear functional derivatives but the density admits two derivatives in the sense of Lions (see section 2.1 for definitions), which appears to be a stronger notion of differentiation. As a consequence, we recover an ad hoc version of the theory investigated in the linear case in the monograph of Friedman \cite{friedman:64}, \cite{Friedman2011}. In particular, we establish some Gaussian type estimates for both densities and their derivatives with respect to the time, space and measure arguments.\\

Finally, the previous smoothing properties of the densities enable us to investigate classical solutions for a class of linear parabolic PDEs on the Wasserstein space, namely
\begin{align}
\begin{cases}
(\partial_t + \mathcal{L}_t) U(t, x, \mu)  = f(t, x, \mu) & \quad \mbox{ for } (t, x , \mu) \in [0,T) \times \rr^d \times \pp,  \\
U(T, x, \mu)   = h(x, \mu) & \quad \mbox{ for } (x , \mu) \in \rr^d \times \pp, 
\end{cases}
 \label{pde:wasserstein:space}
\end{align}

\noindent where the source term $f : \rr_+ \times \rr^d \times \pp \rightarrow \rr$ and the terminal condition $h: \rr^d \times \pp \rightarrow \rr$ are some given functions and the operator $\mathcal{L}_t$ is defined by
\begin{align}
\mathcal{L}_t g(x,  \mu) & = \sum_{i=1}^d b_i(t, x, \mu) \partial_{x_i}g(x, \mu) + \frac12 \sum_{i, j=1}^d a_{i, j}(t, x, \mu) \partial^2_{x_i, x_j}g( x, \mu) \nonumber \\
& \quad + \int \left\{ \sum_{i=1}^{d} b_i(t, z, \mu) [\partial_{\mu}g(x,\mu)(z)]_{i} + \frac12 \sum_{i, j=1}^d a_{i, j}(t, z, \mu) \partial_{z_i} [\partial_{\mu} g(x, \mu)(z)]_j  \right\} \mu(dz) \label{inf:generator:mckean:vlasov}
\end{align}

\noindent and acts on sufficiently smooth test functions $g: \rr^d \times \pp \rightarrow \rr$ and $a = \sigma \sigma^{*}$ is uniformly elliptic. Though the first part of the operator appearing in the right-hand side of \eqref{inf:generator:mckean:vlasov} is quite standard, the second part is new and involves the Lions' derivative of the test function with respect to the measure variable $\mu$, as introduced by P.-L. Lions in his seminal lectures at \emph{the Coll\`ege de France}, see \cite{lecture:lions:college}. We briefly present this notion of differentiation on the Wasserstein space in Section \ref{wasserstein:differentiation} together with the chain rule formula established in Chassagneux et al. \cite{chassagneux:crisan:delarue}, see also Carmona and Delarue \cite{carmona2018probabilistic}, for the flow of measures generated by the law of an It\^o process. Classical solutions for PDEs of the form \eqref{pde:wasserstein:space} have already been investigated in the literature using different methods and under various settings, e.g. Buckdhan et al. \cite{buckdahn2017} (for $f\equiv0$), \cite{chassagneux:crisan:delarue} and very recently \cite{Crisan2017} (for $f\equiv 0$). We also refer the reader to the pedagogical paper Bensoussan et al. \cite{BENSOUSSAN20172093} for a discussion of the different point of views in order to derive PDEs on the Wasserstein space and their applications. 

In the classical diffusion setting, provided the coefficients $b$ and $\sigma$ and the terminal condition $h$ are smooth enough (with bounded derivatives), it is now well-known that the solution to the related linear Kolmogorov PDE is smooth (see e. g. Krylov \cite{Krylov1999}). In \cite{buckdahn2017}, the authors proved a similar result in the case of the linear PDE \eqref{pde:wasserstein:space} (with $f\equiv0$) and Chassagneux et al. \cite{chassagneux:crisan:delarue} reached the same conclusion for a non-linear version also known as the Master equation. In this sense, the solution of the considered PDE preserves the regularity of the terminal condition. Still in the standard diffusion setting, it is known that one can weaken the regularity assumption on $h$ if one can benefit from the smoothness of the underlying transition density. Indeed in this case, $u(t, x) = \int h(y) \, p(t,T, x, y) \ dy$, $y\mapsto p(t, T, x, y)$ being the density of the (standard) SDE taken at time $T$ and starting from $x$ at time $t$. However, in order to benefit from this regularizing property, one has to assume that the associated operator $\mathcal{L}$ satisfies some non-degeneracy assumption. When the coefficients $b$, $a=\sigma \sigma^{*}$ are bounded measurable and H\"older continuous in space (uniformly in time) and if $a$ is unformly elliptic, it is known (see e.g. \cite{friedman:64}) that the linear Kolmogorov PDE admits a fundamental solution so that the unique classical solution exists when the terminal condition $h$ is not differentiable but only continuous. In the seminal paper \cite{horm:67}, H\"ormander gave a sufficient condition for a second order linear Kolmogorov PDE with smooth coefficients to be hypoelliptic. Thus, if H\"ormander's condition is satisfied then the unique classical solution exists even if the terminal condition is not smooth. Note that this condition is known to be nearly necessary since in the non-hypoelliptic regime, even in the case of smooth coefficients, there exists counterexample to the regularity preservation of the terminal condition, see e.g. Hairer and al. \cite{hairer2015}. \\

The recent paper \cite{Crisan2017} provides the first result in this direction for the PDE \eqref{pde:wasserstein:space} without source term and for non differentiable terminal condition $h$ using Malliavin calculus techniques under the assumption that the time-homogeneous coefficients $b, \, \sigma$ are smooth with respect to the space and measure variables. In particular, the function $h$ has to belong to a certain class of (possibly non-smooth) functions for which Malliavin integration by parts can be applied in order to retrieve the differentiability of the solution in the measure direction. This kind of condition appears to be natural since one cannot expect the solution of the PDE \eqref{pde:wasserstein:space} to preserve regularity in the measure variable in full generality as it is the case for the spatial argument, see Example 5.1 in \cite{Crisan2017} for more details on this loss of regularity. \\

 Under the aforementioned regularity assumptions on the coefficients $b$ and $a$ and if the data $f$ and $h$ admit a linear functional derivative satisfying some mild regularity and growth assumptions, we derive a theory on the existence and uniqueness of classical solutions for the PDE \eqref{pde:wasserstein:space} which is analogous to the one considered in Chapter 1 \cite{friedman:64} for linear parabolic PDEs. \\

 \noindent\textbf{Organization of the paper.} The paper is organized as follows. The basic notions of differentiation on the Wasserstein space with an emphasis on the chain rule and on the regularization property of a map defined on $\pp$ by a smooth flow of probability measures that will play a central role in our analysis are presented in Section \ref{diff:wasserstein:structural:class}. The general set-up together with the assumptions and the main results are described in Section \ref{assumptions:results}. The well-posedness of the martingale problem associated to the SDE \eqref{SDE:MCKEAN} is tackled in Section \ref{martingale:problem:sec}. The existence and the smoothness properties of its transition density are investigated in Section \ref{existence:regularity:transition:density}. Finally, classical solutions to the Cauchy problem related to the PDE \eqref{pde:wasserstein:space} are studied in Section \ref{solving:pde:wasserstein:space}. 


\subsection*{Notations:}
In the following we will denote by $C$ and $K$ some generic positive constants that may depend on the coefficients $b$ and $\sigma$. We reserve the notation $c$ for constants depending on $|\sigma|_\infty$ and $\lambda$ (see assumption \HE\, in Section \ref{assumptions:results}) but not on the time horizon $T$. Moreover, the value of both $C,\, K$ or $c$ may eventually change from line to line.  

We will denote by $\mathcal{P}(\rr^d)$ the space of probability measures on $\rr^d$ and by $\mathcal{P}_q(\rr^d) \subset \mathcal{P}(\rr^d)$, $q\geq1$, the space of probability measures with finite moment of order $q$. 

For a positive variance-covariance matrix $\Sigma$, the function $y\mapsto g(\Sigma , y)$ stands for the $d$-dimensional Gaussian kernel with $\Sigma$ as covariance matrix $g(\Sigma, x) = (2\pi)^{-\frac{d}{2}} (\d \, \Sigma)^{-\frac12} \exp(-\frac12 \langle \Sigma^{-1} x, x \rangle)$. We also define the first and second order Hermite polynomials: $H^{i}_1(\Sigma, x) := -(\Sigma^{-1} x)_{i}$ and $H^{i, j}_2(\Sigma, x) := (\Sigma^{-1} x)_i (\Sigma^{-1} x)_j - (\Sigma^{-1} )_{i, j}$, $1\leq i, j \leq d$ which are related to the previous Gaussian density as follows $\partial_{x_i} g(\Sigma, x) = H^{i}_1(\Sigma, x) g(\Sigma, x)$, $\partial^2_{x_i, x_j} g(\Sigma, x) = H^{i, j}_2(\Sigma, x) g(\Sigma, x)$. Also, when $\Sigma= c I_d$, for some positive constant $c$, the latter notation is simplified to $g(c, x) := (1/(2\pi c))^{d/2} \exp(-|x|^2/(2c))$. 

One of the key inequality that will be used intensively in this work is the following: for any $p, q>0$ and $x\in \rr$, $|x|^p e^{-q x^2} \leq (p/(2qe))^{p/2}$. 
As a direct consequence, we obtain the {\it space-time inequality},
\begin{gather}
\forall p, \, c>0, \quad |x|^p g(c t, x)\leq C t^{p/2} g(c' t, x) \label{space:time:inequality}
\end{gather}
\noindent which in turn gives {the \it standard Gaussian estimates} for the first and second order derivatives of Gaussian density, namely
\begin{align}
\forall c>0, \, |H^{i}_1(c t, x)| g(c t, x) \leq \frac{C}{t^{\frac12}} g(c' t,x) \quad \mbox{and} \quad |H^{i, j}_2(c t, x)| g(c t, x) \leq \frac{C}{t}g(c' t, x) \label{standard}
\end{align}

\noindent for some positive constants $C,\, c'$. Since we will employ it quite frequently, we will often omit to mention it explicitly at some places. We finally define the Mittag-Leffler function $E_{\alpha,\beta}(z) := \sum_{n\geq 0} z^{n}/\Gamma(\alpha n +\beta)$, $z\in \mathbb{R}$, $\alpha, \beta>0$.

\section{Preliminaries: Differentiation on the Wasserstein space and Smoothing properties of McKean-Vlasov equations}\label{diff:wasserstein:structural:class}

\subsection{Differentiation on the Wasserstein space}\label{wasserstein:differentiation}
In this section, we present the reader with a brief overview of the regularity notions used when working with mappings defined on $\mathcal{P}_2(\rr^d)$. We refer the reader to Lions' seminal lectures \cite{lecture:lions:college}, to Cardaliaguet's lectures notes \cite{cardaliaguet}, to the recent work Cardaliaguet et al. \cite{cardaliaguet:delarue:lasry:lions} or to Chapter 5 of Carmona and Delarue's monograph \cite{carmona2018probabilistic} for a more complete and detailed exposition. Unless otherwise specified, we equip the space $\mathcal{P}(\rr^d)$ with the topology induced by the total variation metric $d_{TV}$ defined by
$$
d_{{\rm TV}}(\mu, \nu) = \sup_{A \in \mathcal{B}(\rr^d)}\int_{A} (\mu-\nu)(dx) .
$$

 The space $\pp$ is equipped with the 2-Wasserstein metric
$$
W_2(\mu, \nu) = \inf_{\pi \in \mathcal{P}(\mu, \nu)} \left( \int_{\rr^d \times \rr^d} |x-y|^2 \, \pi(dx, dy) \right)^{\frac12}
$$

\noindent where, for given $\mu, \nu \in \pp$, $\mathcal{P}(\mu, \nu)$ denotes the set of measures on $\rr^d \times \rr^d$ with marginals $\mu$ and $\nu$.

In what follows, we will work with two different notions of differentiation of a continuous map $U$ defined on $\mathcal{P}(\rr^d)$. The first one, called the \emph{linear functional derivative} and denoted by $\delta U/\delta m$, will be intensively employed in our linearization procedure to tackle the martingale problem and to study the smoothing properties of McKean-Vlasov SDEs. The second one is the \emph{Lions' derivative}, $L$-derivative in short, and will be denoted by $\partial_\mu U$. Compared to the flat derivative, the $L$-derivative requires additional smoothness and will be our central object in order to establish the well-posedness of the PDE \eqref{pde:wasserstein:space}.\\

\noindent\textbf{Linear functional derivative.} 
\begin{definition}\label{continuous:L:derivative}
The continuous map $U: \mathcal{P}(\mathbb{R}^d) \rightarrow \mathbb{R}$ is said to have a linear functional derivative if there exists a real-valued bounded measurable function
$$
 \mathcal P(\mathbb{R}^d) \times \mathbb{R}^d  \ni (m, x) \mapsto \frac {\delta U}{\delta m}(m)(x) \in \mathbb{R},
$$
\noindent such that for all $x$ in $\mathbb{R}^d$, the map $\mathcal{P}(\mathbb{R}^d) \ni m \mapsto [\delta U/\delta m](m)(x)$ is continuous and such that for all $m$ and $m'$ in $\mathcal{P}(\mathbb{R}^d)$, it holds
\begin{equation}
\label{limit:equation:linear:functional:derivative}
\lim_{\varepsilon \downarrow 0} \frac{U((1-\varepsilon) m + \varepsilon m') - U(m)}{\varepsilon} = \int_{\mathbb{R}^d} \frac{\delta U}{\delta m}(m)(y) \, d(m'-m)(y).
\end{equation}
The map $y\mapsto [\delta U/\delta m](m)(y)$ being defined up to an additive constant, we will follow the usual normalization convention $\int_{\mathbb{R}^d} [\delta U/\delta m](m)(y) \, dm(y) = 0$. Observe from the above definition that for all $m$ and $m'$ in $\mathcal{P}(\mathbb{R}^d)$
\begin{equation}
\label{mean:value:theorem:flat:derivative}
 U(m')-U(m) = \int_0^1 \int_{\rr^d} \frac{\delta U}{\delta m}((1-\lambda) m + \lambda m')(y) \, d(m'-m)(y)\, d\lambda.
\end{equation}
Note that the boundedness assumption of the map $x\mapsto [\delta U/\delta m](m)(x)$, uniformly in $m$ guarantees the well-posedness of the integral appearing in the right-hand side of \eqref{mean:value:theorem:flat:derivative}.
\end{definition}

\begin{remark}
\label{rem:linderivandLip}
\begin{itemize}
\item[(i)] Such a notion of derivative was introduced in \cite{carmona2018probabilistic}, see also \cite{cardaliaguet:delarue:lasry:lions}, with $\mathcal{P}(\rr^d)$ being replaced by $\pp$, equipped with the 2-Wasserstein metric. In this case, the map $ [\delta U/\delta m](m)(.)$ is assumed to be continuous but allowed to be of quadratic growth, uniformly on bounded set $\mathcal{K} \subset \pp$. \\
\item[(ii)] If a map $U$ admits a flat derivative in the above sense then one may deduce an additional regularity property with respect to the total variation distance. Observe indeed that as $ [\delta U/\delta m](.)(.)$ is bounded, then from \eqref{mean:value:theorem:flat:derivative} it is readily seen that for all $m$ and $m'$ in $\mathcal{P}(\mathbb{R}^d)$
\begin{equation}\label{eq:linFuncDerivtoLipdtv}
 |U(m) - U(m')| \leq \sup_{m'' \in \mathcal{P}(\rr^d)} \|\frac{\delta U}{\delta m}(m'')(.)\|_\infty\, d_{{\rm TV}}(m, m').
\end{equation}
 Therefore, if the map $U$ admits a linear functional derivative in the sense of Definition \ref{continuous:L:derivative} then it is Lipschitz continuous with respect to the total variation metric.
 \end{itemize}
\end{remark}

With the above definition in mind, one may again investigate the smoothness of $m \mapsto [\delta U/\delta m](m)(y)$ for a fixed $y \in \mathbb{R}^d$. We will say that $U$ has two linear functional derivative and denote $[\delta^2 U/\delta m^2](m)(y)$ its second derivative taken at $(m, y)$ if $m \mapsto [\delta U/\delta m](m)(y)$ has a linear functional derivative in the sense of Definition \ref{continuous:L:derivative}. As a consequence, for all $m$ and $m'$ in $\mathcal{P}(\mathbb{R}^d)$ it holds 
$$
\frac{\delta U}{\delta m}(m')(y) -  \frac{\delta U}{\delta m}(m)(y) = \int_0^1 \int_{\rr^d} \frac{\delta^2 U}{\delta m^2}( (1-\lambda) m +\lambda m')(y, y') \, d(m' -m)(y')\ d\lambda
$$
\noindent and if $\mathcal{P}(\mathbb{R}^d)\times (\mathbb{R}^d)^2 \ni (m, y, y') \mapsto [\delta^2 U /\delta m^2](m)(y, y')$ is continuous then $[\delta^2 U /\delta m^2](m)(y , y') = [\delta^2 U /\delta m^2](m)(y' , y)$ for all $(m, y, y') \in \mathcal{P}(\mathbb{R}^d) \times (\mathbb{R}^d)^2$. Again, for more details on the above notion of derivative, we refer to \cite{cardaliaguet:delarue:lasry:lions} and \cite{carmona2018probabilistic}.

 \medskip
\noindent\textbf{The $L$-derivative.}
We now briefly present the second notion of derivatives that we will employ as originally introduced by Lions \cite{lecture:lions:college}. His strategy consists in considering the canonical lift of the real-valued function $U : \pp \ni \mu \mapsto U(\mu)$ into a function $\mathcal{U} : \mathbb{L}_2  \ni Z \mapsto \mathcal{U}(Z) = U([Z]) \in \rr$, $(\Omega, \mathcal{F}, \P)$ standing for an atomless probability space, with $\Omega$ a Polish space, $\mathcal{F}$ its Borel $\sigma$-algebra, $\mathbb{L}_2:=\mathbb{L}_2(\Omega,\mathcal{F},\mathbb{P}, \rr^d)$ standing for the space of $\rr^d$-valued random variables defined on $\Omega$ with finite second moment and $Z$ being a random variable with law $\mu$. Taking advantage of the Hilbert structure of the $\mathbb{L}_2$ space, the function $U$ is then said to be differentiable at $\mu\in \pp$ if its canonical lift $\mathcal{U}$ is Fr\'{e}chet differentiable at some point $Z$ such that $[Z]=\mu$. In that case, its gradient is denoted by $D\mathcal{U}$. Thanks to Riezs' representation theorem, we can identify $D\mathcal{U}$ as an element of $\mathbb{L}^2$. It then turns out that $D\mathcal{U}$ is a random variable which is $\sigma(Z)$-measurable and given by a function $DU(\mu)(.)$ from $\rr^d$ to $\rr^d$, which depends on the law $\mu$ of $Z$ and satisfying $DU(\mu)(.) \in \mathbb{L}^2(\rr^d, \mathcal{B}(\rr^d), \mu; \rr^d)$. Since we will work with mappings $U$ depending on several variables, we will adopt the notation $\partial_\mu U(\mu)(.)$ in order to emphasize that we are taking the derivative of the map $U$ with respect to its measure argument. Thus, inspired by \cite{carmona2018probabilistic}, the $L$-derivative (or $L$-differential) of $U$ at $\mu$ is the map $\partial_\mu U(\mu)(.): \rr^d \ni v \mapsto \partial_\mu U(\mu)(v) \in \rr^d$, satisfying $D \mathcal{U} = \partial_\mu U(\mu)(Z)$. 

It is important to note that this representation holds irrespectively of the choice of the original probability space $(\Omega, \mathcal{F}, \P)$. In what follows, we will only consider functions which are $\mathcal{C}^1$, that is, functions for which the associated canonical lift is $\mathcal{C}^1$ on $\mathbb{L}^2$. We will also restrict our consideration to the class of functions which are $\mathcal{C}^1$ and for which there exists a continuous version of the mapping $\pp \times \rr^d \ni (\mu, v) \mapsto \partial_\mu U(\mu)(v) \in \rr^d$. It then appears that this version is unique. We straightforwardly extend the above discussion to $\rr^d$-valued or $\rr^d\otimes \rr^d$-valued maps $U$ defined on $\pp$, component by component.

\begin{remark}\label{rem:Lderiv:lipwarr}
Let us point out the link between this notion of derivative and the regularity property with respect to the Wasserstein metrics of order one and two. Observe indeed that if a map $U$ is continuously $L$-differentiable and if the Fr\'echet derivative of its lift $D\mathcal U$ is bounded in $\mathbb L_2$ then for all $\mu$ and $\mu'$ in $\pp$ it holds
\begin{align*}
 |U(\mu) - U(\mu')| &= \bigg| \int_0^1 \E[\partial_\mu U([\lambda X + (1-\lambda)X'])(\lambda X + (1-\lambda) X')(X-X')] \, d\lambda \bigg| \\
&\leq  \|D\mathcal U \|_{\mathbb L_2} W_2(\mu,\mu'),
\end{align*}
\noindent thanks to Cauchy-Schwarz's inequality and where above $X$ and $X'$ denote two independent random variables in $\mathbb L_2$ with respective law $\mu$ and $\mu'$. In comparison with Remark \ref{rem:linderivandLip}, more precisely the estimate \eqref{eq:linFuncDerivtoLipdtv}, if one now assumes that the $L$-differential $\partial_\mu U$, viewed as the map $\pp\times \rr^d \ni (\mu,y) \mapsto \partial_\mu U(\mu)(y)$, is bounded in supremum norm then, from the above computations one readily sees that
 \begin{align*}
 |U(\mu) - U(\mu')| &\leq  \sup_{\mu''\in \pp }\|\partial_\mu U(\mu'')(\cdot)\|_{\infty} W_1(\mu,\mu').
\end{align*}

\end{remark}

\noindent\textbf{Linear functional and $L$-derivative, link and examples.}
As underlined in Proposition 5.48 of \cite{carmona2018probabilistic}, the following relation holds between the linear functional and the $L$-derivative. If a map $h: \pp \rightarrow \mathbb{R}$ admits a linear functional derivative $\delta h / \delta m$ (see Remark \ref{rem:linderivandLip} (i)) such that for any $\mu$ in $\pp$, the map $v \mapsto [\delta h / \delta m](\mu)(v)$ is differentiable and its derivative is jointly continuous in $v$ and $\mu$ and at most of linear growth in $v$ uniformly in $\mu$ for any $\mu$ in bounded subset $\mathcal{K}$ of $\pp$ then it holds
\begin{equation}\label{eq:relationLionsFlat}
\partial_\mu h(\mu)(\cdot) = \partial_v[\frac{\delta h}{\delta m}](\mu)(\cdot).
\end{equation}

Below are some examples of functions admitting linear functional. One can thus, under an additional regularity assumption, deduce its $L$-derivative.
\begin{example}\label{exple:linearfunc}
In the following, $h$ denotes a map from $\mathcal{P}(\mathbb{R}^d)$ to $\mathbb{R}$. We can straightforwardly consider their multidimensional version.

\begin{enumerate}
\item First order interaction. We say that $h$ satisfies a first order interaction if it is of following form: for some bounded and mesurable function $\bar{h} :  \mathbb{R}^d \to \mathbb{R}$, it holds
$$
 h(\mu) = \int_{\mathbb{R}^d} \bar h(y) \mu(dy).
$$
\item $N$ order interaction. We say that $h$ satisfies an $N$ order interaction if it is of following form: for some bounded and mesurable function $\bar h : (\mathbb{R}^d)^N \rightarrow \mathbb{R}$, it holds
$$
h(\mu) = \int_{(\mathbb{R}^d)^N} \bar h(y_1, \cdots, y_N) \, \mu(dy_1) \cdots \mu(dy_N).
$$
\item Polynomials on the Wasserstein space. We say that a function $h$ is a polynomial on the Wasserstein space if there exist some real-valued bounded and mesurable functions $\bar h_1, \, \cdots, \bar h_N$ defined on $\mathbb{R}^d$ such that
$$
h(\mu) = \prod_{i=1}^N \int \bar h_i(z) \mu(dz).
$$
\item Scalar interaction. We say that a function $h$ satisfies a scalar interaction if there exist a continuously differentiable real-valued function $\bar{h}$ defined on $\rr^N$ with bounded first order derivative as well as some real-valued bounded and mesurable functions $\bar h_1, \, \cdots, \bar h_N$ defined on $\mathbb{R}^d$ such that 
$$
h(\mu) = \bar{h}\left(\int \bar{h}_1(y) \, \mu(dy), \cdots, \int \bar{h}_N(y) \, \mu(dy)\right).
$$ 
\item Sum, product and more generally any smooth composition of $N$ order interactions, polynomials on Wasserstein space or scalar interaction.
\end{enumerate}
\end{example}

%

\noindent\textbf{Smooth maps defined in the strip $[0,T] \times \rr^d \times \pp$ and associated chain rule formula.}
In order to tackle the PDE \eqref{pde:wasserstein:space} defined in the strip $[0,T] \times \rr^d \times \pp$, we need a chain rule formula for $(U(t, Y_t, [X_t]))_{t\geq0}$, where $(X_t)_{t\geq0}$ and $(Y_t)_{t\geq 0}$ are two It\^o processes defined for sake of simplicity on the same probability space $(\Omega, \mathcal{F}, \mathbb{F}, \mathbb{P})$ assumed to be equipped with a right-continuous and complete filtration $\mathbb{F}=(\mathcal{F}_t)_{t\geq0}$. Their dynamics are given by
\begin{align}
X_t &= X_0 + \int_0^t b_s \, ds + \int_0^t \sigma_s \, dW_s, \, X_0 \in \mathbb{L}_2, \label{ito:process:X} \\
Y_t & = Y_0 + \int_0^t \eta_s \, ds + \int_0^t \gamma_s \, dW_s \label{ito:process:Y}
\end{align}

\noindent where $W=(W_t)_{t\geq0}$ is an $\mathbb{F}$-adapted $d$-dimensional Brownian, $(b_t)_{t\geq 0}$, $(\eta_t)_{t\geq0}$, $(\sigma_t)_{t\geq 0}$ and $(\gamma_t)_{t\geq 0}$ are $\mathbb{F}$-progressively measurable processes, with values in $\rr^d$, $\rr^d$, $\rr^d \otimes \rr^d$ and $\rr^{d\times q}$ respectively, satisfying the following conditions 
\begin{equation}
\label{cond:integrab:ito:process}
\forall T>0, \quad \E\Big[\int_0^T (|b_t|^2 + |\sigma_t|^4) \, dt\Big] < \infty \mbox{ and } \P\left(\int_0^T (|\eta_t| + |\gamma_t|^2) \, dt < + \infty \right) = 1.
\end{equation}

We now introduce two classes of functions we will work with throughout the paper.

\begin{definition}(The space $\mathcal{C}^{p, 2, 2}([0,T] \times \rr^d \times \pp)$, for $p=0, \,1$)\label{def:space:c122}
Let $T>0$ and $p \in \left\{0, 1\right\}$. The continuous function $U : [0,T] \times \rr^d \times \pp$ is in $\mathcal{C}^{p, 2, 2}([0,T] \times \rr^d \times \pp)$ if the following conditions hold:
\begin{itemize}
\item[(i)] For any $\mu \in \mathcal{P}_2(\rr^d)$, the mapping $[0,T] \times \rr^d \ni (t,x) \mapsto U(t, x,\mu)$ is in $\mathcal{C}^{p,2}([0,T] \times \rr^d)$ and the functions $[0,T] \times \rr^d \times \pp \ni (t, x, \mu) \mapsto \partial^{p}_t U(t, x, \mu),\, \partial_x U(t, x, \mu),\, \partial_{x}^2 U(t, x, \mu)$ are continuous.

\item[(ii)] For any $(t, x)\in [0,T] \times \rr^d$, the mapping $\mathcal{P}_2(\rr^d) \ni \mu \mapsto U(t ,x, \mu)$ is continuously $L$-differentiable and for any $\mu \in \mathcal{P}_2(\rr^d)$, we can find a version of the mapping $\rr^d \ni v \mapsto \partial_\mu U(t, x,\mu)(v)$ such that the mapping $[0,T]\times \rr^d \times \mathcal{P}_2(\rr^d) \times \rr^d \ni (t ,x, \mu, v) \mapsto \partial_\mu U(t, x, \mu)(v)$ is locally bounded and is continuous at any $(t, x, \mu, v)$ such that $v \in \supp(\mu)$.

\item[(iii)] For the version of $\partial_\mu U$ mentioned above and for any $(t, x,\mu)$ in $[0,T] \times \rr^d \times \mathcal{P}_2(\rr^d)$, the mapping $\rr^d \ni v \mapsto \partial_\mu U(t, x,\mu)(v)$ is continuously differentiable and its derivative $\partial_v [\partial_\mu U(t, x, \mu)](v) \in \rr^{d\times d}$ is jointly continuous in $(t, x, \mu, v)$ at any point $(t, x, \mu, v)$ such that $v \in \supp(\mu)$.
\end{itemize}
\end{definition}

\begin{remark}\label{space:restriction} We will also consider the space $\mathcal{C}^{1,p}([0,T] \times \pp)$ for $p=1, \, 2$, where we adequately remove the space variable in the Definition \ref{def:space:c122}. More precisely, we will say that $U \in \mathcal{C}^{1, 1}([0,T] \times \pp)$ if $U$ is continuous, $t\mapsto U(t,\mu) \in \mathcal{C}^{1}([0,T])$ for any $\mu \in \pp$, $(t,\mu) \mapsto \partial_t U(t, \mu)$ being continuous and if for any $t\in [0,T]$, $\mu \mapsto U(t,\mu)$ is continuously $L$-differentiable such that we can find a version of $v\mapsto \partial_\mu U(t, \mu)(v)$ satisfying: $(t,\mu, v) \mapsto \partial_\mu U(t, \mu)(v)$ is locally bounded and continuous at any $(t, \mu, v)$ satisfying $v\in \supp(\mu)$. 

We will say that $U\in \mathcal{C}^{1,2}([0,T] \times \pp)$ if $U \in \mathcal{C}^{1,1}([0,T] \times \pp)$ and for the version of $\partial_\mu U$ previously considered, for any $(t, \mu) \in [0,T] \times \pp$, the mapping $\rr^d \ni v \mapsto \partial_\mu U(t, \mu)(v)$ is continuously differentiable and its derivative $\partial_v[\partial_\mu U(t, \mu)](v) \in \rr^{d\times d}$ is jointly continuous in $(t, \mu, v)$ at any point $(t, \mu, v)$ such that $v\in \supp(\mu)$.\\
\end{remark}

With the above definitions, we can now provide the chain rule formula on the Wasserstein space that will play a central role in our analysis.

\begin{prop}[\cite{carmona2018probabilistic}, Proposition 5.102]\label{prop:chain:rule:joint:space:measure}
Let $X$ and $Y$ be two It\^o processes, with respective dynamics \eqref{ito:process:X} and \eqref{ito:process:Y}, satisfying \eqref{cond:integrab:ito:process}. Assume that $U\in \mathcal{C}^{1, 2, 2}([0,T]\times \rr^d \times \pp)$ in the sense of Definition \ref{def:space:c122} such that for any compact set $\mathcal{K}\subset \rr^d \times \pp$, 
\begin{equation}
\label{cond:integrab:ito:process:second:version}
\sup_{(t, x, \mu) \in [0,T] \times \mathcal{K}}\left\{\int_{\rr^d} | \partial_\mu U(t, x, \mu)(v)|^2 \, \mu(dv) +  \int_{\rr^d} |\partial_v [\partial_\mu U(t, x, \mu)](v)|^2 \, \mu(dv) \right\} < \infty.
\end{equation}


 Then, $\P$-a.s., $\forall t\in [0,T]$, one has
\begin{align}
  U(t, Y_t, [X_t]) & = U(0, Y_0, [X_0]) + \int_0^t \partial_x U(s, Y_s, [X_s]) \, . \gamma_s \, dW_s  \nonumber \\
&   + \int_0^t \left\{\partial_s U(s, Y_s, [X_s]) +  \partial_x U(s, Y_s, [X_s]) . \eta_s + \frac12 Tr(\partial^2_{x} U(s, Y_s, [X_s]) \gamma_s \gamma^{T}_s) \right\} ds \label{chain:rule:mes}\\
&   + \int_0^t \left\{ \widetilde{\E}\big[\partial_\mu U(s , Y_s, [X_s])(\widetilde{X}_s) . \widetilde{b}_s\big] + \frac12 \widetilde{\E}\big[Tr(\partial_v [\partial_\mu U(s, Y_s, [X_s])](\widetilde{X}_s)  \widetilde{a}_s)\big] \right\} ds \nonumber
\end{align}

\noindent where the It\^o process $(\widetilde{X}_t, \widetilde{b}_t, \widetilde{\sigma}_t)_{0\leq t \leq T}$ is a copy of the original process $(X_t, b_t, \sigma_t)_{0\leq t \leq T}$ defined on a copy $(\widetilde{\Omega}, \widetilde{\mathcal{F}}, \widetilde{\P})$ of the original probability space $(\Omega, \mathcal{F}, \P)$.
\end{prop}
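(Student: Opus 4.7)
The plan is to reduce the joint evolution to separate analyses of the $(t, y)$-component, handled by the classical It\^o formula, and of the measure-component $s \mapsto [X_s]$, which is the novel ingredient. Via a telescoping argument along a partition $0 = t_0 < \cdots < t_n = t$ of $[0,t]$, each increment $U(t_{k+1}, Y_{t_{k+1}}, [X_{t_{k+1}}]) - U(t_k, Y_{t_k}, [X_{t_k}])$ splits into a finite-dimensional piece (with the measure frozen at $[X_{t_k}]$) and a pure-measure piece (with time and space frozen at $(t_{k+1}, Y_{t_{k+1}})$). Summing the finite-dimensional pieces and refining the mesh yields, via the standard It\^o formula applied to $s \mapsto U(s, Y_s, [X_{t_k}])$ and the integrability \eqref{cond:integrab:ito:process}, exactly the first two integrals on the right-hand side of \eqref{chain:rule:mes}.

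For the pure-measure pieces I would follow the empirical-projection strategy of Carmona and Delarue. Introduce $N$ i.i.d.\ copies $(X^i_s, b^i_s, \sigma^i_s)_{s\geq 0}$ of $(X_s, b_s, \sigma_s)$, each driven by an independent Brownian motion $W^i$, set $\mu^N_s := \frac{1}{N}\sum_{i=1}^N \delta_{X^i_s}$, and consider $\Phi^N(\bx) := U(t_{k+1}, Y_{t_{k+1}}, \frac{1}{N}\sum_i \delta_{x_i})$ on $(\rr^d)^N$. Applying the classical finite-dimensional It\^o formula to $s \mapsto \Phi^N(X^1_s, \ldots, X^N_s)$ and exploiting the empirical projection identities
\[
\partial_{x_i}\Phi^N(\bx) = \tfrac{1}{N}\,\partial_\mu U(t_{k+1}, Y_{t_{k+1}}, \mu^N)(x_i),\quad \partial^2_{x_i}\Phi^N(\bx) = \tfrac{1}{N}\,\partial_v[\partial_\mu U(t_{k+1}, Y_{t_{k+1}}, \mu^N)](x_i) + O(N^{-2}),
\]
valid when the $x_i$ are distinct so that each $x_i \in \supp(\mu^N)$, produces an It\^o expansion of $U(t_{k+1}, Y_{t_{k+1}}, \mu^N_s)$ whose drift and trace terms are symmetric empirical averages over the $N$ particles. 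As $N \to \infty$, the martingale contributions vanish in $\mathbb{L}_2$ (their bracket is $O(1/N)$ by independence of the $W^i$), the $O(N^{-2})$ cross-terms vanish in expectation, and by a law-of-large-numbers argument for exchangeable systems the empirical averages converge to the $\widetilde{\mathbb{E}}$-expectations appearing in \eqref{chain:rule:mes}.

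The main obstacle is to justify the $N \to \infty$ passage under the mild regularity of Definition \ref{def:space:c122}: the expansion of $\Phi^N$ a priori involves a second $L$-derivative $\partial^2_\mu U$ which we do not assume exists. One circumvents this by a preliminary mollification of the lift $\mathcal{U}(t, y, Z) := U(t, y, [Z])$ on $\mathbb{L}_2$, proving the identity for a smoothed $U^\varepsilon$ and then transferring it to $U$ by stability. The limit $\varepsilon \to 0$ uses the joint continuity of $\partial_\mu U$ and $\partial_v[\partial_\mu U]$ at points with $v \in \supp(\mu)$ (precisely the regularity granted by Definition \ref{def:space:c122}), the uniform integrability afforded by \eqref{cond:integrab:ito:process:second:version} and \eqref{cond:integrab:ito:process}, and a localization reducing to deterministic compact sets in $\pp$ along the trajectory $s \mapsto [X_s]$, which is possible by its $W_2$-continuity. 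Refining the partition then glues the space-time and measure contributions into \eqref{chain:rule:mes}.
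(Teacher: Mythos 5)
There is nothing in the paper to compare your argument against: Proposition \ref{prop:chain:rule:joint:space:measure} is not proved in this work, it is imported verbatim from Carmona and Delarue (Proposition 5.102 of \cite{carmona2018probabilistic}) and used as a black box. Your sketch essentially reconstructs the standard proof strategy of that monograph (and of Chassagneux, Crisan and Delarue): telescoping over a time grid, classical It\^o formula for the $(t,x)$-increments with the measure frozen, and an empirical-projection argument with $N$ independent copies for the measure-increments, followed by $N\to\infty$. Two remarks on the sketch itself. First, the displayed identity $\partial^2_{x_i}\Phi^N = \tfrac1N\partial_v[\partial_\mu U](x_i)+O(N^{-2})$ is not available under the hypotheses, since the $O(N^{-2})$ correction is exactly the second-order $L$-derivative $\partial^2_\mu U$, which Definition \ref{def:space:c122} does not provide; you acknowledge this and propose a mollification of the lift, but that step is the technical heart of the proof and is only gestured at. Transferring the formula from $U^{\varepsilon}$ back to $U$ requires convergence of $\partial_\mu U^{\varepsilon}$ and $\partial_v[\partial_\mu U^{\varepsilon}]$ to the corresponding derivatives of $U$ along the (random) arguments $(s,Y_s,[X_s],\widetilde X_s)$, uniformly enough to pass to the limit in the $ds$-integrals; this is where the continuity only at points $v\in\supp(\mu)$, the localization on compact subsets of $\pp$, and the integrability conditions \eqref{cond:integrab:ito:process} and \eqref{cond:integrab:ito:process:second:version} must actually be put to work, and none of that is carried out. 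Second, replacing the frozen measure $[X_{t_k}]$ by $[X_s]$ when refining the mesh also needs the joint continuity of the derivatives together with a uniform-integrability argument, which you should state explicitly. As an outline your route is the same as the cited source's; as a proof it is incomplete at precisely the approximation step.
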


\subsection{Smoothing properties of McKean-Vlasov semigroup}\label{subsec:structural} One of the central feature of our analysis relies on the smoothing properties of a non-degenerate McKean-Vlasov semi-group. In our current setting, this effect translates into a weakening of the topology with respect to which maps are, a priori, smooth. In particular, the composition of a flat differentiable map with a non-degenerate and smooth flow of probability measures allows to achieve a stronger form of differentiability, in the sense that such composition is now differentiable in the sense of Lions.\\

In order to foster the understanding of the key idea, let us consider a map $h:\mathcal{P}(\mathbb{R}^d) \to \mathbb{R}$ which is assumed to admit a linear functional derivative $\delta h /\delta m$. Recall importantly from \eqref{eq:linFuncDerivtoLipdtv} that this implies that the map $h$ is Lipschitz continuous with respect to the total variation distance. Consider the simplest version of \eqref{SDE:MCKEAN} (i.e. with $d=q=1$, $b \equiv 0$ and $\sigma \equiv1$ therein) that is the process $X_t^{\xi}= \xi +  W_t$, where $[\xi] = \mu \in \mathcal P(\mathbb{R}^d)$ and recall that $W$ is a Brownian motion independent of $\xi$. Observe that, the law $[X^\xi_t]$ only depends on $\xi$ through its law $\mu$.

Let us first show how the noise regularizes the map $\mu \mapsto h([X_t^{\xi}])$ in the sense that it is now Lipschitz with respect to a weaker topology and differentiable in a stronger sense (i.e. in the sense of Lions). Note first that in that setting, $\mu \mapsto h([X_t^{\xi}])$ rewrites $\mu \mapsto h(\mu \star g_t)$, $g_t$ being the Gaussian density with variance $t$ and where $\star$ stands for the usual convolution product that is for all $A$ in $\mathcal{B}(\rr^d)$, $(\mu \star g_t)(A) = \int \int_A  g_t(y-x) \, dy d\mu(x)$. From \eqref{mean:value:theorem:flat:derivative}, for all probability measures $\mu$ and $\mu'$ in $\mathcal P(\rr^d)$, it holds
\begin{align}
h([X_t^{\mu}]) - h([X_t^{\mu'}]) & = h( \mu \star g_t) - h( \mu' \star g_t)\notag\\
&= \int_0^1 \int_{(\rr^d)^2} \frac{\delta h}{\delta m}((\lambda \mu  + (1-\lambda) \mu')\star g_t)(y) \, g_t(y-x) \, d(\mu - \mu')(x) \, dy d\lambda \notag\\
&= \int_0^1 \int_{\rr^d} \frac{\delta h}{\delta m}((\lambda \mu + (1-\lambda)\mu')\star g_t)(y) \, \E[g_t(y-\xi) - g_t(y-\xi')] \, dy d\lambda \label{dvpExsmooth}
\end{align}
where $\xi$ and $\xi'$ have respective law $\mu$ and $\mu'$.

On the one hand, we readily obtain from the previous identity and the mean-value theorem as well as \eqref{standard} that for any $t>0$
\begin{align*}
|h([X_t^{\mu}]) - h([X_t^{\mu'}])| &\leq  C \sup_{m'' \in \mathcal{P}(\rr^d)} \|\frac{\delta h}{\delta m}(m'')(.)\|_\infty t^{-\frac 12}   \inf_{ \pi \in \Pi(m,m')} \int \{|x-y| \wedge 1\} d\pi(x,y) \\
& =: C  \sup_{m'' \in \mathcal{P}(\rr^d)} \|\frac{\delta h}{\delta m}(m'')(.)\|_\infty t^{-\frac 12}  d(\mu, \mu'),
\end{align*}
where $\Pi(\mu,\mu')$ denotes the set of probability measures $\pi \in \mathcal{P}(\mathbb{R}^d\times \mathbb{R}^d)$ with $\mu$ and $\mu'$ as respective marginals. Hence, starting at time $0$ with a map $h$ being Lipschitz in total variation distance, we end up at any time $t>0$ with a map which is Lipschitz w.r.t. the distance $d$ defined above, which is well seen to be less than the total variation distance and the Wasserstein distance $W_1$.

On the other hand, coming back to \eqref{dvpExsmooth} and restricting our considerations to initial conditions with law in $\mathcal P_2(\rr^d)$ (\emph{i.e.} $\xi$ and $\xi'$ are now assumed to belong in $\mathbb L^2$), one can choose $\xi'= \xi + \varepsilon Y$ for some $Y$ in $\mathbb L_2$ and $\varepsilon >0$ and we have, from the dominated convergence theorem, continuity of the integrands in the right hand side and then Fubini's theorem, that for any $t>0$, it holds
\begin{eqnarray*}
&&\lim_{\varepsilon \downarrow 0}\frac{1}{\varepsilon}\Big(h\big(\big[X_t^{[\xi +\varepsilon Y]}\big]\big) - h\big(\big[X_t^{[\xi]}\big]\big)\Big)\\
&=& \lim_{\varepsilon \downarrow 0}\int_0^1 \int_{\rr^d} \frac{\delta h}{\delta m}((\lambda [\xi+\varepsilon Y] + (1-\lambda)[\xi]))\star g_t)(y)\\
&&\quad  \times \E[ \int_0^1 (-H_1\cdot g_t)(y-\lambda' (\xi+\varepsilon Y) - (1-\lambda')(\xi))\cdot Y d \lambda'] dy d\lambda  \\
&=&\E\left[ \left(\int_{\rr^d} \frac{\delta h}{\delta m}(\mu \star g_t)(y) (-H_1\cdot g_t)(y-\xi) dy\right)\cdot Y \right]
\end{eqnarray*}
\noindent where we importantly used the boundedness of the map $(m, x)\mapsto [\delta h/\delta m](m)(x)$ together with the fact that $\lim_{\varepsilon \downarrow 0}d_{{\rm TV}}(\mu\star g_t, (\lambda [\xi+\varepsilon Y] + (1-\lambda)[\xi]))\star g_t)=0$ for any $\lambda \in [0,1]$ and any $t>0$. Thus, the map $\mu \mapsto h([X_t^{\mu}])$ is $L$-differentiable for any $t>0$.\\

Let us eventually conclude this illustration in view of the relation \eqref{eq:relationLionsFlat} between the flat and Lions derivatives. We readily have from \eqref{dvpExsmooth} and the previous computation that for any $t>0$
$$
\partial_\mu [h([X^{\mu}_t])](v) = \partial_v[\frac{\delta }{\delta m}[h([X^{\mu}_t])]](\mu)(v) = \int_{\rr^d} \frac{\delta h}{\delta m}([X^{\mu}_t])(z) (-H_1\cdot g_t)(z-v) \, dz 
$$
\noindent so that, for any $\mu \in \pp$ and any $t>0$, the map $v \mapsto [\delta/\delta m] \big(h([X^{\mu}_t])\big)(\mu)(v)$ is clearly a smooth function with bounded derivatives of any order and $v\mapsto \partial_\mu [h([X_t^\mu])](v)$ is also a smooth and bounded function. This immediately gives that $\mu \mapsto h ([X_t^\mu])$ is Lipschitz continuous with respect to the $2$-Wasserstein metric.\\

From the above simple but quite enlightening illustration, it is then naturally expected that such regularizing effect along smooth flows of probability measures holds in a more general way. Let us recast the above discussion in our framework with the following Proposition which will play a major role in our analysis.

\begin{prop}\label{structural:class} Assume that the continuous map $h: \mathcal{P}(\mathbb{R}^d) \rightarrow \mathbb{R}$ admits a linear functional derivative. Consider a map $ (t, x, \mu) \mapsto p(\mu, t, T, x, z) \in \mathcal{C}^{1,2, 2}([0,T)\times \mathbb{R}^d \times \pp)$, for some prescribed $T>0$, $z\mapsto p(\mu, t, T, x, z)$ being a density function, such that the probability measure given by $(p(\mu, t, T, ., dz) \sharp\mu)$ belongs to $\pp$, locally uniformly with respect to $(t, \mu) \in [0,T) \times \pp$, i.e. uniformly in $(t, \mu) \in \mathcal{K}$, $\mathcal{K}$ being any compact subset of $[0,T) \times \pp$. Assume additionally that the mappings $\mathbb{R}^d \ni v\mapsto  \int_{\rr^d} |\partial^{n}_v[\partial_{\mu} p(\mu, t, T, x, z)](v)| \, dz, \ \int_{\mathbb{R}^d} |\partial^{1+n}_x p(\mu, t, T, v, z)| \, dz$, for $n \in \left\{ 0, 1 \right\}$, are at most of linear growth, uniformly in $(t, \mu, x)$ in compact subsets of $[0,T) \times \pp \times \mathbb{R}^d$ and such that for any compact set $\mathcal{K}' \subset [0,T) \times \pp \times (\rr^d)^2$, and any $n \in \left\{ 0, 1\right\}$
\begin{equation}
\label{integrability:condition}
\int_{\mathbb{R}^d} \sup_{(t, \mu, x, y) \in \mathcal{K}'}  \left\{|\partial^{n}_t p(\mu, t, T, x, z)| + | \partial^{1+n}_x p(\mu, t, T, x, z)| + |\partial^{n}_{v}[\partial_\mu p(\mu, t, T, x, z)](v)|\right\} \, dz < \infty.
\end{equation}

\noindent Consider the map $\Theta: [0,T) \times \pp \rightarrow \pp$ defined by
$$
 \Theta(t, \mu)(dz) = (p(\mu, t, T, ., z) \sharp\mu)(dz) = \int_{\rr^d} p(\mu, t, T, x, z) \mu(dx) \, dz.
 $$
\noindent Then, the following statements hold:
\begin{itemize}
\item the map $ h(\Theta(., .)) \in \mathcal{C}^{1,2}([0,T) \times \pp)$,
\item Its $L$ and time derivatives satisfy for any $n \in \left\{0,1\right\}$ 
\begin{align}
\partial^{n}_v[\partial_{\mu} [h(\Theta(t, \mu))]](v) & = \partial^{n}_v \Big[\partial_{\nu} \Big[\int_{(\mathbb{R}^d)^2} \frac{\delta h}{\delta m} (\Theta(t,\mu))(z) \, p(\nu, t, T, x, z) \, dz \, \nu(dx) \Big]_{|\nu =\mu}\Big](v) \nonumber \\
& =  \int_{\rr^d} \Big[\frac{\delta h}{\delta m}(\Theta(t,\mu))(z) - \frac{\delta h}{\delta m} (\Theta(t,\mu))(v)\Big] \partial^{1+n}_x p(\mu, t, T, v, z) \, dz \label{condition1:structural:class:measure:derivative} \\
& \quad  + \int_{(\rr^d)^2}  \Big[ \frac{\delta h}{\delta m}(\Theta(t,\mu))(z)  - \frac{\delta h}{\delta m}(\Theta(t,\mu))(x) \Big] \partial^{n}_v[ \partial_\mu p(\mu, t, T, x, z)](v) \, dz \,\mu(dx),  \nonumber \\
 \partial_{t} h(\Theta(t, \mu)) & = \partial_{s} \Big[\int_{(\rr^d)^2} \frac{\delta h}{\delta m}(\Theta(t, \mu))(z) \, p(\mu, s, T, x, z) \, dz \, \mu(dx) \Big]_{|s = t} \nonumber \\
 & = \int_{(\mathbb{R}^d)^2} \Big[ \frac{\delta h}{\delta m} (\Theta(t,\mu))(z) -  \frac{\delta h}{\delta m}(\Theta(t,\mu))(x)\Big]\ \partial_t p(\mu, t, T, x, z) \, dz \, \mu(dx).\label{condition1:structural:class:time:derivative}
\end{align}
\end{itemize}
\end{prop}

\smallskip

\begin{remark}\label{remark:def:class}
\noindent $\circ$ Importantly, we note that in the above proposition we do not impose the \emph{intrinsic smoothness} (i.e. smoothness in the sense of Lions) of the map $h$ but only require the existence of a \emph{linear or flat} derivative. In this regard, the composition with the smooth flow $(t, \mu) \mapsto \Theta(t, \mu)$ of probability measures of $\pp$ allows to regularize the map $h$, the regularity being understood for a coarser topology. As already mentioned before, in what follows, the map $\Theta$ will be the one generated by the unique weak solution of the SDE \eqref{SDE:MCKEAN}, i.e. we will be interested in the smoothness of $[0,T) \times \pp \ni (t, \mu) \mapsto h([X^{t, \xi}_T])$. 

\smallskip

$\circ$ For functions $h: \mathcal{P}(\mathbb{R}^d) \rightarrow \mathbb{R}^{d}$ and $h: \mathcal{P}(\mathbb{R}^d) \rightarrow \mathbb{R}^{d\times d}$, we will straightforwardly extend the previous proposition to each component and still denote $[\delta h/\delta m] :  \mathcal{P}(\mathbb{R}^d) \times \mathbb{R}^d \rightarrow \mathbb{R}^d $ and $[\delta h/\delta m] :  \mathcal{P}(\mathbb{R}^d) \times \mathbb{R}^d  \rightarrow \mathbb{R}^{d\times d} $ the corresponding maps.

\smallskip

$\circ$ The second equalities in \eqref{condition1:structural:class:measure:derivative} and \eqref{condition1:structural:class:time:derivative} are related to \emph{cancellation} argument. Such arguments play a key role when investigating the regularity property of a map $h : \mathcal{P}(\mathbb{R}^d) \to \mathbb{R}^d$ composed with a smooth flow of probability measure satisfying the above assumptions when the linear functional derivative of $h$ is further assumed be H\"older continuous in space uniformly with respect to its measure argument. This will be a crucial tool in our analysis of the regularity of the transition density associated to a non-degenerate McKean-Vlasov diffusion process.


\end{remark}

%

\begin{proof} The proof is divided into two steps: we first prove continuity of $[0,T) \times \pp \ni (t, \mu) \mapsto h(\Theta(t, \mu))$ and then its differentiability.\\

\noindent\emph{Step 1: Continuity of the map $[0,T) \times \pp \ni (t, \mu) \mapsto h(\Theta(t, \mu))$.} Let $(t_n, \mu_n)_{n\geq 1}$ be a sequence of $[0,T) \times \pp$ satisfying $\lim_n |t_n-t| = \lim_n W_2(\mu_n, \mu) = 0$. In order to prove that $[0,T) \times \pp \ni (t, \mu) \mapsto h(\Theta(t, \mu))$ is continuous, it is sufficient to prove that $\lim_{n} d_{{\rm TV}}(\Theta(t_n, \mu_n), \Theta(t, \mu)) = 0$. Let $\bar h$ be a bounded and measurable real-valued function defined on $\rr^d$ satisfying $|h|_\infty\leq 1$. We use the following decomposition 
\begin{align*}
\langle \bar h, \Theta(t_n, \mu_n) \rangle -  \langle \bar h, \Theta(t, \mu) \rangle & = \int_{(\rr^d)^2} \bar{h}(z) p(\mu_n, t_n, T, x, z) \, dz \, \mu_n(dx) - \int_{(\rr^d)^2} \bar{h}(z) p(\mu, t, T, x, z) \, dz \, \mu(dx) \\
& = \mathcal{A}_n \bar{h} + \mathcal{B}_n \bar{h}
\end{align*}
\noindent with
\begin{align*}
\mathcal{A}_n \bar{h} & := \int_{(\mathbb{R}^d)^2} \bar{h}(z) p(\mu_n, t_n, T, x, z) \, dz (\mu_n-\mu)(dx), \\
 \mathcal{B}_n\bar{h} & := \int_{(\mathbb{R}^d)^2} \bar{h}(z) (p(\mu_n, t_n, T, x, z) - p(\mu, t, T, x, z)) \, dz \mu(dx). 
\end{align*}
Let us note that from condition \eqref{integrability:condition} and the dominated convergence theorem, one directly gets $\lim_n \sup_{|\bar h|_\infty \leq 1} |\mathcal{B}_n \bar h| \leq  \int_{(\mathbb{R}^d)^2} \lim_{n}  |p(\mu_n, t_n, T, x, z) - p(\mu, t, T, x, z)| \, dz \mu(dx) = 0$, where the supremum on the left-hand side is taken over all bounded and measurable real-valued function $h$ defined on $\mathbb{R}^d$ such that $|h|_\infty \leq 1$. Then, we decompose $\mathcal{A}_n \bar{h}$ as the sum of two terms namely 
\begin{align*}
\mathcal{A}^{1}_n \bar{h} &:= \int_{(\rr^d)^2} \bar{h}(z) p(\mu_n, t_n, T, x, z) \,dz \,\eta_R(x) (\mu_n-\mu)(dx), \\
 \mathcal{A}^2_n \bar{h} & := \int_{(\rr^d)^2} \bar{h}(z) p(\mu_n, t_n, T, x, z) \, dz\, (1-\eta_R)(x) (\mu_n-\mu)(dx)
\end{align*}
\noindent where $\eta_R$, $R>1$, is a non-negative smooth cutoff function such that $0\leq \eta_R \leq 1$, $\eta_R(x) = 1$ for $|x|\leq R$, $\eta_R(x)=0$ for $|x|\geq 2R$ and $|\nabla \eta_R|_\infty \leq C$, $C$ being a positive constant independent of $R$. Observe that the map $f^{n}_R: \rr^d \ni x\mapsto \int_{\rr^d} \bar{h}(z)   p(\mu_n, t_n, T, x, z) \eta_R(x) \, dz $ is continuously differentiable with a first order derivative uniformly bounded by
$$
|\nabla f^{n}_R|_\infty \leq C\bigg(1+ \int_{\rr^d} \sup_{(t, \mu , x) \in \mathcal{K} \times B_{2R}} |\partial_x p(\mu, t, T, x, z)| \, dz\bigg)
$$
\noindent where $\mathcal{K}$ is a compact set of $[0,T) \times \pp$ containing the sequence $(t_n, \mu_n)_{n\geq1}$ and $B_{2R}$ is the closed ball of radius $2R$ around the origin. From the Monge-Kantorovich duality principle, we thus get
$$
\sup_{|\bar{h}|_\infty \leq 1} |\mathcal{A}^{1}_n \bar{h}| \leq C\bigg(1+ \int_{\rr^d} \sup_{(t, \mu , x) \in \mathcal{K} \times B_{2R}} |\partial_x p(\mu, t, T, x, z)| \, dz\bigg) W_1(\mu_n, \mu)
$$
\noindent which clearly yields $\lim_{n} \sup_{|\bar{h}|_\infty \leq 1} |\mathcal{A}^{1}_n \bar{h}| =0$. \\
Now, from the boundedness of $\bar{h}$ and the weak convergence of $(\mu_n)_{n\geq1}$ towards $\mu$, we obtain 
$$
\lim\sup_n \sup_{|\bar h|_\infty \leq 1} |\mathcal{A}^{2}_n \bar{h}| \leq \bigg(\lim\sup_{n}\int_{|x| \geq R} \mu_n(dx) + \int_{|x| \geq R} \mu(dx)\bigg) \leq 2 \int_{|x| \geq R} \mu(dx).
$$
\noindent which in turn, by letting $R\uparrow \infty$, implies $\lim\sup_n \sup_{|\bar h|_\infty \leq 1} |\mathcal{A}^{2}_n \bar{h}|=0$.\\
Combining the previous arguments, we eventually obtain 
$$
\lim_{n} d_{{\rm TV}}( \Theta(t_n, \mu_n), \Theta(t, \mu) ) = \lim_n \sup_{|\bar{h}|_\infty\leq 1} | \langle \bar h, \Theta(t_n, \mu_n)-\Theta(t, \mu) \rangle|  = 0.
$$
This concludes the proof of the first step.

\medskip
\noindent \emph{Step 2: Continuous differentiability of the map $[0,T) \times \pp \ni (t, \mu) \mapsto h(\Theta(t, \mu))$.}
%
We start this step by proving the continuous differentiability of $[0,T) \ni t\mapsto h(\Theta(t, \mu))$ for any fixed $\mu \in \pp$. Let us set $\Theta_{\varepsilon, \lambda}(t, \mu) := (1-\lambda)\Theta(t, \mu) + \lambda \Theta(t+\varepsilon, \mu) $, for a fixed $(t, \mu) \in [0,T) \times \pp$ and $\varepsilon >0$ small enough. Then, 
\begin{align}
&h(\Theta(t+ \varepsilon, \mu))  - h(\Theta(t, \mu)) \nonumber \\
 & = \int_0^1 \int_{(\rr^d)^2} \frac{\delta h}{\delta m} (\Theta_{\varepsilon, \lambda}(t, \mu))(y) \left\{p(\mu, t+\varepsilon, T, x,  y) - p(\mu, t, T, x, y) \right\} \, dy \mu(dx) d\lambda \nonumber \\
& = \int_0^1 \int_{(\rr^d)^2} \left\{ \frac{\delta h}{\delta m} (\Theta_{\varepsilon, \lambda}(t, \mu))(y) - \frac{\delta h}{\delta m} (\Theta(t, \mu))(x) \right\}\big[p(\mu, t+\varepsilon, T, x,  y) - p(\mu, t, T, x, y) \big] \, dy \mu(dx) d\lambda \label{diff:varepsilon:time:h:flow:measure}
\end{align}

\noindent where for the last equality we used the fact that the two maps $y \mapsto p(\mu, t+\varepsilon, T, x,  y)$ and $y\mapsto p(\mu, t, T, x,  y)$ are density functions. Observe that $d_{{\rm TV}}(\Theta_{\varepsilon, \lambda}(t, \mu), \Theta(t, \mu)) \leq \int_{(\rr^d)^2} |p(\mu, t+\varepsilon, T, x, y) - p(\mu, t, T, x, y)| \, dy \mu(dx)$ so that, by \eqref{integrability:condition}, the continuity of $[0,T) \ni t \mapsto p(\mu, t, T, x, y)$ and the dominated convergence theorem, passing to the limit as $\varepsilon \downarrow 0$ clearly yields $\lim_{\varepsilon \downarrow 0} d_{TV}(\Theta_{\varepsilon, \lambda}(t, \mu), \Theta(t, \mu)) = 0$ which in turn, by continuity of $m\mapsto [\delta h/ \delta m](m)(y)$, implies $\lim_{\varepsilon \downarrow 0} [\delta h/ \delta m](\Theta_{\varepsilon, \lambda}(t, \mu))(y)  = [\delta h/\delta m](\Theta(t, \mu))(y)$.\\
Hence, dividing on both sides of \eqref{diff:varepsilon:time:h:flow:measure} by $\varepsilon$ and letting $\varepsilon$ goes to zero yields that $t\mapsto h(\Theta(t, \mu))$ is right-differentiable and also continuously differentiable since the limit is continuous on $[0,T)$. Moreover, the identity \eqref{condition1:structural:class:time:derivative} follows. The continuity of the map $[0,T) \times \pp \ni (t, \mu) \mapsto \partial_t h(\Theta(t, \mu))$ then follows from the relation \eqref{condition1:structural:class:time:derivative} and arguments similar to those employed previously in step 1 of the proof.

\smallskip

We now prove that $\mu \mapsto h(\Theta(t, \mu))$ is continuously $L$-differentiable for any fixed $t\in [0,T)$. Let us introduce for convenience the probability measures $\Theta_{\varepsilon, \lambda}(t, \mu) := (1-\lambda)\Theta(t, \mu) + \lambda \Theta(t, (1-\varepsilon) \mu + \varepsilon \mu') $ and $\mu_{ \varepsilon, \lambda'} := (1-\lambda') \mu + \lambda' [(1-\varepsilon)\mu+\varepsilon \mu']$, for a fixed $(t, \mu, \mu') \in [0,T) \times (\pp)^2$. Then, 
\begin{align*}
& h(\Theta(t, (1-\varepsilon)\mu+\varepsilon \mu'))  - h(\Theta(t, \mu)) \\
 \quad & = \int_0^1 \int_{\rr^d} \frac{\delta h}{\delta m} (\Theta_{\varepsilon, \lambda}(t, \mu))(y) \left\{p((1-\varepsilon)\mu+\varepsilon \mu', t, T,  y) - p(\mu, t, T, y) \right\} \, dy \, d\lambda  \\
& = \int_0^1 \int_{(\rr^d)^2}  \frac{\delta h}{\delta m} (\Theta_{\varepsilon, \lambda}(t, \mu))(y)   \big[p((1-\varepsilon) \mu + \varepsilon \mu', t, T, x,  y)  [(1-\varepsilon) \mu + \varepsilon \mu'](dx) - p(\mu, t, T, x, y) \mu(dx) \big] \, dy \, d\lambda  \\
& =  \varepsilon \int_0^1 \int_{(\rr^d)^2}  \frac{\delta h}{\delta m} (\Theta_{\varepsilon, \lambda}(t, \mu))(y) p((1-\varepsilon) \mu + \varepsilon \mu', t, T, x,  y) \, dy \,  (\mu'-\mu)(dx) \, d\lambda\\
&  +\varepsilon \int_{[0,1]^2} \int_{(\rr^d)^ 3} \left\{ \frac{\delta h }{\delta m} (\Theta_{\varepsilon, \lambda}(t, \mu))(y) - \frac{\delta h}{\delta m}(\Theta(t, \mu))(x)\right\} \frac{\delta}{\delta m} p(\mu_{\varepsilon, \lambda'}, t, T, x, y)(x') \, dz \mu(dx) \, (\mu'-\mu)(dx') \, d\lambda d\lambda'. 
\end{align*}

Observe now that $d_{{\rm TV}}(\Theta_{\varepsilon, \lambda}(t, \mu), \Theta(t, \mu)) \leq \int_{\rr^d} |p((1-\varepsilon)\mu + \varepsilon \mu', t, T, z)-p(\mu, t, T, z)| \, dz$ so that, by the continuity of $\pp \ni \mu \mapsto p(\mu, t, T, z)$, \eqref{integrability:condition} and the dominated convergence theorem, we obtain $\lim_{\varepsilon \downarrow 0} d_{{\rm TV}}(\Theta_{\varepsilon, \lambda}(t, \mu), \Theta(t, \mu))=0$. Note also that $\lim_{\varepsilon \downarrow 0} W_2(\mu_{\varepsilon, \lambda'}, \mu) = 0$. 
 Hence, dividing by $\varepsilon$ both sides  of the previous identity and letting $\varepsilon$ goes to zero yields that $\mu \mapsto h(\Theta(t, \mu))$ admits a continuous linear functional derivative thanks to the dominated convergence theorem as well as the continuity of the maps $\mu \mapsto [\delta h/\delta m](\mu), \, [\delta p/\delta m](\mu, t, T, x, y)(x')$, the boundedness of $ [\delta h/\delta m] $ and \eqref{integrability:condition}. Moreover, it holds 
\begin{align*}
\frac{\delta}{\delta m}[h(\Theta(t, \mu))](v) & = \int_{\mathbb{R}^d}  \frac{\delta h}{\delta m} (\Theta(t, \mu))(z) \, p(\mu, t, T, v,  z) \, dz \\
& \quad + \int_{(\mathbb{R}^d)^2} \left\{ \frac{\delta h}{\delta m} (\Theta(t, \mu))(z) - \frac{\delta h}{\delta m}(\Theta(t, \mu))(x)\right\} \frac{\delta}{\delta m} p(\mu, t, T, x, z)(v) \, dz \mu(dx).
\end{align*}

Each term appearing in the right-hand side, seen as a function of $v$, is continuously differentiable on $\mathbb{R}^d$ so that for any $v_0 \in \rr^d$
\begin{align}
\partial_v \big[\frac{\delta}{\delta m}[h(\Theta(t, \mu))]\big](v) & =  \partial_v \Big(\int_{\mathbb{R}^d}  \frac{\delta h}{\delta m} (\Theta(t, \mu))(z) \, p(\mu, t, T, v,  z) \, dz \Big) \nonumber \\
& \quad + \int_{(\rr^d)^2} \bigg[ \frac{\delta h}{\delta m} (\Theta(t, \mu))(z) - \frac{\delta h}{\delta m}(\Theta(t, \mu))(x)\bigg] \partial_v \frac{\delta}{\delta m} p(\mu, t, T, x, z)(v) \, dz d\mu(x) \nonumber \\
& =  \int_{\rr^d}  \Big[\frac{\delta h}{\delta m} (\Theta(t, \mu))(z) - \frac{\delta h}{\delta m}(\Theta(t, \mu))(v_0)\Big] \, \partial_x p(\mu, t, T, v,  z) \, dz  \nonumber \\
& \quad + \int_{(\rr^d)^2} \bigg[ \frac{\delta h}{\delta m} (\Theta(t, \mu))(z) - \frac{\delta h}{\delta m}(\Theta(t, \mu))(x)\bigg] \partial_v \frac{\delta}{\delta m} p(\mu, t, T, x, z)(y) \, dz d\mu(x) \nonumber \\
& \quad +  \partial_v \Big(\int_{\rr^d}  \frac{\delta h}{\delta m} (\Theta(t, \mu))(v_0) \, p(\mu, t, T, v,  z) \, dz \Big) \nonumber \\
& =  \int_{\rr^d}  \Big[\frac{\delta h}{\delta m} (\Theta(t, \mu))(z) - \frac{\delta h}{\delta m}(\Theta(t, \mu))(v_0)\Big] \, \partial_x p(\mu, t, T, v,  z) \, dz \label{deriv:mu:part1} \\
& \quad + \int_{(\rr^d)^2} \left\{ \frac{\delta h}{\delta m} (\Theta(t, \mu))(z) - \frac{\delta h}{\delta m}(\Theta(t, \mu))(x)\right\} \partial_\mu p(\mu, t, T, x, z)(v) \, dz d\mu(x) \label{deriv:mu:part2}
\end{align}

\noindent where we used the fact that the last term appearing in the last but one equality is 0 since $z\mapsto p(\mu, t, T, v, z)$ is a density function. The joint continuity of the map $(t, \mu, v) \mapsto \partial_v \big[\frac{\delta}{\delta m}[h(\Theta(t, \mu))]\big](v)$ then follows from the above identity and similar arguments as those previously employed in the first step of the proof. Moreover, from the boundedness of $ (v, m)\mapsto [\delta h/\delta m](m)(v)$
and the linear growth of the maps $v \mapsto \int_{\mathbb{R}^d} | \partial_v p(\mu, t, T, v,  z)| \, dz , \, \int_{\mathbb{R}^d} |\partial_\mu p(\mu, t, T, x, z)(v)| \, dz $, uniformly in $\mu \in \mathcal{K}$, we deduce that $\mu \mapsto h(\Theta(t, \mu))$ is continuously $L$-differentiable. The identity \eqref{condition1:structural:class:measure:derivative} for $n=0$ then follows by taking $v_0= v$. 
One may again differentiate \eqref{deriv:mu:part1} and \eqref{deriv:mu:part2} with respect to $v$ for a fixed $(t, \mu) \in [0,T) \times \pp$ and then select $v_0 = v$. Then, one obtains \eqref{condition1:structural:class:measure:derivative} for $n=1$ from the boundedness of $ (v,m)\mapsto [\delta h/\delta m](m)(v)$
and the linear growth $v\mapsto \int_{\mathbb{R}^d} | \partial^2_x p(\mu, t, T, v,  z)| \, dz , \, \int_{\mathbb{R}^d} |\partial_v\partial_\mu p(\mu, t, T, x, z)(v)| \, dz $, uniformly in $\mu \in \mathcal{K}$. The continuity of the map $(t, \mu, v) \mapsto \partial_v [\partial_\mu \big[h(\Theta(t, \mu))\big]](v)$ finally follows from arguments similar to those previously employed in the first step of the proof. The remaining technical details are omitted.

\end{proof}

\section{Overview, assumptions and main results}\label{assumptions:results}

\subsection{On the well-posedness of the martingale problem related to the SDE \eqref{SDE:MCKEAN}.} 

We first present the martingale problem associated to equation \eqref{SDE:MCKEAN}.
\begin{definition}\label{def:martingale:problem}
Let $\mu \in \mathcal{P}(\rr^d)$. We say that the probability measure $\P$ on the canonical space $\mathcal{C}([0,\infty), \rr^d)$ (endowed with the canonical filtration $(\mathcal{F}_t)_{t \geq 0}$) with time marginals $(\P(t))_{t\geq0}$, solves the non-linear martingale problem associated to the SDE \eqref{SDE:MCKEAN} with initial distribution $\mu$ at time $0$ if the canonical process $(y_t)_{t\geq0}$ satisfies the following two conditions:
\begin{itemize}
\item[(i)] $\P(y_0 \in \Gamma) = \mu(\Gamma)$, $\Gamma \in \mathcal{B}(\rr^d)$. 

\item[(ii)] For all $f \in \mathcal{C}^2_b(\rr^d)$, the process
\begin{equation}
\label{martingale:problem:mckean}
f(y_t) - f(y_0) - \int_0^t \left\{ \sum_{i=1}^d b_i(s, y_s, \P(s)) \partial_{x_i} f(y_s) + \frac12 \sum_{i, j=1}^d a_{i, j}(s, y_s,\P(s)) \partial^2_{x_i, x_j}f(y_s) \right\}ds
\end{equation}

\noindent is a square integrable martingale under $\P$. 
\end{itemize}

\end{definition}

\begin{remark}
A similar definition holds by letting the canonical process starts from time $t_0$ with initial distribution $\mu $, in which case we say that the initial condition is $(t_0, \mu)$ and $(i)$ is replaced by the condition: $\P(y(s)\in \Gamma; 0\leq s \leq t_0) = \mu(\Gamma)$.
\end{remark}




Having this definition at hand we now introduce some assumptions on the coefficients:
 \begin{itemize}
%
%

\item[\HR]
\begin{itemize}
\item[(i)] The drift coefficient $b: \rr_+ \times \mathbb{R}^d \times \mathcal{P}(\mathbb{R}^d) \rightarrow \mathbb{R}^d$ is a bounded and measurable function. Moreover, for any $(t, x) \in \mathbb{R}_+ \times \mathbb{R}^d$, the map $m\mapsto b(t, x, m)$ is Lipschitz-continuous for the total variation metric, uniformly with respect to $t, x$, that is, there exists a positive constant $C$ such that for all $(t, x) \in \mathbb{R}_+\times \mathbb{R}^d$, for all $m, m' \in \mathcal{P}(\mathbb{R}^d)$
$$
|b(t, x, m) - b(t, x, m')| \leq C d_{{\rm TV}}(m, m')
$$

\noindent where we remind the reader that $d_{{\rm TV}}$ denotes the total variation metric on $\mathcal{P}(\mathbb{R}^d)$.

\item[(ii)] The diffusion coefficient $a :  \mathbb{R}_+ \times \mathbb{R}^d \times \mathcal{P}(\mathbb{R}^d) \rightarrow \mathbb{R}^d \otimes \mathbb{R}^d$, where $a(t, x, m) = (\sigma \sigma^{*})(t, x, m)$, is a bounded and continuous function. Moreover, for any $(t, m) \in \rr_+\times \mathcal{P}(\rr^d)$, the function $ \mathbb{R}^d \ni x \mapsto a(t, x, m) \in \mathbb{R}^{d} \otimes \mathbb{R}^d$ is uniformly $\eta$-H\"older continuous for some $\eta \in (0,1]$, namely
$$
[a]_{H}:=\sup_{t \geq 0, \, x \neq y, \, m \in \mathcal{P}(\mathbb{R}^d)}\frac{| a(t, x, m) - a(t, y, m) |}{|x-y|^{\eta}} < \infty.
$$

\item[(iii)] For any $(i, j) \in \left\{ 1, \cdots, d\right\}^2$ and any $(t, x) \in \mathbb{R}_+\times \mathbb{R}^d$, the map $\mathcal{P}(\mathbb{R}^d) \ni m \mapsto a_{i, j}(t, x, m)$ has a linear functional derivative.

\item[(iv)] For any $(i, j) \in \left\{ 1, \cdots, d\right\}^2$ and any $(t, m) \in \mathbb{R}_+ \times \mathcal{P}(\mathbb{R}^d)$, the map $(\mathbb{R}^d)^2 \ni (x, y) \mapsto  [\delta a_{i, j}/ \delta m](t, x, m)(y)$ is an $\eta$-H\"older continuous function, for some $\eta \in (0,1]$, uniformly with respect to the variables $t$ and $m$.
\end{itemize}
 
 \medskip
 
\item[\HE] The diffusion coefficient is uniformly elliptic, that is, there exists $\lambda \geq 1$ such that for any $(t, m) \in [0,\infty) \times \mathcal{P}(\mathbb{R}^d)$ and any $(x, z) \in (\mathbb{R}^d)^2 $, $\lambda^{-1} |z|^2 \leq \langle a(t, x, m) z,z \rangle \leq \lambda |z|^2$.
\bigskip

\end{itemize}

\begin{remark} \noindent $\circ$ Assumption \HR(i) may be reformulated as the following slightly stronger assumption: the map $\mathcal{P}(\rr^d) \ni m\mapsto b(t, x, m)$ has a linear functional derivative.\\

\noindent $\circ$ Note that under assumption \HR (iii) and (iv), the map $\mathcal{P}(\mathbb{R}^d)\ni m\mapsto a_{i, j}(t, x, m)$ is Lipschitz-continuous with respect to the distance 
$$
d_\eta(m, m') = \inf_{\pi \in \Pi(m,m')} \int_{(\mathbb{R}^d)^2} \left\{|x-y|^\eta \wedge 1\right\} \, \pi(dx, dy)
$$

\noindent where $\Pi(m, m')$ is the set of all transference plan from $m$ to $m'$. Indeed, for any $m, m' \in \mathcal{P}(\mathbb{R}^d)$ and any transference plan $\pi \in \Pi(m, m')$ 
\begin{eqnarray*}
&&| a_{i, j}(t, x, m) - a_{i, j}(t, x, m') | \\
& =& | \int_0^1 \int_{\mathbb{R}^d} \frac{\delta a_{i, j}}{\delta m} (t, x, (1-\lambda) m + \lambda m')(y') (m-m')(dy') \, d\lambda | \\
& =&  | \int_0^1\int_{\mathbb{R}^d} \frac{\delta a_{i, j}}{\delta m} (t, x, (1-\lambda) m + \lambda m')(x) - \frac{\delta}{\delta m} a_{i, j}(t, x, (1-\lambda) m + \lambda m')(y)  \, d\lambda \, \pi(dx, dy) | \\
& \leq& \sup_{t, x, m}\Big[\frac{\delta a_{i, j}}{\delta m} (t, x, m )(.)\Big]_{H}\int_{(\mathbb{R}^d)^2} \left\{|x-y|^\eta \wedge 1\right\} \, \pi(dx, dy)
\end{eqnarray*}

\noindent where $\sup_{t, x, m}\big[ [\delta a_{i, j}/\delta m](t, x, m )(.)\big]_{H}$ denotes the uniform H\"older modulus of the map $[\delta a_{i, j}/\delta m](t, x, m)(\cdot)$. Finally, the claim follows by taking the infimum in the previous inequality with respect to $\pi \in \Pi(m, m')$.
\end{remark}

Our first main result concerns the well-posedness of the martingale problem associated to the SDE \eqref{SDE:MCKEAN}.

\begin{theorem}\label{thm:martingale:problem} Under \HR\, and \HE, the martingale problem associated with \eqref{SDE:MCKEAN} is well-posed for any initial distribution $\mu \in \mathcal{P}(\mathbb{R}^d)$. In particular, weak uniqueness in law holds for the SDE \eqref{SDE:MCKEAN}. 
\end{theorem}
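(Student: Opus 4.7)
The plan is to solve the non-linear martingale problem via a Banach fixed point argument on a suitable complete metric space of flows of probability measures, leveraging the non-degeneracy \HE\ to gain regularity through the parametrix representation of the frozen transition density. Fix $T>0$ and an arbitrary continuous flow $m = (m_t)_{t \in [0,T]} \in \mathcal{C}([0,T], \pp)$. Freezing the measure variable along $m$ yields the time-dependent linear coefficients $\bar b(t, x) := b(t, x, m_t)$ and $\bar a(t, x) := a(t, x, m_t)$, which are bounded, with $\bar a$ being $\eta$-Hölder in space and uniformly elliptic. Classical parametrix theory \cite{friedman:64} then produces a fundamental solution $p^m(s, t, x, y)$ of the frozen operator satisfying two-sided Gaussian bounds and Gaussian bounds on its spatial derivatives, \emph{uniformly} in the flow $m$. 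By Stroock--Varadhan theory, the corresponding linear martingale problem admits a unique solution $\P^m$ with time marginals $\P^m(t)(\mathrm dy) = \int p^m(0, t, x, y) \mu(\mathrm dx) \mathrm dy$.

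Introduce the map $\Phi: \mathcal{C}([0,T], \pp) \to \mathcal{C}([0,T], \pp)$ by $\Phi(m)_t := \P^m(t)$. A direct identification argument shows that a probability measure $\P$ on the canonical space solves the non-linear martingale problem of Definition \ref{def:martingale:problem} starting from $\mu$ if and only if its time-marginal flow is a fixed point $m$ of $\Phi$ and $\P$ coincides with $\P^m$ on the corresponding frozen problem. Well-posedness of \eqref{SDE:MCKEAN} is thus reduced to the existence and uniqueness of a fixed point of $\Phi$.

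Equip $\mathcal{C}([0,T], \pp)$ with the complete metric $d_T(m, m') := \sup_{t \in [0,T]} d_{TV}(m_t, m'_t)$. The heart of the argument is the stability estimate
\begin{equation*}
\sup_{t \in [0,T]} \int_{\rr^d} |p^m(0, t, x, y) - p^{m'}(0, t, x, y)| \,\mathrm dy \leq C\, T^{\alpha}\, d_T(m, m'),
\end{equation*}
for some $\alpha>0$, uniformly in $x \in \rr^d$. It is derived from the parametrix series $p^m = \widetilde p^m + \widetilde p^m \otimes H^m + \widetilde p^m \otimes H^m \otimes H^m + \cdots$, where $\widetilde p^m$ is the Gaussian proxy obtained by freezing $\bar a$ in space and $H^m$ is the parametrix kernel. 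The drift difference $b(\cdot, \cdot, m_t) - b(\cdot, \cdot, m'_t)$ is controlled by $d_{TV}(m_t, m'_t)$ thanks to \HR(i), while the diffusion difference $a(\cdot, \cdot, m_t) - a(\cdot, \cdot, m'_t)$ is controlled by $d_\eta(m_t, m'_t) \leq d_{TV}(m_t, m'_t)$ thanks to \HR(iii)--(iv) and the subsequent remark. For $T$ small enough, $\Phi$ is a strict contraction and Banach's theorem provides a unique fixed point; one then extends the solution to any finite horizon by iterating the short-time construction over a partition of $[0,T]$, using the endpoint law at each step as the new initial condition.

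The main obstacle lies precisely in the stability estimate above. The parametrix kernel $H^m$ involves a first-order Gaussian derivative multiplied by $\bar b$ and a second-order Gaussian derivative multiplied by the spatial Hölder increment of $\bar a$; controlling the difference $H^m - H^{m'}$ integrated in space and iterated in time requires carefully exploiting the spatial Hölder cancellation of \HR(ii) together with the space-time inequality \eqref{space:time:inequality} in order to convert the singular $t^{-1}$ factor coming from second-order Gaussian derivatives into an integrable $t^{-1+\eta/2}$ factor. This Hölder smoothing, combined with the measure-stability of the coefficients afforded by \HR(i) and \HR(iii)--(iv), is exactly what yields the extra $T^\alpha$ prefactor needed to close the contraction in short time.
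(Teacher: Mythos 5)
Your reduction to a fixed point for the marginal-flow map, and your treatment of the drift part and of the kernel difference $H^m-H^{m'}$, are in the spirit of the paper, but the stability estimate you call the heart of the argument is false in the generality of \HR. The bound
\[
\sup_{t\le T}\int_{\rr^d}\big|p^m(0,t,x,y)-p^{m'}(0,t,x,y)\big|\,dy\;\le\;C\,T^{\alpha}\,d_T(m,m')
\]
cannot hold once the diffusion coefficient genuinely depends on the measure, because the measure enters the \emph{covariance of the Gaussian proxy} itself. Take $d=q=1$, $b\equiv0$, $\sigma(t,x,m)=\phi(\int\psi\,dm)$ with $\phi$ smooth, bounded and bounded below, and two time-constant flows $m,m'$: then $p^m(0,t,x,\cdot)$ and $p^{m'}(0,t,x,\cdot)$ are Gaussians with variances $t\phi^2(\langle\psi,m\rangle)$ and $t\phi^2(\langle\psi,m'\rangle)$, and by scaling their $L^1$-distance is \emph{independent of $t$} and comparable to $|\phi^2(\langle\psi,m\rangle)-\phi^2(\langle\psi,m'\rangle)|$, i.e.\ comparable to $d_{TV}(m,m')$ with a constant that neither shrinks as $T\downarrow0$ nor is forced to be $<1$. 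In the parametrix series this shows up in the $k=0$ term $\widetilde p^{\,m}-\widetilde p^{\,m'}$: the mean-value theorem and \eqref{standard} only give a bound of order $\sup_r\|a(r,\cdot,m_r)-a(r,\cdot,m'_r)\|_\infty\, g(c(t-s),y-x)$, with no gain in time; only the drift contribution and the iterated kernels produce powers of $T$. Hence $\Phi$ is merely Lipschitz in your metric, not a small-time contraction, and the Banach step collapses. (If $\sigma$ did not depend on the measure your one-step scheme would work; allowing measure-dependent diffusion is precisely the point of \HR(iii)--(iv).)

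The paper closes exactly this gap by contracting \emph{iterates} rather than the map itself, exploiting the smoothing of the flow: after one application of the map the marginals are images of $\mu$ by kernels with Gaussian bounds, and when comparing coefficients one uses the centering $\int\delta p^{(\ell)}(\mu,s,r,x',y')\,dy'=0$ to replace $\tfrac{\delta a_{i,j}}{\delta m}(\cdot)(y')$ by $\tfrac{\delta a_{i,j}}{\delta m}(\cdot)(y')-\tfrac{\delta a_{i,j}}{\delta m}(\cdot)(x')$; the H\"older regularity in $y$ of the flat derivative (\HR(iv)) then produces weights of type $|y'-x'|^{\eta/2}\wedge1$, which against Gaussian-dominated densities yield a factor $(t-s)^{\eta/4}$. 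The induction is closed with the two norms \eqref{weight:holder:norm:diff:pm}--\eqref{tv:norm:diff:pm}, giving \eqref{final:ineq:diff:dens}, i.e.\ $d_{s,s+T}(\mathscr{T}^{(\ell+1)}(P_1),\mathscr{T}^{(\ell+1)}(P_2))\le K(KT^{\eta/4})^{\ell}d_{s,s+T}(P_1,P_2)$ --- note the first application gains nothing, the smallness only accrues from the second iteration on --- and a Weissinger-type fixed point theorem applies. To repair your proposal you would have to either prove contraction of $\Phi^{(2)}$ (using that $\Phi(m)_t$ has a density with Gaussian bounds so the centering/H\"older trick is available at the second application) or run the induction in a weighted metric of the above type. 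A secondary, fixable point: $(\mathcal{C}([0,T],\pp),\sup_t d_{TV})$ is not complete (a TV-limit of laws in $\pp$ may lose the second-moment bound), which is why the paper works with flows of absolutely continuous measures and metrizes through the $L^1$ distance of their densities.
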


When investigating strong well-posedness of non-linear SDE an interesting fact is that, combining uniqueness in law for the non-linear SDE together with strong uniqueness result for the associated linear SDE, \emph{i.e.} the same SDE with time-inhomogeneous coefficients, the law argument being now treated as a time-inhomogeneity, immediately yields to strong uniqueness. To be more specific, from the previous well-posedness result we have that any strong solution $Y$ of the SDE \eqref{SDE:MCKEAN} (if it exists) writes
\begin{equation}\label{SDE:falseMCKEAN}
Y_t = \xi + \int_0^t b(s,Y_s, [X^{\xi}_s]) ds + \int_0^t \sigma(s,Y_s,[X^{\xi}_s]) dW_s 
\end{equation}

\noindent implying that, setting $\widehat b : \rr^+ \times \rr^d \ni (t,y) \mapsto b(t, y, [X^{\xi}_t]) \in \rr^d$ and $\widehat \sigma : \rr^+ \times \rr^d \ni (t, y) \mapsto \widehat \sigma (t, y, [X^{\xi}_t]) \in \rr^d \times \rr^d$, it solves 
\begin{equation}\label{SDE:falseMCKEAN2}
Y_t = \xi + \int_0^t \widehat b(s,Y_s) ds + \int_0^t \widehat \sigma(s,Y_s) dW_s. 
\end{equation}

But this linear SDE is well posed in the strong sense under the additional assumption that the diffusion coefficient $\widehat{\sigma}$ is Lipschitz in space (see \cite{veretennikov_strong_1980}). Hence, any strong solutions of \eqref{SDE:falseMCKEAN} are equals $\mathbb{P}$-a.s. so that strong well-posedness follows from the Yamada-Watanabe theorem. This gives the following corollary.
\begin{cor}\label{cor:strong}
Assume that the assumptions of Theorem \ref{thm:martingale:problem} hold and, that for all $(t, m)$ in $\mathbb{R}_+ \times \mathcal{P}(\mathbb{R}^d)$, the map $x \mapsto \sigma(t ,x, m)$ is Lispchitz continuous uniformly with respect to $t$ and $m$. Then, strong uniqueness holds for the SDE \eqref{SDE:MCKEAN}.
\end{cor}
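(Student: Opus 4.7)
The plan is to combine the weak uniqueness provided by Theorem \ref{thm:martingale:problem} with a classical strong uniqueness result for standard (linear) It\^o SDEs having bounded measurable drift and Lipschitz diffusion coefficient, and then to invoke the Yamada-Watanabe principle to upgrade pathwise uniqueness to strong existence and uniqueness.

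First, I would fix a filtered probability space $(\Omega,\mathcal{F},\mathbb{F},\mathbb{P})$ carrying a $q$-dimensional Brownian motion $W$ and an $\mathcal{F}_0$-measurable initial condition $\xi$ with $[\xi]\in\mathcal{P}_2(\rr^d)$, and consider two strong solutions $Y$ and $Y'$ of \eqref{SDE:MCKEAN} with the same initial datum $\xi$ and driven by the same $W$. By Theorem \ref{thm:martingale:problem}, weak uniqueness holds, so the one-dimensional time marginals of $Y$ and $Y'$ coincide with those of the canonical McKean-Vlasov flow starting from $[\xi]$; denote this common flow by $(\mu_t)_{t\ge 0}=([X^\xi_t])_{t\ge 0}$. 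Define the (now deterministic in the measure argument) coefficients
\[
\widehat b(t,y):=b(t,y,\mu_t),\qquad \widehat\sigma(t,y):=\sigma(t,y,\mu_t).
\]
Both $Y$ and $Y'$ are then strong solutions on $(\Omega,\mathcal{F},\mathbb{F},\mathbb{P})$ of the same standard (time-inhomogeneous) SDE
\[
Z_t = \xi + \int_0^t \widehat b(s,Z_s)\,ds + \int_0^t \widehat \sigma(s,Z_s)\,dW_s.
\]

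Second, I would check that this linearized SDE falls into a classical strong-uniqueness framework. Under \HR(i), the map $b$ is bounded and continuous, so $\widehat b$ is bounded and Borel measurable in $(t,y)$. Under the added hypothesis of the corollary, $x\mapsto \sigma(t,x,m)$ is Lipschitz uniformly in $(t,m)$, so $y\mapsto \widehat\sigma(t,y)$ is Lipschitz uniformly in $t$; moreover \HE\ provides uniform ellipticity of $\widehat a=\widehat\sigma\widehat\sigma^{*}$. Veretennikov's theorem (\cite{veretennikov_strong_1980}) applies and yields pathwise uniqueness for this SDE, so $Y$ and $Y'$ are indistinguishable. This gives pathwise uniqueness for \eqref{SDE:MCKEAN}.

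Third, existence of a weak solution is already ensured by Theorem \ref{thm:martingale:problem}: the solution of the non-linear martingale problem produces, on an enlarged space, a weak solution of \eqref{SDE:MCKEAN}. Combining weak existence with the pathwise uniqueness established in the previous step, the Yamada-Watanabe theorem yields the existence of a (unique) strong solution and strong uniqueness for \eqref{SDE:MCKEAN}. I do not expect serious obstacles: the only subtle point is that the drift $\widehat b$, being merely bounded measurable in space, forces us to cite a Veretennikov-type result rather than the elementary Lipschitz SDE theory; the essential input making this step work is precisely that weak uniqueness (already obtained in Theorem \ref{thm:martingale:problem}) freezes the measure argument into the same deterministic flow $(\mu_t)_{t\ge 0}$ for both candidate solutions.
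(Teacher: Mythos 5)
Your proposal is correct and follows essentially the same route as the paper: use weak uniqueness from Theorem \ref{thm:martingale:problem} to freeze the measure argument into the deterministic flow $([X^{\xi}_t])_{t\geq0}$, apply Veretennikov's strong uniqueness result for the resulting linear SDE with bounded measurable drift and Lipschitz, uniformly elliptic diffusion, and conclude by Yamada--Watanabe. No gaps.
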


\subsection{On the density of the solution of the SDE \eqref{SDE:MCKEAN} and its regularity properties.} 
Under the assumption of Theorem \ref{thm:martingale:problem}, by weak uniqueness, the law of the process $(X^{s, \xi}_t)_{t\geq s}$ given by the unique solution to the SDE \eqref{SDE:MCKEAN} starting from the initial distribution $\mu = [\xi]$ at time $s$ only depends upon $\xi$ through its law $\mu$. Given $\mu \in \pp$, it thus makes sense to consider $([X^{s, \xi}_t])_{t\geq s}$ as a function of $\mu$ (and also of the time variable $s$) without specifying the choice of the lifted random variable $\xi$ that has $\mu$ as distribution. We then introduce, for any $x\in \rr^d$, the following \emph{decoupled stochastic flow} associated to the SDE \eqref{SDE:MCKEAN}
\begin{equation}
\label{SDE:MCKEAN:decoupling}
X^{s, x, \mu}_t = x + \int_s^t b(r, X^{s, x, \mu}_r, [X^{s, \xi}_r]) \, dr + \int_s^t \sigma(r, X^{s, x , \mu}_r, [X^{s, \xi}_r]) \, dW_r.
\end{equation}

We note that the previous equation is not a McKean-Vlasov SDE since the law appearing in the coefficients is not $[X^{s, x, \mu}_r]$ but rather $[X^{s, \xi}_r]$, that is, the marginal law of the solution to the SDE \eqref{SDE:MCKEAN} (starting at time $s$ from the initial distribution $\mu$) evaluated at time $r$. Under the assumptions of Theorem \ref{thm:martingale:problem}, the time-inhomogeneous martingale problem associated to the SDE \eqref{SDE:MCKEAN:decoupling} is well-posed, see e.g. Stroock and Varadhan \cite{stroock:varadhan}. In particular, weak existence and uniqueness in law holds for the SDE \eqref{SDE:MCKEAN:decoupling}.

Moreover, from Friedman \cite{friedman:64}, see also McKean and Singer \cite{mcke:sing:67}, it follows that the transition density of the SDE \eqref{SDE:MCKEAN:decoupling} exists\footnote{In \cite{friedman:64}, it is proved that if $x\mapsto \bar{b}(r, x) = b(r, x, [X^{s, \xi}_r])$ is bounded and H\"older-continuous then the fundamental solution associated to the infinitesimal generator of \eqref{SDE:MCKEAN:decoupling} exists and is unique by means of the parametrix method. However, existence of the transition density as well as weak existence and weak uniqueness can be derived under the sole assumption that the drift coefficient is bounded and measurable and the diffusion matrix is uniformly elliptic, bounded and H\"older continuous.}. In particular, the random variable $X^{s, x, \mu}_t$ has a density that we denote by $z \mapsto p(\mu, s, t, x, z)$ which admits a representation in infinite series by means of the parametrix method that we now briefly describe. We refer the reader to \cite{friedman:64} or Konakov and Mammen \cite{kona:mamm:00} for a more complete exposition. We first introduce the approximation process $(\widehat{X}^{t_1, x, \mu}_{t_2})_{t_2\geq t_1}$ obtained from the dynamics \eqref{SDE:MCKEAN:decoupling} by removing the drift and freezing the diffusion coefficient in space at a fixed point $y$, namely
\begin{equation}
\label{SDE:MCKEAN:decoupling:parametrix:process}
\widehat{X}^{t_1, x, \mu}_{t_2} = x +  \int_{t_1}^{t_2} \sigma(r, y, [X^{s, \xi}_r]) \, dW_r.
\end{equation}

The process $(\widehat{X}^{t_1, x, \mu}_{t_2})_{t_2\geq t_1}$ is a simple Gaussian process with transition density given explicitly by 
$$
\widehat{p}^{y}(\mu, s, t_1, t_2, x, z) := g\left(\int_{t_1}^{t_2} a(r, y, [X^{s, \xi}_r]) \, dr, z-x\right).
$$

To make the notation simpler, we will write $\widehat{p}(\mu, s, t_1, t_2, x, y) := \widehat{p}^{y}(\mu, s, t_1, t_2, x, y)$ and $\widehat{p}^{y}(\mu, s, t_2, x, z) = \widehat{p}^{y}(\mu, s, s, t_2, x, z)$. Note importantly that the variable $y$ acts twice since it appears as a terminal point where the density is evaluated and also as the point where the diffusion coefficient is frozen. Note also that in what follows we need to separate between the starting time $t_1$ of the approximation process and the starting time $s$ of the original McKean-Vlasov dynamics. We now introduce the two infinitesimal generators associated to the dynamics \eqref{SDE:MCKEAN:decoupling} and \eqref{SDE:MCKEAN:decoupling:parametrix:process}, namely
\begin{align*}
\mathcal{L}_{s, t} f(\mu, t, x)  & = \sum_{i=1}^d b_i(t, x, [X^{s, \xi}_t]) \partial_{x_i} f(\mu, t, x) + \frac12 \sum_{i, j=1}^d a_{i, j}(t, x, [X^{s, \xi}_t]) \partial^2_{x_i, x_j} f(\mu, t, x),   \\
\widehat{\mathcal{L}}_{s, t} f(\mu, t, x)  & =  \frac12 \sum_{i, j=1}^d a_{i, j}(t, y, [X^{s, \xi}_t]) \partial^2_{x_i, x_j} f(\mu, t, x)
\end{align*}

\noindent and define the parametrix kernel $\mH$ for $(\mu, r, x, y) \in \pp \times [s, t) \times (\rr^d)^2$
\begin{align*}
\mH(\mu, s, r, t, x, y) & := (\mathcal{L}_{s, r} - \widehat{\mathcal{L}}_{s, r})  \widehat{p}(\mu, s, r, t, x, y) \\
& = \sum_{i=1}^d b_i(r, x, [X^{s, \xi}_r]) \partial_{x_i} \widehat{p}(\mu, s,  r, t, x, y) \\
& \quad + \frac12 \sum_{i, j=1}^d (a_{i, j}(r, x, [X^{s, \xi}_r]) - a_{i, j}(r, y, [X^{s, \xi}_r]))\partial^2_{x_i, x_j} \widehat{p}(\mu, s, r, t, x, y).
\end{align*}

Now we define the following space-time convolution operator
$$
(f \otimes g)(\mu, s, r, t, x, y) := \int_r^t \int_{\rr^d} f(\mu, s, r, r', x, z) g(\mu, s, r', t, z, y) \, dz \, dr'
$$

\noindent and to simplify the notation we will write $(f \otimes g)(\mu, s, t, x, y) := (f \otimes g)(\mu, s, s, t, x, y)$, $\mH(\mu, s, t, x, z) = \mH(\mu, s, s, t, x, z)$ and proceed similarly for other maps. We also define $f \otimes \mH^{(k)} = (f\otimes \mH^{(k-1)}) \otimes \mH$ for $k\geq1$ with the convention that $f \otimes \mH^{(0)} \equiv f$. With these notations, the following parametrix expansion in infinite series of the transition $p(\mu, s, t, x, z)$ holds. Let $T>0$. For any $0\leq s < t \leq T$ and any $(\mu, x, y) \in \pp \times (\rr^d)^2$
\begin{equation}
p(\mu, s, t, x, y) = \widehat{p}(\mu, s, t, x, y) + p\otimes \mH(\mu, s, t, x, y). \label{first:step:parametrix:series:expansion}
\end{equation}

\noindent so that, by induction
\begin{equation}
p(\mu, s, t, x, y) = \sum_{k\geq0} (\widehat{p} \otimes \mH^{(k)})(\mu, s, t, x, y). \label{parametrix:series:expansion}
\end{equation}

Moreover, the above infinite series converge absolutely and uniformly for $(\mu, x, y) \in \pp \times (\rr^d)^2$ and satisfies the following Gaussian upper-bound: for any $0\leq s < t \leq T$ and any $(\mu, x, y) \in \pp \times (\rr^d)^2$ 
\begin{equation}
\label{bound:density:parametrix}
p(\mu, s, t, x, y) \leq E_{\eta/2, 1}( C (|b|_{\infty}+1)) \, g(c (t-s), y-x)
\end{equation}

\noindent where $C:=C(T, \lambda, \eta)$ and $c:=c(\lambda)$ are two positive constants. We refer to \cite{mcke:sing:67} for a proof based on Kolmogorov's backward and forward equations satisfied by $p$, see also Frikha \cite{hdr:report:noufel} for a proof based on probabilistic arguments. 

Under the additional assumption that $x\mapsto b(t, x, \mu)$ is $\eta$-H\"older continuous, it turns out that $x\mapsto p(\mu, s, t, x, z)$ is two times continuously differentiable. Moreover, the following pointwise Gaussian estimates for its derivatives hold: for any $\beta \in [0,\eta)$, there exist some positive constants $C:=C(T, b, a, \lambda, \eta)$, $C_\beta:=C(T, b, a, \lambda, \eta, \beta)$ and $c:=c(\lambda)$ such that for any $(\mu, x, y, z) \in \pp \times (\mathbb{R}^d)^3$ and any $0\leq s < t \leq T $
\begin{equation}
\label{gradient:estimates:parametrix:series}
|\partial^{n}_x p(\mu, s, t, x, z)| \leq  \frac{C}{(t-s)^{\frac{n}{2}}} \, g(c (t-s), z-x), \quad n=0,1,2
\end{equation}

\noindent and
\begin{equation}
\label{second:derivatives:holder:estimates:parametrix:series}
|\partial^{2}_x p(\mu, s, t, x, z) - \partial^2_x p(\mu, s, t, y, z) | \leq  C_\beta \frac{|x-y|^{\beta}}{(t-s)^{1+ \frac{\beta}{2}}} \,\Big[ g(c (t-s), z-x) + g(c (t-s), z-y)\Big].
\end{equation}
 
 We refer again to \cite{friedman:64} for a proof of the above estimates. Let us point out that the differentiability of the map $[0,t) \times \pp \ni (s, \mu) \mapsto p(\mu, s, t, x, z)$ is the main question that we want to address here. 

A similar representation in infinite series is also valid for the density of the random variable $X^{s, \xi}_t$, denoted by $z\mapsto p(\mu, s, t, z)$, but we will not use it explicitly. Actually, we will make use of the following key relation
\begin{equation}
\label{relation:density:mckean:decoupling:field}
p(\mu, s, t , z) = \int_{\rr^d} p(\mu, s, t ,x ,z) \, \mu(dx).
\end{equation}

The representation in infinite series of $p(\mu, s , t, z)$ is thus obtained by integrating $x\mapsto p(\mu, s, t, x, z)$ against the initial distribution $\mu$, in other words, $z \mapsto p(\mu, s, t, z)$ is the density of the image measure of the map $x\mapsto p(\mu, s, t, x, z)$ by the measure $\mu$.

\medskip
We now introduce the following additional assumption on the coefficients.
\begin{itemize}
\item[\HRp] The coefficients $b$ and $a$ satisfy \HR\, and the following additional assumptions :

\begin{itemize}
\item[(i)]  The drift coefficient $b: \mathbb{R}_+ \times \mathbb{R}^d \times \mathcal{P}(\mathbb{R}^d) \rightarrow \mathbb{R}^d$ is a continuous function. For any $(t, m) \in \rr_+ \times \mathcal{P}(\mathbb{R}^d)$, the function $ \mathbb{R}^d \ni x \mapsto b(t, x, \mu) \in \rr^{d}$ is $\eta$-H\"older continuous for some $\eta \in (0,1]$, uniformly with respect to the variables $t$ and $m$, namely
$$
[b]_{H}:=\sup_{t \geq 0, \, x \neq y, \,  m \in \mathcal{P}(\mathbb{R}^d)}\frac{| b(t, x,m) - b(t, y,m) |}{|x-y|^{\eta}} < \infty.
$$

\item[(ii)] For any $(i,j) \in \left\{1, \cdots, d\right\}^2$ and any $(t, x, y) \in \mathbb{R}_+\times (\mathbb{R}^d)^2$, the map $ \mathcal{P}(\mathbb{R}^d) \ni m \mapsto [\delta a_{i, j}/\delta m](t, x, m)(y)$ has a linear functional derivative, such that the map $(\rr^d)^2 \ni (x,y')\mapsto [\delta^2 a_{i, j} /\delta m^2] (t, x, m)(y, y')$ is $\eta$-H\"older continuous uniformly with respect to the other variables.

\item[(iii)] For any $i \in \left\{1, \cdots, d\right\}$ and any $(t, x) \in \mathbb{R}_+\times \mathbb{R}^d$, the map $\mathcal{P}(\mathbb{R}^d) \ni m \mapsto b_{i}(t, x, m)$ has a linear functional derivative, such that $y\mapsto [\delta b_{i}/\delta m]  (t, x, m)(y)$ is $\eta$-H\"older continuous uniformly with respect to the other variables. Moreover, for any $i \in \left\{1, \cdots, d\right\}$, for any $(t, x, y) \in \mathbb{R}_+\times (\mathbb{R}^d)^2$, the map $ \mathcal{P}(\mathbb{R}^d) \ni m \mapsto [\delta  b_{i}/\delta m](t, x, m)(y)$ has a linear functional derivative, such that $y'\mapsto [\delta^2  b_{i}/\delta m^2](t, x, m)(y, y')$ is $\eta$-H\"older continuous uniformly with respect to the other variables.
\end{itemize}
\end{itemize}

Our next result concerns the regularity properties of the two maps $(s,\mu) \mapsto p(\mu, s, t, z)$ and $(s, \mu, x) \mapsto p(\mu, s, t, x, z)$ and also important pointwise Gaussian estimates on its derivatives. As mentioned above under the assumptions of Theorem \ref{thm:martingale:problem} and \HRp, $\mathbb{R}^d \ni x \mapsto p(\mu, s, t, x, z)$ is two times continuously differentiable. In view of the relation \eqref{relation:density:mckean:decoupling:field}, it thus suffices to investigate the smoothness of the map $(s, \mu, x) \mapsto p(\mu, s, t, x, z)$.

\begin{theorem}\label{derivative:density:sol:mckean:and:decoupling} Assume that \HE\, and \HRp\, hold. Let $T>0$ and $(t, z) \in (0,T] \times \mathbb{R}^d$. Then, the mapping $[0,t) \times \rr^d \times \pp \ni (s, x, \mu) \mapsto p(\mu, s, t, x, z)$ is in $\mathcal{C}^{1, 2, 2}([0,t) \times \mathbb{R}^d \times \pp)$. 
Moreover, for any $(\mu, \mu', s, x, x', v, v') \in (\mathcal{P}_2(\mathbb{R}^d))^2 \times [0,t) \times (\mathbb{R}^d)^4$ and any $(s_1, s_2) \in [0,t)$, 
\begin{align}
|\partial^{n}_v [\partial_\mu p(\mu, s, t, x, z)](v)| & \leq  \frac{C}{(t-s)^{\frac{1+n-\eta}{2}}} g(c (t-s), z-x), \, n \in \left\{0,1\right\}, \label{first:second:lions:derivative:mckean:decoupling} \\
|\partial_s p(\mu, s, t, x, z)| & \leq  \frac{C}{t-s} g(c (t-s), z-x), \label{first:time:derivative:mckean:decoupling}
\end{align}



\noindent for any $\beta \in [0,1]$ if $n=0$ or any $\beta \in [0,\eta)$ if $n=1$,
\begin{align}
 \quad | \partial^{n}_v [\partial_\mu p(\mu, s, t, x, z)](v) & - \partial^{n}_v [\partial_\mu p(\mu, s, t , x', z)] (v) |\nonumber \\
&  \leq C_\beta \frac{|x-x'|^{\beta}}{(t-s)^{\frac{1+n+\beta-\eta}{2}}} \left\{ g(c(t-s), z-x) + g(c(t-s), z-x') \right\},  \label{equicontinuity:second:third:estimate:decoupling:mckean:final} 
\end{align}

\noindent for any $\beta \in [0,\eta)$,
\begin{align}
 |\partial_v [\partial_\mu p(\mu, s, t, x, z)](v) - \partial_v [\partial_\mu p(\mu, s, t, x, z)](v')| & \leq C_\beta \frac{|v-v'|^{\beta}}{(t-s)^{1+ \frac{\beta-\eta}{2}}} g(c(t-s), z-x), \label{equicontinuity:first:estimate:mckean:decoupling}
\end{align}

\noindent for any $\beta \in [0,1]$ if $n \in \left\{0, 1\right\}$ or any $\beta \in [0,\eta)$ if $n=2$,
\begin{align}
 |\partial^{n}_x p(\mu, s, t, x, z) & - \partial^{n}_x p(\mu', s, t, x, z)](v)| \leq C_\beta \frac{W^{\beta}_2(\mu, \mu')}{(t-s)^{ \frac{n+\beta}{2}}} \, g(c(t-s), z-x), \label{regularity:measure:estimate:v1:v2:v3:mckean:decoupling} 
\end{align}

\noindent for any $\beta\in [0,1]$ if $n=0$ or any $\beta\in [0,\eta)$ if $n=1$, 
\begin{align}
 |\partial^{n}_v [\partial_\mu p(\mu, s, t, x, z)](v) - \partial^{n}_v [\partial_\mu p(\mu', s, t, x, z)](v)| & \leq C^{+}_\beta \frac{W^{\beta}_2(\mu, \mu')}{(t-s)^{ \frac{1+n+\beta- \eta}{2}}} g(c(t-s), z-x) \label{regularity:measure:estimate:v1:v2:mckean:decoupling} 
\end{align}

\noindent for any $\beta \in [0,1]$ if $n=0$, any $\beta \in [0,\frac{1+\eta}{2})$ if $n=1$ or any $\beta \in [0, \frac{\eta}{2})$ if $n=2$, 
\begin{align}
 | & \partial^{n}_x p (\mu, s_1, t, x, z) - \partial^{n}_x p(\mu, s_2, t, x, z)  | \nonumber \\
 & \leq C_\beta \left\{ \frac{|s_1-s_2|^{\beta}}{(t-s_1)^{\frac{n}{2} + \beta }} \, g(c(t-s_1), z-x) + \frac{|s_1-s_2|^{\beta}}{(t- s_2)^{ \frac{n}{2} +\beta }} \, g(c(t-s_2), z-x) \right\}, \label{regularity:time:estimate:v1:v2:v3:mckean:decoupling} 
\end{align}

\noindent and for any $\beta \in [0, \frac{1+\eta}{2})$ if $n=0$ or any $\beta \in [0,\frac{\eta}{2})$ if $n=1$,
\begin{align}
 | & \partial^{n}_v [\partial_\mu p(\mu, s_1, t, x, z)](v) - \partial^n_v [\partial_\mu p(\mu, s_2, t, x, z)](v)| \nonumber \\
 & \leq C^{+}_\beta \left\{ \frac{|s_1-s_2|^{\beta}}{(t-s_1)^{ \frac{1+n-\eta}{2}+\beta}} \, g(c(t-s_1), z-x) + \frac{|s_1-s_2|^{\beta}}{(t- s_2)^{\frac{1+n-\eta}{2} + \beta}} \, g(c(t-s_2), z-x) \right\},  \label{regularity:time:estimate:v1:v2:mckean:decoupling} 
\end{align}

 \noindent where $C := C(T, \HR, \, \HE)$, $C_\beta := C(T, \HR, \,\HE, \beta)$, $C^{+}_\beta := C^{+}_\beta(T, \HRp,\, \HE, \beta)$ and $c:=c(\lambda)$ are positive constants.

 From this result and the key relation \eqref{relation:density:mckean:decoupling:field} we deduce the following corollary.
\begin{cor}
Assume that \HE\, and \HRp\, hold. Let $T>0$ and $(t, z) \in (0,T] \times \mathbb{R}^d$. Then, the mapping $[0,t) \times \pp \ni (s, \mu) \mapsto p(\mu, s, t, z)$  in $\mathcal{C}^{1, 2}([0,t) \times \pp)$. Moreover, there exist positive constants $C:=C(T, \HR, \HE)$ and $c:=c(\lambda)$ such that for any $(\mu,  s,  v,) \in \mathcal{P}_2(\mathbb{R}^d) \times [0,t) \times \mathbb{R}^d$, 
\begin{align}
|\partial^{n}_v [\partial_\mu p(\mu, s, t, z)](v)| & \leq  C\Bigg\{\frac{1}{(t-s)^{\frac{1+n}{2}}} g(c (t-s), z-v) \nonumber \\
&+ \frac{1}{(t-s)^{\frac{1+n-\eta}{2}}}\int_{\rr^d} g(c (t-s), z-x) \mu(dx)\Bigg\}, \, n \in \left\{0,1\right\}, \label{first:second:lions:derivative:mckean:decoupling} \\
|\partial_s p(\mu, s, t, z)| & \leq  \frac{C}{t-s} \int_{\rr^d}g(c (t-s), z-x)\mu(dx). \label{first:time:derivative:mckean:decoupling}
\end{align}
\end{cor}

\end{theorem}

\subsection{On the Cauchy problem related to the PDE \eqref{pde:wasserstein:space}.} 

The previous regularity properties on the density of the random variables $X^{s, \xi}_t$ and $X^{s, x, \mu}_t$ allow us in turn to tackle the Cauchy problem related to the PDE \eqref{pde:wasserstein:space} on the Wasserstein space defined in the strip $[0,T] \times \rr^d \times \pp$. We introduce the following assumption on the two real-valued maps $f$ and $h$ appearing in \eqref{pde:wasserstein:space}:
\begin{itemize}
\item[\HST] 
\begin{itemize}
\item[(i)] The two maps $[0,T] \times \mathbb{R}^d \times \pp \ni (t, x, m) \mapsto f(t, x, m)$ and $\mathbb{R}^d \times \pp \ni (x, m) \mapsto h(x, m)$ are continuous and the two maps $\pp \ni m \mapsto f(t, x, m),\, h(x, m)$ have a continuous linear functional derivative for any fixed $(t, x) \in [0,T] \times \mathbb{R}^d$, c.f. Remark \ref{rem:linderivandLip} (i). Moreover, the maps $[0,T] \times (\mathbb{R}^d)^2 \times \pp \ni (t, x, y, m) \mapsto [\delta f/\delta m](t, x, m)(y)$, $(\mathbb{R}^d)^2 \times \pp \ni (x, y, m) \mapsto [\delta h/\delta m](x, m)(y)$ are continuous.


\item[(ii)] The maps $f, \, h, \, [\delta f/\delta m]$ and $[\delta h/\delta m] $ satisfy the following regularity and growth assumptions: there exist $C:=C(T)\geq 0$ and $q\geq1$ such that for any $(t, x, y,  m) \in [0,T] \times (\rr^d)^2 \times \pp$ and any bounded set $D \subset \mathbb{R}^d$,
\begin{align}
\sup_{x \neq x', x, x' \in D } \frac{|f(t, x, m) - f(t, x', m)|}{|x-x'|^{\eta}} & \leq C  (1 + M^{q}_2(m)), \label{local:holder:reg:f} \\
\sup_{y \neq y', y, y' \in D } \frac{|[\delta f/\delta m](t, x, m)(y) - [\delta f/\delta m](t, x, m)(y')|}{|y-y'|^{\eta}} & \leq C  \exp\Big(\alpha \frac{|x|^2}{T}\Big) (1 + M^{q}_2(m)), \label{local:holder:reg:linear:functional:deriv:f} 
\end{align}
\noindent and
\begin{align}
| f(t, x, m) | + | h(x, m) | & \leq C \exp\Big(\alpha \frac{|x|^2}{T}\Big) (1 + M^{q}_2(m)), \label{growth:condition:h:f}\\
| \frac{\delta }{\delta m}f(t, x, m)(y) | + | \frac{\delta }{ \delta m}h(x, m)(y) |  & \leq C  \exp\Big(\alpha \frac{|x|^2}{T}\Big) (1 + |y|^2 + M^{q}_2(m)) \label{growth:condition:tilde:h:f}
\end{align}

\noindent where $M_2(m):=\int_{\mathbb{R}^d} |x|^2 m(dx)$ and $\alpha$ is any non-negative constant satisfying $\alpha < (2c)^{-1}$, the constant $c$ being the maximum among the constants $c$ appearing in the estimates \eqref{gradient:estimates:parametrix:series} and \eqref{first:second:lions:derivative:mckean:decoupling}.
\end{itemize}

\end{itemize}

\begin{theorem}\label{cauchy:problem:wasserstein} Assume that \HE,\, \HRp\, and \HST\, hold. Then, the function $U$ defined by
\begin{align}
U(t, x, \mu) & := \int_{\mathbb{R}^d} h(z, [X^{t, \xi}_T]) \, p(\mu, t, T, x, z) \, dz - \int_t^T \int_{\mathbb{R}^d} f(s, z, [X^{t, \xi}_s]) \,  p(\mu, t, s, x, z) \, dz  \, ds \label{sol:pde:wasserstein} \\
& = \E\left[h(X^{t, x, \mu}_T, [X^{t, \xi}_T]) - \int_t^T f(s, X^{t, x, \mu}_s, [X^{t, \xi}_s]) \, ds\right] \nonumber
\end{align}

\noindent where $\xi \in \mathbb{L}^2$ with $[\xi]=\mu$, is a solution of the Cauchy problem \eqref{pde:wasserstein:space} (in the strip $[0,T] \times \rr^d \times \pp$) and
\begin{equation}
\label{bound:solution:cauchy:problem}
|U(t, x, \mu)| \leq C \exp\Big(\frac{k |x|^2}{T}\Big) (1+ M^{q}_2(\mu)), \quad \mbox{ for } (t, x, \mu) \in [0,T] \times \rr^d \times \pp
\end{equation}

\noindent where $C:=C(T, \HR,\, \HE)$ and $k:=k(\lambda, \alpha)$ are positive constants. 

Moreover, $U$ is unique among all of the classical solutions to the PDE \eqref{pde:wasserstein:space} satisfying \eqref{cond:integrab:ito:process:second:version}, $T$ being replaced by any $T' \in [0,T)$, as well as the exponential growth assumption \eqref{bound:solution:cauchy:problem} and with $h$ and $f$ satisfying \eqref{growth:condition:h:f} and \eqref{growth:condition:tilde:h:f} for some positive constants $k$ and $\alpha$.
\end{theorem}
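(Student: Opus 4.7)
The plan is to establish existence by direct verification of the PDE and uniqueness by a Feynman-Kac-type verification argument; both rely crucially on the chain rule on the Wasserstein space (Proposition \ref{prop:chain:rule:joint:space:measure}) applied along the coupled pair $(X^{t,x,\mu}_s, [X^{t,\xi}_s])_{s\in [t, T]}$.

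For existence, I would proceed in three steps. The first and most technical step is to show $U \in \mathcal{C}^{1, 2, 2}([0, T) \times \rr^d \times \pp)$ together with the exponential bound \eqref{bound:solution:cauchy:problem}. Derivatives in $x$ are obtained by differentiating under the integral sign, using the Gaussian bounds \eqref{gradient:estimates:parametrix:series}--\eqref{second:derivatives:holder:estimates:parametrix:series} on $\partial_x^n p$. For the $\mu$-derivatives, note that the argument $[X^{t,\xi}_s]$ in $h$ and $f$ itself depends on $\mu$; to handle this, I would invoke Proposition \ref{structural:class} with the flow $\Theta(t, \mu) = [X^{t,\xi}_s]$, whose density is $z \mapsto \int p(\mu, t, s, x, z) \mu(dx)$ by \eqref{relation:density:mckean:decoupling:field}. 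The regularity and integrability assumptions of Proposition \ref{structural:class} are exactly supplied by Theorem \ref{derivative:density:sol:mckean:and:decoupling}. Combined with the $\partial_\mu p$ bounds from the same theorem and the chain rule, this delivers all derivatives of $U$ with the required Gaussian-type estimates. The exponential growth bound \eqref{bound:solution:cauchy:problem} follows from integrating \eqref{growth:condition:h:f}--\eqref{growth:condition:tilde:h:f} against the Gaussian density, which is finite provided $\alpha < c$ so that the Gaussian dominates.

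The second step is to verify that $U$ satisfies the PDE. The key observation is the Markov/tower property: for all $t \leq s < T$,
\begin{equation*}
U(t, x, \mu) = \E\!\left[U(s, X^{t,x,\mu}_s, [X^{t,\xi}_s])\right] - \int_t^s \E\!\left[f(r, X^{t,x,\mu}_r, [X^{t,\xi}_r])\right] dr,
\end{equation*}
which follows from the flow identities $X^{t,x,\mu}_T = X^{s, X^{t,x,\mu}_s, [X^{t,\xi}_s]}_T$ and $[X^{t,\xi}_T] = [X^{s, X^{t,\xi}_s}_T]$, themselves consequences of weak uniqueness (Theorem \ref{thm:martingale:problem}). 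Next, applying Proposition \ref{prop:chain:rule:joint:space:measure} to $U$ along $(X^{t,x,\mu}_r, [X^{t,\xi}_r])$ between $t$ and $s$ (the integrability condition \eqref{cond:integrab:ito:process:second:version} holding from step one together with ellipticity \HE), and taking expectation so that the stochastic integral drops out, yields
\begin{equation*}
\E\!\left[U(s, X^{t,x,\mu}_s, [X^{t,\xi}_s])\right] - U(t, x, \mu) = \int_t^s \E\!\left[(\partial_r + \mathcal{L}_r) U(r, X^{t,x,\mu}_r, [X^{t,\xi}_r])\right] dr.
\end{equation*}
Subtracting the two identities and differentiating in $s$ at $s = t^+$ gives $(\partial_t + \mathcal{L}_t) U(t, x, \mu) = f(t, x, \mu)$. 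The terminal condition is read off directly from the definition, using $p(\mu, T, T, x, \cdot) = \delta_x$ and $[X^{T,\xi}_T] = \mu$.

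Uniqueness is the reverse of step two: given any classical solution $V$ satisfying the exponential growth bound and the integrability condition on every subinterval $[0, T']$, $T' < T$, Proposition \ref{prop:chain:rule:joint:space:measure} applied to $V$ along $(X^{t,x,\mu}_r, [X^{t,\xi}_r])$ on $[t, T - \varepsilon]$ combined with the PDE $(\partial_r + \mathcal{L}_r) V = f$ gives, after taking expectation and letting $\varepsilon \downarrow 0$,
\begin{equation*}
V(t, x, \mu) = \E\!\left[h(X^{t,x,\mu}_T, [X^{t,\xi}_T])\right] - \int_t^T \E\!\left[f(r, X^{t,x,\mu}_r, [X^{t,\xi}_r])\right] dr = U(t, x, \mu),
\end{equation*}
where the passage to the limit is legitimate because the exponential growth of $V$ combined with the Gaussian bound \eqref{bound:density:parametrix} and a sufficiently small $\alpha$ relative to $c$ ensure uniform integrability. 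The main obstacle throughout is the smoothness verification in step one: carefully pairing the derivative formulas from Proposition \ref{structural:class} with the singular-in-time Gaussian estimates of Theorem \ref{derivative:density:sol:mckean:and:decoupling}, and exploiting the H\"older continuity of $\delta h/\delta m$ and $\delta f/\delta m$ (built into \HST) to absorb the time singularities that appear near $s = t$ and near $s = T$ in the second integral.
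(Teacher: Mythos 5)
Your overall architecture mirrors the paper's: the spatial and measure regularity of $U$ is obtained exactly as in Proposition \ref{reg:map:U} (differentiation under the integral sign, Proposition \ref{structural:class} applied to the flow generated by \eqref{relation:density:mckean:decoupling:field}, the Gaussian bounds of Theorem \ref{derivative:density:sol:mckean:and:decoupling}, and the smallness of $\alpha$ relative to $c$ for the growth bound), and your uniqueness argument is essentially the paper's verbatim (chain rule applied to $V$ along $(X^{t,x,\mu}_s,[X^{t,\xi}_s])$ on $[t,T-\varepsilon]$, true-martingale property of the stochastic integral under the growth conditions, then $\varepsilon\downarrow 0$ using continuity at the boundary).

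There is, however, a genuine gap in your existence step: the time derivative of $U$ is asserted but never constructed, and the way you verify the PDE presupposes it. In your second step you apply Proposition \ref{prop:chain:rule:joint:space:measure} to $r\mapsto U(r, X^{t,x,\mu}_r,[X^{t,\xi}_r])$, which requires $U\in\mathcal{C}^{1,2,2}$ \emph{including} a continuous $\partial_t U$; but your first step only produces the $x$- and $\mu$-derivatives, so the argument is circular as written. Nor is the missing time derivative a routine differentiation under the integral: the bound $|\partial_t p(\mu,t,s,x,z)|\leq C(s-t)^{-1}g(c(s-t),z-x)$ from \eqref{first:time:derivative:mckean:decoupling} is not integrable in $s$ near $s=t$, so differentiating the source term $\int_t^T\int f(s,z,[X^{t,\xi}_s])\,p(\mu,t,s,x,z)\,dz\,ds$ in $t$ requires exploiting the cancellation $\int\partial_t p(\mu,t,s,x,z)\,dz=0$ together with the spatial H\"older continuity of $f$ (and the analogous cancellation for $\partial_t[X^{t,\xi}_s]$ via \eqref{condition1:structural:class:time:derivative} and the H\"older regularity of $\delta f/\delta m$), none of which you carry out. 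The paper sidesteps this entirely: it writes $U(t-h,x,\mu)=\E[U(t,X^{t-h,x,\mu}_t,[X^{t-h,\xi}_t])-\int_{t-h}^t f(r,\cdot,\cdot)\,dr]$ by the Markov property and applies the chain rule \emph{with respect to the space and measure variables only} to the map $(y,\nu)\mapsto U(t,y,\nu)$ (which needs only the already-established $\mathcal{C}^{2,2}$ regularity and the bounds \eqref{bound:mes:deriv:sol:pde}--\eqref{bound:space:deriv:sol:pde}), then divides by $h$ and lets $h\downarrow 0$; this yields the existence of $\partial_t U$ and the identity $\partial_t U=-\mathcal{L}_t U+f$ simultaneously. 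To repair your proof, either carry out the analytic time differentiation with the cancellation arguments indicated above, or replace your step two by this backward-increment argument.
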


\begin{remark}
We importantly point out that in order to address the continuity of the map $t\mapsto U(t, x, \mu)$ at time $T$ as well as its differentiability on the interval $[0,T)$, we here impose a stronger regularity assumption on the terminal condition $h$ and source term $f$ compared to \HR\, or \HRp. In particular, we assume that $h$ and $f$ are continuous in the measure direction with respect to 2-Wasserstein distance. This seems natural inasmuch if $(t_n)_{n\geq0}$ is a sequence of $[0,T]$ such that $t_n \rightarrow T$, the convergence of the sequence of probability measures $([X^{t_n, \xi}_T])_{n\geq0}$ only holds weakly in $\mathcal{P}_2(\mathbb{R}^d)$ or equivalently with respect to the $2$-Wasserstein distance and not with respect to the total variation distance.   

\end{remark}

\subsection{Examples}
Here are some examples of McKean-Vlasov SDEs whose coefficients satisfy assumptions \HE\, and \HR \, or \HRp.

\begin{example}(First order interaction) Consider the following non-linear SDE with coefficients $b: \mathbb{R}_+ \times (\mathbb{R}^d)^2  \rightarrow \mathbb{R}^d$ and $\sigma: \mathbb{R}_+ \times (\mathbb{R}^d)^2 \rightarrow \mathbb{R}^d \otimes \mathbb{R}^q$:
\begin{eqnarray}
X^{\xi}_t &=& \xi + \int_0^t \widetilde{\E}[b(s,X^{\xi}_s,\widetilde{X}^{\xi}_s)] ds + \int_0^t \widetilde{\E}[\sigma(s,X^{\xi}_s,\widetilde{X}^{\xi}_s)] dW_s, \quad [\xi] = \mu \in \pp,\label{ex:wp:2}
\end{eqnarray}

\noindent where the process $(\widetilde{X}^{\xi}_t)_{t\geq0}$ is a copy of $(X^{\xi}_t)_{t\geq0}$ defined on a copy $(\widetilde{\Omega}, \widetilde{\mathcal{F}}, \widetilde{\P})$ of the original probability space $(\Omega, \mathcal{F}, \P)$.

\begin{itemize}
\item If $b$ and $\sigma$ are bounded and measurable functions and if moreover $\sigma$ is $\eta$-H\"older continuous uniformly with respect to $t$ then, $b$ and $a=\sigma \sigma^{*}$ satisfy \HR.
\item If in addition $b$ is $\eta$-H\"older continuous uniformly with respect to $t$ then, $b$ and $a=\sigma \sigma^{*}$ satisfy \HRp.
\item If $(\int \sigma(t, x, z) m(dz)) (\int \sigma(t, x, z) m(dz))^{*}$ is uniformly elliptic, uniformly with respect to the variables $t,\, x, \, m$, then \HE\, holds.
\end{itemize}
\end{example}

\begin{example}($N$ order interaction) For some positive integer $N$, we consider the following non-linear SDE with coefficients $b: \rr_+ \times (\rr^d)^{N+1}  \rightarrow \rr^d$ and $\sigma: \rr_+ \times (\rr^d)^{N+1} \rightarrow \rr^{d\times q}$:
\begin{align}
X^{\xi}_t &= \xi + \int_0^t \widetilde{\E}[b(s,X^{\xi}_s,X^{\xi,(1)}_s,\cdots,X^{\xi,(N)}_s)] ds + \int_0^t \widetilde{\E}[\sigma(s,X^{\xi}_s,X^{\xi,(1)}_s,\cdots,X^{\xi,(N)}_s)]  dW_s,\label{ex:wp:1}
\end{align}
where the processes $\left\{(X^{\xi,(i)}_t)_{t\geq0},\ 1 \leq i \leq N\right\}$ are mutually independent copies of the process $(X^{\xi}_t)_{t\geq0}$ defined on a copy $(\widetilde{\Omega}, \widetilde{\mathcal{F}}, \widetilde{\P})$ of the original probability space $(\Omega, \mathcal{F}, \P)$.

\begin{itemize}
\item If $b$ and $\sigma$ are bounded and measurable functions and if moreover $\sigma$ is continuous in $t$ and $\eta$-H\"older continuous uniformly with respect to $t$ then, $b$ and $a=\sigma \sigma^{*}$ satisfy \HR.
\item If in addition $b$ is continuous in $t$ and $\eta$-H\"older continuous uniformly with respect to $t$ then, $b$ and $a=\sigma \sigma^{*}$ satisfy \HRp.
\item Let $m_N$ denotes the $N$-fold product measure of $\mu$. If $(\int \sigma(t, x, z) m_N(dz)) (\int \sigma(t, x, z) m_N(dz))^{*}$ is uniformly elliptic, uniformly with respect to the variables $t,\, x, \, m$, then \HE\, holds.
\end{itemize}
\end{example}

\begin{example}(Scalar interaction(s)) For some $N\geq 0$ and some measurable maps $\psi_1,\varphi_1 \, \cdots, \psi_N,\varphi_N : \rr^d \rightarrow \rr $, we consider the following non-linear SDE with coefficients $b: \rr_+ \times \rr^d \times \rr^N \rightarrow \rr^d$ and $\sigma: \rr_+ \times \rr^d \times \rr^N \rightarrow \rr^{d} \otimes \rr^q$:
\begin{align}
X^{\xi}_t & = \xi + \int_0^t b\Big(s, X^{\xi}_s, \widetilde{\E}\big[ \psi_1(\widetilde{X}^{\xi}_s)\big], \cdots, \widetilde{\E}\big[ \psi_N(\widetilde{X}^{\xi}_s)\big]\Big) ds \nonumber \\
& \quad \quad + \int_0^t \sigma\Big(s, X^{\xi}_s, \widetilde{\E}\big[ \varphi_1(\widetilde{X}^{\xi}_s)\big], \cdots, \widetilde{\E}\big[ \varphi_N(\widetilde{X}^{\xi}_s)\big]\Big) dW_s \label{ex:wp:1}
\end{align}
where the process $(\widetilde{X}^{\xi}_t)_{t\geq0}$ is a copy of $(X^{\xi}_t)_{t\geq0}$ defined on a copy $(\widetilde{\Omega}, \widetilde{\mathcal{F}}, \widetilde{\P})$ of the original probability space $(\Omega, \mathcal{F}, \P)$.
\begin{itemize}
\item If $b$ is a bounded and measurable function which is Lipschitz in space uniformly in time, if $(\psi_i)_{1\leq i\leq N}$ are bounded maps, if $\sigma$ is a bounded and measurable function continuous in $t$ and such that each entry of $\sigma(t,. ,z)$ is uniformly $\eta$-H\"older continuous, $\sigma(t, x,\cdot)$ is continuously differentiable with a bounded derivative uniformly in $t$ and $x$ and if $(\varphi_i)_{1\leq i \leq N}$ are $\eta$-H\"older continuous, then \HR\, holds.
\item If $b,\, \sigma$ are continuous w.r.t. the time variable and if in addition for any $t,\, x$, the maps $b(t,x,\cdot)$ and $a(t,x,\cdot)$ are two times continuously differentiable with bounded derivatives and $(\psi_i)_{1\leq i\leq N}$ are $\eta$-H\"older continuous, then \HRp\, holds.
\item If $a=\sigma\sigma^*$ is uniformly elliptic then \HE\, holds.
\end{itemize}
\end{example}

\begin{example}(Polynomials on the Wasserstein space)  We consider the following scalar non-linear SDE
\begin{eqnarray}
X^{\xi}_t &=& \xi + \int_0^t \prod_{i=1}^N \widetilde{\E}\Big[ \psi_i\big(t, X^{\xi}_s,\widetilde{X}^{\xi}_s\big)\Big] ds + \int_0^t \prod_{i=1}^N \widetilde{\E}\Big[ \varphi_i\big(t, X^{\xi}_s,\widetilde{X}^{\xi}_s\big)\Big] dW_s \label{ex:wp:3}
\end{eqnarray}

\noindent for some measurable maps $\psi_1,\varphi_1 \cdots, \psi_N,\varphi_N : \mathbb{R}_+ \times \mathbb{R}^2 \rightarrow \mathbb{R}$, the process $(\widetilde{X}^{\xi}_t)_{t\geq0}$ being a copy of $(X^{\xi}_t)_{t\geq0}$ defined on a copy $(\widetilde{\Omega}, \widetilde{\mathcal{F}}, \widetilde{\P})$ of the original probability space $(\Omega, \mathcal{F}, \P)$.

\begin{itemize}
\item If the functions $(\psi_i)_i$ are bounded and if the functions $(\varphi_i)_{1\leq i\leq N}$ are continuous in time and $\eta$-H\"older continuous in space uniformly in time, then \HR\, holds.
\item If in addition the functions $(\psi_i)_{1\leq i\leq N}$ are continuous in time and uniformly $\eta$-H\"older continuous in space uniformly in time, then \HRp\, holds.
\item If there exists $\lambda>0$ such that for any $i\in \left\{1, \cdots, N\right\}$ and any $(t, x, z) \in \mathbb{R}_+\times \mathbb{R}^2$, $\lambda < \varphi_i(t, x, z)$ then \HE\, holds.
\end{itemize}
\end{example}

\begin{remark}
A multi-dimensional version of the SDE \eqref{ex:wp:3} in the above example can be described as follows. We consider measurable maps $\varphi_i: \mathbb{R}_+ \times (\mathbb{R}^d)^2 \rightarrow \mathbb{R}^{q_{i-1}} \otimes \rr^{q_{i}}$, $i=1, \cdots, N$, for some positive integers $q_0, \cdots, q_N$ satisfying $q_0=d$ and $q_N=q$ and $\psi_{i, j}: \mathbb{R}_+ \times (\mathbb{R}^d)^2 \rightarrow \rr$, $i=1,\ldots,N$, $j=1, \cdots, d$. Set $b_j(t, x, m) := \prod_{i=1}^{N} \int_{\mathbb{R}^d} \psi_{i, j}(t, x, z) \, m(dz)$ and $a(t, x, m) := (\prod_{i=1}^N \int \varphi_i(t, x, z) \, m(dz))  (\prod_{i=1}^N \int \varphi_i(t, x, z) \, m(dz))^{*}$.
\begin{itemize}
\item If the maps $(\psi_{i, j})_{i, j}$ and $(\varphi_i)_i$ are bounded, if $\varphi_i(t, \cdot, \cdot)$ is uniformly $\eta$-H\"older continuous, and is continuous in time, then \HR\, holds.
\item If in addition each $(\psi_{i, j}(t,\cdot,\cdot))_{i,j}$ is $\eta$-H\"older continuous and continuous in time, then \HRp\, holds.
\item If each $a_i(t, x, m) := (\int \varphi_i(t, x, z) m(dz))  ( \int \varphi_i(t, x, z) m(dz))^{*}$, $i=1, \cdots, N$, is uniformly elliptic then $a(t, x, m)$ is also uniformly elliptic so that \HE\, holds.
\end{itemize}
\end{remark}

\section{Well-posedness of the martingale problem}\label{martingale:problem:sec}
In this section, we tackle the martingale problem associated to the SDE \eqref{SDE:MCKEAN}. We thus assume that  \HE\,and \HR\,are in force throughout this section. As mentioned in the introduction, the proof follows from a fixed point argument, more precisely from the Banach fixed point theorem applied to suitable map and complete metric space. The definition of the underlying complete metric space and map is introduced in subsection \ref{some:definitions:martingale:problem}. In subsection \ref{subsection:distance:between:two:successive:iterate}, we establish some important technical estimates for the difference of the transition densities associated to two successive iterates of the map. The proof of Theorem \ref{thm:martingale:problem} is tackled in subsection \ref{proof:martingale:problem}.

\subsection{Definition of the complete metric space and the map}\label{some:definitions:martingale:problem}
  Let $(s, \mu) \in \mathbb{R}_+ \times \mathcal{P}(\mathbb{R}^d)$ be the initial condition of the martingale problem of Definition \ref{def:martingale:problem}. Recalling that $(\mathcal{P}(\mathbb{R}^d), d_{{\rm TV}})$ is a complete metric space, for any fixed $T>s$, we consider the following set 
\begin{align*}
\mathscr{A}_{s, T, \mu} & = \left\{ P \in \mathcal{C}([s,T], \mathcal{P}(\mathbb{R}^d)): P(s)= \mu \right\}.
\end{align*}
Observe that $\mathscr{A}_{s, T, \mu}$ is a closed subspace of the complete metric $\mathcal{C}([s,T],\mathcal{P}(\rr^d))$ equipped with the uniform metric $d_{s, T}(P, P') = \sup_{t\in [s,T]} d_{{\rm TV}}(P(t),P'(t))$. Hence, $(\mathscr{A}_{s, T, \mu}, d_{s, T})$ is also a complete metric space.

We define the map $\mathscr{T}: \mathscr{A}_{s, T, \mu} \rightarrow \mathscr{A}_{s, T, \mu}$ which to a probability measure $Q \in \mathscr{A}_{s, T, \mu}$ associates the flow of probability measures $\mathscr{T}(Q) \in \mathscr{A}_{s, T, \mu}$ induced by the unique weak solution to the following SDE with dynamics
$$
\bar{X}^{s, \xi}_t = \xi + \int_s^t b(r, \bar X^{s, \xi}_r, Q(r))\, dr + \int_s^t \sigma(r, \bar X^{s, \xi}_r, Q(r)) \, dW_r, \quad t\in[s,T],
$$

\noindent that is, $\mathscr{T}(Q)(t) = [\bar{X}^{s, \xi}_t]$, $t\in [s,T]$. Let us note that under our current assumptions \HE\, and \HR, the martingale problem associated to the above SDE is well-posed so that weak existence and uniqueness holds and the map $\mathscr{T}$ is well-defined. Moreover, any fixed point of $\mathscr{T}$ is a solution to the martingale problem on the considered time interval $[s,T]$.

\subsection{On the distance between two successive iterations of the map}\label{subsection:distance:between:two:successive:iterate}
For fixed $ P_1, \, P_2 \in \mathscr{A}_{s, T, \mu}$, we now consider the two following sequences of SDEs $(\bar{X}^{1, (\ell)})_{\ell \geq 0}$ and $(\bar{X}^{2,(\ell)})_{\ell \geq 0}$ constructed by induction on $\ell$ as the unique weak solution to the following SDEs:
\begin{equation}
\label{approxim:sequence:}
   \bar{X}^{i, (\ell+1)}_t = \xi +  \int_s^t b(r, \bar{X}^{i, (\ell+1)}_r, [\bar{X}^{i, (\ell)}_r]) dr + \int_s^t \sigma(r, \bar{X}^{i, (\ell+1)}_r, [\bar{X}^{i, (\ell)}_r]) dW_r, \, t\in [s,T]
\end{equation}

\noindent where $ [\bar{X}^{i, (0)}_t] = P_i(t)$ for $i=1, 2$. Again, observe that under our current assumptions, for any $\ell \geq0$, the above SDE admits a unique weak solution so that $([\bar{X}^{i, (\ell)}_t])_{t\in [s,T]}$ is uniquely determined. We denote by $P^{(\ell)}_i = \mathscr{T}^{(\ell)}(P_i) = \mathscr{T}(\mathscr{T}^{(\ell-1)}(P_i)) = (P^{(\ell)}_i(t))_{t\in [s,T]} \in \mathscr{A}_{s, T, \mu}$, $\mathscr{T}^{(0)}(P_i)= P_i$, the time marginals induced by $(\bar{X}^{i, (\ell)}_t)_{t\in[s,T]}$. 

The density function of the random vector $\bar{X}^{i, (\ell+1)}_t$ given by the solution to the SDE \eqref{approxim:sequence:} at time $t$ and denoted by $p^{(\ell+1)}_i(\mu, s, t, .)$ satisfies 
$$
p^{(\ell+1)}_i(\mu, s, t, z) = \int p^{(\ell+1)}_i (\mu, s, t, x, z) \,  \mu(dx)
$$

\noindent where $z \mapsto p^{(\ell+1)}_i (\mu, r, t, x, z)$ is the density function of the random variable $\bar{X}^{i, (\ell+1), x, \mu}_t$, given by the unique weak solution  at time $t$ to the SDE obtained as the decoupling field associated to \eqref{approxim:sequence:}. Observe that the notation $p^{(\ell+1)}_i(\mu, s, t, x, z)$ (and also $p^{(\ell+1)}_i(\mu, s, t, z)$) makes sense since, by weak uniqueness, $[\bar{X}^{i, (\ell)}_t]$ only depends on $\xi$ through its law $\mu$. 

We then proceed using the first step of the parametrix expansion \eqref{first:step:parametrix:series:expansion} with $\widehat{p}$, $\mathcal{L}_{s, t}$, $\widehat{\mathcal{L}}_{s, t}$ and $\mH$ defined accordingly, namely, for any $i=1,\, 2$, for any fixed $(t, z) \in (0,T]\times \mathbb{R}^d$, we write
\begin{align}
p^{(\ell+1)}_i(\mu, s , t, x, z) & = \widehat{p}^{(\ell+1)}_i(\mu, s, t, x, z)\nonumber  \\
 & \quad + \int_s^t \int_{\mathbb{R}^d} p^{(\ell+1)}_i(\mu, s , r, x, y) (\mathcal{L}^{(\ell), i}_r - \widehat{\mathcal{L}}^{(\ell), i}_r) \widehat{p}^{(\ell+1)}_i(\mu, r, t, y, z)  \, dy \, dr \label{first:step:mckea:singer}
\end{align}

\noindent where for any $r\in [s, t)$
\begin{align*}
\widehat{p}^{(\ell+1)}_i(\mu, s, t, x, z) & := g\bigg(\int_s^t a(r', z, [\bar{X}^{i, (\ell)}_{r'}]) \, dr', z-x\bigg),\\
 \mathcal{L}^{(\ell), i}_r h(x)  & := \sum_{l=1}^{d} b_l(r, x,  [\bar{X}^{i, (\ell)}_r]) \partial_{x_l} h(x) +  \frac12 \sum_{k, l = 1}^{d} a_{k, l}(r, x, [\bar{X}^{i, (\ell)}_r]) \partial^2_{x_k, x_l}h(x),\\
 \widehat{\mathcal{L}}^{(\ell), i}_r h(x)  & := \frac12 \sum_{k, l = 1}^{d} a_{k, l}(r, z, [\bar{X}^{i, (\ell)}_r]) \partial^2_{x_k, x_l}h(x).
\end{align*}

Clearly, since the law appears as a parameter in the coefficients $b_i$ and $a_{i, j}$, from \HR\, and \HE\,, similarly to \eqref{bound:density:parametrix}, the following (uniform) Gaussian upper-estimate holds : there exist positive constants $C:=C(T, b, a, \eta, \lambda), \, c:=c(\lambda)$ such that for any integer $\ell$, any $(\mu, x, z) \in \pp \times (\mathbb{R}^d)^2$ and any $0\leq s < t \leq T$
\begin{equation}
|p^{(\ell+1)}_i(\mu, s, t, x, z)| \leq C g(c(t-s), z-x). \label{Gaussian:upper:estimate:pi}
\end{equation}
\noindent Also, from the previous expression of $\widehat{p}^{(\ell+1)}_i$, \HE\, and \eqref{standard}, the following estimates are satisfied: for any positive integer $n$, there exist some positive constants $C:=C(T,  b, a, \eta, \lambda), \, c:=c(\lambda)$ such that for any integer $\ell$, any $(\mu, x, z) \in \pp \times (\mathbb{R}^d)^2$ and any $0\leq s < t\leq T$ 
\begin{equation}
\label{estimate:deriv:tilde:p} 
|\partial^{n}_{x} \widehat{p}^{(\ell+1)}_i(\mu, s, t, x, z)| \leq \frac{C}{(t-s)^{\frac{n}{2}}} g(c(t-s), z-x).
\end{equation}

For the sake of clarity, we introduce the following notations. For any $r\in [s, t)$ and any integer $\ell$, we let
\begin{align*}
\delta(\mathcal{L}^{(\ell)}_r - \widehat{\mathcal{L}}^{(\ell)}_r) \widehat{p}^{(\ell+1)}(\mu, r, t, y, z) &:= (\mathcal{L}^{(\ell),1}_r- \widehat{\mathcal{L}}^{(\ell), 1}_r) \widehat{p}^{(\ell+1)}_1(\mu, r, t, y, z) -  (\mathcal{L}^{(\ell),2}_r- \widehat{\mathcal{L}}^{(\ell), 2}_r) \widehat{p}^{(\ell+1)}_2(\mu, r, t, y, z), \\
\delta \widehat{p}^{(\ell+1)}(\mu, r, t, x, z)& := (\widehat{p}^{(\ell+1)}_1 - \widehat{p}^{(\ell+1)}_2)(\mu, r, t, x, z),\\
\delta p^{(\ell+1)}(\mu, r, t, z) & := (p^{(\ell+1)}_1- p^{(\ell+1)}_2)(\mu, r, t, z). 
\end{align*}

 From the first order expansion \eqref{first:step:mckea:singer}, we thus see that $\delta p^{(\ell+1)}(\mu, s, t, x, z):= (p^{(\ell+1)}_1-p^{(\ell+1)}_2)(\mu, s, t, x, z)$ satisfies the following relation
\begin{align}
\delta p^{(\ell+1)}(\mu, s , t, x, z) & = \delta \widehat{p}^{(\ell+1)}(\mu, s, t, x, z) + \int_s^t \int_{\mathbb{R}^d} \delta p^{(\ell+1)}(\mu, s , r, x, y) (\mathcal{L}^{(\ell), 1}_r - \widehat{\mathcal{L}}^{(\ell), 1}_r) \widehat{p}^{(\ell+1)}_1(\mu, r, t, y, z) \, dy \, dr \nonumber \\
& \quad + \int_s^t \int_{\mathbb{R}^d} p^{(\ell+1)}_2(\mu, s, r, x, y) \, \delta(\mathcal{L}^{(\ell)}_r - \widehat{\mathcal{L}}^{(\ell)}_r) \,  \widehat{p}^{(\ell+1)}(\mu, r, t, y, z) \, dy \, dr. \label{diff:dens:iterate:mp1}
\end{align}

Let us note that for the second term of the above identity, one may use the uniform $\eta$-H\"older regularity of $ a(t, ., m)$, \eqref{estimate:deriv:tilde:p} and the space-time inequality \eqref{space:time:inequality}, for any integer $\ell$, it holds
\begin{align*}
 |(\mathcal{L}^{(\ell), 1}_r & - \widehat{\mathcal{L}}^{(\ell), 1}_r) \widehat{p}^{(\ell+1)}_1|(\mu, r, t, y, z)   \leq \frac{C}{(t-r)^{1-\frac{\eta}{2}}} g(c(t-r), z-y)
\end{align*}

\noindent so that
\begin{align}
\Big|  \int_{s}^t \int_{\mathbb{R}^d} & \delta p^{(\ell+1)}(\mu, s , r, x, y) (\mathcal{L}^{(\ell), 1}_r  - \widehat{\mathcal{L}}^{(\ell), 1}_r) \widehat{p}^{(\ell+1)}_1(\mu, r, t, y, z) \, dy dr\Big| \nonumber \\
& \leq C \int_s^t \int_{\mathbb{R}^d} |\delta p^{(\ell+1)}|(\mu, s , r, x, y)  \frac{1}{(t-r)^{1-\frac{\eta}{2}}} g(c(t-r), z-y) \, dy dr. \label{second:term:delta:parametrix:expansion}
\end{align}
 
 The first and third term appearing in the right-hand side of \eqref{diff:dens:iterate:mp1} will be handled thanks to the following technical estimates that will play an important role in order to prove the existence and uniqueness of a fixed point for the map $\mathscr{T}$ if $T>s$ is small enough. Its proof is postponed at the end of this section. 
 
\begin{lem}\label{lemma:technical:result}
For any $T>0$, there exist some positive constants $C:=C(T, \HR, \HE)$ and $c:=c(\lambda)$ such that for any $(\mu, x, z) \in  \pp \times \mathbb{R}^d$, any $0 \leq  s \leq r < t \leq T$, any positive $\ell$ and any $n \in \left\{0, 1, 2\right\}$
\begin{align}
\, |\partial^{n}_x \delta & \widehat{p}^{(\ell+1)}|(\mu, r, t, x, z) \nonumber \\
& \leq \frac{C}{(t-r)^{\frac{2+n}{2}}} \int_r^t \int_{(\mathbb{R}^d)^2}  (|y'-x'|^{\eta} \wedge 1) \,  |\delta p^{(\ell)}|(\mu, s , r', x', y') \, dy' \, \mu(dx') \, dr' \, g(c(t-r), z-x),  \label{bound:delta:frozen:kernel}
\end{align}
\begin{align}
| \delta(\mathcal{L}^{(\ell)}_r & - \widehat{\mathcal{L}}^{(\ell)}_r) \widehat{p}^{(\ell+1)}(\mu, r, t, x, z)| \nonumber \\
& \leq  C \left\{ \frac{1}{(t-r)^{\frac12}}\int_{(\mathbb{R}^d)^2} | \delta p^{(\ell)}|(\mu, s, r, x', y') \, dy' \mu(dx') \right. \nonumber\\
& \quad \left. +\frac{1}{(t-r)^{1-\frac{\eta}{4}}}  \int_{(\mathbb{R}^d)^2} (|y'-x'|^{\frac{\eta}{2}} \wedge 1)  |\delta p^{(\ell)}|(\mu, s, r, x',  y') \, dy'  \, \mu(dx') \right. \label{bound:delta:parametrix:kernel:positive:ell}\\ 
& \quad \left. +  \frac{1}{(t-r)^{2-\frac{\eta}{2}}} \int_r^t \int_{(\mathbb{R}^d)^2}  (|y'-x'|^{\eta} \wedge 1) \,  |\delta p^{(\ell)}|(\mu, s , r', x', y') \, dy' \, \mu(dx') \, dr'  \right\} \, g(c(t-r), z-x),\nonumber
\end{align}

\noindent while for $\ell=0$, 
\begin{align}
|\partial^{n}_x \delta \widetilde{p}^{(1)}|(\mu, r, t, x, z) \leq \frac{C}{(t-r)^{\frac{n}{2}}} d_{s, t}(P_1, P_2) g(c(t-r), z-x), \label{bound:delta:deriv:frozen:kernel:ell=0}
\end{align}
\noindent and
\begin{align}
| \delta(\mathcal{L}^{(0)}_r & - \widehat{\mathcal{L}}^{(0)}_r) \widehat{p}^{(1)}(\mu, r, t, x, z)| \leq  \frac{C}{(t-r)^{1-\frac{\eta}{2}}} d_{s, t}(P_1, P_2) \, g(c(t-r), z-x). \label{bound:delta:parametrix:kernel:ell=0}
\end{align}

\end{lem}

%
%

The previous lemma together with \eqref{second:term:delta:parametrix:expansion}, the Gaussian upper-estimate \eqref{Gaussian:upper:estimate:pi} as well as the semigroup property of Gaussian kernels yield for any positive integer $\ell$
\begin{align*}
& |\delta p^{(\ell+1)}|(\mu, s , t, x, z) \\
& \leq \frac{C}{t-s} \int_s^t \int_{(\mathbb{R}^d)^2}  (|y'-x'|^{\eta} \wedge 1) \,  |\delta p^{(\ell)}|(\mu, s , r, x', y') \, dy' \, \mu(dx') \, dr \, g(c(t-s), z-x) \\
& \quad + C  \int_s^t \int_{\mathbb{R}^d} |\delta p^{(\ell+1)}|(\mu, s , r, x, y) \frac{1}{(t-r)^{1-\frac{\eta}{2}}} g(c(t-r), z-y)  \, dy \, dr
 \\
& \quad + C \int_s^t \frac{1}{(t-r)^{1-\frac{\eta}{4}}}  \int_{(\mathbb{R}^d)^2}(|y'-x'|^{\frac{\eta}{2}} \wedge 1)  |\delta p^{(\ell)}|(\mu, s, r, x',  y') \, dy'  \, \mu(dx') \, dr \, g(c(t-s), z-x) \\
& \quad + C \int_s^t \frac{1}{(t-r)^{\frac12}}  \int_{(\mathbb{R}^d)^2}  |\delta p^{(\ell)}|(\mu, s, r, x',  y') \, dy'  \, \mu(dx') \, dr \, g(c(t-s), z-x) \\
& \quad + C  \int_s^t \frac{1}{(t-r)^{2-\frac{\eta}{2}}} \int_r^t \int_{(\mathbb{R}^d)^2}  (|y'-x'|^{\frac{\eta}{2}} \wedge 1) \,  |\delta p^{(\ell)}|(\mu, s , r', x', y') \, dy' \, \mu(dx') \, dr'  \, dr \, g(c(t-s), z-x)
\end{align*}
 
\noindent which in turn, by Fubini's theorem, implies
\begin{align}
& |\delta p^{(\ell+1)}|(\mu, s , t, x, z) \nonumber \\
& \quad \leq \frac{C}{t-s} \int_s^t \int_{(\mathbb{R}^d)^2}  (|y'-x'|^{\eta} \wedge 1) \,  |\delta p^{(\ell)}|(\mu, s , r, x', y') \, dy'  \mu(dx') \, dr \, g(c(t-s), z-x) \nonumber\\
& \quad +  C \int_s^t \int_{\mathbb{R}^d} |\delta p^{(\ell+1)}|(\mu, s , r, x, y) \frac{1}{(t-r)^{1-\frac{\eta}{2}}}  \, g(c(t-r), z-y)  \, dy \, dr \label{induction:relation:deltapm}
 \\
 & \quad + C \int_s^t \frac{1}{(t-r)^{\frac12}}  \int_{(\mathbb{R}^d)^2}  |\delta p^{(\ell)}|(\mu, s, r, x',  y') \, dy'  \mu(dx') \, dr \, g(c(t-s), z-x) \nonumber \\
& \quad + C \int_s^t \frac{1}{(t-r)^{1-\frac{\eta}{4}}}  \int_{(\mathbb{R}^d)^2}(|y'-x'|^{\frac{\eta}{2}} \wedge 1)  |\delta p^{(\ell)}|(\mu, s, r, x',  y') \, dy'   \mu(dx') \, dr \, g(c(t-s), z-x). \nonumber
\end{align}

For $\ell=0$, we similarly obtain 
\begin{align}
 |\delta p^{(1)}|(\mu, s , t, x, z) &  \leq C d_{s, t}(P_1, P_2) g(c(t-s), z-x)\nonumber\\
& \quad +  C \int_s^t \int_{\mathbb{R}^d} |\delta p^{(1)}|(\mu, s , r, x, y) \frac{1}{(t-r)^{1-\frac{\eta}{2}}}  \, g(c(t-r), z-y)  \, dy \, dr\nonumber
 \\
 & \quad + C \int_s^t \frac{1}{(t-r)^{1-\frac\eta2}} d_{s, r}(P_1, P_2) \, dr \, g(c(t-s), z-x) \nonumber\\
 & \leq C d_{s, t}(P_1, P_2) g(c(t-s), z-x) \nonumber \\
 & \quad + C \int_s^t \int_{\mathbb{R}^d} |\delta p^{(1)}|(\mu, s , r, x, y) \frac{1}{(t-r)^{1-\frac{\eta}{2}}}  \, g(c(t-r), z-y)  \, dy \, dr \label{induction:relation:deltapm:first:step}
\end{align}
\noindent up to a change of the constant $C$.

\subsection{Existence and uniqueness of a fixed point for the map $\mathscr{T}$ in small time.}\label{proof:martingale:problem}

Our aim here is to prove that the map $\mathscr{T}$ admits a unique fixed point on the space $\mathscr{A}_{s, T, \mu}$, if $T>s$ is sufficiently small. The induction relation \eqref{induction:relation:deltapm} established in the previous step will play a central role. Indeed, we will prove that for any $T >0$ there exists some non-decreasing positive function $t\mapsto K(t)$, which is independent of $\mu$, such that for all integer $\ell$
\begin{align}
\sup_{s < t \leq T+s} \int_{(\mathbb{R}^d)^2} (|y'-x'|^{\frac{\eta}{2}} \wedge 1) |\delta p^{(\ell+1)}|(\mu, s, t, x', y') \, dy' \mu(dx') \leq (K(T) T^{\frac{\eta}{4}})^{\ell+1} d_{s, s +T} (P_1, P_2) \label{weight:holder:norm:diff:pm}
\end{align}
\noindent and
\begin{align}
\sup_{s < t \leq T+s} \int_{(\mathbb{R}^d)^2} |\delta p^{(\ell+1)}|(\mu, s, t, x', y') \, dy' \mu(dx') \leq K(T) (K(T) T^{\frac{\eta}{4}})^{\ell} d_{s, s +T} (P_1, P_2). \label{tv:norm:diff:pm}
\end{align}

Before proving \eqref{weight:holder:norm:diff:pm} and \eqref{tv:norm:diff:pm}, let us observe that the latter estimate yields
\begin{align}
d_{s, s+T}(\mathscr{T}^{(\ell+1)}(P_1), \mathscr{T}^{(\ell+1)}(P_2)) &:=  \sup_{t\in [s, s+ T]} d_{{\rm TV}}(\mathscr{T}^{(\ell+1)}(P_1)(t), \mathscr{T}^{(\ell+1)}(P_2)(t))   \nonumber \\
& = \sup_{t\in (s, s+ T]} d_{{\rm TV}}(\mathscr{T}^{(\ell+1)}(P_1)(t), \mathscr{T}^{(\ell+1)}(P_2)(t)) \nonumber \\
& = \sup_{t\in (s, s+ T]} \sup_{|h|_\infty \leq 1} \int_{\mathbb{R}^d} h(z) (p^{ (\ell+1)}_1 - p^{(\ell+1)}_2)(\mu, s, t, z) \, dz \nonumber\\
&  \leq K (K T^{\frac{\eta}{4}})^\ell \, d_{s, s+T}(P_1, P_2). \label{final:ineq:diff:dens}
\end{align}

 Hence, taking $T$ sufficiently small so that $\sum_{\ell \geq 0}  (K(T)T^{\frac{\eta}{4}})^{\ell}  < \infty$, the Banach fixed point theorem guarantees that the map $\mathscr{T}$ admits a unique fixed point $P^{*} \in  \mathscr{A}_{s, s+ T, \mu}$. As a consequence, the martingale problem associated to the SDE \eqref{SDE:MCKEAN} is well-posed on the time interval $[s, s+T]$. The constant $K$ in \eqref{final:ineq:diff:dens} being independent of $\mu$, it is then standard to extend the well-posedness to the whole interval $[0,\infty)$.

We now prove \eqref{weight:holder:norm:diff:pm} and \eqref{tv:norm:diff:pm}. We may assume without loss of generality that $T\leq 1$. We proceed by induction on $\ell$. For the base case $\ell = 0$, we integrate both sides of the inequality \eqref{induction:relation:deltapm:first:step} with respect to the measure $dz \mu(dx)$ on $(\rr^d)^2$, after some simplifications, we obtain 
\begin{align*}
\int_{(\mathbb{R}^d)^2}  |\delta p^{(1)}|(\mu, s , t, x, z)  \, dz \mu(dx) & \leq C d_{s, t}(P_1, P_2) + C \int_s^t \frac{1}{(t-r)^{1-\frac{\eta}{2}}} \int_{(\rr^d)^2}  |\delta p^{(1)}|(\mu, s , r, x, z)  \, dz \mu(dx) \, dr
\end{align*}
\noindent so that
$$
\sup_{s < t \leq T+s}\int_{(\mathbb{R}^d)^2}  |\delta p^{(1)}|(\mu, s , t, x, z)  \, dz \mu(dx) \leq K(T) d_{s, T+s}(P_1,P_2)
$$
\noindent where $[0,T] \ni t\mapsto K(t)$ is some positive non-decreasing function. This concludes the proof of \eqref{tv:norm:diff:pm} for $\ell=0$.

 Similarly, we multiply by $|z-x|^{\frac{\eta}{2}} \wedge 1$ and then integrate with respect to $dz  \mu(dx)$ on $(\mathbb{R}^d)^2$ both sides of the inequality \eqref{induction:relation:deltapm:first:step}, using the space-time inequality \eqref{space:time:inequality} as well as the previous estimate, we obtain for $t \in (s, T+s]$
 \begin{align*}
\int_{(\rr^d)^2}  |\delta p^{(1)}|(\mu, s , t, x, z) (|z-x|^{\frac{\eta}{2}}\wedge 1) \, dz \mu(dx)& \leq C (t-s)^{\frac{\eta}{4}} d_{s, t}(P_1, P_2)\\
& \quad + C  \int_s^t \frac{1}{(t-r)^{1-\frac{\eta}{2}}} \int_{(\rr^d)^2} |\delta p^{(1)}|(\mu, s , r, x, z) \, dz \, \mu(dx) \, dr \\
&   \leq C (t-s)^{\frac{\eta}{4}} d_{s, t}(P_1, P_2)  + C (t-s)^{\frac{\eta}{2}} K(T) d_{s, T+s}(P_1, P_2) \\
& \leq (K(T) T^{\frac{\eta}{4}}) d_{s, T+s}(P_1,P_2)
\end{align*}

\noindent so that, taking the supremum over $t \in (s, T+s]$ on the left-hand side of the previous inequality allows to conclude the proof of \eqref{weight:holder:norm:diff:pm} for $\ell=0$. Assuming now that both \eqref{weight:holder:norm:diff:pm} and \eqref{tv:norm:diff:pm} hold at step $\ell \geq 1$, we proceed similarly, namely we integrate with respect to $dz \mu(dx)$ on $(\mathbb{R}^d)^2$ both sides of the inequality \eqref{induction:relation:deltapm}
\begin{align*}
\int_{(\rr^d)^2}  |\delta p^{(\ell+1)}|(\mu, s , t, x, z)  \, dz \mu(dx) & \leq  \frac{C}{t-s} \int_s^t \int_{(\rr^d)^2}  (|z-x|^{\eta} \wedge 1) \,  |\delta p^{(\ell)}|(\mu, s , r, x, z) \, dz  \mu(dx) \, dr \\
& \quad +  C \int_s^t \frac{1}{(t-r)^{1-\frac{\eta}{2}}}  \int_{(\mathbb{R}^d)^2}  |\delta p^{(\ell+1)}|(\mu, s , r, x, z) \, dz\mu(dx) \, dr \\
& \quad + C \int_s^t \frac{1}{(t-r)^{1-\frac{\eta}{4}}}  \int_{(\rr^d)^2}  |\delta p^{(\ell)}|(\mu, s, r, x,  z) \, dz   \mu(dx) \, dr \\
& \leq C (K(T) T^{\frac{\eta}{4}})^\ell d_{s, s+T}(P_1, P_2) \\
& \quad + C \int_s^t \frac{1}{(t-r)^{1-\frac{\eta}{2}}}  \int_{(\mathbb{R}^d)^2}  |\delta p^{(\ell+1)}|(\mu, s , r, x, z) \, dz\mu(dx) \, dr 
\end{align*} 
\noindent which in turn implies \eqref{tv:norm:diff:pm}. Similarly, we now multiply  both sides of \eqref{induction:relation:deltapm} by $|z-x|^{\eta} \wedge 1$ and then integrate on $(\mathbb{R}^d)^2$ with respect to $dz \mu(dx)$. From the space-time inequality \eqref{space:time:inequality}, we obtain
\begin{align*}
\int_{(\rr^d)^2} & |\delta p^{(\ell+1)}|(\mu, s , t, x, z) (|z-x|^{\frac{\eta}{2}}\wedge 1) \, dz  \mu(dx)\\
&   \leq \frac{C}{(t-s)^{1-\frac{\eta}{4}}} \int_s^t \int_{(\rr^d)^2}  (|z-x|^{\frac{\eta}{2}} \wedge 1) \,  |\delta p^{(\ell)}|(\mu, s , r, x, z) \, dz  \mu(dx) \, dr \\
& \quad + C   \int_s^t \frac{1}{(t-r)^{1-\frac{\eta}{2}}} \int_{(\mathbb{R}^d)^2} |\delta p^{(\ell+1)}|(\mu, s , r, x, z) \, dz  \mu(dx) \, dr \\
& \quad + C T^{\frac{\eta}{4}} \int_s^t \frac{1}{(t-r)^{\frac12}} \int_{(\rr^d)^2}  |\delta p^{(\ell)}|(\mu, s , r, x, z) \, dz  \mu(dx) \, dr\\
& \quad + C T^{\frac{\eta}{4}} \int_s^t \frac{1}{(t-r)^{1-\frac{\eta}{4}}}  \int_{(\rr^d)^2}(|z-x|^{\frac{\eta}{2}} \wedge 1)  |\delta p^{(\ell)}|(\mu, s, r, x,  z) \, dz  \mu(dx) \, dr
\end{align*}

\noindent which in turn, using the induction hypothesis, yields
\begin{align*}
\int_{(\mathbb{R}^d)^2}  & |\delta p^{(\ell+1)}|(\mu, s , t, x, z) (|z-x|^{\frac{\eta}{2}}\wedge 1) \, dz  \mu(dx)  \\
& \leq C T^{\frac{\eta}{4}} (K(T) T^{\frac{\eta}{4}})^{\ell} d_{s, T+s}(P_1,P_2)  + C K(T) T^{\frac{\eta}{2}}  (K(T) T^{\frac{\eta}{4}})^{\ell} d_{s, T+s}(P_1,P_2)  \\
  & \quad + C T^{\frac{\eta}{4}+\frac12} (K(T) T^{\frac{\eta}{4}})^{\ell-1} d_{s, T+s}(P_1,P_2) + CT^{\frac\eta2} (K(T) T^{\frac\eta4})^{\ell}  d_{s, T+s}(P_1,P_2)\\
& \leq  (K(T) T^{\frac{\eta}{4}})^{\ell+1} d_{s, s+T}(P_1, P_2)
\end{align*}
\noindent which allows to conclude the proof of \eqref{weight:holder:norm:diff:pm}. The proof is now complete.\qed

\subsection{Proof of Lemma \ref{lemma:technical:result}}

\noindent \emph{Step 0:  Preliminaries.} Let us first give a general control on the difference of the diffusion coefficients taken along the two iterates. We claim that, for any $k, l$ in $\{1,\ldots,d\}^2$ and $y,z \in \rr^d$, it holds, 
\begin{eqnarray}\label{eq:INTER:PMG:DIFFdesA}
&&\left| [a_{k, l}(r, y, [\bar{X}^{1, (\ell)}_r]) - a_{k, l}(r, y, [\bar{X}^{2, (\ell)}_r]) ] - [a_{k, l}(r, z, [\bar{X}^{1, (\ell)}_r]) - a_{k, l}(r, z, [\bar{X}^{2, (\ell)}_r]) ] \right| \\
&\le & \sup_{(k, l), (t, m)}[\frac{\delta a_{k,l}}{\delta m}(t, ., m)(.)]_{H} \int_{(\mathbb{R}^d)^2} (|y'-x'|^{\eta} \wedge |z-y|^{\eta} \wedge 1) |\delta p^{(\ell)}|(\mu, s, r, x',  y') \, dy'  \, \mu(dx'),\notag
\end{eqnarray}
and 
\begin{eqnarray}\label{eq:INTER:PMG:DIFFdesA0}
&&\left| [a_{k, l}(r, y, [\bar{X}^{1, (0)}_r]) - a_{k, l}(r, y, [\bar{X}^{2, (0)}_r]) ] - [a_{k, l}(r, z, [\bar{X}^{1, (\ell)}_r]) - a_{k, l}(r, z, [\bar{X}^{2, (\ell)}_r]) ] \right| \\
&\le & \sup_{(k, l), (t, m)}[\frac{\delta a_{k,l}}{\delta m}(t, ., m)(.)]_{H} |z-y|^{\eta} d_{s,r}(P_1,P_2).\notag
\end{eqnarray}
Indeed, let $k,l$ in $\{1,\ldots,d\}^2$ and $y,z \in \rr^d$, using the fact that the map $a$ admits a linear functional derivative we have
\begin{eqnarray*}
&&\left\{ [a_{k, l}(r, y, [\bar{X}^{1, (\ell)}_r]) - a_{k, l}(r, y, [\bar{X}^{2, (\ell)}_r]) ] - [a_{k, l}(r, z, [\bar{X}^{1, (\ell)}_r]) - a_{k, l}(r, z, [\bar{X}^{2, (\ell)}_r]) ] \right\} \\
&=&\int_0^1 \int_{\mathbb{R}^d} \left\{ \frac{\delta a_{k, l}}{\delta m}(r, y, \lambda [\bar{X}^{1, (\ell)}_r] + (1-\lambda) [\bar{X}^{2, (\ell)}_r] )(y') -  \frac{\delta a_{k, l}}{\delta m}(r, z, \lambda [\bar{X}^{1, (\ell)}_r] + (1-\lambda)[\bar{X}^{2, (\ell)}_r] )(y') \right\} \\
&& \quad \quad \times (P_1^{(\ell)}-P^{(\ell)}_2)(r)(dy') d\lambda \\
& = & \int_0^1 \int_{\mathbb{R}^d} \textbf{1}_{\left\{y\neq z\right\}} \left\{ \frac{\delta a_{k, l}}{\delta m}(r, y, \lambda [\bar{X}^{1, (\ell)}_r] + (1-\lambda) [\bar{X}^{2, (\ell)}_r] )(y') -  \frac{\delta a_{k, l}}{\delta m}(r, z, \lambda [\bar{X}^{1, (\ell)}_r] + (1-\lambda)[\bar{X}^{2, (\ell)}_r] )(y') \right\} \\
&& \quad \quad \times (P_1^{(\ell)}-P^{(\ell)}_2)(r)(dy') d\lambda
\end{eqnarray*}
If $\ell=0$, using the fact that $\mathbb{R}^d \ni y' \mapsto \big[ [\delta a_{k, l}/\delta m](r, y, m)(y')- [\delta a_{k, l}/\delta m](r, z, m)(y')\big]/|z-y|^\eta$, $y\neq z$, is a measurable and bounded function (uniformly with respect to the other variables), directly yields \eqref{eq:INTER:PMG:DIFFdesA0}. If now $\ell \ge 1$, using the identity $(P_1^{(\ell)}-P^{(\ell)}_2)(r)(dy') = \int_{\mathbb{R}^d}  \delta p^{(\ell)}(\mu, s, r, x', y') \, dy' \mu(dx') $ and the fact that $\int_{\mathbb{R}^d}  \delta p^{(\ell)}(\mu, s, r, x', y') \, dy' = 0$ since each $p^{(\ell)}_i$, $i=1,2$, is a density, we obtain
\begin{eqnarray*}
&&\left\{ [a_{k, l}(r, y, [\bar{X}^{1, (\ell)}_r]) - a_{k, l}(r, y, [\bar{X}^{2, (\ell)}_r]) ] - [a_{k, l}(r, z, [\bar{X}^{1, (\ell)}_r]) - a_{k, l}(r, z, [\bar{X}^{2, (\ell)}_r]) ] \right\} \\
& = & \frac12 \sum_{k, l=1}^d \int_0^1 \int_{(\mathbb{R}^d)^2} \textbf{1}_{\left\{y\neq z\right\}}  \left\{ \frac{\delta a_{k, l}}{\delta m}(r, y, \lambda [\bar{X}^{1, (\ell)}_r] + (1-\lambda) [\bar{X}^{2, (\ell)}_r] )(y') -  \frac{\delta a_{k, l}}{\delta m}(r, z, \lambda [\bar{X}^{1, (\ell)}_r] + (1-\lambda) [\bar{X}^{2, (\ell)}_r] )(y') \right. \\
&&  \left. - \Big[\frac{\delta a_{k, l}}{\delta m}(r, y,  \lambda [\bar{X}^{1, (\ell)}_r] + (1-\lambda) [\bar{X}^{2, (\ell)}_r] )(x') -  \frac{\delta a_{k, l}}{\delta m}(r, z, \lambda [\bar{X}^{1, (\ell)}_r] + (1-\lambda) [\bar{X}^{2, (\ell)}_r] )(x') \Big] \right\} \\
 && \quad \quad \times \delta p^{(\ell)}(\mu, s, r, x',  y') \, dy'  \, \mu(dx') d\lambda,
\end{eqnarray*}

\noindent so that \eqref{eq:INTER:PMG:DIFFdesA} follows from the uniform $\eta$-H\"older regularity of the map $[\delta a/\delta m](t,. ,. )$. Similar computations give
\begin{eqnarray}\label{eq:INTER:PMG:DIFFdesAmesonly}
&&\left| [a_{k, l}(r, y, [\bar{X}^{1, (\ell)}_r]) - a_{k, l}(r, y, [\bar{X}^{2, (\ell)}_r]) ]\right| \\
&\le & \sup_{(k, l), (t, x, m)}[\frac{\delta a_{k,l}}{\delta m}(t, x, m)(.)]_{H} \int_{(\mathbb{R}^d)^2} (|y'-x'|^{\eta} \wedge 1) \delta p^{(\ell)}(\mu, s, r, x',  y') \, dy'  \, \mu(dx')\notag
\end{eqnarray}
and 
\begin{eqnarray}\label{eq:INTER:PMG:DIFFdesA0mesonly}
\left| [a_{k, l}(r, y, [\bar{X}^{1, (0)}_r]) - a_{k, l}(r, y, [\bar{X}^{2, (0)}_r]) ]\right| \le 2 \sup_{(k, l), (t, x, m)} |\frac{\delta a_{k,l}}{\delta m}(t, x, m)(.)|_{\infty} d_{s,r}(P_1,P_2).\\\notag
\end{eqnarray}

\noindent \emph{Step 1: proof of \eqref{bound:delta:frozen:kernel} and \eqref{bound:delta:deriv:frozen:kernel:ell=0}.} From the very definition of $\partial_x^n  \widehat{p}^{(\ell+1)}|(\mu, r, t, x, z)$, we obtain the decomposition
\begin{align}
\partial^{n}_x \delta \widehat{p}^{(\ell+1)}(\mu, r, t, x, z) & = \Bigg[ H_n\bigg(\int_r^t a(r', z, [\bar{X}^{1, (\ell)}_{r'}]) \, dr', z-x\bigg)\label{first_term_lemme-PMG}\\
&\quad \qquad  -  H_n\bigg(\int_s^t a(r', z, [\bar{X}^{2, (\ell)}_{r'}]) \, dr', z-x\bigg) \Bigg] \widehat{p}^{(\ell+1)}_1(\mu, s, t, x, z) \notag\\
& \quad   +  H_n\bigg(\int_s^t a(r', z, [\bar{X}^{2, (\ell)}_{r'}]) \, dr', z-x\bigg) \delta \widehat{p}^{(\ell+1)}(\mu, s, t, x, z), \notag
\end{align}
\noindent where we introduced the notation $H_0(\Sigma, x)=1$, $H_1(\Sigma, x)=(H^{i}_1(\Sigma, x))_{1\leq i \leq d}$ and $H_2(\Sigma, x)=(H^{i, j}_2(\Sigma, x))_{1\leq i, j \leq d}$ for a symmetric and positive matrix $\Sigma$ and $x\in \mathbb{R}^d$. From the very definition of the Hermite polynomials, the mean-value theorem and \eqref{eq:INTER:PMG:DIFFdesAmesonly}, for any positive integer $\ell$, we obtain
\begin{align*}
&\Bigg|  H_n\bigg(\int_r^t a(r', z, [\bar{X}^{1, (\ell)}_{r'}]) \, dr', z-x\bigg)  -  H_n\bigg(\int_r^t a(r', z, [\bar{X}^{2, (\ell)}_{r'}]) \, dr', z-x\bigg) \Bigg| \\
  \leq &C \left\{ \frac{|z-x|}{(t-r)^2} \textbf{1}_{n=1} + \bigg (\frac{|z-x|^2}{(t-r)^{3}} + \frac{1}{(t-r)^2} \bigg)\textbf{1}_{n=2} \right\}  \int_r^t \int_{(\mathbb{R}^d)^2}  (|y'-x'|^{\eta} \wedge 1) \,  |\delta p^{(\ell)}|(\mu, s , r', x', y') \, dy' \, \mu(dx')\, dr' 
\end{align*}
\noindent while for $\ell=0$, \eqref{eq:INTER:PMG:DIFFdesA0mesonly} directly gives
\begin{align*}
\Bigg| & H_n\bigg(\int_r^t a(r', z, [\bar{X}^{1, (0)}_{r'}]) \, dr', z-x\bigg)  -  H_n\bigg(\int_r^t a(r', z, [\bar{X}^{2, (0)}_{r'}]) \, dr', z-x\bigg) \Bigg| \\
& \quad \leq C \left\{ \frac{|z-x|}{(t-r)^2} \textbf{1}_{n=1} + \bigg (\frac{|z-x|^2}{(t-r)^{3}} + \frac{1}{(t-r)^2} \bigg)\textbf{1}_{n=2} \right\}\int_r^t d_{s, r'}(P_1, P_2) \, dr'.
\end{align*}

Hence, using the space-time inequality \eqref{space:time:inequality} and \eqref{estimate:deriv:tilde:p}, for any positive integer $\ell$, we deduce that the first term in the right hand side of \eqref{first_term_lemme-PMG} can be bounded as follows 
\begin{align*}
\Bigg| \Bigg[ & H_n\bigg(\int_r^t a(r', z, [\bar{X}^{1, (\ell)}_{r'}]) \, dr', z-x\bigg) -  H_n\bigg(\int_r^t a(r', z, [\bar{X}^{2, (\ell)}_{r'}]) \, dr', z-x\bigg) \Bigg] \widehat{p}^{(\ell+1)}_1(\mu, r, t, x, z) \Bigg| \\
& \leq  \frac{C}{(t-r)^{\frac{2+n}{2}}} \int_r^t \int_{(\mathbb{R}^d)^2}  (|y'-x'|^{\eta} \wedge 1) \,  |\delta p^{(\ell)}|(\mu, s , r', x', y') \, dy' \, \mu(dx') \, dr' \, g(c(t-r), z-x)
\end{align*}
\noindent while for $\ell =0$,
\begin{align*}
\Bigg| \Bigg[ & H_n\bigg(\int_r^t a(r', z, [\bar{X}^{1, (0)}_{r'}]) \, dr', z-x\bigg) -  H_n\bigg(\int_r^t a(r', z, [\bar{X}^{2, (0)}_{r'}]) \, dr', z-x\bigg) \Bigg] \widehat{p}^{(1)}_1(\mu, r, t, x, z) \Bigg| \\
& \leq  \frac{C}{(t-r)^{\frac{2+n}{2}}}\int_r^t d_{s, r'}(P_1, P_2) \, dr'  \, g(c(t-r), z-x)  \\
& \leq \frac{C}{(t-r)^{\frac{n}{2}}} d_{s, t}(P_1, P_2) \, g(c(t-r), z-x)
\end{align*}

\noindent where we used the fact that $[s, T]\ni r \mapsto d_{s, r}(P_1, P_2)$ is non-decreasing for the last inequality. We now consider the second term in the right hand side of \eqref{first_term_lemme-PMG}. First, for any integer $\ell$, by the mean-value theorem, we get
\begin{align}
\delta \widehat{p}^{(\ell+1)}(\mu, r, t, x, z)  & = \sum_{i ,j=1}^d \int_0^1   \frac12 (H^{i, j}_2 . g)\bigg(\int_r^t (\lambda a(r', z, [\bar{X}^{1, (\ell)}_{r'}]) + (1-\lambda) a(r', z, [\bar{X}^{2, (\ell)}_{r'}]) dr', z-x\bigg) \label{first:comparaison:approxy:process} \\
& \quad\quad  \times \int_r^t (a_{i, j}(r', z, [\bar{X}^{1, (\ell)}_{r'}]) - a_{i, j}(r', z, [\bar{X}^{2, (\ell)}_{r'}])) dr' \, d\lambda \nonumber.
\end{align}

Hence, we deduce from \eqref{eq:INTER:PMG:DIFFdesAmesonly} and \eqref{standard} that
$$
|\delta \widehat{p}^{(\ell+1)}|(\mu, r, t, x, z) \leq \frac{C}{t-r} \int_r^t \int_{(\mathbb{R}^d)^2}  (|y'-x'|^{\eta} \wedge 1) \,  |\delta p^{(\ell)}|(\mu, s , r', x', y') \, dy' \, \mu(dx') \, dr' \, g(c(t-r), z-x)
$$
\noindent for $\ell\geq1$ while for $\ell=0$, using \eqref{eq:INTER:PMG:DIFFdesA0mesonly}, the previous bound writes
$$
|\delta \widehat{p}^{(1)}|(\mu, r, t, x, z) \leq C d_{s, t}(P_1, P_2) g(c(t-r), z-x).
$$

The two previous estimates together with \eqref{standard} thus imply
\begin{align*}
\Big| & H_n\bigg(\int_r^t  a(r', z, [\bar{X}^{2, (\ell)}_{r'}]) \, dr', z-x\bigg)  \delta \widehat{p}^{(\ell+1)}(\mu, r, t, x, z) \Big| \\
&  \leq \frac{C}{(t-r)^{\frac{n+2}{2}}}  \int_r^t \int_{(\mathbb{R}^d)^2}  (|y'-x'|^{\eta} \wedge 1) \,  |\delta p^{(\ell)}|(\mu, s , r', x', y') \, dy' \, \mu(dx') \, dr' \, g(c(t-r), z-x), \, 
\end{align*}
\noindent for $\ell\geq1$ and 
\begin{align*}
\Big| H_n\bigg(\int_r^t a(r', z, [\bar{X}^{2, (0)}_{r'}]) \, dr', z-x\bigg) \delta \widehat{p}^{(1)}(\mu, r, t, x, z) \Big| \leq \frac{C}{(t-r)^{\frac{n}{2}}} d_{s, t}(P_1, P_2) \, g(c(t-r), z-x).
\end{align*}
This completes the proof of \eqref{bound:delta:frozen:kernel} and \eqref{bound:delta:deriv:frozen:kernel:ell=0}.\\

\noindent \emph{Step 2: proof of \eqref{bound:delta:parametrix:kernel:positive:ell} and \eqref{bound:delta:parametrix:kernel:ell=0}.} In order to establish both estimates, we make use of the following decomposition
\begin{eqnarray}\label{decompdiffgenPMG}
\delta(\mathcal{L}^{(\ell)}_r - \widehat{\mathcal{L}}^{(\ell)}_r) \widehat{p}^{(\ell+1)}(\mu, r, t, y, z)&=&[(\mathcal{L}^{(\ell),1}_r- \widehat{\mathcal{L}}^{(\ell), 1}_r) - (\mathcal{L}^{(\ell),2}_r- \widehat{\mathcal{L}}^{(\ell), 2}_r) ] \widehat{p}^{(\ell+1)}_1(\mu, r, t, y, z)\notag\\
&&+(\mathcal{L}^{(\ell),2}_r- \widehat{\mathcal{L}}^{(\ell), 2}_r)  \delta \widehat{p}^{(\ell+1)}(\mu, r, t, y, z)\notag\\
&=:& {\rm I}^{(\ell)} + {\rm II}^{(\ell)}
\end{eqnarray}
with ${\rm I}^{(\ell)} =: {\rm I}^{(\ell)}_1+{\rm I}^{(\ell)}_2$ where
\begin{align*}
{\rm I}^{(\ell)}_1&  =  \sum_{l=1}^d [b_{l}(r, y, [\bar{X}^{1, (\ell)}_r]) - b_{l}(r, y, [\bar{X}^{2, (\ell)}_r])] \partial_{x_l}\widehat{p}^{(\ell+1)}_1(\mu, r, t, y, z),\\
{\rm I}^{(\ell)}_2 &=  \frac12 \sum_{k, l=1}^d \left\{ [a_{k, l}(r, y, [\bar{X}^{1, (\ell)}_r]) - a_{k, l}(r, y, [\bar{X}^{2, (\ell)}_r]) ] - [a_{k, l}(r, z, [\bar{X}^{1, (\ell)}_r]) - a_{k, l}(r, z, [\bar{X}^{2, (\ell)}_r]) ] \right\} \\
& \quad \times \partial^2_{x_k, x_l}\widehat{p}^{(\ell+1)}_1(\mu, r, t, y, z).
\end{align*}
We first handle the term ${\rm I}^{(\ell)}$ in \eqref{decompdiffgenPMG}. From \HR (i) and \eqref{estimate:deriv:tilde:p}, for any positive integer $\ell$, one has
 $$
 |{\rm I}^{(\ell)}_1| \leq \frac{C}{(t-r)^{\frac12}}\int_{(\rr^d)^2} | \delta p^{(\ell)}|(\mu, s, r, x', y') \, dy' \mu(dx') \, g(c(t-r), z-y),
 $$
while for $\ell=0$, one obtains
  $$
 |{\rm I}^{(0)}_1| \leq \frac{C}{(t-r)^{\frac12}} d_{s, r}(P_1,P_2) \, g(c(t-r), z-y) \leq  \frac{C}{(t-r)^{\frac12}} d_{s, t}(P_1,P_2) \, g(c(t-r), z-y)
 $$
where we used the fact that $[s, T] \ni r\mapsto d_{s, r}(P_1, P_2)$ is non-decreasing for the last inequality. Then, from \eqref{eq:INTER:PMG:DIFFdesA}, for any positive integer $\ell$,
\begin{align*}
| {\rm I}^{(\ell)}_2 | \leq \frac{C}{t-r} \int_{(\mathbb{R}^d)^2}(|y'-x'|^{\eta} \wedge |z-y|^{\eta} \wedge 1)  |\delta p^{(\ell)}|(\mu, s, r, x',  y') \, dy'  \, \mu(dx') \, g(c(t-r), z-y).
\end{align*}
Now, we may break the integral appearing in the right-hand side of the above inequality into two parts by dividing the domain of $dy'$ integration into the two disjoint domains: $\left\{y' \in \rr^d :  |y'-x'| \geq |z-y| \right\}$ and $\left\{ y' \in \rr^d: |y'-x'| < |z-y| \right\}$. On the first domain, we have $(|y'-x'|^{\eta} \wedge |z-y|^{\eta} \wedge 1) \le |z-y|^{\eta}\leq |y'-x'|^{\eta/2} |z-y|^{\eta/2}$ while on the second one $(|y'-x'|^{\eta} \wedge |z-y|^{\eta} \wedge 1)\le |y'-x'|^{\eta}\leq  |y'-x'|^{\eta/2} |z-y|^{\eta/2}$. Therefore
\begin{align*}
| {\rm I}^{(\ell)}_2 |&   \leq C \frac{|z-y|^{\frac{\eta}{2}}}{t-r} \int_{(\rr^d)^2}(|y'-x'|^{\frac{\eta}{2}} \wedge 1)  |\delta p^{(\ell)}|(\mu, s, r, x',  y') \, dy'  \, \mu(dx') \, g(c(t-r), z-y)\\
&\leq \frac{C}{(t-r)^{1-\frac{\eta}{4}}}  \int_{(\rr^d)^2}(|y'-x'|^{\frac{\eta}{2}} \wedge 1)  |\delta p^{(\ell)}|(\mu, s, r, x',  y') \, dy'  \, \mu(dx') \, g(c(t-r), z-y)
\end{align*}

\noindent for any positive integer $\ell$, where we used the space-time inequality \eqref{space:time:inequality} for the last inequality. For $\ell=0$, \eqref{eq:INTER:PMG:DIFFdesA0mesonly} yields
$$
|{\rm I}^{(0)}_2 | \leq C \frac{|z-y|^\eta}{t-r} d_{s,r}(P_1, P_2) \, g(c(t-r), z-y) \leq  \frac{C}{(t-r)^{1-\frac{\eta}{2}}} d_{s, t}(P_1, P_2) \, g(c(t-r), z-y)
$$
\noindent where we used the space-time inequality \eqref{space:time:inequality} and the fact that $[s, T] \ni r\mapsto d_{s, r}(P_1, P_2)$ is non-decreasing for the last inequality.\\

We now handle the term {\rm II} in \eqref{decompdiffgenPMG}. To do so, we make use of the uniform $\eta$-H\"older regularity of $ a_{k, l}(t, ., m)$, the estimate \eqref{bound:delta:frozen:kernel} for $n=1, 2$ and the space-time inequality \eqref{space:time:inequality} to obtain
$$
|{\rm II}^{(\ell)}| \leq \frac{C}{(t-r)^{2-\frac{\eta}{2}}} \int_r^t \int_{(\rr^d)^2}  (|y'-x'|^{\eta} \wedge 1) \,  |\delta p^{(\ell)}|(\mu, s , r', x', y') \, dy' \, \mu(dx') \, dr' \, g(c(t-r), z-y)
$$
\noindent for any positive integer $\ell$ while for $\ell=0$, from \eqref{bound:delta:deriv:frozen:kernel:ell=0} and the space-time inequality \eqref{space:time:inequality}, we get
$$
|{\rm II}^{(0)}| \leq \frac{C}{(t-r)^{1-\frac{\eta}{2}}} d_{s, t}(P_1, P_2) \, g(c(t-r), z-y).
$$

Gathering the above estimates on $ {\rm I} = {\rm I}_1+{\rm I}_2$ and  ${\rm II}$ concludes the proof of \eqref{bound:delta:parametrix:kernel:positive:ell} and \eqref{bound:delta:parametrix:kernel:ell=0}.\qed

\section{Existence and regularity properties of the transition density}\label{existence:regularity:transition:density}

This section is dedicated to the proof of Theorem \ref{derivative:density:sol:mckean:and:decoupling}. Hence, throughout this section, we assume that \HE\, and \HRp \, are in force.

\subsection{Strategy of proof}\label{strategy:proof:main:thm}

Our strategy is based on a Picard approximation scheme for the transition density $p(\mu, s, t, x, z)$ and sharp uniform estimates on its derivatives from which we can extract a uniformly convergent subsequence.
 
\medskip
\noindent \emph{Step 1: Construction of an approximation sequence and related estimates}

\medskip

For a given initial condition $(s,\mu) \in \mathbb{R}_+ \times \pp$ and a probability measure $\nu \in \pp$, $\nu \neq \mu$, we let $\P^{(0)} = (\P^{(0)}(t))_{t\geq s}$ be the probability measure on $\mathcal{C}([s,\infty), \rr^d)$, endowed with its canonical filtration, satisfying $\P^{(0)}(t)=\nu$, $t\geq s$, and we consider the following recursive sequence of probability measures $\left\{ \P^{(m)}; m\geq 0 \right\}$, with time marginals $(\P^{(m)}(t))_{t\geq s}$, where, $\P^{(m)}$ being given, $\P^{(m+1)}$ is the unique solution to the following martingale problem
\begin{itemize}
\item[(i)] $\P^{(m+1)}(y(r) \in \Gamma; 0\leq r \leq s)  =  \mu(\Gamma)$, for all $\Gamma \in \mathcal{B}(\rr^d)$.
\item[(ii)] For all $f\in \mathcal{C}^2_b(\rr^d)$, 
$$
f(y_t)  - f(y_s) - \int_s^t \left\{ \sum_{i=1}^d b_i(r, y_r, \P^{(m)}(r)) \partial_i f(y_r) + \sum_{i, j =1}^d \frac12 a_{i, j}(r, y_r, \P^{(m)}(r)) \partial^2_{i, j} f(y_r) \right\} \, dr
$$

\noindent is a continuous square-integrable martingale under $\mathbb{P}^{(m+1)}$.
\end{itemize}

Note that, under the considered assumptions, the well-posedness of the above martingale problem follows from standard results, see e.g. Chapter 7 in \cite{stroock:varadhan}, so that for any given initial condition $(s,\mu) \in \mathbb{R}_+ \times \pp$, there exists a unique weak solution to the SDE
\begin{align}
X_t^{s,\xi, (m+1)} & = \xi + \int_s^t b(r,X_r^{s,\xi, (m+1)},[X_r^{s,\xi, (m)}]) dr + \int_s^t \sigma(r,X_r^{s,\xi, (m+1)},[X_r^{s,\xi, (m)}]) d W_r. \label{iter:mckean} 
\end{align}

 We also associate to the above dynamics the decoupled stochastic flow given by the unique weak solution to SDE 
\begin{align}
X_t^{s, x, \mu, (m+1)} & = x + \int_s^t b(r,X_r^{s, x,\mu, (m+1)},[X_r^{s,\xi, (m)}]) dr + \int_s^t \sigma(r,X_r^{s,x, \mu, (m+1)},[X_r^{s,\xi, (m)}]) d W_r.  \label{iter:mckean:decoupl}
\end{align}

We recall that the notation $X^{s, x, \mu, (m+1)}_t$ makes sense since by weak uniqueness of solution to the SDE \eqref{iter:mckean}, the law $[X^{s, \xi, (m)}_t]$ only depends on the initial condition $\xi$ through its law $\mu$. 

From \cite{friedman:64}, for any positive integer $m$, both random variables $X^{s,\xi,(m)}_t$ and $X^{s, x, \mu,(m)}_t$ admit a density w.r.t the Lebesgue measure on $\mathbb{R}^d$ that we denote by $p_m(\mu, s, t, z)$ and $p_m(\mu, s, t, x, z)$ respectively. Moreover, the following relation is satisfied 
\begin{equation}
 p_m(\mu, s, t, z) = \int p_m(\mu, s, t, x, z) \mu(dx), \label{conv:relation:step:m}
\end{equation}

\noindent where for all $m\geq1$
\begin{align}
p_m(\mu, s, t, x, z) & = \sum_{k \geq 0} (\widehat{p}_{m} \otimes \mH^{(k)}_m)(\mu, s, t, x, z), \label{series:approx:mckean} 
\end{align}

\noindent with
\begin{align}
\widehat{p}^{y}_m(\mu, s, r,  t , x, z) & = g\left(\int_{r}^{t} a(r', y, [X^{s,\xi, (m-1)}_{r'}]) dr', z-x\right), \label{definition:general:phat}\\
\widehat{p}_m(\mu, s, r,  t , x, z) & = \widehat{p}^{z}_m(\mu, s, r,  t , x, z) = g\left(\int_{r}^{t} a(r', z, [X^{s,\xi, (m-1)}_{r'}]) dr', z-x\right), \label{definition:phat:end:point:frozen} \\
\mH_m(\mu, s, r ,t ,x ,z) & = \left\{- \sum_{i=1}^d b_i(r, x, [X^{s, \xi, (m-1)}_r]) H^{i}_1\left(\int_r^{t} a(r', z, [X^{s, \xi, (m-1)}_{r'}]) dr', z-x\right) \right. \label{definition:parametrix:kernel:iterate:m} \\
&  \left. + \frac12 (a_{i, j}(r, x, [X^{s, \xi, (m-1)}_{r}]) - a_{i, j}(r, z, [X^{s, \xi, (m-1)}_r])) \right. \nonumber \\
& \left. \quad \quad \times H^{i, j}_2\left(\int_r^{t} a(r', z, [X^{s, \xi, (m-1)}_{r'}]) dr', z-x\right)  \right\} \widehat{p}_m(\mu, s, r, t, x, z) \nonumber
\end{align}

\noindent and $\mH^{(k+1)}_m(\mu, s, t, x, z) = (\mH_m \otimes \mH^{(k)}_m)(\mu, s, t, x, z)$, $\mH^{(0)}_m = I_d$, with the convention that $[X^{s, \xi, (0)}_t] = \P^{(0)}(t)=\nu$, $t\geq0$. In what follows, we will often make use of the following estimates: there exist constants $c:= c(\lambda)>1$, $K:=K(T, b, a , \eta, \lambda)>0$, such that for any integer $k$, for any $(\mu, x, z) \in \pp \times (\mathbb{R}^d)^2$, for any $0\leq s < t \leq T$ and any $r\in [s, t)$, it holds
\begin{equation}
\label{iter:parametrix:kernel}
| \mH^{(k)}_m(\mu, s, r, t, x, z)| \leq \frac{K^{k}}{(t-r)^{1-k \frac{\eta}{2}}} \prod_{\ell=1}^{k-1} B\left(\ell \frac{\eta}{2}, \frac{\eta}{2}\right) \, g(c(t-r), z-x)
\end{equation}
\noindent and
\begin{equation}
\label{iter:param:classic}
| \widehat{p}_{m} \otimes \mH^{(k)}_m(\mu, s, t, x, z)| \leq K^{k} (t-s)^{k \frac{\eta}{2}} \prod_{\ell=1}^{k} B\left(1+ \frac{(\ell-1)\eta}{2}, \frac{\eta}{2}\right) g(c(t-s), z-x)
\end{equation}

\noindent where $B(k, \ell) = \int_0^1 (1-v)^{-1+k} v^{-1+\ell} dv$ stands for the Beta function. From the asymptotics of the Beta function, the series \eqref{series:approx:mckean} converges absolutely and uniformly for $(\mu, x, z) \in \pp \times (\rr^d)^2$ and satisfies the following Gaussian upper-estimates: there exist some positive constants $K:=K(T, b, a, \eta, \lambda)$ and $c:=c(\lambda)$ such that for any positive integer $m$, for any $0\leq s < t \leq T$ and any $(\mu, x, z) \in \pp \times (\mathbb{R}^d)^2$ 
\begin{equation}
\label{bound:derivative:heat:kernel}
| \partial^{n}_x p_m(\mu, s, t, x, z)| \leq K (t-s)^{-\frac{n}{2}} \, g(c (t-s), z-x), \, n=0,1,2,
\end{equation}

\noindent and for any $\beta \in [0,1]$ if $n\in \left\{ 0, 1\right\}$ or for any $\beta \in [0, \eta)$ if $n=2$, there exist some positive constants $K:=K(T, b, a, \eta, \lambda, \beta)$ and $c:=c(\lambda)$ such that for any positive integer $m$ and any $(x, x')\in (\mathbb{R}^d)^2$
\begin{align}
\label{reg:heat:kernel:deriv}
 |\partial^{n}_x p_{m}(\mu, s, t, x, z) & - \partial^{n}_x p_{m}(\mu, s, t , x', z)| \nonumber \\
 & \leq K \frac{|x-x'|^{\beta}}{(t-s)^{\frac{n+\beta}{2}}} \left\{ g(c (t-s), z-x) + g(c(t-s), z-x') \right\}.
\end{align}

We refer to Friedman \cite{friedman:64} for a proof of the above estimates. Denote now by $\Phi_m(\mu, s, r, t, x_1, x_2)$ the solution to the Volterra integral equation
\begin{equation}
\Phi_m(\mu, s, r, t, x_1, x_2) = \mH_m(\mu, s,  r, t, x_1, x_2) + (\mH_m \otimes \Phi_m)(\mu, s, r, t, x_1, x_2). 
\end{equation}  

From the space-time inequality \eqref{space:time:inequality}, it is easily seen that the singular kernel $\mH_m(\mu, s, r, t, x_1, x_2)$ induces an integrable singularity in time in the above space-time convolution so that the solution exists and is given by the (uniform) convergent series
\begin{equation}
\label{infinite:series:Phi:step:m}
\Phi_m(\mu, s, r, t, x_1, x_2) = \sum_{k\geq1} \mH^{(k)}_m(\mu, s, r, t, x_1, x_2)
\end{equation}

\noindent which by \eqref{iter:parametrix:kernel} and the asymptotics of the Beta function satisfies
\begin{equation}
\label{Gaussian:estimate:Phim}
| \Phi_m(\mu, s, r, t, x_1, x_2) | \leq \frac{K}{(t-r)^{1-\frac{\eta}{2}}} \, g(c(t-r), x_2-x_1)
\end{equation}
\noindent for some positive constants $K:=K(T, b, a, \eta, \lambda), \, c:=c(\lambda)$. Moreover, the infinite series \eqref{series:approx:mckean} satisfies
\begin{equation}
p_m(\mu, s, t, x, z) = \widehat{p}_m(\mu, s,  t , x, z)  + \int_s^t \int_{\rr^d} \widehat{p}_m(\mu, s,  r , x, y) \Phi_m(\mu, s, r, t, y, z) \, dy \, dr. \label{other:representation:parametrix:series}
\end{equation}

From Chapter 1 in \cite{friedman:64}, for any positive integer $m$, the maps $x\mapsto \mH_m(\mu, s, r, t, x, z), \,  \Phi_m(\mu, s, r, t, x, z)$ are H\"older-continuous. More precisely, for any $\beta \in [0,\eta]$ and any $\beta' \in[0, \eta)$, there exist some positive constants $K:=K(T, b, a, \eta, \lambda, \beta)$, $K':=K(T, b, a, \eta, \lambda, \beta')$, $c:=c(\lambda)$, which do not depend on $m$, such that for any $0\leq s \leq r < t \leq T$ and any $(\mu, x, y,  z) \in \pp \times (\mathbb{R}^d)^2$ 
\begin{align}
|\mH_m(\mu, s, r, t, x, z) & - \mH_m(\mu, s, r, t, y, z) |\nonumber \\
& \leq K \frac{|x-y|^{\beta}}{(t-r)^{1+ \frac{\beta-\eta}{2}}} \left\{ g(c(t-r), z -x) + g(c(t-r), z-y)\right\} \label{holder:reg:parametrix:kernel}
\end{align}

\noindent and 
\begin{align}
|\Phi_m(\mu, s, r, t, x, z) & - \Phi_m(\mu, s, r, t, y, z) |\nonumber \\
& \leq K' \frac{|x-y|^{\beta'}}{(t-r)^{1+ \frac{\beta'-\eta}{2}}} \left\{ g(c(t-r), z -x) + g(c(t-r), z-y)\right\}. \label{holder:reg:voltera:kernel}
\end{align}

Before stating the main result of this section, we introduce the following notation. For some positive integer $m$, $n\in \left\{0,1\right\}$, $\beta \in [0, 1+\eta)$ if $n=0$ or $\beta \in [0,\eta)$ if $n=1$, $C \geq0$ and $t\in [0,T]$, we define
$$
\mathscr{C}^{n ,\beta}_m(C, t):=  \sum_{k=1}^{m} C^{k} t^{(k-1) \frac{\eta}{2}} \prod_{i=1}^{k-1} B\left(\frac{\eta}{2}, \frac{1-n+\eta-\beta}{2} +  (i-1) \frac{\eta}{2} \right).
$$

With the above notations and properties, we prove the following key proposition whose proof is postponed to the next subsection.

\begin{prop}\label{proposition:reg:density:recursive:scheme:mckean} Let $T>0$. For any fixed $(t, z) \in (0,T] \times \rr^d$, for any positive integer $m$, the following properties hold:
\begin{itemize}
\item The mapping $[0,t) \times \rr^d \times \pp \ni (s, x, \mu) \mapsto p_m(\mu, s, t, x, z)$ is in $\mathcal{C}^{1, 2, 2}([0,t) \times \rr^d \times \pp)$.

\item There exist positive constants $C := C(T, \HR, \HE), \, C_\beta:=C(T, \HR, \HE, \beta), \, c:=c(\lambda)$, which do not depend on $m$, such that, for any $(\mu, s, x, x', z, v, v') \in \mathcal{P}_2(\rr^d) \times [0,t) \times (\rr^d)^5$
\begin{align}
|\partial^{n}_v [\partial_\mu p_m(\mu, s, t, x, z)](v)| & \leq  \frac{\mathscr{C}^{n, 0}_m(C, t-s)}{(t-s)^{\frac{1+n-\eta}{2}}}  \, g(c (t-s), z-x), \quad n=0, 1,\label{first:second:estimate:induction:decoupling:mckean} \\
| \partial_s p_m(\mu, s, t, x, z) | & \leq  \frac{\mathscr{C}^{1, 0}_m(C, t-s)}{t-s} \, g(c (t-s), z-x), \label{time:derivative:induction:decoupling:mckean}
\end{align}

\begin{align}
  | \partial^{n}_v [\partial_\mu p_{m}(\mu, s, t, x, z)](v) & - \partial^{n}_v [\partial_\mu p_{m}(\mu, s, t , x', z)] (v) |\nonumber \\
&  \leq \mathscr{C}^{n, \beta}_{m}(C_\beta, t-s) \frac{|x-x'|^{\beta}}{(t-s)^{\frac{1+n+\beta-\eta}{2}}} \left\{ g(c(t-s), z-x) + g(c(t-s), z-x') \right\},  \label{equicontinuity:second:third:estimate:decoupling:mckean} 
\end{align}
\noindent where $\beta \in [0,1]$ for $n=0$ and $\beta \in [0, \eta)$ for $n=1$,

\begin{align}
 |\partial_v [\partial_\mu p_m(\mu, s, t, x, z)](v) & - \partial_v [\partial_\mu p_m(\mu, s, t, x, z)](v')| \nonumber \\
& \leq \mathscr{C}^{1, \beta}_{m}(C_\beta, t-s) \frac{|v-v'|^{\beta}}{(t-s)^{1+\frac{\beta-\eta}{2}}} \, g(c(t-s), z-x) \label{equicontinuity:first:estimate:decoupling:mckean}
\end{align}
\noindent where $ \beta\in [0,\eta)$.
\item There exist three positive constants $C^{+}_\beta := C(T, \HRp, \HE, \beta)$, $C_\beta:= C(T, \HR, \HE, \beta)$, $c:=c(\lambda)$, which do not depend on $m$, such that for any $(\mu, \mu', s, x, z, v) \in (\mathcal{P}_2(\rr^d))^2 \times [0,t) \times (\rr^d)^3$


\begin{align}
 |\partial^{n}_x p_{m}(\mu, s, t, x, z) & - \partial^{n}_x p_{m}(\mu', s, t, x, z)]| \leq C_\beta \frac{W^{\beta}_2(\mu, \mu')}{(t-s)^{ \frac{n+\beta}{2}}} \, g(c(t-s), z-x), \label{regularity:measure:estimate:v1:v2:v3:decoupling:mckean} 
\end{align}

\noindent where $\beta \in [0,1]$ for $n \in \left\{0, 1\right\}$ and $\beta \in [0,\eta)$ for $n=2$,
\begin{align}
 |\partial^{n}_v [\partial_\mu p_{m}(\mu, s, t, x, z)](v) & - \partial^{n}_v [\partial_\mu p_{m}(\mu', s, t, x, z)](v)| \nonumber \\
 & \leq \mathscr{C}^{n, \beta}_{m}(C^+_{\beta}, t-s)  \frac{W^{\beta}_2(\mu, \mu')}{(t-s)^{ \frac{1+n+\beta- \eta}{2}}} \, g(c(t-s), z-x), \label{regularity:measure:estimate:v1:v2:decoupling:mckean} 
\end{align}

\noindent where $\beta \in [0,1]$ for $n=0$ and $\beta \in [0,\eta)$ for $n=1$, and for all $(s_1, s_2) \in [0,t)^2$,


\begin{align}
 | & \partial^{n}_x p_{m}(\mu, s_1, t, x, z) - \partial^{n}_x p_{m}(\mu, s_2, t, x, z)  | \nonumber \\
 & \leq C_\beta \left\{ \frac{|s_1-s_2|^{\beta}}{(t-s_1)^{\frac{n}{2} + \beta }} \, g(c(t-s_1), z-x) + \frac{|s_1-s_2|^{\beta}}{(t- s_2)^{ \frac{n}{2} +\beta }} \, g(c(t-s_2), z-x) \right\}, \label{regularity:time:estimate:v1:v2:v3:decoupling:mckean} 
\end{align}

\noindent where $\beta \in [0,1]$ for $n=0$, $\beta \in [0,(1+\eta)/2)$ for $n=1$ and $\beta \in [0, \eta/2)$ for $n=2$ and
\begin{align}
 | & \partial^{n}_v [\partial_\mu p_{m}(\mu, s_1, t, x, z)](v) - \partial^{n}_v [\partial_\mu p_{m}(\mu, s_2, t, x, z)](v)| \nonumber \\
 & \leq \mathscr{C}^{n, 2 \beta}_{m}(C^+_{\beta}, t-s_1 \vee s_2) \left\{ \frac{|s_1-s_2|^{\beta}}{(t-s_1)^{\frac{1+n-\eta}{2} + \beta }} \, g(c(t-s_1), z-x) + \frac{|s_1-s_2|^{\beta}}{(t- s_2)^{ \frac{1+n-\eta}{2} +\beta }} \, g(c(t-s_2), z-x) \right\}, \label{regularity:time:estimate:v1:v2:decoupling:mckean} 
\end{align}

\noindent where $\beta \in [0,(1+\eta)/2)$ if $n=0$ and $\beta \in [0,\eta/2)$ if $n=1$.

%
%

\end{itemize}

\end{prop}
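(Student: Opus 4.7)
The plan is to prove Proposition \ref{proposition:reg:density:recursive:scheme:mckean} by induction on $m$, exploiting the parametrix series representation \eqref{series:approx:mckean} together with the smoothing property of Proposition \ref{structural:class}. The base case $m=1$ is essentially trivial: since $[X^{s,\xi,(0)}_t] = \nu$ does not depend on $\mu$, the coefficients appearing in \eqref{iter:mckean:decoupl} at step one do not depend on $\mu$, and neither do $\widehat{p}_1$, $\mH_1$ or $p_1$. All Lions derivatives therefore vanish identically and the bounds involving $\partial_\mu$ or $\partial_v[\partial_\mu\cdot]$ hold trivially, while the purely spatial and temporal bounds at $m=1$ reduce to the classical parabolic estimates \eqref{bound:derivative:heat:kernel}--\eqref{reg:heat:kernel:deriv} and standard Kolmogorov theory from \cite{friedman:64}.

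For the inductive step $m \Rightarrow m+1$, I assume the proposition holds at rank $m$ with constants independent of $m$, and I differentiate the representation $p_{m+1} = \widehat{p}_{m+1} + \widehat{p}_{m+1} \otimes \Phi_{m+1}$ with respect to $(s,x,\mu)$. The dependence of $\widehat{p}_{m+1}$ and $\mH_{m+1}$ on $(s,\mu)$ is funnelled entirely through the flow of measures $r \mapsto [X^{s,\xi,(m)}_r]$. To differentiate maps of the form $\mu \mapsto h([X^{s,\xi,(m)}_r])$ with $h \in \{a_{i,j},\, b_i\}$, I will invoke Proposition \ref{structural:class} with the transition density $p_m$ in place of $p$ and the flat derivative $\delta h/\delta m$ supplied by \HRp. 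The induction hypothesis guarantees that $p_m \in \mathcal{C}^{1,2,2}$ and satisfies the integrability requirement \eqref{integrability:condition}, so Proposition \ref{structural:class} yields the centred representation formulas \eqref{condition1:structural:class:measure:derivative} and \eqref{condition1:structural:class:time:derivative} for the $L$-derivative, its spatial derivative, and the time derivative of $h\circ[X^{s,\xi,(m)}_\cdot]$.

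The crucial observation, and the principal source of the sharp exponents appearing in \eqref{first:second:estimate:induction:decoupling:mckean}--\eqref{time:derivative:induction:decoupling:mckean}, is that the representation formulas of Proposition \ref{structural:class} are written as differences of the flat derivative evaluated at two points. Combining this cancellation with the uniform $\eta$-H\"older regularity of $y \mapsto [\delta h/\delta m](m)(y)$ from \HRp and the Gaussian bound on $\partial_x p_m$ from the induction hypothesis upgrades the naive diagonal singularity $(t-s)^{-1/2}$ into $(t-s)^{-(1-\eta)/2}$, which is exactly the exponent needed. I then propagate this $\eta$-improvement through the parametrix series expansion and through the Volterra equation \eqref{infinite:series:Phi:step:m} satisfied by $\Phi_{m+1}$, by means of the space-time inequality \eqref{space:time:inequality} and the standard Beta-function bookkeeping recalled in \eqref{iter:param:classic}. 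The H\"older estimates \eqref{equicontinuity:second:third:estimate:decoupling:mckean}--\eqref{equicontinuity:first:estimate:decoupling:mckean} will be obtained in parallel by performing difference-quotient analyses inside the parametrix kernel, exploiting the $\eta$-H\"older regularity of the second flat derivatives also granted by \HRp.

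Finally, the stability estimates \eqref{regularity:measure:estimate:v1:v2:v3:decoupling:mckean}--\eqref{regularity:time:estimate:v1:v2:decoupling:mckean} in $\mu$ and $s$ will be derived by differencing two copies of the parametrix series, interpolating between the derivative bounds just established and the raw boundedness of the coefficients, and using the $W_2$-Lipschitz/H\"older modulus of $\mu \mapsto [X^{s,\xi,(m)}_\cdot]$ and the time continuity of this flow, both of which are furnished by the induction hypothesis. The main technical obstacle throughout is precisely the uniformity of constants in $m$: at each inductive pass the bound at step $m+1$ must be expressed as an integral of the bound at step $m$ sharing the same diagonal Gaussian profile, so that the Beta-function summability closes the loop. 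Once these uniform bounds are in hand, the $\mathcal{C}^{1,2,2}$ regularity of $(s,x,\mu) \mapsto p_{m+1}(\mu, s, t, x, z)$ follows from term-by-term differentiation of the uniformly convergent parametrix series together with dominated convergence, and the proposition is proved.
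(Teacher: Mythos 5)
Your plan follows essentially the same route as the paper: induction on $m$ with identically vanishing Lions derivatives at $m=1$, an inductive step driven by Proposition \ref{structural:class} applied to $a$ and $b$ along the step-$m$ flow (using the centred flat-derivative representation together with the $\eta$-H\"older regularity of $\delta a/\delta m$, $\delta b/\delta m$ to gain the $(t-s)^{\eta/2}$ improvement), and propagation through the parametrix series with Beta-function bookkeeping so that the constants stay uniform in $m$, exactly as in the paper's quantities $u^n_m$, $v^n_m$ and $C_{m,n}(s,t)$. The only minor divergences are cosmetic: for the measure derivatives the paper differentiates the relation $p_{m+1}=\widehat p_{m+1}+p_{m+1}\otimes\mH_{m+1}$ rather than the $\Phi_{m+1}$-representation (thereby avoiding the $\mu$-derivative of the full Volterra kernel), and the base-case time-H\"older estimate \eqref{regularity:time:estimate:v1:v2:v3:decoupling:mckean} at $m=1$ is not a direct citation from \cite{friedman:64} but requires the hands-on multi-term decomposition carried out in the paper.
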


\begin{remark}\label{remark:convergence:series:gaussian:upper:bounds}
Let us indicate that for any positive constant $C$, any $T>0$, any $n\in \left\{0, 1\right\}$ and any $\beta \in [0, 1-n+\eta)$, the series $\mathscr{C}^{n, \beta}_\infty(C,T) = \lim_{m\uparrow \infty} \mathscr{C}^{n,\beta}_{m}(C, T)$ converges. Indeed, from the relation $B(a, b) = \Gamma(a) \Gamma(b)/\Gamma(a+b)$, $a, b>0, $ it follows
$$
\mathscr{C}^{n, \beta}_{m}(C, T) = \sum_{k=1}^{m} \frac{(C\Gamma(\eta/2))^{k}}{\Gamma\left(\frac{1-n+\eta-\beta}{2}+ (k-1)\frac{\eta}{2}\right)} T^{k-1}
$$ 
\noindent so that, by Stirling's formula, the above $m$th partial sums converge and satisfy 
$$
 \mathscr{C}^{n, \beta}_{\infty}(C, T) := \lim_{m \uparrow \infty }\mathscr{C}^{n, \beta}_{m}(C, T) = C\Gamma(\eta/2) E_{\eta/2, \frac{1-n+\eta-\beta}{2}}(C\Gamma(\eta/2))< \infty
$$
\noindent where we recall that $z\mapsto E_{\alpha, \beta}(z)$ stands for the Mittag-Leffler function.
\end{remark}
The proof of Proposition \ref{proposition:reg:density:recursive:scheme:mckean} being rather long and technical, we postpone it to the Appendix \ref{proof:main:prop}. 

\medskip

\noindent \emph{Step 2: Extraction of a convergent subsequence}

\medskip

Our next step now is to extract from the following sequences $\left\{\mathbb{L}^{2} \ni \xi \mapsto \widetilde{p}_m(\xi, s, t, x, z) , m \geq 0 \right\}$ (the lifting of the sequence $\left\{ \pp \ni \mu \mapsto p_m(\mu, s, t, x, z), m\geq0\right\}$), $\left\{ \rr^d \ni v \mapsto \partial_\mu p_m(\mu, s, t, x, z)(v) , m \geq 0 \right\}$, $\left\{ \rr^d \ni v \mapsto \partial_v [\partial_\mu p_m(\mu, s, t, x, z)](v) , m \geq 0 \right\}$ the corresponding subsequences which converge locally uniformly using the Arzel\`a-Ascoli theorem.

Since the coefficients $b_i, \, a_{i, j}$ are bounded and the initial condition $\mu \in \pp$, the sequence of probability measures $(\P^{(m)})_{m\geq0}$ is tight. Relabelling the indices if necessary, we may assert that $(\P^{(m)})_{m \geq 0}$ converges weakly to a probability measure $\P^{\infty}$. Under our current assumptions \HE\, and \HRp, from standard arguments that we omit (namely passing to the limit in the characterisation of the martingale problem solved by $\P^{(m)}$) we deduce that $\P^\infty$ is the probability measure $\P$ induced by the unique weak solution to the McKean-Vlasov SDE \eqref{SDE:MCKEAN}. As a consequence, every convergent subsequence converges to the same limit $\P$ and so does the original sequence $(\P^{(m)})_{m\geq1}$. 

By the dominated convergence, for any fixed $t >0$ and $z \in \rr^d$, using \eqref{iter:param:classic}, one may pass to the limit as $m\uparrow \infty$ in the parametrix infinite series \eqref{series:approx:mckean} and thus deduce that the sequence of functions $\left\{\mathcal{K} \ni (s, x, \mu) \mapsto p_m(\mu, s, t, x, z), \, m\geq1\right\}$, $\mathcal{K}$ being a compact set of $[0,t) \times \rr^d \times \pp$, converges to the map $(s, x, \mu) \mapsto p(\mu, s, t, x, z)$, for any fixed $(s, x, \mu) \in \mathcal{K}$, given by the infinite series \eqref{parametrix:series:expansion}. Moreover, it is clearly uniformly bounded and from \eqref{regularity:measure:estimate:v1:v2:v3:decoupling:mckean}, \eqref{regularity:time:estimate:v1:v2:v3:decoupling:mckean} and \eqref{reg:heat:kernel:deriv}, it is equicontinuous. Relabelling the indices if necessary, from the Arzel\`a-Ascoli theorem, we may assert that it converges uniformly. We thus deduce that the map $[0,t) \times \rr^d \times \pp \ni (s, x, \mu) \mapsto p(\mu, s, t, x, z)$ is continuous.

For any $\mu \in \pp$ and any integer $m$, the mapping $ (s, x, \mu) \mapsto p_m(\mu, s, t ,x ,z)$ is in $\mathcal{C}^{0, 2, 0}([0,t) \times \mathbb{R}^d \times \pp)$. Moreover, from the estimates \eqref{regularity:measure:estimate:v1:v2:v3:decoupling:mckean}, \eqref{regularity:time:estimate:v1:v2:v3:decoupling:mckean}, \eqref{bound:derivative:heat:kernel} and \eqref{reg:heat:kernel:deriv} (for $n=1,2$), the sequence of functions $\{\mathcal{K} \ni (s, x, \mu) \mapsto \partial_x p_m(\mu, s, t, x, z)$, $\partial^2_x p_m(\mu, s, t, x, z) , m\geq 0\}$, are uniformly bounded and equicontinuous. Hence, from the Arzel\`a-Ascoli theorem, we may assert that $(s, x) \mapsto p(\mu, s, t, x, z)\in \mathcal{C}^{0, 2 }([0,t) \times \rr^d)$ and that the mappings $[0,t) \times \rr^d \times \pp \ni (s, x, \mu) \mapsto \partial_x p(\mu, s, t, x, z)$, $\partial^2_x p(\mu, s, t, x, z)$ are continuous.

We now consider the sequence $\left\{ \mathcal{K} \ni \xi \mapsto D\widetilde{p}_{m}(\xi, s, t, x, z) = \partial_\mu p _m([\xi], s, t, x, z)(\xi),\, m\geq 1 \right\}$, $\mathcal{K}$ being any compact set of $\mathbb{L}^{2}$. From \eqref{first:second:estimate:induction:decoupling:mckean} (with $n=0$), it is uniformly bounded. From \eqref{first:second:estimate:induction:decoupling:mckean} (with $n=1$) and \eqref{regularity:measure:estimate:v1:v2:decoupling:mckean} (with $n=0$), it is equicontinuous. Relabelling the indices if necessary, from the Arzel\`a-Ascoli theorem, we may assert that it converges uniformly. We thus deduce that the map $\mathcal{K} \ni \xi \mapsto \widetilde{p}(\xi, s, t, x, z)$ is continuously differentiable. As a consequence, $\pp \ni \mu \mapsto p(\mu, s, t, x, z)$ is continuously $L$-differentiable. 

From the estimates \eqref{equicontinuity:second:third:estimate:decoupling:mckean}, \eqref{first:second:estimate:induction:decoupling:mckean}, \eqref{regularity:measure:estimate:v1:v2:decoupling:mckean} and \eqref{regularity:time:estimate:v1:v2:decoupling:mckean} (with $n=0$) and \eqref{first:second:estimate:induction:decoupling:mckean} (for $n=1$), the sequence $ \{\mathcal{K} \ni (s, x, \mu, v) \mapsto \partial_\mu p_{m}(\mu, s, t, x, z)(v)$, $m\geq 1\}$, $\mathcal{K}$ being a compact set of $[0,t) \times \rr^d \times \pp \times \rr^d$, is uniformly bounded and equicontinuous so that the map $[0,t) \times \rr^d \times \pp \times \rr^d \ni (s, x, \mu, v) \mapsto \partial_\mu p (\mu, s, t, x , z)(v)$ is continuous. 

From the estimates \eqref{first:second:estimate:induction:decoupling:mckean} and \eqref{equicontinuity:first:estimate:decoupling:mckean} (both for $n=1$), the sequence $\{\rr^d \supset B(0,R) \ni v \mapsto \partial_v [\partial_\mu p_{m}(\mu, s, t, x, z)](v)$, $m\geq1\}$, is bounded and equicontinuous so that $\rr^d \ni v \mapsto \partial_\mu p(\mu, s, t, x, z)(v)$ is continuously differentiable. 

The continuity of the map $[0,t) \times \rr^d \times \pp \times \rr^d \ni(s, x, \mu, v) \mapsto \partial_v [\partial_\mu p(\mu, s, t, x, z)](v)$ is finally deduced from the uniform convergence of the sequence of continuous mappings $\{ \mathcal{K}  \ni (s, x, \mu, v) \mapsto \partial_v [\partial_\mu p_{m}(\mu, s, t, x, z)](v)$, $m\geq1\}$, $\mathcal{K}$ being a compact set of $[0,t) \times \rr^d \times \pp \times\rr^d$, along a subsequence, obtained from the estimates \eqref{equicontinuity:second:third:estimate:decoupling:mckean}, \eqref{equicontinuity:first:estimate:decoupling:mckean}, \eqref{regularity:measure:estimate:v1:v2:decoupling:mckean} and \eqref{regularity:time:estimate:v1:v2:decoupling:mckean} (with $n=1$) combined with the Arzel\`a-Ascoli theorem. The estimates \eqref{first:second:lions:derivative:mckean:decoupling} and \eqref{equicontinuity:second:third:estimate:decoupling:mckean:final} to \eqref{regularity:time:estimate:v1:v2:mckean:decoupling} then follow by passing to the limit in the corresponding upper-bounds proved in the first step.

 \medskip

\noindent \emph{Step 3: $\mathcal{C}^{1, 2,2}([0,t) \times \rr^d \times \pp)$ regularity and related time estimates.}

\medskip

Let us now prove that $(s, x, \mu) \mapsto p(\mu, s, t, x, z)$ is in $\mathcal{C}^{1, 2, 2}([0,t) \times \rr^d \times \pp)$. From the Markov property satisfied by the SDE \eqref{SDE:MCKEAN}, stemming from the well-posedness of the related martingale problem, for any $0\leq h \leq s$, the following relation is satisfied 
$$
p(\mu, s-h, t, x, z) = \E[p([X^{s-h, \xi}_s], s, t, X^{s-h, x, \mu}_s, z)].
$$

Combining the estimates \eqref{first:second:lions:derivative:mckean:decoupling} and \eqref{gradient:estimates:parametrix:series} (for $n=1$) with the chain rule formula of Proposition \ref{prop:chain:rule:joint:space:measure} (with respect to the space and measure variables only) we obtain
$$
  \E[p([X^{s-h, \xi}_s], s, t, X^{s-h, x, \mu}_s, z)] = p(\mu, s, t, x, z) + \E\left[\int_{s-h}^{s} \mathcal{L}_r p([X^{s-h, \xi}_r], s, t, X^{s-h, x, \mu}_r, z) \, dr \right]
$$

\noindent \noindent with 
\begin{align*}
\mathcal{L}_r h(x,  \mu) & := \sum_{i=1}^d b_i(r, x, \mu) \partial_{x_i}h(x, \mu) +  \frac12 \sum_{i, j=1}^d a_{i, j}(r, x, \mu) \partial^2_{x_i, x_j}h( x, \mu) \\
& \quad +\int_{\rr^d} \left\{ \sum_{i=1}^d b_i(r, v, \mu) [\partial_{\mu} h(x, \mu)(v)]_i + \frac12  \sum_{i, j=1}^d a_{i, j}(r, v, \mu) \partial_{v_i} [\partial_{\mu} h(x, \mu)(v)]_j \right\} \, \mu(dv).
\end{align*}

 We thus deduce
$$
\frac{1}{h}(p(\mu, s-h, t, x, z)  - p(\mu, s, t, x, z)) = \frac{1}{h} \E\left[ \int_{s-h}^s \mathcal{L}_r p([X^{s-h, \xi}_r], s, t, X^{s-h, x, \mu}_r, z) \, dr \right]
$$


\noindent so that, letting $h\downarrow 0$, from the boundedness and continuity of the coefficients as well as the continuity of the maps $(\mu, x) \mapsto p(\mu, s, t, x, z), \, \partial^{1+n}_x p(\mu, s, t, x, z), \, \partial^{n}_v[\partial_\mu p(\mu, s, t, x, z)]$, for $n=0,1$, we deduce that $[0, t) \ni s\mapsto p(\mu, s, t, x, z)$ is left-differentiable. Still from the continuity of the coefficients and of the map $(s, x, \mu)\mapsto \mathcal{L}_s p(\mu, s, t, x, z)$, we then conclude that it is differentiable in time on the interval $[0,t)$ with a time derivative satisfying  
 \begin{eqnarray*}
\partial_s p(\mu, s, t, x, z) = - \mathcal{L}_s p(\mu, s, t, x, z) \quad \mbox{ on } [0,t) \times \rr^d \times \pp.
\end{eqnarray*} 

The time derivative estimate \eqref{first:time:derivative:mckean:decoupling} now follows from the previous relation as well as the estimates \eqref{gradient:estimates:parametrix:series} and \eqref{first:second:lions:derivative:mckean:decoupling}.

\section{Solving the related PDE on the Wasserstein space}\label{solving:pde:wasserstein:space}
This section is devoted to the proof of Theorem \ref{cauchy:problem:wasserstein}. Thanks to the regularity properties provided by Theorem \ref{derivative:density:sol:mckean:and:decoupling}, we are able to tackle the Cauchy problem \eqref{pde:wasserstein:space} in any strip $[0,T] \times \rr^d \times \pp$. We start with the following proposition.

\begin{prop}\label{reg:map:U}Under the assumptions of Theorem \ref{cauchy:problem:wasserstein}, the mapping $[0,T] \times \rr^d \times \pp \ni (t, x, \mu) \mapsto U(t, x, \mu)$ defined by \eqref{sol:pde:wasserstein} is continuous, belongs to $\mathcal{C}^{1, 2, 2}([0,T) \times \rr^d \times \pp)$, satisfies \eqref{bound:solution:cauchy:problem} and for any $(t, x, v, \mu) \in [0,T) \times (\rr^d)^2 \times \pp$
\begin{equation}
\Big| \partial^{n}_v[\partial_\mu U(t, x, \mu)](v) \Big| \leq C (T-t)^{-\frac{(1+n)}{2}} \exp(\frac{k |x|^2}{T}) (1+ |v|^2 + M^{q}_2(\mu)), \, n=0, 1, \label{bound:mes:deriv:sol:pde}
\end{equation}

\noindent and
\begin{equation}
\Big| \partial_x U(t, x, \mu) \Big| \leq K (T-t)^{-\frac{1}{2}} \exp(\frac{k |x|^2}{T}) (1+ M^{q}_2(\mu)), \, n=0, 1, \label{bound:space:deriv:sol:pde}
\end{equation}

\noindent where $C:=C(T, \HR, \HE)$, $K:=K(T, b, a , \lambda, \eta)$ and $k:=k(\lambda, \alpha)$ are positive constants. Moreover, $U$ is a solution to the Cauchy problem \eqref{pde:wasserstein:space} in the strip $[0,T] \times \rr^d \times \pp$.
\end{prop}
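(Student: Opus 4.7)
The plan is to split $U = U_h - U_f$ where $U_h(t,x,\mu) := \int h(z,[X^{t,\xi}_T]) p(\mu,t,T,x,z)\,dz$ and $U_f(t,x,\mu) := \int_t^T \int f(s,z,[X^{t,\xi}_s]) p(\mu,t,s,x,z)\,dz\,ds$, and to treat each piece separately, the contribution of $f$ being obtained by integrating the corresponding estimates for $h$-type integrands in the running time variable. The point is that the map $U_h$ has the form of a composition of the linear-functional-differentiable datum $h$ with the smooth flow $(t,\mu) \mapsto \Theta(t,\mu) := [X^{t,\xi}_T]$, \emph{tensorised} against the density $p(\mu,t,T,x,z)$. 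Since the density $p$ belongs to $\mathcal{C}^{1,2,2}([0,T)\times \rr^d \times \pp)$ and satisfies the Gaussian bounds of Theorem \ref{derivative:density:sol:mckean:and:decoupling}, the integrability hypothesis \eqref{integrability:condition} of Proposition \ref{structural:class} is met, so that $\mu \mapsto h(z,\Theta(t,\mu))$ is itself $\mathcal{C}^{1,2}$ in $\mu$ with explicit representation formulas \eqref{condition1:structural:class:measure:derivative}--\eqref{condition1:structural:class:time:derivative}.

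The first step is to establish the $\mathcal{C}^{1,2,2}$ regularity and the pointwise bounds \eqref{bound:solution:cauchy:problem}, \eqref{bound:mes:deriv:sol:pde}, \eqref{bound:space:deriv:sol:pde}. The $x$-derivatives of $U_h$ are obtained by differentiating under the integral, using \eqref{gradient:estimates:parametrix:series} together with the growth assumption \eqref{growth:condition:h:f} on $h$; the main point is to exploit the standard inequality $\exp(\alpha |x|^2/T)\,g(c(T-t),z-x) \leq C \exp(k |x|^2/T)\, g(c'(T-t), z-x)$, valid for any $\alpha < c$ with $k$ and $c'$ chosen accordingly. For the $\mu$-derivative I would write, using Proposition \ref{structural:class},
\begin{align*}
\partial_\mu U_h(t,x,\mu)(v) & = \int h(z,\Theta(t,\mu))\, \partial_\mu p(\mu,t,T,x,z)(v)\,dz \\
& \quad + \partial_\mu\Big[\int h(z, \Theta(t,\nu))\, p(\mu,t,T,x,z)\,dz\Big]_{|\nu = \mu}(v),
\end{align*}
the second term being expanded via the chain rule \eqref{eq:relationLionsFlat} applied to $\nu \mapsto h(z,\Theta(t,\nu))$, which in turn is handled by \eqref{condition1:structural:class:measure:derivative} at the level of $\Theta$. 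The same strategy yields $\partial_v [\partial_\mu U_h]$, using now both \eqref{first:second:lions:derivative:mckean:decoupling} and \eqref{equicontinuity:first:estimate:mckean:decoupling}. Joint continuity and the claimed bounds \eqref{bound:mes:deriv:sol:pde} follow by combining the Gaussian estimates of Theorem \ref{derivative:density:sol:mckean:and:decoupling} with the growth conditions \eqref{growth:condition:h:f}--\eqref{growth:condition:tilde:h:f} on $h$ and $[\delta h/\delta m]$; for $U_f$ the additional integral in $s$ on $[t,T]$ is absolutely convergent thanks to the $\eta$-H\"older regularity of $f$ in $x$ (needed to control the singular kernel through $(\mathcal{L}_s - \widehat{\mathcal{L}}_s) p$, analogously to the parametrix treatment in Section \ref{existence:regularity:transition:density}).

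For the time derivative I would use the Markov property of the decoupled flow combined with the fact, already established in the discussion preceding Theorem \ref{derivative:density:sol:mckean:and:decoupling}, that $\partial_s p(\mu,s,t,x,z) = -\mathcal{L}_s p(\mu,s,t,x,z)$ on $[0,t)\times \rr^d \times \pp$, with $\mathcal{L}_s$ the generator \eqref{inf:generator:mckean:vlasov} of the McKean-Vlasov dynamics. Applying the chain rule formula of Proposition \ref{prop:chain:rule:joint:space:measure} to the process $s \mapsto U(s, X^{t,x,\mu}_s, [X^{t,\xi}_s])$ and taking expectation, the martingale term vanishes and the boundary contributions at $s=T$ and at the running time produce $\E[h(X^{t,x,\mu}_T,[X^{t,\xi}_T])] - \int_t^T \E[f(s,X^{t,x,\mu}_s,[X^{t,\xi}_s])]\,ds = U(t,x,\mu)$; differentiating this identity in $t$ and using again the backward equation for $p$ together with Proposition \ref{structural:class} applied to the flow $\Theta(\cdot,\cdot)$ yields $(\partial_t + \mathcal{L}_t) U = f$ and the terminal condition $U(T,\cdot,\cdot) = h$.

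The main obstacle will be the $\partial_v[\partial_\mu U](v)$ term, because differentiating $\mu \mapsto U(t,x,\mu)$ produces two distinct contributions: one coming from the $\mu$ appearing as the initial law of the decoupled density $p(\mu,t,T,x,z)$, and one coming from the $\mu$ entering indirectly through $\Theta(t,\mu) = [X^{t,\xi}_T]$ inside $h$ and $f$. Each contribution must itself be differentiated in $v$, and bookkeeping the resulting four (then eight, after the chain rule for $h$) kernels, each with its own time-singularity profile, while showing they all integrate against the Gaussian and the exponential growth of $h$ and $f$, is the technically heavy point. Once this is carried out however, the Gaussian-in-$z-x$ factor consistently produces $(T-t)^{-1/2}$ for one $v$-derivative and a factor with a mild singularity $(T-t)^{-(1-\eta)/2}$ for the remaining combinations, all integrable provided $\alpha < c(\lambda)$ so that the exponential factor can be absorbed into a slightly worse Gaussian constant.
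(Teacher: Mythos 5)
Your treatment of the space and measure regularity is essentially the paper's: split off the terminal and source contributions, apply Proposition \ref{structural:class} to $\mu\mapsto h(z,[X^{t,\xi}_T])$ and $\mu\mapsto f(s,z,[X^{t,\xi}_s])$ through the density, and combine the Gaussian bounds of Theorem \ref{derivative:density:sol:mckean:and:decoupling} with \HST\ and the inequality absorbing $\exp(\alpha|x|^2/T)$ into a worse Gaussian. Two points of imprecision there: the integrability in $s$ of the source term does not come from controlling ``$(\mathcal{L}_s-\widehat{\mathcal{L}}_s)p$'' but from two cancellations that must be used explicitly, namely $\int\partial^{1+n}_xp(\mu,t,s,x,z)\,dz=0$ combined with the local H\"older continuity of $z\mapsto f(s,z,m)$ (this is how the paper writes $\partial^n_xU$, replacing $f(s,z,\cdot)$ by $f(s,z,\cdot)-f(s,x,\cdot)$, see \eqref{space:derivative:U}), and, for the measure derivative, the H\"older continuity of $y\mapsto[\delta f/\delta m](s,z,m)(y)$ from \HST(ii) together with the same cancellation, which produces the gain $(s-t)^{\eta/2}$ in \eqref{bound:mes:deriv:source}; without these the singularities $(s-t)^{-1}$ for $n=1,2$ are not integrable.

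The genuine gap is in your argument for the time derivative and the PDE. Applying the chain rule of Proposition \ref{prop:chain:rule:joint:space:measure} to $s\mapsto U(s,X^{t,x,\mu}_s,[X^{t,\xi}_s])$ presupposes that $U\in\mathcal{C}^{1,2,2}$, in particular that $\partial_tU$ exists — which is exactly what you are trying to prove, so that step is circular (the paper only uses this full chain rule later, for \emph{uniqueness}, applied to a candidate solution $V$ assumed classical). Your fallback, ``differentiate the representation in $t$ using $\partial_sp=-\mathcal{L}_sp$,'' is not carried out and is more delicate than you suggest: differentiating $\int_t^T\int f(s,z,[X^{t,\xi}_s])\,p(\mu,t,s,x,z)\,dz\,ds$ in $t$ produces, besides the boundary term $f(t,x,\mu)$, an interior integrand of order $(s-t)^{-1}$ near $s=t$ coming from $\partial_tp$ (estimate \eqref{first:time:derivative:mckean:decoupling}), which is non-integrable unless one again invokes the cancellations above; and one must then verify that all terms reassemble into $-\mathcal{L}_tU+f$ with the full measure-dependent generator \eqref{inf:generator:mckean:vlasov}. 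The paper avoids all of this with a short flow argument: by the Markov property, $U(t-h,x,\mu)=\E[U(t,X^{t-h,x,\mu}_t,[X^{t-h,\xi}_t])-\int_{t-h}^tf(r,X^{t-h,x,\mu}_r,[X^{t-h,\xi}_r])\,dr]$, and the chain rule is applied \emph{only in the space and measure variables} to the time-frozen map $(x',\mu')\mapsto U(t,x',\mu')$, whose $\mathcal{C}^{2,2}$ regularity and derivative bounds \eqref{bound:mes:deriv:sol:pde}--\eqref{bound:space:deriv:sol:pde} are already established; dividing by $h$ and letting $h\downarrow0$ then gives $\partial_tU=-\mathcal{L}_tU+f$ directly, with no need to differentiate the singular kernels in time. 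You would need either to adopt this route or to supply the missing cancellation and interchange arguments for the direct differentiation.
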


\begin{proof} 
The proof is divided into three steps. We first address the continuity of the map $U$ in $[0,T) \times \mathbb{R}^d \times \pp$. We then prove that $\lim_{t \uparrow T} U(t, x, \mu) = h(x, \mu)$. We eventually prove that the map $U$ belongs to $\mathcal{C}^{1, 2, 2}([0,T) \times \mathbb{R}^d \times \pp)$, solves the PDE \eqref{pde:wasserstein:space} and establish the related estimates.

\medskip

\noindent \emph{Step 1: continuity of the map $[0,T) \times \mathbb{R}^d \times \pp \ni (t, x, \mu) \mapsto U(t, x, \mu)$.} 

\medskip

Let $(\mu_n, t_n, x_n)_{n\geq1}$ be a sequence of $\pp \times [0,T) \times \mathbb{R}^d$ satisfying $\lim_{n} |t_n-t| = \lim_{n} W_2(\mu_n, \mu) = \lim_{n} |x_n-x| = 0$, for some $( \mu, t, x) \in \pp \times [0,T) \times \mathbb{R}^d$. By weak uniqueness, the sequence of probability measures $([X^{t_n, \xi_n}_T])_{n\geq1}$ converges weakly to $[X^{t, \xi}_T]$, where $[\xi_n] = \mu_n$ and $[\xi]=\mu$. Hence, since the $2$-Wasserstein distance metrizes weak convergence in $\pp$, it suffices to prove that $\lim_n \int_{\mathbb{R}^d} |z|^2 p(\mu_n, t_n, T, z) \, dz = \int_{\mathbb{R}^d} |z|^2 p(\mu, t, T, z)\, dz$ in order to derive that $\lim_n W_2([X^{t_n, \xi_n}_T], [X^{t, \xi}_T])=0$. Taking the same notation as in the first step of Proposition \ref{structural:class} with $\bar{h}(z) = |z|^2$, we write
\begin{align*}
 \int_{\mathbb{R}^d} & |z|^2 p(\mu_n, t_n, T, z) \, dz   - \int_{\mathbb{R}^d} |z|^2 p(\mu, t, T, z)\, dz \\
 &  =   \int_{(\mathbb{R}^d)^2} \bar{h}(z) p(\mu_n, t_n, T, x, z) \, dz (\mu_n-\mu)(dx) + \int_{(\mathbb{R}^d)^2} \bar{h}(z) (p(\mu_n, t_n, T, x, z) - p(\mu, t, T, x, z)) \, dz \mu(dx) \\
 & =: \mathcal{A}_n \bar{h} + \mathcal{B}_n \bar{h}
\end{align*}
\noindent where $\mathcal{A}_n \bar{h}  = \mathcal{A}^{1}_n \bar{h} + \mathcal{A}^{2}_n \bar{h}$ with
\begin{align*}
\mathcal{A}^{1}_n \bar{h} &:= \int_{(\mathbb{R}^d)^2} \bar{h}(z) p(\mu_n, t_n, T, x, z) \,dz \,\eta_R(x) (\mu_n-\mu)(dx), \\
 \mathcal{A}^2_n \bar{h} & := \int_{(\mathbb{R}^d)^2} \bar{h}(z) p(\mu_n, t_n, T, x, z) \, dz\, (1-\eta_R)(x) (\mu_n-\mu)(dx)
\end{align*}
\noindent where we recall that $\eta_R$, $R>1$, is a non-negative smooth cutoff function such that $0\leq \eta_R \leq 1$, $\eta_R(x) = 1$ for $|x|\leq R$, $\eta_R(x)=0$ for $|x|\geq 2R$ and $|\nabla \eta_R|_\infty \leq C$, $C$ being a positive constant independent of $R$. We first deal with $\mathcal{A}_n \bar{h}$. From the Gaussian estimates \eqref{gradient:estimates:parametrix:series} with $n=0,1$, the map $f^{n}_R: \mathbb{R}^d \ni x\mapsto \int_{\mathbb{R}^d} \bar{h}(z)   p(\mu_n, t_n, T, x, z) \eta_R(x) \, dz $ is continuously differentiable with a first order derivative uniformly bounded by $|\nabla f^{n}_R|_\infty \leq C(1+R^2)$ so that, from the Monge-Kantorovich duality principle, $ \lim\sup_n |\mathcal{A}^{1}_n \bar{h}| \leq C (1+ R^2) \lim\sup_{n}W_1(\mu_n, \mu)=0$, which in turn clearly yields $\lim_{n} \mathcal{A}^{1}_n \bar h = 0$. 

Again the Gaussian estimates \eqref{gradient:estimates:parametrix:series} with $n=0$ and the weak convergence in $\mathcal{P}_2(\mathbb{R}^d)$ of $(\mu_n)_{n\geq1}$ towards $\mu$ yield 
$$
\lim\sup_n  |\mathcal{A}^{2}_n \bar{h}| \leq \bigg(\lim\sup_{n}\int_{|x| \geq R} (1+|x|^2) \mu_n(dx) + \int_{|x| \geq R} (1+|x|^2)\mu(dx)\bigg) \leq 2 \int_{|x| \geq R} (1+|x|^2)\mu(dx).
$$
\noindent so that, by letting $R\uparrow \infty$, we eventually deduce $\lim_n \mathcal{A}^{2}_n \bar{h} =0$.

 We now deal with $\mathcal{B}_n \bar{h}$. By continuity of the map $[0,T) \times \pp \ni (t, \mu) \mapsto p(\mu, t, T, x, z)$, we deduce that the sequence $(p(\mu_n, t_n, T, x, z))_{n\geq 1}$ converges to $p(\mu, t, T, x, z)$ for any fixed $ x, z$. From the pointwise Gaussian upper-estimate \eqref{bound:density:parametrix} and the dominated convergence theorem, we deduce that $\lim_{n}  \mathcal{B}_n \bar{h} = 0$. From the above arguments, we thus conclude that $\lim_n W_2([X^{t_n, \xi_n}_T], [X^{t, \xi}_T])=0$.

 By continuity of the maps $h$ and $f$, the two sequences $(h(z, [X^{t_n, \xi_n}_T]))_{n\geq1}$ and $(f(s, z, [X^{t_n, \xi_n}_s]))_{n\geq1}$ converge respectively to  $h(z, [X^{t, \xi}_T])$ and $f(s, z, [X^{t, \xi}_s])$. Using again the fact that the sequence $(p(\mu_n, t_n, T, x_n, z))_{n\geq1}$ converges to $p(\mu, t, T, x, z)$, the Gaussian upper-estimate \eqref{bound:density:parametrix} satisfied by $z\mapsto p(\mu_n, t_n, T, x_n, z)$, the growth assumption \eqref{growth:condition:h:f} with $\alpha < (2c)^{-1}$, $c:=c(\lambda)$ being the constant appearing in \eqref{bound:density:parametrix} together with the fact that $M_2([X^{t_n, \xi_n}_s]) \leq C(1+ M_2(\mu))$ for some positive constant $C$ independent of $n$ and  the dominated convergence theorem, we finally deduce that the sequence $(U(t_n, x_n, \mu_n))_{n\geq1}$ converges to $U(t, x, \mu)$. We thus conclude that the map $[0,T) \times \mathbb{R}^d \times \pp \ni (t, x, \mu) \mapsto U(t, x, \mu)$ is continuous. \\

\noindent \emph{Step 2: $\lim_{t \uparrow T} U(t, x, \mu) = h(x, \mu)$.} 

\medskip

Following the same lines of reasonings as those employed in chapter 1, Friedman \cite{friedman:64}, we deduce that $\lim_{t\uparrow T} p(\mu, t, T, x, z) = \delta_z(x)$ in the weak sense so that $[X^{t, \xi}_T]$ converges weakly to $\mu$ as $t \uparrow T$. As in the previous step, in order to derive that the latter convergence holds with respect to the $2$-Wasserstein metric, it remains to prove that $\lim_{t\uparrow T} \int_{(\mathbb{R}^d)^2} |z|^2 p(\mu, t, T, x, z) \, dz \mu(dx) = \int_{\mathbb{R}^d} |x|^2 \mu(dx)$. We first write
$$
\int_{(\mathbb{R}^d)^2} |z|^2 p(\mu, t, T, x, z) \, dz \mu(dx) - \int_{\mathbb{R}^d} |x|^2 \mu(dx) = \int_{\mathbb{R}^d} \int_{\mathbb{R}^d} (|z|^2-|x|^2) p(\mu, t, T, x, z) \,dz \,  \mu(dx)
$$

\noindent then use the inequality $||z|^2-|x|^2| \leq |z-x| (|z|+|x|)$ together with the Gaussian upper-estimate \eqref{bound:density:parametrix} and the space-time inequality \eqref{space:time:inequality} so that
$$
|\int_{(\mathbb{R}^d)^2} |z|^2 p(\mu, t, T, x, z) \, dz \mu(dx) - \int_{\mathbb{R}^d} |x|^2 \mu(dx)| \leq C (T-t)^{\frac12} (1+ \int_{\mathbb{R}^d} |x| \mu(dx)).
$$

Passing to the limit as $t\uparrow T$ in both sides of the previous inequality yields the result. We thus conclude that $\lim_{t\uparrow T} W_2([X^{t, \xi}_T], \mu) = 0$. Now, by the continuity of $h$, $\lim_{t\uparrow T} h(z, [X^{t, \xi}_T]) = h(z, \mu)$ and since the convergence also holds locally uniformly in $z$, we deduce that $\lim_{t\uparrow T} \int_{\mathbb{R}^d} h(z, [X^{t, \xi}_T]) p(\mu, t, T, x, z) \, dz = h(x, \mu)$. Combining the Gaussian upper-estimate \eqref{bound:density:parametrix} and the growth assumption \eqref{growth:condition:h:f} allows to derive that $\lim_{t \uparrow T} \int_{t}^{T} \int_{\mathbb{R}^d} f(s, z, [X^{t, \xi}_s]) \,  p(\mu, t, s, x, z) \, dz  \, ds = 0$.  We thus conclude that $\lim_{t \uparrow T} U(t, x, \mu) = h(x, \mu)$.\\

\noindent \emph{Step 3: the map $U$ belongs to $\mathcal{C}^{1, 2, 2}([0,T) \times \mathbb{R}^d \times \pp)$, solves the PDE \eqref{pde:wasserstein:space} and related estimates.} 

\medskip

 We now prove that $ \mathcal{P}_2(\rr^d) \ni (x,\mu) \mapsto U(t, x, \mu) \in \mathcal{C}^{2,2}(\mathbb{R}^d \times \pp)$, for any $t \in [0,T)$ and that $[0,T) \times \mathbb{R}^d \times \mathcal{P}_2(\mathbb{R}^d) \ni (t, x,\mu) \mapsto \mathcal{L}_t U(t, x,\mu)$ is continuous, where the operator $\mathcal{L}_t$ is defined by \eqref{inf:generator:mckean:vlasov}.

 From Theorem \ref{derivative:density:sol:mckean:and:decoupling} and the relation \eqref{relation:density:mckean:decoupling:field}, the map $ \pp \ni  \mu  \mapsto p(\mu, t, T, z)$ is partially $\mathcal{C}^{2}(\pp)$ (see Chapter 5 of \cite{carmona2018probabilistic} for a definition of partial $\mathcal{C}^{2}(\pp)$ regularity) with derivatives given by
\begin{align*}
\partial^{n}_v[\partial_\mu p(\mu, t, T, z)](v) & = \partial^{1+n}_x p(\mu, t, T, v, z) + \int_{\rr^d} \partial^{n}_v[\partial_\mu p(\mu, t, T, x, z)](v) \, \mu(dx), \quad n \in \left\{0,\, 1\right\}.
\end{align*}

From Proposition \ref{structural:class}, we thus deduce that the two maps $ \pp \ni \mu \mapsto h(z, [X^{t, \xi}_T])$, $ \pp \ni  \mu \mapsto f(s, z, [X^{t, \xi}_s])$ are partially $\mathcal{C}^{2}( \pp)$ for any fixed $T>0, \, s >t\geq 0$ and $z\in \rr^d$. Note carefully that in Proposition \ref{structural:class}, the linear functional derivative is assumed to be bounded for sake of simplicity while here the linear functional derivatives $ [\delta h/\delta m](z, m)(.)$ and $ [\delta f/ \delta m](t, z, m)(.)$ are of quadratic growth, see \eqref{growth:condition:tilde:h:f}. However, using the pointwise Gaussian estimates \eqref{gradient:estimates:parametrix:series} and \eqref{first:second:lions:derivative:mckean:decoupling}, one can extend the analysis performed in step 2 of the proof of Proposition \ref{structural:class} to the current setting. According to \eqref{condition1:structural:class:measure:derivative}, their $L$-derivatives are given by
\begin{align}
\partial^{n}_v[\partial_\mu [h(z, [X^{t, \xi}_T])]](v) & = \int_{\mathbb{R}^d} \frac{\delta h}{\delta m}(z, [X^{t, \xi}_T])(y) \, \partial^{1+n}_x p(\mu, t, T, v, y) \, dy \nonumber \\
& \quad + \int_{(\mathbb{R}^d)^2} \frac{\delta h}{\delta m}(z, [X^{t, \xi}_T])(y) \, \partial^{n}_v[\partial_\mu p(\mu, t, T, x, y)](v) \, dy \, \mu(dx) \label{deriv:measure:h}
\end{align}

\noindent and
\begin{align}
\partial^{n}_v[\partial_\mu [f(s, z, [X^{t, \xi}_s])]](v) & = \int_{\mathbb{R}^d} \Big[\frac{\delta f}{\delta m}(s, z, [X^{t, \xi}_s])(y) - \frac{\delta f}{\delta m}(s, z, [X^{t, \xi}_s])(v)\Big] \, \partial^{1+n}_x p(\mu, t, s, v, y) \, dy\nonumber \\
& \quad +  \int_{(\mathbb{R}^d)^2} \frac{\delta f}{\delta m}(s, z, [X^{t, \xi}_s])(y)  \, \partial^{n}_v[\partial_\mu p(\mu, t, s, x, y)](v) \, dy \, \mu(dx). \label{deriv:measure:source}
\end{align}

We may break the first integral appearing in the right-hand side of \eqref{deriv:measure:source} into two parts ${\rm J}_1$ and ${\rm J}_2$ by dividing the domain of integration into two domains. In the first part ${\rm J}_1$, the $dy$-integration is taken over a bounded domain $D$ containing $v$ such that $|y-v| \geq 1$ if $y \notin D$. Using \eqref{local:holder:reg:linear:functional:deriv:f}, that is, the $\eta$-H\"older regularity of $ [\delta f/\delta m](s, z, \mu)(.)$ on $D$, \eqref{gradient:estimates:parametrix:series}, the space-time inequality \eqref{space:time:inequality} and noting that $M_2([X^{t, \xi}_s]) \leq C(1+M_2(\mu))$, we get
$$
|{\rm J}_1| \leq C \exp\left(\alpha \frac{|z|^2}{T}\right) \, (s-t)^{\frac{-1- n + \eta}{2}}  (1+ M^{q}_2(\mu)).
$$

As for ${\rm J}_2$, from \eqref{growth:condition:tilde:h:f} and the space-time inequality \eqref{space:time:inequality}, we obtain
\begin{align*}
|{\rm J}_2| & \leq C \exp\left(\alpha \frac{|z|^2}{T}\right) \,  \int_{|y-v| \geq 1} (s-t)^{-\frac{1+n}{2}} (1+ |y|^2 + |v|^2+ M^{q}_2([X^{t, \xi}_s])) \, g(c(s-t), y-v) \, dy \,  \\
& \leq C \exp\left(\alpha \frac{|z|^2}{T}\right) \,  (s-t)^{\frac{-1-n +\eta}{2}} (1+ |v|^{2} + M^{q}_2(\mu)). 
\end{align*}

Also, from \eqref{first:second:lions:derivative:mckean:decoupling} and \eqref{growth:condition:tilde:h:f}, we derive
\begin{align*}
\Big|  \int_{(\mathbb{R}^d)^2} \frac{\delta f}{\delta m}(s, z, [X^{t, \xi}_s])(y)  \, \partial^{n}_v[\partial_\mu p(\mu, t, s, x, y)](v) \, dy \, \mu(dx) \Big| \leq C \exp\left(\alpha \frac{|z|^2}{T}\right) \,  (s-t)^{\frac{-1-n  + \eta}{2}} (1+ M^{q}_2(\mu)).
\end{align*}

\noindent Gathering the previous estimates, we obtain
\begin{align}
|\partial^{n}_v[\partial_\mu [f(s, z, [X^{t, \xi}_s])]](v)| \leq C \exp\left(\alpha \frac{|z|^2}{T}\right) \,  (s-t)^{\frac{-1-n+\eta}{2}} (1+ |v|^2 + M^{q}_2(\mu)). \label{bound:mes:deriv:source}
\end{align}

From \eqref{deriv:measure:h}, \eqref{growth:condition:tilde:h:f}, the estimates \eqref{gradient:estimates:parametrix:series}, \eqref{growth:condition:tilde:h:f} and similar computations
\begin{align}
|\partial^{n}_v[\partial_\mu [h(z, [X^{t, \xi}_T])]](v)| \leq C \exp\left(\alpha \frac{|z|^2}{T}\right) \,  (T-t)^{-\frac{(1+n)}{2}} (1+ |v|^2 + M^{q}_2(\mu)). \label{bound:mes:deriv:terminal:condition}
\end{align}

The estimates \eqref{gradient:estimates:parametrix:series}, \eqref{first:second:lions:derivative:mckean:decoupling}, \eqref{bound:mes:deriv:source} and \eqref{bound:mes:deriv:terminal:condition} allow to conclude that if $\alpha < (2c)^{-1}$, the constant $c$ being the maximum among the constants $c$ appearing in the estimates \eqref{gradient:estimates:parametrix:series} and \eqref{first:second:lions:derivative:mckean:decoupling}, then the map $(x, \mu) \mapsto U(t, x, \mu)$ is in $\mathcal{C}^{2, 2}( \rr^d \times \pp)$ with derivatives given by

\begin{align}
\partial^{n}_v[\partial_\mu U(t, x, \mu)](v)  & = \int_{\mathbb{R}^d} h(z, [X^{t,\xi}_T]) \, \partial^{n}_v[\partial_\mu p(\mu, t, T, x, z)](v) \, dz  + \int_{\rr^d} \partial^{n}_v[\partial_\mu [h(z, [X^{t, \xi}_T])]](v) \, p(\mu, t, T, x, z) \, dz  \nonumber \\
& \quad  - \int_t^T  \int_{\mathbb{R}^d} \partial^{n}_v[\partial_\mu [f(s, z, [X^{t, \xi}_s])]](v) \, p(\mu, t, s, x, z) \, dz \, ds \label{mes:derivative:U} \\
& \quad - \int_t^T \int_{\mathbb{R}^d} f(s, z, [X^{t,\xi}_s]) \, \partial^{n}_v [\partial_\mu p(\mu, t, s, x, z)](v) \, dz \, ds \nonumber
\end{align}

\noindent for $n=0, \,1$ and
\begin{align}
\partial^{n}_x U(t, x, \mu)  & = \int_{\mathbb{R}^d} h(z, [X^{t,\xi}_T]) \, \partial^{n}_x p(\mu, t, T, x, z) \, dz   \nonumber \\
& \quad  - \int_t^T  \int_{\mathbb{R}^d} [ f(s, z, [X^{t, \xi}_s]) - f(s, x, [X^{t, \xi}_s]) ] \, \partial^{n}_x p(\mu, t, s, x, z) \, dz \, ds \label{space:derivative:U} 
\end{align}

\noindent for $n= 1, \, 2$. Note that we may break the last integral appearing in the right-hand side of \eqref{space:derivative:U} into two parts by dividing the domain of integration into two domains as we did before. Then, using \eqref{local:holder:reg:f}, that is, the local $\eta$- H\"older continuity of $ f(s, ., m)$, \eqref{growth:condition:h:f} and the estimate \eqref{gradient:estimates:parametrix:series}, we get
\begin{align*}
\Big| \int_{\rr^d} & [ f(s, z, [X^{t, \xi}_s]) - f(s, x, [X^{t, \xi}_s]) ] \, \partial^{1+n}_x p(\mu, t, s, x, z) \, dz \Big| \\
& \leq C (s-t)^{\frac{-1-n + \eta}{2}} \left\{ \int_{\rr^d} e^{\alpha \frac{|z|^2}{T}} g(c(s-t), z-x) \, dz + e^{\alpha \frac{|x|^2}{T}} \right\} (1+M^{q}_2(\mu))\\
& \leq C (s-t)^{ \frac{-1-n + \eta}{2}}  e^{k \frac{|x|^2}{T}} (1+M^{q}_2(\mu))
\end{align*}

\noindent for some positive constant $k:=k(c, \alpha)$, $\alpha \mapsto k(c, \alpha)$ being non-decreasing, where for the last inequality we used the fact that the constant $\alpha$ is sufficiently small, recall that $\alpha < (2c)^{-1}$, $c$ being the constant appearing in \eqref{gradient:estimates:parametrix:series} and the inequality: for any positive constants $\alpha$ and $c'$ satisfying $0< \alpha< c'$, there exists a positive constant $C:=C(c', \alpha)$ (take e.g. $C = c' \alpha/(c'-\alpha)$) such that for any $(z, x)\in (\rr^d)^2$,  
\begin{equation}\label{simple:ineq:quadratic}
\alpha |z|^2 - c' |z-x|^2 \leq C |x|^2.
\end{equation}

 The previous estimate as well as \eqref{bound:mes:deriv:terminal:condition}, \eqref{bound:mes:deriv:source}, \eqref{gradient:estimates:parametrix:series} and \eqref{first:second:lions:derivative:mckean:decoupling} ensure that the integrals appearing in \eqref{mes:derivative:U} and \eqref{space:derivative:U} are well defined if $\alpha < (2c)^{-1}$, where we recall that $c$ is the maximum among the constants $c$ appearing in the estimates \eqref{gradient:estimates:parametrix:series} and \eqref{first:second:lions:derivative:mckean:decoupling}. We thus conclude from \eqref{mes:derivative:U}, \eqref{space:derivative:U} and the continuity of the coefficients $b_i$, $a_{i, j}$ that $[0,T) \times \rr^d \times \mathcal{P}_2(\rr^d) \ni (t, x,\mu) \mapsto \mathcal{L}_t U(t, x,\mu)$ is continuous.

Finally, from \eqref{bound:density:parametrix}, \eqref{growth:condition:h:f} and \eqref{simple:ineq:quadratic}, setting e.g. $k = k(c, \alpha) := (2c)^{-1} \alpha/((2c)^{-1}-\alpha)$, we get
\begin{align*}
|U(t, x, \mu)| & \leq C \left\{\int_{\rr^d} \exp\Big(\alpha \frac{|z|^2}{T}\Big) g(c(T-t), z-x) \, dz + \int_t^T\int_{\rr^d} \exp\Big(\alpha \frac{|z|^2}{T}\Big) g(c(s-t), z-x) \, dz \, ds \right\} \\
& \quad \times (1+ M^{q}_2(\mu)) \\
& \leq C \exp\Big(\frac{k |x|^2}{T}\Big) (1+M^q_2(\mu))
\end{align*}

\noindent and the proof of \eqref{bound:space:deriv:sol:pde} follows similarly by combining \eqref{space:derivative:U} with \eqref{gradient:estimates:parametrix:series} (with $n=1$), \eqref{growth:condition:h:f} and \eqref{simple:ineq:quadratic}. The proof of \eqref{bound:mes:deriv:sol:pde} also follows from similar arguments using \eqref{mes:derivative:U}, \eqref{growth:condition:h:f}, \eqref{bound:mes:deriv:source}, \eqref{bound:mes:deriv:terminal:condition} and \eqref{simple:ineq:quadratic}.

Let us now prove that $U$ is in $\mathcal{C}^{1, 2, 2}([0,T) \times \rr^d \times \pp)$. From the Markov property satisfied by the SDE \eqref{SDE:MCKEAN} (which is inherited from the well-posedness of the associated martingale problem) we obtain the following identity
\begin{align*}
U(t-h,x,\mu) = \E\left[ U(t,X_t^{t-h,x,\mu},[X_t^{t-h,\xi}]) - \int_{t-h}^t f(r, X^{t-h, x, \mu}_r, [X^{t-h, \xi}_r]) dr\right], \quad 0 < h < t.
\end{align*} 

 The chain rule formula of Proposition \ref{prop:chain:rule:joint:space:measure} (with respect to the space and measure variables only) together with the estimate \eqref{bound:mes:deriv:sol:pde} yield
\begin{align*}
U(t,X_t^{t-h, x, \mu},[X_t^{t-h,\xi}]) & = U(t, x, \mu) + \int_{t-h}^{t} \mathcal{L}_r U(t, X_r^{t-h, x,\mu},[X_r^{t-h,\xi}])  dr \\
& + \int_{t-h}^{t} \partial_x U(t, X_r^{t-h, x,\mu}, [X_r^{t-h,\xi}]) . \sigma(r, X_r^{t-h, x,\mu}, [X_r^{t-h,\xi}]) dW_r.
\end{align*}

From \eqref{bound:space:deriv:sol:pde} and \eqref{bound:density:parametrix}, taking $h$ small enough, it follows that the last term appearing in the right-hand side of the previous equality is a square integrable martingale. Hence,
\begin{align*}
\E\left[U(t,X_t^{t-h, x, \mu},[X_t^{t-h,\xi}])\right] = U(t, x, \mu) + \E\left[ \int_{t-h}^t \mathcal{L}_r U(t, X_r^{t-h, x,\mu},[X_r^{t-h,\xi}])  dr\right]
\end{align*}

\noindent and
\begin{eqnarray*}
\frac 1h \left(U(t-h,x,\mu) - U(t,x,\mu) \right) = \frac 1h  \E\left[ \int_{t-h}^t \left\{\mathcal{L}_rU(t, X_r^{t-h , x, \mu},[X_r^{t-h,\xi}]) - f(r, X^{t-h, x, \mu}_r, [X^{t-h, \xi}_r]) \right\} dr \right] 
\end{eqnarray*} 

\noindent so that letting $h\downarrow 0$, from the boundedness and continuity of the coefficients $b_i$, $a_{i, j}$ and $f$, we deduce that $U$ is left differentiable in time at any time $t\in [0,T)$. Still from the continuity of the coefficients and $f$, we then conclude that it is differentiable in time with  
 \begin{eqnarray*}
\partial_t U(t, x, \mu) = -\mathcal{L}_tU(t, x,\mu) + f(t, x,\mu).
\end{eqnarray*}

Hence, the map $U$ solves the PDE \eqref{pde:wasserstein:space}. 
\end{proof}

In order to get the uniqueness result of Theorem \ref{cauchy:problem:wasserstein}, first fix any $0\leq t \leq s <T$ and consider any solution $V$ to the Cauchy problem \eqref{pde:wasserstein:space} satisfying \eqref{cond:integrab:ito:process:second:version} on any interval $[0,T']$, with $T'<T$, as well as \eqref{bound:solution:cauchy:problem}. We apply the chain rule formula of Proposition \ref{prop:chain:rule:joint:space:measure} to $\left\{ V(s, X^{t, x, \mu}_{s}, [X^{t, \xi}_s]), \,  t \leq s  < T  \right\}$ and use the fact that $(\partial_t + \mathcal{L}_t)V(t, x, \mu)  = f(t, x, \mu)$, for any $(t, x, \mu) \in [0,T) \times \rr^d \times \pp$, to get
\begin{align*}
V(s, X^{t, x, \mu}_s, [X^{t, \xi}_s]) &  = V(t, x, \mu) + \int_t^s f(r, X^{t, x, \mu}_r, [X^{t, \xi}_r]) \, dr  \\
& \quad  + \int_{t}^s \sum_{i=1}^d \sum_{j=1}^q \sigma_{i, j}(r, X^{t, x, \mu}_r, [X^{t,\xi}_r]) \, \partial_{x_i}V(r, X^{t, x, \mu}_r, [X^{t, \xi}_r]) dW^{j}_r.
\end{align*}

The local martingale appearing in the right-hand side of the above equality is in fact a true martingale since $V(s, X^{t, x, \mu}_s, [X^{t, \xi}_s])$ and $\int_t^s f(r, X^{t, x, \mu}_r, [X^{t, \xi}_r]) \, dr $ are both square integrable if the constants $\alpha$ and $k$ appearing in the two conditions \eqref{growth:condition:h:f} and \eqref{bound:solution:cauchy:problem} are small enough. Namely, it is sufficient to take $\alpha$ and $k$ strictly less than $(4c)^{-1}$, $c:=c(\lambda)$ being the constant appearing in \eqref{bound:density:parametrix}.

Hence, taking expectation in the previous equality, then passing to the limit as $s\uparrow T$ and finally using the continuity assumption at the boundary, we obtain
$$
V(t, x, \mu) = \E\left[h(X^{t, x, \mu}_T, [X^{t, \xi}_T]) - \int_{t}^T f(r, X^{t, x, \mu}_r, [X^{t, \xi}_r]) \, dr\right]  
$$

\noindent which completes the proof of Theorem \ref{cauchy:problem:wasserstein}.

\appendix
\section{Proof of Proposition \ref{proposition:reg:density:recursive:scheme:mckean}}\label{proof:main:prop} 
This appendix is dedicated to the proof of Proposition \ref{proposition:reg:density:recursive:scheme:mckean}. While relying on somehow classical Gaussian like computations, the proof appears to be quite long and technical. In order to focus on the main steps of the proof, part of intermediate results are collected into several technical lemmas and associated corollaries. Their proof are postponed to the second part of this appendix, see Appendix \ref{appendix}.

This section is thus organized as follows: the first section  \ref{subsubsection:base:case} deals with the base case $(m=1)$, then the first part of the induction step (namely the estimates \eqref{first:second:estimate:induction:decoupling:mckean} to \eqref{equicontinuity:first:estimate:decoupling:mckean}) is treated in Section \ref{subsubsection:proof:first:part:induction:step}. Then, as a consequence of the first part, we prove the estimates \eqref{regularity:measure:estimate:v1:v2:v3:decoupling:mckean} and \eqref{regularity:time:estimate:v1:v2:v3:decoupling:mckean} in Section \ref{proofs:intermediate:estimates:as:consequence}. We eventually address the second part of the induction step, namely the estimates \eqref{regularity:measure:estimate:v1:v2:decoupling:mckean} and \eqref{regularity:time:estimate:v1:v2:decoupling:mckean} in Section \ref{second:part:induction:step}.\\

%
%
%
%

\smallskip

\noindent \textbf{ Some additional notations.} In order to simplify the notations, in the following, we will denote by $K$ a generic constant that can depend on $T$
 and the parameters in \HR\, and \HE\, but does not depend on $m$ or the constant $C$. We will proceed similarly and denote by $K^{+}$ a generic constant that can depend on $T$ and the parameters in \HRp\, and \HE. 
We reserve the notation $c$ for a constant that depends only on $\lambda$ and $d$. In particular, the constants $K, K^{+}$ and $c$ are uniform with respect to $m$ and the constants $C$, $C_\beta$ and $C^{+}_\beta$ that appear in the estimates \eqref{first:second:estimate:induction:decoupling:mckean} to \eqref{regularity:time:estimate:v1:v2:decoupling:mckean} and their values may change from line to line. We will emphasize the dependence of the constants $K$ or $K^{+}$ with respect to a prescribed parameter $\beta$ by writing $K_\beta$ or $K_\beta^{+}$.  

\smallskip

Apart from section \ref{subsubsection:base:case} which concerns the base case, we will work under the following assumption. For a fixed positive time horizon $T>0$ and positive integer $m$, we assume that for any fixed $(t, z) \in (0,T] \times \mathbb{R}^d$, the map $(s, x, \mu) \mapsto p_m(\mu, s, t, x, z)$ defined by \eqref{series:approx:mckean} belongs to $\mathcal{C}^{1,2, 2}([0,t) \times \mathbb{R}^d \times \pp)$ and denote by $[X^{s, \xi, (m)}_t]$ the probability measure on $\mathbb{R}^d$ with density function $z\mapsto (p_m(\mu, t, T, ., z) \sharp\mu)$.\\

\subsection{Base case $m=1$.\\ }\label{subsubsection:base:case}

\noindent \emph{Step 1: $(s, x, \mu) \mapsto p_1(\mu, s, t, x, z) \in \mathcal{C}^{1, 2, 2}([0,t)\times \rr^d \times \pp)$.\\ }
Let us first observe that since $P^{(0)}(t) = \nu$ for any $t\geq s$, it is readily seen from \eqref{definition:general:phat}, \eqref{definition:phat:end:point:frozen} and \eqref{definition:parametrix:kernel:iterate:m} for $m=1$ that the law argument in the coefficients depends neither on the initial measure $\mu$ nor on the initial time $s$ but only on $\nu$. From \cite{friedman:64}, we thus conclude that the map $[0,t) \times \rr^d \ni (s, x) \mapsto p_1(\mu, s, t, x, z) =  \sum_{k \geq 0} (\widehat{p}_{1} \otimes \mH^{(k)}_1)(\mu, s, t, x, z)$ belongs to $\mathcal{C}^{1, 2}([0,t) \times \rr^d)$ with derivatives that do not depend on $\mu$. Obviously, the map $\pp \ni \mu \mapsto p_1(\mu, s, t, x, z)$ is continuously $L$-differentiable and satisfies $\partial_\mu p_1(\mu, s, t, x, z)(v) = \partial_v [\partial_\mu p_1(\mu, s, t, x, z)](v) = 0 $ for any $(s, x ,\mu, v) \in [0,t) \times \rr^d \times \pp \times \rr^d$. We thus conclude that the map $[0,t) \times \rr^d \times \pp \ni (s, x, \mu) \mapsto p_1(\mu, s, t, x, z)$ is in $\mathcal{C}^{1, 2, 2}([0,t)\times \rr^d \times \pp)$.\\

\noindent \emph{Step 2: Gaussian estimates on the derivatives of the map $(s, x, \mu) \mapsto p_1(\mu, s, t, x, z)$.\\}
According to the preceding discussion, the estimates \eqref{first:second:estimate:induction:decoupling:mckean}, \eqref{equicontinuity:second:third:estimate:decoupling:mckean} to \eqref{regularity:measure:estimate:v1:v2:decoupling:mckean} and \eqref{regularity:time:estimate:v1:v2:decoupling:mckean} are straightforward. The estimate \eqref{time:derivative:induction:decoupling:mckean} is a direct consequence of \eqref{bound:derivative:heat:kernel} and the fact that $[0,t) \times \rr^d \ni (s, x) \mapsto p_1(\mu, s, t, x, z)$ is the fundamental solution to the backward Kolmogorov PDE associated to the SDE \eqref{iter:mckean:decoupl} with $m=0$, see e.g. \cite{friedman:64} and \cite{Friedman2011}. The estimate \eqref{regularity:time:estimate:v1:v2:v3:decoupling:mckean} for $m=1$ is a consequence of the estimate (3.33) of Theorem 3.5 in Garroni and Menaldi \cite{1992green} in the case $|s_1 - s_2|\leq  (t- s_1 \vee s_2)/2$ and a consequence of the Gaussian estimate \eqref{bound:derivative:heat:kernel} if $|s_1-s_2| > (t-s_1 \vee s_2)/2$. This concludes the proof of the base case.\\

\subsection{Some preparatory technical results}\label{subsubsection:technical:lemmas}
 To proceed with our induction procedure, we have to prove that the statements obtained in the base case $m=1$ indeed propagate at step $m+1$ provided they hold at step $m$. To do so, we start with the process $(X^{s,\xi,(m+1)}_t, t\in [s, T])$ with dynamics given by \eqref{iter:mckean} and coefficients frozen in their measure argument at the law of the Picard iteration scheme at step $m$. The key observation is that the density function of the random variable $X^{s, \xi, (m+1)}_t$ denoted by $z \mapsto p_{m+1}(\mu, s, t, z)$ satisfies the relation \eqref{conv:relation:step:m} where $z\mapsto p_{m+1}(\mu, s, t, x, z)$ denotes the transition density of the decoupling SDE $(X^{s, x, \mu, (m+1)}_t , t\in [s, T])$. 

As already emphasized, the crucial point is that this transition density enjoys a representation in infinite series given by \eqref{series:approx:mckean} which involves space-time iterated convolutions of the so-called parametrix kernel $\mathcal H_{m+1}$ given by \eqref{definition:parametrix:kernel:iterate:m} against the Gaussian type kernel $\widehat p_{m+1}$ given by \eqref{definition:general:phat}. As suggested by the indices $m$ in the notation, the point is that such quantities depend on the density $p_{m}$ built at the previous step of the Picard iteration scheme, so that, when investigating the $\mathcal{C}^{1, 2, 2}([0,t)\times \rr^d \times \pp)$ smoothness of $p_{m+1}$ and its related estimates, we will be lead to handle those terms. Namely, as a preparatory step of our induction argument, we need to investigate the regularity properties and to establish some adequate estimates for the coefficients $b_i(t, x, [X^{s, \xi, (m)}_t]),\, a_{i, j}(t, x, [X^{s, \xi, (m)}_t])$, the Gaussian type kernel $\widehat p_{m+1}$, the parametrix kernel $\mH_{m+1}$ and its iterated space time convolution $\mH^{(k)}_{m+1},\, k\geq 1$ defined just after \eqref{definition:parametrix:kernel:iterate:m}) in order to prove that the estimates in Proposition \ref{proof:main:prop} indeed propagates from one step to another. 

This is the purpose of this part and the associated results are respectively given by Lemma \ref{lem:diff:and:control:deriv:coeff} and Corollaries \ref{cor:deriv:time:and:mes:phat}, \ref{cor:deriv:time:and:mes:parametrix:kernel} and \ref{cor:mes:time:deriv:iterated:parametrix:kernel}. As the proofs rely on rather classical but sometimes tricky Gaussian types computations (sometimes hinted in the part of the current proof), we decided to respectively postpone their proofs to Sections \ref{section:proof:lem:diff:and:control:deriv:coeff}, \ref{section:proof:cor:deriv:time:and:mes:phat}, \ref{section:proof:cor:deriv:time:and:mes:parametrix:kernel} and \ref{section:proof:cor:mes:time:deriv:iterated:parametrix:kernel}.

%
\begin{lem}\label{lem:diff:and:control:deriv:coeff}
 For any fixed $(t, x) \in (0, T] \times \mathbb{R}^d$ and any $(i, j) \in \left\{1, \cdots, d\right\}^2$, the maps $(s, \mu) \mapsto b_i(t, x, [X^{s, \xi, (m)}_t]), \, a_{i, j}(t, x, [X^{s, \xi, (m)}_t])$ belong to $\mathcal{C}^{1, 2}([0,t)\times \pp)$ and satisfy the following estimates: for any $\beta \in [0,\eta)$ and any $\beta' \in [0,1]$, there exist positive constants $K$, $K_\beta$, $K_{\beta'}$ such that for any $(t, x, z) \in (0,T] \times (\mathbb{R}^d)^2$, for any $(s, \mu , v, v')\in [0,t) \times \pp \times (\mathbb{R}^d)^2$ and any $(i, j) \in \left\{1, \cdots, d\right\}$
\begin{align}
| \partial^n_v [\partial_\mu & [b_i(t, x, [X^{s,\xi, (m)}_t])]](v) |  + |\partial^n_v [\partial_\mu [a_{i, j}(t, x, [X^{s,\xi, (m)}_t])]](v)| \nonumber \\
&  \leq K \left\{ \frac{1}{(t-s)^{\frac{1+n - \eta}{2}}} + \int_{(\mathbb{R}^d)^2} (|y-x'|^{\eta}\wedge 1) | \partial^{n}_v[\partial_\mu p_{m}(\mu, s, t, x', y)](v)| \, \mu(dx') \, dy   \right\}, \label{recursive:bound:deriv:a:or:b}
\end{align}
\begin{align}
| & \partial_v[\partial_\mu [b_{i}(t, x, [X^{s, \xi, (m)}_{t}])]](v)  - \partial_v[\partial_\mu [b_{i}(t, x, [X^{s, \xi, (m)}_{t}])]](v')| \nonumber \\
& \quad \quad + |  \partial_v[\partial_\mu [a_{i, j}(t, x, [X^{s, \xi, (m)}_{t}])]](v)  - \partial_v[\partial_\mu [ a_{i, j}(t, x, [X^{s, \xi, (m)}_{t}])]](v')| \nonumber \\
& \leq K_\beta \left\{ \frac{|v-v'|^\beta}{(t-s)^{1+\frac{\beta-\eta}{2}}} \right.  \label{recursive:bound:deriv:mes:reg:holder:a:or:b} \\
& \quad\quad \left. + \int_{(\mathbb{R}^d)^2} (|y-x'|^\eta \wedge 1) | | \partial_v[\partial_\mu p_{m}(\mu, s, t, x', y)](v) - \partial_v[\partial_\mu p_{m}(\mu, s, t, x', y)](v')| \, \mu(dx') \, dy \right\}, \notag
\end{align}
\begin{align}
|  & \partial^{n}_v [\partial_\mu [a_{i, j}(t, x, [X^{s, \xi, (m)}_{t}]) - a_{i, j}(t, z, [X^{s, \xi, (m)}_t])]](v) | \nonumber \\
 &\quad \quad \leq  K_{\beta'} |z-x|^{\beta' \eta} \left\{ \frac{1}{(t-s)^{\frac{1+n-(1-\beta')\eta}{2}}} \right. \label{recursive:bound:deriv:mes:holder:reg:a} \\
 & \left. \quad \quad \quad +  \int_{(\mathbb{R}^d)^2} (|y-x'|^{(1-\beta')\eta} \wedge 1) |\partial^{n}_v[\partial_\mu p_{m}(\mu, s, t, x', y)](v)| \, \mu(dx')\, dy\right\}, \nonumber
\end{align}
\begin{align}
|   \partial^{n}_v [\partial_\mu & [a_{i, j}(t, x, [X^{s, \xi, (m)}_{t}])  - a_{i, j}(t, z, [X^{s, \xi, (m)}_t])]](v) -  [\partial^{n}_v [\partial_\mu [a_{i, j}(t, x, [X^{s, \xi, (m)}_{t}]) - a_{i, j}(t, z, [X^{s, \xi, (m)}_t])]](v') ] | \nonumber \\
 &\quad  \leq  K_\beta \left\{\frac{(|z-x|^\eta \wedge 1)}{(t-s)^{1+\frac{\beta}{2}}} \wedge \frac{1}{(t-s)^{1+\frac{\beta-\eta}{2}}} \right\} \left\{ {|v-v'|^\beta} \right.  \label{recursive:bound:deriv:mes:double:reg:holder:a} \\
 & \quad \quad \left. + (t-s)^{1+\frac{\beta-\eta}{2}} \int_{(\mathbb{R}^d)^2} (|y'-x'|^\eta \wedge 1) |\partial_v [\partial_{\mu}p_{m}(\mu, s , t, x', y')](v) - \partial_v[\partial_{\mu}p_{m}(\mu, s , t, x', y')](v')| \, \mu(dx') \, dy' \right. \nonumber \\
 & \quad \quad \left.  + (t-s)^{1+\frac{\beta}{2}}\int_{(\mathbb{R}^d)^2} \Big|\partial_v\Big[\partial_{\mu}p_{m}(\mu, s , t, x', y')\Big](v) - \partial_v\Big[\partial_{\mu}p_{m}(\mu, s , t, x', y')\Big](v')\Big| \, \mu(dx') \, dy' \right\}, \nonumber
\end{align}

\begin{align}
|\partial_s [b_i (t, x, [X^{s,\xi, (m)}_t])]|  &+ |\partial_s[a_{i, j} (t, x, [X^{s,\xi, (m)}_t])]|\notag \\
&  \leq K \int_{(\mathbb{R}^d)^2} (|y-x'|^\eta \wedge 1) |\partial_s p_m(\mu, s, t,  x', y)| \, \mu(dx') \, dy, \label{recursive:bound:time:deriv:a:or:b}
\end{align}
\begin{align}
 | \partial_s [a_{i, j} (t, x, [X^{s,\xi, (m)}_t]) & -  a_{i, j} (t, z, [X^{s,\xi, (m)}_t])] |\notag  \\
 & \leq  K |z-x|^{\beta' \eta} \int_{(\mathbb{R}^d)^2} (|y-x'|^{(1-\beta')\eta} \wedge 1) |\partial_s p_m(\mu, s, t,  x', y)| \, \mu(dx') \, dy. \label{recursive:bound:time:deriv::holder:reg:a}
\end{align}
\end{lem}

Let us also importantly point out that under the assumptions of Lemma \ref{lem:diff:and:control:deriv:coeff} and if the estimates \eqref{first:second:estimate:induction:decoupling:mckean} and \eqref{time:derivative:induction:decoupling:mckean} are satisfied at step $m$ then from the space-time inequality \eqref{space:time:inequality}, the following estimates for the derivatives of the maps $(s, \mu)\mapsto b_i(t, x, [X^{s, \xi, (m)}_t])$, $a_{i, j}(t, x, [X^{s, \xi, (m)}_t])$ hold
\begin{align}
| \partial^n_v [\partial_\mu  [b_i(t, x, [X^{s,\xi, (m)}_t])]](v) |  & + |\partial^n_v [\partial_\mu [a_{i, j}(t, x, [X^{s,\xi, (m)}_t])]](v)| \notag\\
& \leq K  (t-s)^{-\frac{1+n - \eta}{2}} (1+ \mathscr{C}^{n,0}_m(C, t-s) (t-s)^{\frac{\eta}{2}}) \label{time:degeneracy:estimate:deriv:mes:coeff}
\end{align}
\noindent and 
\begin{align}\label{time:degeneracy:estimate:deriv:time:coeff}
|\partial_s [b_i(t, x, [X^{s, \xi, (m)}_t])]| + | \partial_s[a_{i, j}(t, x, [X^{s, \xi, (m)}_t])]| \leq K \mathscr{C}^{n, 1}_m(C, t-s)  (t-s)^{-1 + \frac{\eta}{2}}
\end{align}
\noindent up to a modification of the constant $K$.
\begin{cor}\label{cor:deriv:time:and:mes:phat} Assume that the estimates \eqref{first:second:estimate:induction:decoupling:mckean} and \eqref{time:derivative:induction:decoupling:mckean} (at step $m$) are satisfied for some positive constant $C$. Then, for any $(t, y, z) \in (0,T] \times (\mathbb{R}^d)^2$ and any $r \in (0,t) $, the maps $(s, x, \mu) \mapsto \widehat{p}^{y}_{m+1}(\mu, s, r,  t , x, z)$, $\widehat{p}^{y}_{m+1}(\mu, s,  t , x, z) = \widehat{p}^{y}_{m+1}(\mu, s, s,  t , x, z)$ belong to $\mathcal{C}^{1, 0, 2}([0,r)\times \mathbb{R}^d \times \pp)$ and $\mathcal{C}^{1, 0,  2}([0,t)\times \mathbb{R}^d \times \pp)$ respectively with continuous derivatives with respect to the variables $s$, $x$, $\mu$ and $v$.\\
Moreover, the derivatives satisfy the following pointwise Gaussian estimates: there exist positive constants $K$ and $c$ such that for any $( x, z, y, v, \mu)\in (\mathbb{R}^d)^4 \times \pp$ and any $0\leq s \leq r < t \leq T$
\begin{align}
| \partial^{n}_v & [\partial_\mu \widehat{p}^{y}_{m+1}(\mu, s, r, t, x, z)](v)| \notag \\
 & \leq \frac{K}{t-r} \left\{ \int_r^t \frac{1}{(r'-s)^{\frac{1+n - \eta}{2}}} \,dr'  \right. \label{cross:mes:deriv:p:hat:s:r:t}\\
 & \quad \quad \left. +  \int_r^t  \int_{(\mathbb{R}^d)^2} (|y'-x'|^{\eta}\wedge 1) | \partial^{n}_v[\partial_\mu p_{m}(\mu, s, r', x', y')](v)| \, \mu(dx') \, dy' \, dr' \right\}  g(c(t-r), z-x),  \notag
\end{align}
\begin{align}
|\partial_s & \widehat{p}^{y}_{m+1}(\mu, s, r, t, x, z) |  \notag \\
&  \leq \frac{K}{t-r}\int_r^t  \int_{(\rr^d)^2} (|y'-x'|^{\eta}\wedge 1) | \partial_s p_{m}(\mu, s, r', x', y') | \, \mu(dx') \, dy' \, dr'  g(c(t-r), z-x), \, r \neq s, \label{time:deriv:p:hat:s:r:t}
\end{align}
\begin{align}
|\partial_s & \widehat{p}^{y}_{m+1}(\mu, s, t, x, z) |  \notag \\
&  \leq \frac{K}{t-s} \left\{ 1 + \int_s^t  \int_{(\rr^d)^2} (|y'-x'|^{\eta}\wedge 1) | \partial_s p_{m}(\mu, s, r', x', y') | \, \mu(dx') \, dy' \, dr' \right\} \label{time:deriv:p:hat:s:t}\\
& \quad \quad \times g(c(t-s), z-x). \notag
\end{align}

For any $\beta \in [0,1]$ and any $\beta' \in [0,\eta)$, there exist positive constants $K$, $K_{\beta'}$ and $c$ such that for any $(\mu, x, z, y, v, v')\in (0,T] \times \pp \times (\mathbb{R}^d)^5$, for any $0\leq s \leq r < t \leq T$ and any $(x_1, x_2),  (y_1, y_2) \in (\mathbb{R}^d)^2$ 
\begin{align}
| \partial^{n}_v&  [\partial_\mu \widehat{p}^{y}_{m+1}(\mu, s, t, x_1, z)](v) -  \partial^{n}_v [\partial_\mu \widehat{p}^{y}_{m+1}(\mu, s, t, x_2, z)](v)| \notag \\
&   \leq K \frac{|x_1-x_2|^\beta}{(t-s)^{\frac{\beta}{2}}}\left\{ \frac{1}{(t-s)^{\frac{1+n-\eta}{2}}} + \frac{1}{t-s} \int_s^t \int_{(\rr^d)^2} (|y'-x'|^{\eta} \wedge 1)   |\partial^{n}_v[\partial_\mu p_{m}(\mu, s, r', x', y')](v)| \,  \mu(dx') \, dy' \, dr' \right\} \label{cross:mes:deriv:holder:p:hat:s:t}\\
& \quad \quad \times \left\{ g(c(t-s) , z-x_1) + g(c(t-s) , z-x_2)\right\}, \notag
\end{align}
\begin{align}
|\partial_s & \widehat{p}^{y_1}_{m+1}(\mu, s, t, x, z) - \partial_s  \widehat{p}^{y_2}_{m+1}(\mu, s, t, x, z) |  \notag \\
&  \leq K  \frac{|y_1-y_2|^{\beta \eta}}{t-s} \left\{ 1 + \int_s^t  \int_{(\rr^d)^2} (|y'-x'|^{(1-\beta)\eta}\wedge 1) | \partial_s p_{m}(\mu, s, r', x', y') | \, \mu(dx') \, dy' \, dr' \right\} \label{time:deriv:holder:reg:p:hat:s:t}\\
& \quad \quad \times g(c(t-s), z-x), \notag
\end{align}
\begin{align}
 | \partial_v & [\partial_\mu \widehat{p}^y_{m+1}(\mu, s, r, t, x, z)](v)   -   \partial_v [\partial_\mu \widehat{p}^y_{m+1}(\mu, s, r, t, x, z)](v') | \notag \\
& \leq \frac{K_{\beta'}}{t - r}  \left\{ \int_{r}^{t} \Big[ \frac{|v-v'|^{\beta'}}{(r'-s)^{1+ \frac{\beta'-\eta}{2}}}\right. \notag\\
&  \quad  \quad \left.+ \int_{(\mathbb{R}^d)^2} (|y'-x'|^{\eta}\wedge 1) |\partial_v [\partial_\mu p_{m}(\mu, s, r', x', y')](v) - \partial_v [\partial_\mu p_{m}(\mu, s, r', x', y')](v') | \, \mu(dx')\, dy' \,  \Big] dr' \right\}   \label{cross:mes:deriv:reg:holder:terminal point:p:hat:s:r:t} \\
& \quad \times g(c(t-r), z- x). \nonumber
\end{align}
\end{cor}

\begin{cor}\label{cor:deriv:time:and:mes:parametrix:kernel} Assume that the estimates \eqref{first:second:estimate:induction:decoupling:mckean} and \eqref{time:derivative:induction:decoupling:mckean} are satisfied (at step $m$) for some positive constant $C$. For any $(t, x, z) \in (0,T] \times (\mathbb{R}^d)^2$ and any $r\in (0,t)$, the map $[0,r) \ni (s, \mu) \mapsto \mH_{m+1}(\mu, s, r, t, x, z)$ belongs to $\mathcal{C}^{1, 2}([0,r)\times \pp)$, its derivatives $\partial_s  \mH_{m+1}(\mu, s, r, t, x, z)$, $\partial_v^{n}[ \partial_\mu[ \mH_{m+1}(\mu, s, r, t, x, z)]](v)$, $n=0, 1$ being continuous with respect to the variables $s$, $x$, $\mu$ and $v$

Moreover, the derivatives satisfy the following pointwise Gaussian estimates: for any $\beta \in [0,1]$ and any $\beta' \in [0,\eta)$, there exist positive constant $K_{\beta}$, $K_{\beta'}$ and $c$ such that for any $(t, x, z)\in (0,T] \times (\mathbb{R}^d)^2$, for any $s \in [0, t)$, for any $r\in (s, t)$ and any $(\mu, v, v') \in \pp \times (\mathbb{R}^d)^2$
\begin{align}
| \partial^{n}_v & [\partial_\mu \mH_{m+1}(\mu, s, r, t, x, z)](v)| \notag \\
& \leq \frac{K_\beta}{(t-r)^{1-\frac{\beta\eta}{2}}(r-s)^{\frac{1+n-(1-\beta)\eta}{2}}}  \label{cross:mes:deriv:parametrix:kernel:s:r:t:with:beta} \\
  & \quad \times \left(1 +  (r-s)^{\frac{1+n-(1-\beta)\eta}{2}} \int_{(\mathbb{R}^d)^2} (|y'-x'|^{(1-\beta)\eta}\wedge 1) |\partial^{n}_v[\partial_\mu p_{m}(\mu, s, r, x', y')](v)| \, \mu(dx')\, dy'  \right. \notag \\
& \quad \quad \left. + \frac{(r-s)^{\frac{1+n-(1-\beta)\eta}{2}}}{(t-r)^{1+\frac{(\beta-1)\eta}{2}}} \int_r^t \int_{(\mathbb{R}^d)^2} (|y'-x'|^{\eta}\wedge 1) |\partial^{n}_v[\partial_\mu p_{m}(\mu, s, r', x', y')](v)| \, \mu(dx') \, dy' \, dr'   \right) \notag \\
& \quad \quad \times g(c(t-r), z-x), \notag
\end{align}
\begin{align}
|  & \partial_s \mH_{m+1}(\mu, s, r, t, x, z)| \notag \\
 & \leq  \frac{K_\beta}{(t-r)^{1-\frac{\beta \eta}{2}}}\left\{\int_{(\mathbb{R}^d)^2} (|y'-x'|^{(1-\beta)\eta} \wedge 1) \,   |\partial_s p_{m}(\mu, s , r, x', y')| \, \mu(dx') \, dy'  \right.  \label{cross:time:deriv:parametrix:kernel:s:r:t} \\
& \quad \left. + \frac{1}{(t-r)^{1+\frac{(\beta-1)\eta}{2}}}\int_r^t  \int_{(\rr^d)^2} (|y'-x'|^\eta \wedge 1) \,   |\partial_s p_{m}(\mu, s , r', x', y')| \, \mu(dx') \, dy'\, dr'  \right\} g(c(t-r), z-x). \notag
\end{align}
\noindent and 
\begin{align}
& \Big|  \partial_v\Big[\partial_\mu \mH_{m+1}(\mu, s, r ,t ,x ,z) \Big](v) -  \partial_v\Big[\partial_\mu \mH_{m+1}(\mu, s, r ,t ,x ,z) \Big](v') \Big| \notag \\
& \leq K_{\beta'}  \left\{ \frac{1}{(t-r)^{1-\frac{\eta}{2}} (r-s)^{1+\frac{\beta'}{2}}} \wedge  \frac{1}{(t-r)(r-s)^{1+\frac{\beta'-\eta}{2}}} \right\}  \label{cross:mes:deriv:parametrix:kernel:s:r:t:reg:holder} \\
& \quad \times \Big[|v-v'|^{\beta'} + (r-s)^{1+\frac{\beta'}{2}} \int_{(\mathbb{R}^d)^2} |\partial_v[\partial_{\mu}p_{m}(\mu, s , r, x', y')](v) - \partial_v[\partial_{\mu}p_{m}(\mu, s , r, x', y')](v')| \, \mu(dx') \, dy'  \notag\\
& \quad \quad +(r-s)^{1+\frac{\beta'-\eta}{2}} \int_{(\mathbb{R}^d)^2} (|y'-x'|^\eta \wedge 1) |\partial_v[\partial_{\mu}p_{m}(\mu, s , r, x', y')](v) - \partial_v[\partial_{\mu}p_{m}(\mu, s , r, x', y')](v')| \, \mu(dx')\, dy' \notag \\
& \quad \quad \quad +  \frac{(r-s)^{1 + \frac{\beta'-\eta}{2}}}{t-r}   \int_r^t \int_{(\mathbb{R}^d)^2} (|y'-x'|^\eta \wedge 1) | | \partial_v[\partial_\mu p_{m}(\mu, s, r', x', y')](v) - \partial_v[\partial_\mu p_{m}(\mu, s, r', x', y')](v')| \, \mu(dx') \, dy' \, dr' \Big] \notag \\
& \quad \quad \quad \quad \times g(c(t-r), z-x). \notag
\end{align}

\end{cor}

Again, let us importantly point out that if the estimates \eqref{first:second:estimate:induction:decoupling:mckean} and \eqref{time:derivative:induction:decoupling:mckean} (at step $m$) are satisfied then from \eqref{cross:mes:deriv:parametrix:kernel:s:r:t:with:beta}, taking the minimum between the upper-bounds obtained in the two cases $\beta=0$ and $\beta=1$,
\begin{align}
|\partial^n_v [\partial_\mu & \mH_{m+1}(\mu, s, r ,t ,x ,z)](v)| \notag \\ 
& \leq K \left( \frac{1}{(t-r)^{1-\frac{\eta}{2}}(r-s)^{\frac{1+n}{2}}} \wedge  \frac{1}{(t-r)(r-s)^{\frac{1+n-\eta}{2}}}  \right) \notag \\
  & \quad \times \left(1 + (r-s)^{\frac{1+n-\eta}{2}} \int_{(\mathbb{R}^d)^2} (|y'-x'|^{\eta}\wedge 1) |\partial^{n}_v[\partial_\mu p_{m}(\mu, s, r, x', y')](v)| \, dy' \mu(dx') \right.  \label{cross:mes:deriv:parametrix:kernel:s:r:t} \\
& \quad \quad \left. + (r-s)^{\frac{1+n}{2}}\int_{(\rr^d)^2}  |\partial^{n}_v[\partial_\mu p_{m}(\mu, s, r, x', y')](v)| \, dy' \mu(dx') \right. \notag \\
& \quad \quad \left. + \frac{(r-s)^{\frac{1+n-\eta}{2}}}{t-r} \int_r^t \int_{(\mathbb{R}^d)^2} (|y'-x'|^{\eta}\wedge 1) |\partial^{n}_v[\partial_\mu p_{m}(\mu, s, r', x', y')](v)| \, dy' \mu(dx') \, dr'   \right) \notag \\
& \quad \times g(c(t-r), z-x)\notag
\end{align}
\noindent or taking $\beta=1/2$ and using \eqref{first:second:estimate:induction:decoupling:mckean} as well as the space-time inequality \eqref{space:time:inequality}
\begin{align}
|\partial^n_v [\partial_\mu & \mH_{m+1}(\mu, s, r ,t ,x ,z)](v)|   \leq  \frac{K_m}{(t-r)^{1-\frac{\eta}{4}}(r-s)^{\frac{1+n}{2}-\frac{\eta}{4}}} g(c(t-r),z-x)\label{time:degeneracy:estimate:deriv:mes:parametrix:kernel:bis}
\end{align}

\noindent for some positive constant $K_m:= K(T, \HR, \HE, m)$. In a completely analogous manner, taking $\beta=1/2$ in \eqref{cross:time:deriv:parametrix:kernel:s:r:t} and using \eqref{time:derivative:induction:decoupling:mckean} as well as the space-time inequality \eqref{space:time:inequality}, we obtain
\begin{align}\label{time:degeneracy:estimate:deriv:time:parametrix:kernel}
|\partial_s  \mH_{m+1}(\mu, s, r ,t ,x ,z)|  \leq \frac{K_m}{(t-r)^{1 - \frac{\eta}{4}}(r-s)^{1-\frac{\eta}{4}}} g(c(t-r),z-x).
\end{align}

%

The previous controls will be useful in the sequel. In particular, we can now state the following result.
\begin{cor}\label{cor:mes:time:deriv:iterated:parametrix:kernel} Assume that the estimates \eqref{first:second:estimate:induction:decoupling:mckean} and \eqref{time:derivative:induction:decoupling:mckean} are satisfied (at step $m$) for some positive constant $C$. For any positive integer $k$, for any $(t, x, z) \in (0,T] \times (\mathbb{R}^d)^2$ and any $r\in (0,t)$, the map $[0,r) \ni (s, \mu) \mapsto \mH^{(k)}_{m+1}(\mu, s, r, t, x, z)$ belongs to $\mathcal{C}^{1, 2}([0,r)\times \pp)$ with continuous derivatives with respect to the variables $s$, $x$, $\mu$ and $v$.

Moreover, the following estimates hold: there exists $c>0$ such that for any positive integer $k$, for any $(t, x, z)\in (0,T] \times (\mathbb{R}^d)^2$, for any $s \in [0, t)$, for any $r\in (s, t)$ and any $(\mu, v) \in \pp \times \mathbb{R}^d$
\begin{align}
|\partial^n_v [\partial_\mu & \mH^{(k)}_{m+1}(\mu, s, r ,t ,x ,z)](v)| \notag \\
& \leq \frac{k K^{k-1} K_m}{(r-s)^{\frac{1+n}{2}-\frac{\eta}{4}}(t-r)^{1-(k-1)\frac{\eta}{2}-\frac{\eta}{4}}} \prod_{\ell =1}^{k-1} B\left(\frac{\eta}{4}, \frac{\eta}{4} + (\ell-1)\frac{\eta}{2}\right) \, g(c(t-r), z-x) \label{cross:mes:deriv:iterated:parametrix:kernel:s:r:t}
\end{align}
\noindent and
\begin{align}
|\partial_s & \mH^{(k)}_{m+1}(\mu, s, r ,t ,x ,z)| \notag \\
& \leq \frac{k K^{k-1} K_m}{(r-s)^{1-\frac{\eta}{4}}(t-r)^{1-(k-1)\frac{\eta}{2}-\frac{\eta}{4}}} \prod_{\ell =1}^{k-1} B\left(\frac{\eta}{4}, \frac{\eta}{4} + (\ell-1)\frac{\eta}{2}\right) \, g(c(t-r), z-x) \label{cross:time:deriv:iterated:parametrix:kernel:s:r:t}
\end{align}
\noindent where $K$ and $K_m$ are the constants appearing respectively in the right-hand side of the inequalities \eqref{iter:parametrix:kernel} and \eqref{time:degeneracy:estimate:deriv:mes:parametrix:kernel:bis}-\eqref{time:degeneracy:estimate:deriv:time:parametrix:kernel}.
\end{cor}

\subsection{First part of the induction step}\label{subsubsection:proof:first:part:induction:step}

In this section, our aim is to prove the first two points of the induction step of Proposition \ref{proposition:reg:density:recursive:scheme:mckean}. To be more specific, we here prove that if the map $(s, x, \mu) \mapsto p_{m}(\mu, s, t, x, z)$ belongs to $\mathcal{C}^{1, 2,  2}([0,t)\times \mathbb{R}^d \times \pp)$ and if the pointwise Gaussian estimates \eqref{first:second:estimate:induction:decoupling:mckean} and \eqref{time:derivative:induction:decoupling:mckean} are satisfied for some positive constant $C$ (the constant $C$ being the one appearing in the definition of the mth partial sums $\mathscr{C}^{n,\beta}_m(C, t)$) then $(s, x, \mu) \mapsto p_{m+1}(\mu, s, t, x, z) \in \mathcal{C}^{1, 2,  2}([0,t)\times \mathbb{R}^d \times \pp)$. Additionally, we prove that if the estimates \eqref{first:second:estimate:induction:decoupling:mckean} to \eqref{equicontinuity:first:estimate:decoupling:mckean} are satisfied at step $m$ for some adequate specification of the constants $C$ and $C_\beta$, then they remain valid at step $m+1$.

\begin{prop}\label{prop:induction:step:first:part} Assume that the estimates \eqref{first:second:estimate:induction:decoupling:mckean}, \eqref{time:derivative:induction:decoupling:mckean} and \eqref{equicontinuity:first:estimate:decoupling:mckean} (at step $m$) are satisfied for some positive constants $C$ and $C_\beta$. For any $(t, x, z) \in (0,T] \times (\mathbb{R}^d)^2$, the map $[0,t) \times \mathbb{R}^d \times \pp \ni (s, x, \mu) \mapsto p_{m+1}(\mu, s, t, x, z)$ belongs to $\mathcal{C}^{1, 2,  2}([0,t)\times \mathbb{R}^d \times \pp)$ and satisfies 
\begin{align}
\partial^{n}_v[\partial_\mu p_{m+1}(\mu, s , t, x, z)](v) & = \sum_{k\geq0} (\partial^{n}_v [\partial_\mu \widehat{p}_{m+1}] + p_{m+1} \otimes \partial^{n}_v [\partial_\mu \mH_{m+1}]) \otimes \mH^{(k)}_{m+1}(\mu, s, t, x, z)(v), \, n \in \left\{0, 1\right\}\label{representation:formula:cross:lions:deriv:dens}
\end{align}
\noindent and
\begin{align}
\partial_s p_{m+1}(\mu, s , t, x, z) & = \partial_s \widehat{p}_{m+1}(\mu, s, t, x, z) - \Phi_{m+1}(\mu, s, t, x, z) + \partial_s \widehat{p}_{m+1}\otimes \Phi_{m+1}(\mu, s, t, x, z) \nonumber \\
& \quad + p_{m+1} \otimes \partial_s \mH_{m+1}(\mu, s ,t, x, z) + [(p_{m+1}\otimes \partial_s \mH_{m+1})\otimes \Phi_{m+1}](\mu, s, t, x, z).\label{representation:formula:time:deriv:dens}
\end{align}
Moreover, the four following Gaussian estimates hold: for any $\beta \in [0,1]$ if $n=0$ or any $\beta \in [0,\eta)$ if $n=1$, there exist positive constants $K_{\beta}$, $K$, $c$ such that for any $(t, x, x', z)\in (0,T] \times (\mathbb{R}^d)^3$, for any $(s, \mu, v) \in [0,t) \times \pp \times \mathbb{R}^d$ and any value of the positive constants $C$ and $C_{\beta}$ appearing in the definition of the mth partial sums $\mathscr{C}^{n, \beta}_m(C, t-s)$ and $\mathscr{C}^{n, \beta}_m(C_\beta, t-s)$ of \eqref{first:second:estimate:induction:decoupling:mckean}, \eqref{time:derivative:induction:decoupling:mckean}, \eqref{equicontinuity:second:third:estimate:decoupling:mckean} and \eqref{equicontinuity:first:estimate:decoupling:mckean}
\begin{align}
|\partial^{n}_v & [\partial_\mu p_{m+1}(\mu, s, t, x, z)](v) | \nonumber \\
& \leq   \frac{K}{(t-s)^{\frac{1+n-\eta}{2}}} \left\{ 1 + \sum_{k=1}^{m} C^{k} (t-s)^{k \frac{\eta}{2}}  \prod_{i=1}^{k} B\left(\frac{\eta}{2}, \frac{1-n+\eta}{2} +  (i-1)\frac{\eta}{2} \right) \right\} \label{estimate:deriv:mes:dens:stepmp1:cor} \\
& \quad \times  g(c(t-s), z-x), \, n \in \left\{0, 1\right\},\nonumber
\end{align}
\begin{align}
|\partial^{n}_v & [\partial_\mu p_{m+1}(\mu, s, t, x, z)](v) - \partial^{n}_v [\partial_\mu p_{m+1}(\mu, s, t, x', z)](v) | \nonumber \\
& \leq   K_\beta \frac{|x-x'|^{\beta}}{(t-s)^{\frac{1+n + \beta-\eta}{2}}} \left\{ 1 + \sum_{k=1}^{m} C^{k} (t-s)^{k \frac{\eta}{2}}  \prod_{i=1}^{k} B\left(\frac{\eta}{2}, \frac{1-n+\eta-\beta}{2} +  (i-1)\frac{\eta}{2} \right)  \right\} \label{estimate:deriv:mes:holder:reg:dens:stepmp1:cor} \\
& \quad \times  ( g(c(t-s), z-x) + g(c(t-s), z-x') ), \, n \in \left\{0, 1\right\}, \nonumber
\end{align}
\begin{align}
|\partial_v & [\partial_\mu p_{m+1}(\mu, s, t, x, z)](v) - \partial_v [\partial_\mu p_{m+1}(\mu, s, t, x, z)](v') | \nonumber \\
& \leq   K_\beta \frac{|v-v'|^{\beta}}{(t-s)^{1+\frac{ \beta-\eta}{2}}} \left\{ 1 + \sum_{k=1}^{m} C_\beta^{k} (t-s)^{k \frac{\eta}{2}}  \prod_{i=1}^{k} B\left(\frac{\eta}{2}, \frac{1-n+\eta-\beta}{2} +  (i-1)\frac{\eta}{2} \right)  \right\} \label{estimate:deriv:mes:holder:reg:terminal:point:mes:dens:stepmp1:cor} \\
& \quad \times  g(c(t-s), z-x), \nonumber
\end{align}
\noindent and
\begin{align}
|  \partial_s p_{m+1}(\mu, s, t, x, z)|  & \leq   \frac{K}{t-s} \left\{ 1 + \sum_{k=1}^{m} C^{k} (t-s)^{k \frac{\eta}{2}}  \prod_{i=1}^{k} B\left(\frac{\eta}{2}, \frac{\eta}{2} +  (i-1)\frac{\eta}{2} \right) \right\} \label{estimate:deriv:time:dens:stepmp1:cor} \\
& \quad \times  g(c(t-s), z-x). \nonumber
\end{align}

\end{prop}

\bigskip 

\noindent \emph{Conclusion of the first part of the induction step:} In view of Proposition \ref{prop:induction:step:first:part}, it suffices to set the constants $C$ and $C_{\beta}$ in the mth partial sums $\mathscr{C}^{n, \beta}_m(C, t-s)$, $\mathscr{C}^{n, \beta}_m(C_\beta, t-s)$ which are used in the statement of the Gaussian estimates \eqref{first:second:estimate:induction:decoupling:mckean} to \eqref{equicontinuity:first:estimate:decoupling:mckean} to be equal to $K$ or $K_{\beta}$ where $K$ and $K_\beta$ are the constants appearing in the right-hand side of the Gaussian estimates \eqref{estimate:deriv:mes:dens:stepmp1:cor} to \eqref{estimate:deriv:time:dens:stepmp1:cor}. Indeed, doing so and by the very definition of $\mathscr{C}^{n, \beta}_{m+1}(K , t-s)$ and $\mathscr{C}^{n, \beta}_{m+1}(K_\beta , t-s)$, we deduce that the map $[0,t) \times \mathbb{R}^d \times \pp \ni (s, x, \mu) \mapsto p_{m+1}(\mu, s, t, x, z)$ belongs to $\mathcal{C}^{1, 2,  2}([0,t)\times \mathbb{R}^d \times \pp)$ and the estimates \eqref{estimate:deriv:mes:dens:stepmp1:cor} to \eqref{estimate:deriv:time:dens:stepmp1:cor} directly yield the desired estimates \eqref{first:second:estimate:induction:decoupling:mckean} to \eqref{equicontinuity:first:estimate:decoupling:mckean} at step $m+1$. We thus conclude that for any positive integer $m$, $[0,t) \times \mathbb{R}^d \times \pp \ni (s, x, \mu) \mapsto p_{m}(\mu, s, t, x, z) \in \mathcal{C}^{1, 2,  2}([0,t)\times \mathbb{R}^d \times \pp)$ and that the pointwise Gaussian estimates \eqref{first:second:estimate:induction:decoupling:mckean} to \eqref{equicontinuity:first:estimate:decoupling:mckean} are satisfied. This completes the proof of the first of part of Proposition \ref{proposition:reg:density:recursive:scheme:mckean}.

As already mentioned in Remark \ref{remark:convergence:series:gaussian:upper:bounds},  in order to tackle the second part of the induction step, from now on and without explicitly mentioning it in the sequel, we will use the pointwise Gaussian estimates \eqref{first:second:estimate:induction:decoupling:mckean} to \eqref{equicontinuity:first:estimate:decoupling:mckean} with constants $\mathscr{C}^{n, 0}_\infty(K, T)$ and $ \mathscr{C}^{n, \beta}_\infty(K_\beta, T)$ instead of $\mathscr{C}^{n, \beta}_m(K, T)$ and $\mathscr{C}^{n, \beta}_m(K_\beta, T)$. We now move to the proof of Proposition \ref{prop:induction:step:first:part}.\\

\begin{proof}[Proof of Proposition \ref{prop:induction:step:first:part}]\quad\\

\noindent \emph{Step 1: the map $[0,t)\times \mathbb{R}^d \times \pp \ni (s, x,  \mu) \mapsto p_{m+1}(\mu, s, t, x, z) \in \mathcal{C}^{1, 2, 2}([0,t)\times \mathbb{R}^d \times \pp)$.}\\

We first prove that $[0,t)\times \mathbb{R}^d \times \pp \ni  (s, x,  \mu) \mapsto p_{m+1}(\mu, s, t, x, z),  \, \mH_{m+1}(\mu, s, t, x, z), \, \Phi_{m+1}(\mu, s, r, t, x, z)$ are continuous. We consider a sequence $(s_n, x_n, \mu_n)_{n\geq1}$ taking values in $[0,t)\times \rr^d \times \pp$ and satisfying $\lim_{n}|s_n -s| = \lim_{n} |x_n -x| = \lim_{n} W_2(\mu_n, \mu) = 0$ for some $(s, x, \mu) \in [0,t) \times \rr^d \times \pp$. Following the same lines of reasonings as those employed in the first step of the proof of Proposition \ref{structural:class}, we deduce that $\lim_{n}d_{{\rm TV}}([X^{s_n, \xi_n, (m)}_t], [X^{s, \xi, (m)}_t])=0$, where we denote by $(\xi_n)_{n\geq1}$ any sequence of random variables such that $[\xi_n] = \mu_n$ and by $\xi$ any random variable with law $\mu$. The continuity of the coefficients then implies that $\lim_{n} a_{i, j}(t, x_n, [X^{s_n, \xi_n, (m)}_t]) = a_{i, j}(t, x,  [X^{s, \xi, (m)}_t])$ and $\lim_{n} b_{i}(t, x_n, [X^{s_n, \xi_n, (m)}_t]) = b_{i}(t, x,  [X^{s, \xi, (m)}_t])$ which in turn from the representation in infinite series \eqref{series:approx:mckean}, \eqref{definition:parametrix:kernel:iterate:m}, \eqref{infinite:series:Phi:step:m} (at step $m+1$), the estimates \eqref{iter:parametrix:kernel}, \eqref{iter:param:classic}, \eqref{Gaussian:estimate:Phim} and the dominated convergence theorem yield the continuity of the maps $[0,t) \times \rr^d \times \pp \ni (s, x, \mu) \mapsto p_{m+1}(\mu, s, t, x, z), \, \mH_{m+1}(\mu, s, t, x, z), \, \Phi_{m+1}(\mu, s, r, t, x, z)$. 

We now prove that $\mathbb{R}^d \ni x\mapsto p_{m+1}(\mu, s, t, x, z)$ is two times differentiable and that the functions $[0,t) \times \mathbb{R}^d \times \pp \ni (s, x, \mu) \mapsto \partial_x p_{m+1}(\mu, s, t, x, z), \, \partial^2_x p_{m+1}(\mu, s, t, x, z)$ are continuous. For fixed $0\leq s \leq r < t \leq T$, we differentiate $n$-times ($n=1,2$) with respect to the variable $x$ the relation 
\begin{align*}
 \int_{\rr^d} \widehat{p}_{m+1}(\mu, s, r, x, y) &\Phi_{m+1}(\mu, s, r, t, y, z) \, dy\\
 &  =  \int_{\rr^d} \widehat{p}_{m+1}(\mu, s, r, x, y) [\Phi_{m+1}(\mu, s, r, t, y, z) -\Phi_{m+1}(\mu, s, r, t, x', z)]\, dy \\
 & \quad + \Phi_{m+1}(\mu, s, r, t, x', z) \int_{\rr^d} [\widehat{p}^{y}_{m+1}(\mu, s, r, x, y) - \widehat{p}^{x'}_{m+1}(\mu, s, r, x, y)] \ dy \\
 & \quad +  \Phi_{m+1}(\mu, s, r, t, x', z)
 \end{align*}
 \noindent and then chose $x'=x$ so that by the dominated convergence theorem, we get 
\begin{align*}
 \int_{\mathbb{R}^d} \partial^{n}_x\widehat{p}_{m+1}(\mu, s, r, x, y) & \Phi_{m+1}(\mu, s, r, t, y, z) \, dy \\
 &  =  \int_{\mathbb{R}^d} \partial^{n}_x\widehat{p}_{m+1}(\mu, s, r, x, y) [\Phi_{m+1}(\mu, s, r, t, y, z) - \Phi_{m+1}(\mu, s, r, t, x, z)]\, dy \\
 & \quad + \Phi_{m+1}(\mu, s, r, t, x, z) \int_{\mathbb{R}^d} [\partial^n_x\widehat{p}^y_{m+1}(\mu, s, r, x, y) - \partial^{n}_x\widehat{p}^{x}_{m+1}(\mu, s, r, x, y)] \ dy. 
 \end{align*}
 
The mean-value theorem and the uniform $\eta$-H\"older regularity of $a(t, ., m)$ imply that for any $y', y'' \in \mathbb{R}^d$
\begin{align}
|\partial^n_x\widehat{p}^{y'}_{m+1}(\mu, s, r, x, y) - \partial^{n}_x\widehat{p}^{y''}_{m+1}(\mu, s, r, x, y)| &  \leq K (|y'-y''|^\eta \wedge 1) (r-s)^{-\frac{n}{2}} g(c(r-s), y-x) \label{diff:freezing:point:deriv:space:phat} 
\end{align}
\noindent which in turn by taking $y'=y$, $y''=x$ and using the space-time inequality \eqref{space:time:inequality} yield
\begin{align*}
 \int_{\mathbb{R}^d} |\partial^n_x\widehat{p}^y_{m+1}(\mu, s, r, x, y) - \partial^{n}_x\widehat{p}^{x}_{m+1}(\mu, s, r, x, y)| \ dy & \leq K (r-s)^{\frac{-n+\eta}{2}}.
\end{align*}

Moreover, from \eqref{Gaussian:estimate:Phim}, for $r\in [s, (t+s)/2]$, one gets
$$
| \Phi_{m+1}(\mu, s, r, t, x, z) | \leq \frac{K}{t-s} \, g(c(t-s), z-x) 
$$
\noindent so that 
$$
|\Phi_{m+1}(\mu, s, r, t, x, z) \int_{\mathbb{R}^d} [\partial^n_x\widehat{p}^y_{m+1}(\mu, s, r, x, y) - \partial^{n}_x\widehat{p}^{x}_{m+1}(\mu, s, r, x, y)] \ dy| \leq \frac{K}{(t-s) (r-s)^{\frac{n-\eta}{2}}} \, g(c(t-s), z-x). 
$$
\noindent Combining \eqref{holder:reg:voltera:kernel} with the space-time inequality \eqref{space:time:inequality}, we get
\begin{align*}
\Big|\int_{\mathbb{R}^d} \partial^{n}_x \widehat{p}_{m+1}(\mu, s, r, x, y) & [\Phi_{m+1}(\mu, s, r, t, y, z)  - \Phi_{m+1}(\mu, s, r, t, x, z)]\, dy \Big| \\
&  \leq K (t-r)^{-1+\frac{\eta-\beta}{2}}(r-s)^{\frac{-n+\beta}{2}} g(c(t-s), z-x) 
\end{align*}

\noindent for any $\beta \in [0,\eta)$. Hence, from the dominated convergence theorem, it follows that the map $x\mapsto \int_s^{(t+s)/2} \int_{\mathbb{R}^d} \widehat{p}_{m+1}(\mu, s, r, x, y) \Phi_{m+1}(\mu, s, r, t, y, z) \, dy \, dr$ is $n$-times continuously differentiable, $n=1, 2$, and satisfies 
\begin{align*}
\partial_x^{n}  & \int_s^{\frac{t+s}{2}} \int_{\mathbb{R}^d} \widehat{p}_{m+1}(\mu, s, r, x, y) \Phi_{m+1}(\mu, s, r, t, y, z) \, dy \, dr \\
& = \int_{s}^{\frac{t+s}{2}} \int_{\mathbb{R}^d} \partial^{n}_x\widehat{p}_{m+1}(\mu, s, r, x, y) [\Phi_{m+1}(\mu, s, r, t, y, z) - \Phi_{m+1}(\mu, s, r, t, x, z)]\, dy \, dr \\
 & \quad + \int_{s}^{\frac{t+s}{2}} \Phi_{m+1}(\mu, s, r, t, x, z) \int_{\mathbb{R}^d} [\partial^n_x\widehat{p}^y_{m+1}(\mu, s, r, x, y) - \partial^{n}_x\widehat{p}^{x}_{m+1}(\mu, s, r, x, y)] \ dy \,dr.
\end{align*}
 
 Moreover, from \eqref{Gaussian:estimate:Phim}, the standard Gaussian estimate $|\partial_x^{n} \widehat{p}_{m+1}(\mu, s, r, x, y)| \leq K (r-s)^{-\frac{n}{2}} \, g(c(r-s), y-x)$ and the dominated convergence theorem, $x\mapsto  \int_{(t+s)/2}^{t} \int_{\mathbb{R}^d} \widehat{p}_{m+1}(\mu, s, r, x, y) \Phi_{m+1}(\mu, s, r, t, y, z) \, dy \, dr$ is $n$-times continuously differentiable with 
 \begin{align*}
 \partial_x^n \int_{\frac{t+s}{2}}^{t}&  \int_{\mathbb{R}^d} \widehat{p}_{m+1}(\mu, s, r, x, y) \Phi_{m+1}(\mu, s, r, t, y, z) \, dy \, dr \\
 &  = \int_{ \frac{t+s}{2}}^{t} \int_{\mathbb{R}^d}  \partial_x^{n} \widehat{p}_{m+1}(\mu, s, r, x, y) \Phi_{m+1}(\mu, s, r, t, y, z) \, dy \, dr.
 \end{align*}
 
Indeed, note carefully that the term $ \partial_x^{n} \widehat{p}_{m+1}(\mu, s, r, x, y)$ does not induce any time singularity for $r\in [(t+s)/2, t]$. One may thus differentiate $n$-times the relation \eqref{other:representation:parametrix:series} with respect to the variable $x$ and deduce that $x\mapsto  p_{m+1}(\mu, s, t, x, z)$ is two times continuously differentiable with
\begin{align}
\partial^{n}_x p_{m+1}(\mu, s, t, x, z) & = \partial^{n}_x \widehat{p}_{m+1}(\mu, s, t, x, z) \nonumber \\
&   \quad + \int_s^{\frac{t+s}{2}} \int_{\rr^d} \partial^{n}_x \widehat{p}_{m+1}(\mu, s, r, x, y) \Big[\Phi_{m+1}(\mu, s, r, t, y, z) - \Phi_{m+1}(\mu, s, r, t, x, z)\Big] \, dy \, dr \label{deriv:space:parametrix:series} \\
&  \quad  + \int_s^{\frac{t+s}{2}} \Phi_{m+1}(\mu, s, r, t, x, z) \int_{\rr^d} \Big[ \partial^{n}_x\widehat{p}^{y}_{m+1}(\mu, s, r, x, y) - \partial^{n}_x \widehat{p}^{x}_{m+1}(\mu, s, r, x, y)\Big] \, dy \, dr \nonumber \\
& \quad   + \int_{\frac{t+s}{2}}^{t}  \partial_x^{n} \widehat{p}_{m+1}(\mu, s, r, x, y) \Phi_{m+1}(\mu, s, r, t, y, z) \, dy \, dr\nonumber
\end{align}
\noindent for $n \in \left\{1, 2\right\}$. Now, the previous relation together with the continuity of $[0,t) \times \mathbb{R}^d\times \pp \ni (s, x, \mu) \mapsto \Phi_{m+1}(\mu, s, r, t, x, z), \, \partial^{n}_x\widehat{p}^{y'}_{m+1}(\mu, s, r, x, y) $, for any $y'\in \mathbb{R}^d$, and again the dominated convergence theorem eventually yield the continuity of $[0,t)\times \mathbb{R}^d \times \pp \ni (s, x, \mu) \mapsto \partial^{n}_x p_{m}(\mu, s, t, x, z)$, $n \in \left\{1, 2\right\}$. 

We now investigate the smoothness of $\pp \ni \mu \mapsto p_{m+1}(\mu, s, t, x, z)$. Combining \eqref{cross:mes:deriv:p:hat:s:r:t} with \eqref{first:second:estimate:induction:decoupling:mckean} and then using \eqref{iter:parametrix:kernel}, we deduce that for some positive constant $K_m$ one has
\begin{align*}
|\partial^{n}_v[\partial_\mu \widehat{p}_{m+1}] \otimes \mH^{(k)}_{m+1}(\mu, s, t, x, z)(v)| & \leq  C^{k}K_m \prod_{\ell=1}^{k-1} B\left(\ell \frac{\eta}{2}, \frac{\eta}{2}\right)  \int_s^t \frac{1}{(r-s)^{\frac{1+n-\eta}{2}}} \frac{1}{(t-r)^{1-k \frac{\eta}{2}}} \, dr\, g(c(t-s), z-x)\\
& \leq \frac{K^{k}K_m}{ (t-s)^{\frac{1+n}{2} - (k+1) \frac{\eta}{2}}} \prod_{\ell=1}^{k} B\left(\ell \frac{\eta}{2}, \frac{\eta}{2}\right) \, g(c(t-s), z-x)
\end{align*}
\noindent so that, using the fact that $\prod_{\ell=1}^{k} B\left(\ell \eta/2, \eta/2\right) = \prod_{\ell=1}^{k} \Gamma(\eta/2) \Gamma(\ell \eta/2) / \Gamma((\ell+1) \eta/2) = \Gamma(\eta/2)^{k+1} / \Gamma((k+1)\eta/2)$ and Stirling's formula, we deduce that the series $\sum_{k\geq 0}\partial^{n}_v[\partial_\mu \widehat{p}_{m+1}] \otimes \mH^{(k)}_{m+1}(\mu, s, t, x, z)(v)$ is absolutely convergent and locally uniformly in $(s, \mu, x, v) \in [0,t) \times \pp \times (\mathbb{R}^d)^2$ so that it is continuous with respect to the variables $s$, $x$, $\mu$ and $v$. 
Similarly from \eqref{definition:phat:end:point:frozen}, \HE\, and \eqref{cross:mes:deriv:iterated:parametrix:kernel:s:r:t}
\begin{align*}
| \widehat{p}_{m+1}\otimes & \partial^{n}_v[\partial_\mu \mH^{(k)}_{m+1}(\mu, s, t, x, z)](v)|  \\
& \leq k K^{k-1} K_m \prod_{\ell =1}^{k-1} B\left(\frac{\eta}{4}, \frac{\eta}{4} + (\ell-1) \frac{\eta}{2}\right) \int_s^t \frac{1}{(r-s)^{\frac{1+n}{2}-\frac{\eta}{4}}(t-r)^{1-(k-1)\frac{\eta}{2}-\frac{\eta}{4}}} \, dr \, g(c(t-s), z-x)\\
& \leq \frac{k K^{k-1} K_m}{(t-s)^{\frac{1+n}{2}-k\frac{\eta}{2}}} \prod_{\ell =1}^{k} B\left(\frac{\eta}{4}, \frac{\eta}{4} + (\ell-1) \frac{\eta}{2}\right) \,g(c(t-s), z-x)
\end{align*}
\noindent so that, again from the asymptotics of the Beta function, the series $\sum_{k\geq0}  \widehat{p}_{m+1}\otimes \partial^{n}_v[\partial_\mu \mH^{(k)}_{m+1}(\mu, s, t, x, z)](v)$ converges absolutely and locally uniformly in $(s, \mu, x, v) \in [0,t) \times \pp \times (\mathbb{R}^d)^2$ so that it is continuous with respect to the variables $s$, $x$, $\mu$ and $v$. Hence, from the representation in infinite series \eqref{series:approx:mckean} and the dominated convergence theorem, we conclude that $\mu \mapsto p_{m+1}(\mu, s, t, x, z)$ is continuously $L$-differentiable, $[0,t) \times \mathbb{R}^d \times \pp \times \mathbb{R}^d  \ni (s, x, \mu, v) \mapsto \partial_\mu p_{m+1}(\mu, s, t, x, z)(v)$ being continuous and such that for any $(s, x, \mu) \in [0,t) \times \mathbb{R}^d \times \pp$, $\mathbb{R}^d \ni v\mapsto  \partial_\mu p_{m+1}(\mu, s, t, x, z)(v)$ is continuously differentiable, $[0,t) \times \mathbb{R}^d \times \pp \times \mathbb{R}^d  \ni (s, x, \mu, v) \mapsto \partial_v \partial_\mu p_{m+1}(\mu, s, t, x, z)(v)$ being continuous with
$$
\partial^{n}_v[\partial_\mu p_{m+1}(\mu, s, t, x, z)](v) = \sum_{k\geq 0} \partial^{n}_v[\partial_\mu \widehat{p}_{m+1}] \otimes \mH^{(k)}_{m+1}(\mu, s, t, x, z)(v) +  \widehat{p}_{m+1}\otimes \partial^{n}_v[\partial_\mu \mH^{(k)}_{m+1}(\mu, s, t, x, z)](v)
$$
\noindent and
\begin{equation}\label{gaussian:estimate:cros:mes:derivative:p}
|\partial^{n}_v[\partial_\mu p_{m+1}(\mu, s, t, x, z)](v)| \leq \frac{K_m}{(t-s)^{\frac{1+n}{2}-\frac{\eta}{2}}} \,g(c(t-s), z-x).
\end{equation}

We now discuss the continuous differentiability of the map $[0,t) \ni s \mapsto p_{m+1}(\mu, s, t, x, z)$ as well as the continuity of the function $[0,t)\times \mathbb{R}^d \times \pp \ni (s, x,  \mu) \mapsto  \partial_s p_{m+1}(\mu, s, t, x, z)$. First, let us note that from Corollary \ref{cor:mes:time:deriv:iterated:parametrix:kernel}, in particular from the estimate \eqref{cross:time:deriv:iterated:parametrix:kernel:s:r:t} and the dominated convergence theorem, we derive that the map $s\mapsto \Phi_{m+1}(\mu, s, r, t, x, z) = \sum_{k\geq1} \mH^{(k)}_{m+1}(\mu, s , r,  t, x, z) \in \mathcal{C}^{1}([0,t))$ with a derivative $(s, x, \mu) \mapsto \partial_s  \Phi_{m+1}(\mu, s, r, t, x, z) = \sum_{k\geq1} \partial_s\mH^{(k)}_{m+1}(\mu, s , r,  t, x, z) $ being continuous on $[0,t)\times \mathbb{R}^d \times \pp$ which satisfies
\begin{align}\label{time:deriv:phi:mp1}
|\partial_s  \Phi_{m+1}(\mu, s, r, t, x, z)| \leq \sum_{k\geq1} |\partial_s\mH^{(k)}_{m+1}(\mu, s , r,  t, x, z)| \leq \frac{K_m}{(r-s)^{1-\frac{\eta}{4}}(t-r)^{1-\frac{\eta}{4}}} \, g(c(t-r), z-x).
\end{align}

The previous estimate as well as \eqref{iter:parametrix:kernel} and \eqref{time:degeneracy:estimate:deriv:time:parametrix:kernel} combined with the dominated convergence theorem allow to differentiate with respect to the variable $s$ the relation $\Phi_{m+1}(\mu, s, r, t, x, z) = \mH_{m+1}(\mu, s, r, t, x, z) + (\mH_{m+1}\otimes \Phi_{m+1})(\mu, s, r, t, x, z)$. We thus get
\begin{align*}
\partial_s \Phi_{m+1}(\mu, s, r, t, x, z) & = \partial_s \mH_{m+1}(\mu, s, r, t, x, z) + (\partial_s \mH_{m+1} \otimes \Phi_{m+1})(\mu, s, r, t, x, z)  \\
& \quad + (\mH_{m+1} \otimes \partial_s \Phi_{m+1})(\mu, s, r, t, x, z). 
\end{align*}

The kernel $ \partial_s \mH_{m+1} + (\partial_s \mH_{m+1} \otimes \Phi_{m+1})$ yields an integrable time singularity so that, iterating the previous relation and then using \eqref{infinite:series:Phi:step:m}, we deduce the key identity
\begin{align}
\partial_s \Phi_{m+1}(\mu, s,  r, t, x, z) & = \sum_{k \geq 0} \Big(\mH^{(k)}_{m+1} \otimes [ \partial_s \mH_{m+1} + \partial_s \mH_{m+1} \otimes \Phi_{m+1}]\Big)(\mu, s, r, t, x, z) \nonumber\\
& = (\partial_s \mH_{m+1} + \partial_s \mH_{m+1} \otimes \Phi_{m+1})(\mu, s, r, t, x, z) \label{deriv:Phi:mp1:var:mu:key:relation}\\
& \quad + (\Phi_{m+1} \otimes [\partial_s \mH_{m+1} + \partial_s \mH_{m+1} \otimes \Phi_{m+1}] )(\mu, s ,r ,t, x, z). \nonumber
\end{align}

We now study the differentiability of $[0,t) \ni s\mapsto (\widehat{p}_{m+1} \otimes  \Phi_{m+1})(\mu, s, t, x, z) $. We first consider the map $[0,r) \ni s\mapsto \int_{\mathbb{R}^d} \widehat{p}_{m+1}(\mu, s, r, x ,y) \Phi_{m+1}(\mu, s, r, t, y, z) \, dy$. From the above arguments and the dominated convergence theorem, it is continuously differentiable and satisfies
\begin{align*}
\partial_s \int_{\mathbb{R}^d} \widehat{p}_{m+1}(\mu, s, r, x ,y) & \Phi_{m+1}(\mu, s, r, t, y, z) \, dy \\
& = \int_{\mathbb{R}^d}  \partial_s \widehat{p}_{m+1}(\mu, s, r, x ,y) \Phi_{m+1}(\mu, s, r, t, y, z) \, dy\\
& +  \int_{\mathbb{R}^d}   \widehat{p}_{m+1}(\mu, s, r, x ,y) \partial_s\Phi_{m+1}(\mu, s, r, t, y, z) \, dy \\
& =  \int_{\mathbb{R}^d}  \partial_s \widehat{p}_{m+1}(\mu, s, r, x ,y) [\Phi_{m+1}(\mu, s, r, t, y, z) - \Phi_{m+1}(\mu, s, r, t, x, z)]  \, dy\\
& + \Phi_{m+1}(\mu, s, r, t, x, z) \int_{\mathbb{R}^d} [\partial_s \widehat{p}^{y}_{m+1}(\mu, s, r, x ,y) - \partial_s \widehat{p}^{x}_{m+1}(\mu, s, r, x ,y)] \, dy\\
& +  \int_{\mathbb{R}^d}   \widehat{p}_{m+1}(\mu, s, r, x ,y) \partial_s\Phi_{m+1}(\mu, s, r, t, y, z) \, dy \\
& =: ({\rm I} + {\rm II} + {\rm III})(\mu, s, r, t, x, z)
\end{align*}
\noindent where for the last equality we used the identity $\int_{\mathbb{R}^d} \partial_s \widehat{p}^{x}_{m+1}(\mu, s, r, x ,y) \, dy = \partial_s \int_{\mathbb{R}^d} \widehat{p}^{x}_{m+1}(\mu, s, r, x ,y) \, dy = 0$.

Now, for the first term ${\rm I}$, we use Corollary \ref{cor:deriv:time:and:mes:phat} and the continuity of $(s, x, \mu) \mapsto  \Phi_{m+1}(\mu, s, r, t, x, z) $ to deduce that $[0,r) \times \mathbb{R}^d \times \pp \ni (s, x, \mu) \mapsto {\rm I}(\mu, s, r, t, x, z)$ is continuous. Moreover, one may combine \eqref{holder:reg:voltera:kernel} with the space-time inequality \eqref{space:time:inequality} in order to get rid off the time singularity induced by $\partial_s \widehat{p}_{m+1}(\mu, s, r, x, y)$ in the pointwise Gaussian estimate \eqref{time:deriv:p:hat:s:t} (which has to be combined with \eqref{time:derivative:induction:decoupling:mckean}). We obtain 
$$
|{\rm I}(\mu, s, r, t, x, z)| \leq \frac{K_m}{(r-s)^{1-\frac{\beta}{2}}(t-r)^{1+\frac{\beta-\eta}{2}}} g(c(t-s), z-x)
$$
\noindent for any $\beta \in [0,\eta)$.
For the second term ${\rm II}$, from Corollary \ref{cor:deriv:time:and:mes:phat}, the map $[0,r) \times \mathbb{R}^d \times \pp \ni (s, x, \mu) \mapsto  \partial_s \widehat{p}^{y}_{m+1}(\mu, s, r, x, y)  - \partial_s \widehat{p}^{x}_{m+1}(\mu, s, r, x, y) $ is continuous and \eqref{time:deriv:holder:reg:p:hat:s:t} together with \eqref{time:derivative:induction:decoupling:mckean} and the space-time inequality \eqref{space:time:inequality} we obtain $| \partial_s \widehat{p}^{y}_{m+1}(\mu, s, r, x, y)  - \partial_s \widehat{p}^{x}_{m+1}(\mu, s, r, x, y)|\leq K_m (r-s)^{-1+ \beta \eta/2} g(c(r-s),y-x)$ for any $\beta \in [0,1)$. We thus deduce that the map $[0,r) \times \mathbb{R}^d \times \pp \ni (s, x, \mu) \mapsto {\rm II}(\mu, s, r,  t, x, z)$ is continuous and satisfies 
$$
|{\rm II}(\mu, s, r, t, x, z)| \leq \frac{K_m}{(r-s)^{1-\beta \frac{\eta}{2}}(t-r)^{1- \frac{\eta}{2}}} g(c(t-r), z-x) \leq  \frac{K_m}{(r-s)^{1- \beta\frac{\eta}{2}}(t-s)^{1- \frac{\eta}{2}}} g(c(t-s), z-x)
$$

\noindent for any $r \in [s, (t+s)/2]$. 

Also, note that if $r \in [(t+s)/2, t]$, then the time degeneracy induced by the pontwise Gaussian estimate \eqref{time:deriv:p:hat:s:t} on $\partial_s \widehat{p}_{m+1}(\mu, s, r, x ,y)$ is not singular  
\begin{align*}
\left| \int_{\mathbb{R}^d}  \partial_s \widehat{p}_{m+1}(\mu, s, r, x ,y) \Phi_{m+1}(\mu, s, r, t, y, z) \, dy \right| & = |({\rm I} + {\rm II})(\mu, s, r, t, x, z)| \\
& \leq \frac{K_m}{(t-s)(t-r)^{1-\frac{\eta}{2}}} \, g(c(t-s), z-x).
\end{align*}
Similarly, from the continuity of the maps $(s, x, \mu) \mapsto  \widehat{p}_{m+1}(\mu, s, r, x ,y)$ and $(s, \mu) \mapsto \partial_s\Phi_{m+1}(\mu, s, r, t, y, z) $ as well as the estimate \eqref{time:deriv:phi:mp1}, we deduce that the map $[0,r) \times \mathbb{R}^d \times \pp \ni (s, x, \mu) \mapsto {\rm III}(\mu, s, r,  t, x, z)$ is continuous and satisfies
$$
|{\rm III}(\mu, s, r, t, x, z)| \leq \frac{K_m}{(r-s)^{1-\frac{\eta}{4}}(t-r)^{1-\frac{\eta}{4}}} g(c(t-s), z-x).
$$
The dominated convergence theorem combined with the above estimates allows to conclude that the map $[0,t) \ni s \mapsto (\widehat{p}_{m+1}\otimes \Phi_{m+1})(\mu, s, t, x, z)$ is continuously differentiable and satisfies
\begin{align*}
\partial_s (\widehat{p}_{m+1} & \otimes  \Phi_{m+1})(\mu, s, t, x, z) \\
& = -\Phi_{m+1}(\mu, s, t, x, z) +  (\partial_s \widehat{p}_{m+1}\otimes \Phi_{m+1})(\mu, s, t, x, z) +  (\widehat{p}_{m+1} \otimes \partial_s  \Phi_{m+1})(\mu, s, t, x, z)\\
& =  -\Phi_{m+1}(\mu, s, t, x, z) \\
& \quad + \int_s^{\frac{t+s}{2}} \int_{\mathbb{R}^d}  \partial_s \widehat{p}_{m+1}(\mu, s, r, x ,y) [\Phi_{m+1}(\mu, s, r, t, y, z) - \Phi_{m+1}(\mu, s, r, t, x, z)]  \, dy \, dr \\
& \quad +  \int_s^{\frac{t+s}{2}} \Phi_{m+1}(\mu, s, r, t, x, z) \int_{\mathbb{R}^d} (\partial_s \widehat{p}^{y}_{m+1}(\mu, s, r, x ,y) - \partial_s \widehat{p}^{x}_{m+1}(\mu, s, r, x ,y)) \, dy\, dr \\
& \quad + \int_{\frac{t+s}{2}}^{t}  \int_{\mathbb{R}^d}  \partial_s \widehat{p}_{m+1}(\mu, s, r, x ,y) \Phi_{m+1}(\mu, s, r, t, y, z) \, dy\\
& \quad +\int_{s}^{t}  \int_{\mathbb{R}^d}   \widehat{p}_{m+1}(\mu, s, r, x ,y) \partial_s\Phi_{m+1}(\mu, s, r, t, y, z) \, dy \, dr
\end{align*}
\noindent so that $[0,t) \times \mathbb{R}^d \times \pp \ni (s, x, \mu) \mapsto \partial_s (\widehat{p}_{m+1}\otimes \Phi_{m+1})(\mu, s, t, x, z)$ is continuous.
From the key relation \eqref{other:representation:parametrix:series}, we eventually conclude that the map $ s\mapsto p_{m+1}(\mu, s, t, x, z) \in \mathcal{C}^{1}([0,t))$, $\partial_s p_{m+1}(., ., t, ., z)$ being continuous on $[0,t) \times \mathbb{R}^d \times \pp$ for any fixed $(t, z) \in (0,T] \times \pp$, and satisfies the relation
\begin{align}
\partial_s p_{m+1}(\mu, s, t, x, z) & = \partial_s \widehat{p}_{m+1}(\mu, s, t, x, z) - \Phi_{m+1}(\mu, s , t, x, z) +(\partial_s \widehat{p}_{m+1}\otimes \Phi_{m+1})(\mu, s, t, x, z) \nonumber \\
& \quad +  (\widehat{p}_{m+1} \otimes \partial_s  \Phi_{m+1})(\mu, s, t, x, z). \label{deriv:time:pmp1:inside:proof}
\end{align}

\smallskip

\noindent \emph{Step 2: proof of the representation formulae \eqref{representation:formula:cross:lions:deriv:dens} and \eqref{representation:formula:time:deriv:dens}.}\\

From \eqref{gaussian:estimate:cros:mes:derivative:p}, \eqref{time:degeneracy:estimate:deriv:mes:parametrix:kernel:bis} and the dominated convergence theorem, one may differentiate with respect to the variables $\mu$ and then with respect to $v$, the relation 
$$
p_{m+1}(\mu, s, t, x, z) = \widehat{p}_{m+1}(\mu, s, t, x, z) + (p_{m+1}\otimes \mH_{m+1})(\mu, s , t, x, z)
$$
\noindent so that
\begin{align*}
\partial^{n}_v[\partial_\mu p_{m+1}(\mu, s , t, x, z)](v) & = \partial^{n}_v[\partial_\mu \widehat{p}_{m+1}(\mu, s, t, x, z)](v) + (p_{m+1} \otimes \partial^{n}_v[\partial_\mu \mH_{m+1}])(\mu, s ,t, x, z)(v) \\
& \quad + (\partial^{n}_v[\partial_\mu p_{m+1}] \otimes \mH_{m+1})(\mu, s, t, x, z)(v).
\end{align*}

The estimates \eqref{cross:mes:deriv:p:hat:s:r:t} combined with \eqref{first:second:estimate:induction:decoupling:mckean} and \eqref{time:degeneracy:estimate:deriv:mes:parametrix:kernel:bis} show that both space-time kernels $(t, z) \mapsto \partial^{n}_v[\partial_\mu \widehat{p}_{m+1}(\mu, s, t, x, z)](v)$ and $(t, z) \mapsto (p_{m+1} \otimes \partial^{n}_v[\partial_\mu \mH_{m+1}])(\mu, s ,t, x, z)(v)$ yield an integrable time singularity so that one may iterate the previous relation. This eventually yields the representation in infinite series \eqref{representation:formula:cross:lions:deriv:dens} and, by arguments similar to those used in the first step, the series is absolutely convergent.

The representation formula \eqref{representation:formula:time:deriv:dens} follows from \eqref{deriv:time:pmp1:inside:proof} by plugging first the identity \eqref{deriv:Phi:mp1:var:mu:key:relation} and then by using the relation \eqref{other:representation:parametrix:series}. \\

We will now establish the pointwise Gaussian estimates \eqref{estimate:deriv:mes:dens:stepmp1:cor}, \eqref{estimate:deriv:mes:holder:reg:dens:stepmp1:cor}, \eqref{estimate:deriv:mes:holder:reg:terminal:point:mes:dens:stepmp1:cor} and  \eqref{estimate:deriv:time:dens:stepmp1:cor}. We again emphasize that we denote by $K$ a positive constant that depends on $T$, $b$, $a$, $\delta b/\delta m$, $\delta a/\delta m$, $\lambda$, $\eta$ and possibly on $\beta$ (in which case we denote it by $K_\beta$) which may change from line to line but is \emph{independent of $m$, $C$ and $C_\beta$}, $C$ and $C_\beta$ being the constants appearing in the definition of the mth partial sums $\mathscr{C}^{n, \beta}_m:=\mathscr{C}^{n, \beta}_m(C, t-s)$ and $\mathscr{C}^{n, \beta}_m:=\mathscr{C}^{n, \beta}_m(C_\beta, t-s)$ of the estimates \eqref{first:second:estimate:induction:decoupling:mckean} to \eqref{equicontinuity:first:estimate:decoupling:mckean}.\\

\noindent \emph{Step 3: proof of the Gaussian estimate \eqref{estimate:deriv:mes:dens:stepmp1:cor}.}\\

The Gaussian estimates \eqref{cross:mes:deriv:p:hat:s:r:t} (with $r=s$) of Corollary \ref{cor:deriv:time:and:mes:phat} combined with \eqref{first:second:estimate:induction:decoupling:mckean} and the space-time inequality \eqref{space:time:inequality} yields
\begin{align}
| \partial^n_v &[ \partial_\mu \widehat{p}_{m+1}(\mu, s, t, x, z)](v)|  \nonumber\\
& \leq K \left\{ \frac{1}{(t-s)^{\frac{1+n - \eta}{2}}} + \frac{1}{t-s} \int_s^t \frac{\mathscr{C}^{n,0}_{m}(C, r-s)}{(r-s)^{\frac{1+n}{2}-\eta}} \, dr \right\} \, g(c(t-s) , z-x) \nonumber \\
& \leq K \left\{ \frac{1}{(t-s)^{\frac{1+n - \eta}{2}}} +  \int_s^t \frac{\mathscr{C}^{n,0}_{m}(C, r-s)}{(t-r)^{1-\frac{\eta}{2}} (r-s)^{\frac{1+n-\eta}{2}}} \, dr \right\} \, g(c(t-s) , z-x) \nonumber \\
&  \leq K  \left\{ \frac{1}{(t-s)^{\frac{1+n - \eta}{2}}} + \frac{1}{(t-s)^{\frac{1 + n  - \eta}{2}}} \sum_{k=1}^{m} C^{k} (t-s)^{k \frac{\eta}{2}}  \prod_{i=1}^{k} B\left(\frac{\eta}{2}, \frac{1-n+\eta}{2} +  (i-1)\frac{\eta}{2} \right) \right\} \label{deriv:p:hat:mu:bound}\\
& \quad \times g(c(t-s) , z-x). \nonumber
\end{align}
Hence, by induction on $r$, we deduce 
\begin{align*}
|(\partial^{n}_v[\partial_\mu \widehat{p}_{m+1}] \otimes \mH^{(r)}_{m+1})(\mu, s, t, x, z)(v)| & \leq  K^{r} \left\{ 1 + \sum_{k=1}^{m} C^{k} (t-s)^{k \frac{\eta}{2}} \prod_{i=1}^{k} B\left(\frac{\eta}{2}, \frac{1-n+\eta}{2} +  (i-1)\frac{\eta}{2} \right)  \right\} \\
&  \quad \times (t-s)^{-\frac{(1 +n -\eta)}{2} + r \frac{\eta}{2}} \prod_{i=1}^{r} B\left(\frac{\eta}{2}, \frac{1-n+\eta}{2} + (i-1) \frac{\eta}{2}\right)\\
& \quad \times g(c(t-s), z-x),
\end{align*}

\noindent which in turn implies
\begin{align}
\sum_{r\geq 0}& |(\partial^{n}_v[\partial_\mu \widehat{p}_{m+1}] \otimes \mH^{(r)}_{m+1})(\mu, s, t, x, z)(v)| \nonumber \\
& \leq  \frac{K}{(t-s)^{\frac{1+n-\eta}{2}}} \left\{ 1 + \sum_{k=1}^{m} C^{k}  (t-s)^{k \frac{\eta}{2}} \prod_{i=1}^{k} B\left(\frac{\eta}{2}, \frac{1-n+\eta}{2} +  (i-1)\frac{\eta}{2} \right) \right\}  g(c(t-s), z-x). \label{deriv:hat:pm:mu:1st:term:induction}
\end{align}
In order to deal with the second term appearing in the series \eqref{representation:formula:cross:lions:deriv:dens}, we first combine the Gaussian estimate \eqref{cross:mes:deriv:parametrix:kernel:s:r:t} of Corollary \ref{cor:deriv:time:and:mes:parametrix:kernel} with \eqref{first:second:estimate:induction:decoupling:mckean} and the space-time inequality \eqref{space:time:inequality}
\begin{align}
| \partial^{n}_v [\partial_\mu \mH_{m+1}(\mu, s, r ,t , x, z)](v) | & \leq K \left( \frac{1}{(t-r)^{1-\frac{\eta}{2}}(r-s)^{\frac{1+n}{2}}} \wedge  \frac{1}{(t-r)(r-s)^{\frac{1+n-\eta}{2}}}  \right)\nonumber \\
&  \quad \times \Big[ 1+ \mathscr{C}^{n,0}_{m}(C, r-s) (r-s)^{\frac{\eta}{2}} +\frac{1}{t-r} \int_r^t \mathscr{C}^{n, 0}_{m}(C, r'-s) (r'-s)^{\frac{\eta}{2}} \, dr' \Big] \label{deriv:mes:parametrix:kernel:iterate:induction} \\
& \quad \times g(c(t-r),z-x). \nonumber
\end{align}

Now, our aim is to establish an upper-bound for the quantity $p_{m+1} \otimes \partial^{n}_v[\partial_\mu \mH_{m+1}](\mu, s ,t , x, z)(v)$. The key idea is to remark that the estimate \eqref{deriv:mes:parametrix:kernel:iterate:induction} allows to balance the singularity in time induced by $\partial^{n}_v[\partial_\mu \mH_{m+1}](\mu, s, r, t, x, z)(v)$. Indeed, assuming first that $r \in [s, (t+s)/2]$, one has $t-r\geq (t-s)/2$ which directly implies
\begin{align*}
\int_{\mathbb{R}^d} &  |p_{m+1}(\mu, s, r, x, y)|  | \partial^{n}_v [\partial_\mu \mH_{m+1}(\mu, s, r ,t , y, z)](v)| dy \\
& \leq \frac{K}{(t-s)(r-s)^{\frac{1+n-\eta}{2}}}\Big[ 1+ \mathscr{C}^{n,0}_{m}(C, r-s) (r-s)^{\frac{\eta}{2}} + \frac{1}{t-r} \int_r^t \mathscr{C}^{n, 0}_{m}(C, r'-s) (r'-s)^{\frac{\eta}{2}} \, dr' \Big]  \\
& \quad \times g(c(t-s),z-x),
\end{align*}

\noindent so that using the fact that $[s, t] \ni r\mapsto \mathscr{C}^{n, 0}_{m}(C, r-s) (r-s)^{\eta/2}$ is non-decreasing and then integrating by parts
\begin{align*}
\int_s^{\frac{t+s}{2}} \int_{\mathbb{R}^d} & |p_{m+1}(\mu, s, r, x, y)|  | \partial^{n}_v[\partial_\mu \mH_{m+1}(\mu, s, r ,t , y, z)](v)| dy \, dr \\
& \leq K \left\{ \frac{1}{(t-s)^{\frac{1+n-\eta}{2}}}  + \frac{1}{(t-s)^2} \int_s^{t} \frac{1}{(r-s)^{\frac{1+n-\eta}{2}}} \int_r^t  \mathscr{C}^{n, 0}_{m}(C, r'-s) (r'-s)^{\frac{\eta}{2}} \, dr' \,  dr \right\} \, g(c(t-s) , z-x) \\
& \leq K \left\{ \frac{1}{(t-s)^{\frac{1+n-\eta}{2}}} + \frac{1}{(t-s)^2} \int_s^t (r-s)^{1-\frac{(1+n-\eta)}{2}} \mathscr{C}^{n, 0}_{m}(C, r-s) (r-s)^{\frac{\eta}{2}} \, dr  \right\} \, g(c(t-s) , z-x)\\
& \leq K \left\{ \frac{1}{(t-s)^{\frac{1+n-\eta}{2}}}  +  \int_s^t \frac{\mathscr{C}^{n, 0}_{m}(C, r-s)}{(t-r)^{1-\frac{\eta}{2}}(r-s)^{\frac{1+n-\eta}{2}}} \, dr \right\} \,  g(c(t-s) , z-x) \\
& \leq  \frac{K}{(t-s)^{\frac{1+n-\eta}{2}}}  \left\{ 1 + \sum_{k=1}^{m} C^{k} (t-s)^{k \frac{\eta}{2}} \prod_{i=1}^{k} B\left(\frac{\eta}{2}, \frac{1-n+\eta}{2} +  (i-1)\frac{\eta}{2} \right)  \right\} \,  g(c(t-s) , z-x).
\end{align*}

 Then, assuming that $r \in [(t+s)/2, t]$ so that $r-s \geq (t-s)/2$ we obtain 
\begin{align*}
\int_{\mathbb{R}^d} & |p_{m+1}(\mu, s, r, x, y)|  |\partial^{n}_v [\partial_\mu \mH_{m+1}(\mu, s, r ,t , y, z)](v)| dy \\
&  \leq \frac{K}{(t-s)^{\frac{1+n}{2}} (t-r)^{1-\frac{\eta}{2}}} \left[1+\mathscr{C}^{n, 0}_{m}(C, r-s) (r-s)^{\frac{\eta}{2}} + \frac{1}{t-r}\int_r^t\mathscr{C}^{n, 0}_{m}(C, r'-s) (r'-s)^{\frac{\eta}{2}} \, dr' \right] \\
& \quad \times g(c(t-s),z-x),
\end{align*}  

\noindent which in turn, by Fubini's theorem, directly yields
\begin{align*}
\int_{\frac{t+s}{2}}^{t} \int_{\mathbb{R}^d} & |p_{m+1}(\mu, s, r, x, y)|  | \partial^{n}_v[\partial_\mu \mH_{m+1}(\mu, s, r ,t , y, z)](v)| \, dy \, dr \\
&  \leq  \frac{K}{(t-s)^{\frac{1+n}{2}}} \int_{\frac{t+s}{2}}^{t} \frac{1}{(t-r)^{1-\frac{\eta}{2}}}  \left[1+\mathscr{C}^{n, 0}_{m}(C, r-s) (r-s)^{\frac{\eta}{2}} + \frac{1}{t-r}\int_r^t\mathscr{C}^{n, 0}_{m}(C, r'-s) (r'-s)^{\frac{\eta}{2}} \, dr' \right]\, dr \\
& \quad \times g(c(t-s),z-x) \\
& \leq K \left\{ \frac{1}{(t-s)^{\frac{1+n-\eta}{2}}}  +  \int_s^t \frac{\mathscr{C}^{n, 0}_{m}(C, r-s)}{(t-r)^{1-\frac{\eta}{2}} (r-s)^{\frac{1+n-\eta}{2}}} \, dr \right\} \, g(c(t-s) , z-x) \\
& \leq  \frac{K}{(t-s)^{\frac{1+n-\eta}{2}}}  \left\{ 1 + \sum_{k=1}^{m} C^{k} (t-s)^{k \frac{\eta}{2}} \prod_{i=1}^{k} B\left(\frac{\eta}{2}, \frac{1-n+\eta}{2} +  (i-1)\frac{\eta}{2} \right)  \right\} \,  g(c(t-s) , z-x).
\end{align*}

Gathering the two previous cases, we thus conclude
\begin{align}
| p_{m+1} & \otimes \partial^{n}_v[\partial_\mu \mH_{m+1}(\mu, s ,t , x, z)](v) |  \nonumber \\
&  \leq \frac{K}{(t-s)^{\frac{1+n-\eta}{2}}} \left( 1 + \sum_{k=1}^{m} C^{k} (t-s)^{k \frac{\eta}{2}} \prod_{i=1}^{k}   B\left( \frac{\eta}{2}, \frac{1-n+\eta}{2} + (i-1) \frac{\eta}{2}\right) \right) g(c(t-s),z-x) \label{estimate:convol:pmp1:cross:deriv:mes:Hmp1}
\end{align}

\noindent so that  
\begin{align}
\sum_{r\geq 0}| (p_{m+1} & \otimes \partial^{n}_v[\partial_\mu \mH_{m+1}]) \otimes \mH^{(r)}_{m+1}(\mu, s, t, x, z) (v) | \nonumber \\
& \leq \frac{K}{(t-s)^{\frac{1+n-\eta}{2}}} \left\{ 1 + \sum_{k=1}^{m} C^{k} (t-s)^{k \frac{\eta}{2}} \prod_{i=1}^{k} B\left(\frac{\eta}{2}, \frac{1-n+\eta}{2} +  (i-1)\frac{\eta}{2} \right) \right\}\,  g(c(t-s), z-x). \label{deriv:hat:pm:mu:2nd:term:induction}
\end{align}

The estimates \eqref{deriv:hat:pm:mu:1st:term:induction} and \eqref{deriv:hat:pm:mu:2nd:term:induction} together with the representation formula \eqref{representation:formula:cross:lions:deriv:dens} imply that there exist two positive constants $K:= K(T,  \HR, \HE)$ and $c:=c(\lambda)$ such that 
\begin{align*}
|\partial^{n}_v & [\partial_\mu p_{m+1}(\mu, s, t, x, z)](v) | \\
& \leq   \frac{K}{(t-s)^{\frac{1+n-\eta}{2}}} \left\{ 1 + \sum_{k=1}^{m} C^{k}(t-s)^{k \frac{\eta}{2}} \prod_{i=1}^{k} B\left(\frac{\eta}{2}, \frac{1-n+\eta}{2} +  (i-1)\frac{\eta}{2} \right) \right\} \,  g(c(t-s), z-x).
\end{align*}
This completes the proof of \eqref{estimate:deriv:mes:dens:stepmp1:cor}.\\

\noindent \emph{Step 4: proof of the Gaussian estimate \eqref{estimate:deriv:mes:holder:reg:dens:stepmp1:cor}.}\\

We will be brief on the proof of the Gaussian estimate \eqref{estimate:deriv:mes:holder:reg:dens:stepmp1:cor} inasmuch it essentially follows from similar arguments. Indeed, the estimate \eqref{cross:mes:deriv:holder:p:hat:s:t} of Corollary \ref{cor:deriv:time:and:mes:phat} combined with \eqref{first:second:estimate:induction:decoupling:mckean} and the space-time inequality \eqref{space:time:inequality} yield for any $\beta \in [0,1]$ if $n=0$ or any $\beta \in [0,\eta)$ if $n=1$
\begin{align*}
| \partial^n_v &[ \partial_\mu \widehat{p}_{m+1}(\mu, s, t, x, z)](v) - \partial^n_v [ \partial_\mu \widehat{p}_{m+1}(\mu, s, t, x', z)](v)|  \nonumber\\
& \leq K_\beta |x-x'|^\beta \left\{ \frac{1}{(t-s)^{\frac{1+n + \beta - \eta}{2}}} + \frac{1}{(t-s)^{1+\frac{\beta}{2}}} \int_s^t \frac{\mathscr{C}^{n,0}_{m}(C, r-s)}{(r-s)^{\frac{1+n}{2}-\eta }} \, dr \right\} \, (g(c(t-s) , z-x) + g(c(t-s) , z-x') ) \nonumber \\
& \leq K_\beta |x-x'|^\beta  \left\{ \frac{1}{(t-s)^{\frac{1+n + \beta- \eta}{2}}} +  \int_s^t \frac{\mathscr{C}^{n,0}_{m}(C, r-s)}{(t-r)^{1-\frac{\eta}{2}} (r-s)^{\frac{1+n+\beta-\eta}{2}}} \, dr \right\} \, (g(c(t-s) , z-x) + g(c(t-s) , z-x')) \nonumber \\
&  \leq K_\beta  \frac{ |x-x'|^\beta}{(t-s)^{\frac{1+n + \beta - \eta}{2}}}\left\{ 1 + \sum_{k=1}^{m} C^{k} (t-s)^{k \frac{\eta}{2}}  \prod_{i=1}^{k-1} B\left(\frac{\eta}{2}, \frac{1-n+\eta}{2} +  (i-1)\frac{\eta}{2} \right) B\left(\frac{\eta}{2}, \frac{1-n+ \eta-\beta}{2} + k \frac{\eta}{2}\right)\right\} \nonumber \\
& \quad \times (g(c(t-s) , z-x) + g(c(t-s) , z-x'))\nonumber \\
& \leq  K_\beta  \frac{ |x-x'|^\beta}{(t-s)^{\frac{1+n + \beta - \eta}{2}}}\left\{ 1 + \sum_{k=1}^{m} C^{k} (t-s)^{k \frac{\eta}{2}}  \prod_{i=1}^{k} B\left(\frac{\eta}{2}, \frac{1-n+\eta-\beta}{2} +  (i-1)\frac{\eta}{2} \right) \right\}\\
& \quad \times (g(c(t-s) , z-x)  + g(c(t-s) , z-x'))\nonumber
\end{align*}
\noindent where we used the fact that $ B\left(\frac{\eta}{2}, \frac{1-n+\eta}{2} +  (i-1)\frac{\eta}{2} \right) \leq B(\frac{\eta}{2}, \frac{1-n+\eta-\beta}{2} +  (i-1)\frac{\eta}{2})$, $i=1, \cdots k-1$, for the last inequality.
Hence, by induction on $r$, we obtain 
\begin{align*}
|(\partial^{n}_v[\partial_\mu \widehat{p}_{m+1}] &\otimes \mH^{(r)}_{m+1})(\mu, s, t, x, z)(v) -(\partial^{n}_v[\partial_\mu \widehat{p}_{m+1}] \otimes \mH^{(r)}_{m+1})(\mu, s, t, x', z)(v)| \\
 & \leq  K_\beta^{r} \left\{ 1 + \sum_{k=1}^{m} C^{k} (t-s)^{k \frac{\eta}{2}} \prod_{i=1}^{k} B\left(\frac{\eta}{2}, \frac{1-n+\eta - \beta}{2} +  (i-1)\frac{\eta}{2} \right)  \right\} \\
&  \quad \times |x-x'|^\beta (t-s)^{-\frac{(1 +n + \beta -\eta)}{2} + r \frac{\eta}{2}} \prod_{i=1}^{r} B\left(\frac{\eta}{2}, \frac{1-n+\eta-\beta}{2} + (i-1) \frac{\eta}{2}\right)\\
& \quad \times (g(c(t-s), z-x) + g(c(t-s), z-x'))
\end{align*}
\noindent which in turn implies
\begin{align}
\sum_{r\geq 0}& |(\partial^{n}_v[\partial_\mu \widehat{p}_{m+1}] \otimes \mH^{(r)}_{m+1})(\mu, s, t, x, z)(v) - (\partial^{n}_v[\partial_\mu \widehat{p}_{m+1}] \otimes \mH^{(r)}_{m+1})(\mu, s, t, x', z)(v)| \nonumber \\
& \leq  K_\beta \frac{|x-x'|^\beta}{(t-s)^{\frac{1+n + \beta-\eta}{2}}} \left\{ 1 + \sum_{k=1}^{m} C^{k}  (t-s)^{k \frac{\eta}{2}} \prod_{i=1}^{k} B\left(\frac{\eta}{2}, \frac{1-n+\eta-\beta}{2} +  (i-1)\frac{\eta}{2} \right) \right\}  \label{deriv:hat:pm:mu:reg:holder:1st:term:induction} \\
& \quad \times ( g(c(t-s), z-x) + g(c(t-s), z-x')).\notag
\end{align}

In order to establish an upper-bound for the quantity $p_{m+1} \otimes \partial^{n}_v[\partial_\mu \mH_{m+1}](\mu, s ,t , x, z)(v) - p_{m+1} \otimes \partial^{n}_v[\partial_\mu \mH_{m+1}](\mu, s ,t , x', z)(v)$, we proceed as before by balancing the time singularity thanks to the estimate \eqref{deriv:mes:parametrix:kernel:iterate:induction}. Indeed, assuming first that $r \in [s, (t+s)/2]$, from  \eqref{reg:heat:kernel:deriv} we get
\begin{align*}
\int_{\mathbb{R}^d} &  |p_{m+1}(\mu, s, r, x, y) - p_{m+1}(\mu, s, r, x', y)|  | \partial^{n}_v [\partial_\mu \mH_{m+1}(\mu, s, r ,t , y, z)](v)| \, dy \\
& \leq K_\beta \frac{|x-x'|^\beta}{(t-s)(r-s)^{\frac{1+n + \beta-\eta}{2}}}\Big[ 1+ \mathscr{C}^{n,0}_{m}(C, r-s) (r-s)^{\frac{\eta}{2}} + \frac{1}{t-r} \int_r^t \mathscr{C}^{n, 0}_{m}(C, r'-s) (r'-s)^{\frac{\eta}{2}} \, dr' \Big]  \\
& \quad \times ( g(c(t-s),z-x) + g(c(t-s), z-x') ),
\end{align*}

\noindent so that using again the inequality $t-r\geq (t-s)/2$, the fact that $[s, t] \ni r\mapsto \mathscr{C}^{n, 0}_{m}(C, r-s) (r-s)^{\frac{\eta}{2}}$ is non-decreasing and then integrating by parts
\begin{align*}
\int_s^{\frac{t+s}{2}} \int_{\mathbb{R}^d} & |p_{m+1}(\mu, s, r, x, y)-p_{m+1}(\mu, s, r, x', y)|  | \partial^{n}_v[\partial_\mu \mH_{m+1}(\mu, s, r ,t , y, z)](v)| dy \, dr \\
& \leq K_\beta |x-x'|^\beta \left\{ \frac{1}{(t-s)^{\frac{1+n + \beta -\eta}{2}}}  + \frac{1}{(t-s)^2} \int_s^{t} \frac{1}{(r-s)^{\frac{1+n+\beta-\eta}{2}}} \int_r^t  \mathscr{C}^{n, 0}_{m}(C, r'-s) (r'-s)^{\frac{\eta}{2}} \, dr' \,  dr \right\} \\
& \quad \times ( g(c(t-s) , z-x) + g(c(t-s), z-x') ). \\
& \leq K_\beta |x-x'|^\beta \left\{ \frac{1}{(t-s)^{\frac{1+n+ \beta -\eta}{2}}}  + \frac{1}{(t-s)^2} \int_s^t (r-s)^{1-\frac{(1+n + \beta -\eta)}{2}} \mathscr{C}^{n, 0}_{m}(C, r-s) (r-s)^{\frac{\eta}{2}} \, dr  \right\} \\
 & \quad  \times ( g(c(t-s) , z-x) + g(c(t-s), z-x') )\\
& \leq K_\beta |x-x'|^\beta \left\{ \frac{1}{(t-s)^{\frac{1+n+ \beta -\eta}{2}}}  +  \int_s^t \frac{\mathscr{C}^{n, 0}_{m}(C, r-s)}{(t-r)^{1-\frac{\eta}{2}}(r-s)^{\frac{1+n + \beta -\eta}{2}}} \, dr \right\} \\
& \quad ( g(c(t-s) , z-x) + g(c(t-s), z-x') ) \\
& \leq K_\beta \frac{|x-x'|^\beta}{(t-s)^{\frac{1+n+\beta-\eta}{2}}}  \left\{ 1 + \sum_{k=1}^{m} C^{k} (t-s)^{k \frac{\eta}{2}} \prod_{i=1}^{k} B\left(\frac{\eta}{2}, \frac{1-n+\eta-\beta}{2} +  (i-1)\frac{\eta}{2} \right)  \right\} \\
& \quad \times  ( g(c(t-s) , z-x) + g(c(t-s), z-x') ).
\end{align*}

 Similarly, assuming now that $r \in [(t+s)/2, t]$ so that $r-s \geq (t-s)/2$ we obtain 
\begin{align*}
\int_{\mathbb{R}^d} & |p_{m+1}(\mu, s, r, x, y) - p_{m+1}(\mu, s, r, x', y) |  |\partial^{n}_v [\partial_\mu \mH_{m+1}(\mu, s, r ,t , y, z)](v)| dy \\
&  \leq  K_\beta \frac{|x-x'|^\beta}{(t-s)^{\frac{1+n+\beta}{2}} (t-r)^{1-\frac{\eta}{2}}} \left[1+\mathscr{C}^{n, 0}_{m}(C, r-s) (r-s)^{\frac{\eta}{2}} + \frac{1}{t-r}\int_r^t\mathscr{C}^{n, 0}_{m}(C, r'-s) (r'-s)^{\frac{\eta}{2}} \, dr' \right] \\
& \quad \times (g(c(t-s),z-x) + g(c(t-s), z-x')),
\end{align*}  

\noindent which in turn, by Fubini's theorem, directly yields
\begin{align*}
\int_{\frac{t+s}{2}}^{t} \int_{\mathbb{R}^d} & |p_{m+1}(\mu, s, r, x, y) - p_{m+1}(\mu, s, r, x', y) |  | \partial^{n}_v[\partial_\mu \mH_{m+1}(\mu, s, r ,t , y, z)](v)| \, dy \, dr \\
&  \leq K_\beta  \frac{|x-x'|^\beta}{(t-s)^{\frac{1+n+\beta}{2}}} \int_{\frac{t+s}{2}}^{t} \frac{1}{(t-r)^{1-\frac{\eta}{2}}}  \left[1+\mathscr{C}^{n, 0}_{m}(C, r-s) (r-s)^{\frac{\eta}{2}} + \frac{1}{t-r}\int_r^t\mathscr{C}^{n, 0}_{m}(C, r'-s) (r'-s)^{\frac{\eta}{2}} \, dr' \right]\, dr \\
& \quad \times (g(c(t-s),z-x) + g(c(t-s), z-x')\\
& \leq K_\beta |x-x'|^\beta \left\{ \frac{1}{(t-s)^{\frac{1+n + \beta -\eta}{2}}}  +  \int_s^t \frac{\mathscr{C}^{n, 0}_{m}(C, r-s)}{(t-r)^{1-\frac{\eta}{2}} (r-s)^{\frac{1+n+ \beta -\eta}{2}}} \, dr \right\} \\
& \quad \times  ( g(c(t-s) , z-x) + g(c(t-s), z-x') ) \\
& \leq  K_\beta \frac{|x-x'|^\beta}{(t-s)^{\frac{1+n+\beta-\eta}{2}}}  \left\{ 1 + \sum_{k=1}^{m} C^{k} (t-s)^{k \frac{\eta}{2}} \prod_{i=1}^{k} B\left(\frac{\eta}{2}, \frac{1-n+\eta-\beta}{2} +  (i-1)\frac{\eta}{2} \right)  \right\} \\
& \quad \times  ( g(c(t-s) , z-x) + g(c(t-s), z-x')).
\end{align*}

Gathering the two previous cases, we thus conclude
\begin{align}
| p_{m+1} & \otimes \partial^{n}_v[\partial_\mu \mH_{m+1}(\mu, s ,t , x, z)](v) - p_{m+1}  \otimes \partial^{n}_v[\partial_\mu \mH_{m+1}(\mu, s ,t , x', z)](v)|  \nonumber \\
&  \leq K_\beta \frac{|x-x'|^\beta}{(t-s)^{\frac{1+n+\beta-\eta}{2}}} \left( 1 + \sum_{k=1}^{m} C^{k} (t-s)^{k \frac{\eta}{2}} \prod_{i=1}^{k}   B\left( \frac{\eta}{2}, \frac{1-n+\eta-\beta}{2} + (i-1) \frac{\eta}{2}\right) \right) \label{estimate:convol:pmp1:cross:deriv:mes:Hmp1}\\
& \quad \times  ( g(c(t-s),z-x) + g(c(t-s), z-x') ) \notag
\end{align}

\noindent so that  
\begin{align}
\sum_{r\geq 0}| (p_{m+1} & \otimes \partial^{n}_v[\partial_\mu \mH_{m+1}]) \otimes \mH^{(r)}_{m+1}(\mu, s, t, x, z) (v) -(p_{m+1}  \otimes \partial^{n}_v[\partial_\mu \mH_{m+1}]) \otimes \mH^{(r)}_{m+1}(\mu, s, t, x', z) (v) | \nonumber \\
& \leq K_\beta \frac{|x-x'|^\beta}{(t-s)^{\frac{1+n+\beta-\eta}{2}}} \left\{ 1 + \sum_{k=1}^{m} C^{k} (t-s)^{k \frac{\eta}{2}} \prod_{i=1}^{k} B\left(\frac{\eta}{2}, \frac{1-n+\eta-\beta}{2} +  (i-1)\frac{\eta}{2} \right) \right\} \label{deriv:hat:pm:mu:reg:holder:2nd:term:induction} \\
& \quad \times ( g(c(t-s), z-x) + g(c(t-s), z-x') ). \notag
\end{align}

Gathering the estimates \eqref{deriv:hat:pm:mu:reg:holder:1st:term:induction} and \eqref{deriv:hat:pm:mu:reg:holder:2nd:term:induction} eventually implies that there exist two positive constants $K_\beta:= K(T, \HR, \HE, \beta)$ and $c:=c(\lambda)$ such that 
\begin{align*}
|\partial^{n}_v & [\partial_\mu p_{m+1}(\mu, s, t, x, z)](v) - \partial^{n}_v  [\partial_\mu p_{m+1}(\mu, s, t, x', z)](v)  | \\
& \leq  K_\beta \frac{|x-x'|^\beta}{(t-s)^{\frac{1+n + \beta -\eta}{2}}} \left\{ 1 + \sum_{k=1}^{m} C^{k}(t-s)^{k \frac{\eta}{2}} \prod_{i=1}^{k} B\left(\frac{\eta}{2}, \frac{1-n+\eta-\beta}{2} +  (i-1)\frac{\eta}{2} \right) \right\} \\
& \quad \times  ( g(c(t-s), z-x) + g(c(t-s), z-x') )
\end{align*}
\noindent for any $\beta \in [0,1]$ if $n=0$ and for any $\beta \in [0,\eta)$ if $n=1$. This concludes the proof of \eqref{estimate:deriv:mes:holder:reg:dens:stepmp1:cor}.\\

\noindent \emph{Step 5: proof of the Gaussian estimate \eqref{estimate:deriv:mes:holder:reg:terminal:point:mes:dens:stepmp1:cor}.}\\

We now prove the Gaussian estimate \eqref{estimate:deriv:mes:holder:reg:terminal:point:mes:dens:stepmp1:cor}. Again, we will brief on some technical arguments as it follows from similar arguments as those employed in previous proofs. The estimate \eqref{cross:mes:deriv:reg:holder:terminal point:p:hat:s:r:t} of Corollary \ref{cor:deriv:time:and:mes:phat} (taken with $r=s$) combined with \eqref{equicontinuity:first:estimate:decoupling:mckean} and the space-time inequality \eqref{space:time:inequality} yields for any $\beta \in [0,\eta)$
\begin{align*}
| \partial_v &[ \partial_\mu \widehat{p}_{m+1}(\mu, s, t, x, z)](v) - \partial_v [ \partial_\mu \widehat{p}_{m+1}(\mu, s, t, x, z)](v')|  \nonumber\\
& \leq K_\beta |v-v'|^\beta \left\{ \frac{1}{(t-s)^{1+\frac{ \beta - \eta}{2}}} + \frac{1}{t-s} \int_s^t \frac{\mathscr{C}^{1,\beta}_{m}(C_\beta, r-s)}{(r-s)^{1+\frac{\beta}{2}-\eta }} \, dr \right\} \, g(c(t-s) , z-x) \nonumber \\
& \leq K_\beta |v-v'|^\beta  \left\{ \frac{1}{(t-s)^{1+\frac{ \beta- \eta}{2}}} + \int_s^t \frac{\mathscr{C}^{1,\beta}_{m}(C_\beta, r-s)}{(t-r)^{1-\frac{\eta}{2}} (r-s)^{1+\frac{\beta-\eta}{2}}} \, dr \right\} \, g(c(t-s) , z-x)  \nonumber \\
&  \leq K_\beta  \frac{ |v-v'|^\beta}{(t-s)^{1+\frac{\beta - \eta}{2}}}\left\{ 1 + \sum_{k=1}^{m} C_\beta^{k} (t-s)^{k \frac{\eta}{2}}  \prod_{i=1}^{k} B\left(\frac{\eta}{2}, \frac{\eta-\beta}{2} +  (i-1)\frac{\eta}{2} \right) \right\}  \, g(c(t-s) , z-x). 
\end{align*}

Hence, by induction on $r$, there exists a positive constant $K_\beta:=K(T, \HR,\HE, \beta)$ (which may change from line to line but is independent of $m$ and $C_\beta$) such that
\begin{align*}
|(\partial_v \partial_\mu& \widehat{p}_{m+1} \otimes \mH^{(r)}_{m+1})(\mu, s, t, x, z)(v) - (\partial_v \partial_\mu \widehat{p}_{m+1} \otimes \mH^{(r)}_{m+1})(\mu, s, t, x, z)(v')| \\
 & \leq  K_\beta^{r} |v-v'|^{\beta} \left\{ 1 + \sum_{k=1}^{m} C_\beta^{k}  (t-s)^{k \frac{\eta}{2}} \prod_{i=1}^{k} B\left(\frac{\eta}{2},  \frac{\eta-\beta}{2} +  (i-1)\frac{\eta}{2} \right) \right\} (t-s)^{-1+\frac{(\eta-\beta)}{2} + r \frac{\eta}{2}} \\
& \quad \times \prod_{i=1}^{r} B\left(\frac{\eta}{2}, \frac{\eta-\beta}{2} + (i-1) \frac{\eta}{2}\right) \, g(c(t-s), z-x),
\end{align*}

\noindent which in turn implies
\begin{align}
\sum_{r\geq 0} |(\partial_v \partial_\mu& \widehat{p}_{m+1} \otimes \mH^{(r)}_{m+1})(\mu, s, t, x, z)(v) - (\partial_v \partial_\mu \widehat{p}_{m+1} \otimes \mH^{(r)}_{m+1})(\mu, s, t, x, z)(v')| \nonumber \\
& \leq  K_\beta \frac{|v-v'|^{\beta}}{(t-s)^{1 + \frac{\beta-\eta}{2}}} \left\{ 1 + \sum_{k=1}^{m} C_\beta^{k} (t-s)^{k \frac{\eta}{2}}  \prod_{i=1}^{k} B\left(\frac{\eta}{2},  \frac{\eta-\beta}{2} +  (i-1)\frac{\eta}{2} \right) \right\} \,  g(c(t-s), z-x) \nonumber \\
& \leq K_\beta \frac{|v-v'|^{\beta}}{(t-s)^{1 + \frac{\beta-\eta}{2}}} \left\{ 1 + \sum_{k=1}^{m} C_\beta^{k} (t-s)^{k \frac{\eta}{2}}  \prod_{i=1}^{k} B\left(\frac{\eta}{2},  \frac{\eta - \beta}{2} +  (i-1)\frac{\eta}{2} \right) \right\} \, g(c(t-s), z-x). \label{diff:deriv:hat:pm:mu:v:reg:holder:1st:term}
\end{align}

We now investigate the H\"older regularity of the map $v\mapsto p_{m+1} \otimes \partial_v [\partial_\mu \mH_{m+1}(\mu, s ,t , x, z)](v)$. The estimate \eqref{cross:mes:deriv:parametrix:kernel:s:r:t:reg:holder} allows to balance the time singularity induced by the $L$-derivative of the underlying parametrix kernel so that we are naturally lead to separate the time integral of the space-time convolution into the two disjoint intervals $[s, (t+s)/2]$ and $[(t+s)/2, t]$ as we did before. Skipping some technical details, from \eqref{cross:mes:deriv:parametrix:kernel:s:r:t:reg:holder} and \eqref{equicontinuity:first:estimate:decoupling:mckean}, we obtain
\begin{align*}
| p_{m+1} & \otimes  \partial_v [\partial_\mu \mH_{m+1}(\mu, s ,t , x, z)](v) - p_{m+1}  \otimes  \partial_v [\partial_\mu \mH_{m+1}(\mu, s ,t , x, z)](v') | \\
&  \leq K_\beta |v-v'|^{\beta}  \left( \frac{1}{(t-s)^{1 + \frac{\beta-\eta}{2}}} +  \int_s^t  \frac{\mathscr{C}^{1,\beta}_{m}(C_\beta, r-s)}{(t-r)^{1-\frac{\eta}{2}} (r-s)^{1+\frac{\beta-\eta}{2}}} \, dr \right)  g(c(t-s),z-x) \\
& \leq K_\beta \frac{ |v-v'|^{\beta}}{(t-s)^{1+ \frac{\beta-\eta}{2}}} \left( 1 + \sum_{k=1}^{m} C_\beta^{k} (t-s)^{k \frac{\eta}{2}}\prod_{i=1}^{k} B\left( \frac{\eta}{2}, \frac{\eta - \beta}{2} + (i-1) \frac{\eta}{2}\right)  \right)  g(c(t-s),z-x),
\end{align*}

\noindent so that  
\begin{align}
\sum_{r\geq 0}| (p_{m+1} & \otimes  \partial_v \partial_\mu \mH_{m+1})\otimes \mH^{(r)}_{m+1}(\mu, s, t, x, z)(v) - (p_{m+1}  \otimes  \partial_v \partial_\mu \mH_{m+1})\otimes \mH^{(r)}_{m+1}(\mu, s, t, x, z)(v')  | \nonumber \\
& \leq K_\beta \frac{ |v-v'|^{\beta}}{(t-s)^{1 + \frac{\beta-\eta}{2}}} \left( 1 + \sum_{k=1}^{m} C_\beta^{k} (t-s)^{k \frac{\eta}{2}} \prod_{i=1}^{k} B\left( \frac{\eta}{2}, \frac{\eta - \beta}{2} + (i-1) \frac{\eta}{2}\right)  \right)  g(c(t-s), z-x). \label{diff:deriv:hat:pm:mu:v:reg:holder:2nd:term}
\end{align}

Now, the representation formula \eqref{representation:formula:cross:lions:deriv:dens} together with the estimates \eqref{diff:deriv:hat:pm:mu:v:reg:holder:1st:term} and \eqref{diff:deriv:hat:pm:mu:v:reg:holder:2nd:term} show that for any $\beta \in [0,\eta)$ there exist two positive constants $K_\beta:= K(T, \HR, \HE, \beta)$, $c:=c(\lambda)$ such that for any $(\mu, x, z, v) \in \pp \times (\mathbb{R}^d)^3$, for any $0\leq s < t \leq T$ and any positive constant $C_\beta$ 
\begin{align*}
| & \partial_v [\partial_\mu p_{m+1}(\mu, s, t, x, z)](v) - \partial_v [\partial_\mu p_{m+1}(\mu, s, t, x, z)](v') |\\
&  \leq  K_\beta \frac{|v-v'|^{\beta}}{(t-s)^{1 + \frac{\beta-\eta}{2}}} \left( 1 + \sum_{k=1}^{m} C_\beta^{k} (t-s)^{k \frac{\eta}{2}} \prod_{i=1}^{k} B\left( \frac{\eta}{2}, \frac{\eta - \beta}{2} + (i-1) \frac{\eta}{2}\right)  \right) g(c(t-s), z-x).
\end{align*}

The proof of \eqref{estimate:deriv:mes:holder:reg:terminal:point:mes:dens:stepmp1:cor} is now complete.\\

\noindent \emph{Step 6: proof of the pointwise Gaussian estimate \eqref{estimate:deriv:time:dens:stepmp1:cor}.}\\

The proof of \eqref{estimate:deriv:time:dens:stepmp1:cor} again follows from similar arguments so we will be brief on some technical details. The road is clear inasmuch one has to establish a pointwise Gaussian estimate for each term appearing in the identity \eqref{representation:formula:time:deriv:dens}. The estimates \eqref{time:deriv:p:hat:s:t}, \eqref{Gaussian:estimate:Phim} and \eqref{time:derivative:induction:decoupling:mckean} together with the space-time inequality \eqref{space:time:inequality} clearly yield
\begin{align*}
|\partial_s & \widehat{p}_{m+1}(\mu, s, t, x, z) | + | \Phi_{m+1}(\mu, s , t, x, z)|  \\
&  \leq K \left\{ \frac{1}{t-s} + \frac{1}{t-s} \int_s^t  \int_{(\rr^d)^2} (|y'-x'|^{\eta}\wedge 1) | \partial_s p_{m}(\mu, s, r, x', y') | \, \mu(dx') \, dy' \, dr \right\}  g(c(t-s), z-x)\\
& \leq K \left\{ \frac{1}{t-s} + \frac{1}{t-s} \int_s^t \frac{\mathscr{C}^{1,0}_{m}(C, r-s)}{(r-s)^{1 - \frac{\eta}{2}}} \, dr \right\}  g(c(t-s), z-x)\\
& \leq K \left\{ \frac{1}{t-s} + \frac{1}{(t-s)^{\frac{\eta}{2}}} \int_s^t \frac{\mathscr{C}^{1,0}_{m}(C, r-s)}{(t-r)^{1-\frac{\eta}{2}}(r-s)^{1 - \frac{\eta}{2}}} \, dr \right\}  g(c(t-s), z-x)\\
& \leq \frac{K}{t-s}   \left\{ 1+ \sum_{k=1}^{m} C^{k}  (t-s)^{k \frac{\eta}{2}} \prod_{i=1}^{k} B\left(\frac{\eta}{2}, \frac{\eta}{2} +  (i-1)\frac{\eta}{2} \right)  \right\}  g(c(t-s), z-x).
\end{align*}
We now deal with $ \partial_s \widehat{p}_{m+1}\otimes \Phi_{m+1}(\mu, s, t, x, z)$ and use the decomposition
\begin{align*}
 \partial_s \widehat{p}_{m+1}& \otimes \Phi_{m+1}(\mu, s, t, x, z) \\
 & =  \int_s^{\frac{t+s}{2}} \int_{\mathbb{R}^d}  \partial_s \widehat{p}_{m+1}(\mu, s, r, x ,y) [\Phi_{m+1}(\mu, s, r, t, y, z) - \Phi_{m+1}(\mu, s, r, t, x, z)]  \, dy \, dr \\
& \quad +  \int_s^{\frac{t+s}{2}} \Phi_{m+1}(\mu, s, r, t, x, z) \int_{\mathbb{R}^d} [\partial_s \widehat{p}^{y}_{m+1}(\mu, s, r, x ,y) - \partial_s \widehat{p}^{x}_{m+1}(\mu, s, r, x ,y)] \, dy\, dr \\
& \quad + \int_{\frac{t+s}{2}}^{t}  \int_{\mathbb{R}^d}  \partial_s \widehat{p}_{m+1}(\mu, s, r, x ,y) \Phi_{m+1}(\mu, s, r, t, y, z) \, dy\\
& =: {\rm I} + {\rm II} + {\rm III}.
\end{align*}
In order to deal with ${\rm I}$, we use the estimate \eqref{time:deriv:p:hat:s:t} and remove the time singularity by combining the estimate \eqref{holder:reg:voltera:kernel} with the space-time inequality \eqref{space:time:inequality}. We obtain
\begin{align*}
|{\rm I}| & \leq K \int_s^{\frac{t+s}{2}} \frac{1}{(r-s)^{1-\frac{\beta}{2}}(t-r)^{1+\frac{\beta-\eta}{2}}} \left\{ 1+ \int_r^t  (|y'-x'|^{\eta}\wedge 1) | \partial_s p_{m}(\mu, s, r', x', y') | \, \mu(dx') \, dy' \, dr' \right\}\, dr\\
& \quad \times g(c(t-s), z-x)\\
& \leq \frac{K}{(t-s)^{1+\frac{\beta-\eta}{2}}} \int_s^{\frac{t+s}{2}} \frac{1}{(r-s)^{1-\frac{\beta}{2}}} \left\{ 1 + \int_r^t \frac{\mathscr{C}^{1,0}_{m}(C, r'-s)}{(r'-s)^{1 - \frac{\eta}{2}}} \, dr'\right\} \, dr \,  g(c(t-s), z-x) \\
& \leq \frac{K}{(t-s)^{1-\frac{\eta}{2}}} \left\{ 1 + \int_s^t  \frac{\mathscr{C}^{1,0}_{m}(C, r-s)}{(r-s)^{1 - \frac{\eta}{2}}} \, dr \right\} \, g(c(t-s), z-x)\\
& \leq \frac{K}{t-s}   \left\{ 1+ \sum_{k=1}^{m} C^{k}  (t-s)^{k \frac{\eta}{2}} \prod_{i=1}^{k} B\left(\frac{\eta}{2}, \frac{\eta}{2} +  (i-1)\frac{\eta}{2} \right)  \right\}  g(c(t-s), z-x).
\end{align*}

For the second term, we use the fact that $(t-s)/2\leq t-r \leq t-s$ for $r\in[s, (t+s)/2]$, the estimate \eqref{time:deriv:holder:reg:p:hat:s:t} with $\beta=1/2$ together with the space-time inequality \eqref{space:time:inequality} and finally the fact that $t \mapsto \mathscr{C}^{1,0}_{m}(C, t)$ is non-decreasing. We obtain
\begin{align*}
|{\rm II}| & \leq \frac{K}{(t-s)^{1-\frac{\eta}{2}}} \int_s^{\frac{t+s}{2}} \frac{1}{(r-s)^{1-\frac{\eta}{4}}} \left(1 + \int_s^r \frac{\mathscr{C}^{1,0}_{m}(C, r'-s)}{(r'-s)^{1-\frac{\eta}{4}}} \, dr'\right) \, dr \, g(c(t-s), z-x) \\
& \leq \frac{K}{(t-s)^{1-\frac{\eta}{2}}} \left( (t-s)^{\frac{\eta}{4}} + \int_s^{t} \frac{\mathscr{C}^{1,0}_{m}(C, r-s)}{(r-s)^{1-\frac{\eta}{2}}} \, dr\right) \, g(c(t-s), z-x)\\
& \leq \frac{K}{t-s}   \left\{ 1+ \sum_{k=1}^{m} C^{k}  (t-s)^{k \frac{\eta}{2}} \prod_{i=1}^{k} B\left(\frac{\eta}{2}, \frac{\eta}{2} +  (i-1)\frac{\eta}{2} \right)  \right\}  g(c(t-s), z-x).
\end{align*}

Finally, using again \eqref{time:deriv:p:hat:s:t}, \eqref{Gaussian:estimate:Phim}, the space-time inequality \eqref{space:time:inequality} as well as the fact that $(t-s)/2\leq r-s \leq t-s$ for $r\in[(t+s)/2, t]$, we obtain
\begin{align*}
|{\rm III}| & \leq \frac{K}{t-s} \int_{\frac{t+s}{2}}^{t} \frac{1}{(t-r)^{1-\frac{\eta}{2}}} \left(1+ \int_s^r  \frac{\mathscr{C}^{1,0}_{m}(C, r'-s)}{(r'-s)^{1-\frac{\eta}{2}}} \, dr'\right)\, dr  \, g(c(t-s), z-x)\\
& \leq  \frac{K}{(t-s)^{1-\frac{\eta}{2}}} \left(1+ \int_s^t  \frac{\mathscr{C}^{1,0}_{m}(C, r-s)}{(r-s)^{1-\frac{\eta}{2}}} \, dr\right)  \, g(c(t-s), z-x)\\
& \leq \frac{K}{t-s}   \left\{ 1+ \sum_{k=1}^{m} C^{k}  (t-s)^{k \frac{\eta}{2}} \prod_{i=1}^{k} B\left(\frac{\eta}{2}, \frac{\eta}{2} +  (i-1)\frac{\eta}{2} \right)  \right\}  g(c(t-s), z-x).
\end{align*}
Gathering the estimates on ${\rm I}$, ${\rm II}$ and ${\rm III}$, we thus conclude
$$
|\partial_s \widehat{p}_{m+1} \otimes \Phi_{m+1}(\mu, s, t, x, z)| \leq  \frac{K}{t-s}   \left\{ 1+ \sum_{k=1}^{m} C^{k}  (t-s)^{k \frac{\eta}{2}} \prod_{i=1}^{k} B\left(\frac{\eta}{2}, \frac{\eta}{2} +  (i-1)\frac{\eta}{2} \right)  \right\}  g(c(t-s), z-x).
$$

In order to handle the quantity $p_{m+1} \otimes \partial_s \mH_{m+1}(\mu, s ,t, x, z)$, we write the time integral on the interval $[s, t]$ of the space-time convolution as the sum of the time integrals on $[s, (t+s)/2]$ and on $[(t+s)/2, t]$. For the first time integral, we use the estimate \eqref{cross:time:deriv:parametrix:kernel:s:r:t} with $\beta=0$ while for the second we use the same estimate but with $\beta=1$. Skipping some standard computations, we obtain
\begin{align*}
| p_{m+1} & \otimes \partial_s \mH_{m+1}(\mu, s ,t, x, z) |\\
&   \leq K \left(  \int_s^t \left\{\frac{\mathscr{C}^{1,0}_{m}(C, r-s)}{(t-s)(t-r)^{1-\frac{\eta}{2}}} + \frac{\mathscr{C}^{1,0}_{m}(C, r-s)}{(t-s)(r-s)^{1-\frac{\eta}{2}}} \right. \right. \\
& \left. \left. \quad \quad+\frac{1}{(t-r)^{2-\frac{\eta}{2}}}\int_r^t \frac{\mathscr{C}^{1,0}_{m}(C, r'-s)}{(r'-s)^{1-\frac{\eta}{2}}} \, dr'  \right\} \, dr \right) \, g(c(t-s), z-x)\\
& \leq K  \left( \frac{1}{(t-s)^{\frac{\eta}{2}}}\int_s^t \frac{\mathscr{C}^{1,0}_{m}(C, r-s)}{(t-r)^{1-\frac{\eta}{2}}(r-s)^{1-\frac{\eta}{2}}} \, dr  + \frac{1}{(t-s)^{1-\frac{\eta}{2}}}\int_s^t \frac{\mathscr{C}^{1,0}_{m}(C, r-s)}{(r-s)^{1-\frac{\eta}{2}}} \, dr   \right) \, g(c(t-s), z-x)\\
& \leq \frac{K}{t-s}   \left\{ \sum_{k=1}^{m} C^{k}  (t-s)^{k \frac{\eta}{2}} \prod_{i=1}^{k} B\left(\frac{\eta}{2}, \frac{\eta}{2} +  (i-1)\frac{\eta}{2} \right)  \right\}  g(c(t-s), z-x).
\end{align*}
The previous estimate in turn yields
\begin{align*}
 |[(p_{m+1} & \otimes \partial_s \mH_{m+1})  \otimes \Phi_{m+1}](\mu, s, t, x, z)| \\
 & \leq K \int_s^t  \frac{1}{(r-s)}   \left\{ \sum_{k=1}^{m} C^{k}  (r-s)^{k \frac{\eta}{2}} \prod_{i=1}^{k} B\left(\frac{\eta}{2}, \frac{\eta}{2} +  (i-1)\frac{\eta}{2} \right)  \right\} \frac{1}{(t-r)^{1-\frac{\eta}{2}}} \, dr \, g(c(t-s), z-x) \\
 & \leq \frac{K}{(t-s)^{1-\frac{\eta}{2}}} \sum_{k=1}^{m} C^{k}  (t-s)^{k \frac{\eta}{2}} \prod_{i=1}^{k} B\left(\frac{\eta}{2}, \frac{\eta}{2} +  (i-1)\frac{\eta}{2} \right) \, g(c(t-s), z-x).
\end{align*}
Gathering the above estimates allows to conclude the proof of the pointwise Gaussian estimate \eqref{estimate:deriv:time:dens:stepmp1:cor}.
\end{proof}

\subsection{Proofs of the estimates \eqref{regularity:measure:estimate:v1:v2:v3:decoupling:mckean} and \eqref{regularity:time:estimate:v1:v2:v3:decoupling:mckean}.}\label{proofs:intermediate:estimates:as:consequence}

We here establish the estimates \eqref{regularity:measure:estimate:v1:v2:v3:decoupling:mckean} and \eqref{regularity:time:estimate:v1:v2:v3:decoupling:mckean} which both appear as a consequence of the Gaussian estimates established in the previous section. In this regard, they cannot be considered as part of the second part of the induction step. 

In what follows, to make the notations simpler, for two maps $h$ and $f$ defined respectively on $\pp$ and $[0 ,t)$, for some prescribed $t>0$, for any probability measures $\mu, \mu' \in \pp$ and any $(s_1, s_2) \in [0,t)^2$, we will write $\Delta_{\mu, \mu'} h(\mu) = h(\mu) - h(\mu')$ and $\Delta_{s_1, s_2} f(s) = f(s_1\vee s_2) - f(s_1\wedge s_2)$. In particular, $\Delta_{\mu, \mu'} p_m(\mu, s, t, x, z) = p_m(\mu, s, t, x, z) - p_m(\mu', s, t, x, z)$ and $\Delta_{s_1, s_2} p_m(\mu, s, t, x, z) = p_m(\mu, s_1\vee s_2, t, x, z) - p_m(\mu, s_1\wedge s_2, t, x, z)$.

We start with the following auxiliary result that will be useful in order to establish the estimates \eqref{regularity:measure:estimate:v1:v2:v3:decoupling:mckean}. Its proof is postponed to Section \ref{section:proof:lem:technical:estimate:reg:wasserstein:metric:coefficients:densities} of the appendix.

\begin{lem}\label{lem:technical:estimate:reg:wasserstein:metric:coefficients:densities}  There exist some positive constants $K$, $c$ such that for any $\beta \in [\eta,1]$, any $\beta' \in [0,1]$, any $\alpha \in [0, \eta]$, any $\beta'' \in [\alpha, 1]$, any integer $m$, any $t\in (0,T]$, any $(s, x, y, y', z) \in [0,t) \times (\mathbb{R}^d)^4$, any $r\in (s, t)$, any $\mu, \mu' \in \mathcal{P}_2(\mathbb{R}^d)$ (denoting by $\xi$ and $\xi'$ any random variables with respective law $\mu$ and $\mu'$) and any $(i, j) \in \left\{1, \cdots, d\right\}^2$
\begin{align}
|a_{i, j}(t, x, [X^{s, \xi, (m)}_t])  - & a_{i, j}(t, x, [X^{s, \xi', (m)}_t])| + |b_{i}(t, x, [X^{s, \xi, (m)}_t])  - b_{i}(t, x, [X^{s, \xi', (m)}_t])| \notag \\
& \leq K \frac{W^{\beta}_2(\mu, \mu')}{(t-s)^{\frac{\beta-\eta}{2}}}, \label{diff:mes:drift:diff:coefficients}
\end{align}
\begin{align}
 | \Delta_{\mu, \mu'} \partial^{n}_x \widehat{p}_{m+1}(\mu, s, t, x, z) | \leq K \frac{W^{\beta}_2(\mu, \mu')}{(t-s)^{\frac{n+\beta-\eta}{2}}} \, g(c(t-s), z-x), \quad n\in \left\{0,1, 2\right\}, \label{gaussian:bound:diff:deriv:hat:pm:same:time}
\end{align}
\begin{align}
 | \Delta_{\mu, \mu'} \partial^{n}_x \widehat{p}_{m+1}(\mu, s, r, t, x, z) | \leq K \frac{W^{\beta}_2(\mu, \mu')}{(t-r)^{\frac{n}{2}}(r-s)^{\frac{\beta-\eta}{2}}} \, g(c(t-r), z-x), \quad n\in \left\{0,1, 2, 3\right\}, \label{gaussian:bound:diff:deriv:hat:pm:different:time}
\end{align}
\begin{align}
 | \Delta_{\mu, \mu'} p_{m+1}(\mu, s, t, x, z) | & \leq K \frac{W^{\beta}_2(\mu, \mu')}{(t-s)^{\frac{\beta-\eta}{2}}} \, g(c(t-s), z-x), \label{gaussian:bound:diff:pmp1:mu:mup}
\end{align}
\begin{align}
|a_{i, j}& (t, x, [X^{s, \xi, (m)}_t])  -  a_{i, j}(t, x, [X^{s, \xi', (m)}_t]) - (a_{i, j}(t, y, [X^{s, \xi, (m)}_t])  - a_{i, j}(t, y, [X^{s, \xi', (m)}_t]))| \notag \\
 & + |b_{i}(t, x, [X^{s, \xi, (m)}_t])  - b_{i}(t, x, [X^{s, \xi', (m)}_t]) - (b_{i}(t, y, [X^{s, \xi, (m)}_t])  - b_{i}(t, y, [X^{s, \xi', (m)}_t]))| \notag \\
& \quad \quad \leq  K ( |x-y|^{\eta-\alpha} \wedge 1)\frac{W^{\beta''}_2(\mu, \mu')}{(t-s)^{\frac{\beta''-\alpha}{2}}},  \label{diff:mes:with:holder:reg:space:drift:diff:coefficients}
\end{align}
\begin{align}
 | &\Delta_{\mu, \mu'} \mH_{m+1}(\mu, s, r ,t , x ,z) |  \notag \\
 & \leq K \left\{\frac{1}{(t-r)^{1-\frac{\eta}{2}}(r-s)^{\frac{\beta}{2}}} \wedge\frac{1}{(t-r) (r-s)^{\frac{\beta-\eta}{2}}} \right\} W^{\beta}_2(\mu, \mu') \, g(c(t-r), z - x), \label{gaussian:bound:diff:Hm:mu}
\end{align}
\begin{align}
 | &\Delta_{\mu, \mu'} \Phi_{m+1}(\mu, s, r ,t , x ,z) |  \leq K_{\beta'} \frac{W^{\beta'}_2(\mu, \mu')}{(t-r)^{1-\frac{\eta}{2}}(r-s)^{\frac{\beta'}{2}}}   \, g(c(t-r), z - x), \label{gaussian:bound:diff:Phimp1:mu}
\end{align}
\begin{align}
 | &\Delta_{\mu, \mu'} [\mH_{m+1}(\mu, s, r ,t , x ,z ) - \mH_{m+1}(\mu, s, r ,t , y ,z )] |  \notag \\
 & \leq K |x-y|^{\eta-\alpha} \frac{ W^{\beta}_2(\mu, \mu')}{(t-r)^{1+\frac{\eta-\alpha}{2}} (r-s)^{\frac{\beta-\eta}{2}}}   \left\{ g(c(t-r), z - x)  + g(c(t-r), z - y) \right\}, \label{gaussian:bound:diff:Hm:mu:holder:reg}
\end{align}

\begin{align}
 | \Delta_{\mu, \mu'} [\partial^2_{x_i, x_j} [\widehat{p}^{y}_{m+1}& -  \widehat{p}^{y'}_{m+1}](\mu, s ,t , x ,z )]  |  \leq K (|y-y'|^{\eta} \wedge 1) \frac{ W^{\beta'}_2(\mu, \mu')}{(t-s)^{1+\frac{\beta'}{2}} }   g(c(t-s), z - x).\label{gaussian:bound:diff:second:deriv:p:hat:mu:holder:reg}
\end{align}

\end{lem}

We are now ready to prove that the estimates \eqref{regularity:measure:estimate:v1:v2:v3:decoupling:mckean} hold for any positive integer $m$.

\begin{prop}\label{prop:representation:and:gaussian:estimate:diff:mes:deriv:heat:kernel}
For any $(\mu, \mu', x, z) \in (\mathcal{P}_2(\rr^d))^2  \times (\mathbb{R}^d)^2$ and any $0\leq s < t \leq T$, the following representation in infinite series holds
\begin{align}
\Delta_{\mu, \mu'}  & p_{m+1} (\mu, s, t, x, z) \notag\\
& = \sum_{k\geq0} \left\{ \Delta_{\mu, \mu'} \widehat{p}_{m+1} + p^{\mu'}_{m+1}\otimes \Delta_{\mu, \mu'} \mH_{m+1} \right\} \otimes \mH^{(k)}_{m+1}(\mu, s, t, x, z)\label{representation:diff:deriv:pmp1}
\end{align}

\noindent where $p^{\mu'}_{m+1}$ stands for the density function $z\mapsto p_{m+1}(\mu', s, t, x, z)$.

Moreover, the following pointwise Gaussian estimates hold: for any $\beta \in [\eta, 1]$ if $n\in \left\{0, 1 \right\}$ or any $\beta \in [0,\eta)$ if $n=2$, there exist some positive constants $K_\beta$ and $c$ such that for any $(\mu, \mu', x, z) \in (\mathcal{P}_2(\mathbb{R}^d))^2 \times (\mathbb{R}^d)^2$ and any $0\leq s < t \leq T$ 
\begin{equation}
\label{gaussian:estimate:diff:mes:deriv:heat:kernel}
 | \Delta_{\mu, \mu'} \partial^{n}_x p_{m+1}(\mu, s, t, x, z) |   \leq K_\beta \frac{W^{\beta}_2(\mu, \mu')}{(t-s)^{\frac{n+\beta-\eta}{2}}} \, g(c(t-s), z-x), \quad n \in \left\{0,1\right\},
 \end{equation}
 \noindent and
 \begin{equation}
\label{gaussian:estimate:diff:mes:deriv:heat:kernel:part2:statement:proposition}
 | \Delta_{\mu, \mu'} \partial^{2}_x p_{m+1}(\mu, s, t, x, z) |   \leq K_\beta \frac{W^{\beta}_2(\mu, \mu')}{(t-s)^{1+\frac{\beta}{2}}} \, g(c(t-s), z-x).
 \end{equation}
\end{prop}

\bigskip

We importantly note that the two estimates \eqref{gaussian:estimate:diff:mes:deriv:heat:kernel} directly give the estimates \eqref{regularity:measure:estimate:v1:v2:v3:decoupling:mckean} for $n=0$ and $n=1$. Indeed, if $W^2_2(\mu, \mu') \leq t-s$, then it suffices to remark that \eqref{gaussian:estimate:diff:mes:deriv:heat:kernel} is still valid for any $\beta \in [0,1]$ and if $W^2_2(\mu, \mu') \geq t-s$, we directly use the Gaussian estimates \eqref{bound:derivative:heat:kernel} to derive \eqref{regularity:measure:estimate:v1:v2:v3:decoupling:mckean}. 

\bigskip

\begin{proof}
It directly follows from \eqref{gaussian:bound:diff:deriv:hat:pm:same:time}, \eqref{gaussian:bound:diff:Hm:mu} and an induction on $k$ that for any $\beta \in [\eta,1]$
\begin{align*}
|\Delta_{\mu, \mu'} \widehat{p}_{m+1} \otimes \mH^{(k)}_{m+1}(\mu, s, t, x, z)| & \leq K^k W^{\beta}_2(\mu, \mu') (t-s)^{\frac{(\eta-\beta)}{2} +  k\frac{\eta}{2}} \prod_{i=1}^{k} B\left(\frac{\eta}{2}, 1 + \frac{\eta-\beta}{2} + (i-1)\frac{\eta}{2}\right) \\
& \quad \times g(c(t-s), z-x)
\end{align*}
\noindent and
\begin{align*}
|(p^{\mu'}_{m+1} \otimes \Delta_{\mu, \mu'} \mH_{m+1}) \otimes \mH^{(k)}_{m+1}(\mu, s, t, x, z)| & \leq K^k W^{\beta}_2(\mu, \mu') (t-s)^{\frac{(\eta-\beta)}{2} +  k\frac{\eta}{2}} \prod_{i=1}^{k} B\left(\frac{\eta}{2}, 1 + \frac{\eta-\beta}{2} + (i-1)\frac{\eta}{2}\right) \\
& \quad \times g(c(t-s), z-x).
\end{align*}

Now, the representation in infinite series \eqref{series:approx:mckean} satisfied by $p_{m+1}(\mu, s, t, x, z)$ which equivalently writes $p_{m+1} = \widehat{p}_{m+1} + p_{m+1}\otimes \mH_{m+1}$ implies
\begin{align*}
\Delta_{\mu, \mu'} p_{m+1}  &  = \Delta_{\mu, \mu'} \widehat{p}_{m+1} + p^{\mu'}_{m+1} \otimes \Delta_{\mu, \mu'} \mH_{m+1} + \Delta_{\mu, \mu'} p_{m+1} \otimes \mH_{m+1},
\end{align*}

\noindent which in turn, from the two previous estimates, yields
\begin{align}
\Delta_{\mu, \mu'} p_{m+1}   = \sum_{k\geq0} \left\{\Delta_{\mu, \mu'} \widehat{p}_{m+1} + p^{\mu'}_{m+1}\otimes \Delta_{\mu, \mu'} \mH_{m+1} \right\} \otimes \mH^{(k)}_{m}. \label{representation:diff:mes:pm}
\end{align}

\noindent Moreover, the previous series converges absolutely and locally uniformly with respect to the variables $\mu, \mu', s, x, z$. Observe also that from \eqref{infinite:series:Phi:step:m}, the above series writes
\begin{align*}
\Delta_{\mu, \mu'} p_{m+1} (\mu, s, t, x, z) & = \Delta_{\mu, \mu'} \widehat{p}_{m+1}(\mu, s, t, x, z)  + (p_{m+1}\otimes (\Delta_{\mu, \mu'} \mH_{m+1}))(\mu', s, t, x, z)   \\
& \quad + \left\{\Delta_{\mu, \mu'} \widehat{p}_{m+1} + p^{\mu'}_{m+1}\otimes \Delta_{\mu, \mu'} \mH_{m+1} \right\} \otimes \Phi_{m+1}(\mu, s, t, x, z).
\end{align*}

We now aim at differentiating $n$ times ($n=1, 2$) with respect to the space variable $x$ the above representation formula, so that, formally speaking, one has 
\begin{align}
\partial^{n}_x \Delta_{\mu, \mu'} p_{m+1} (\mu, s, t, x, z) & =  \Delta_{\mu, \mu'} \partial^n_x \widehat{p}_{m+1}(\mu, s, t, x, z)  + (\partial_x^n p_{m+1}\otimes (\Delta_{\mu, \mu'} \mH_{m+1}))(\mu', s, t, x, z)  \nonumber \\
& \quad + \left\{\Delta_{\mu, \mu'} \partial_x^n \widehat{p}_{m+1} + \partial_x^n p^{\mu'}_{m+1}\otimes \Delta_{\mu, \mu'} \mH_{m+1} \right\} \otimes \Phi_{m+1}(\mu, s, t, x, z). \label{representation:diff:mes:pm:p1:bis}
\end{align}

Let us rigorously justify the interchange of differentiation and summation. We proceed as follows. From the estimates \eqref{gaussian:bound:diff:deriv:hat:pm:same:time} and the dominated convergence theorem, we first deduce that the map $x\mapsto \Delta_{\mu, \mu'} \widehat{p}_{m+1} \otimes  \Phi_{m+1}(\mu, s, t, x, z)$ is continuously differentiable and satisfies for any $\beta \in [\eta, 1]$ 
\begin{align}
| \Delta_{\mu, \mu'} \partial_x\widehat{p}_{m+1}& \otimes  \Phi_{m+1}(\mu, s, t, x, z)| \leq K \frac{W^{\beta}_2(\mu, \mu')}{(t-s)^{1+\frac{\beta}{2}-\eta}} \, g(c(t-s), z-x). \label{iter:first:deriv:diff:mes:part:1}
\end{align}
 
We then split the domain of the time integral of the space-time convolution $(\partial_x p_{m+1}\otimes \Delta_{\mu, \mu'} \mH_{m+1})(\mu, s, t, x, z)$ into the two disjoint time intervals $[s, (t+s)/2]$ and $[(t+s)/2, t]$. From \eqref{bound:derivative:heat:kernel} and \eqref{gaussian:bound:diff:Hm:mu}, for any $\beta \in [\eta,1]$, one has
\begin{align*}
\int_s^{\frac{t+s}{2}} \int_{\mathbb{R}^d} & |\partial_x p_{m+1}(\mu, s, r, x, y)|  |\Delta_{\mu, \mu'} \mH_{m+1}(\mu, s , r , t, y, z)| \, dy \, dr \\
& \leq K W^{\beta}_2(\mu, \mu') \int_s^{\frac{t+s}{2}} \frac{1}{(r-s)^{\frac12}} \frac{1}{(t-r)(r-s)^{\frac{\beta-\eta}{2}}} \, g(c(t-s), z-x) \\
& \leq K \frac{W^{\beta}_2(\mu, \mu')}{(t-s)^{\frac{1+\beta-\eta}{2}}} \,  g(c(t-s), z-x)
\end{align*}
\noindent and similarly
\begin{align*}
\int_{\frac{t+s}{2}}^{t} \int_{\mathbb{R}^d} & |\partial_x p_{m+1}(\mu, s, r, x, y)|  |\Delta_{\mu, \mu'} \mH_{m+1}(\mu, s , r , t, y, z)| \, dy \, dr \\
& \leq K W^{\beta}_2(\mu, \mu') \int_{\frac{t+s}{2}}^{t} \frac{1}{(r-s)^{\frac12}} \frac{1}{(t-r)^{1-\frac{\eta}{2}}(r-s)^{\frac{\beta}{2}}} \, g(c(t-s), z-x) \\
& \leq K \frac{W^{\beta}_2(\mu, \mu')}{(t-s)^{\frac{1+\beta-\eta}{2}}} \,  g(c(t-s), z-x).
\end{align*}

We thus deduce from the dominated convergence theorem that the map $x\mapsto (p_{m+1} \otimes  \Delta_{\mu, \mu'} \mH_{m+1})(\mu, s, t, x, z)$ is continuously differentiable and satisfies 
$$
|(\partial_x p_{m+1} \otimes  \Delta_{\mu, \mu'} \mH_{m+1})(\mu, s, t, x, z)| \leq K \frac{W^{\beta}_2(\mu, \mu')}{(t-s)^{\frac{1+\beta-\eta}{2}}} \,  g(c(t-s), z-x).
$$
 In a similar manner, we conclude that the map $x\mapsto (p_{m+1} \otimes \Delta_{\mu, \mu'} \mH_{m+1}) \otimes  \Phi_{m+1}(\mu, s, t, x, z)$ is continuously differentiable and satisfies 
\begin{align}
|(\partial_x p_{m+1} \otimes  &\Delta_{\mu, \mu'} \mH_{m+1}) \otimes  \Phi_{m+1}(\mu, s, t, x, z)| \leq K \frac{W_2^{\beta}(\mu, \mu')}{(t-s)^{\frac{1+\beta}{2}-\eta}} \,  g(c(t-s), z-x). \label{iter:first:deriv:diff:mes:part:2}
\end{align}

We thus conclude that the map $x\mapsto \Delta_{\mu, \mu'} p_{m+1} (\mu, s, t, x, z) $ is continuously differentiable, satisfies the announced representation \eqref{representation:diff:mes:pm:p1:bis} as well as the estimate \eqref{gaussian:estimate:diff:mes:deriv:heat:kernel} for $n=1$.

We now prove that $x\mapsto \Delta_{\mu, \mu'} p_{m+1}(\mu, s, t, x, z)$ is twice continuously differentiable, satisfies the representation \eqref{representation:diff:mes:pm:p1:bis} and that the estimate \eqref{gaussian:estimate:diff:mes:deriv:heat:kernel:part2:statement:proposition} is valid.
In order to prove that the map $x\mapsto (\Delta_{\mu, \mu'} \widehat{p}_{m+1} \otimes \Phi_{m+1})(\mu, s, t, x, z)$ is twice continuously differentiable, we proceed as follows. For any $r \in [s, (t+s)/2]$ and any $x_0 \in \mathbb{R}^d$, it holds
\begin{align*}
\partial_x^2 \int_{\mathbb{R}^d} & \Delta_{\mu, \mu'}  \widehat{p}_{m+1}(\mu, s, r, x, y) \Phi_{m+1}(\mu, s, r, t, y, z) \, dy \\
& = \int_{\mathbb{R}^d} \Delta_{\mu, \mu'}  \partial^2_x\widehat{p}_{m+1}(\mu, s, r, x, y) [\Phi_{m+1}(\mu, s, r, t, y, z)  - \Phi_{m+1}(\mu, s, r, t, x_0, z) ] \, dy \\
& \quad + \Phi_{m+1}(\mu, s, r, t, x_0, z) \int_{\mathbb{R}^d} \Delta_{\mu, \mu'}  [\partial^2_x (\widehat{p}^{y}_{m+1}-\widehat{p}^{x_0}_{m+1})(\mu, s, r, x, y)] \, dy
\end{align*}
\noindent where we used the fact that $\int_{\mathbb{R}^d} \widehat{p}^{x_0}_{m+1}(\mu, s, r, x, y) \, dy = 1$, for any $x_0 \in \mathbb{R}^d$. We now select $x_0=x$ in the above identity. From the estimates \eqref{holder:reg:voltera:kernel}, \eqref{gaussian:bound:diff:deriv:hat:pm:same:time} and the space-time inequality \eqref{space:time:inequality}, we obtain
\begin{align*}
\Big|\int_{\mathbb{R}^d} \Delta_{\mu, \mu'}  \partial^2_x\widehat{p}_{m+1}(\mu, s, r, x, y) & [\Phi_{m+1}(\mu, s, r, t, y, z)  - \Phi_{m+1}(\mu, s, r, t, x, z) ] \, dy \Big|  \\
& \leq K_\alpha  \frac{W^{\beta}_2(\mu, \mu')}{(r-s)^{1+\frac{\beta-\alpha-\eta}{2}} (t-r)^{1+\frac{\alpha-\eta}{2}}} \, g(c(t-s), z-x)
\end{align*}
\noindent for any $\beta \in [\eta, 1]$ and any $\alpha \in [0,\eta)$. Note that if $W^2_2(\mu, \mu') \leq r-s$ the above estimate remains valid for any $\beta\in [0,1]$. Otherwise, if $W^2_2(\mu, \mu') \geq r-s$, from \eqref{holder:reg:voltera:kernel}, \eqref{standard} and the space-time inequality \eqref{space:time:inequality}, we directly get
 \begin{align*}
\Big| \int_{\mathbb{R}^d} \Delta_{\mu, \mu'}  \partial^2_x\widehat{p}_{m+1}(\mu, s, r, x, y) & [\Phi_{m+1}(\mu, s, r, t, y, z)  - \Phi_{m+1}(\mu, s, r, t, x_0, z) ] \, dy \Big|  \\
& \leq K_\alpha \frac{1}{(r-s)^{1-\frac{\alpha}{2}}(t-r)^{1+\frac{\alpha-\eta}{2}}} \, g(c(t-s), z-x) \\
& \leq K_\alpha  \frac{W^{\beta}_2(\mu, \mu')}{(r-s)^{1+\frac{\beta-\alpha}{2}} (t-r)^{1+\frac{\alpha-\eta}{2}}} \, g(c(t-s), z-x)
\end{align*}
\noindent for any $\beta \in [0,1]$ and any $\alpha \in [0,\eta)$. From \eqref{gaussian:bound:diff:second:deriv:p:hat:mu:holder:reg}, the space time inequality \eqref{space:time:inequality} and recalling that $r\in [s, \frac{t+s}{2}]$, we obtain
$$
\Big| \Phi_{m+1}(\mu, s, r, t, x, z) \int_{\mathbb{R}^d} \Delta_{\mu, \mu'}  [\partial^2_x (\widehat{p}^{y}_{m+1}-\widehat{p}^{x_0}_{m+1})(\mu, s, r, x, y)] \, dy \Big| \leq K \frac{W^{\beta}_2(\mu, \mu')}{(t-r)^{1-\frac{\eta}{2}}(r-s)^{1+\frac{\beta-\eta}{2}}} \, g(c(t-s), z-x).
$$

Hence, taking $\beta \in [0,\eta)$ and eventually $\alpha$ such that $\beta < \alpha < \eta$ in the above estimates, from the dominated convergence theorem, we derive that the map $x\mapsto \int_s^{\frac{t+s}{2}}\int_{\mathbb{R}^d} \Delta_{\mu, \mu'}  \widehat{p}_{m+1}(\mu, s, r, x, y) \Phi_{m+1}(\mu, s, r, t, y, z) \, dy \, dr$ is twice continuously differentiable and satisfies
\begin{align*}
\Big| \partial_x^2 \int_s^{\frac{t+s}{2}}\int_{\mathbb{R}^d}&  \Delta_{\mu, \mu'}  \widehat{p}_{m+1}(\mu, s, r, x, y) \Phi_{m+1}(\mu, s, r, t, y, z) \, dy \, dr \Big| \\
& \leq K_\beta W^{\beta}_2(\mu, \mu') \int_s^{\frac{t+s}{2}} \left\{ \frac{1}{(r-s)^{1+\frac{\beta-\alpha}{2}} (t-r)^{1+\frac{\alpha-\eta}{2}}} + \frac{1}{(t-r)^{1-\frac{\eta}{2}}(r-s)^{1+\frac{\beta-\eta}{2}}}\right\} \, dr \, g(c(t-s), z-x) \\
& \leq K_\beta \frac{W^{\beta}_2(\mu, \mu')}{(t-s)^{1+\frac{\beta-\eta}{2}}} \, g(c(t-s), z-x)
\end{align*}
\noindent for any $\beta \in [0,\eta)$.

Now, if $r\in [\frac{t+s}{2}, t]$ since $r-s \geq (t-s)/2$, the kernel $ \Delta_{\mu, \mu'}  \partial^2_x\widehat{p}_{m+1}(\mu, s, r, x, y)$ does not generate any time singularity. Hence, if $W^2_2(\mu, \mu') \geq r-s$, from \eqref{Gaussian:estimate:Phim} and \eqref{standard}, one directly gets
\begin{align*}
 \Big| \int_{\mathbb{R}^d}&  \Delta_{\mu, \mu'}   \partial_x^2\widehat{p}_{m+1}(\mu, s, r, x, y) \Phi_{m+1}(\mu, s, r, t, y, z) \, dy \Big|  \\
 & \leq  \int_{\mathbb{R}^d} ( |\partial_x^2\widehat{p}_{m+1}(\mu, s, r, x, y)| + |\partial_x^2\widehat{p}_{m+1}(\mu', s, r, x, y)|) |\Phi_{m+1}(\mu, s, r, t, y, z)| \, dy\\
 & \leq K \frac{1}{(r-s)(t-r)^{1-\frac{\eta}{2}}} \, g(c(t-s), z-x) \\
 & \leq  K \frac{W^{\beta}_2(\mu, \mu')}{(t-s)^{1+ \frac{\beta}{2}}(t-r)^{1-\frac{\eta}{2}}} \, g(c(t-s), z-x)
\end{align*}
\noindent for any $\beta \in [0,1]$. Otherwise, if $W^2_2(\mu, \mu') < r-s$, from \eqref{gaussian:bound:diff:deriv:hat:pm:same:time} and \eqref{Gaussian:estimate:Phim}, we obtain
\begin{align*}
 \Big| \int_{\mathbb{R}^d}&  \Delta_{\mu, \mu'}   \partial_x^2\widehat{p}_{m+1}(\mu, s, r, x, y) \Phi_{m+1}(\mu, s, r, t, y, z) \, dy \Big|  \\
 & \leq  K  \frac{W^{\beta}_2(\mu, \mu')}{(r-s)^{1+\frac{\beta-\eta}{2}}(t-r)^{1-\frac{\eta}{2}}} \, g(c(t-s), z-x)
\end{align*}
\noindent for any $\beta \in [\eta,1]$. Observe again that since $W^2_2(\mu, \mu') < r-s$, the above estimate is actually valid for any $\beta \in [0,1]$.
From the above discussion and the dominated convergence, we deduce that the map $x\mapsto \int_{\frac{t+s}{2}}^{t} \int_{\mathbb{R}^d}  \Delta_{\mu, \mu'}  \widehat{p}_{m+1}(\mu, s, r, x, y) \Phi_{m+1}(\mu, s, r, t, y, z) \, dy \, dr$ is twice continuously differentiable and satisfies
\begin{align*}
\Big| \partial_x^2&  \int_{\frac{t+s}{2}}^{t} \int_{\mathbb{R}^d}  \Delta_{\mu, \mu'}  \widehat{p}_{m+1}(\mu, s, r, x, y) \Phi_{m+1}(\mu, s, r, t, y, z) \, dy \, dr \Big| \\
& = \Big|   \int_{\frac{t+s}{2}}^{t} \int_{\mathbb{R}^d}  \Delta_{\mu, \mu'} \partial_x^2 \widehat{p}_{m+1}(\mu, s, r, x, y) \Phi_{m+1}(\mu, s, r, t, y, z) \, dy \, dr \Big| \\
&  \leq  K \frac{W^{\beta}_2(\mu, \mu')}{(t-s)^{1+\frac{\beta-\eta}{2}}} \, g(c(t-s), z-x)
\end{align*}
\noindent for any $\beta \in [0,1]$. We finally conclude that $x\mapsto (\Delta_{\mu, \mu'}  \widehat{p}_{m+1}\otimes\Phi_{m+1})(\mu, s, t, x, z)$ is twice continuously differentiable and satisfies 
$$
|  \partial^2_x(\Delta_{\mu, \mu'}  \widehat{p}_{m+1}\otimes\Phi_{m+1})(\mu, s, t, x, z)| \leq  K_\beta \frac{W^{\beta}_2(\mu, \mu')}{(t-s)^{1+\frac{\beta-\eta}{2}}} \, g(c(t-s), z-x)
$$
\noindent for any $\beta \in [0,\eta)$.

In order to handle the last term appearing in the right-hand of \eqref{representation:diff:mes:pm:p1:bis} for $n=2$, we first notice that
\begin{align*}
\partial^2_x \int_{\mathbb{R}^d} & p_m(\mu', s, r, x, y) \Delta_{\mu, \mu'} \mH_m(\mu, s, r , t, y ,z) \, dy  \\
& =  \int_{\mathbb{R}^d} \partial^2_x p_m(\mu', s, r, x, y) \Delta_{\mu, \mu'} [\mH_m(\mu, s, r , t, y ,z) -  \mH_m(\mu, s, r , t, x_0 ,z)] \, dy  \\
& \quad + \Delta_{\mu, \mu'} \mH_m(\mu, s, r , t, x_0 ,z) \partial^2_x \int_{\mathbb{R}^d}  p_m(\mu', s, r, x, y)  \, dy \\
& =  \int_{\mathbb{R}^d} \partial^2_x p_m(\mu', s, r, x, y) \Delta_{\mu, \mu'} [\mH_m(\mu, s, r , t, y ,z) -  \mH_m(\mu, s, r , t, x_0 ,z)] \, dy  
\end{align*}
\noindent for any $x_0 \in \mathbb{R}^d$. We now set $x_0=x$ and split our computations according to the two cases $W^2_2(\mu, \mu') \leq r-s$ and $W^2_2(\mu, \mu') \geq r-s$. In the first case, we use \eqref{gaussian:bound:diff:Hm:mu:holder:reg} which is here valid for any $\beta \in [0,1]$ while in the second case, we use \eqref{holder:reg:parametrix:kernel} with $\beta= \beta' \in [0,\eta]$. Gathering the two cases and using \eqref{bound:derivative:heat:kernel} as well as the space-time inequality \eqref{space:time:inequality}, for any $\beta \in [0,\eta)$, any $\beta' \in [0,\eta]$ and any $r\in [s, \frac{t+s}{2}]$, we obtain
\begin{align*}
\Big| \int_{\mathbb{R}^d} &  \partial^2_x p_m(\mu, s, r, x, y) \Delta_{\mu, \mu'} [\mH_m(\mu, s, r , t, y ,z) -  \mH_m(\mu, s, r , t, x ,z)] \, dy \Big| \\
& \leq  K_{\beta'} W^{\beta}_2(\mu, \mu') \Big( \frac{1}{(r-s)^{1-\frac{(\eta-\alpha)}{2}}} \frac{1}{(t-r)^{1+\frac{\eta-\alpha}{2}}(r-s)^{\frac{\beta-\eta}{2}}} + \frac{1}{(r-s)^{1-\frac{\beta'}{2}}}\frac{1}{(t-r)^{1+\frac{\beta'-\eta}{2}}(r-s)^{\frac{\beta}{2}}}\Big)  \\
& \quad \times   g(c(t-s), z-x) \\
& \leq  K_{\beta'} W^{\beta}_2(\mu, \mu') \Big( \frac{1}{(r-s)^{1+\frac{\alpha+\beta}{2}-\eta}} \frac{1}{(t-s)^{1+\frac{\eta-\alpha}{2}}} + \frac{1}{(r-s)^{1+\frac{\beta-\beta'}{2}}}\frac{1}{(t-s)^{1+\frac{\beta'-\eta}{2}}}\Big)  \, g(c(t-s), z-x)
\end{align*}

\noindent and, selecting $\alpha \in [0,\eta)$ and $\beta' \in (\beta, \eta)$ so that $\beta < \eta < 2 \eta-\alpha $ and $1+(\beta-\beta')/2< 1$, by the dominated convergence theorem, we deduce that $x\mapsto \int_s^{\frac{t+s}{2}}\int_{\mathbb{R}^d} p_m(\mu, s, r, x, y) \Delta_{\mu, \mu'} \mH_m(\mu, s, r , t, y ,z) \, dy \, dr $ is twice continuously differentiable and satisfies
\begin{align*}
\Big| & \int_s^{\frac{t+s}{2}}\int_{\mathbb{R}^d}  p_m(\mu, s, r, x, y) \Delta_{\mu, \mu'} \mH_m(\mu, s, r , t, y ,z) \, dy \, dr \Big| \\
 &  \quad \leq  K_\beta W^{\beta}_2(\mu, \mu') \int_s^{\frac{t+s}{2}} \Big( \frac{1}{(r-s)^{1-\frac{(\eta-\alpha)}{2}}} \frac{1}{(t-r)^{1+\frac{\eta-\alpha}{2}}(r-s)^{\frac{\beta-\eta}{2}}} + \frac{1}{(r-s)^{1-\frac{\beta'}{2}}}\frac{1}{(t-r)^{1+\frac{\beta'-\eta}{2}}(r-s)^{\frac{\beta}{2}}}\Big)  \, dr \\
 &\quad \quad \times   g(c(t-s), z-x) \\
& \quad\leq  K_\beta \frac{W^{\beta}_2(\mu, \mu')}{(t-s)^{1+ \frac{\beta-\eta}{2}}} \, g(c(t-s), z-x).
\end{align*}

From \eqref{bound:derivative:heat:kernel}, \eqref{iter:parametrix:kernel} (with $k=1$) if $W_2^2(\mu, \mu') \geq r-s$ or \eqref{gaussian:bound:diff:Hm:mu} which is valid for any $\beta \in [0,1]$ if $W_2^2(\mu, \mu') < r-s$ and the dominated convergence theorem, we similarly deduce that the map $x\mapsto \int_{\frac{t+s}{2}}^{t}\int_{\mathbb{R}^d} p_m(\mu, s, r, x, y) \Delta_{\mu, \mu'} \mH_m(\mu, s, r , t, y ,z) \, dy \, dr $ is twice continuously differentiable with
\begin{align*}
\Big| \int_{\frac{t+s}{2}}^{t}& \int_{\mathbb{R}^d} \partial^2_x p_m(\mu, s, r, x, y) \Delta_{\mu, \mu'} \mH_m(\mu, s, r , t, y ,z) \, dy \, dr \Big| \\
&  \leq  K W^{\beta}_2(\mu, \mu') \int_{\frac{t+s}{2}}^{t} \frac{1}{r-s} \frac{1}{(t-r)^{1-\frac{\eta}{2}}(r-s)^{\frac{\beta}{2}}}\, dr \, g(c(t-s), z-x)\\
& \leq  K \frac{W^{\beta}_2(\mu, \mu')}{(t-s)^{1+ \frac{\beta-\eta}{2}}} \, g(c(t-s), z-x).
\end{align*}

We thus conclude that $x\mapsto  (p_{m+1} \otimes \Delta_{\mu, \mu'} \mH_{m+1})(\mu, s, t, x, z)$ is twice continuously differentiable and satisfies
$$
 |(\partial^2_x p_{m+1} \otimes \Delta_{\mu, \mu'} \mH_{m+1})(\mu, s, t, x, z)| \leq   K_\beta  \frac{W^{\beta}_2(\mu, \mu')}{(t-s)^{1+ \frac{\beta-\eta}{2}}} g(c(t-s), z-x)
$$

\noindent which in turn, using \eqref{Gaussian:estimate:Phim}, yields 
\begin{align*}
|(\partial^2_x p_{m+1} \otimes \Delta_{\mu, \mu'} \mH_{m+1}) \otimes \Phi_{m+1}(\mu, s, t, x, z)| & \leq  K_\beta \frac{W^{\beta}_2(\mu, \mu')}{(t-s)^{1+ \frac{\beta}{2}-\eta}} g(c(t-s), z-x)
\end{align*}
\noindent for any $\beta\in [0, \eta)$. The proof of \eqref{gaussian:estimate:diff:mes:deriv:heat:kernel:part2:statement:proposition} is now complete.

\end{proof}

Similarly, in order to tackle the estimates \eqref{regularity:time:estimate:v1:v2:v3:decoupling:mckean}, we first need the following auxiliary result whose proof is postponed to Section \ref{section:proof:lem:technical:estimate:reg:time:metric:coefficients:densities} of the appendix.

\begin{lem}\label{lem:technical:estimate:reg:time:metric:coefficients:densities} For any $\beta \in [0,1]$ and any $\alpha \in [0,\eta]$, there exist positive constants $K$, $K_{\alpha}$ and $c$ such that for any positive integer $m$, any $t\in (0,T]$, any $(s_1, s_2, x, y, y', z) \in [0,t)^2 \times (\mathbb{R}^d)^4$, any $r\in (s_1 \vee s_2, t)$ and any $(i, j) \in \left\{1, \cdots, d\right\}^2$
\begin{align}
& |\Delta_{s_1,s_2} p_{m}(\mu, s, t, x, z) | \nonumber \\
&  \leq K \left\{ \frac{|s_1-s_2|^{\beta}}{(t-s_1)^{\beta}} g(c(t-s_1), z-x) +  \frac{|s_1-s_2|^{\beta}}{(t-s_2)^{\beta}} g(c(t-s_2), z-x)  \right\}, \label{diff:time:heat:kernel}
\end{align}
\begin{align}
|a_{i, j}(t, x, [X^{s_1 \vee s_2, \xi, (m)}_t])  - & a_{i, j}(t, x, [X^{s_1 \wedge s_2, \xi, (m)}_t])| + |b_{i}(t, x, [X^{s_1 \vee s_2, \xi, (m)}_t])  - b_{i}(t, x, [X^{s_1 \wedge s_2, \xi', (m)}_t])| \notag \\
& \leq K \left\{ \frac{|s_1-s_2|^\beta}{(t-s_1)^{\beta-\frac{\eta}{2}}} +\frac{|s_1-s_2|^\beta}{(t-s_2)^{\beta-\frac{\eta}{2}}}  \right\}, \label{diff:time:drift:diff:coefficients}
\end{align}
\begin{align}
 | \Delta_{s_1, s_2}&  \partial^{n}_x \widehat{p}_{m+1}(\mu, s, t, x, z) |  \notag \\
 & \leq K \left\{ \frac{|s_1-s_2|^\beta}{(t-s_1)^{\frac{n}{2}+\beta}} \, g(c(t-s_1), z-x) + \frac{|s_1-s_2|^\beta}{(t-s_2)^{\frac{n}{2}+\beta}} \, g(c(t-s_2), z-x) \right\}, \quad n\in \left\{0,1, 2\right\}, \label{gaussian:bound:diff:time:hat:pm:same:time}
\end{align}
\begin{align}
|a_{i, j}& (t, x, [X^{s_1 \vee s_2, \xi, (m)}_t])  -  a_{i, j}(t, x, [X^{s_1 \wedge s_2, \xi, (m)}_t]) - (a_{i, j}(t, y, [X^{s_1 \vee s_2, \xi, (m)}_t])  - a_{i, j}(t, y, [X^{s_1 \wedge s_2, \xi, (m)}_t]))| \notag \\
\quad + |b_{i}& (t, x, [X^{s_1 \vee s_2, \xi, (m)}_t])  -  b_{i}(t, x, [X^{s_1 \wedge s_2, \xi, (m)}_t]) - (b_{i}(t, y, [X^{s_1 \vee s_2, \xi, (m)}_t])  - b_{i}(t, y, [X^{s_1 \wedge s_2, \xi, (m)}_t]))| \notag \\
& \quad \quad \leq   K_{\alpha} (|y-x|^\alpha \wedge 1) |s_1-s_2|^\beta \left\{\frac{1}{(t-s_1)^{\beta+\frac{\alpha-\eta}{2}}} + \frac{1}{(t-s_2)^{\beta+\frac{\alpha-\eta}{2}}}\right\},  \label{diff:time:with:holder:reg:space:drift:diff:coefficients}
\end{align} 
\begin{align}
 | \Delta_{s_1, s_2} & [\partial^n_{x}  [\widehat{p}^{y}_{m+1} - \widehat{p}^{y'}_{m+1}](\mu, s ,t , x ,z )]  |  \notag  \\
 & \leq  K (|y-y'|^\eta \wedge 1)  \left\{ \frac{|s_1-s_2|^\beta}{(t-s_1)^{\frac{n}{2}+\beta}} \, g(c(t-s_1), z-x) + \frac{|s_1-s_2|^\beta}{(t-s_2)^{\frac{n}{2}+\beta}} \, g(c(t-s_2), z-x) \right\}, \, \beta \in [0,1), \, n\in \left\{1, 2\right\},\label{gaussian:bound:diff:time:second:deriv:p:hat:mu:holder:reg}
\end{align}

\begin{align}
 | \Delta_{s_1, s_2} \partial^{n}_x \widehat{p}_{m+1}(\mu, s, r, t, x, z) | \leq K \frac{|s_1-s_2|^\beta}{(t-r)^{\frac{n}{2}}(r-s_1\vee s_2)^{\beta}} \, g(c(t-r), z-x), \quad n\in \left\{0,1, 2, 3\right\}, \label{gaussian:bound:diff:time:hat:pm:different:time}
\end{align}
\begin{align}
 |  \Delta_{s_1, s_2} \mH_{m+1}(\mu, s, r ,t , x ,z) |  \leq K  \frac{|s_1-s_2|^{\beta}}{(t-r)^{1-\frac{\eta}{2}}(r-s_1\vee s_2)^{\beta}}  \, g(c(t-r), z-x), \label{gaussian:bound:diff:time:Hm}
\end{align}
\begin{align}
 | &  \Delta_{s_1, s_2} [\mH_{m+1}(\mu, s, r ,t , x ,z) - \mH_{m+1}(\mu, s, r ,t , y ,z)] |  \notag \\
 & \quad \leq K (|y-x|^\alpha \wedge 1) \frac{|s_1-s_2|^{\beta}}{(t-r)^{1+\frac{\alpha- \eta}{2}}(r-s_1\vee s_2)^{\beta}}  \, \left\{ g(c(t-r), z-x) + g(c(t-r), z-y)\right\}, \label{gaussian:bound:diff:time:plus:holder:regularity:Hm}
\end{align}
\begin{align}
 | \Delta_{s_1, s_2} \Phi_{m+1}(\mu, s, r ,t , x ,z) |  \leq K  \frac{|s_1-s_2|^{\beta}}{(t-r)^{1-\frac{\eta}{2}}(r-s_1\vee s_2)^{\beta}}  \, g(c(t-r), z-x) \label{gaussian:bound:diff:time:Phim}
\end{align}
\noindent and
\begin{align}
 | & \Delta_{s_1, s_2} [\Phi_{m+1}(\mu, s, r ,t , x ,z) - \Phi_{m+1}(\mu, s, r ,t , y ,z)] | \notag \\
 & \quad  \leq K_{\alpha} (|y-x|^\alpha \wedge 1) \frac{|s_1-s_2|^{\beta}}{(t-r)^{1+\frac{\alpha- \eta}{2}}(r-s_1\vee s_2)^{\beta}}  \, \left\{ g(c(t-r), z-x) + g(c(t-r), z-y)\right\}, \, \alpha \in [0,\eta). \label{gaussian:bound:diff:time:plus:holder:regularity:Phim}
\end{align}
\end{lem}

Having the above technical result at hand, we are now ready to establish the estimates \eqref{regularity:time:estimate:v1:v2:v3:decoupling:mckean}.

\begin{prop}\label{prop:representation:and:gaussian:estimate:diff:time:deriv:space:heat:kernel}
Let $n\in \left\{0, 1, 2\right\}$. The following Gaussian estimates are satisfied: for any $\beta \in [0,1]$ if $n=0$ or any $\beta \in [0,  (1+\eta)/2)$ if $n=1$ or any $\beta \in [0, \eta/2)$ if $n=2$, there exist some positive constants $K_\beta$ and $c$ such that for any positive integer $m$, any $(\mu, t, x, z) \in \pp \times (0,T] \times (\mathbb{R}^d)^2$ and any $(s_1, s_2) \in [0,t)^2$
\begin{align}
 | & \partial^{n}_x p_{m}(\mu, s_1, t, x, z) - \partial^{n}_x p_{m}(\mu, s_2, t, x, z)  | \nonumber \\
 & \leq K_\beta \left\{ \frac{|s_1-s_2|^{\beta}}{(t-s_1)^{\frac{n}{2} + \beta }} \, g(c(t-s_1), z-x) + \frac{|s_1-s_2|^{\beta}}{(t- s_2)^{ \frac{n}{2} +\beta }} \, g(c(t-s_2), z-x) \right\}. \label{regularity:time:estimate:v1:v2:v3:decoupling:mckean:prop:statement} 
\end{align}
\end{prop}

\begin{proof}
The estimate \eqref{regularity:time:estimate:v1:v2:v3:decoupling:mckean:prop:statement} for $n=0$ corresponds exactly to \eqref{diff:time:heat:kernel}. We now deal with the two remaining cases $n=1$ and $n=2$. Let $\beta \in [0, (1+\eta/2)$ if $n=1$ or $\beta \in [0, \eta/2)$ if $n=2$. We first remark that if $|s_1-s_2| \geq t-s_1\vee s_2$, both estimates follow from \eqref{bound:derivative:heat:kernel}. From now on we assume that $|s_1-s_2| \leq t-s_1 \vee s_2$. We recall that the relation \eqref{other:representation:parametrix:series} is $n$-times continuously differentiable with respect to the variable $x$, for $n \in \left\{1, 2\right\}$ and
 $$
 \partial^{n}_x p_{m+1}(\mu, s, t, x, z) = \partial^{n}_x \widehat{p}_{m+1}(\mu, s, t, x, z) + \int_s^t \int_{\mathbb{R}^d} \partial^{n}_x \widehat{p}_{m+1}(\mu, s, t, x, y) \Phi_{m+1}(\mu, s, r, t, y, z) \, dy \, dr, 
 $$
 \noindent which also satisfies the relation \eqref{deriv:space:parametrix:series}. The previous relation directly gives
 \begin{align*}
 \Delta_{s_1, s_2} &  \partial^n_x p_{m+1}(\mu, s, t, x, z) \\
 & = \Delta_{s_1, s_2} \partial^n_x \widehat{p}_{m+1}(\mu, s, t, x, z)  \\
 & \quad + \int_{s_1\vee s_2}^t \int_{\mathbb{R}^d} \Delta_{s_1, s_2} \partial^n_x \widehat{p}_{m+1}(\mu, s, r, x, y) \Phi_{m+1}(\mu, s_1\vee s_2, r, t, y, z)  \, dy\, dr \\
 & \quad + \int_{s_1\vee s_2}^t \int_{\mathbb{R}^d} \partial^n_x \widehat{p}_{m+1}(\mu, s_1\wedge s_2, r, x, y) \,  \Delta_{s_1, s_2} \Phi_{m+1}(\mu, s, r, t, y, z) \, dy \, dr \\
 & \quad - \int_{s_1\wedge s_2}^{s_1\vee s_2} \int_{\mathbb{R}^d}  \partial^n_x \widehat{p}_{m+1}(\mu, s_1\wedge s_2, r, x, y) \,  \Phi_{m+1}(\mu, s_1\wedge s_2, r, t, y, z)  \, dy\, dr \\
 & =: {\rm I} + {\rm II} + {\rm III} + {\rm IV}.
 \end{align*}

We now investigate each term of the above decomposition. From \eqref{gaussian:bound:diff:time:hat:pm:same:time}, one directly gets
$$
|{\rm I}| \leq K \left\{ \frac{|s_1-s_2|^{\beta}}{(t-s_1)^{\frac{n}{2} + \beta }} \, g(c(t-s_1), z-x) + \frac{|s_1-s_2|^{\beta}}{(t- s_2)^{ \frac{n}{2} +\beta }} \, g(c(t-s_2), z-x) \right\}.
$$

In order to deal with ${\rm II}$, we separate the time integral into the two disjoint intervals $[s_1\vee s_2, (t+s_1\vee s_2)/2]$ and $((t+s_1\vee s_2)/2, t]$. If $r \in [s_1\vee s_2,  (t+s_1\vee s_2)/2]$, we balance the time singularity generated by the $n$th-derivatives of $\widehat{p}_{m+1}$ by writing 
\begin{align*}
\int_{\mathbb{R}^d} \Delta_{s_1, s_2} & \partial^n_x \widehat{p}_{m+1}(\mu, s, r, x, y) \Phi_{m+1}(\mu, s_1\vee s_2, r, t, y, z)  \, dy \\
& = \int_{\mathbb{R}^d} \Delta_{s_1, s_2} \partial^n_x \widehat{p}_{m+1}(\mu, s, r, x, y) [ \Phi_{m+1}(\mu, s_1\vee s_2, r, t, y, z) - \Phi_{m+1}(\mu, s_1\vee s_2, r, t, x, z) ] \, dy \\
& \quad + \Phi_{m+1}(\mu, s_1\vee s_2, r, t, x, z) \int_{\mathbb{R}^d} \Delta_{s_1, s_2} [\partial^n_x \widehat{p}^y_{m+1}(\mu, s, r, x, y) - \partial^n_x \widehat{p}^x_{m+1}(\mu, s, r, x, y)]  \, dy. 
\end{align*}

Now, from \eqref{gaussian:bound:diff:time:hat:pm:same:time}, \eqref{holder:reg:voltera:kernel} and the space-time inequality \eqref{space:time:inequality}, we obtain
\begin{align}
\Big| \int_{\mathbb{R}^d} \Delta_{s_1, s_2} & \partial^n_x \widehat{p}_{m+1}(\mu, s, r, x, y) [ \Phi_{m+1}(\mu, s_1\vee s_2, r, t, y, z) - \Phi_{m+1}(\mu, s_1\vee s_2, r, t, x, z) ] \, dy \Big| \notag \\
& \leq K_\alpha \frac{|s_1-s_2|^\beta}{(t-r)^{1+\frac{\alpha-\eta}{2}}} \, \left\{ \frac{1}{(r-s_1)^{\frac{n}{2}+\beta- \frac{\alpha}{2}}} g(c(t-s_1), z-x) + \frac{1}{(r- s_2)^{\frac{n}{2}+\beta- \frac{\alpha}{2}}} g(c(t-s_2), z-x) \right\} \label{first:bound:term:II:diff:time}
\end{align}
\noindent for any $\alpha \in [0,\eta)$. Moreover, from \eqref{gaussian:bound:diff:time:second:deriv:p:hat:mu:holder:reg}, \eqref{Gaussian:estimate:Phim} and again the space-time inequality \eqref{space:time:inequality}, if $r \in [s_1\vee s_2, (t+s_1\vee s_2)/2]$ we get
\begin{align*}
|  & \Phi_{m+1}(\mu, s_1\vee s_2, r, t, x, z)  \int_{\mathbb{R}^d} \Delta_{s_1, s_2} [\partial^n_x \widehat{p}^y_{m+1}(\mu, s, r, x, y) - \partial^n_x \widehat{p}^x_{m+1}(\mu, s, r, x, y)]  \, dy  | \\
& \quad  \leq K \frac{|s_1-s_2|^\beta}{(t-r)^{1-\frac{\eta}{2}}} \, \left\{ \frac{1}{(r-s_1)^{\frac{n}{2}+\beta- \frac{\eta}{2}}}  + \frac{1}{(r- s_2)^{\frac{n}{2}+\beta- \frac{\eta}{2}}} \right\}g(c(t-s_1\vee s_2), z-x).
\end{align*}

We now select $\alpha\in [0,\eta)$ in \eqref{first:bound:term:II:diff:time} such that $\frac{n}{2}+\beta - \frac{\alpha}{2} < 1$. Hence, the two previous estimates give
\begin{align*}
& \int_{s_1\vee s_2}^{\frac{t+s_1\vee s_2}{2}} \int_{\mathbb{R}^d} |\Delta_{s_1, s_2}  \partial^n_x \widehat{p}_{m+1}(\mu, s, r, x, y)|  |\Phi_{m+1}(\mu, s_1\vee s_2, r, t, y, z)|  \, dy \, dr\\
& \leq  K_\beta  \int_{s_1\vee s_2}^{\frac{t+s_1\vee s_2}{2}} \left\{ \frac{|s_1-s_2|^\beta}{(t-r)^{1+\frac{\alpha-\eta}{2}}(r-s_1)^{\frac{n}{2}+\beta- \frac{\alpha}{2}}} g(c(t-s_1), z-x) + \frac{|s_1-s_2|^\beta}{(t-r)^{1+\frac{\alpha-\eta}{2}}(r- s_2)^{\frac{n}{2}+\beta- \frac{\alpha}{2}}} g(c(t- s_2), z-x) \right\} \, dr\\
& \quad + K_\beta  \int_{s_1\vee s_2}^{\frac{t+s_1\vee s_2}{2}} \frac{|s_1-s_2|^\beta}{(t-r)^{1-\frac{\eta}{2}}} \, \left\{ \frac{1}{(r-s_1)^{\frac{n}{2}+\beta - \frac{\eta}{2}}}  + \frac{1}{(r- s_2)^{\frac{n}{2}+\beta- \frac{\eta}{2}}} \right\}\, dr \, g(c(t-s_1\vee s_2), z-x)  \\
& \leq K_\beta \left\{ \frac{|s_1-s_2|^\beta}{(t-s_1)^{\frac{n}{2}+\beta-\frac{\eta}{2}} } g(c(t-s_1), z-x) + \frac{|s_1-s_2|^\beta}{(t-s_2)^{\frac{n}{2}+\beta - \frac{\eta}{2}}} g(c(t- s_2), z-x) \right\}. 
\end{align*}

Otherwise, if $r \in ((t+s_1\vee s_2)/2, t]$, the kernel $ \Delta_{s_1, s_2} \partial^n_x \widehat{p}_{m+1}(\mu, s, r, x, y)$ does not generate any time singularity. Indeed, from \eqref{gaussian:bound:diff:time:hat:pm:same:time} and  \eqref{Gaussian:estimate:Phim}, we directly derive
\begin{align*}
& \int_{\frac{t+s_1\vee s_2}{2} }^{t} \int_{\mathbb{R}^d} | \Delta_{s_1, s_2}  \partial^n_x \widehat{p}_{m+1}(\mu, s, r, x, y)|  |\Phi_{m+1}(\mu, s_1\vee s_2, r, t, y, z)| \, dy \, dr\\
& \leq K \int_{\frac{t+s_1\vee s_2}{2}}^{t} \left\{ \frac{|s_1-s_2|^\beta }{(r-s_1\vee s_2)^{\frac{n}{2}+\beta}(t-r)^{1-\frac{\eta}{2}}} \, g(c(t-s_1\vee s_2), z-x) \right. \\
& \quad \left. +\frac{|s_1-s_2|^\beta }{(r-s_1\wedge s_2)^{\frac{n}{2}+\beta}(t-r)^{1-\frac{\eta}{2}}} \, g(c(t-s_1\wedge s_2), z-x)  \right\} \, dr \\
& \leq K \left\{ \frac{|s_1-s_2|^\beta}{(t-s_1\vee s_2)^{\frac{n}{2}+\beta - \frac{\eta}{2}}} \,  g(c(t-s_1\vee s_2), z-x) +  \frac{|s_1-s_2|^\beta}{(t-s_1\wedge s_2)^{\frac{n}{2}+\beta - \frac{\eta}{2}}} \,  g(c(t-s_1\wedge s_2), z-x) \right\}.
\end{align*}

Gathering the two previous estimates yields
$$
| {\rm II} | \leq  K_\beta \left\{ \frac{|s_1-s_2|^\beta}{(t-s_1\vee s_2)^{\frac{n}{2}+\beta - \frac{\eta}{2}}} \,  g(c(t-s_1\vee s_2), z-x) +  \frac{|s_1-s_2|^\beta}{(t-s_1\wedge s_2)^{\frac{n}{2}+\beta - \frac{\eta}{2}}} \,  g(c(t-s_1\wedge s_2), z-x) \right\}.
$$

We handle the third term ${\rm III}$ in a similar manner. Namely, if $r \in [s_1\vee s_2, (t+s_1\vee s_2)/2]$, we first write
\begin{align*}
\int_{\mathbb{R}^d} & \partial^n_x \widehat{p}_{m+1}(\mu, s_1\wedge s_2, r, x, y) \Delta_{s_1, s_2} \Phi_{m+1}(\mu, s, r, t, y, z)  \, dy \\
& = \int_{\mathbb{R}^d} \partial^n_x \widehat{p}_{m+1}(\mu, s_1\wedge s_2, r, x, y) \Delta_{s_1, s_2} [ \Phi_{m+1}(\mu, s, r, t, y, z) - \Phi_{m+1}(\mu, s, r, t, x, z) ] \, dy \\
& \quad + \Delta_{s_1, s_2}  \Phi_{m+1}(\mu, s, r, t, x, z) \int_{\mathbb{R}^d} [\partial^n_x \widehat{p}^y_{m+1}(\mu, s_1\wedge s_2, r, x, y) - \partial^n_x \widehat{p}^x_{m+1}(\mu, s_1 \wedge s_2, r, x, y)]  \, dy. 
\end{align*}

Now, from \eqref{gaussian:bound:diff:time:plus:holder:regularity:Phim} and the space-time inequality \eqref{space:time:inequality}, if $|s_1-s_2| \leq r-s_1\vee s_2$, we get
\begin{align*}
 \int_{\mathbb{R}^d}&  |\partial^n_x \widehat{p}_{m+1}(\mu, s_1\wedge s_2, r, x, y)| |\Delta_{s_1, s_2} [ \Phi_{m+1}(\mu, s, r, t, y, z) - \Phi_{m+1}(\mu, s, r, t, x, z) ]| \, dy \\
 & \leq K_{\alpha} \frac{|s_1-s_2|^\beta}{(t-r)^{1+\frac{\alpha-\eta}{2}}(r-s_1\vee s_2)^{\frac{n}{2}+\beta}} (r-s_1\wedge s_2)^{\frac{\alpha}{2}} \, \left\{ g(c(t-s_1\wedge s_2), z-x) + g(c(t-s_1\vee s_2), z-x) \right\} \\
 & \leq K_{\alpha} \frac{|s_1-s_2|^\beta}{(t-r)^{1+\frac{\alpha-\eta}{2}}(r-s_1\vee s_2)^{\frac{n}{2}+\beta-\frac{\alpha}{2}}} \, \left\{ g(c(t-s_1\wedge s_2), z-x) + g(c(t-s_1\vee s_2), z-x) \right\}
\end{align*}
\noindent where we used the inequality $r-s_1\wedge s_2 \leq 2 (r-s_1\vee s_2)$ for the last inequality. Otherwise, if $|s_1-s_2| \geq r-s_1\vee s_2$, we use \eqref{holder:reg:voltera:kernel} and the space-time inequality \eqref{space:time:inequality} so that
\begin{align*}
 \int_{\mathbb{R}^d}&  |\partial^n_x \widehat{p}_{m+1}(\mu, s_1\wedge s_2, r, x, y)| |\Delta_{s_1, s_2} [ \Phi_{m+1}(\mu, s, r, t, y, z) - \Phi_{m+1}(\mu, s, r, t, x, z) ]| \, dy \\
 & \leq K_{\alpha}  \frac{1}{(t-r)^{1+\frac{\alpha-\eta}{2}}(r-s_1\vee s_2)^{\frac{n}{2}-\frac{\alpha}{2}}} \, \left\{ g(c(t-s_1\wedge s_2), z-x) + g(c(t-s_1\vee s_2), z-x) \right\} \\
 & \leq K_{\alpha}  \frac{|s_1-s_2|^\beta}{(t-r)^{1+\frac{\alpha-\eta}{2}}(r-s_1\vee s_2)^{\frac{n}{2}+\beta-\frac{\alpha}{2}}} \, \left\{ g(c(t-s_1\wedge s_2), z-x) + g(c(t-s_1\vee s_2), z-x) \right\}.
\end{align*}

From \eqref{gaussian:bound:diff:time:Phim} and the inequality $|(\partial_x^{n} \widehat{p}^{y}_{m+1} - \partial_x^{n} \widehat{p}^{y'}_{m+1})(\mu, s, t, x, z)| \leq K (|y-y'|^\eta \wedge 1) (t-s)^{-\frac{n}{2}} \, g(c(t-s), z-x)$ (which follows from the mean value theorem and the uniform $\eta$-H\"older regularity of $a(t, ., m)$) we obtain
\begin{align*}
\Big| \Delta_{s_1, s_2}  & \Phi_{m+1}(\mu, s, r, t, x, z) \int_{\mathbb{R}^d} [\partial^n_x \widehat{p}^y_{m+1}(\mu, s_1\wedge s_2, r, x, y) - \partial^n_x \widehat{p}^x_{m+1}(\mu, s_1 \wedge s_2, r, x, y)]  \, dy \Big| \\
& \leq  K  \frac{|s_1-s_2|^\beta}{(t-r)^{1-\frac{\eta}{2}}(r-s_1\vee s_2)^{\frac{n}{2}+\beta-\frac{\eta}{2}}} \, g(c(t-s_1\vee s_2), z-x).
\end{align*}

Gathering the above estimates, we eventually get
\begin{align*}
\Big| \int_{\mathbb{R}^d} & \partial^n_x \widehat{p}_{m+1}(\mu, s_1\wedge s_2, r, x, y) \Delta_{s_1, s_2} \Phi_{m+1}(\mu, s, r, t, y, z)  \, dy \Big| \\
& \leq K_{\alpha}  \frac{|s_1-s_2|^\beta}{(t-r)^{1+\frac{\alpha-\eta}{2}}(r-s_1\vee s_2)^{\frac{n}{2}+\beta-\frac{\alpha}{2}}} \, \left\{ g(c(t-s_1\wedge s_2), z-x) + g(c(t-s_1\vee s_2), z-x) \right\} \\
& \quad + K  \frac{|s_1-s_2|^\beta}{(t-r)^{1-\frac{\eta}{2}}(r-s_1\vee s_2)^{\frac{n}{2}+\beta-\frac{\eta}{2}}} \, \left\{ g(c(t-s_1\wedge s_2), z-x) + g(c(t-s_1\vee s_2), z-x) \right\}
\end{align*}
\noindent if $r \in [s_1\vee s_2,  (t+s_1\vee s_2)/2]$. We now select $\alpha \in [0,\eta)$ such that $\frac{n}{2} + \beta - \frac{\alpha}{2} < 1$. This implies
\begin{align*}
\Big|  \int_{s_1\vee s_2}^{ \frac{t+s_1\vee s_2}{2}} \int_{\mathbb{R}^d} & \partial^n_x \widehat{p}_{m+1}(\mu, s_1\wedge s_2, r, x, y) \,  \Delta_{s_1, s_2} \Phi_{m+1}(\mu, s, r, t, y, z) \, dy \, dr \Big| \\
 & \leq K_{\beta}  \frac{|s_1-s_2|^\beta}{(t-s_1\vee s_2)^{\frac{n}{2}+\beta-\frac{\eta}{2}}}\, \left\{ g(c(t-s_1\wedge s_2), z-x) + g(c(t-s_1\vee s_2), z-x) \right\} \\
 & \leq K_{ \beta}   \left\{ \frac{|s_1-s_2|^\beta}{(t-s_1\wedge s_2)^{\frac{n}{2}+\beta-\frac{\eta}{2}}} g(c(t-s_1\wedge s_2), z-x) + \frac{|s_1-s_2|^\beta}{(t-s_1\vee s_2)^{\frac{n}{2}+\beta-\frac{\eta}{2}}} g(c(t-s_1\vee s_2), z-x) \right\}
\end{align*}
\noindent where for the last inequality we used the fact that $t-s_1\wedge s_2 \leq 2 (t-s_1\vee s_2)$. Otherwise, if $r  \in ( (t+s_1\vee s_2)/2, t]$, from \eqref{gaussian:bound:diff:time:Phim}, we obtain
\begin{align*}
\Big| \int_{\mathbb{R}^d} & \partial^n_x \widehat{p}_{m+1}(\mu, s_1\wedge s_2, r, x, y) \Delta_{s_1, s_2} \Phi_{m+1}(\mu, s, r, t, y, z)  \, dy \Big| \\
& \leq K \frac{|s_1-s_2|^\beta}{(t-r)^{1-\frac{\eta}{2}}(r-s_1\vee s_2)^{\frac{n}{2}+\beta}} \, g(c(t-s_1\wedge s_2), z-x) \\
& \leq  K  \frac{|s_1-s_2|^\beta}{(t-r)^{1-\frac{\eta}{2}}(t-s_1\vee s_2)^{\frac{n}{2}+\beta}} \, g(c(t-s_1\wedge s_2), z-x) \\
& \leq  K \frac{|s_1-s_2|^\beta}{(t-r)^{1-\frac{\eta}{2}}(t-s_1\wedge s_2)^{\frac{n}{2}+\beta}} \, g(c(t-s_1\wedge s_2), z-x) 
\end{align*}
\noindent where we used the inequality $t-s_1\wedge s_2 \leq 2 (t-s_1\vee s_2)$ recalling that $|s_1-s_2| \leq t-s_1\vee s_2$. This yields
\begin{align*}
\Big| \int_{\frac{t+s_1\vee s_2}{2}}^{t} & \int_{\mathbb{R}^d} \partial^n_x \widehat{p}_{m+1}(\mu, s_1\wedge s_2, r, x, y) \Delta_{s_1, s_2} \Phi_{m+1}(\mu, s, r, t, y, z)  \, dy\, dr \Big| \\
& \leq K  \frac{|s_1-s_2|^\beta}{(t-s_1\wedge s_2)^{\frac{n}{2}+\beta-\frac{\eta}{2}}} \, g(c(t-s_1\wedge s_2), z-x).
\end{align*}

Gathering the above estimates, we thus obtain
$$
 | {\rm III} |  \leq K_{ \beta}   \left\{ \frac{|s_1-s_2|^\beta}{(t-s_1\wedge s_2)^{\frac{n}{2}+\beta-\frac{\eta}{2}}} g(c(t-s_1\wedge s_2), z-x) + \frac{|s_1-s_2|^\beta}{(t-s_1\vee s_2)^{\frac{n}{2}+\beta-\frac{\eta}{2}}} g(c(t-s_1\vee s_2), z-x) \right\}.
$$

In order to handle the last term ${\rm IV}$, we use a decomposition similar to the one used previously, namely
\begin{align*}
 & \int_{s_1\wedge s_2}^{s_1\vee s_2 } \int_{\mathbb{R}^d}  \partial^n_x \widehat{p}_{m+1}(\mu, s_1\wedge s_2, r, x, y) \,  \Phi_{m+1}(\mu, s_1\wedge s_2, r, t, y, z)  \, dy\, dr \\
 \quad & = \int_{s_1\wedge s_2}^{  s_1 \vee s_2} \int_{\mathbb{R}^d}  \partial^n_x \widehat{p}_{m+1}(\mu, s_1\wedge s_2, r, x, y) \, [ \Phi_{m+1}(\mu, s_1\wedge s_2, r, t, y, z) - \Phi_{m+1}(\mu, s_1\wedge s_2, r, t, x, z) ]  \, dy\, dr \\
 & \quad +  \int_{s_1\wedge s_2}^{  s_1 \vee s_2} \Phi_{m+1}(\mu, s_1\wedge s_2, r, t, x, z)   \int_{\mathbb{R}^d}  [ \partial^n_x \widehat{p}_{m+1}^{y}(\mu, s_1\wedge s_2, r, x, y) - \partial^n_x \widehat{p}^{x}_{m+1}(\mu, s_1\wedge s_2, r, x, y) ] \, dy \, dr.
\end{align*}

For the first term, from \eqref{holder:reg:voltera:kernel} with $\beta' \in [0, \eta)$ if $n=1$ or $\beta' \in (0,\eta)$ if $n=2$, the space-time inequality \eqref{space:time:inequality} and using the fact that $t-s_1\wedge s_2 \leq 2 (t-s_1\vee s_2)$, we obtain
\begin{align*}
\Big| \int_{s_1\wedge s_2}^{ s_1\vee s_2} & \int_{\mathbb{R}^d}  \partial^n_x \widehat{p}_{m+1}(\mu, s_1\wedge s_2, r, x, y) \, [ \Phi_{m+1}(\mu, s_1\wedge s_2, r, t, y, z) - \Phi_{m+1}(\mu, s_1\wedge s_2, r, t, x, z) ]  \, dy\, dr  \Big| \\
& \leq K_{\beta'} \int_{s_1\wedge s_2}^{ s_1\vee s_2 } \frac{1}{(r-s_1\wedge s_2)^{\frac{n-\beta'}{2}}(t-r)^{1+\frac{\beta'-\eta}{2}}} \, dr \, g(c(t-s_1\wedge s_2), z-x) \\
& \leq K_{\beta'} \frac{|s_1-s_2|^{1+\frac{\beta' - n}{2}}}{(t-s_1\vee s_2)^{1+\frac{\beta'-\eta}{2}}} g(c(t-s_1\wedge s_2), z-x) \\
& \leq K_{\beta'} \frac{|s_1-s_2|^{1+\frac{\beta' - n}{2}}}{(t-s_1\wedge s_2)^{1+\frac{\beta'-\eta}{2}}} g(c(t-s_1\wedge s_2), z-x) \\
& \leq K_{\beta} \frac{|s_1-s_2|^{\beta}}{(t-s_1\wedge s_2)^{\frac{n}{2}+\beta-\frac{\eta}{2}}} g(c(t-s_1\wedge s_2), z-x)
\end{align*}
\noindent for any $\beta \in [0,(1+\eta)/2)$ if $n=1$ or any $\beta \in [0,\eta/2)$ if $n=2$. For the second term, similar arguments as those previously used yield
\begin{align*} 
 \Big| \int_{s_1\wedge s_2}^{s_1\vee s_2 } & \Phi_{m+1}(\mu, s_1\wedge s_2, r, t, x, z)   \int_{\mathbb{R}^d}  [ \partial^n_x \widehat{p}_{m+1}^{y}(\mu, s_1\wedge s_2, r, x, y) - \partial^n_x \widehat{p}^{x}_{m+1}(\mu, s_1\wedge s_2, r, x, y) ] \, dy \, dr \Big| \\
 & \leq K \int_{s_1\wedge s_2}^{s_1\vee s_2}  \frac{1}{(r-s_1 \wedge s_2)^{\frac{n-\eta}{2}}} \, dr \, \frac{1}{(t-s_1\vee s_2)^{1-\frac{\eta}{2}}} \, g(c(t-s_1\wedge s_2), z-x) \\
 & \leq K \frac{|s_1-s_2|^{1-\frac{(n-\eta)}{2}}}{(t-s_1\wedge s_2)^{1-\frac{\eta}{2}}} \, g(c(t-s_1\wedge s_2), z-x) \\
 & \leq K \frac{|s_1-s_2|^\beta}{(t-s_1\wedge s_2)^{\frac{n}{2}+\beta - \eta}}  \, g(c(t-s_1\wedge s_2), z-x).
\end{align*}
\noindent for any $\beta \in [0, (1+\eta)/2]$ if $n=1$ or any $\beta \in [0,\eta/2]$ if $n=2$. Gathering the two previous estimates, we obtain
\begin{align*}
|{\rm IV}| & = \Big| \int_{s_1\wedge s_2}^{ s_1\vee s_2 }  \int_{\mathbb{R}^d}  \partial^n_x \widehat{p}_{m+1}(\mu, s_1\wedge s_2, r, x, y) \,  \Phi_{m+1}(\mu, s_1\wedge s_2, r, t, y, z)  \, dy\, dr \Big| \\
& \leq K_\beta\frac{|s_1-s_2|^\beta}{(t-s_1 \wedge  s_2)^{\frac{n}{2}+\beta - \eta}}  \, g(c(t-s_1\wedge s_2), z-x)
\end{align*}
\noindent for any $\beta \in [0, (1+\eta)/2]$ if $n=1$ or any $\beta \in [0,\eta/2]$ if $n=2$. Putting the estimates on ${\rm I}$, ${\rm II}$, ${\rm III}$ and ${\rm IV}$ together eventually concludes the proof of \eqref{regularity:time:estimate:v1:v2:v3:decoupling:mckean:prop:statement}.

\end{proof}

\subsection{Second part of the induction step} \label{second:part:induction:step}

We here prove the second part of the induction step namely the estimates \eqref{regularity:measure:estimate:v1:v2:decoupling:mckean} and \eqref{regularity:time:estimate:v1:v2:decoupling:mckean} at step $m+1$.  

We start with two auxiliary lemmas whose proofs are postponed to Sections \ref{section:proof:lem:estimate:reg:mes:L:derivatives:coefficients:densities} and \ref{section:proof:lemme:technical:estimate:diff:time} of the appendix. 
 
 \begin{lem}\label{lem:estimate:reg:mes:L:derivatives:coefficients:densities}Let $n\in \left\{0, 1\right\}$. For any $\beta \in [0,1]$ if $n=0$ or any $\beta \in [0,\eta)$ if $n=1$, there exists a constant $K^{+}_\beta$ such that for any $t\in (0,T]$, for any $(\mu, \mu', s, x, z, v) \in \pp^2 \times [0,t) \times (\mathbb{R}^d)^3$, for any $r\in [s, t)$ and any $(i, j) \in \left\{1, \cdots, d\right\}^2$
\begin{align}
& | \partial^{n}_v [\partial_\mu [a_{i, j}(t, x, [X^{s,\xi, (m)}_t])]](v) -   \partial^{n}_v [\partial_\mu [a_{i, j}(t, x, [X^{s, \xi', (m)}_t])]](v)| \notag \\
& \quad \quad + | \partial^{n}_v [\partial_\mu [b_{i}(t, x, [X^{s,\xi, (m)}_t])]](v) -   \partial^{n}_v [\partial_\mu [b_{i}(t, x, [X^{s, \xi', (m)}_t])]](v)| \notag \\
& \quad \quad \quad \leq K^{+}_\beta  \left(\frac{W^{\beta}_2(\mu, \mu')}{(t-s)^{\frac{1+n+\beta-\eta}{2}}} + \int_{(\mathbb{R}^d)^2} (|y'-x'|^\eta \wedge 1) \, | \Delta_{\mu, \mu'} \partial^{n}_v[\partial_\mu p_{m}(\mu, s, t, x', y')](v)| \, dy' \, \mu(dx') \right), \label{dec:cross:deriv:diff:mu:a}
\end{align}

\begin{align}
  |& \Delta_{\mu, \mu'}  \partial^{n}_v [\partial_\mu \widehat{p}_{m+1}(\mu, s, r, t, x, z)](v) | \notag \\
  & \leq  K^{+}_\beta  \left( W^{\beta}_2(\mu, \mu')\left( \frac{1}{(t-s)^{ \frac{1+n+\beta-\eta}{2}}} \I_\seq{r=s} + \frac{1}{(r-s)^{ \frac{1+n+\beta-\eta}{2}}} \I_\seq{r>s} \right) \right. \label{diff:L:deriv:pm:mu:mup} \\
  & \quad \left. + \frac{1}{t-r}   \int_r^t  \int_{(\mathbb{R}^d)^2} (|y'-x'|^{\eta} \wedge1) |\Delta_{\mu, \mu'} \partial^{n}_v [\partial_\mu p_{m}(\mu, s, r', x', y')] (v)| \, dy' \, \mu'(dx') \, dr' \right)  \, g(c(t-r), z-x),  \nonumber
\end{align}

\begin{align}
| \Delta_{\mu, \mu'} & \partial^{n}_v[\partial_\mu [a_{i, j} (t, x, [X^{s, \xi, (m)}_t])  -  a_{i, j}(t, z, [X^{s, \xi, (m)}_t])]](v) | \notag \\
& \leq  K^{+}_\beta W_2^{\beta}(\mu, \mu') \left\{ \frac{(|z-x|^\eta\wedge 1)}{(t-s)^{\frac{1+n+\beta}{2}}} \wedge \frac{1}{(t-s)^{\frac{1+n+\beta-\eta}{2}}} \right\} \label{diff:mes:L:deriv:diff:diff:coeff:holder:reg} \\
& \quad + K^{+}_\beta \left\{ ( |z-x|^\eta \wedge 1) \int_{(\mathbb{R}^d)^2}  |\Delta_{\mu, \mu'} \partial^{n}_v [\partial_\mu p_{m}(\mu, s, t, x', y')](v)|  \, dy'  \mu'(dx') \right. \notag \\
& \quad \quad \left. \wedge \, \int_{(\mathbb{R}^d)^2} (|y'-x'|^\eta \wedge 1)  |\Delta_{\mu, \mu'} \partial^{n}_v [\partial_\mu p_{m}(\mu, s, t, x', y')](v)|  \, dy'  \mu'(dx') \right\}, \notag
\end{align}

\noindent and
 \begin{align}
 & | \Delta_{\mu, \mu'} \partial^{n}_v [\partial_\mu \mH_{m+1}(\mu, s, r, t, x, z)] (v)| \nonumber \\
 & \leq K^{+}_\beta\left( W_2^{\beta}(\mu, \mu') \left\{ \frac{1}{(t-r)(r-s)^{\frac{1+n+\beta-\eta}{2}}} \wedge \frac{1}{(t-r)^{1- \frac{\eta}{2}}(r-s)^{\frac{1+n+\beta}{2}}} \right\} \right. \nonumber \\
 & \left. \quad + \left\{ \frac{1}{(t-r)^{1-\frac{\eta}{2}}} \int_{(\mathbb{R}^d)^2}  |\Delta_{\mu, \mu'} \partial^{n}_v [\partial_\mu p_{m}(\mu, s, r, x', y')](v)|  \, dy'  \mu'(dx') \right. \right. \label{diff:mes:L:deriv:parametrix:kernel:pmp1} \\
& \quad \quad \left. \left. \wedge \, \frac{1}{t-r} \int_{(\mathbb{R}^d)^2} (|y'-x'|^\eta \wedge 1)  |\Delta_{\mu, \mu'} \partial^{n}_v [\partial_\mu p_{m}(\mu, s, r, x', y')](v)|  \, dy'  \mu'(dx') \right\} \right. \nonumber  \\
 & \left. \quad +  \frac{1}{(t-r)^{2-\frac{\eta}{2}}}   \int_r^t \int_{(\mathbb{R}^d)^2} (|y'-x'|^{\eta} \wedge1) |\Delta_{\mu, \mu'} \partial^{n}_v [\partial_\mu p_{m}(\mu, s, r', x', y')] (v)| \, dy' \, \mu'(dx') \, dr' \right) \nonumber \\
 & \quad \quad   g(c(t-r), z-x).\nonumber
 \end{align}
\end{lem}

 \begin{lem}\label{lemme:technical:estimate:diff:time}Let $n\in \left\{0, 1\right\}$. For any $\beta \in [0,(1+\eta)/2)$ if $n=0$ or any $\beta \in [0,\eta/2)$ if $n=1$, there exist positive constants $K^{+}_\beta$, $c$ such that for any $t\in (0,T]$, for any $(\mu, s_1, s_2, x, z, v) \in \pp \times [0,t)^2 \times (\mathbb{R}^d)^3$, for any $r\in [s_1\vee s_2, t)$ and any $(i, j) \in \left\{1, \cdots, d\right\}^2$
\begin{align}
& | \partial^{n}_v [\partial_\mu [a_{i, j}(t, x, [X^{s_1 ,\xi, (m)}_t])]](v) -   \partial^{n}_v [\partial_\mu [a_{i, j}(t, x, [X^{s_2, \xi, (m)}_t])]](v)| \notag \\
& \quad \quad + | \partial^{n}_v [\partial_\mu [b_{i}(t, x, [X^{s_1, \xi , (m)}_t])]](v) -   \partial^{n}_v [\partial_\mu [b_{i}(t, x, [X^{s_2, \xi, (m)}_t])]](v)| \notag \\
& \quad \quad \quad \leq K^{+}_\beta  \left(\frac{|s_1-s_2|^\beta}{(t-s_1\vee s_2)^{\frac{1+n-\eta}{2}+\beta}} + \int_{(\mathbb{R}^d)^2} (|y'-x'|^\eta \wedge 1) \, | \Delta_{s_1, s_2} \partial^{n}_v[\partial_\mu p_{m}(\mu, s, t, x', y')](v)| \, dy' \, \mu(dx') \right), \label{diff:time:cross:deriv:diff:and:deriv:coeff}
\end{align}

\begin{align}
  |& \Delta_{s_1, s_2}  \partial^{n}_v [\partial_\mu \widehat{p}_{m+1}(\mu, s, r, t, x, z)](v) | \notag \\
  & \leq K^{+}_\beta \left\{ \frac{|s_1-s_2|^{\beta}}{(t-s_1\vee s_2)^{\frac{1+n-\eta}{2}+\beta}} \right. \label{diff:time:L:deriv:p:hat} \\
 &    \Big. \left.   + \frac{1}{t-r} \int_{r}^{t} \int_{(\mathbb{R}^d)^2} (|y'-x'|^{\eta}\wedge 1)  |\Delta_{s_1, s_2}\partial^{n}_v[\partial_\mu p_{m}(\mu, s, r', x', y')](v) |   \, dy' \mu(dx') dr' \right\} g(c(t-r), z-x), \notag 
 \end{align}
 
 \begin{align}
  |& \Delta_{s_1, s_2}  \partial^{n}_v [\partial_\mu \widehat{p}_{m+1}(\mu, s, t, x, z)](v) | \notag \\
  & \leq K^{+}_\beta \left\{ \frac{|s_1-s_2|^{\beta}}{(t-s_1)^{\frac{1+n-\eta}{2}+\beta}} g(c(t-s_1), z-x) +  \frac{|s_1-s_2|^{\beta}}{(t-s_2)^{\frac{1+n-\eta}{2}+\beta}} g(c(t-s_2), z-x) \right. \label{diff:time:L:deriv:p:hat:same:time} \\
 &    \Big. \left.   + \frac{1}{t-s_1 \vee s_2} \int_{s_1 \vee s_2}^{t} \int_{(\mathbb{R}^d)^2} (|y'-x'|^{\eta}\wedge 1)  |\Delta_{s_1, s_2}\partial^{n}_v[\partial_\mu p_{m}(\mu, s, r', x', y')](v) |   \, dy' \mu(dx') dr'  g(c(t-s_1 \vee s_2), z-x)\right\} , \notag 
 \end{align}

\begin{align}
| \Delta_{s_1, s_2} & \partial^{n}_v[\partial_\mu [a_{i, j} (t, x, [X^{s, \xi, (m)}_t])  -  a_{i, j}(t, z, [X^{s, \xi, (m)}_t])]](v) | \notag \\
& \leq  K^{+}_\beta |s_1-s_2|^\beta \left\{ \frac{(|z-x|^\eta\wedge 1)}{(t-s_1 \vee s_2)^{\frac{1+n}{2}+\beta}} \wedge \frac{1}{(t-s_1 \vee s_2)^{\frac{1+n-\eta}{2} + \beta }} \right\} \label{diff:time:L:deriv:diff:diff:coeff:holder:reg} \\
& \quad + K^{+}_\beta \left\{ |z-x|^{\eta}\wedge 1 \int_{(\mathbb{R}^d)^2} |\Delta_{s_1, s_2}\partial^{n}_v[\partial_\mu p_{m}(\mu, s, t, x', y')](v) |   \, dy' \mu(dx')  \right. \notag \\
& \quad \quad \left. \wedge  \int_{(\mathbb{R}^d)^2} (|y'-x'|^\eta \wedge 1)  |\Delta_{s_1, s_2}\partial^{n}_v[\partial_\mu p_{m}(\mu, s, t, x', y')](v) |   \, dy' \mu(dx') \right\} \notag
\end{align}

\noindent and
 \begin{align}
 & | \Delta_{s_1, s_2} \partial^{n}_v [\partial_\mu \mH_{m+1}(\mu, s, r, t, x, z)] (v)| \nonumber \\
 & \leq K^{+}_\beta \left( |s_1-s_2|^\beta \left\{ \frac{1}{(t-r)(r-s_1\vee s_2)^{\frac{1+n-\eta}{2}+\beta}} \wedge \frac{1}{(t-r)^{1- \frac{\eta}{2}}(r-s_1 \vee s_2)^{\frac{1+n}{2} + \beta}} \right\} \right. \nonumber \\
 & \left. \quad + \left\{ \frac{1}{(t-r)^{1-\frac{\eta}{2}}} \int_{(\mathbb{R}^d)^2}  |\Delta_{s_1, s_2} \partial^{n}_v [\partial_\mu p_{m}(\mu, s, r, x', y')](v)|  \, dy'  \mu'(dx') \right. \right. \label{diff:time:L:deriv:parametrix:kernel:pmp1} \\
& \quad \quad \left. \left. \wedge \, \frac{1}{t-r} \int_{(\mathbb{R}^d)^2} (|y'-x'|^\eta \wedge 1)  |\Delta_{s_1, s_2} \partial^{n}_v [\partial_\mu p_{m}(\mu, s, r, x', y')](v)|  \, dy'  \mu'(dx') \right\} \right. \nonumber  \\
 & \left. \quad +  \frac{1}{(t-r)^{2-\frac{\eta}{2}}}   \int_r^t \int_{(\rr^d)^2} (|y'-x'|^{\eta} \wedge1) |\Delta_{s_1, s_2} \partial^{n}_v [\partial_\mu p_{m}(\mu, s, r', x', y')] (v)| \, dy' \, \mu'(dx') \, dr' \right) \nonumber \\
 & \quad \quad   g(c(t-r), z-x).\nonumber
 \end{align}
\end{lem}

With the above technical results at hand, we are now ready to prove the induction step for the estimates \eqref{regularity:measure:estimate:v1:v2:decoupling:mckean} and \eqref{regularity:time:estimate:v1:v2:decoupling:mckean}. We proceed as we did before in the proof of the first part of the induction step.

 \begin{prop}\label{prop:sensi:space:time:lions:deriv} Let $n\in \left\{0,1\right\}$. For any $\beta \in  [0,1]$ if $n=0$ or any $\beta  \in [0,\eta) $ if $n=1$, there exist positive constants $K^{+}_\beta$ and $c$ such that for any $(t, x, z, v)\in (0,T] \times (\mathbb{R}^d)^3$, any $(s, \mu, \mu') \in [0,t) \times \pp^2$ and any value of the constant $C_\beta^{+}$ appearing in the estimates \eqref{regularity:measure:estimate:v1:v2:decoupling:mckean}
\begin{align}
|\partial^{n}_v & [\partial_\mu p_{m+1}(\mu, s, t, x, z)](v) - \partial^{n}_v [\partial_\mu p_{m+1}(\mu', s, t, x, z)](v) | \nonumber \\
& \leq   K^{+}_\beta \frac{W_2^{\beta}(\mu, \mu')}{(t-s)^{\frac{1+n + \beta-\eta}{2}}} \left\{ 1 + \sum_{k=1}^{m} (C_\beta^{+})^{k} (t-s)^{k \frac{\eta}{2}}  \prod_{i=1}^{k} B\left(\frac{\eta}{2}, \frac{1-n+\eta-\beta}{2} +  (i-1)\frac{\eta}{2} \right)  \right\} \label{estimate:deriv:mes:holder:reg:mes:mes:dens:stepmp1:cor} \\
& \quad \times  g(c(t-s), z-x). \nonumber
\end{align}
\noindent Similarly, for any $ \beta \in  [0, (1+\eta)/2)$ if $n=0$ or any $\beta \in [0,\eta/2)$ if $n=1$, there exist positive constants $K^{+}_\beta$ and $c$ such that for any $(t, x, z, v)\in (0,T] \times (\mathbb{R}^d)^3$, any $(s_1, s_2) \in [0,t)^2$ and any value of the constant $C_\beta^{+}$ appearing in the estimates \eqref{regularity:time:estimate:v1:v2:decoupling:mckean} 
\begin{align}
|\partial^{n}_v & [\partial_\mu p_{m+1}(\mu, s_1, t, x, z)](v) - \partial^{n}_v [\partial_\mu p_{m+1}(\mu, s_2, t, x, z)](v) | \nonumber \\
& \leq   K^{+}_{\beta}  \left\{ 1 + \sum_{k=1}^{m} (C_\beta^{+})^{k} (t-s_1 \vee s_2)^{k \frac{\eta}{2}}  \prod_{i=1}^{k} B\left(\frac{\eta}{2}, \frac{1-n+\eta}{2}-\beta +  (i-1)\frac{\eta}{2} \right)  \right\} \label{estimate:deriv:mes:holder:reg:time:dens:stepmp1:cor} \\
& \quad \times  \left\{ \frac{|s_1-s_2|^{\beta}}{(t-s_1)^{\frac{1+n -\eta}{2}+\beta}} g(c(t-s_1), z-x) + \frac{|s_1-s_2|^{\beta}}{(t-s_2)^{\frac{1+n -\eta}{2}+\beta}} g(c(t-s_2), z-x) \right\}. \nonumber
\end{align}

\end{prop}

\bigskip

\noindent \emph{Conclusion of the second part of the induction step:}\\
Similarly to the conclusion of the first part of the induction step, we set the constant $C_\beta^{+}$ in the mth partial sums $\mathscr{C}^{n, \beta}_m(C^{+}_{\beta}, t-s)$ and $\mathscr{C}^{n, 2\beta}_m(C^{+}_{\beta}, t-s)$ which are used in the statement of the Gaussian estimates \eqref{regularity:measure:estimate:v1:v2:decoupling:mckean} and \eqref{regularity:time:estimate:v1:v2:decoupling:mckean} to be equal to the maximum between the two constants $K^{+}_\beta$ (that we still denote by $K^{+}_\beta$) appearing in the right-hand side of the Gaussian estimates \eqref{estimate:deriv:mes:holder:reg:mes:mes:dens:stepmp1:cor} and \eqref{estimate:deriv:mes:holder:reg:time:dens:stepmp1:cor}. In doing so, by the very definition of $\mathscr{C}^{n, \beta}_{m+1}(K^+_\beta, t-s)$ and $\mathscr{C}^{n, 2\beta}_{m+1}(K^{+}_\beta, t-s)$, we conclude that the estimates \eqref{estimate:deriv:mes:holder:reg:mes:mes:dens:stepmp1:cor} and \eqref{estimate:deriv:mes:holder:reg:time:dens:stepmp1:cor} directly yield the desired estimates \eqref{regularity:measure:estimate:v1:v2:decoupling:mckean} and \eqref{regularity:time:estimate:v1:v2:decoupling:mckean} at step $m+1$. We thus conclude that the Gaussian estimates \eqref{regularity:measure:estimate:v1:v2:decoupling:mckean} and \eqref{regularity:time:estimate:v1:v2:decoupling:mckean} hold for any positive integer $m$. This completes the proof of the second of part of Proposition \ref{proposition:reg:density:recursive:scheme:mckean}.

\bigskip

\begin{proof}

\noindent \emph{Step 1: proof of \eqref{estimate:deriv:mes:holder:reg:mes:mes:dens:stepmp1:cor}}\\
From the identities \eqref{representation:formula:cross:lions:deriv:dens} and \eqref{infinite:series:Phi:step:m}, we directly obtain
\begin{align*}
\partial^{n}_v [\partial_\mu p_{m+1}(\mu, s, t, x, z)] (v) & = (\partial^{n}_v [\partial_\mu \widehat{p}_{m+1}] + p_{m+1} \otimes \partial^{n}_v [\partial_\mu \mH_{m+1}]) \\
& \quad + (\partial^{n}_v [\partial_\mu \widehat{p}_{m+1}] + p_{m+1} \otimes \partial^{n}_v [\partial_\mu \mH_{m+1}]) \otimes \Phi_{m+1}(\mu, s, t, x, z)(v)
\end{align*}

\noindent so that
\begin{align}
\Delta_{\mu, \mu'} \partial^{n}_v & [\partial_\mu p_{m+1}(\mu, s, t, x, z)] (v) \nonumber \\
& = \Delta_{\mu, \mu'} \partial^{n}_v [\partial_\mu \widehat{p}_{m+1}(\mu, s, t, x, z)](v) + p_{m+1} \otimes (\Delta_{\mu, \mu'} \partial^{n}_v \partial_\mu \mH_{m+1})(\mu', s, t, x, z)(v) \nonumber \\
& \quad + (\Delta_{\mu, \mu'} p_{m+1}) \otimes \partial^{n}_v \partial_\mu \mH_{m+1}(\mu, s, t, x, z)(v) \label{relation:diff:mes:L:deriv:heat:kernel:pmp1} \\
& \quad + \Big(\Delta_{\mu, \mu'}\Big[\partial^{n}_v [\partial_\mu \widehat{p}_{m+1}] + p_{m+1} \otimes \partial^{n}_v [\partial_\mu \mH_{m+1}]\Big]\Big) \otimes \Phi_{m+1}(\mu, s, t, x, z)(v) \nonumber\\
& \quad + (\partial^{n}_v [\partial_\mu \widehat{p}_{m+1}] + p_{m+1} \otimes \partial^{n}_v [\partial_\mu \mH_{m+1}]) \otimes(\Delta_{\mu, \mu'} \Phi_{m+1})(\mu', s, t, x, z)(v). \nonumber
\end{align}

We now establish an appropriate upper-bound for each term of the above decomposition. 

From \eqref{diff:L:deriv:pm:mu:mup}, \eqref{regularity:measure:estimate:v1:v2:decoupling:mckean} at step $m$ and the space-time inequality \eqref{space:time:inequality}, we obtain
\begin{align}
|\Delta_{\mu, \mu'} \partial^{n}_v & [\partial_\mu \widehat{p}_{m+1}(\mu, s, t, x, z)] (v)| \notag  \\
&\leq K^{+}_\beta \left\{ \frac{1}{(t-s)^{\frac{1+n+\beta-\eta}{2}}} + \frac{1}{t-s} \int_s^t  \frac{\mathscr{C}^{n, \beta}_{m}(C_\beta^{+}, r-s)}{(r-s)^{\frac{1+n+\beta}{2}-\eta}} dr \right\} \, W^{\beta}_2(\mu, \mu') \,  g(c(t-s), z-x) \notag \\
&\leq K^{+}_\beta \left\{ \frac{1}{(t-s)^{\frac{1+n+\beta-\eta}{2}}} +\int_s^t  \frac{\mathscr{C}^{n, \beta}_{m}(C_\beta^{+}, r-s)}{(t-r)^{1-\frac{\eta}{2}}(r-s)^{\frac{1+n+\beta-\eta}{2}}} dr \right\} \, W^{\beta}_2(\mu, \mu') \,  g(c(t-s), z-x) \notag \\
& \leq \frac{K^{+}_\beta}{(t-s)^{\frac{1+n+\beta-\eta}{2}}} \left\{ 1+ \sum_{k=1}^{m} (C_\beta^{+})^{k} (t-s)^{k \frac{\eta}{2}} \prod_{i=1}^{k} B\left(\frac{\eta}{2}, \frac{1-n +\eta-\beta}{2} +  (i-1)\frac{\eta}{2} \right)  \right\} \label{induction:step:diff:mes:L:deriv:phat}\\
& \quad \times W^{\beta}_2(\mu, \mu') \, g(c(t-s), z-x) \notag
\end{align}
\noindent for any $\beta \in [0,1]$ if $n=0$ and any $\beta \in [0,\eta)$ of $n=1$.

In order to derive an estimate for the second term, we separate the time integral of the space-time convolution into the two disjoint intervals $[s, (t+s)/2]$ and $[(t+s)/2, t]$ in order to balance the time singularity induced by \eqref{diff:mes:L:deriv:parametrix:kernel:pmp1}. Indeed, from \eqref{diff:mes:L:deriv:parametrix:kernel:pmp1}, \eqref{regularity:measure:estimate:v1:v2:decoupling:mckean} at step $m$ and the space-time inequality \eqref{space:time:inequality}, one gets
\begin{align*}
& \int_{s}^{\frac{t+s}{2}}  \int_{\mathbb{R}^d} |p_{m+1}(\mu', s, r, x, y)|  |\Delta_{\mu, \mu'} \partial^{n}_v \partial_\mu \mH_{m+1}(\mu, s, r,  t, y, z)| \, dy \, dr\\
& \leq K^{+}_\beta  \int_s^{\frac{t+s}{2}} \left(\frac{1}{(t-r)(r-s)^{\frac{1+n+\beta - \eta}{2}}} + \frac{\mathscr{C}^{n,\beta}_m(C_\beta^{+},r-s)}{(t-r)(r-s)^{\frac{1+n+\beta}{2}-\eta}} \right. \\
& \quad \left. + \frac{1}{(t-r)^{2-\frac{\eta}{2}}} \int_r^t \frac{\mathscr{C}^{n, \beta}_m(C^{+}_\beta,  r'-s)}{(r'-s)^{\frac{1+n+ \beta}{2} -\eta}}\, dr'\right) \, dr \, W_2^\beta(\mu, \mu') \, g(c(t-s), z-x) \\
& \leq K^{+}_\beta \left( \frac{1}{(t-s)^{\frac{1+n+\beta-\eta}{2}}} + \int_s^{t} \frac{\mathscr{C}^{n,\beta}_m(C_\beta^{+}, r-s)}{(t-r)^{1-\frac{\eta}{2}}(r-s)^{\frac{1+n+\beta-\eta}{2}}} \,dr   \right) \, W_2^\beta(\mu, \mu')\,  g(c(t-s), z-x) \\
& \leq \frac{K^{+}_\beta}{(t-s)^{\frac{1+n+\beta-\eta}{2}}} \left\{ 1+ \sum_{k=1}^{m} (C_\beta^{+})^{k} (t-s)^{k \frac{\eta}{2}} \prod_{i=1}^{k} B\left(\frac{\eta}{2}, \frac{1-n +\eta-\beta}{2} +  (i-1)\frac{\eta}{2} \right)  \right\}\\
& \quad \times W^{\beta}_2(\mu, \mu') \, g(c(t-s), z-x)
\end{align*}
\noindent for any $\beta \in [0,1]$ if $n=0$ and any $\beta \in [0,\eta)$ of $n=1$.

From similar arguments, we obtain
\begin{align*}
& \int_{\frac{t+s}{2}}^{t}  \int_{\mathbb{R}^d} |p_{m+1}(\mu', s, r, x, y)|  |\Delta_{\mu, \mu'} \partial^{n}_v \partial_\mu \mH_{m+1})(\mu, s, r,  t, y, z)| \, dy \, dr\\
& \leq K^{+}_\beta  \int_{\frac{t+s}{2}}^{t} \left(\frac{1}{(t-r)^{1-\frac{\eta}{2}}(r-s)^{\frac{1+n+\beta }{2}}} + \frac{\mathscr{C}^{n,\beta}_m(C_\beta^{+}, r-s)}{(t-r)^{1-\frac{\eta}{2}}(r-s)^{\frac{1+n+\beta-\eta}{2}}} \right. \\
& \quad \left. + \frac{1}{(t-r)^{2-\frac{\eta}{2}}} \int_r^t \frac{\mathscr{C}^{n, \beta}_m(C_\beta^{+}, r'-s)}{(r'-s)^{\frac{1+n+ \beta}{2} -\eta}}\, dr'\right) \, dr \, W_2^\beta(\mu, \mu') \, g(c(t-s), z-x) \\
& \leq K^{+}_\beta \left( \frac{1}{(t-s)^{\frac{1+n+\beta-\eta}{2}}} + \int_s^{t} \frac{\mathscr{C}^{n,\beta}_m(C_\beta^{+}, r-s)}{(t-r)^{1-\frac{\eta}{2}}(r-s)^{\frac{1+n+\beta-\eta}{2}}} \,dr   \right) \, W_2^\beta(\mu, \mu')\,  g(c(t-s), z-x) \\
& \leq \frac{K^{+}_\beta}{(t-s)^{\frac{1+n+\beta-\eta}{2}}} \left\{ 1+ \sum_{k=1}^{m} (C_\beta^{+})^{k} (t-s)^{k \frac{\eta}{2}} \prod_{i=1}^{k} B\left(\frac{\eta}{2}, \frac{1-n +\eta-\beta}{2} +  (i-1)\frac{\eta}{2} \right)  \right\}\\
& \quad \times W^{\beta}_2(\mu, \mu') \, g(c(t-s), z-x)
\end{align*}
\noindent for any $\beta \in [0,1]$ if $n=0$ and any $\beta \in [0,\eta)$ of $n=1$.

Gathering the two previous estimates, we thus conclude
\begin{align}
|p_{m+1} & \otimes (\Delta_{\mu, \mu'} \partial^{n}_v \partial_\mu \mH_{m+1})(\mu', s, t, x, z)(v)| \notag  \\
& \leq \frac{K^{+}_\beta}{(t-s)^{\frac{1+n+\beta-\eta}{2}}} \left\{ 1+ \sum_{k=1}^{m} (C_\beta^{+})^{k} (t-s)^{k \frac{\eta}{2}} \prod_{i=1}^{k} B\left(\frac{\eta}{2}, \frac{1-n +\eta-\beta}{2} +  (i-1)\frac{\eta}{2} \right)  \right\} \label{convol:p:diff:mes:L:deriv:parametrix:kernel} \\
& \quad \times W^{\beta}_2(\mu, \mu') \, g(c(t-s), z-x) \notag 
\end{align}
\noindent for any $\beta \in [0,1]$ if $n=0$ and any $\beta \in [0,\eta)$ if $n=1$.

For the third term, we first remark that \eqref{cross:mes:deriv:parametrix:kernel:s:r:t} together with \eqref{first:second:estimate:induction:decoupling:mckean} (recall again that $\mathscr{C}^{n,0}_\infty = \lim_{m \uparrow \infty} \mathscr{C}^{n, 0}_m < \infty$) yields
\begin{equation}
\label{estimate:uniform:L:deriv:parametrix:kernel}
|\partial^n_v [\partial_\mu  \mH_{m+1}(\mu, s, r ,t ,x ,z)](v)|  \leq K \left( \frac{1}{(t-r)^{1-\frac{\eta}{2}}(r-s)^{\frac{1+n}{2}}} \wedge  \frac{1}{(t-r)(r-s)^{\frac{1+n-\eta}{2}}}  \right) \, g(c(t-r), z-x)
\end{equation}

\noindent so that using \eqref{regularity:measure:estimate:v1:v2:v3:decoupling:mckean} with $n=0$ and separating the time integral of the space-time convolution into the two disjoint intervals $[s, (t+s)/2]$ and $[(t+s)/2, t]$ as before to equilibrate the time singularity yield
 \begin{equation}
 \label{diff:mes:p:convol:L:deriv:parametrix:kernel}
|(\Delta_{\mu, \mu'} p_{m+1}) \otimes \partial^{n}_v \partial_\mu \mH_{m+1}(\mu, s, t, x, z)(v)| \leq K_\beta \frac{W^{\beta}_2(\mu, \mu')}{(t-s)^{\frac{1+n+\beta-\eta}{2}}} \, g(c(t-s), z-x)
 \end{equation}
 \noindent for any $\beta \in [0,1]$ if $n=0$ and any $\beta \in [0,\eta)$ if $n=1$.
 
From \eqref{induction:step:diff:mes:L:deriv:phat}, \eqref{convol:p:diff:mes:L:deriv:parametrix:kernel} and \eqref{diff:mes:p:convol:L:deriv:parametrix:kernel}, we also deduce
 \begin{align*}
\Big| \Delta_{\mu, \mu'}\Big[\partial^{n}_v&  [\partial_\mu \widehat{p}_{m+1}] + p_{m+1} \otimes \partial^{n}_v [\partial_\mu \mH_{m+1}]\Big](\mu, s, t, x, z) \Big|\\
& \leq \frac{K^{+}_\beta}{(t-s)^{\frac{1+n+\beta-\eta}{2}}} \left\{ 1+ \sum_{k=1}^{m} (C_\beta^{+})^{k} (t-s)^{k \frac{\eta}{2}} \prod_{i=1}^{k} B\left(\frac{\eta}{2}, \frac{1-n +\eta-\beta}{2} +  (i-1)\frac{\eta}{2} \right)  \right\}\\
& \quad \times W^{\beta}_2(\mu, \mu') \, g(c(t-s), z-x)
 \end{align*}
 
 \noindent which combined with \eqref{Gaussian:estimate:Phim} implies
  
\begin{align*}
\Big| \Big(\Delta_{\mu, \mu'}\Big[\partial^{n}_v&  [\partial_\mu \widehat{p}_{m+1}] + p_{m+1} \otimes \partial^{n}_v [\partial_\mu \mH_{m+1}]\Big]\Big) \otimes \Phi_{m+1}(\mu, s, t, x, z) \Big|\\
&\leq \frac{K^{+}_\beta}{(t-s)^{\frac{1+n+\beta-\eta}{2}}} \left\{ 1+ \sum_{k=1}^{m} (C_\beta^{+})^{k} (t-s)^{k \frac{\eta}{2}} \prod_{i=1}^{k} B\left(\frac{\eta}{2}, \frac{1-n +\eta-\beta}{2} +  (i-1)\frac{\eta}{2} \right)  \right\}\\
& \quad \times W^{\beta}_2(\mu, \mu') \, g(c(t-s), z-x).
\end{align*}
 
 For the last term, we first combine \eqref{cross:mes:deriv:p:hat:s:r:t} (with $r=s$) with \eqref{first:second:estimate:induction:decoupling:mckean} (recalling again that $\mathscr{C}^{n, 0}_m \leq K:= \mathscr{C}^{n,0}_\infty < \infty$) and use \eqref{gaussian:bound:diff:Phimp1:mu} so that
 $$
 |\partial^{n}_v \partial_\mu \widehat p_{m+1} \otimes (\Delta_{\mu, \mu'} \Phi_{m+1})(\mu', s, t, x , z)(v)| \leq K_\beta \frac{W_2^{\beta}(\mu, \mu')}{(t-s)^{\frac{1+n+\beta-\eta}{2}}} \, g(c(t-s), z-x)
 $$
 \noindent for any $\beta \in [0,1]$ if $n=0$ and any $\beta \in [0,\eta)$ if $n=1$. We then use \eqref{bound:derivative:heat:kernel} (with $n=0$), \eqref{estimate:uniform:L:deriv:parametrix:kernel} and split the time integral on $[s, t]$ into the two disjoint intervals $[s, (t+s)/2)$ and $[(t+s)/2, t]$ in order to balance the time singularity in the space-time convolution $p_{m+1} \otimes \partial^{n}_v [\partial_\mu \mH_{m+1}(\mu, s, t, x, z)$ as we did previously. After some standard computations that we omit, we obtain
 $$
 | p_{m+1} \otimes \partial^{n}_v [\partial_\mu \mH_{m+1}(\mu, s, t, x, z) | \leq \frac{K}{(t-s)^{\frac{1+n-\eta}{2}}} \, g(c(t-s), z-x)
 $$
 \noindent which in turn, using \eqref{gaussian:bound:diff:Phimp1:mu}, yields
 $$
 | (p_{m+1} \otimes \partial^{n}_v [\partial_\mu \mH_{m+1}]) \otimes (\Delta_{\mu, \mu'} \Phi_{m+1})(\mu', s, t, x, z)(v)| \leq K_\beta \frac{W_2^{\beta}(\mu, \mu')}{(t-s)^{\frac{1+n+\beta}{2}-\eta}} \, g(c(t-s), z-x)
 $$
  \noindent for any $\beta \in [0,1]$ if $n=0$ and any $\beta \in [0,\eta)$ if $n=1$.
 
We collect the previous estimates and eventually conclude
\begin{align*}
 |  \Delta_{\mu, \mu'} \partial^{n}_v &  [\partial_\mu p_{m+1}(\mu, s, t, x, z)] (v)| \\
 & \leq \frac{K^{+}_\beta}{(t-s)^{\frac{1+n+\beta-\eta}{2}}} \left\{ 1+ \sum_{k=1}^{m} (C_\beta^{+})^{k} (t-s)^{k \frac{\eta}{2}} \prod_{i=1}^{k} B\left(\frac{\eta}{2}, \frac{1-n +\eta-\beta}{2} +  (i-1)\frac{\eta}{2} \right)  \right\}\\
& \quad \times W^{\beta}_2(\mu, \mu') \, g(c(t-s), z-x)
\end{align*} 
  \noindent for any $\beta \in [0,1]$ if $n=0$ and any $\beta \in [0,\eta)$ if $n=1$. \\

\noindent \emph{Step 2: proof of \eqref{estimate:deriv:mes:holder:reg:time:dens:stepmp1:cor}.} \\

If $|s_1-s_2| \geq t - s_1 \vee s_2$, the estimate \eqref{estimate:deriv:mes:holder:reg:time:dens:stepmp1:cor} directly stems from \eqref{first:second:estimate:induction:decoupling:mckean} and the fact that $\mathscr{C}^{n, 0}_m \leq \mathscr{C}^{n, 0}_\infty < \infty$. For the rest of the proof, we thus assume that $|s_1 - s_2| \leq t-s_1\vee s_2$. Similarly to the proof of the previous step, we start from the representation in infinite series \eqref{representation:formula:cross:lions:deriv:dens} which in view of the identity \eqref{infinite:series:Phi:step:m} directly gives the following decomposition
\begin{align}
\Delta_{s_1, s_2} \partial^{n}_v[\partial_\mu p_{m+1}(\mu, s, t, x, z)](v) & = \Delta_{s_1, s_2} \partial^{n}_v[\partial_\mu \widehat{p}_{m+1}(\mu, s, t , x, z)](v) \nonumber \\
& +  \Delta_{s_1, s_2} (p_{m+1} \otimes \partial^{n}_v[\partial_\mu \mH_{m+1}])(\mu, s, t, x, z)(v) \nonumber\\
& + \Delta_{s_1, s_2} (\partial^{n}_v[\partial_\mu \widehat{p}_{m+1}]\otimes \Phi_{m+1})(\mu, s, t, x, z)(v) \label{decomp:delta:time:mu:deriv}\\
& + \Delta_{s_1, s_2} ((p_{m+1}\otimes \partial^{n}_v[\partial_\mu \mH_{m+1}])\otimes \Phi_{m+1})(\mu, s, t, x, z)(v).\nonumber
\end{align}

We now investigate each term of the above decomposition

From \eqref{diff:time:L:deriv:p:hat:same:time}, \eqref{regularity:time:estimate:v1:v2:decoupling:mckean} at step $m$ and the space-time inequality \eqref{space:time:inequality}, we get
\begin{align}
| \Delta_{s_1, s_2} & \partial^{n}_v[\partial_\mu \widehat{p}_{m+1}(\mu, s, t , x, z)](v) |  \nonumber \\
& \leq K^{+}_\beta \left\{ \frac{|s_1-s_2|^{\beta}}{(t-s_1)^{\frac{1+n-\eta}{2}+\beta}} g(c(t-s_1), z-x) +  \frac{|s_1-s_2|^{\beta}}{(t-s_2)^{\frac{1+n-\eta}{2}+\beta}} g(c(t-s_2), z-x) \right. \nonumber \\
 &    \Big. \left.   + \frac{|s_1-s_2|^{\beta}}{t-s_1 \vee s_2} \int_{s_1 \vee s_2}^{t} \frac{\mathscr{C}^{n, 2\beta}_{m}(C_\beta^{+}, r'-s_1 \vee s_2)}{(r'-s_1 \vee s_2)^{\frac{1+n}{2}-\eta+\beta}} dr'  g(c(t-s_1 \vee s_2), z-x)\right\} \nonumber \\
 &  \leq K^{+}_\beta \left\{ \frac{|s_1-s_2|^{\beta}}{(t-s_1)^{\frac{1+n-\eta}{2}+\beta}} g(c(t-s_1), z-x) +  \frac{|s_1-s_2|^{\beta}}{(t-s_2)^{\frac{1+n-\eta}{2}+\beta}} g(c(t-s_2), z-x) \right. \nonumber \\
 &    \Big. \left.   + |s_1-s_2|^{\beta} \int_{s_1 \vee s_2}^{t} \frac{\mathscr{C}^{n, 2\beta}_{m}(C_\beta^{+}, r'-s_1 \vee s_2)}{(t-r')^{1-\frac{\eta}{2}}(r'-s_1 \vee s_2)^{\frac{1+n-\eta}{2}+\beta}} dr'  g(c(t-s_1 \vee s_2), z-x)\right\} \nonumber\\
 & \leq  K^{+}_\beta  \left\{ 1 + \sum_{k=1}^{m} (C_\beta^{+})^{k} (t-s_1 \vee s_2)^{k \frac{\eta}{2}}  \prod_{i=1}^{k} B\left(\frac{\eta}{2}, \frac{1-n+\eta}{2}-\beta +  (i-1)\frac{\eta}{2} \right)  \right\} \label{diff:time:phat:mp1:recursive:bound}\\
& \quad \times  \left\{ \frac{|s_1-s_2|^{\beta}}{(t-s_1)^{\frac{1+n -\eta}{2}+\beta}} g(c(t-s_1), z-x) + \frac{|s_1-s_2|^{\beta}}{(t-s_2)^{\frac{1+n -\eta}{2}+\beta}} g(c(t-s_2), z-x) \right\}. \nonumber
\end{align}

We now turn our attention to the quantity $ \Delta_{s_1, s_2} (p_{m+1} \otimes \partial^{n}_v[\partial_\mu \mH_{m+1}])(\mu, s, t, x, z)(v) $ and use of the following decomposition 
$$
 \Delta_{s_1, s_2} (p_{m+1} \otimes \partial^{n}_v[\partial_\mu \mH_{m+1}])(\mu, s, t, x, z)(v) = {\rm I} + {\rm II} + {\rm III},
$$

\noindent with
\begin{align*}
{\rm I} & := \int_{s_1 \vee s_2}^t \int_{\mathbb{R}^d} \Delta_{s_1, s_2} p_{m+1}(\mu, s, r, x, y)  \partial^{n}_v[\partial_\mu \mH_{m+1}(\mu, s_1\vee s_2, r, t, y, z)](v) \, dy \, dr, 
\end{align*}

\begin{align*}
{\rm II} & := \int_{s_1 \vee s_2}^t \int_{\mathbb{R}^d}  p_{m+1}(\mu, s_1\wedge s_2, r, x, y) \Delta_{s_1, s_2}  \partial^{n}_v[\partial_\mu \mH_{m+1}(\mu, s, r, t, y, z)(v) \, dy \, dr,
\end{align*}
\noindent and
\begin{align*}
{\rm III} & := - \int_{s_1 \wedge s_2}^{s_1 \vee s_2} \int_{\mathbb{R}^d}   p_{m+1}(\mu, s_1 \wedge s_2, r, x, y)  \partial^{n}_v[\partial_\mu \mH_{m+1}(\mu, s_1\wedge s_2, r, t, y, z)](v) \, dy \, dr.
\end{align*}

We first use the estimates \eqref{regularity:time:estimate:v1:v2:v3:decoupling:mckean:prop:statement} and \eqref{estimate:uniform:L:deriv:parametrix:kernel}, then split the time integral into the two intervals $[s_1\vee s_2, (t+s_1\vee s_2)/2)$ and $ [(t+s_1\vee s_2)/2, t]$ to balance the time singularity and eventually use the fact that $t-s_1 \wedge s_2 \leq 2 (t-s_1 \vee s_2)$. After some standard computations that we omit, we obtain
$$
 |{\rm I}| \leq K \left\{\frac{|s_1-s_2|^{\beta}}{(t-s_1)^{\frac{1+n-\eta}{2}+\beta}} g(c(t-s_1), z-x) + \frac{|s_1-s_2|^{\beta}}{(t-s_2)^{\frac{1+n-\eta}{2}+\beta}} g(c(t-s_2) , z-x) \right\}.
$$

To deal with ${\rm II}$, we use \eqref{diff:time:L:deriv:parametrix:kernel:pmp1}. To be more specific, we again split the time integral into the two disjoint intervals $[s_1\vee s_2, (t+s_1\vee s_2)/2)$ and $ [(t+s_1\vee s_2)/2, t]$ as previously done. For the time integral on $[s_1\vee s_2, (t+s_1\vee s_2)/2)$, we bound the first term appearing on the right-hand side of \eqref{diff:time:L:deriv:parametrix:kernel:pmp1} which writes as a minimum by $K^{+}_\beta |s_1-s_2|^{\beta} (t-r)^{-1} (r-s_1\vee s_2)^{- \frac{(1+n-\eta)}{2}-\beta} g(c(t-r), z-x)$ while for the second term which also writes as a minimum, we bound it by $K^{+}_\beta (t-r)^{-1} \int_{(\mathbb{R}^d)^2} (|y'-x'|^\eta \wedge 1)  |\Delta_{s_1, s_2} \partial^{n}_v [\partial_\mu p_{m}(\mu, s, r, x', y')](v)|  \, dy'  \mu'(dx') \, g(c(t-r), z-x)$. For the time integral on $[(t+s_1\vee s_2)/2), t]$, we bound the first term appearing on the right-hand side of \eqref{diff:time:L:deriv:parametrix:kernel:pmp1} by $K^{+}_\beta |s_1-s_2|^{\beta} (t-r)^{-1+\frac{\eta}{2}} (r-s_1\vee s_2)^{- \frac{(1+n)}{2}-\beta} g(c(t-r), z-x)$ while for the second term, we bound it by $K^{+}_\beta (t-r)^{-1+\frac{\eta}{2}} \int_{(\mathbb{R}^d)^2} |\Delta_{s_1, s_2} \partial^{n}_v [\partial_\mu p_{m}(\mu, s, r, x', y')](v)|  \, dy'  \mu'(dx') \, g(c(t-r), z-x)$. For the third term, in both cases, we use Fubini's theorem. After some standard computations that we omit, we obtain
\begin{align*}
|{\rm II}| & \leq K^{+}_\beta \left\{\frac{1}{(t-s_1\vee s_2)^{\frac{1+n-\eta}{2}+\beta}} +  \int_{s_1\vee s_2}^{t} \frac{\mathscr{C}^{n, 2 \beta}_{m}(C^{+}_\beta , r-s_1\vee s_2)}{(t-r)^{1-\frac{\eta}{2}}(r-s_1\vee s_2)^{\frac{1+n}{2}+\beta-\frac{\eta}{2}}} \, dr \, \right\} \\
& \quad \times |s_1-s_2|^{\beta} g(c(t-s_1\wedge s_2), z-x)
\end{align*}

\noindent for any $\beta \in [0, (1+\eta)/2)$ if $n=0$ or any $\beta \in [0,\eta/2)$ if $n=1$.

We eventually deal with ${\rm III}$ by using \eqref{estimate:uniform:L:deriv:parametrix:kernel}. We get 
\begin{align*}
|{\rm III}| & \leq K \int_{s_1 \wedge s_2}^{s_1 \vee s_2} \frac{1}{(t-r)(r-s_1 \wedge s_2)^{\frac{1+n-\eta}{2}}} \, dr \, g(c(t-s_1 \wedge s_2), z-x)\\
& \leq K \frac{|s_1-s_2|^{\frac{1-n+\eta}{2}}}{t-s_1\vee s_2} \, g(c(t- s_1\wedge s_2), z-x) \\
& \leq K \frac{|s_1-s_2|^{\beta}}{(t-s_1\wedge s_2)^{\frac{1+n-\eta}{2}+\beta}} \, g(c(t-s_1\wedge s_2) , z-x)
\end{align*}

\noindent for any $\beta \in [0, (1+\eta)/2]$ if $n=0$ and any $\beta \in [0, \eta/2]$ if $n=1$ where we used the fact that $t-s_1 \wedge s_2 \leq 2 (t-s_1 \vee s_2)$ for the last inequality. Gathering the three previous estimates and using \eqref{regularity:time:estimate:v1:v2:decoupling:mckean} at step $m$ finally yield
\begin{align}
| \Delta_{s_1, s_2} (p_{m+1} & \otimes \partial^{n}_v[\partial_\mu \mH_{m+1}])(\mu, s, t, x, z)(v)|\nonumber \\
&  \leq K^{+}_\beta \left\{\frac{|s_1-s_2|^{\beta}}{(t-s_1)^{\frac{1+n-\eta}{2}+\beta}} g(c(t-s_1), z-x) + \frac{|s_1-s_2|^{\beta}}{(t-s_2)^{\frac{1+n-\eta}{2}+\beta}} g(c(t-s_2) , z-x) \right\} \nonumber \\
& + K^{+}_\beta |s_1-s_2|^{\beta} \int_{s_1\vee s_2}^{t} \frac{\mathscr{C}^{n, 2\beta}_{m}(C^{+}_\beta, r-s_1 \vee s_2)}{(t-r)^{1-\frac{\eta}{2}}(r-s_1\vee s_2)^{\frac{1+n}{2}+\beta-\frac{\eta}{2}}} \, dr \, g(c(t-s_1\wedge s_2), z-x) \nonumber \\
& \leq  K^{+}_\beta  \left\{ 1 + \sum_{k=1}^{m} (C^{+}_\beta)^{k} (t-s_1 \vee s_2)^{k \frac{\eta}{2}}  \prod_{i=1}^{k} B\left(\frac{\eta}{2}, \frac{1-n+\eta}{2}-\beta +  (i-1)\frac{\eta}{2} \right)  \right\} \label{bound:delta:time:convol:pmp1:and:L:deriv:H} \\
& \quad \times  \left\{ \frac{|s_1-s_2|^{\beta}}{(t-s_1)^{\frac{1+n -\eta}{2}+\beta}} g(c(t-s_1), z-x) + \frac{|s_1-s_2|^{\beta}}{(t-s_2)^{\frac{1+n -\eta}{2}+\beta}} g(c(t-s_2), z-x) \right\}. \nonumber
\end{align}

We now consider the quantity $\Delta_{s_1, s_2} [\partial^{n}_v [\partial_\mu \widehat{p}_{m+1}] \otimes \Phi_{m+1}](\mu, s, t, x, z)$ and use the following decomposition
$$
\Delta_{s_1, s_2} [\partial^{n}_v [\partial_\mu \widehat{p}_{m+1}] \otimes \Phi_{m+1}](\mu, s, t, x, z)(v) = {\rm I} + {\rm II} + {\rm III},
$$

\noindent with
\begin{align*}
{\rm I} & := \int_{s_1 \vee s_2}^t \int_{\mathbb{R}^d} \Delta_{s_1, s_2} \partial^{n}_v [\partial_\mu \widehat{p}_{m+1}(\mu, s, r, x, y)](v) \Phi_{m+1}(\mu, s_1\vee s_2, r, t, y, z) \, dy \, dr, 
\end{align*}

\begin{align*}
{\rm II} & := \int_{s_1 \vee s_2}^t \int_{\mathbb{R}^d}   \partial^{n}_v[\partial_\mu \widehat{p}_{m+1}(\mu, s_1 \wedge s_2, r, x, y)](v) \Delta_{s_1, s_2}\Phi_{m+1}(\mu, s, r, t, y, z) \, dy \, dr
\end{align*}
\noindent and
\begin{align*}
{\rm III} & := - \int_{s_1 \wedge s_2}^{s_1 \vee s_2} \int_{\mathbb{R}^d}    \partial^{n}_v[\partial_\mu  \widehat{p}_{m+1}(\mu, s_1 \wedge s_2, r, x, y)](v) \Phi_{m+1}(\mu, s_1\wedge s_2, r, t, y, z) \, dy \, dr.
\end{align*}

From \eqref{diff:time:L:deriv:p:hat:same:time}, \eqref{regularity:time:estimate:v1:v2:decoupling:mckean} at step $m$, \eqref{Gaussian:estimate:Phim}, the space-time inequality \eqref{space:time:inequality} and using the fact that $r\mapsto \mathscr{C}^{n, 2\beta}_{m}(C^{+}_\beta, r-s_1\vee s_2)$ is non-decreasing, we derive
\begin{align*}
|{\rm I}| & \leq   K^{+}_\beta \left\{\frac{|s_1-s_2|^{\beta}}{(t-s_1)^{\frac{1+n}{2}+\beta-\eta}} g(c(t-s_1), z-x) + \frac{|s_1-s_2|^{\beta}}{(t-s_2)^{\frac{1+n}{2}+\beta-\eta}} g(c(t-s_2) , z-x) \right\}\\
& \quad + K^{+}_\beta |s_1-s_2|^{\beta} \int_{s_1\vee s_2}^{t} \frac{\mathscr{C}^{n, 2\beta}_{m}( C^{+}_\beta, r-s_1\vee s_2)}{(t-r)^{1-\frac{\eta}{2}}(r-s_1\vee s_2)^{\frac{1+n}{2}+\beta-\eta}} \, dr \, g(c(t-s_1\vee s_2), z-x).
\end{align*}

From \eqref{cross:mes:deriv:p:hat:s:r:t} (with $r=s$) combined with \eqref{first:second:estimate:induction:decoupling:mckean} and the space-time inequality \eqref{space:time:inequality} and using \eqref{gaussian:bound:diff:time:Phim}, one gets
\begin{align*}
 |{\rm II}| & \leq K_\beta |s_1 -s_2|^{\beta} \int_{s_1 \vee s_2}^{t} \frac{1}{(t-r)^{1-\frac{\eta}{2}}(r-s_1 \vee s_2)^{\frac{1+n-\eta}{2}+\beta}} \, dr \, g(c(t-s_1\wedge s_2), z-x) \\
 & \leq K_\beta \frac{|s_1-s_2|^{\beta}}{(t-s_1\wedge s_2)^{\frac{1+n}{2}+\beta-\eta}}  g(c(t-s_1\wedge s_2), z-x) 
\end{align*}

\noindent where we used the fact that $t-s_1 \wedge s_2 \leq 2 (t-s_1 \vee s_2)$ for the last inequality.
\noindent Finally, using again \eqref{cross:mes:deriv:p:hat:s:r:t} (with $r=s$), \eqref{first:second:estimate:induction:decoupling:mckean}, \eqref{Gaussian:estimate:Phim} as well as the fact that $t-s_1 \wedge s_2 \leq 2 (t-s_1 \vee s_2)$, one obtains
\begin{align*}
|{\rm III}| & \leq K \int_{s_1 \wedge s_2}^{s_1 \vee s_2} \frac{1}{(t-r)^{1-\frac{\eta}{2}}(r-s_1\wedge s_2)^{\frac{1+n-\eta}{2}}} \, dr \, g(c(t-s_1\wedge s_2), z-x)\\
& \leq K \frac{|s_1-s_2|^{\frac{1-n+\eta}{2}}}{(t-s_1\vee s_2)^{1-\frac{\eta}{2}}}  \, g(c(t-s_1\wedge s_2), z-x) \\
& \leq K \frac{|s_1-s_2|^{\beta}}{(t-s_1\vee s_2)^{\frac{1+n}{2}+\beta-\eta}}  \, g(c(t-s_1\wedge s_2), z-x) \\
& \leq K \frac{|s_1-s_2|^{\beta}}{(t-s_1\wedge s_2)^{\frac{1+n}{2}+\beta-\eta}} \,  g(c(t-s_1\wedge s_2), z-x) 
\end{align*}

\noindent for any $\beta \in [0, (1+\eta)/2]$ if $n=0$ and any $\beta \in [0, \eta/2]$ if $n=1$.

Gathering the previous estimates eventually yields
\begin{align}
|\Delta_{s_1, s_2} [\partial^{n}_v \partial_\mu[\widehat{p}_{m+1}] & \otimes \Phi_{m+1}](\mu, s, t, x, z)(v) | \nonumber  \\
& \leq  K^{+}_\beta \left\{\frac{|s_1-s_2|^{\beta}}{(t-s_1)^{\frac{1+n}{2}+\beta-\eta}} g(c(t-s_1), z-x) + \frac{|s_1-s_2|^{\beta}}{(t-s_2)^{\frac{1+n}{2}+\beta-\eta}} g(c(t-s_2) , z-x) \right\} \nonumber\\ 
& \quad + K^{+}_\beta |s_1-s_2|^{\beta} \int_{s_1\vee s_2}^{t} \frac{\mathscr{C}^{n, 2\beta}_{m}( C^{+}_\beta, r-s_1\vee s_2)}{(t-r)^{1-\frac{\eta}{2}}(r-s_1\vee s_2)^{\frac{1+n}{2}+\beta-\eta}} \, dr \, g(c(t-s_1\vee s_2), z-x) \nonumber \\
& \leq  K^{+}_\beta  \left\{ 1 + \sum_{k=1}^{m} (C^{+}_\beta)^{k} (t-s_1 \vee s_2)^{k \frac{\eta}{2}}  \prod_{i=1}^{k} B\left(\frac{\eta}{2}, \frac{1-n+\eta}{2}-\beta +  (i-1)\frac{\eta}{2} \right)  \right\} \label{bound:diff:time:convol:L:deriv:phat:convol:Phi} \\
& \quad \times  \left\{ \frac{|s_1-s_2|^{\beta}}{(t-s_1)^{\frac{1+n -\eta}{2}+\beta}} g(c(t-s_1), z-x) + \frac{|s_1-s_2|^{\beta}}{(t-s_2)^{\frac{1+n -\eta}{2}+\beta}} g(c(t-s_2), z-x) \right\}. \nonumber
\end{align}

For the last term, namely $\Delta_{s_1, s_2} ((p_{m+1}\otimes \partial^{n}_v[\partial_\mu \mH_{m+1}])\otimes \Phi_{m+1})(\mu, s, t, x, z)(v)$, as previously done, we decompose it as the sum of the three following terms 
\begin{align*}
{\rm I} & := \int_{s_1 \vee s_2}^{t} \int_{\mathbb{R}^d}  \Delta_{s_1, s_2} (p_{m+1}\otimes \partial^{n}_v[\partial_\mu \mH_{m+1}])(\mu, s, r, x, y)\, \Phi_{m+1}(\mu, s_1\vee s_2 , r, t, y, z)\, dy \, dr,\\
{\rm II} & := \int_{s_1 \vee s_2}^{t}   \int_{\mathbb{R}^d} (p_{m+1}\otimes \partial^{n}_v[\partial_\mu \mH_{m+1}])(\mu, s_1 \wedge s_2, r, x, y) \, \Delta_{s_1, s_2} \Phi_{m+1}(\mu, s , r, t, y, z)\, dy \, dr,\\
{\rm III} & := -\int_{s_1 \wedge s_2}^{s_1 \vee s_2}  \int_{\mathbb{R}^d} (p_{m+1}\otimes \partial^{n}_v[\partial_\mu \mH_{m+1}])(\mu, s_1 \wedge s_2, r, x, y) \, \Phi_{m+1}(\mu, s_1 \wedge s_2 , r, t, y, z)\, dy \, dr.
\end{align*}

We deal with ${\rm I}$ by using \eqref{bound:delta:time:convol:pmp1:and:L:deriv:H} and \eqref{Gaussian:estimate:Phim}. In order to deal with ${\rm II}$, we first remark that the estimates \eqref{bound:derivative:heat:kernel} and \eqref{estimate:uniform:L:deriv:parametrix:kernel} yield 
\begin{equation}
\label{estimate:convol:pmp1:L:deriv:parametrix:kernel}
|(p_{m+1}\otimes \partial^{n}_v[\partial_\mu \mH_{m+1}])(\mu, s, t, x, y)| \leq \frac{K}{(t-s)^{\frac{1+n-\eta}{2}}} \, g(c(t-s), z-x)
\end{equation}
\noindent and then use \eqref{gaussian:bound:diff:time:Phim}. We finally deal with ${\rm III}$ by using \eqref{estimate:convol:pmp1:L:deriv:parametrix:kernel} and \eqref{Gaussian:estimate:Phim} as well as the fact that $t-s_1 \wedge s_2 \leq 2 (t-s_1 \vee s_2)$. Skipping some technical but standard computations, we obtain
\begin{align}
|\Delta_{s_1, s_2} ((p_{m+1}&\otimes \partial^{n}_v[\partial_\mu \mH_{m+1}])\otimes \Phi_{m+1})(\mu, s, t, x, z)(v)|\nonumber \\
&  \leq  K^{+}_\beta  \left\{ 1 + \sum_{k=1}^{m} (C^{+}_\beta)^{k} (t-s_1 \vee s_2)^{k \frac{\eta}{2}}  \prod_{i=1}^{k} B\left(\frac{\eta}{2}, \frac{1-n+\eta}{2}-\beta +  (i-1)\frac{\eta}{2} \right)  \right\} \label{bound:diff:time:convol:p:and:mu:deriv:H:convol:Phi} \\
& \quad \times  \left\{ \frac{|s_1-s_2|^{\beta}}{(t-s_1)^{\frac{1+n -\eta}{2}+\beta}} g(c(t-s_1), z-x) + \frac{|s_1-s_2|^{\beta}}{(t-s_2)^{\frac{1+n -\eta}{2}+\beta}} g(c(t-s_2), z-x) \right\}. \nonumber
\end{align}

Gathering the estimates \eqref{diff:time:phat:mp1:recursive:bound}, \eqref{bound:delta:time:convol:pmp1:and:L:deriv:H}, \eqref{bound:diff:time:convol:L:deriv:phat:convol:Phi} and \eqref{bound:diff:time:convol:p:and:mu:deriv:H:convol:Phi} eventually completes the proof of \eqref{estimate:deriv:mes:holder:reg:time:dens:stepmp1:cor}.

\end{proof}

\section{Proofs of the technical results}\label{appendix}
\subsection{Proof of Lemma \ref{lem:diff:and:control:deriv:coeff}.\\}\label{section:proof:lem:diff:and:control:deriv:coeff}

\noindent \emph{Step 1: smoothness of the maps $(s, \mu) \mapsto b_i(t, x, [X^{s, \xi, (m)}_t]), \, a_{i, j}(t, x, [X^{s, \xi, (m)}_t])$.}\\

We apply Proposition \ref{structural:class} to the density function $(s, x, \mu) \mapsto p_m(\mu, s, t, x, z) \in \mathcal{C}^{1, 2, 2}([0,t)\times \rr^d \times \pp) $ and to both maps $h(.) = b_i(t, x, .)$ and $h(.) = a_{i, j}(t, x, .)$ respectively. Note that from the estimates \eqref{first:second:estimate:induction:decoupling:mckean}, \eqref{time:derivative:induction:decoupling:mckean} and \eqref{bound:derivative:heat:kernel}, the map $[0,t) \times \mathbb{R}^d \times \pp \ni (s, x, \mu) \mapsto p_m(\mu, s, t, x, z)$ satisfies the conditions of Proposition \ref{structural:class}, in particular the condition \eqref{integrability:condition} is satisfied for any fixed positive integer $m$. We thus deduce that $(s, \mu) \mapsto  b_i (t, x, [X^{s,\xi, (m)}_t]), \, a_{i, j}(t, x, [X^{s,\xi, (m)}_t]) \in \mathcal{C}^{1,2}([0,t) \times \pp)$ with derivatives satisfying 
\begin{align}
& \partial^{n}_v [\partial_\mu [b_i(t, x, [X^{s,\xi, (m)}_t])]](v) \notag \\
& =  \int_{\mathbb{R}^d} \bigg(\frac{\delta b_i}{\delta m}(t, x, [X^{s,\xi, (m)}_t])(y) - \frac{\delta b_i}{\delta m}(t, x, [X^{s,\xi, (m)}_t])(v)\bigg)  \, \partial^{1+n}_x p_{m}(\mu, s, t, v, y) \, dy \notag \\
& \quad \quad + \int_{(\mathbb{R}^d)^2} \bigg(\frac{\delta b_i}{\delta m}(t, x, [X^{s,\xi, (m)}_t])(y) - \frac{\delta b_i}{\delta m}(t, x, [X^{s,\xi, (m)}_t])(x')\bigg) \, \partial^{n}_v[\partial_\mu p_{m}(\mu, s, t, x', y)](v) \, \mu(dx')\, dy, \label{dec:cross:deriv:b}
\end{align}
\begin{align}
\partial_s & [b_i(t, x, [X^{s,\xi, (m)}_t])] \notag \\
&  = \int_{(\mathbb{R}^d)^2} \bigg( \frac{\delta b_i }{\delta m}(t, x, [X^{s,\xi, (m)}_t])(y) - \frac{\delta b_i}{\delta m}(t, x, [X^{s,\xi, (m)}_t])(x') \bigg) \, \partial_s p_{m}(\mu, s, t, x', y) \, \mu(dx')\, dy, \label{time:deriv:bi}
\end{align}
\begin{align}
& \partial^{n}_v [\partial_\mu [a_{i, j}(t, x, [X^{s,\xi, (m)}_t])]](v) \notag \\
& =  \int_{\mathbb{R}^d} \bigg(\frac{\delta a_{i, j}}{\delta m}(t, x, [X^{s,\xi, (m)}_t])(y) - \frac{\delta a_{i, j}}{\delta m}(t, x, [X^{s,\xi, (m)}_t])(v)\bigg)  \, \partial^{1+n}_x p_{m}(\mu, s, t, v, y) \, dy \nonumber \\
&  \quad \quad + \int_{(\mathbb{R}^d)^2} \bigg(\frac{\delta a_{i, j}}{\delta m}(t, x, [X^{s,\xi, (m)}_t])(y) - \frac{\delta a_{i, j}}{\delta m}(t, x,  [X^{s,\xi, (m)}_t])(x') \bigg) \, \partial^{n}_v[\partial_\mu p_{m}(\mu, s, t, x', y)](v) \, \mu(dx')\, dy, \label{dec:cross:deriv:a}
\end{align}
\begin{align}
\partial_s & [a_{i, j}(t, x, [X^{s,\xi, (m)}_t])] \notag \\
&= \int_{(\mathbb{R}^d)^2} \bigg(\frac{\delta a_{i, j}}{\delta m}(t, x, [X^{s,\xi, (m)}_t])(y) - \frac{\delta a_{i, j}}{\delta m} (t, x, [X^{s,\xi, (m)}_t])(x') \bigg) \, \partial_s p_{m}(\mu, s, t, x', y) \, \mu(dx') \,  dy, \label{time:deriv:aij}
\end{align}
\noindent and
\begin{align}
\partial_s & [a_{i, j}(t, x, [X^{s,\xi, (m)}_t]) -   a_{i, j}(t, z, [X^{s,\xi, (m)}_t])] \notag \\
& = \int_{(\rr^d)^2} \left\{ \bigg(\frac{\delta a_{i, j}}{\delta m}(t, x, [X^{s,\xi, (m)}_t])(y) - \frac{\delta a_{i, j}}{\delta m} (t, x, [X^{s,\xi, (m)}_t])(x') \bigg) \right.  \label{time:deriv::holder:a} \\
& \quad \left. - \bigg(\frac{\delta a_{i, j}}{\delta m}(t, z, [X^{s,\xi, (m)}_t])(y) - \frac{\delta a_{i, j}}{\delta m} (t, z, [X^{s,\xi, (m)}_t])(x') \bigg) \right\} \, \partial_s p_{m}(\mu, s, t, x', y)  \, \mu(dx')\, dy. \notag
\end{align}

\noindent Now, from the preceding identities and employing similar arguments as those used in step 1 of the proof of Proposition \ref{structural:class}, we eventually deduce that the maps $[0,t) \times \pp \ni (s, \mu) \mapsto \partial_s [b_i (t, x, [X^{s,\xi, (m)}_t])]$, \, $\partial_s[a_{i, j}(t, x, [X^{s, \xi, (m)}_t])]$ and $ [0,t) \times \pp \times \mathbb{R}^d \ni (s, \mu, v)\mapsto \partial^{n}_v [\partial_\mu [b_i(t, x, [X^{s,\xi, (m)}_t])]](v)$, \, $ \partial^{n}_v [\partial_\mu [a_{i, j}(t, x, [X^{s,\xi, (m)}_t])]](v)$, $n \in \left\{0,1\right\}$, are continuous. \\

\noindent \emph{Step 2: proofs of \eqref{recursive:bound:deriv:a:or:b}, \eqref{recursive:bound:deriv:mes:reg:holder:a:or:b}, \eqref{recursive:bound:deriv:mes:holder:reg:a}, \eqref{recursive:bound:deriv:mes:double:reg:holder:a}, \eqref{recursive:bound:time:deriv:a:or:b} and \eqref{recursive:bound:time:deriv::holder:reg:a}.}\\

Combining the $\eta$-H\"older regularity of $[\delta b_{i}/\delta m](t, x, m)(.), [\delta a_{i, j} / \delta m](t, x, m)(.)$, uniformly with respect to the variables $t, x, m$, with the identities \eqref{dec:cross:deriv:b}, \eqref{dec:cross:deriv:a}, the estimates \eqref{bound:derivative:heat:kernel} as well as the space-time inequality \eqref{space:time:inequality}, we obtain
\begin{align*}
| \partial^n_v & [\partial_\mu [b_i(t, x, [X^{s,\xi, (m)}_t])]](v) |  + |\partial^n_v [\partial_\mu [a_{i, j}(t, x, [X^{s,\xi, (m)}_t])]](v)|  \\
&  \leq K \left\{ \frac{1}{(t-s)^{\frac{1+n - \eta}{2}}} + \int_{(\mathbb{R}^d)^2} (|y-x'|^{\eta}\wedge 1) | \partial^{n}_v[\partial_\mu p_{m}(\mu, s, t, x', y)](v)| \, \mu(dx')\, dy  \right\} 
\end{align*}
\noindent for $n \in \left\{0, 1\right\}$. This concludes the proof of \eqref{recursive:bound:deriv:a:or:b}.

We now prove the estimate \eqref{recursive:bound:deriv:mes:reg:holder:a:or:b} for the difference $\partial_v [\partial_\mu [a_{i, j}(t, x, [X^{s,\xi, (m)}_t])]](v) - \partial_v [\partial_\mu [a_{i, j}(t, x, [X^{s,\xi, (m)}_t])]](v')$ as completely analogous arguments apply for $\partial_v [\partial_\mu [b_{i}(t, x, [X^{s,\xi, (m)}_t])]](v) - \partial_v[\partial_\mu [b_{i}(t, x, [X^{s,\xi, (m)}_t])]](v')$. From \eqref{dec:cross:deriv:a}, it holds
\begin{align}
& \partial_v [\partial_\mu [a_{i, j}(t, x, [X^{s,\xi, (m)}_t])]](v) - \partial_v[\partial_\mu [a_{i, j}(t, x, [X^{s,\xi, (m)}_t])]](v') \notag \\
& =  \int_{\mathbb{R}^d} \frac{\delta a_{i, j}}{\delta m}(t, x, [X^{s,\xi, (m)}_t])(y)  \, [\partial^{2}_x p_{m}(\mu, s, t, v, y)-\partial^{2}_x p_{m}(\mu, s, t, v', y)] \, dy \notag \\
&  \quad \quad + \int_{(\mathbb{R}^d)^2} \bigg(\frac{\delta a_{i, j}}{\delta m}(t, x, [X^{s,\xi, (m)}_t])(y) - \frac{\delta a_{i, j}}{\delta m}(t, x,  [X^{s,\xi, (m)}_t])(x') \bigg) \label{decomp:diff:deriv:cross:mes:reg:holder:aij} \\ 
& \quad \times [\partial_v[\partial_\mu p_{m}(\mu, s, t, x', y)](v) - \partial_v[\partial_\mu p_{m}(\mu, s, t, x', y)](v')]  \, \mu(dx')\, dy. \notag
\end{align}
We estimate the first integral appearing on the right-hand side of the above equality by splitting the computations into the two following cases: $|v - v'|^2 \leq t-s$ and $|v - v'|^2 \geq t-s$. In the first case, we remark that
\begin{align*}
 \int_{\mathbb{R}^d}& \frac{\delta a_{i, j}}{\delta m}(t, x, [X^{s,\xi, (m)}_t])(y)  \, \left[\partial^{2}_x p_{m}(\mu, s, t, v, y)-\partial^{2}_x p_{m}(\mu, s, t, v', y)\right] \, dy\\
 & =  \int_{\mathbb{R}^d} \left[\frac{\delta a_{i, j}}{\delta m}(t, x, [X^{s,\xi, (m)}_t])(y) - \frac{\delta a_{i, j}}{\delta m}(t, x, [X^{s,\xi, (m)}_t])(v)\right]    \, \left[\partial^{2}_x p_{m}(\mu, s, t, v, y)-\partial^{2}_x p_{m}(\mu, s, t, v', y)\right] \, dy\end{align*}

\noindent so that, from \eqref{reg:heat:kernel:deriv} with $n=2$, $\beta \in [0,\eta)$ and using the uniform $\eta$-H\"older regularity of the map $v\mapsto [\delta a_{i, j}/\delta m](t, x, m)(v)$ as well as the space-time inequality \eqref{space:time:inequality} and the fact that $|y-v|^\eta \leq |y-v'|^\eta + |v-v'|^\eta \leq |y-v'|^\eta + (t-s)^{\eta/2}$, we get 
\begin{align}
\Big|\int_{\mathbb{R}^d} & \frac{\delta a_{i, j}}{\delta m}(t, x, [X^{s,\xi, (m)}_t])(y)  \, [\partial^{2}_x p_{m}(\mu, s, t, v, y)-\partial^{2}_x p_{m}(\mu, s, t, v', y)] \, dy\Big| \nonumber \\
& \leq K_\beta |v-v'|^{\beta} \int_{\mathbb{R}^d} \frac{|y-v|^{\eta} }{(t-s)^{1+\frac{\beta}{2}}} \left\{ g(c(t-s), y-v) + g(c(t-s), y-v')  \right\} \, dy \nonumber \\
& \leq K_\beta \frac{|v-v'|^{\beta}}{(t-s)^{1+ \frac{\beta-\eta}{2}}}. \label{ineq:sec:deriv:tilde:reg:holder:function}
\end{align}

\noindent Otherwise, if $|v-v'|^2 \geq t-s$, we rather write
\begin{align*}
 \int & \frac{\delta a_{i, j}}{\delta m}(t, x, [X^{s,\xi, (m)}_t])(y)  [\partial^2_x p_{m}(\mu, s, t, v, y) - \partial^2_x p_{m}(\mu, s, t , v', y)] \, dy \\
 & =  \int_{\mathbb{R}^d} \left[\frac{\delta a_{i, j}}{\delta m}(t, x, [X^{s,\xi, (m)}_t])(y) - \frac{\delta a}{\delta m}(t, x, [X^{s, \xi, (m)}_t])(v)\right] \partial^2_x p_{m}(\mu, s, t, v, y) \, dy\\
 & \quad - \int_{\mathbb{R}^d} \left[\frac{\delta a_{i, j}}{\delta m}(t, x, [X^{s,\xi, (m)}_t])(y) - \frac{\delta a}{\delta m}(t, x, [X^{s, \xi, (m)}_t])(v')\right] \partial^2_x p_{m}(\mu, s, t , v', y)] \, dy 
\end{align*}

\noindent and combine the uniform $\eta$-H\"older regularity of $ [\delta a_{i, j}/\delta m](t, x, m)(.)$ with \eqref{bound:derivative:heat:kernel} for $n=2$ as well as the space-time inequality \eqref{space:time:inequality}. This yields \eqref{ineq:sec:deriv:tilde:reg:holder:function}. We handle the second integral appearing on the right-hand side of \eqref{decomp:diff:deriv:cross:mes:reg:holder:aij} using again the boundedness and uniform $\eta$-H\"older regularity of $[\delta a_{i, j}/\delta m](t, x, m)(.)$. Gathering the previous estimates concludes the proof of \eqref{recursive:bound:deriv:mes:reg:holder:a:or:b}.

We now prove \eqref{recursive:bound:deriv:mes:holder:reg:a}. From \eqref{dec:cross:deriv:a} we directly obtain
\begin{align}
 \partial^{n}_v [\partial_\mu & [a_{i, j}(t, x, [X^{s, \xi, (m)}_{t}])  - a_{i, j}(t, z, [X^{s, \xi, (m)}_t])]](v)\nonumber \\
 &=  \int_{(\mathbb{R}^d)^2} \Big\{ \big(\frac{\delta a_{i, j}}{\delta m}  (t, x, [X^{s, \xi, (m)}_{t}])(y) - \frac{\delta a_{i, j}}{\delta m} (t, x, [X^{s, \xi, (m)}_t])(x')\big)\nonumber \\
  &\quad  - \big(\frac{\delta a_{i, j}}{\delta m}  (t, z, [X^{s, \xi, (m)}_{t}])(y)  - \frac{\delta a_{i, j}}{\delta m} (t, z, [X^{s, \xi, (m)}_t])(x')\big)\Big\} 
  \partial^{n}_v\Big[\partial_{\mu}p_{m}(\mu, s , t, x', y)\Big](v) \, \mu(dx') \, dy  \label{eq:decompJ}\\
 & + \int_{\mathbb{R}^d}  \Big\{\big(\frac{\delta a_{i, j}}{\delta m}  (t, x, [X^{s, \xi, (m)}_{t}])(y) - \frac{\delta a_{i, j}}{\delta m} (t, x, [X^{s, \xi, (m)}_t])(v)\big)\notag\\
 & \quad - \big(\frac{\delta a_{i, j}}{\delta m}  (t, z, [X^{s, \xi, (m)}_{t}])(y) - \frac{\delta a_{i, j}}{\delta m} (t, z, [X^{s, \xi, (m)}_t])(v)\big) \Big\}  \partial^{1+n}_x p_{m}(\mu, s, t , v, y) \, dy  \notag\\
 & =: {\rm J}^{n,1}_{i, j}(v) + {\rm J}^{n, 2}_{i, j}(v). \label{decomposition:Jij}
\end{align}

Now, observe that the uniform $\eta$-H\"older regularity of the map $ [\delta a_{i, j}/ \delta m](t, ., m)(.)$ gives
\begin{align}
\left|{\rm J}^{n,1}_{i, j}(v)\right|  &\leq  K \int_{(\mathbb{R}^d)^2} (|y-x'|^{\eta}\wedge |z-x|^\eta \wedge 1) |\partial^{n}_v[\partial_\mu p_{m}(\mu, s, t, x', y)](v)| \, \mu(dx')\, dy  \notag \\
& \leq K  |z-x|^{\beta \eta} \int_{(\mathbb{R}^d)^2} (|y-x'|^{(1-\beta)\eta} \wedge 1) |\partial^{n}_v[\partial_\mu p_{m}(\mu, s, t, x', y)](v)| \,  \mu(dx')\, dy \label{eq:estiJ:1}
\end{align}
\noindent and similarly combining also \eqref{bound:derivative:heat:kernel} with the space-time inequality \eqref{space:time:inequality}
\begin{align}
\left|{\rm J}^{n,2}_{i, j}(v)\right|  &\leq  K  |z-x|^{\beta \eta}  \int_{(\mathbb{R}^d)^2} (|y-v|^{(1-\beta)\eta}\wedge 1) | \partial^{1+n}_x p_{m}(\mu, s, t , v, y)| \, dy \leq K_\beta \frac{ |z-x|^{\beta \eta} }{(t-s)^{\frac{1+n-(1-\beta)\eta}{2}}} \label{eq:estiJ:2}
\end{align}
\noindent for any $\beta \in [0,1]$. Gathering \eqref{eq:estiJ:1} and \eqref{eq:estiJ:2} completes the proof of \eqref{recursive:bound:deriv:mes:holder:reg:a}. 

We now establish the estimate \eqref{recursive:bound:deriv:mes:double:reg:holder:a}. According to the notations previously introduced, we have to deal with the two terms ${\rm J}^{1,1}_{i, j}(v)-{\rm J}^{1,1}_{i, j}(v')$ and ${\rm J}^{1,2}_{i, j}(v)-{\rm J}^{1,2}_{i, j}(v')$. We first write
\begin{align*}
 {\rm J}^{1, 1}_{i, j}(v)-{\rm J}^{1, 1}_{i, j}(v')& = \int_{(\mathbb{R}^d)^2} \Big\{ \big(\frac{\delta a_{i, j}}{\delta m}  (t, x, [X^{s, \xi, (m)}_{t}])(y') - \frac{\delta a_{i, j}}{\delta m} (t, x, [X^{s, \xi, (m)}_t])(x')\big)\nonumber \\
  &\quad  - \big(\frac{\delta a_{i, j}}{\delta m}  (t, z, [X^{s, \xi, (m)}_{t}])(y') - \frac{\delta a_{i, j}}{\delta m} (t, z, [X^{s, \xi, (m)}_t])(x')\big)\Big\} \\
  & \quad \quad \times [\partial_v\Big[\partial_{\mu}p_{m}(\mu, s , t, x', y')\Big](v) - \partial_v\Big[\partial_{\mu}p_{m}(\mu, s , t, x', y')\Big](v')] \,  \mu(dx')\, dy'.
\end{align*}
 
 On the one hand, the uniform $\eta$-H\"older regularity of the map $[\delta a_{i, j}/\delta m](t, x, m)(.)$ yields
 $$
 | {\rm J}^{1, 1}_{i, j}(v)-{\rm J}^{1, 1}_{i, j}(v') | \leq K  \int_{(\mathbb{R}^d)^2} (|y'-x'|^\eta \wedge 1) \Big|\partial_v\Big[\partial_{\mu}p_{m}(\mu, s , t, x', y')\Big](v) - \partial_v\Big[\partial_{\mu}p_{m}(\mu, s , t, x', y')\Big](v')\Big| \, \mu(dx')\, dy'  \, .
 $$
 \noindent On the other hand, the uniform $\eta$-H\"older regularity of the map $ [\delta a_{i, j}/\delta m](t, ., m)(v)$ implies
 $$
 | {\rm J}^{1, 1}_{i, j}(v)-{\rm J}^{1, 1}_{i, j}(v') | \leq K (|z-x|^\eta\wedge 1)  \int_{(\mathbb{R}^d)^2} \Big|\partial_v\Big[\partial_{\mu}p_{m}(\mu, s , t, x', y')\Big](v) - \partial_v\Big[\partial_{\mu}p_{m}(\mu, s , t, x', y')\Big](v')\Big| \, \mu(dx')\, dy' .
 $$
 
In order to deal with the second term, we write
 \begin{align*}
 {\rm J}^{1,2}_{i, j}(v) - {\rm J}^{1, 2}_{i, j}(v') &= \int_{\mathbb{R}^d}  \Big\{\big(\frac{\delta a_{i, j}}{\delta m}  (t, x, [X^{s, \xi, (m)}_{t}])(y') - \frac{\delta a_{i, j}}{\delta m} (t, x, [X^{s, \xi, (m)}_t])(v)\big)\notag\\
 & \quad - \big(\frac{\delta a_{i, j}}{\delta m}  (t, z, [X^{s, \xi, (m)}_{t}])(y') - \frac{\delta a_{i, j}}{\delta m} (t, z, [X^{s, \xi, (m)}_t])(v)\big) \Big\} \\
 & \quad \quad \times (\partial^{2}_x p_{m}(\mu, s, t , v, y')-\partial^{2}_x p_{m}(\mu, s, t , v', y'))  \, dy'. 
 \end{align*} 

First, from \eqref{reg:heat:kernel:deriv} (with $n=2$) and the uniform $\eta$-H\"older regularity of $[\delta a_{i, j}/\delta m](t, ., m)(v)$, we obtain
$$
| {\rm J}^{1,2}_{i, j}(v) - {\rm J}^{1, 2}_{i, j}(v') | \leq K_\beta \frac{|z-x|^\eta\wedge 1}{(t-s)^{1+\frac{\beta}{2}}} |v-v'|^\beta
$$
\noindent for any $ \beta \in [0,\eta)$. Then, in the diagonal regime $|v-v'|\leq (r-s)^{1/2}$, from \eqref{reg:heat:kernel:deriv} (with $n=2$) and the uniform $\eta$-H\"older regularity of $[\delta a_{i, j}/\delta m](t, x, m)(.)$, for any $\beta \in [0,\eta)$, one has
  $$
 |{\rm J}^{1,2}_{i, j}(v) - {\rm J}^{1, 2}_{i, j}(v')| \leq K_\beta \frac{|v-v'|^\beta}{(t-s)^{1+\frac{\beta-\eta}{2}}}
 $$
 \noindent while in the off-diagonal regime $|v-v'|> (r-s)^{1/2}$, writing
  \begin{align*}
 {\rm J}^{1,2}_{i, j}(v) - {\rm J}^{1, 2}_{i, j}(v') &= \int_{\mathbb{R}^d} \big(\frac{\delta a_{i, j}}{\delta m}  (t, x, [X^{s, \xi, (m)}_{t}])(y') - \frac{\delta a_{i, j}}{\delta m} (t, x, [X^{s, \xi, (m)}_t])(v)\big) \partial^{2}_x p_{m}(\mu, s, t , v, y') \, dy'\notag\\
 & \quad - \int_{\mathbb{R}^d} \big(\frac{\delta a_{i, j}}{\delta m}  (t, z, [X^{s, \xi, (m)}_{t}])(y') - \frac{\delta a_{i, j}}{\delta m} (t, z, [X^{s, \xi, (m)}_t])(v')\big)\partial^{2}_x p_{m}(\mu, s, t , v', y')  \, dy'
 \end{align*} 
 \noindent and using \eqref{bound:derivative:heat:kernel} with $n=2$ as well as the space-time inequality \eqref{space:time:inequality} yield the same estimate as in the diagonal regime. Hence, for any $(v, v')\in (\mathbb{R}^d)^2$ and any $\beta \in [0,\eta)$, it holds
 $$
 |{\rm J}^{1,2}_{i, j}(v) - {\rm J}^{1, 2}_{i, j}(v')| \leq K_\beta \frac{|v-v'|^\beta}{(t-s)^{1+\frac{\beta-\eta}{2}}}.
 $$
 Gathering the above estimates, we thus conclude
 \begin{align*}
 |& {\rm J}^{1}_{i, j}(v)  - {\rm J}^{1}_{i, j}(v')| \\
 & \leq K_\beta (|z-x|^\eta \wedge 1) \left\{ \frac{|v-v'|^\beta}{(t-s)^{1+\frac{\beta}{2}}}  + \int_{(\mathbb{R}^d)^2} \Big|\partial_v\Big[\partial_{\mu}p_{m}(\mu, s , t, x', y')\Big](v) - \partial_v\Big[\partial_{\mu}p_{m}(\mu, s , t, x', y')\Big](v')\Big| \, \mu(dx') \, dy'\right\}
 \end{align*}
 \noindent and
  \begin{align*}
 |& {\rm J}^{1}_{i, j}(v)  - {\rm J}^{1}_{i, j}(v')| \\
 & \leq K_\beta \left\{ \frac{|v-v'|^\beta}{(t-s)^{1+\frac{\beta-\eta}{2}}}  + \int_{(\mathbb{R}^d)^2} (|y'-x'|^\eta \wedge 1) \Big|\partial_v\Big[\partial_{\mu}p_{m}(\mu, s , t, x', y')\Big](v) - \partial_v\Big[\partial_{\mu}p_{m}(\mu, s , t, x', y')\Big](v')\Big| \, \mu(dx') \, dy'\right\}.
 \end{align*}

Again, combining the two previous estimates yields \eqref{recursive:bound:deriv:mes:double:reg:holder:a}.

We now prove \eqref{recursive:bound:time:deriv:a:or:b}. Similarly, from the identities \eqref{time:deriv:bi}, \eqref{time:deriv:aij} and the uniform $\eta$-H\"older regularity of $[\delta b_{i}/\delta m](t, x, m)(.)$, $[\delta a_{i, j}/\delta m](t, x, m)(.)$ one gets
\begin{align*}
|\partial_s [b_i (t, x, [X^{s,\xi, (m)}_t])]| + |\partial_s[a_{i, j}(t, x, [X^{s,\xi, (m)}_t])]| & \leq K \int_{(\mathbb{R}^d)^2} (|y-x'|^\eta \wedge 1) |\partial_s p_m(\mu, s, t, x', y)| \,  \mu(dx')\, dy 
\end{align*}
\noindent for some positive constant $K:=K(T, \HR, \HE)$.

In order to derive \eqref{recursive:bound:time:deriv::holder:reg:a}, we use either the uniform $\eta$-H\"older regularity of the map $ [\delta a_{i, j}/\delta m](t, ., m)(y)$ or the uniform $\eta$-H\"older regularity of the map $ [\delta a_{i, j}/\delta m](t, x, m)(.)$ so that 
\begin{align*}
\Big| \frac{\delta a_{i, j}}{\delta m}(t, x, [X^{s,\xi, (m)}_t])(y) & - \frac{\delta a_{i, j}}{\delta m} (t, x, [X^{s,\xi, (m)}_t])(x') \\
&  - \bigg(\frac{\delta a_{i, j}}{\delta m}(t, z, [X^{s,\xi, (m)}_t])(y) - \frac{\delta a_{i, j}}{\delta m} (t, z, [X^{s,\xi, (m)}_t])(x') \bigg)\Big| \\
& \leq K (|y-x'|^\eta \wedge |x-z|^\eta \wedge 1)\\
& \leq K |x-z|^{\beta' \eta} ( |y-x'|^{(1-\beta')\eta} \wedge1)
\end{align*}
\noindent for any $\beta' \in [0,1]$.
Combining the above upper-estimate with \eqref{time:deriv::holder:a} directly yields \eqref{recursive:bound:time:deriv::holder:reg:a}.

\subsection{Proof of Corollary \ref{cor:deriv:time:and:mes:phat}.\\}\label{section:proof:cor:deriv:time:and:mes:phat}

\emph{Step 1: smoothness of the maps $(s, x, \mu) \mapsto \widehat{p}^{y}_{m+1}(\mu, s, r,  t , x, z)$, $\widehat{p}^{y}_{m+1}(\mu, s,  t , x, z)$.}

\smallskip

Combining Lemma \ref{lem:diff:and:control:deriv:coeff} with the estimates \eqref{time:degeneracy:estimate:deriv:mes:coeff}, \eqref{time:degeneracy:estimate:deriv:time:coeff} and the dominated convergence theorem, we deduce that the maps $(s, \mu) \mapsto \int_r^t a(r', y, [X^{s, \xi, (m)}_{r'}]) \, dr'$, \,  $\int_s^t a(r', y, [X^{s, \xi, (m)}_{r'}]) \, dr'$ belong to $\mathcal{C}^{1, 2}([0,r) \times \pp)$ and $\mathcal{C}^{1, 2}([0,t) \times \pp)$ respectively. Hence, we conclude that the maps $(s, x,  \mu) \mapsto \widehat{p}^{y}_{m+1}(\mu, s, r,  t , x, z)$, $\widehat{p}^{y}_{m+1}(\mu, s,  t , x, z) = \widehat{p}^{y}_{m+1}(\mu, s, s,  t , x, z)$ defined by \eqref{definition:general:phat} belong to $\mathcal{C}^{1, 0, 2}([0,r)\times \pp)$ and $\mathcal{C}^{1, 0, 2}([0,t)\times \pp)$ respectively with continuous derivatives with respect to the variables $s$, $x$, $\mu$ and $v$. We now prove the announced pointwise Gaussian estimates.\\

\noindent \emph{Step 2: proofs of the related pointwise Gaussian estimates on the derivatives.}

\smallskip

From \eqref{definition:general:phat}, \eqref{time:degeneracy:estimate:deriv:mes:coeff} and Jacobi's formula, for any $r\in [s, t)$, we derive
\begin{align}
& \partial^n_v[\partial_\mu \widehat{p}^{y}_{m+1}(\mu, s, r, t, x, z)](v)  \notag\\
& = -\frac12\left\{ \tr\left(\left(\int_r^t a(r', y, [X^{s, \xi, (m)}_{r'}])\, dr'\right)^{-1} \int_r^t \partial^n_v[\partial_\mu [a(r', y, [X^{s, \xi, (m)}_{r'}])]](v) \, dr' \right) \right. \notag\\
& \quad \left. - (z-x)^{t} \left(\int_r^t a(r', y, [X^{s, \xi, (m)}_{r'}])\, dr'\right)^{-1} \int_r^t \partial^n_v[\partial_\mu [a(r', y, [X^{s, \xi, (m)}_{r'}])]](v) \, dr'  \right. \label{representation:formula:deriv:mes:p:hat}\\ 
& \quad \left. \quad \times \left(\int_r^t a(r', y, [X^{s, \xi, (m)}_{r'}])\, dr'\right)^{-1} (z-x) \right\} \widehat{p}^{y}_{m+1}(\mu, s, r, t, x, z) \notag
\end{align}
\noindent where
\begin{align*}
& \tr\left(\left(\int_r^t a(r', y, [X^{s, \xi, (m)}_{r'}])\, dr'\right)^{-1}  \int_r^t  \partial^n_v[\partial_\mu [a(r', y, [X^{s, \xi, (m)}_{r'}])]](v) \, dr' \right)  \\
& = \sum_{k, \ell =1}^{d} \Big[\left(\int_r^t a(r', y, [X^{s, \xi, (m)}_{r'}])\, dr'\right)^{-1}\Big]_{k, \ell}  \int_r^t \partial^n_v[\partial_\mu [a_{\ell, k}(r', y, [X^{s, \xi, (m)}_{r'}])]](v) \, dr'
\end{align*}
\noindent and
\begin{align*}
& (z-x)^{t} \left(\int_r^t a(r', y, [X^{s, \xi, (m)}_{r'}])\, dr'\right)^{-1} \int_r^t  \partial^n_v[\partial_\mu [a(r', y, [X^{s, \xi, (m)}_{r'}])]](v) \, dr'  \left(\int_r^t a(r', y, [X^{s, \xi, (m)}_{r'}])\, dr'\right)^{-1} (z-x) \\
& = \sum_{k, \ell=1}^d \Big[(z-x)^{t} \left(\int_r^t a(r', y, [X^{s, \xi, (m)}_{r'}])\, dr'\right)^{-1}\Big]_{k} \int_r^t  \partial^n_v[\partial_\mu [a_{k, \ell}(r', y, [X^{s, \xi, (m)}_{r'}])]](v) \, dr' \\
& \quad \quad \times\Big[\left(\int_r^t a(r', y, [X^{s, \xi, (m)}_{r'}])\, dr'\right)^{-1} (z-x)\Big]_{\ell}.
\end{align*}
The preceding identity combined with \HE, the space-time inequality \eqref{space:time:inequality} and then \eqref{recursive:bound:deriv:a:or:b} yields
\begin{align*}
& | \partial^n_v[\partial_\mu \widehat{p}^{y}_{m+1}(\mu, s, r, t, x, z)](v)|\\
&  \leq \frac{K}{t-r} \int_r^t  \max_{k, \ell}|\partial^n_v[\partial_\mu [a_{k, \ell}(r', y, [X^{s, \xi, (m)}_{r'}])]](v)| \, dr' \, g(c(t-r), z-x)\\
& \leq \frac{K}{t-r} \left\{ \int_r^t \frac{1}{(r'-s)^{\frac{1+n - \eta}{2}}} \, dr' +  \int_r^t  \int_{(\mathbb{R}^d)^2} (|y'-x'|^{\eta}\wedge 1) | \partial^{n}_v[\partial_\mu p_{m}(\mu, s, r', x', y')](v)| \, \mu(dx') \, dy' \, dr' \right\}\\
 & \quad \quad \times g(c(t-r), z-x).
\end{align*}
This concludes the proof of \eqref{cross:mes:deriv:p:hat:s:r:t}. 
In order to get \eqref{time:deriv:p:hat:s:r:t}, we employ similar lines of reasonings, namely, from \eqref{definition:general:phat}, \eqref{time:degeneracy:estimate:deriv:time:coeff} and Jacobi's formula,
\begin{align}
& \partial_s \widehat{p}^{y}_{m+1}(\mu, s, r, t, x, z)  \notag\\
& = -\frac12\left\{ \tr\left(\left(\int_r^t a(r', y, [X^{s, \xi, (m)}_{r'}])\, dr'\right)^{-1} \int_r^t \partial_s [a(r', y, [X^{s, \xi, (m)}_{r'}])] \, dr' \right) \right. \notag\\
& \quad \left. - (z-x)^{t} \left(\int_r^t a(r', y, [X^{s, \xi, (m)}_{r'}])\, dr'\right)^{-1} \int_r^t \partial_s [a(r', y, [X^{s, \xi, (m)}_{r'}])] \, dr'  \right. \label{representation:formula:deriv:time:p:hat:s:r:t}\\ 
& \quad \left. \quad \times \left(\int_r^t a(r', y, [X^{s, \xi, (m)}_{r'}])\, dr'\right)^{-1} (z-x) \right\} \widehat{p}^{y}_{m+1}(\mu, s, r, t, x, z) \notag
\end{align}
\noindent with $ \partial_s [a(r', y, [X^{s, \xi, (m)}_{r'}]]) = ( \partial_s [a_{k, \ell}(r', y, [X^{s, \xi, (m)}_{r'}])])_{1\leq k, \ell \leq d}$, which in turn by \HE, the space-time inequality \eqref{space:time:inequality} and then \eqref{recursive:bound:time:deriv:a:or:b} yield the Gaussian estimate \eqref{time:deriv:p:hat:s:r:t}.
We now remark that the derivative of $s\mapsto \widehat{p}^{y}_{m+1}(\mu, s, t, x, z)= \widehat{p}^{y}_{m+1}(\mu, s, s, t, x, z)$ satisfies the relation
\begin{align}
& \partial_s \widehat{p}^{y}_{m+1}(\mu, s, t, x, z)  \notag\\
& = - \frac12 \sum_{k, \ell=1}^d a_{k,\ell }(s, y, \mu) H^{k, \ell}_{2}\left(\int_s^t a(r', y, [X^{s, \xi ,(m)}_{r'}]) \, dr', z-x\right) \,  \widehat{p}^{y}_{m+1}(\mu, s, t, x, z)\notag \\
& \quad -\frac12\left\{ \tr\left(\left(\int_s^t a(r', y, [X^{s, \xi, (m)}_{r'}])\, dr'\right)^{-1} \int_s^t \partial_s [a(r', y, [X^{s, \xi, (m)}_{r'}])] \, dr' \right) \right. \notag\\
& \quad \quad \left. - (z-x)^{t} \left(\int_s^t a(r', y, [X^{s, \xi, (m)}_{r'}])\, dr'\right)^{-1} \int_s^t \partial_s [a(r', y, [X^{s, \xi, (m)}_{r'}])] \, dr'  \right. \label{representation:formula:deriv:time:p:hat:s:t}\\ 
& \quad \quad \quad \left. \quad \times \left(\int_s^t a(r', y, [X^{s, \xi, (m)}_{r'}])\, dr'\right)^{-1} (z-x) \right\} \widehat{p}^{y}_{m+1}(\mu, s, t, x, z) \notag
\end{align}
\noindent which in turn, again by \HE, the space-time inequality \eqref{space:time:inequality} and then \eqref{recursive:bound:time:deriv:a:or:b} yield the Gaussian estimate \eqref{time:deriv:p:hat:s:t}. We now prove \eqref{cross:mes:deriv:holder:p:hat:s:t}. If $|x_1-x_2| \geq (t-s)^{1/2}$, then the announced estimate easily follows from \eqref{cross:mes:deriv:p:hat:s:r:t} (with $r=s$). Assuming now that $|x_1-x_2| < (t-s)^{1/2}$, from the mean-value theorem and the space-time inequality \eqref{space:time:inequality}
\begin{align*}
| \widehat{p}^{y}_{m+1}(\mu, s, t, x_1, z) & - \widehat{p}^{y}_{m+1}(\mu, s, t, x_2, z) | \\
& \leq K \frac{|x_1-x_2|}{(t-s)^{\frac12}}  \left\{ g(c(t-s), z-x_1) + g(c(t-s), z-x_2) \right\}\\
& \leq K \frac{|x_1-x_2|^{\beta}}{(t-s)^{\frac{\beta}{2}}}  \left\{ g(c(t-s), z-x_1) + g(c(t-s), z-x_2) \right\}
\end{align*}
\noindent which combined with the identity \eqref{representation:formula:deriv:mes:p:hat} for $r=s$, \eqref{recursive:bound:deriv:a:or:b} yields \eqref{cross:mes:deriv:holder:p:hat:s:t}.
 
 In order to prove \eqref{time:deriv:holder:reg:p:hat:s:t}, one may assume without loss of generality that $|y_1-y_2| \leq 1$. Starting from the identity \eqref{representation:formula:deriv:time:p:hat:s:t}, one uses \eqref{recursive:bound:time:deriv:a:or:b}, \eqref{recursive:bound:time:deriv::holder:reg:a}, the uniform $\eta$-H\"older regularity of $ a(t, ., m)$, the inequality $| p^{y_1}_{m+1}(\mu, s, t, x, z) - p^{y_2}_{m+1}(\mu, s, t, x, z)| \leq K |y_1- y_2|^\eta g(c(t-s), z-x)$, which stems from the mean-value theorem and again the $\eta$-H\"older regularity of $ a(t, ., m)$, and finally the space-time inequality \eqref{space:time:inequality}. We omit the remaining technical details.
 
We finally prove \eqref{cross:mes:deriv:reg:holder:terminal point:p:hat:s:r:t}. We start again from the identity \eqref{representation:formula:deriv:mes:p:hat} and use \HE, the space-time inequality \eqref{space:time:inequality} and then \eqref{recursive:bound:deriv:mes:reg:holder:a:or:b}. We obtain
\begin{align*}
&  | \partial_v [\partial_\mu \widehat{p}^y_{m+1}(\mu, s, r, t, x, z)](v) - \partial_v [\partial_\mu \widehat{p}^y_{m+1}(\mu, s, r, t, x, z)](v') | \nonumber\\
& \leq  \frac{K}{t-r}  \int_r^t \max_{i, j} | \partial_v[\partial_\mu [a_{i, j}(r', y, [X^{s, \xi, (m)}_{r'}])]](v) - \partial_v[\partial_\mu [a_{i, j}(r', y, [X^{s, \xi, (m)}_{r'}])]](v')| \, dr'   g(c(t-r), z-x)\nonumber \\
& \leq \frac{K_\beta}{t - r}  \left\{ \int_{r}^{t} \Big[ \frac{|v-v'|^{\beta}}{(r'-s)^{1+ \frac{(\beta-\eta)}{2}}}\right. \notag\\
&  \quad  \quad \left.+ \int_{(\mathbb{R}^d)^2} (|y'-x'|^{\eta}\wedge 1) |\partial_v [\partial_\mu p_{m}(\mu, s, r', x', y')](v) - \partial_v [\partial_\mu p_{m}(\mu, s, r', x', y')](v') | \, dy' \, \mu(dx') \Big] dr' \right\}  \\
& \quad \times g(c(t-r), z- x).
\end{align*}

\subsection{Proof of Corollary \ref{cor:deriv:time:and:mes:parametrix:kernel}.\\}\label{section:proof:cor:deriv:time:and:mes:parametrix:kernel}

\emph{Step 1: smoothness of the map $(s, \mu) \mapsto \mH_{m+1}(\mu, s, r,  t , x, z)$.}\\

We apply again Lemma \ref{lem:diff:and:control:deriv:coeff} and make use of the estimates \eqref{time:degeneracy:estimate:deriv:mes:coeff}, \eqref{time:degeneracy:estimate:deriv:time:coeff} together with the dominated convergence theorem to deduce that each term appearing in the expression of $\mH_{m+1}(\mu, s, r, t, x, z)$ given by \eqref{definition:parametrix:kernel:iterate:m} is $\mathcal{C}^{1, 2}([0,r)\times \pp)$ with continuous derivatives with respect to the variables $s$, $x$, $\mu$ and $v$. We thus conclude that the map $(s, \mu) \mapsto \mH_{m+1}(\mu, s, r,  t , x, z) \in \mathcal{C}^{1, 2}([0,r) \times \pp)$ and admits continuous derivatives with respect to the variables $s$, $x$, $\mu$ and $v$.\\

\noindent \emph{Step 2: proof of the pointwise Gaussian estimate \eqref{cross:mes:deriv:parametrix:kernel:s:r:t:with:beta}.}\\

We use the following decomposition
\begin{equation}
\partial^n_v [\partial_\mu \mH_{m+1}(\mu, s, r ,t ,x ,z)](v)  = :{\rm I}^{n}(v)+ {\rm II}^{n}(v)+{\rm III }^{n}(v), \, \quad n \in \left\{0,1\right\}, \label{deriv:mu:H:mp1}
\end{equation}
\noindent with
\begin{align*}
{\rm I }^{n} (v) &: =  \left\{ - \sum_{i=1}^d H^{i}_1\left(\int_r^{t} a(r', z, [X^{s, \xi, (m)}_{r'}]) dr', z-x\right)  \partial^{n}_v [\partial_\mu[b_i(r, x, [X^{s, \xi, (m)}_r])]](v) \right\} \, \widehat{p}_{m+1}(\mu, s, r, t, x, z)  \\
& + \Bigg\{ - \sum_{i=1}^d b_i(r, x, [X^{s, \xi, (m)}_r]) \partial^{n}_v \left[\partial_\mu  H^{i}_1\left(\int_r^{t} a(r',z, [X^{s, \xi, (m)}_{r'}]) dr', z-x\right)\right](v)   \Bigg\} \, \widehat{p}_{m+1}(\mu, s, r, t, x, z) \\
& =: {\rm I}^{n}_1(v)+{\rm I}^{n}_2(v),  \\
{\rm II}^{n}(v) & : =  \left\{ \frac12 \sum_{i, j=1}^d  \partial^{n}_v [\partial_\mu [a_{i, j}(r, x, [X^{s, \xi, (m)}_{r}]) - a_{i, j}(r, z, [X^{s, \xi, (m)}_r])]](v) \right.  \\
&\quad \left. \times H^{i, j}_2\left(\int_r^{t} a(r', z , [X^{s, \xi, (m)}_{r'}]) dr', z-x\right)  \right\}   \widehat{p}_{m+1}(\mu, s, r, t, x, z) \\
&+  \left\{ \frac12 \sum_{i, j=1}^d  \left(a_{i, j}(r, x, [X^{s, \xi, (m)}_{r}]) - a_{i, j}(r, z, [X^{s, \xi, (m)}_r])\right) \right.  \\ 
& \quad \left. \times \partial^{n}_v \left[\partial_\mu H^{i, j}_2\left(\int_r^{t} a(r', z, [X^{s, \xi, (m)}_{r'}]) dr', z-x\right)\right](v) \right\} \,  \widehat{p}_{m+1}(\mu, s, r, t, x, z) \\
& =: {\rm II}^{n}_1(v) + {\rm II}^{n}_2(v),  \\ \\
{\rm III}^n(v) &:= \left\{- \sum_{i=1}^d b_i(r, x, [X^{s, \xi, (m)}_r]) H^{i}_1\left(\int_r^{t} a(r', z, [X^{s, \xi, (m)}_{r'}]) dr', z-x\right)\right.\\
& +\left.\frac12 \sum_{i, j=1}^d (a_{i, j}(r, x, [X^{s, \xi, (m)}_{r}]) - a_{i, j}(r, z, [X^{s, \xi, (m)}_r])) H^{i, j}_2\left(\int_r^{t} a(r', z, [X^{s, \xi, (m)}_{r'}]) dr', z-x\right) \right\} \\
& \quad \times \partial^{n}_v [\partial_\mu \widehat{p}_{m+1}(\mu, s, r, t, x, z)](v).
\end{align*}
\noindent We now provide an estimate for each term separately. First, from \eqref{recursive:bound:deriv:a:or:b} and the space-time inequality \eqref{space:time:inequality}
\begin{align*}
\left| {\rm I}^{n}_1(v) \right| & = \left| - \sum_{i=1}^d  H^{i}_1\left(\int_r^{t} a(v, z, [X^{s, \xi, (m)}_v]) dv, z-x\right) \partial^{n}_v  [\partial_\mu [ b_i(r, x, [X^{s, \xi, (m)}_r])\right] ] (v)| \widehat{p}_{m+1}(\mu, r, t, x, z) \\
&\leq  \frac{K}{(t-r)^{\frac{1}{2}}}\left\{ \frac{1}{(r-s)^{\frac{1+n-\eta}{2}}} +  \int_{(\rr^d)^2} (|y'-x'|^{\eta}\wedge 1) | \partial^{n}_v[\partial_\mu p_{m}(\mu, s, r, x', y')](v)| \, \mu(dx') \, dy'   \right\} \\
& \quad \times g(c(t-r), z-x). 
\end{align*}
Then, again from \eqref{recursive:bound:deriv:a:or:b} and the space-time inequality \eqref{space:time:inequality} 
\begin{align*}
\left| {\rm I}^{n}_2(v) \right| & = \left| - \sum_{i=1}^d  b_i(r, x, [X^{s, \xi, (m)}_r]) \partial^{n}_v \left[\partial_\mu  H^{i}_1\left(\int_r^{t} a(r', z, [X^{s, \xi, (m)}_{r'}]) dr', z-x\right)\right](v) \right| \widehat{p}_{m+1}(\mu, r, t, x, z)\\
& \leq  K \frac{|z-x|}{(t-r)^2}\int_r^t \left( \frac{1}{(r'-s)^{\frac{1+n-\eta}{2}}} +  \int_{(\rr^d)^2} (|y'-x'|^{\eta}\wedge 1) | \partial^{n}_v[\partial_\mu p_{m}(\mu, s, r', x', y')](v)| \, \mu(dx') \, dy' \right) \, dr' \\
& \quad \times g(c(t-r), z-x)\\
& \leq  \frac{K}{(t-r)^{\frac12}} \left(\frac{1}{(r-s)^{\frac{1+n-\eta}{2}}} +\frac{1}{t-r} \int_r^t  \int_{(\rr^d)^2} (|y'-x'|^{\eta}\wedge 1) | \partial^{n}_v[\partial_\mu p_{m}(\mu, s, r', x', y')](v)| \, \mu(dx') \, dy' \, dr' \right)\\
& \quad \times g(c(t-r), z-x).
\end{align*}
We thus conclude
\begin{align*}
| {\rm I}^{n}(v) | & \leq \frac{K}{(t-r)^{\frac{1}{2}}}\left\{ \frac{1}{(r-s)^{\frac{1+n-\eta}{2}}} +  \int_{(\rr^d)^2} (|y'-x'|^{\eta}\wedge 1) | \partial^{n}_v[\partial_\mu p_{m}(\mu, s, r, x', y')](v)| \, \mu(dx') \, dy' \, \right.\\
& \left.  +\frac{1}{t-r} \int_r^t  \int_{(\rr^d)^2} (|y'-x'|^{\eta}\wedge 1) | \partial^{n}_v[\partial_\mu p_{m}(\mu, s, r', x', y')](v)| \, \mu(dx') \, dy' \, dr'   \right\} g(c(t-r), z-x).
\end{align*}
We now provide an estimate for ${\rm II}^{n}$. 
It follows from \eqref{recursive:bound:deriv:mes:holder:reg:a}, \eqref{standard} and the space-time inequality \eqref{space:time:inequality} that
\begin{align*}
\left| {\rm II}^{n}_1(v)\right| &\leq  K_\beta  \frac{|z-x|^{\beta \eta}}{(t-r)(r-s)^{\frac{1+n-(1-\beta)\eta}{2}}} \\
& \quad \times \left(1 + (r-s)^{\frac{1+n-(1-\beta)\eta}{2}} \int_{(\mathbb{R}^d)^2} (|y'-x'|^{(1-\beta)\eta}\wedge 1) |\partial^{n}_v[\partial_\mu p_{m}(\mu, s, r, x', y')](v)| \, \mu(dx')\, dy'  \right) \notag \\
& \quad \quad \times g(c(t-r), z-x)\\
&\leq  \frac{K_\beta}{(t-r)^{1-\frac{\beta\eta}{2}}(r-s)^{\frac{1+n-(1-\beta)\eta}{2}}} \\
& \quad \times \left(1 + (r-s)^{\frac{1+n-(1-\beta)\eta}{2}} \int_{(\mathbb{R}^d)^2} (|y'-x'|^{(1-\beta)\eta}\wedge 1) |\partial^{n}_v[\partial_\mu p_{m}(\mu, s, r, x', y')](v)| \, \mu(dx')\, dy' \right) \notag \\
& \quad \quad \times g(c(t-r), z-x).
\end{align*}
We now turn to ${\rm II}^{n}_2(v)$. From the very definition of $H^{i, j}_2$, \eqref{recursive:bound:deriv:a:or:b} and noticing that for any smooth map $\mathcal{P}_2(\rr^d) \ni  \nu \mapsto \Sigma(\nu)$ taking values in the set of positive definite matrix it holds 
\begin{align}
\partial^{n}_v & [\partial_\mu  (\Sigma^{-1}(\mu))_{i, j}](v) \notag \\
& = - (\Sigma^{-1}(\mu) \partial^{n}_v [\partial_\mu \Sigma(\mu)](v) \Sigma^{-1}(\mu))_{i, j} = - \sum_{k_1, k_2} (\Sigma^{-1}(\mu))_{i, k_1} \partial^{n}_v[\partial_\mu(\Sigma(\mu))_{{k_1,k_2}}](v) (\Sigma^{-1}(\mu))_{k_2, j}, \quad n=0,1 \label{deriv:mes:cross:smooth:matrix}
\end{align} 
\noindent we get
\begin{align} 
\Big|\partial^{n}_v \partial_\mu & \left[ H^{i, j}_2\left(\int_r^{t} a(r', z, [X^{s, \xi, (m)}_{r'}]) dr', z-x\right)\right](v) \Big| \notag \\
&  \leq   K \left( \frac{|z-x|^2}{(t-r)^3} + \frac{1}{(t-r)^2} \right)  \int_r^{t}  \max_{k, \ell}  \Big| \partial^{n}_v [\partial_\mu [a_{k, \ell}(r', z, [X^{s, \xi, (m)}_{r'}])]](v) \Big|dr' \notag \\
&\leq  K \left(\frac{|z-x|^2}{(t-r)^{2} } + \frac{1}{t-r} \right) \left(\frac{1}{(r-s)^{\frac{1+n-\eta}{2}}} \right. \notag \\
& \quad \left. + \frac{1}{t-r} \int_r^t \int_{(\mathbb{R}^d)^2} (|y'-x'|^{\eta}\wedge 1) |\partial^{n}_v[\partial_\mu p_{m}(\mu, s, r', x', y')](v)| \, \mu(dx') \, dy'  \, dr'  \right), \label{deriv:mes:second:order:polyn:hermite}
\end{align}
\noindent so that, from the uniform $\eta$-H\"older regularity of $a(t, ., m)$ and the space-time inequality \eqref{space:time:inequality}
\begin{align*}
\left| {\rm II}^{n}_2(v)\right| &\leq  K \left(\frac{ |z-x|^{2+ \eta}}{(t-r)^2} + \frac{|z-x|^{\eta}}{(t-r)} \right) \\
& \quad \times  \left(\frac{1}{(r-s)^{\frac{1+n-\eta}{2}}} + \frac{1}{t-r} \int_r^t \int_{(\mathbb{R}^d)^2} (|y'-x'|^{\eta}\wedge 1) |\partial^{n}_v[\partial_\mu p_{m}(\mu, s, r', x', y')](v)| \, \mu(dx') \, dy'  \, dr'  \right) \\
& \quad  \quad \times g(c(t-r),z-x)\\
&\leq \frac{K}{(t-r)^{1-\frac{\eta}{2}}(r-s)^{\frac{1+n-\eta}{2}}}  \\
& \quad \times  \left(1+ \frac{(r-s)^{\frac{1+n-\eta}{2}}}{t-r} \int_r^t \int_{(\mathbb{R}^d)^2} (|y'-x'|^{\eta}\wedge 1) |\partial^{n}_v[\partial_\mu p_{m}(\mu, s, r', x', y')](v)| \, \mu(dx') \, dy'  \, dr'  \right) \\
& \quad \quad \times g(c(t-r),z-x).
\end{align*}
Hence, gathering the estimates on ${\rm II}^{n}_1$ and ${\rm II}^{n}_2$, we obtain
\begin{align*}
  \left| {\rm II}^n(v) \right| &\leq \frac{K_\beta}{(t-r)^{1-\frac{\beta\eta}{2}}(r-s)^{\frac{1+n-(1-\beta)\eta}{2}}}\\
  & \quad \times \left(1 +  (r-s)^{\frac{1+n-(1-\beta)\eta}{2}} \int_{(\mathbb{R}^d)^2} (|y'-x'|^{(1-\beta)\eta}\wedge 1) |\partial^{n}_v[\partial_\mu p_{m}(\mu, s, r, x', y')](v)| \, \mu(dx') \, dy'  \right. \notag \\
& \quad \quad \left. + \frac{(r-s)^{\frac{1+n-(1-\beta)\eta}{2}}}{(t-r)^{1+\frac{(\beta-1)\eta}{2}}} \int_r^t \int_{(\mathbb{R}^d)^2} (|y'-x'|^{\eta}\wedge 1) |\partial^{n}_v[\partial_\mu p_{m}(\mu, s, r', x', y')](v)| \, \mu(dx')\, dy' \, dr'   \right) \\
& \quad \times g(c(t-r), z-x)
\end{align*}
\noindent for any $\beta \in [0,1]$.
Finally, using the estimate  \eqref{cross:mes:deriv:p:hat:s:r:t} of Corollary \ref{cor:deriv:time:and:mes:phat}, the uniform $\eta$-H\"older regularity of $ a_{i, j}(t, ., m)$ and the space-time inequality \eqref{space:time:inequality}, we get
\begin{align*}
|{\rm III}^n(v)| & \leq  \frac{K}{(t-r)^{1-\frac{\eta}{2}}(r-s)^{\frac{1+n-\eta}{2}}} \\
& \quad \times  \left(1+ \frac{(r-s)^{\frac{1+n-\eta}{2}}}{t-r} \int_r^t \int_{(\mathbb{R}^d)^2} (|y'-x'|^{\eta}\wedge 1) |\partial^{n}_v[\partial_\mu p_{m}(\mu, s, r', x', y')](v)| \, \mu(dx') \, dy'  \, dr' \right)  \\
& \quad \quad \times g(c(t-r),z-x).
\end{align*}
We conclude the proof of \eqref{cross:mes:deriv:parametrix:kernel:s:r:t:with:beta} by gathering the estimates on ${\rm I}^n(v)$, ${\rm II}^n(v)$ and ${\rm III}^n(v)$.\\

\noindent \emph{Step 3: proof of the pointwise Gaussian estimate \eqref{cross:time:deriv:parametrix:kernel:s:r:t}.}\\

The strategy is similar to the one developed previously. We here use the following decomposition
$$
\partial_s \mH_{m+1}(\mu, s, r, t, x, z) =: {\rm I} + {\rm II} + {\rm III} 
$$
\noindent where
\begin{align*}
{\rm I} & = \Big[- \sum_{i=1}^d \partial_s [b_i \left(r, x, [X^{s, \xi, (m)}_r]\right)] H^{i}_1\left(\int_r^t a(r', z, [X^{s, \xi ,(m)}_{r'}]) \, dr', z-x\right) \nonumber\\
& - \sum_{i=1}^d b_i\left(r, x, [X^{s, \xi, (m)}_r]\right) \partial_s H^{i}_1\left(\int_r^t a(r', z, [X^{s, \xi ,(m)}_{r'}]) \, dr', z-x\right) \Big] \widehat{p}_{m+1}(\mu, s, r, t, x, z)\nonumber \\
& =: {\rm I}_{1} + {\rm I}_{2}, \\
{\rm II} & = \Big[ \frac12 \sum_{i, j=1}^d (\partial_s [a_{i, j}(r, x, [X^{s, \xi, (m)}_r]) -  a_{i, j}(r, z, [X^{s, \xi , (m)}_r])])  H^{i, j}_2(\int_r^t a(r', z, [X^{s, \xi, (m)}_{r'}]) \, dr', z-x) \nonumber \\
& + \frac12 \sum_{i, j=1}^d \left(a_{i, j}(r, x, [X^{s, \xi, (m)}_r]) - a_{i, j}(r, z, [X^{s, \xi , (m)}_r])\right) \\
& \quad \times \partial_s H^{i, j}_2\left(\int_r^t a(r', z, [X^{s, \xi, (m)}_{r'}]) \, dr', z-x\right) \Big] \widehat{p}_{m+1}(\mu, s, r, t, x, z)\\
& =: {\rm II}_1 + {\rm II}_2, \\
{\rm III} &=  \Big[ - \sum_{i=1}^d b_i(r, x, [X^{s, \xi, (m)}_r]) H^{i}_1(\int_r^t a(r', z, [X^{s, \xi ,(m)}_{r'}]) \, dr', z-x) \nonumber \\
& \quad + \frac{1}{2} \sum_{i, j=1}^d \left(a_{i, j}(r, x, [X^{s, \xi, (m)}_r]) - a_{i, j}(r, z, [X^{s, \xi, (m)}_r])\right) \nonumber \\
& \quad \quad \times H^{i, j}_2\left(\int_r^t a(r', z, [X^{s, \xi, (m)}_{r'}) \, dr', z-x\right) \Big] \partial_s \widehat{p}_{m+1}(\mu, s, r, t, x, z).
\end{align*}

We again provide an estimate for each term separately. We will be brief on some technical details inasmuch as the proof of the following estimates follows similar lines of reasonings as those employed in the previous step. 

From \eqref{recursive:bound:time:deriv:a:or:b} and the space-time inequality \eqref{space:time:inequality}, we obtain
$$
|{\rm I}_1| \leq \frac{K}{(t-r)^{\frac12}} \int_{(\mathbb{R}^d)^2} (|y'-x'|^\eta \wedge 1) |\partial_s p_m(\mu, s, r,  x', y')| \, \mu(dx') \, dy' \, g(c(t-r), z-x).
$$
 
 From the very definition of $H^{i}_1$ and using the fact that for any differentiable map $[0,r) \ni  s \mapsto \Sigma(s)$ taking values in the set of positive definite matrix one has 
$$
\partial_s (\Sigma^{-1}(s))_{i, j} = - (\Sigma^{-1}(s) \partial_s \Sigma(s) \Sigma^{-1}(s))_{i, j} = - \sum_{k_1, k_2} (\Sigma^{-1}(s))_{i, k_1} \partial_s \Sigma_{k_1,k_2}(s) (\Sigma^{-1}(s))_{k_2, j}$$
\noindent we get
\begin{align}
\label{bound:time:derivative:H1}
\Big| \partial_s H^{i}_1\left(\int_r^t a(r', z, [X^{s, \xi ,(m)}_{r'}]) \, dr', z-x\right) \Big| \leq K\frac{|z-x|}{(t-r)^2} \int_r^t \max_{k, l} |\partial_s [a_{k, l}(r', z, [X^{s, \xi ,(m)}_{r'}])]| \, dr'
\end{align}

\noindent and
\begin{align}
\label{bound:time:derivative:H2}
\Big| \partial_s&  H^{i, j}_2\left(\int_r^t a(r', z, [X^{s, \xi ,(m)}_{r'}]) \, dr', z-x\right) \Big| \\
& \leq   K \left( \frac{|z-x|^2}{(t-r)^3} + \frac{1}{(t-r)^2} \right)  \int_r^{t}  \max_{k, \ell}  \Big| \partial_s [a_{k, \ell}(r', z, [X^{s, \xi, (m)}_{r'}])] \Big| dr'. \notag 
\end{align}

It now follows from \eqref{bound:time:derivative:H1} together with \eqref{recursive:bound:time:deriv:a:or:b} and the space-time inequality \eqref{space:time:inequality} that
\begin{align*}
|{\rm I}_2| & \leq K \frac{|z-x|}{(t-r)^{2}} \int_r^t \int_{(\mathbb{R}^d)^2} (|y'-x'|^\eta \wedge 1) |\partial_s p_m(\mu, s, r',  x', y')| \, \mu(dx') \, dy' \, dr' \, g(c(t-r), z-x)\\
& \leq  \frac{K}{(t-r)^{\frac32}} \int_r^t \int_{(\mathbb{R}^d)^2} (|y'-x'|^\eta \wedge 1) |\partial_s p_m(\mu, s, r,  x', y')| \, \mu(dx') \, dy'\, dr' \, g(c(t-r), z-x)
\end{align*}
\noindent so that
\begin{align*}
|{\rm I}|&  \leq \frac{K}{(t-r)^{\frac12}} \left\{  \int_{(\mathbb{R}^d)^2} (|y'-x'|^\eta \wedge 1) |\partial_s p_m(\mu, s, r,  x', y')| \, \mu(dx') \, dy' \right. \\ 
& \quad \quad \left.+ \frac{1}{t-r} \int_r^t \int_{(\mathbb{R}^d)^2} (|y'-x'|^\eta \wedge 1) |\partial_s p_m(\mu, s, r,  x', y')| \, \mu(dx') \, dy'\, dr'  \right\}  \, g(c(t-r), z-x).
\end{align*}

It follows from \eqref{recursive:bound:time:deriv::holder:reg:a} and the space-time inequality \eqref{space:time:inequality} that 
$$
|{\rm II}_1| \leq \frac{K_\beta}{(t-r)^{1-\frac{\beta \eta}{2}}} \int_{(\rr^d)^2} (|y'-x'|^{(1-\beta)\eta} \wedge 1) \,   |\partial_s p_{m}(\mu, s , r, x', y')| \, \mu(dx') \, dy'  \, g(c(t-r), z-x).
$$
\noindent for any $\beta \in [0,1]$. From \eqref{bound:time:derivative:H2}, \eqref{recursive:bound:time:deriv:a:or:b}, the uniform $\eta$-H\"older regularity of $a_{i, j}(t, ., m)$, the boundedness of $b_i$ and the space-time inequality \eqref{space:time:inequality}, we get
\begin{align*}
|{\rm II}_2| & \leq K  \left\{ \frac{|z-x|^{2+\eta}}{(t-r)^3} + \frac{|z-x|^\eta}{(t-r)^2}\right\} \int_r^t \max_{k,l} | \partial_s [a_{k, l}(r', z, [X^{s, \xi , (m)}_{r'}])]| \, dr' \, g(c(t-r), z-x) \\
& \leq \frac{K}{(t-r)^{2-\frac{\eta}{2}}} \int_r^t  \int_{(\mathbb{R}^d)^2} (|y'-x'|^\eta \wedge 1) \,   |\partial_s p_{m}(\mu, s , r', x', y')| \, \mu(dx') \, dy'\, dr' \, g(c(t-r), z-x).
\end{align*}
Gathering the two previous estimates, we obtain
\begin{align*}
|{\rm II}| & \leq \frac{K_\beta}{(t-r)^{1-\frac{\beta \eta}{2}}}\left\{\int_{(\mathbb{R}^d)^2} (|y'-x'|^{(1-\beta)\eta} \wedge 1) \,   |\partial_s p_{m}(\mu, s , r, x', y')| \, \mu(dx') \, dy'  \right. \\
& \quad \left. + \frac{1}{(t-r)^{1+\frac{(\beta-1)\eta}{2}}}\int_r^t  \int_{(\mathbb{R}^d)^2} (|y'-x'|^\eta \wedge 1) \,   |\partial_s p_{m}(\mu, s , r', x', y')| \, \mu(dx') \, dy'\, dr'  \right\} g(c(t-r), z-x).
\end{align*}
Finally, the Gaussian estimate \eqref{time:deriv:p:hat:s:r:t}, the uniform $\eta$-H\"older regularity of $a_{i, j}(t, ., m)$ and the space-time inequality \eqref{space:time:inequality} clearly imply
\begin{align*}
|{\rm III}| & \leq \frac{K}{(t-r)^{2-\frac{\eta}{2}}}  \int_r^t  \int_{(\rr^d)^2} (|y'-x'|^{\eta}\wedge 1) | \partial_s p_{m}(\mu, s, r', x', y') | \, \mu(dx') \, dy' \, dr'  g(c(t-r), z-x).
\end{align*}
Gathering the previous estimates on ${\rm I}$, ${\rm II}$ and ${\rm III}$ concludes the proof of \eqref{cross:time:deriv:parametrix:kernel:s:r:t}. \\

\noindent \emph{Step 4: proof of the pointwise Gaussian estimate \eqref{cross:mes:deriv:parametrix:kernel:s:r:t:reg:holder}.}\\
 
We start from the decomposition formula \eqref{deriv:mu:H:mp1} with $n=1$ and establish appropriate Gaussian estimate for the difference of each term evaluated at $v$ and $v'$ respectively. From \eqref{recursive:bound:deriv:mes:reg:holder:a:or:b}, \HE\, and the space-time inequality \eqref{space:time:inequality}, we get
 \begin{align*}
  | &{\rm I}^1_1(v)   -   {\rm I}^1_1(v')|  \\
 & \leq \frac{K}{(t-r)^{\frac12}}  \sum_{i=1}^d  \Big|\partial_v  [\partial_\mu [ b_i(r, x, [X^{s, \xi, (m)}_r])]](v) -  \partial_v [\partial_\mu [ b_i(r, x, [X^{s, \xi, (m)}_r])]](v')\Big|  g(c(t-r), z-x)\\
 & \leq \frac{K_\beta}{(t-r)^{\frac12}} \left\{ \frac{|v-v'|^\beta}{(r-s)^{1+\frac{\beta-\eta}{2}}} \right. \\
 & \quad \left.+ \int_{(\mathbb{R}^d)^2} (|y'-x'|^\eta \wedge 1) | | \partial_v[\partial_\mu p_{m}(\mu, s, r, x', y)](v) - \partial_v[\partial_\mu p_{m}(\mu, s, r, x', y)](v')| \, \mu(dx') \, dy' \right\}  g(c(t-r), z-x).
\end{align*}

Next, from the very definition of the first order Hermite polynomial $H^{i}_1$, the identity \eqref{deriv:mes:cross:smooth:matrix}, the estimate \eqref{recursive:bound:deriv:mes:reg:holder:a:or:b}, \HE\, and the space-time inequality \eqref{space:time:inequality}, we deduce 
\begin{align*}
 |& {\rm I}^1_2(v) -   {\rm I}^1_2(v')|  \\
  & \leq \frac{K}{(t-r)^{\frac32}} \int_r^t \max_{i, j} \Big| \partial_v [\partial_\mu [a_{i, j}(r', z,[X^{s, \xi, (m)}_{r'}])]](v) - \partial_v [\partial_\mu [a_{i, j}(r', z,[X^{s, \xi, (m)}_{r'}])]](v') \Big| \, dr' \,  g(c(t-r), z-x)\\
  & \leq  K_\beta \left\{ \frac{|v-v'|^\beta}{(t-r)^{\frac12}(r-s)^{1+\frac{\beta-\eta}{2}}} \right.\\
  & \left. \quad + \frac{1}{(t-r)^{\frac32}} \int_r^t \int_{(\mathbb{R}^d)^2} (|y'-x'|^\eta \wedge 1) | | \partial_v[\partial_\mu p_{m}(\mu, s, r', x', y')](v) - \partial_v[\partial_\mu p_{m}(\mu, s, r', x', y')](v')| \, \mu(dx') \, dy' \, dr' \right\}\\
  & \quad \quad \times g(c(t-r), z-x).
\end{align*}

Gathering the two previous estimates, we thus conclude
\begin{align*}
 |& {\rm I}^1(v) -   {\rm I}^1(v')| \\
 & \leq K_\beta \left\{ \frac{|v-v'|^\beta}{(t-r)^{\frac12}(r-s)^{1+\frac{\beta-\eta}{2}}} \right. \\
 & \quad \left. + \frac{1}{(t-r)^{\frac12}} \int_{(\mathbb{R}^d)^2} (|y'-x'|^\eta \wedge 1) | | \partial_v[\partial_\mu p_{m}(\mu, s, r, x', y)](v) - \partial_v[\partial_\mu p_{m}(\mu, s, r, x', y)](v')| \, \mu(dx') \, dy'  \right.\\
 & \left. \quad+ \frac{1}{(t-r)^{\frac32}} \int_r^t \int_{(\mathbb{R}^d)^2} (|y'-x'|^\eta \wedge 1) | | \partial_v[\partial_\mu p_{m}(\mu, s, r', x', y)](v) - \partial_v[\partial_\mu p_{m}(\mu, s, r', x', y)](v')| \, \mu(dx') \, dy' \, dr' \right\} \\
   & \quad \quad \times g(c(t-r), z-x).
 \end{align*}

  In order to handle the difference ${\rm II}^{1}_1(v) - {\rm II}^{1}_1(v')$, we use \eqref{recursive:bound:deriv:mes:double:reg:holder:a}, \eqref{standard} and the space-time inequality \eqref{space:time:inequality}. We obtain
\begin{align*}
 |& {\rm II}^1_1(v) - {\rm II}^1_1(v')| \\
 & \leq K_\beta \left\{\frac{1}{(t-r)^{1-\frac{\eta}{2}}(r-s)^{1+\frac{\beta}{2}}} \wedge \frac{1}{(t-r) (r-s)^{1+\frac{\beta-\eta}{2}}} \right\} \\
 & \quad \times \left\{ |v-v'|^\beta + (r-s)^{1+\frac{\beta}{2}} \int_{(\mathbb{R}^d)^2} \Big|\partial_v\Big[\partial_{\mu}p_{m}(\mu, s , r, x', y')\Big](v) - \partial_v\Big[\partial_{\mu}p_{m}(\mu, s , r, x', y')\Big](v')\Big| \, \mu(dx') \, dy' \right. \\
 & \quad \quad \left. +(r-s)^{1+\frac{\beta-\eta}{2}} \int_{(\mathbb{R}^d)^2} (|y'-x'|^\eta \wedge 1) \Big|\partial_v\Big[\partial_{\mu}p_{m}(\mu, s , r, x', y')\Big](v) - \partial_v\Big[\partial_{\mu}p_{m}(\mu, s , r, x', y')\Big](v')\Big| \, \mu(dx')\, dy'\right\}\\
 & \quad \quad \quad  \times g(c(t-r), z-x).
\end{align*}

We deal with ${\rm II}^{1}_2(v) - {\rm II}^{1}_2(v')$ by using the very definition of the Hermite polynomial $H_2^{i, j}$, the identity \eqref{deriv:mes:cross:smooth:matrix}, the estimate \eqref{recursive:bound:deriv:mes:reg:holder:a:or:b}, \HE\, and the space-time inequality \eqref{space:time:inequality}. We obtain
\begin{align*}
 |& {\rm II}^1_2(v) - {\rm II}^1_2(v')| \\
 & \leq K\left\{ \frac{|z-x|^{2+\eta}}{(t-r)^3} + \frac{|z-x|^\eta}{(t-r)^2}\right\}  \int_r^t \max_{i, j} \Big| \partial_v [\partial_\mu [a_{i, j}(r', z,[X^{s, \xi, (m)}_{r'}])]](v) - \partial_v [\partial_\mu [a_{i, j}(r', z,[X^{s, \xi, (m)}_{r'}])]](v') \Big| \, dr' \\
  & \quad \times g(c(t-r), z-x) \\
& \leq  K_\beta \left\{ \frac{|v-v'|^\beta}{(t-r)^{1-\frac{\eta}{2}} (r-s)^{1+\frac{\beta-\eta}{2}}}  \right. \\
& \quad \left. + \frac{1}{(t-r)^{2 - \frac{\eta}{2}}} \int_r^t \int_{(\mathbb{R}^d)^2} (|y'-x'|^\eta \wedge 1) | | \partial_v[\partial_\mu p_{m}(\mu, s, r', x', y')](v) - \partial_v[\partial_\mu p_{m}(\mu, s, r', x', y')](v')| \, \mu(dx') \, dy' \, dr'   \right\}\\
& \quad \quad \times g(c(t-r), z-x).
\end{align*}

Putting the above terms together, we conclude
\begin{align*}
 |& {\rm II}^1(v) - {\rm II}^1(v')| \\
 & \leq K_\beta \left\{\frac{1}{(t-r)^{1-\frac{\eta}{2}}(r-s)^{1+\frac{\beta}{2}}} \wedge \frac{1}{(t-r) (r-s)^{1+\frac{\beta-\eta}{2}}} \right\} \\
 & \quad \times \left\{ |v-v'|^\beta + (r-s)^{1+\frac{\beta}{2}} \int_{(\mathbb{R}^d)^2} \Big|\partial_v\Big[\partial_{\mu}p_{m}(\mu, s , r, x', y')\Big](v) - \partial_v\Big[\partial_{\mu}p_{m}(\mu, s , r, x', y')\Big](v')\Big| \, \mu(dx') \, dy' \right. \\
 & \quad \quad \left. +(r-s)^{1+\frac{\beta-\eta}{2}} \int_{(\mathbb{R}^d)^2} (|y'-x'|^\eta \wedge 1) \Big|\partial_v\Big[\partial_{\mu}p_{m}(\mu, s , r, x', y')\Big](v) - \partial_v\Big[\partial_{\mu}p_{m}(\mu, s , r, x', y')\Big](v')\Big| \, \mu(dx')\, dy' \right. \\
 & \left. \quad \quad +  \frac{(r-s)^{1+\frac{\beta-\eta}{2}}}{t-r} \int_r^t \int_{(\mathbb{R}^d)^2} (|y'-x'|^\eta \wedge 1) | | \partial_v[\partial_\mu p_{m}(\mu, s, r', x', y')](v) - \partial_v[\partial_\mu p_{m}(\mu, s, r', x', y')](v')| \, \mu(dx') \, dy' \, dr' \right\}\\
 & \quad \quad \quad  \times g(c(t-r), z-x).
\end{align*}

For the last term, from the uniform $\eta$-H\"older regularity of $a_{i, j}(t, ., m)$, the space-time inequality \eqref{space:time:inequality} and \eqref{cross:mes:deriv:reg:holder:terminal point:p:hat:s:r:t}, it holds
\begin{align*}
 | & {\rm III}^1(v)  - {\rm III}^1(v') |  \\
 & \leq K \frac{1}{(t-r)^{1-\frac{\eta}{2}}} |\partial_v [\partial_\mu \widehat{p}_{m+1}(\mu, s, r, t, x, z)](v) - \partial_v [\partial_\mu \widehat{p}_{m+1}(\mu, s, r, t, x, z)](v') | \\
 & \leq \frac{K_\beta}{(t-r)^{1-\frac{\eta}{2}}(r-s)^{1 + \frac{\beta-\eta}{2}}} \Big\{|v-v'|^{\beta}   \\
 & + \frac{(r-s)^{1 + \frac{\beta-\eta}{2}}}{t-r}   \int_r^t \int_{(\mathbb{R}^d)^2} (|y'-x'|^\eta \wedge 1) | | \partial_v[\partial_\mu p_{m}(\mu, s, r', x', y')](v) - \partial_v[\partial_\mu p_{m}(\mu, s, r', x', y')](v')| \, \mu(dx') \, dy' \, dr'  \Big\} \\
 & \quad \times g(c(t-r), z- x).
\end{align*}

\noindent Gathering the previous estimates, we eventually deduce
\begin{align*}
\Big| & \partial_v\Big[\partial_\mu \mH_{m+1}(\mu, s, r ,t ,x ,z) \Big](v) -  \partial_v\Big[\partial_\mu \mH_{m+1}(\mu, s, r ,t ,x ,z) \Big](v') \Big| \\
& \leq K_\beta  \left\{ \frac{1}{(t-r)^{1-\frac{\eta}{2}} (r-s)^{1+\frac{\beta}{2}}} \wedge  \frac{1}{(t-r)(r-s)^{1+\frac{\beta-\eta}{2}}} \right\} \\
& \quad \times \Big[|v-v'|^\beta + (r-s)^{1+\frac{\beta}{2}} \int_{(\mathbb{R}^d)^2} \Big|\partial_v\Big[\partial_{\mu}p_{m}(\mu, s , r, x', y')\Big](v) - \partial_v\Big[\partial_{\mu}p_{m}(\mu, s , r, x', y')\Big](v')\Big| \, \mu(dx') \, dy' \\
& \quad \quad +(r-s)^{1+\frac{\beta-\eta}{2}} \int_{(\mathbb{R}^d)^2} (|y'-x'|^\eta \wedge 1) \Big|\partial_v\Big[\partial_{\mu}p_{m}(\mu, s , r, x', y')\Big](v) - \partial_v\Big[\partial_{\mu}p_{m}(\mu, s , r, x', y')\Big](v')\Big| \, \mu(dx') \, dy' \\
& \quad \quad \quad +  \frac{(r-s)^{1 + \frac{\beta-\eta}{2}}}{t-r}   \int_r^t \int_{(\mathbb{R}^d)^2} (|y-x'|^\eta \wedge 1) | | \partial_v[\partial_\mu p_{m}(\mu, s, r', x', y)](v) - \partial_v[\partial_\mu p_{m}(\mu, s, r', x', y)](v')| \, \mu(dx') \, dy \, dr' \Big] \\
& \quad \quad \quad \quad \times g(c(t-r), z-x).
\end{align*}

The proof of \eqref{cross:mes:deriv:parametrix:kernel:s:r:t:reg:holder} is now complete.

\subsection{Proof of Corollary \ref{cor:mes:time:deriv:iterated:parametrix:kernel}.\\}\label{section:proof:cor:mes:time:deriv:iterated:parametrix:kernel}

We proceed by induction on $k$. The case $k=1$ directly follows from Corollary \ref{cor:deriv:time:and:mes:parametrix:kernel} and the estimates \eqref{time:degeneracy:estimate:deriv:mes:parametrix:kernel:bis} and \eqref{time:degeneracy:estimate:deriv:time:parametrix:kernel}. We now assume that the map $[0,r) \times \pp \ni (s, \mu) \mapsto \mH^{(k)}_{m+1}(\mu, s, r, t, x, z)$ belongs to $\mathcal{C}^{1, 2}([0,r)\times \pp)$, with continuous derivatives with respect to $s$, $x$, $\mu$ and $v$ and that the estimates \eqref{cross:mes:deriv:iterated:parametrix:kernel:s:r:t} and \eqref{cross:time:deriv:iterated:parametrix:kernel:s:r:t} are valid at step $k$. Recall that by the very definition of the space-time convolution, it holds
\begin{equation}
\label{iterated:parametrix:kernel:next:step}
\mH^{(k+1)}_{m+1}(\mu, s, r, t, x, z) = \int_r^t \int_{ \mathbb{R}^d }  \mH_{m+1}(\mu, s, r , r' , x , y)  \mH^{(k)}_{m+1}(\mu, s, r' ,t , y ,z) \, dy \, dr'
\end{equation}
\noindent so that it directly follows from the estimates \eqref{time:degeneracy:estimate:deriv:mes:parametrix:kernel:bis}, \eqref{time:degeneracy:estimate:deriv:time:parametrix:kernel}, \eqref{cross:mes:deriv:iterated:parametrix:kernel:s:r:t}, \eqref{cross:time:deriv:iterated:parametrix:kernel:s:r:t} and the dominated convergence theorem that the map $[0,r) \times \pp \ni (s, \mu) \mapsto \mH^{(k+1)}_{m+1}(\mu, s, r, t, x, z)$ belongs to $\mathcal{C}^{1, 2}([0,r)\times \pp)$ and admits continuous derivatives with respect to the variables $s$, $x$, $\mu$ and $v$.  

Now, from \eqref{iter:parametrix:kernel} and \eqref{time:degeneracy:estimate:deriv:mes:parametrix:kernel:bis} 
\begin{align*}
\int_r^t \int_{ \mathbb{R}^d }&   |\partial^n_v [\partial_\mu \mH_{m+1}(\mu, s, r , r' , x , y)](v)| |\mH^{(k)}_{m+1}(\mu, s, r' ,t , y ,z)| \, dy \, dr'\\
& \leq \frac{K^{k} K_m}{(r-s)^{\frac{1+n}{2}-\frac{\eta}{4}}}  \prod_{\ell =1}^{k-1} B\left(\ell \frac{\eta}{2}, \frac{\eta}{2}\right)  \int_r^t \frac{1}{(r'-r)^{1-\frac{\eta}{4}}} \frac{1}{(t-r')^{1-k\frac{\eta}{2}}} \, dr' \, g(c(t-r), z-x) \\
& = \frac{K^{k} K_m}{(r-s)^{\frac{1+n}{2}-\frac{\eta}{4}} (t-r)^{1- k\frac{\eta}{2}-\frac{\eta}{4}}}  \prod_{\ell =1}^{k-1} B\left(\ell \frac{\eta}{2}, \frac{\eta}{2}\right) B\left(\frac{\eta}{4}, k \frac{\eta}{2}\right) \, g(c(t-r), z-x)\\
& \leq \frac{K^{k} K_m}{(r-s)^{\frac{1+n}{2}-\frac{\eta}{4}}(t-r)^{1-k\frac{\eta}{2}-\frac{\eta}{4}}} \prod_{\ell =1}^{k} B\left(\frac{\eta}{4}, \frac{\eta}{4} + (\ell-1)\frac{\eta}{2}\right) \, g(c(t-r), z-x)
\end{align*}
\noindent and similarly from \eqref{iter:parametrix:kernel} with $k=1$ and the induction hypothesis \eqref{cross:mes:deriv:iterated:parametrix:kernel:s:r:t}
\begin{align*}
\int_r^t \int_{ \mathbb{R}^d }&   |\mH_{m+1}(\mu, s, r , r' , x , y)| |\partial_v[ \partial_\mu  \mH^{(k)}_{m+1}(\mu, s, r' ,t , y ,z)](v)| \, dy\, dr' \\
& \leq k K^{k} K_m  \prod_{\ell =1}^{k-1} B\left(\frac{\eta}{4}, \frac{\eta}{4} + (\ell-1)\frac{\eta}{2}\right)  \int_r^t \frac{1}{(r'-r)^{1-\frac{\eta}{2}}} \frac{1}{ (r'-s)^{\frac{1+n}{2}-\frac{\eta}{4}}(t-r')^{1- (k-1) \frac{\eta}{2}- \frac{\eta}{4}}} \, dr' \, g(c(t-r), z-x) \\
& \leq \frac{k K^{k} K_m}{(r-s)^{\frac{1+n}{2}- \frac{\eta}{4}} (t-r)^{1- k\frac{\eta}{2}-\frac{\eta}{4}}} \prod_{\ell =1}^{k-1} B\left(\frac{\eta}{4}, \frac{\eta}{4} + (\ell-1)\frac{\eta}{2}\right) B\left(\frac{\eta}{2}, \frac{\eta}{4} + (k-1)\frac{\eta}{2}\right) \, g(c(t-r), z-x)\\
& \leq \frac{k K^{k} K_m}{(r-s)^{\frac{1+n}{2}- \frac{\eta}{4}} (t-r)^{1- k\frac{\eta}{2}-\frac{\eta}{4}}} \prod_{\ell =1}^{k} B\left(\frac{\eta}{4}, \frac{\eta}{4} + (\ell-1)\frac{\eta}{2}\right) \, g(c(t-r), z-x).
\end{align*}
Summing the two previous upper-bounds allows to conclude that \eqref{cross:mes:deriv:iterated:parametrix:kernel:s:r:t} is satisfied at step $k+1$. The proof of \eqref{cross:time:deriv:iterated:parametrix:kernel:s:r:t} follows from similar arguments and is thus omitted.

\subsection{Proof of Lemma \ref{lem:technical:estimate:reg:wasserstein:metric:coefficients:densities}.\\} \label{section:proof:lem:technical:estimate:reg:wasserstein:metric:coefficients:densities}

\smallskip

\noindent \emph{Step 1: proof of \eqref{diff:mes:drift:diff:coefficients}.}\\

We use the decomposition
\begin{align}
h(x) & :=a_{i, j} (t, x, [X^{s, \xi, (m)}_t])  - a_{i, j}(t, x, [X^{s, \xi', (m)}_t]) \nonumber \\
& =\int_0^1 \int_{\mathbb{R}^d} \frac{\delta a_{i, j}}{\delta m}(t, x, \Theta^{(m)}_{\lambda, t})(y') \, [p_{m}(\mu, s, t, y') - p_{m}(\mu', s, t, y')]\, dy' \, d\lambda \nonumber \\
& = \int_0^1  \int_{(\rr^d)^2} \frac{\delta a_{i, j}}{\delta m}(t, x, \Theta^{(m)}_{\lambda, t})(y') p_{m}(\mu, s, t, x', y') \, dy'  \, (\mu-\mu')(dx') \,  d\lambda  \nonumber \\
&  \quad +  \int_0^1 \int_{(\rr^d)^2} \Big(\frac{\delta a_{i, j}}{\delta m}(t, x, \Theta^{(m)}_{\lambda, t})(y') - \frac{\delta a_{i, j}}{\delta m}(t, x, \Theta^{(m)}_{\lambda, t})(x') \Big) \Delta_{\mu, \mu'} p_{m}(\mu, s, t , x', y')  \, dy'  \mu'(dx') \, d\lambda \nonumber \\
& =: {\rm I}(x) + {\rm II}(x) \label{decomposition:diff:aij}
\end{align}
\noindent where we introduced the notation $\Theta^{(m)}_{\lambda, t} := (1-\lambda) [X^{s, \xi, (m)}_t] + \lambda [X^{s, \xi', (m)}_t]$. We now claim that for any $\beta \in [\eta,1]$, there exists a positive constant $K_\beta$ such that for any $x', \, x'' \in \mathbb{R}^d$
\begin{equation}
\label{holder:regularity:estimate:functional:derivative}
\Big| \int_{\mathbb{R}^d} \frac{\delta a_{i, j}}{\delta m}(t, x, \Theta^{(m)}_{\lambda, t})(y') [p_{m}(\mu, s, t, x', y') - p_{m}(\mu, s, t, x'', y')]  \, dy' \Big| \leq K_\beta\frac{|x'-x''|^\beta}{(t-s)^{\frac{\beta-\eta}{2}}}.
\end{equation}

The proof of the above estimate is quite standard. If $|x'-x''|> (t-s)^{1/2}$, we first write
\begin{align*}
\int_{\rr^d} & \frac{\delta a_{i, j}}{\delta m}(t, x, \Theta^{(m)}_{\lambda, t})(y') [p_{m}(\mu, s, t, x', y') - p_{m}(\mu, s, t, x'', y')]  \, dy' \\
& = \frac{\delta a_{i, j}}{\delta m}(t, x, \Theta^{(m)}_{\lambda, t})(x') - \frac{\delta a_{i, j}}{\delta m}(t, x, \Theta^{(m)}_{\lambda, t})(x'') \\
& \quad + \int_{\rr^d} \Big[\frac{\delta a_{i, j}}{\delta m}(t, x, \Theta^{(m)}_{\lambda, t})(y') -  \frac{\delta a_{i, j}}{\delta m}(t, x, \Theta^{(m)}_{\lambda, t})(x') \Big] p_{m}(\mu, s, t, x', y')  dy' \\
&  \quad - \int_{\rr^d} \Big[\frac{\delta a_{i, j}}{\delta m}(t, x, \Theta^{(m)}_{\lambda, t})(y') -  \frac{\delta a_{i, j}}{\delta m}(t, x, \Theta^{(m)}_{\lambda, t})(x'') \Big] p_{m}(\mu, s, t, x'', y')  dy' 
\end{align*}
\noindent and then combine the uniform $\eta$-H\"older regularity of $[\delta a_{i, j}/\delta m](t, x, m)(.)$ with the Gaussian upper-estimate \eqref{bound:derivative:heat:kernel} with $n=0$ and the space-time inequality \eqref{space:time:inequality} so that
\begin{align*}
\Big| \int_{\mathbb{R}^d} & \frac{\delta a_{i, j}}{\delta m}(t, x, \Theta^{(m)}_{\lambda, t})(y') [p_{m}(\mu, s, t, x', y') - p_{m}(\mu, s, t, x'', y')]  \, dy' \Big| \\
& \leq K (|x'-x''|^\eta + (t-s)^{\eta/2}) \\
& \leq K \frac{|x'-x''|^\beta}{(t-s)^{\frac{\beta-\eta}{2}}}
\end{align*}
\noindent for any $\beta \in [\eta, 1]$. Otherwise, if $|x'-x''| \leq (t-s)^{1/2}$, using the mean-value theorem, \eqref{bound:derivative:heat:kernel} with $n=1$, the uniform $\eta$-H\"older regularity of $ [\delta a_{i, j}/\delta m](t, x, m)(.)$, the space-time inequality \eqref{space:time:inequality} and finally noting that in the current diagonal regime for any $\lambda' \in [0,1]$ and any $0 < c' < c$
\begin{equation}
\label{diagonal:regime:heat:kernel}
 \exp\left(-c \frac{|\lambda' x' + (1-\lambda') x'' - y'|^2}{t-s}\right) \leq K \exp\left(-c' \frac{|y'-x'|^2}{t-s}\right)
\end{equation}
\noindent we obtain
\begin{align*}
&\Big| \int_{\rr^d}  \frac{\delta a_{i, j}}{\delta m}(t, x, \Theta^{(m)}_{\lambda, t})(y') [p_{m}(\mu, s, t, x', y') - p_{m}(\mu, s, t, x'', y')]  \, dy' \Big| \\
& = \Big| \int_0^1  \int_{\rr^d} \frac{\delta a_{i, j}}{\delta m}(t, x, \Theta^{(m)}_{\lambda, t})(y') \, \partial_x p_{m}(\mu, s, t, \lambda' x' + (1-\lambda') x'', y') (x'-x'') \, dy' d\lambda' \Big| \\ 
& = \Big| \int_0^1  \int_{\rr^d} \left[\frac{\delta a_{i, j}}{\delta m}(t, x, \Theta^{(m)}_{\lambda, t})(y') - \frac{\delta a_{i, j}}{\delta m}(t, x, \Theta^{(m)}_{\lambda, t})(\lambda x' + (1-\lambda) x'')\right]\\
 & \quad \quad  \partial_x p_{m}(\mu, s, t, \lambda' x' + (1-\lambda') x'', y') (x'-x'') \, dy' d\lambda' \Big|\\ 
& \leq K \frac{|x'-x''|}{(t-s)^{\frac{1-\eta}{2}}} \\
&  \leq K \frac{|x'-x''|^\beta}{(t-s)^{\frac{\beta-\eta}{2}}}.
\end{align*}
This concludes the proof of \eqref{holder:regularity:estimate:functional:derivative}. Now, from \eqref{holder:regularity:estimate:functional:derivative}, we directly deduce
$$
| {\rm I} | \leq K_\beta \frac{W_2^{\beta}(\mu, \mu')}{(t-s)^{\frac{\beta-\eta}{2}}}.
$$

In order to deal with ${\rm II}$, we first remark that if $W_2(\mu, \mu') \leq (t-s)^{1/2}$, then for any two random variables $\xi$ and $\xi'$ in $\mathbb{L}^{2}$ with respective law $\mu$ and $\mu'$, one can find $\lambda \in [0,1]$ such that
\begin{align*}
| \Delta_{\mu, \mu'} p_{m+1}(\mu, s, t , x, z) | & = \left| \E\Big[\partial_\mu p_{m+1}([\lambda \xi + (1-\lambda) \xi'], s, t, x, z)(\lambda \xi + (1-\lambda) \xi') (\xi-\xi') \Big] \right| \\
& \leq K\frac{\mathbb{E}[|\xi -\xi'|]}{(t-s)^{\frac{1-\eta}{2}}} \, g(c(t-s), z-x)
\end{align*}

\noindent where we used \eqref{first:second:estimate:induction:decoupling:mckean} (see also Remark \ref{remark:convergence:series:gaussian:upper:bounds}). Using the Cauchy-Schwarz inequality and taking the infimum over the random variables $\xi$ and $\xi'$ with prescribed marginal distributions $\mu$ and $\mu'$, we deduce
\begin{equation}
\label{diagonal:estimate:delta:mes:heat:kernel}
|\Delta_{\mu, \mu'} p_{m+1}(\mu, s, t , x, z)  | \leq K \frac{W_2(\mu, \mu')}{(t-s)^{\frac{1-\eta}{2}}} g(c(t-s), z-x) \leq K \frac{W^{\beta}_2(\mu, \mu')}{(t-s)^{\frac{\beta-\eta}{2}}}g(c(t-s), z-x).
\end{equation}

\noindent for any $\beta \in [0,1]$, recalling that $W_2(\mu, \mu') \leq (t-s)^{1/2}$. Otherwise, if $W_2(\mu, \mu') > (t-s)^{1/2}$, the Gaussian upper-estimate \eqref{bound:derivative:heat:kernel} directly yields
$$
 |\Delta_{\mu, \mu'} p_{m+1}(\mu, s, t , x, z)  | \leq K \frac{W^{\beta}_2(\mu, \mu')}{(t-s)^{\frac{\beta}{2}}}g(c(t-s), z-x).
$$

The preceding estimate combined with the uniform $\eta$-H\"older regularity of $ [\delta a_{i, j}/\delta m](t, x, m)(.)$ and the space-time inequality \eqref{space:time:inequality} yield
$$
| {\rm II} | \leq K \frac{W^{\beta}_2(\mu, \mu')}{(t-s)^{\frac{\beta-\eta}{2}}}.
$$
Gathering the estimates on ${\rm I}$ and ${\rm II}$ allows to conclude that
$$
|a_{i, j}(t, x, [X^{s, \xi, (m)}_t])  -  a_{i, j}(t, x, [X^{s, \xi', (m)}_t])|\leq  K \frac{W^{\beta}_2(\mu, \mu')}{(t-s)^{\frac{\beta-\eta}{2}}}
$$
\noindent for any $\beta \in [\eta, 1]$. The corresponding estimate on $|b_{i}(t, x, [X^{s, \xi, (m)}_t])  -  b_{i}(t, x, [X^{s, \xi', (m)}_t])|$ is obtained in a completely analogous manner. We thus omit its proof.\\

 \noindent \emph{Step 2: proofs of \eqref{gaussian:bound:diff:deriv:hat:pm:same:time} and \eqref{gaussian:bound:diff:deriv:hat:pm:different:time}.}\\
 
The mean-value theorem and \eqref{diff:mes:drift:diff:coefficients} yield
\begin{align*}
| & \Delta_{\mu, \mu'} \widehat{p}_{m+1}(\mu, s, t, x, z)  | \nonumber \\
& \leq \frac{K}{t-s} \int_s^t \max_{i, j} | a_{i, j}(r, z, [X^{s, \xi, (m)}_r]) - a_{i, j}(r, z, [X^{s, \xi', (m)}_r])| \, dr \, g(c(t-s), z-x) \nonumber \\
& \leq K \frac{W^{\beta}_2(\mu, \mu')}{(t-s)^{\frac{\beta-\eta}{2}}} \, g(c(t-s), z-x) \nonumber
\end{align*}
\noindent and
\begin{align*}
| & \Delta_{\mu, \mu'} \widehat{p}_{m+1}(\mu, s, r, t, x, z)  | \nonumber \\
& \leq \frac{K}{t-r} \int_r^t \max_{i, j} | a_{i, j}(r', z, [X^{s, \xi, (m)}_{r'}]) - a_{i, j}(r', z, [X^{s, \xi', (m)}_{r'}])| \, dr' \, g(c(t-r), z-x) \nonumber \\
& \leq K\frac{W^{\beta}_2(\mu, \mu')}{(r-s)^{\frac{\beta-\eta}{2}}} \, g(c(t-r), z-x). \nonumber
\end{align*}

 Similarly, differentiating $n$-times the maps $x \mapsto \widehat{p}_{m+1}(\mu, s, t, x, z), \, \widehat{p}_{m+1}(\mu, s, r, t, x, z)$, from the mean-value theorem, \eqref{diff:mes:drift:diff:coefficients} and the space-time inequality \eqref{space:time:inequality}, for all $ \alpha \in [\eta,1]$ and $n\in \left\{1, 2, 3\right\}$, one gets
\begin{align*}
 | \Delta_{\mu, \mu'} \partial^{n}_x \widehat{p}_{m}(\mu, s, t, x, z) | \leq K \frac{W^{\beta}_2(\mu, \mu')}{(t-s)^{\frac{n+\beta-\eta}{2}}} \, g(c(t-s), z-x) 
\end{align*}
\noindent and
\begin{align*}
 | \Delta_{\mu, \mu'} \partial^{n}_x \widehat{p}_{m}(\mu, s, r, t, x, z) | \leq K \frac{W^{\beta}_2(\mu, \mu')}{(t-r)^{\frac{n}{2}}(r-s)^{\frac{\beta-\eta}{2}}} \, g(c(t-r), z-x). 
\end{align*}

\noindent \emph{Step 3: proof of \eqref{gaussian:bound:diff:pmp1:mu:mup}.}\\

We here use the decomposition \eqref{other:representation:parametrix:series} which writes
$$
p_{m+1}(\mu, s, t ,x ,z) = \widehat{p}_{m+1}(\mu, s, t, x , z) + \widehat{p}_{m+1} \otimes \Phi_{m+1}(\mu, s, t , x, z)
$$
\noindent where, by \eqref{Gaussian:estimate:Phim}, the last term satisfies
$$
|\widehat{p}_{m+1} \otimes \Phi_{m+1}(\mu, s, t , x, z)| \leq K (t-s)^{\eta/2} g(c(t-s), z-x).
$$
We thus deduce from \eqref{gaussian:bound:diff:deriv:hat:pm:same:time} that if $W_2(\mu, \mu') > (t-s)^{\frac12}$, one has
\begin{align*}
|\Delta_{\mu, \mu'} p_{m+1}(\mu, s, t , x, z)  | & \leq  |\Delta_{\mu, \mu'} \widehat{p}_{m+1}(\mu, s, t , x, z) | + K (t-s)^{\frac{\eta}{2}} g(c(t-s), z-x) \\
& \leq K \frac{W^{\beta}_2(\mu, \mu')}{(t-s)^{\frac{\beta-\eta}{2}}} \, g(c(t-s), z-x).
\end{align*}

The previous estimates together with \eqref{diagonal:estimate:delta:mes:heat:kernel} concludes the proof of \eqref{gaussian:bound:diff:pmp1:mu:mup}.\\

\noindent \emph{Step 4: proof of \eqref{diff:mes:with:holder:reg:space:drift:diff:coefficients}.}\\

As in step 1, we only deal with the difference $a_{i, j} (t, x, [X^{s, \xi, (m)}_t])  - a_{i, j}(t, x, [X^{s, \xi', (m)}_t]) - [ a_{i, j} (t, y, [X^{s, \xi, (m)}_t])  - a_{i, j}(t, y, [X^{s, \xi', (m)}_t]) ] $. Having in mind the notation of step 1, namely from \eqref{decomposition:diff:aij}, it holds
\begin{align*}
h(x) - h(y) & = a_{i, j} (t, x, [X^{s, \xi, (m)}_t])  - a_{i, j}(t, x, [X^{s, \xi', (m)}_t]) - [ a_{i, j} (t, y, [X^{s, \xi, (m)}_t])  - a_{i, j}(t, y, [X^{s, \xi', (m)}_t]) ] \\
& = \int_0^1  \int_{(\rr^d)^2} \left[\frac{\delta a_{i, j}}{\delta m}(t, x, \Theta^{(m)}_{\lambda, t})(y') -  \frac{\delta a_{i, j}}{\delta m}(t, y, \Theta^{(m)}_{\lambda, t})(y')\right] p_{m}(\mu, s, t, x', y') \, dy'  \, (\mu-\mu')(dx') \,  d\lambda   \\
&   +  \int_0^1 \int_{(\rr^d)^2} \Big(\frac{\delta a_{i, j}}{\delta m}(t, x, \Theta^{(m)}_{\lambda, t})(y') - \frac{\delta a_{i, j}}{\delta m}(t, y, \Theta^{(m)}_{\lambda, t})(y') \Big) \Delta_{\mu, \mu'} p_{m}(\mu, s, t , x', y')  \, dy'  \mu'(dx') \, d\lambda  \\
& =: {\rm I} + {\rm II}.
\end{align*}

We now follow similar lines of reasonings as those developed in step 1. We first prove that there exists a positive constant $K:= K(T,\HR, \HE)$ such that for any $\alpha \in [0,\eta]$, any $\beta \in [\alpha,1]$ and any $x', \, x'' \in \mathbb{R}^d$
\begin{align}
\Big| \int_{\mathbb{R}^d} &  \left[\frac{\delta a_{i, j}}{\delta m}(t, x, \Theta^{(m)}_{\lambda, t})(y') -  \frac{\delta a_{i, j}}{\delta m}(t, y, \Theta^{(m)}_{\lambda, t})(y')\right] [p_{m}(\mu, s, t, x', y') - p_{m}(\mu, s, t, x'', y')]  \, dy' \Big| \notag \\
& \leq K (|x-y|^{\eta-\alpha} \wedge 1) \frac{|x'-x''|^\beta}{(t-s)^{\frac{\beta-\alpha}{2}}}.\label{double:holder:regularity:estimate:functional:derivative}
\end{align}

We again split the computations into the two disjoint cases $|x'-x''| > (t-s)^{1/2}$ and $|x'-x''| \leq (t-s)^{1/2}$. If $|x'-x''|> (t-s)^{1/2}$, we again write
\begin{align*}
\int_{\rr^d} & \left[\frac{\delta a_{i, j}}{\delta m}(t, x, \Theta^{(m)}_{\lambda, t})(y') -  \frac{\delta a_{i, j}}{\delta m}(t, y, \Theta^{(m)}_{\lambda, t})(y')\right]  [p_{m}(\mu, s, t, x', y') - p_{m}(\mu, s, t, x'', y')]  \, dy' \\
& = \left[\frac{\delta a_{i, j}}{\delta m}(t, x, \Theta^{(m)}_{\lambda, t})(x') -  \frac{\delta a_{i, j}}{\delta m}(t, y, \Theta^{(m)}_{\lambda, r})(x')\right] - \left[\frac{\delta a_{i, j}}{\delta m}(t, x, \Theta^{(m)}_{\lambda, t})(x'') -  \frac{\delta a_{i, j}}{\delta m}(t, y, \Theta^{(m)}_{\lambda, t})(x'')\right] \\
& \quad + \int_{\rr^d} \left[\left[\frac{\delta a_{i, j}}{\delta m}(t, x, \Theta^{(m)}_{\lambda, t})(y') -  \frac{\delta a_{i, j}}{\delta m}(t, y, \Theta^{(m)}_{\lambda, t})(y')\right]   \right. \\
&\quad \quad \left.  -  \left[\frac{\delta a_{i, j}}{\delta m}(t, x, \Theta^{(m)}_{\lambda, t})(x') -  \frac{\delta a_{i, j}}{\delta m}(t, y, \Theta^{(m)}_{\lambda, t})(x')\right] \right] p_{m}(\mu, s, t, x', y')  dy' \\
&  \quad - \int_{\rr^d} \left[\left[\frac{\delta a_{i, j}}{\delta m}(t, x, \Theta^{(m)}_{\lambda, t})(y') -  \frac{\delta a_{i, j}}{\delta m}(t, y, \Theta^{(m)}_{\lambda, t})(y')\right]  \right. \\
& \quad \quad \left. -  \left[\frac{\delta a_{i, j}}{\delta m}(t, x, \Theta^{(m)}_{\lambda, t})(x'') -  \frac{\delta a_{i, j}}{\delta m}(t, y, \Theta^{(m)}_{\lambda, t})(x'')\right]  \right] p_{m}(\mu, s, t, x'', y')  dy' 
\end{align*}
\noindent and then combine the uniform $\eta$-H\"older regularity of $[\delta a_{i, j}/\delta m](t, ., m)(.)$ with the Gaussian upper-estimate \eqref{bound:derivative:heat:kernel} with $n=0$ and the space-time inequality \eqref{space:time:inequality} so that
\begin{align*}
\Big| \int_{\rr^d} & \left[\frac{\delta a_{i, j}}{\delta m}(t, x, \Theta^{(m)}_{\lambda, t})(y') -  \frac{\delta a_{i, j}}{\delta m}(t, y, \Theta^{(m)}_{\lambda, t})(y')\right]  [p_{m}(\mu, s, t, x', y') - p_{m}(\mu, s, t, x'', y')]  \, dy' \Big| \\
& \leq K (|x-y|^\eta \wedge |x'-x''|^\eta \wedge 1 + |x-y|^\eta \wedge (t-s)^{\eta/2} \wedge 1)\\
& \leq K (|x-y|^{\eta-\alpha} \wedge 1) (|x'-x''|^{\alpha} + (t-s)^{\alpha/2}) \\
& \leq K (|x-y|^{\eta-\alpha} \wedge 1) \frac{|x'-x''|^\beta}{(t-s)^{\frac{\beta-\alpha}{2}}}
\end{align*}
\noindent for any $\alpha \in [0,\eta]$ and any $\beta \in [\alpha, 1]$. Otherwise, if $|x'-x''| \leq (t-s)^{1/2}$, using the mean-value theorem, \eqref{bound:derivative:heat:kernel} with $n=1$, the uniform $\eta$-H\"older regularity of $ [\delta a_{i, j}/\delta m](t, ., m)(.)$ and the space-time inequality \eqref{space:time:inequality}, we obtain
\begin{align*}
&\Big| \int_{\rr^d} \left[\frac{\delta a_{i, j}}{\delta m}(t, x, \Theta^{(m)}_{\lambda, t})(y') -  \frac{\delta a_{i, j}}{\delta m}(t, y, \Theta^{(m)}_{\lambda, r})(y')\right] [p_{m}(\mu, s, t, x', y') - p_{m}(\mu, s, t, x'', y')]  \, dy' \Big| \\
& = \Big| \int_0^1  \int_{\mathbb{R}^d}\left[\frac{\delta a_{i, j}}{\delta m}(t, x, \Theta^{(m)}_{\lambda, t})(y') -  \frac{\delta a_{i, j}}{\delta m}(t, y, \Theta^{(m)}_{\lambda, t})(y')\right]\, \partial_x p_{m}(\mu, s, t, \lambda' x' + (1-\lambda') x'', y') (x'-x'') \, dy' d\lambda' \Big| \\ 
& = \Big| \int_0^1  \int_{\rr^d} \left[\left[\frac{\delta a_{i, j}}{\delta m}(t, x, \Theta^{(m)}_{\lambda, t})(y') -  \frac{\delta a_{i, j}}{\delta m}(t, y, \Theta^{(m)}_{\lambda, t})(y')\right]  \right. \\
& \left. \quad  -\left[\frac{\delta a_{i, j}}{\delta m}(t, x, \Theta^{(m)}_{\lambda, t})(\lambda x' + (1-\lambda) x'') -  \frac{\delta a_{i, j}}{\delta m}(t, y, \Theta^{(m)}_{\lambda, t})(\lambda x' + (1-\lambda) x'')\right] \right]\\
 & \quad  \quad \partial_x p_{m}(\mu, s, t, \lambda' x' + (1-\lambda') x'', y') (x'-x'') \, dy' d\lambda' \Big|\\ 
& \leq K \frac{|x'-x''|}{(t-s)^{\frac12}} ( |x-y|^\eta \wedge (t-s)^{\eta/2} \wedge 1) \\
& \leq K ( |x-y|^{\eta-\alpha} \wedge 1) \frac{|x'-x''|^\beta}{(t-s)^{\frac{\beta-\alpha}{2}}}
\end{align*}
\noindent for any $\alpha \in [0,\eta]$ and any $\beta \in [0, 1]$. This concludes the proof of \eqref{double:holder:regularity:estimate:functional:derivative}. 

From \eqref{double:holder:regularity:estimate:functional:derivative}, we thus derive
$$
| {\rm I} | \leq K ( |x-y|^{\eta-\alpha} \wedge 1) \frac{W_2^{\beta}(\mu, \mu')}{(t-s)^{\frac{\beta-\alpha}{2}}}
$$

\noindent for any $\alpha \in [0,\eta]$ and any $\beta \in [\alpha, 1]$. 
%

In order to deal with ${\rm II}$, we consider the two cases $W_2(\mu, \mu') < (t-s)^{1/2}$ and $W_2(\mu, \mu') \geq (t-s)^{1/2}$. In the first case, from \eqref{diagonal:estimate:delta:mes:heat:kernel} and the uniform $\eta$-H\"older regularity of $ [\delta a_{i, j}/\delta m](t, ., m)(y)$, we get
 \begin{align*}
&  \Big| \int_{\rr^d}  \Big(\frac{\delta a_{i, j}}{\delta m}(t, x, \Theta^{(m)}_{\lambda, t})(y') - \frac{\delta a_{i, j}}{\delta m}(t, y, \Theta^{(m)}_{\lambda, t})(y') \Big) \Delta_{\mu, \mu'} p_{m}(\mu, s, t , x', y') \, dy' \Big| \\
 & \leq K (|x-y|^\eta \wedge 1)  \frac{W^{\beta}_2(\mu, \mu')}{(t-s)^{\frac{\beta-\eta}{2}}} \\
 & \leq K ( |x-y|^{\eta-\alpha} \wedge 1) \frac{W^{\beta}_2(\mu, \mu')}{(t-s)^{\frac{\beta-\alpha}{2}}}
 \end{align*}
 \noindent so that
 $$
 |{\rm II}| \leq K |x-y|^{\eta-\alpha}\frac{W^{\beta}_2(\mu, \mu')}{(t-s)^{\frac{\beta-\alpha}{2}}}
 $$
 \noindent for any $\alpha \in [0,\eta]$ and any $\beta \in [0, 1]$.
Otherwise, if $W_2(\mu, \mu') \geq (t-s)^{1/2}$, we first write
\begin{align*}
&   \int_{\rr^d}  \Big(\frac{\delta a_{i, j}}{\delta m}(t, x, \Theta^{(m)}_{\lambda, t})(y') - \frac{\delta a_{i, j}}{\delta m}(t, y, \Theta^{(m)}_{\lambda, t})(y') \Big) \Delta_{\mu, \mu'} p_{m}(\mu, s, t , x', y') \, dy'  \\
 & =  \int_{\mathbb{R}^d}  \Big( \frac{\delta a_{i, j}}{\delta m}(t, x, \Theta^{(m)}_{\lambda, t})(y') - \frac{\delta a_{i, j}}{\delta m}(t, y, \Theta^{(m)}_{\lambda, t})(y') - \Big[\frac{\delta a_{i, j}}{\delta m}(t, x, \Theta^{(m)}_{\lambda, t})(x') - \frac{\delta a_{i, j}}{\delta m}(t, y, \Theta^{(m)}_{\lambda, t})(x')\Big]\Big) \\
 & \quad \quad \times \Delta_{\mu, \mu'} p_{m}(\mu, s, t , x', y') \, dy' 
\end{align*}
\noindent and then use the uniform $\eta$-H\"older regularity of $ [\delta a_{i, j}/\delta m](t, ., m)(.)$, the Gaussian upper-estimate \eqref{bound:derivative:heat:kernel} and finally the space-time inequality \eqref{space:time:inequality}. We thus derive
$$
|{\rm II}| \leq K (|x-y|^\eta \wedge (t-s)^{\eta/2} \wedge 1) \leq K (|x-y|^\eta \wedge (t-s)^{\eta/2}\wedge 1) \frac{W^{\beta}_2(\mu, \mu')}{(t-s)^{\frac{\beta}{2}} } \leq K ( |x-y|^{\eta-\alpha} \wedge 1) \frac{W^{\beta}_2(\mu, \mu')}{(t-s)^{\frac{\beta-\alpha}{2}}}. 
$$

Gathering the estimates on ${\rm I}$ and ${\rm II}$, we thus conclude
$$
|h(x) - h(y)| \leq K ( |x-y|^{\eta-\alpha} \wedge 1) \frac{W^{\beta}_2(\mu, \mu')}{(t-s)^{\frac{\beta-\alpha}{2}}} 
$$
\noindent  for any $\alpha \in [0,\eta]$ and any $\beta \in [\alpha, 1]$.\\

\noindent \emph{Step 5: proof of \eqref{gaussian:bound:diff:Hm:mu} and \eqref{gaussian:bound:diff:Phimp1:mu}.}\\

We use the following decomposition
\begin{equation}
\label{decomposition:delta:mes:Hmp1}
\Delta_{\mu, \mu'} \mH_{m+1}(\mu, s, r ,t , x ,z)  = {\rm I}(x) + {\rm II}(x) + {\rm III}(x) + {\rm IV}(x),
\end{equation}

\noindent with
\begin{align*}
{\rm I }(x) & :=  \sum_{i=1}^d [b_i(r, x, [X^{s, \xi, (m)}_r]) - b_i(r, x, [X^{s, \xi', (m)}_r])] \partial_{x_i}\widehat{p}_{m+1}(\mu, s, r, t, x, z), \\
{\rm II }(x) & :=  \sum_{i=1}^d b_i(r, x, [X^{s, \xi', (m)}_r]) \Delta_{\mu, \mu'} \partial_{x_i}\widehat{p}_{m+1}(\mu, s, r, t, x, z), \\
{\rm III }(x)  & := \frac12 \sum_{i, j=1}^d \Big[(a_{i, j}(r, x, [X^{s, \xi, (m)}_r]) - a_{i, j}(r, z , [X^{s, \xi, (m)}_r])) \\
& \quad \quad - (a_{i, j}(r, x, [X^{s, \xi', (m)}_r]) - a_{i, j}(r, z, [X^{s, \xi', (m)}_r])) \Big]   \partial^2_{x_i, x_j}\widehat{p}_{m+1}(\mu, s, r, t, x , z), \\
{\rm IV}(x) & : = \frac12 \sum_{i, j=1}^d   (a_{i, j}(r, x, [X^{s, \xi', (m)}_r]) - a_{i, j}(r, z, [X^{s, \xi', (m)}_r])) \Delta_{\mu, \mu'} \partial^2_{x_i, x_j} \widehat{p}_{m+1}(\mu, s, r, t, x, z).
\end{align*}

\noindent and prove the following estimates: there exists positive constants $K:=K(T, \HR, \HE), \, c:=c(\lambda) >0$ such that for all $\beta \in [\eta,1]$
\begin{align*}
 |{\rm I} (x)| & \leq K \frac{W^{\beta}_2(\mu, \mu')}{(t-r)^{\frac12}(r-s)^{\frac{\beta-\eta}{2}}} \, g(c(t-r) , z- x), \\
  |{\rm II} (x)| & \leq K \frac{W^{\alpha}_2(\mu, \mu')}{(t-r)^{\frac12}(r-s)^{\frac{\beta-\eta}{2}}} \, g(c(t-r), z-x),   \\
|{\rm III} (x)| & \leq K \left\{\frac{1}{(t-r)^{1-\frac{\eta}{2}}(r-s)^{\frac{\beta}{2}}} \wedge\frac{1}{(t-r) (r-s)^{\frac{\beta-\eta}{2}}} \right\} \,W^{\beta}_2(\mu, \mu') \,  g(c(t-r), z - x), \\
|{\rm IV} (x)| & \leq  K  \frac{W^{\beta}_2(\mu, \mu')}{(t-r)^{1-\frac{\eta}{2}}(r-s)^{\frac{\beta-\eta}{2}}} \, g(c(t-r), z - x).
\end{align*}

 The estimates on ${\rm I}$, ${\rm II}$ and ${\rm IV}$ follow from \eqref{diff:mes:drift:diff:coefficients}, \eqref{gaussian:bound:diff:deriv:hat:pm:different:time}, the uniform $\eta$-H\"older regularity of $a_{i, j}(t, ., m)$, the boundedness of $b_i$ and the space-time inequality \eqref{space:time:inequality}. We now deal with ${\rm III}$ which is the tricky part of the proof. From \eqref{diff:mes:with:holder:reg:space:drift:diff:coefficients} (with $\alpha=0$ and $\alpha=\eta$) and \eqref{standard}, we obtain 
\begin{align*}
 |{\rm III} | &  \leq K  \left\{ \frac{|z-x|^\eta}{(t-r) (r-s)^{\frac{\beta}{2}}} \wedge \frac{1}{(t-r)(r-s)^{\frac{\beta-\eta}{2}}}\right\} \, W_2^{\beta}(\mu, \mu')\, g(c(t-r), z - x) \\
 & \leq K  \left\{ \frac{1}{(t-r)^{1-\frac{\eta}{2}} (r-s)^{\frac{\beta}{2}}} \wedge \frac{1}{(t-r)(r-s)^{\frac{\beta-\eta}{2}}}\right\} \, W_2^{\beta}(\mu, \mu')\, g(c(t-r), z - x) 
\end{align*}
\noindent where we used the space-time inequality \eqref{space:time:inequality} for the last inequality. This concludes the proof of \eqref{gaussian:bound:diff:Hm:mu}. \\

In order to prove \eqref{gaussian:bound:diff:Phimp1:mu}, we first remark that using \eqref{gaussian:bound:diff:Hm:mu} if $W_2(\mu, \mu') \leq (r-s)^{1/2}$ (which is thus valid for any $\beta \in [0,1]$) and the standard estimate \eqref{iter:parametrix:kernel} with $k=1$ otherwise yields
 \begin{equation}
 \label{diff:mes:parametrix:kernel:mp1}
 |\Delta_{\mu, \mu'} \mH_{m+1}(\mu, s, r, t, x , z)| \leq K \frac{W^{\beta}_2(\mu, \mu')}{(t-r)^{1-\frac{\eta}{2}}(r-s)^{\frac{\beta}{2}}} \, g(c(t-r), z-x)
 \end{equation}
 \noindent for any $\beta \in [0,1]$. From the identity $\Phi_{m+1}(\mu, s, r, t, x, z) = \mH_{m+1}(\mu, s, r, t, x, z) + \mH_{m+1}\otimes \Phi_{m+1}(\mu, s, r, t, x, z)$, we then obtain the following relation 
\begin{align}
\Delta_{\mu, \mu'} \Phi_{m+1}(\mu, s, r, t, x, z) & = \Delta_{\mu, \mu'} \mH_{m+1}(\mu, s, r, t, x, z) \nonumber  \\
& \quad + \int_r^t \int_{\mathbb{R}^d} \Delta_{\mu, \mu'} \mH_{m+1}(\mu, s, r, r', x, y)  \Phi_{m+1}(\mu, s , r', t, y, z) \, dr' \, dy \label{delta:measure:Phi:mp1}\\
& \quad + \int_r^t \int_{\mathbb{R}^d}  \mH_{m+1}(\mu', s, r, r', x, y)  \Delta_{\mu, \mu'} \Phi_{m+1}(\mu, s, r', t, y, z) \, dr' \, dy \nonumber
\end{align}
\noindent which in turn, using the estimates \eqref{diff:mes:parametrix:kernel:mp1} and \eqref{Gaussian:estimate:Phim}, yields
\begin{align*}
| \Delta_{\mu, \mu'} \Phi_{m+1}(\mu, s, r, t, x, z) | & \leq K \frac{W^{\beta}_2(\mu, \mu')}{(t-r)^{1-\frac{\eta}{2}}(r-s)^{\frac{\beta}{2}}} \, g(c(t-r), z-x) \\
& + K \int_{r}^t \int_{\mathbb{R}^d} \frac{1}{(t-r)^{1-\frac{\eta}{2}}} g(c(t-r'), y-x) \, |\Delta_{\mu, \mu'} \Phi_m(\mu, s, r', t, y, z)| \, dy\, dr'. 
\end{align*}

The space-time kernel $[r, t] \times \mathbb{R}^d \ni(r', x) \mapsto (t-r')^{-1+\eta/2} (r'-s)^{-\beta} g(c(t-r'), z-x)$ being non-singular, iterating the previous inequality implies
$$
| \Delta_{\mu , \mu'} \Phi_{m+1}(\mu, s, r, t, x, z) | \leq  K \frac{W^{\beta}_2(\mu, \mu')}{(t-r)^{1-\frac{\eta}{2}}(r-s)^\beta} \, g(c(t-r), z-x)
$$

\noindent for any $\beta \in [0,1]$. The proof of \eqref{gaussian:bound:diff:Phimp1:mu} is now complete. \\

\noindent \emph{Step 6: proof of \eqref{gaussian:bound:diff:Hm:mu:holder:reg}.}

In the off-diagonal regime $|x-y|^2 \geq t-r$, the estimate \eqref{gaussian:bound:diff:Hm:mu:holder:reg} directly follows \eqref{gaussian:bound:diff:Hm:mu}. From now on, we assume that the diagonal regime $|x-y|^2 < t-r$ is in force. It suffices to investigate the H\"older regularity of each term appearing in the decomposition \eqref{decomposition:delta:mes:Hmp1}. 

We write
$$
{\rm I}(x) - {\rm I}(y) = {\rm I}_1(x) - {\rm I}_1(y) + {\rm I}_2(x) - {\rm I}_2(y)
$$
\noindent with
\begin{align*}
& {\rm I}_1(x)  - {\rm I}_1(y) \\
&\quad :=   \sum_{i=1}^d [b_i(r, x, [X^{s, \xi, (m)}_r]) - b_i(r, x, [X^{s, \xi', (m)}_r]) - (b_i(r, y, [X^{s, \xi, (m)}_r]) - b_i(r, y, [X^{s, \xi', (m)}_r]) )] \partial_{x_i}\widehat{p}_{m+1}(\mu, s, r, t, x, z)
\end{align*}
\noindent and
\begin{align*}
& {\rm I}_2(x)  - {\rm I}_2(y) \\
&\quad :=   \sum_{i=1}^d (b_i(r, x, [X^{s, \xi, (m)}_r]) - b_i(r, x, [X^{s, \xi', (m)}_r])) [\partial_{x_i}\widehat{p}_{m+1}(\mu, s, r, t, x, z) - \partial_{x_i}\widehat{p}_{m+1}(\mu, s, r, t, y, z)].
\end{align*}

From \eqref{diff:mes:with:holder:reg:space:drift:diff:coefficients} and \eqref{standard}, it directly follows
\begin{align*}
| {\rm I}_1(x)  - {\rm I}_1(y) | & \leq K W^{\beta}_2(\mu, \mu') \frac{|x-y|^{\eta-\alpha}}{(t-r)^{\frac12} (r-s)^{\frac{\beta - \alpha}{2}}} \,  g(c(t-r), z-x) 
\end{align*}
\noindent for any $\alpha \in [0,\eta]$ and any $\beta \in [\alpha, 1]$. From \eqref{diff:mes:drift:diff:coefficients} and standard regularity estimates of Gaussian kernels
 \begin{align*}
| {\rm I}_2(x)  - {\rm I}_2(y) | & \leq K \frac{W^{\beta}_2(\mu, \mu')}{(r-s)^{\frac{\beta-\eta}{2}}}  \frac{|x-y|^{\eta-\alpha}}{(t-r)^{\frac{1+\eta-\alpha}{2}}}   \, \left\{ g(c(t-r), z-x) + g(c(t-r), z-y) \right\} 
\end{align*}
\noindent for any $\alpha \in [0,\eta]$ and any $\beta \in [\eta,1]$.
Similarly, we write
$$
{\rm II }(x) -{\rm II}(y)  = {\rm II}_1(x) - {\rm II}_1(y) + {\rm II}_2(x) - {\rm II}_2(y)  
$$
\noindent with
$$
{\rm II}_1(x) - {\rm II}_1(y) := \sum_{i=1}^d [ b_i(r, x, [X^{s, \xi', (m)}_r]) -  b_i(r, y, [X^{s, \xi', (m)}_r]) ] \Delta_{\mu, \mu'} \partial_{x_i}\widehat{p}_{m+1}(\mu, s, r, t, x, z), 
$$
\noindent and
$$
{\rm II}_2(x) - {\rm II}_2(y) := \sum_{i=1}^d  b_i(r, x, [X^{s, \xi', (m)}_r])  \Delta_{\mu, \mu'} [\partial_{x_i}\widehat{p}_{m+1}(\mu, s, r, t, x, z) - \partial_{x_i}\widehat{p}_{m+1}(\mu, s, r, t, y, z)].
$$

The uniform $\eta$-H\"older regularity of $b_i(t, ., m)$ and \eqref{gaussian:bound:diff:deriv:hat:pm:different:time} yield
\begin{align*}
| {\rm II}_1(x) - {\rm II}_1(y) | & \leq K |x-y|^\eta  \frac{W^{\beta}_2(\mu, \mu')}{(t-r)^{\frac12}(r-s)^{\frac{\beta-\eta}{2}}} \, g(c(t-r), z-x)\\
& \leq K |x-y|^{\eta-\alpha}  \frac{W^{\beta}_2(\mu, \mu')}{(t-r)^{\frac{1-\alpha}{2}} (r-s)^{\frac{\beta-\eta}{2}}} g(c(t-r), z-x)
\end{align*}

\noindent recalling that $|x-y|^2 < t-r$ for the last inequality. From the mean-value theorem, \eqref{gaussian:bound:diff:deriv:hat:pm:different:time} and \eqref{diagonal:regime:heat:kernel}, we obtain
\begin{align*} 
|\Delta_{\mu, \mu'} & [\partial_{x_i}\widehat{p}_{m+1}(\mu, s, r, t, x, z) -\partial_{x_i}\widehat{p}_{m+1}(\mu, s, r, t, y, z)]| \\
& \leq \int_0^1 |\Delta_{\mu, \mu'} \partial^2_{x, x_i} \widehat{p}_{m+1}(\mu, s, r, t, \lambda x +(1-\lambda) y, z)| |x-y| \, d\lambda \\
& \leq K |x-y|  \frac{W^{\beta}_2(\mu, \mu') }{(t-r) (r-s)^{\frac{\beta-\eta}{2}}}  \,  g(c(t-r), z-x) \\
& \leq K |x-y|^{\eta-\alpha} \frac{W^{\beta}_2(\mu, \mu')}{(t-r)^{\frac{1+\eta-\alpha}{2}} (r-s)^{\frac{\beta-\eta}{2}}} \, g(c(t-r), z-x) 
\end{align*}
\noindent which, by the boundedness of the drift coefficient, yields
\begin{align*}
| {\rm II}_2(x) - {\rm II}_2(y) | &  \leq K |x-y|^{\eta-\alpha} \frac{W^{\beta}_2(\mu, \mu')}{(t-r)^{\frac{1+\eta-\alpha}{2}} (r-s)^{\frac{\beta-\eta}{2}}} \, g(c(t-r), z-x) 
\end{align*}
\noindent for any $\alpha \in [0,\eta]$ and any $\beta \in [\eta, 1]$.

We then use the decomposition
$$
{\rm III}(x) - {\rm III}(y) = {\rm III}_1(x)  - {\rm III}_1(y)  + {\rm III}_2(x)  - {\rm III}_2(y)  
$$
\noindent with
\begin{align*}
{\rm III}_1(x)  - {\rm III}_1(y) &:= \frac12 \sum_{i, j=1}^d \Big[(a_{i, j}(r, x, [X^{s, \xi, (m)}_r]) - a_{i, j}(r, x, [X^{s, \xi', (m)}_r])) \\
& \quad \quad - (a_{i, j}(r, y, [X^{s, \xi, (m)}_r]) - a_{i, j}(r, y, [X^{s, \xi', (m)}_r])) \Big] \partial^2_{x_i, x_j}\widehat{p}_{m+1}(\mu, s, r, t, x , z) 
\end{align*}
\noindent and
\begin{align*}
{\rm III}_2(x) & - {\rm III}_2(y) := \frac12 \sum_{i, j=1}^d \Big[(a_{i, j}(r, x, [X^{s, \xi, (m)}_r]) - a_{i, j}(r, z , [X^{s, \xi, (m)}_r])) \\
& \quad \quad - (a_{i, j}(r, x, [X^{s, \xi', (m)}_r]) - a_{i, j}(r, z, [X^{s, \xi', (m)}_r])) \Big] [\partial^2_{x_i, x_j}\widehat{p}_{m+1}(\mu, s, r, t, x , z) - \partial^2_{x_i, x_j}\widehat{p}_{m+1}(\mu, s, r, t, y , z)].
\end{align*}

On the one hand, from \eqref{diff:mes:with:holder:reg:space:drift:diff:coefficients}, one has
\begin{align*}
|{\rm III}_1(x)  - {\rm III}_1(y)|&  \leq K |x-y|^{\eta-\alpha} \frac{W^{\beta}_2(\mu, \mu') }{(t-r) (r-s)^{\frac{\beta-\alpha}{2}}}  \, g(c(t-r), z-x)
\end{align*}
\noindent for any $\alpha \in [0,\eta]$ and any $\beta \in [\alpha,1]$. On the other hand, from \eqref{diff:mes:with:holder:reg:space:drift:diff:coefficients} (with $\alpha=\eta$) and standard regularity estimates of Gaussian kernels, one gets
\begin{align*}
|{\rm III}_2(x)  - {\rm III}_2(y)|&  \leq  K |x-y|^{\eta-\alpha} \frac{W^{\beta}_2(\mu, \mu') }{(t-r)^{1+\frac{\eta-\alpha}{2}}(r-s)^{\frac{\beta-\eta}{2}}}  \left\{ g(c(t-r), z-x) + g(c(t-r), z-y) \right\}
\end{align*}
\noindent for any $ \alpha \in [0,\eta]$ and any $\beta \in [\eta,1]$.

Finally, we write
$$
{\rm IV}(x) -{\rm IV}(y) = {\rm IV}_1(x)-{\rm IV}_1(y) + {\rm IV}_2(x) - {\rm IV}_2(y)  
$$
\noindent with
\begin{align*}
 {\rm IV}_1(x) - {\rm IV}_1(y) & = \frac12 \sum_{i, j=1}^d  [a_{i, j}(r, x, [X^{s, \xi', (m)}_r]) - a_{i, j}(r, y, [X^{s, \xi', (m)}_r]) ]  \Delta_{\mu, \mu'} \partial^2_{x_i, x_j} \widehat{p}_{m+1}(\mu, s, r, t, x, z)
\end{align*}
\noindent and
\begin{align*}
  {\rm IV}_2(x) - {\rm IV}_2(y)  & = \frac12 \sum_{i, j=1}^d  [a_{i, j}(r, x, [X^{s, \xi', (m)}_r]) - a_{i, j}(r, z, [X^{s, \xi', (m)}_r])] \\
 & \quad \times \Delta_{\mu, \mu'}[ \partial^2_{x_i, x_j} \widehat{p}_{m+1}(\mu, s, r, t, x, z) - \partial^2_{x_i, x_j} \widehat{p}_{m+1}(\mu, s, r, t, y, z)].
\end{align*}

From the uniform $\eta$-H\"older regularity of $ a_{i, j}(t, ., m)$ and \eqref{gaussian:bound:diff:deriv:hat:pm:different:time}, we directly obtain
\begin{align*}
| {\rm IV}_1(x) - {\rm IV}_1(y)| & \leq K (|x-y|^\eta \wedge 1) \frac{W^{\beta}_2(\mu, \mu')}{(t-r)(r-s)^{\frac{\beta-\eta}{2}}} \, g(c(t-r), z-x) \\
& \leq K (|x-y|^{\eta-\alpha} \wedge 1) \frac{ W^{\beta}_2(\mu, \mu')}{(t-r)^{1-\frac{\alpha}{2}}(r-s)^{\frac{\beta-\eta}{2}}} \,   g(c(t-r), z-x) 
\end{align*}
\noindent  for any $\beta \in [\eta,1]$ and any $ \alpha \in [0,\eta]$.

From the mean-value theorem, \eqref{gaussian:bound:diff:deriv:hat:pm:different:time} and \eqref{diagonal:regime:heat:kernel}, recalling that the diagonal regime $|x-y|^2 < (t-r)$ is in force, we obtain
\begin{align*} 
|\Delta_{\mu, \mu'} & \partial^2_{x_i, x_j}\widehat{p}_{m+1}(\mu, s, r, t, x, z) - \Delta_{\mu, \mu'} \partial^2_{x_i, x_j}\widehat{p}_{m+1}(\mu, s, r, t, y, z)| \\
& \leq \int_0^1 |\Delta_{\mu, \mu'} \partial^3_{x, x_i, x_j} \widehat{p}_{m+1}(\mu, s, r, t, \lambda x +(1-\lambda) y, z)| |x-y| \, d\lambda \\
& \leq K |x-y| \frac{W^{\beta}_2(\mu, \mu')}{(t-r)^{\frac32} (r-s)^{\frac{\beta-\eta}{2}}}  \, g(c(t-r), z-x)  \\
& \leq K |x-y|^{\eta-\alpha}  \frac{ W^{\beta}_2(\mu, \mu')}{(t-r)^{1+\frac{\eta-\alpha}{2}} (r-s)^{\frac{\beta-\eta}{2}}} \, g(c(t-r), z-x) 
\end{align*}
\noindent which in turn, using the uniform $\eta$-H\"older regularity of $ a_{i, j}(t, ., m)$ and the space-time inequality \eqref{space:time:inequality} yields
\begin{align*}
| {\rm IV}_2(x) - {\rm IV}_2(y) | 
& \leq K |x-y|^{\eta-\alpha}  \frac{ W^{\beta}_2(\mu, \mu')}{(t-r)^{1-\frac{\alpha}{2}} (r-s)^{\frac{\beta-\eta}{2}}} \, g(c(t-r), z-x) 
\end{align*}

\noindent for any $\beta \in [\eta, 1]$ and any $\alpha \in [0,\eta]$. We conclude the proof of \eqref{gaussian:bound:diff:Hm:mu:holder:reg} by putting together the previous estimates. \\

\noindent \emph{Step 7: proof of \eqref{gaussian:bound:diff:second:deriv:p:hat:mu:holder:reg}.} \\

We first remark that if $W^2_2(\mu, \mu') >  t-s$, then from \eqref{diff:freezing:point:deriv:space:phat}, we obtain
\begin{align*}
 | \Delta_{\mu, \mu'} & [\partial^2_{x_i, x_j} [\widehat{p}^{y}_{m+1}  -  \widehat{p}^{y'}_{m+1}](\mu, s ,t , x ,z )]  |    \\
& \leq |\partial^2_{x_i, x_j} \widehat{p}^{y}_{m+1}(\mu, s ,t , x ,z ) - \partial^2_{x_i, x_j} \widehat{p}^{y'}_{m+1}(\mu, s ,t , x ,z )|  + |\partial^2_{x_i, x_j} \widehat{p}^{y}_{m+1}(\mu', s ,t , x ,z ) - \partial^2_{x_i, x_j} \widehat{p}^{y'}_{m+1}(\mu', s ,t , x ,z )|  \\
 & \leq K (|y-y'|^{\eta} \wedge 1) \frac{1}{t-s }  \,  g(c(t-s), z - x) \\
 & \leq K (|y-y'|^{\eta} \wedge 1) \frac{ W^{\beta}_2(\mu, \mu')}{(t-s)^{1+\frac{\beta}{2}} }  \,  g(c(t-s), z - x)
\end{align*}
\noindent for any $\beta \in [0,1]$.
From now on we assume that $W^{2}_2(\mu, \mu') \leq t-s$. Starting from the relation 
$$
\partial^{2}_{x_i , x_j} \widehat{p}^{y}_{m+1}(\mu, s, t, x, z) = H^{i, j}_2(\int_s^t a(r, y, [X^{s, \xi , (m)}_r]) \, dr , z-x) \widehat{p}^{y}_{m+1}(\mu, s, t, x, z),
$$
\noindent we derive the following decomposition
$$
\Delta_{\mu, \mu'} [\partial^{2}_{x_i , x_j} ( \widehat{p}^{y}_{m+1} - \widehat{p}^{y'}_{m+1}) (\mu, s, t, x, z) ]  = {\rm I} + {\rm II} + {\rm III} + {\rm IV}
$$
\noindent with
\begin{align*}
{\rm I}  &  := \Delta_{\mu, \mu'} \Big[ H^{i, j}_2\Big(\int_s^t a(r, y, [X^{s, \xi , (m)}_r]) \, dr , z-x\Big) - H^{i, j}_2\Big(\int_s^t a(r, y', [X^{s, \xi , (m)}_r]) \, dr , z-x\Big) \Big] \widehat{p}^{y}_{m+1}(\mu, s, t, x, z), \\
{\rm II} &  :=  \Big[H^{i, j}_2\Big(\int_s^t a(r, y, [X^{s, \xi' , (m)}_r]) \, dr , z-x\Big) - H^{i, j}_2\Big(\int_s^t a(r, y', [X^{s, \xi' , (m)}_r]) \, dr , z-x\Big)\Big] \Delta_{\mu, \mu'} \widehat{p}^{y}_{m+1}(\mu, s, t, x, z), \\
{\rm III} & := \Delta_{\mu, \mu'} H^{i, j}_2\Big(\int_s^t a(r, y', [X^{s, \xi , (m)}_r] ) \, dr , z-x\Big) \Big[ \widehat{p}^{y}_{m+1}(\mu, s, t, x, z) - \widehat{p}^{y'}_{m+1}(\mu, s, t, x, z) \Big], \\
{\rm IV} & := H^{i, j}_2\Big(\int_s^t a(r, y', [X^{s, \xi' , (m)}_r]) \, dr , z-x\Big) \Delta_{\mu, \mu'}\Big[ \widehat{p}^{y}_{m+1}(\mu, s, t, x, z) - \widehat{p}^{y'}_{m+1}(\mu, s, t, x, z) \Big].
\end{align*}

We now establish an appropriate estimate for each term of the above decomposition. Before doing so, we introduce some notations, namely, the invertible matrix $A^{s, t}_\lambda(\mu) := \int_s^t [\lambda a(r, y, [X^{s, \xi, (m)}_r]) + (1-\lambda) a(r, y', [X^{s, \xi, (m)}_r])] \, dr$,  and the associated Gaussian kernel $\widehat{p}^{\lambda, y, y'}_{m+1}(\mu, s, t, x, z) = g(A^{s, t}_\lambda(\mu), z-x)$, $\lambda \in [0,1]$. Now, the mean-value theorem yields
\begin{align}
& \left(\int_s^t a(r, y, [X^{s, \xi , (m)}_r])  \, dr  \right)^{-1}  - \left(\int_s^t a(r, y', [X^{s, \xi , (m)}_r]) \, dr  \right)^{-1} \nonumber \\
& \quad = \int_0^1 (A^{s, t}_{\lambda}(\mu))^{-1} \int_s^t [a(r, y, [X^{s, \xi, (m)}_r]) -  a(r, y', [X^{s, \xi, (m)}_r])] \, dr (A^{s, t}_{\lambda}(\mu))^{-1} \, d\lambda. \label{diff:inverse:diffusion:matrix:freezing:point}
\end{align}

On the one hand, using \eqref{diff:mes:drift:diff:coefficients}, we obtain
\begin{align}
|\Delta_{\mu, \mu'}( A^{s, t}_\lambda(\mu) ^{-1})| & \leq \frac{K}{(t-s)^2} \int_s^t (|\Delta_{\mu, \mu'} a(r, y, [X^{s, \xi, (m)}_r])| + |\Delta_{\mu, \mu'} a(r, y', [X^{s, \xi, (m)}_r])| ) \, dr \nonumber \\
& \leq K \frac{W^{\beta}_2(\mu, \mu')}{(t-s)^2} \int_s^t \frac{1}{(r-s)^{\frac{\beta-\eta}{2}}} \, dr \nonumber \\
& \leq K  \frac{W^{\beta}_2(\mu, \mu')}{(t-s)^{1+ \frac{\beta-\eta}{2}}} \label{diff:measure:inverse:A:s:t:lambda}
\end{align}
\noindent for any $\beta\in [\eta, 1]$. On the other hand, from \eqref{diff:mes:with:holder:reg:space:drift:diff:coefficients} with $\alpha=0$, we obtain 
\begin{align}
\Big|\int_s^t \Delta_{\mu, \mu'}[a(r, y, [X^{s, \xi, (m)}_{r}]) - a(r, y', [X^{s, \xi, (m)}_{r}])]  \, dr \Big| \leq K (|y-y'|^\eta \wedge 1) W^{\beta}_2(\mu ,\mu') (t-s)^{1 - \frac{\beta}{2}} \label{diff:mes:integral:diff:mes:diffusion:coeff}
\end{align}
\noindent for any $\beta \in [0,1]$. The two previous estimates with \eqref{diff:inverse:diffusion:matrix:freezing:point} thus imply
\begin{align}
\label{diff:measure:diff:inverse:diffusion:matrix:freezing:point}
\Delta_{\mu, \mu'}\Big[ & \left(\int_s^t a(r, y, [X^{s, \xi , (m)}_r])  \, dr  \right)^{-1}   - \left(\int_s^t a(r, y', [X^{s, \xi , (m)}_r]) \, dr  \right)^{-1} \Big] \\
& \quad \leq K (|y-y'|^\eta \wedge 1) \frac{W^{\beta}_2(\mu, \mu')}{(t-s)^{1+\frac{\beta}{2}}}. \nonumber
\end{align}

From the very definition of $H^{i, j}_2$ and the estimate \eqref{diff:measure:diff:inverse:diffusion:matrix:freezing:point}, one directly obtains
\begin{align*}
\Big|\Delta_{\mu, \mu'} \Big[ H^{i, j}_2\Big(\int_s^t a(r, y, [X^{s, \xi , (m)}_r]) \, dr , z-x\Big) & - H^{i, j}_2\Big(\int_s^t a(r, y', [X^{s, \xi , (m)}_r]) \, dr , z-x\Big) \Big]\Big| \\
& \leq K (|y-y'|^\eta \wedge 1) \frac{W^{\beta}_2(\mu, \mu')}{(t-s)^{1+\frac{\beta}{2}}} \left\{ \frac{|z-x|^2}{t-s} + 1\right\}
\end{align*}
\noindent which in turn using the space-time inequality \eqref{space:time:inequality} yields
$$
|{\rm I}| \leq  K (|y-y'|^\eta \wedge 1) \frac{W^{\beta}_2(\mu, \mu')}{(t-s)^{1+\frac{\beta}{2}}} \, g(c(t-s), z-x)
$$
\noindent for any $\beta \in [\eta, 1]$.
From the uniform $\eta$-H\"older regularity of $a(t, ., m)$, \eqref{gaussian:bound:diff:deriv:hat:pm:same:time} and the space-time inequality \eqref{space:time:inequality}, for any $\beta \in [\eta, 1]$, one has
$$
|{\rm II}| \leq K (|y-y'|^\eta \wedge 1) \frac{W^{\beta}_2(\mu, \mu')}{(t-s)^{1+\frac{\beta-\eta}{2}}} \, g(c(t-s), z-x).
$$
The mean value theorem together with \eqref{diff:mes:drift:diff:coefficients}, \eqref{diff:freezing:point:deriv:space:phat} and the space-time inequality \eqref{space:time:inequality} yield
$$
|{\rm III}| \leq K (|y-y'|^\eta \wedge 1) \, \frac{W^{\beta}_2(\mu, \mu')}{(t-s)^{1+\frac{\beta-\eta}{2}}} \, g(c(t-s), z-x).
$$

Finally, from \eqref{definition:general:phat} and Jacobi's formula, one has
\begin{align}
& (\widehat{p}^{y}_{m+1}-\widehat{p}^{y'}_{m+1})(\mu, s, t, x, z) \notag\\
& = \int_0^1 \partial_\lambda \widehat{p}^{\lambda, y, y'}_{m+1}(\mu, s, t, x, z) \, d\lambda \notag \\
& = \int_0^1 -\frac12\left\{ \tr\left(A^{s, t}_\lambda(\mu) ^{-1} \int_s^t [a(r, y, [X^{s, \xi, (m)}_{r}]) - a(r, y', [X^{s, \xi, (m)}_{r}])]  \, dr \right) \right. \notag\\
& \quad \left. - (z-x)^{t} A^{s, t}_\lambda(\mu) ^{-1} \int_s^t [a(r, y, [X^{s, \xi, (m)}_{r}]) - a(r, y', [X^{s, \xi, (m)}_{r}])]  \, dr A^{s, t}_\lambda(\mu) ^{-1} (z-x) \right\} \, \widehat{p}^{\lambda, y, y'}_{m+1}(\mu, s, t, x, z) \, d\lambda. \label{mean:value:theorem:diff:freezing:point:gaussian:kernel}
\end{align}

Using \eqref{diff:measure:inverse:A:s:t:lambda} and \eqref{diff:mes:integral:diff:mes:diffusion:coeff}, the inequality $|\Delta_{\mu, \mu'} \widehat{p}^{\lambda, y, y'}_{m+1}(\mu, s, t, x, z)| \leq K W^{\beta}_2(\mu, \mu') (t-s)^{\frac{\eta-\beta}{2}} g(c(t-s), z-x)$ which stems from the mean-value theorem and \eqref{diff:mes:drift:diff:coefficients}, and the space time inequality \eqref{space:time:inequality} yield
$$
\Big| \Delta_{\mu, \mu'}\Big[ [\widehat{p}^{y}_{m+1}- \widehat{p}^{y'}_{m+1}](\mu, s, t, x, z) \Big] \Big| \leq K (|y-y'|^\eta \wedge 1) \frac{W^{\beta}_2(\mu ,\mu')}{(t-s)^{\frac{\beta}{2}}} \, g(c(t-s), z-x). 
$$
\noindent so that, again by the space time inequality \eqref{space:time:inequality}
$$
|{\rm IV}| \leq K (|y-y'|^\eta \wedge 1) \frac{W^{\beta}_2(\mu ,\mu')}{(t-s)^{1+ \frac{\beta}{2}}} \, g(c(t-s), z-x). 
$$
Gathering the estimates on ${\rm I}$, ${\rm II}$, ${\rm III}$ and ${\rm IV}$, we conclude
\begin{align*}
 | \Delta_{\mu, \mu'} & \partial^2_{x_i, x_j} [\widehat{p}^{y}_{m+1}-\widehat{p}^{y'}_{m+1}](\mu, s ,t , x ,z )  |  \leq K (|y-y'|^{\eta} \wedge 1) \frac{ W^{\beta}_2(\mu, \mu')}{(t-s)^{1+\frac{\beta}{2}} }  \,  g(c(t-s), z - x)
\end{align*}
\noindent for any $\beta \in [\eta, 1]$. Finally, since $W^2_2(\mu, \mu') \leq t-s$, the above estimate still holds for any $\beta \in [0,1]$. The proof of \eqref{gaussian:bound:diff:second:deriv:p:hat:mu:holder:reg} is now complete.

\subsection{Proof of Lemma \ref{lem:technical:estimate:reg:time:metric:coefficients:densities}.\\}\label{section:proof:lem:technical:estimate:reg:time:metric:coefficients:densities}

\noindent \emph{Step 1: proof of \eqref{diff:time:heat:kernel}.}\\

We split the computations into the two cases $|s_1-s_2| \geq t-s_1\vee s_2$ and $|s_1-s_2| \leq t-s_1\vee s_2$. In the first case, the announced estimate directly follows from \eqref{bound:derivative:heat:kernel} with $n=0$. Indeed, observe that for any $\beta \in [0,1]$
\begin{align*}
|\Delta_{s_1,s_2}&  p_{m}(\mu, s, t, x, z) | \nonumber \\
&  \leq K \left\{  g(c(t-s_1 \vee s_2), z-x) +   g(c(t-s_1\wedge s_2), z-x)  \right\} \\
& \leq K \left\{ \frac{|s_1-s_2|^{\beta}}{(t-s_1\vee s_2)^{\beta}}  g(c(t-s_1 \vee s_2), z-x) + \frac{(t-s_1\vee s_2)^{\beta} + |s_1-s_2|^\beta}{(t-s_1\wedge s_2)^{\beta}}  g(c(t-s_1 \wedge s_2), z-x) \right\} \\
 & \leq K \left\{ \frac{|s_1-s_2|^{\beta}}{(t-s_1)^{\beta}} g(c(t-s_1), z-x) +  \frac{|s_1-s_2|^{\beta}}{(t-s_2)^{\beta}} g(c(t-s_2), z-x)  \right\}.
\end{align*}

In the second case, \eqref{diff:time:heat:kernel} follows from the mean-value theorem, \eqref{time:derivative:induction:decoupling:mckean} (recall that $\mathscr{C}^{n, 0}_{m} \leq  \mathscr{C}^{n, 0}_{\infty} := \lim_{m \uparrow \infty} \mathscr{C}^{n, 0}_{m} < \infty$) and the inequality $(t-s_1\vee s_2)^{-1} \leq 2 (t-s_1\wedge s_2)^{-1}$. We omit the remaining technical details. \\

\noindent \emph{Step 2: proof of \eqref{diff:time:drift:diff:coefficients}.}\\

Using the mean-value theorem and the fact that $\int_{\mathbb{R}^d} \Delta_{s_1,s_2} p_m(\mu, s, t, x, y) \, dy = 0$, one gets 
\begin{align*}
& a_{i, j}(t, x, [X^{s_1\vee s_2, \xi, (m)}_t])   - a_{i, j}(t, x, [X^{s_1\wedge s_2, \xi, (m)}_t]) \\
&\quad = \int_0^1 \int_{\mathbb{R}^d} \frac{\delta a_{i, j}}{\delta m} (t, x, \Theta^{(m)}_{\lambda, t})(y) \, \Delta_{s_1,s_2} p_m(\mu, s, t, y) \, dy \, d\lambda\\
& \quad = \int_0^1 \int_{(\mathbb{R}^d)^2} \Big(\frac{\delta a_{i, j}}{\delta m} (t, x, \Theta^{(m)}_{\lambda, t})(y) - \frac{\delta a_{i, j}}{\delta m} (t, x, \Theta^{(m)}_{\lambda, t})(x') \Big) \Delta_{s_1,s_2} p_m(\mu, s, t, x', y) \, dy \, \mu(dx') d\lambda
\end{align*}

\noindent where we introduced the notation $\Theta^{(m)}_{\lambda, t} := \lambda [X^{s_1\vee s_2, \xi, (m)}_t] + (1-\lambda) [X^{s_1\wedge s_2, \xi, (m)}_t]$. The uniform $\eta$-H\"older regularity of $ [\delta  a_{i, j}/\delta m](t, x, m)(.)$ combined with \eqref{diff:time:heat:kernel} and the space-time inequality \eqref{space:time:inequality} thus yield
\begin{equation}
\label{diff:time:diff:coefficient:temporary}
|a_{i, j}(t, x, [X^{s_1\vee s_2, \xi, (m)}_t])  - a_{i, j}(t, x, [X^{s_1\wedge s_2, \xi, (m)}_t])| \leq K |s_1-s_2|^\beta \left( \frac{1}{(t-s_{1})^{\beta-\frac{\eta}{2}}} + \frac{1}{(t-s_{2})^{\beta-\frac{\eta}{2}}} \right)
\end{equation}
\noindent which concludes the proof.\\

\noindent \emph{Step 3: proof of \eqref{gaussian:bound:diff:time:hat:pm:same:time}.}\\

We first remark that from \eqref{time:deriv:p:hat:s:t}, \eqref{time:derivative:induction:decoupling:mckean} and the space-time inequality \eqref{space:time:inequality}, one gets
\begin{equation}
\label{gaussian:estimate:deriv:time:p:hat}
|\partial_s \widehat{p}_{m+1}(\mu, s, t, x, z)| \leq \frac{K}{t-s} \, g(c(t-s), z-x)
\end{equation}
\noindent so that \eqref{gaussian:bound:diff:time:hat:pm:same:time} for $n=0$ follows from similar lines of reasonings as those employed to prove \eqref{diff:time:heat:kernel}. We thus omit the remaining technical details. Now, let $n\in \left\{1, 2\right\}$. If $|s_1-s_2| > t-s_1\vee s_2$, \eqref{gaussian:bound:diff:time:hat:pm:same:time} follows from the standard Gaussian estimate $|\partial_x^n  \widehat{p}_{m+1}(\mu, s, t, x, z)| \leq (t-s)^{-n/2} \, g(c(t-s), z-x)$. From now on we assume that $|s_1-s_2| \leq t-s_1\vee s_2$. We use the decomposition
\begin{align*}
\partial^{n}_x & \widehat{p}_{m+1}(\mu, s_1\vee s_2, t, x, z)  - \partial_x^n \widehat{p}_{m+1}(\mu, s_1 \wedge s_2, t, x, z) \\
& = \Big[ H_n(\int_{s_1\vee s_2}^t a(r', z, [X^{s_1\vee s_2, \xi , (m)}_{r'}]) \, dr', z-x) - H_n(\int_{s_1\wedge s_2}^t a(r', z, [X^{s_1\wedge s_2, \xi , (m)}_{r'}]) \, dr', z-x) \Big] \\
& \quad \times \widehat{p}_{m+1}(\mu, s_1\vee s_2, t, x, z) \\
& \quad + H_n(\int_{s_1\wedge s_2}^t a(r', z, [X^{s_1\wedge s_2, \xi , (m)}_{r'}]) \, dr', z-x) \Big[ \widehat{p}_{m+1}(\mu, s_1\vee s_2, t, x, z) - \widehat{p}_{m+1}(\mu, s_1\wedge s_2, t, x, z)\Big]\\
& =: {\rm A}_n + {\rm B}_n
\end{align*}
\noindent where we introduced the notation $H_n$ for the vector $(H^{i}_1)_{1\leq i \leq d}$ if $n=1$ or the matrix $(H^{i, j}_2)_{1\leq i, j \leq d}$ if $n=2$. 

In order to deal with ${\rm A}_n$, we first observe that $\partial_s [\int_s^{t} a(r', z, [X^{s, \xi , (m)}_{r'}]) \, dr'] = -a(s, z, \mu) + \int_s^t \partial_s a(r', z, [X_{r'}^{s, \xi , (m)}]) \, dr'$ so that combining \eqref{recursive:bound:time:deriv:a:or:b} with \eqref{time:derivative:induction:decoupling:mckean}, we derive that $|\partial_s [\int_s^{t} a(r', z, [X^{s, \xi , (m)}_{r'}]) \, dr']| \leq K$ which in turn by the mean-value theorem yields
\begin{align}
\Big|\Big(\int_{s_1\vee s_2}^{t} a(r', z, [X^{s_1\vee s_2, \xi , (m)}_{r'}]) \, dr'\Big)^{-1} & - \Big(\int_{s_1\wedge s_2}^{t} a(r', z, [X^{s_1\wedge s_2, \xi , (m)}_{r'}]) \, dr'\Big)^{-1}\Big| \notag \\
& \leq K \frac{|s_1-s_2| }{(t-s_1\vee s_2)^2} \notag \\
& \leq K \frac{|s_1-s_2|^{\beta}}{(t-s_1\vee s_2)^{1+\beta}} \label{diff:time:inv:diff:coeff}
\end{align}
\noindent for any $\beta \in [0,1]$, recalling that $|s_1-s_2| \leq t-s_1\vee s_2$. The previous bound together with the space-time inequality \eqref{space:time:inequality} then implies
\begin{align*}
\Big| H_n(\int_{s_1\vee s_2}^t&  a(r', z, [X^{s_1\vee s_2, \xi , (m)}_{r'}]) \, dr', z-x)  - H_n(\int_{s_1\wedge s_2}^t a(r', z, [X^{s_1\wedge s_2, \xi , (m)}_{r'}]) \, dr', z-x) \Big| \widehat{p}_{m+1}(\mu, s_1\vee s_2, t, x, z)\\
& \leq K  \frac{|s_1-s_2|^{\beta}}{(t-s_1\vee s_2)^{\frac{n}{2}+\beta}} g(c(t-s_1 \vee s_2), z-x)
\end{align*}
\noindent so that
$$
|{\rm A}_n| \leq K  \frac{|s_1-s_2|^{\beta}}{(t-s_1\vee s_2)^{\frac{n}{2}+\beta}} \, g(c(t-s_1\vee s_2) , z-x).
$$
In order to deal with ${\rm B}_n$, we first remark that from the very definition of $H_n$ one gets
\begin{align}
\Big| & H_n(\int_{r}^t a(r', z, [X^{s_1\wedge s_2, \xi , (m)}_{r'}]) \, dr', z-x) \Big| \nonumber \\
& \quad  \leq K \left\{ \frac{|z-x|}{t-r} \textbf{1}_{\left\{ n = 1 \right\}}+ \left(\frac{|z-x|^2}{(t-r)^2}+  \frac{1}{t-r} \right) \textbf{1}_{\left\{ n = 2 \right\}} + \left(\frac{|z-x|^3}{(t-r)^3}+  \frac{|z-x|}{(t-r)^2} \right) \textbf{1}_{\left\{ n = 3 \right\}}  \right\} \label{bound:hermite:polynomial:Hn}
\end{align}
\noindent for any $r\in [0,t)$ and any $n\in \left\{1, 2, 3\right\}$. 

Combining \eqref{bound:hermite:polynomial:Hn}, the estimate \eqref{gaussian:bound:diff:time:hat:pm:same:time} for $n=0$ and the space-time inequality \eqref{space:time:inequality}, we eventually deduce
\begin{align*}
| {\rm B}_n | & \leq K \left\{ \frac{|z-x|}{t-s_1\wedge s_2} + \left(\frac{|z-x|^2}{(t-s_1\wedge s_2)^2}+  \frac{1}{t-s_1\wedge s_2} \right) \textbf{1}_{\left\{ n = 2 \right\}}\right\}  \\
& \quad \times \left\{ \frac{ |s_1-s_2|^\beta}{(t-s_1)^{\beta}} g(c(t-s_1) , z-x) + \frac{ |s_1-s_2|^\beta}{(t-s_2)^{\beta}} g(c(t-s_2), z-x) \right\} \\
& \leq K \left\{ \frac{|s_1-s_2|^{\beta}}{(t-s_1)^{\frac{n}{2}+\beta}} g(c(t-s_1) , z-x)  + \frac{|s_1-s_2|^{\beta}}{(t-s_2)^{\frac n2 + \beta}}  g(c(t-s_2) , z-x)  \right\}.
\end{align*}

Gathering the bounds on ${\rm A}_n$ and ${\rm B}_n$ concludes the proof of \eqref{gaussian:bound:diff:time:hat:pm:same:time}. \\

\noindent \emph{Step 4: proof of \eqref{diff:time:with:holder:reg:space:drift:diff:coefficients}.}\\

We prove the announced estimate only for the part related to the difference of the diffusion coefficient as the other part is handled in a completely analogous manner. We write
\begin{align*}
& a_{i, j} (t, x, [X^{s_1\vee s_2, \xi, (m)}_t])   - a_{i, j}(t, x, [X^{s_1\wedge s_2, \xi, (m)}_t]) - [ a_{i, j} (t, y, [X^{s_1\vee s_2, \xi, (m)}_t])  - a_{i, j}(t, y, [X^{s_1 \wedge s_2, \xi, (m)}_t]) ] \\
& = \int_0^1  \int_{(\rr^d)^2} \left[\frac{\delta a_{i, j}}{\delta m}(t, x, \Theta^{(m)}_{\lambda, t})(y') -  \frac{\delta a_{i, j}}{\delta m}(t, y, \Theta^{(m)}_{\lambda, t})(y')\right] \\
& \quad \times [ p_{m}(\mu, s_1\vee s_2, t, x', y') -  p_{m}(\mu, s_1\wedge s_2, t, x', y') ]\, dy'  \, \mu(dx') \,  d\lambda  \\
& = \int_0^1  \int_{(\rr^d)^2} \left\{ \left[\frac{\delta a_{i, j}}{\delta m}(t, x, \Theta^{(m)}_{\lambda, t})(y') -  \frac{\delta a_{i, j}}{\delta m}(t, y, \Theta^{(m)}_{\lambda, t})(y')\right] - \left[\frac{\delta a_{i, j}}{\delta m}(t, x, \Theta^{(m)}_{\lambda, t})(x') -  \frac{\delta a_{i, j}}{\delta m}(t, y, \Theta^{(m)}_{\lambda, t})(x')\right] \right\} \\
& \quad \times [ p_{m}(\mu, s_1\vee s_2, t, x', y') -  p_{m}(\mu, s_1\wedge s_2, t, x', y') ]\, dy'  \, \mu(dx') \,  d\lambda
\end{align*}
\noindent where we introduced the notation $ \Theta^{(m)}_{\lambda, t} := \lambda  [X^{s_1\vee s_2, \xi, (m)}_t] + (1-\lambda) [X^{s_1\wedge s_2, \xi, (m)}_t]$ and used the fact that $\int_{\mathbb{R}^d}  [ p_{m}(\mu, s_1\vee s_2, t, x', y') -  p_{m}(\mu, s_1\wedge s_2, t, x', y') ]\, dy'  = 0$. From the uniform $\eta$-H\"older regularity of the map $[\delta a_{i, j}/\delta m](t, ., m)(.)$, we get
\begin{align*}
\Big| \left[\frac{\delta a_{i, j}}{\delta m}(t, x, \Theta^{(m)}_{\lambda, t})(y') -  \frac{\delta a_{i, j}}{\delta m}(t, y, \Theta^{(m)}_{\lambda, t})(y')\right] & - \left[\frac{\delta a_{i, j}}{\delta m}(t, x, \Theta^{(m)}_{\lambda, t})(x') -  \frac{\delta a_{i, j}}{\delta m}(t, y, \Theta^{(m)}_{\lambda, t})(x')\right] \ \Big|  \\
& \leq K (|y-x|^\eta \wedge |y'-x'|^\eta \wedge 1)\\
& \leq K |y-x|^\alpha (|y'-x'|^{\eta-\alpha}\wedge 1)
\end{align*}
\noindent for any $\alpha \in [0,\eta]$, which combined with \eqref{diff:time:heat:kernel} and the space-time inequality \eqref{space:time:inequality} yields
\begin{align*}
|a_{i, j} (t, x, [X^{s_1\vee s_2, \xi, (m)}_t])  & - a_{i, j}(t, x, [X^{s_1\wedge s_2, \xi, (m)}_t])  - [ a_{i, j} (t, y, [X^{s_1\vee s_2, \xi, (m)}_t])  - a_{i, j}(t, y, [X^{s_1 \wedge s_2, \xi, (m)}_t]) ]  |\\
& \leq K_\alpha |y-x|^\alpha |s_1-s_2|^\beta \left\{\frac{1}{(t-s_1)^{\beta+\frac{\alpha-\eta}{2}}} + \frac{1}{(t-s_2)^{\beta+\frac{\alpha-\eta}{2}}}\right\}.
\end{align*}
This concludes the proof.\\

\noindent \emph{Step 5: proof of \eqref{gaussian:bound:diff:time:second:deriv:p:hat:mu:holder:reg}.}\\

Observe that if $|s_1-s_2| >  t-s_1\vee s_2$, then from \eqref{diff:freezing:point:deriv:space:phat}, we obtain
\begin{align*}
 | & \Delta_{s_1, s_2}  \partial^n_{x} [\widehat{p}^{y}_{m+1}-\widehat{p}^{y'}_{m+1}](\mu, s ,t , x ,z )  |    \\
& \leq |\partial^n_{x} [\widehat{p}^{y}_{m+1}-\widehat{p}^{y'}_{m+1}](\mu, s_1 ,t , x ,z ) |  +|\partial^n_{x} [\widehat{p}^{y}_{m+1}-\widehat{p}^{y'}_{m+1}](\mu, s_2 ,t , x ,z ) |  \\
 & \leq K (|y-y'|^{\eta} \wedge 1) \left\{ \frac{1}{(t-s_1\vee s_2)^{\frac{n}{2}}}  \,  g(c(t-s_1\vee s_2), z - x) +  \frac{1}{(t-s_1 \wedge s_2)^{\frac{n}{2}}}  \,  g(c(t-s_1 \wedge s_2), z - x) \right\} \\
 & \leq K (|y-y'|^{\eta} \wedge 1) \left\{ \frac{|s_1-s_2|^\beta}{(t-s_1\vee s_2)^{\frac{n}{2}+\beta}}  \,  g(c(t-s_1\vee s_2), z - x) +  \frac{|t-s_1\vee s_2|^\beta + |s_1-s_2|^\beta}{(t-s_1\wedge s_2)^{\frac{n}{2}+\beta}}  \,  g(c(t-s_1\wedge s_2), z - x) \right\} \\
 & \leq K (|y-y'|^{\eta} \wedge 1) \left\{ \frac{|s_1-s_2|^\beta}{(t-s_1)^{\frac{n}{2}+\beta}}  \,  g(c(t-s_1), z - x) +  \frac{|s_1-s_2|^\beta}{(t-s_2)^{\frac{n}{2}+\beta}}  \,  g(c(t-s_2), z - x) \right\} 
\end{align*}
\noindent for any $\beta \in [0,1]$.
From now on, we assume that $|s_1-s_2| \leq  t-s_1\vee s_2$. Starting from the relation 
$$
\partial^{n}_x \widehat{p}^{y}_{m+1}(\mu, s, t, x, z) = H_n(\int_s^t a(r, y, [X^{s, \xi , (m)}_r]) \, dr , z-x) \widehat{p}^{y}_{m+1}(\mu, s, t, x, z),
$$
\noindent we derive the following decomposition
$$
\Delta_{s_1, s_2} [\partial^{n}_{x} ( \widehat{p}^{y}_{m+1} - \widehat{p}^{y'}_{m+1}) (\mu, s, t, x, z) ]  = {\rm I} + {\rm II} + {\rm III} + {\rm IV}
$$
\noindent with
\begin{align*}
{\rm I}  &  := \Delta_{s_1, s_2} \Big[ H_n\Big(\int_s^t a(r, y, [X^{s, \xi , (m)}_r]) \, dr , z-x\Big) - H_n\Big(\int_s^t a(r, y', [X^{s, \xi , (m)}_r]) \, dr , z-x\Big) \Big] \widehat{p}^{y}_{m+1}(\mu, s_1 \vee s_2, t, x, z), \\
{\rm II} &  :=  \Big[H_n\Big(\int_{s_1\wedge s_2}^t a(r, y, [X^{s_1\wedge s_2, \xi , (m)}_r]) \, dr , z-x\Big) - H_n\Big(\int_{s_1\wedge s_2}^t a(r, y', [X^{s_1\wedge s_2, \xi , (m)}_r]) \, dr , z-x\Big)\Big] \\
& \quad \times \Delta_{s_1, s_2} \widehat{p}^{y}_{m+1}(\mu, s, t, x, z), \\
{\rm III} & := \Delta_{s_1, s_2} H_n\Big(\int_s^t a(r, y', [X^{s, \xi , (m)}_r] ) \, dr , z-x\Big) \Big[ \widehat{p}^{y}_{m+1}(\mu, s_1\vee s_2, t, x, z) - \widehat{p}^{y'}_{m+1}(\mu, s_1\vee s_2, t, x, z) \Big], \\
{\rm IV} & := H_n\Big(\int_{s_1\wedge s_2}^t a(r, y', [X^{s_1\wedge s_2, \xi , (m)}_r]) \, dr , z-x\Big) \Delta_{s_1, s_2}\Big[ \widehat{p}^{y}_{m+1}(\mu, s, t, x, z) - \widehat{p}^{y'}_{m+1}(\mu, s, t, x, z) \Big].
\end{align*}

We now establish an appropriate estimate for each term of the above decomposition. 

As in the proof of \eqref{gaussian:bound:diff:second:deriv:p:hat:mu:holder:reg}, we first introduce the invertible matrix $A^{s, t}_\lambda = A^{s, t}_\lambda(\mu) := \int_s^t [\lambda a(r, y, [X^{s, \xi, (m)}_r]) + (1-\lambda) a(r, y', [X^{s, \xi, (m)}_r])] \, dr$ and the associated Gaussian kernel $\widehat{p}^{\lambda, y, y'}_{m+1}(\mu, s, t, x, z) = g(A^{s, t}_\lambda(\mu), z-x)$, $\lambda \in [0,1]$. We then note that from the mean-value theorem, one has
\begin{align*}
|\Delta_{s_1, s_2} (A^{s, t}_\lambda)^{-1}| & \leq  \frac{K}{(t-s_1\vee s_2)^2} \Big[\Big| \Delta_{s_1, s_2}\int_{s}^{t}  a(r, y, [X^{s, \xi, (m)}_r]) \, dr \Big| +  \Big| \Delta_{s_1, s_2} \int_s^t a(r, y', [X^{s, \xi, (m)}_r])  \, dr \Big| \Big].
\end{align*}

Moreover, from \eqref{diff:time:drift:diff:coefficients}, recalling that $|s_1-s_2| \leq t-s_1\vee s_2$, for any $x\in \mathbb{R}^d$ and any $\beta \in [0,1]$, one has
\begin{align*}
\Big| \Delta_{s_1, s_2}\int_{s}^{t}  & a(r, x, [X^{s, \xi, (m)}_r]) \, dr \Big| \\
& \leq  \int_{s_1\vee s_2}^{t} | a(r, x, [X^{s_1\vee s_2, \xi, (m)}_r]) - a(r, x, [X^{s_1\wedge s_2, \xi, (m)}_r]) | \, dr  + \int_{s_1 \wedge s_2}^{s_1\vee s_2} | a(r, x, [X^{s_1\wedge s_2, \xi, (m)}_r]) | \, dr\\
& \leq K |s_1-s_2|^\beta \int_{s_1 \vee s_2}^{t} \Big[ \frac{1}{(r-s_1\vee s_2)^{\beta-\frac{\eta}{2}}} +  \frac{1}{(r-s_1\wedge s_2)^{\beta-\frac{\eta}{2}}} \Big] \, dr  + K |s_1-s_2| \\
& \leq K |s_1-s_2|^\beta [ (t-s_1\vee s_2)^{1-\beta +\frac{\eta}{2}} + (t-s_1\wedge s_2)^{1-\beta + \frac{\eta}{2}}] + K |s_1-s_2|^\beta (t-s_1\vee s_2)^{1-\beta} \\
& \leq K |s_1-s_2|^\beta (t-s_1\vee s_2)^{1-\beta} 
\end{align*}
\noindent where for the last inequality we used the fact that $t-s_1\wedge s_2 \leq 2 (t-s_1\vee s_2)$. We thus deduce that
\begin{equation}
\label{delta:time:invertible:matrix:mean:value:thm}
|\Delta_{s_1, s_2} (A^{s, t}_\lambda)^{-1}| \leq K \frac{|s_1-s_2|^\beta}{(t-s_1\vee s_2)^{1+\beta}}
\end{equation}

\noindent for any $\beta \in [0,1]$. Also, from \eqref{diff:time:with:holder:reg:space:drift:diff:coefficients} with $\alpha=\eta$ and similar computations, we obtain
\begin{align}
\Big| \Delta_{s_1, s_2} &  \left[\int_s^t [a(r, y, [X^{s, \xi, (m)}_r]) - a(r, y', [X^{s, \xi, (m)}_r])] \, dr\right] \Big| \notag \\
& \leq K \Big[ \Big| \int_{s_1\vee s_2}^{t} \Delta_{s_1, s_2} [a(r, y, [X^{s, \xi, (m)}_r]) - a(r, y', [X^{s, \xi, (m)}_r])] \, dr\Big| + |s_1-s_2| (|y-y'|^\eta \wedge 1)\Big] \notag \\
& \leq K |s_1-s_2|^\beta (|y-y'|^\eta \wedge 1) \Big[ \int_{s_1\vee s_2}^{t} \Big[\frac{1}{(r-s_1)^{\beta}} + \frac{1}{(r-s_2)^{\beta}}\Big] \, dr + (t-s_1\vee s_2)^{1-\beta} \Big] \notag \\
& \leq K |s_1-s_2|^\beta (|y-y'|^\eta \wedge 1) (t-s_1\vee s_2)^{1-\beta} \label{delta:time:diff:freezing:point:diff:coeff}
\end{align}
\noindent for any $\beta \in [0,1)$, where we again used the fact that $t-s_1\wedge s_2 \leq 2 (t-s_1\vee s_2)$.
Now, from the mean-value theorem, we obtain
\begin{align*}
\left(\int_s^t a(r, y, [X^{s, \xi, (m)}_r]) \, dr\right)^{-1} & - \left(\int_s^t a(r, y', [X^{s, \xi, (m)}_r]) \, dr\right)^{-1} \\
& = -\int_0^1  (A^{s, t}_\lambda)^{-1} \left[\int_s^t a(r, y, [X^{s, \xi, (m)}_r]) - a(r, y', [X^{s, \xi, (m)}_r]) \, dr\right] (A^{s, t}_\lambda)^{-1} \, d\lambda
\end{align*}

\noindent so that, using \eqref{delta:time:invertible:matrix:mean:value:thm}, \eqref{delta:time:diff:freezing:point:diff:coeff} together with some standard computations that we omit, one gets
\begin{align*}
\Big| \Delta_{s_1, s_2} &  \Big[\left(\int_s^t a(r, y, [X^{s, \xi, (m)}_r]) \, dr\right)^{-1}   - \left(\int_s^t a(r, y', [X^{s, \xi, (m)}_r]) \, dr\right)^{-1}\Big] \Big|\\
& \leq K (|y-y'|^\eta \wedge 1) \frac{|s_1-s_2|^\beta}{(t-s_1\vee s_2)^{1+\beta}}
\end{align*}
\noindent for any $\beta \in [0,1)$.
The previous bound and the space-time inequality \eqref{space:time:inequality} allow to conclude
\begin{align*}
\Big|\Delta_{s_1, s_2} \Big[ H_n\Big(\int_s^t a(r, y, [X^{s, \xi , (m)}_r]) \, dr , z-x\Big) & - H_n\Big(\int_s^t a(r, y', [X^{s, \xi , (m)}_r]) \, dr , z-x\Big) \Big]\Big|  \widehat{p}^{y}_{m+1}(\mu, s_1 \vee s_2, t, x, z) \\
& \leq K (|y-y'|^\eta \wedge 1) \frac{|s_1-s_2|^\beta}{(t-s_1\vee s_2)^{\frac{n}{2}+\beta}} \, g(c(t-s_1\vee s_2), z-x)
\end{align*}
\noindent so that
$$
|{\rm I}| \leq  K (|y-y'|^\eta \wedge 1) \frac{|s_1-s_2|^\beta}{(t-s_1\vee s_2)^{\frac{n}{2}+\beta}} \, g(c(t-s_1\vee s_2), z-x)
$$
\noindent for any $\beta \in [0, 1)$.
From the uniform $\eta$-H\"older regularity of $ a(t, ., m)$, one gets
\begin{align*}
 \Big|H_n\Big(\int_{s_1\wedge s_2}^t & a(r, y, [X^{s_1\wedge s_2, \xi , (m)}_r]) \, dr , z-x\Big)  - H_n\Big(\int_{s_1\wedge s_2}^t a(r, y', [X^{s_1\wedge s_2, \xi , (m)}_r]) \, dr , z-x\Big)\Big|  \\
& \leq K (|y-y'|^\eta \wedge 1) \left\{ \frac{|z-x|}{t-s_1\wedge s_2} \textbf{1}_{n=1}+ \Big[\frac{|z-x|^2}{(t-s_1\wedge s_2)^2} + \frac{1}{t-s_1\wedge s_2} \Big] \textbf{1}_{n=2} \right\}
\end{align*}
 \noindent which, combined with \eqref{gaussian:bound:diff:time:hat:pm:same:time} and the space-time inequality \eqref{space:time:inequality}, yields
$$
|{\rm II}| \leq K (|y-y'|^\eta \wedge 1)  \left\{ \frac{|s_1-s_2|^\beta}{(t-s_1)^{\frac{n}{2}+\beta}} \, g(c(t-s_1), z-x) + \frac{|s_1-s_2|^\beta}{(t-s_2)^{\frac{n}{2}+\beta}} \, g(c(t-s_2), z-x)  \right\}.
$$
The estimate \eqref{diff:time:inv:diff:coeff} and standard computations imply
$$
\Big|\Delta_{s_1, s_2} H_n\Big(\int_s^t a(r, y', [X^{s, \xi , (m)}_r] ) \, dr , z-x\Big) \Big| \leq K  \frac{|s_1-s_2|^\beta}{(t-s_1\vee s_2)^{1+\beta}} \left\{ |z-x| \textbf{1}_{n=1} + \Big[ \frac{|z-x|^2}{t-s_1\vee s_2} + 1\Big] \textbf{1}_{n=2} \right\}.
$$

\noindent which combined with \eqref{diff:freezing:point:deriv:space:phat} and the space-time inequality \eqref{space:time:inequality} yields
$$
|{\rm III}| \leq K (|y-y'|^\eta \wedge 1)\frac{|s_1-s_2|^\beta}{(t-s_1\vee s_2)^{\frac{n}{2}+\beta}} \, g(c(t-s_1\vee s_2), z-x).
$$

Finally, in spirit of the proof of \eqref{gaussian:bound:diff:second:deriv:p:hat:mu:holder:reg}, we deal with the last term ${\rm IV}$ by using the identity \eqref{mean:value:theorem:diff:freezing:point:gaussian:kernel} and then \eqref{delta:time:invertible:matrix:mean:value:thm}, \eqref{delta:time:diff:freezing:point:diff:coeff} as well as the inequality 
$$
|\Delta_{s_1, s_2} \widehat{p}^{\lambda, y, y'}_{m+1}(\mu, s, t, x, z)| \leq K  \left\{ \frac{|s_1-s_2|^\beta}{(t-s_1)^{\beta}} \, g(c(t-s_1), z-x) + \frac{|s_1-s_2|^\beta}{(t-s_2)^{\beta}} \, g(c(t-s_2), z-x) \right\}
$$
\noindent which stems from similar arguments as those employed to establish \eqref{gaussian:bound:diff:time:hat:pm:same:time}, and the space time inequality \eqref{space:time:inequality}. Omitting the remaining technical details, we obtain
\begin{align*}
\Big| \Delta_{s_1, s_2}\Big[ \widehat{p}^{y}_{m+1}(\mu, s, t, x, z) &  - \widehat{p}^{y'}_{m+1}(\mu, s, t, x, z) \Big] \Big| \\
& \leq K (|y-y'|^\eta \wedge 1)  \left\{ \frac{|s_1-s_2|^\beta}{(t-s_1)^{\beta}} \, g(c(t-s_1), z-x) + \frac{|s_1-s_2|^\beta}{(t-s_2)^{\beta}} \, g(c(t-s_2), z-x) \right\}. 
\end{align*}
Hence, again by the space time inequality \eqref{space:time:inequality}, we derive
$$
|{\rm IV}| \leq K (|y-y'|^\eta \wedge 1)  \left\{ \frac{|s_1-s_2|^\beta}{(t-s_1)^{\frac{n}{2}+\beta}} \, g(c(t-s_1), z-x) + \frac{|s_1-s_2|^\beta}{(t-s_2)^{\frac{n}{2}+\beta}} \, g(c(t-s_2), z-x) \right\}. 
$$
Putting together the estimates on ${\rm I}$, ${\rm II}$, ${\rm III}$ and ${\rm IV}$, we thus conclude
\begin{align*}
 | \Delta_{s_1, s_2} & [\partial^n_{x} [\widehat{p}^{y}_{m+1}- \widehat{p}^{y'}_{m+1}](\mu, s ,t , x ,z )]  | \\
 & \leq K (|y-y'|^\eta \wedge 1)  \left\{ \frac{|s_1-s_2|^\beta}{(t-s_1)^{\frac{n}{2}+\beta}} \, g(c(t-s_1), z-x) + \frac{|s_1-s_2|^\beta}{(t-s_2)^{\frac{n}{2}+\beta}} \, g(c(t-s_2), z-x) \right\}
\end{align*}
\noindent for any $\beta \in [0, 1)$. The proof of \eqref{gaussian:bound:diff:time:second:deriv:p:hat:mu:holder:reg} is now complete.\\

\noindent \emph{Step 6: proof of \eqref{gaussian:bound:diff:time:hat:pm:different:time}.}\\

We proceed as in the previous step. Let us first remark that if $|s_1-s_2| > r-s_1\vee s_2$ then one directly gets the estimate \eqref{gaussian:bound:diff:time:hat:pm:different:time} from the standard Gaussian bound $|\partial_x^n \widehat{p}_{m+1}(\mu, s, r, t, x, z)| \leq K (t-r)^{-\frac n2} g(c(t-r),  z-x)$.
 Assuming now that $|s_1-s_2| \leq r-s_1\vee s_2$, from \eqref{time:deriv:p:hat:s:r:t} and \eqref{time:derivative:induction:decoupling:mckean} (recall that $\mathscr{C}^{1, 0}_{m}< \mathscr{C}^{1,0}_{\infty} < \infty$) and the space-time inequality \eqref{space:time:inequality}, one gets
\begin{equation}
\label{gaussian:estimate:deriv:time:p:hat:s:r:t}
|\partial_s \widehat{p}_{m+1}(\mu, s, r, t, x, z)| \leq \frac{K}{(r-s)^{1-\frac{\eta}{2}}} \, g(c(t-r), z-x)
\end{equation}
\noindent which combined with the mean-value theorem yields the estimate \eqref{gaussian:bound:diff:time:hat:pm:different:time} for $n=0$. Now, let $n\in \left\{1, 2, 3 \right\}$ and assume again that $|s_1-s_2| \leq r-s_1\vee s_2$. We proceed again as in the previous step. Namely, we write
\begin{align*}
\partial^{n}_x & \widehat{p}_{m+1}(\mu, s_1\vee s_2, r,  t, x, z)  - \partial_x^n \widehat{p}_{m+1}(\mu, s_1 \wedge s_2, r, t, x, z) \\
& = \Big[ H_n(\int_{r}^t a(r', z, [X^{s_1\vee s_2, \xi , (m)}_{r'}]) \, dr', z-x) - H_n(\int_{r}^t a(r', z, [X^{s_1\wedge s_2, \xi , (m)}_{r'}]) \, dr', z-x) \Big] \\
& \quad \times \widehat{p}_{m+1}(\mu, s_1\vee s_2, r, t, x, z) \\
& \quad + H_n(\int_{r}^t a(r', z, [X^{s_1\wedge s_2, \xi , (m)}_{r'}]) \, dr', z-x) \Big[ \widehat{p}_{m+1}(\mu, s_1\vee s_2, r,  t, x, z) - \widehat{p}_{m+1}(\mu, s_1\wedge s_2, r,  t, x, z)\Big]\\
& =: {\rm A}_n + {\rm B}_n
\end{align*}
 
 In order to deal with ${\rm A}_n$, we again combine \eqref{recursive:bound:time:deriv:a:or:b} with \eqref{time:derivative:induction:decoupling:mckean} to deduce that $|\int_r^{t} \partial_s  a(r', z, [X^{s, \xi , (m)}_{r'}]) \, dr'| \leq K (r-s)^{-1+\eta/2} (t-r)$ which in turn by the mean-value theorem implies
\begin{align*}
\Big|\Big(\int_{r}^{t} a(r', z, [X^{s_1\vee s_2, \xi , (m)}_{r'}]) \, dr'\Big)^{-1} & - \Big(\int_{r}^{t} a(r', z, [X^{s_1\wedge s_2, \xi , (m)}_{r'}]) \, dr'\Big)^{-1}\Big| \\
& \leq K \frac{|s_1-s_2| } {(t-r)(r-s_1\vee s_2)^{1-\frac{\eta}{2}}} \\
& \leq K \frac{|s_1-s_2|^{\beta}}{(t-r)(r-s_1\vee s_2)^{\beta-\frac{\eta}{2}}}
\end{align*}
\noindent for any $\beta \in [0,1]$, recalling that $|s_1-s_2| \leq r-s_1\vee s_2$. The previous bound together with the space-time inequality \eqref{space:time:inequality} implies
\begin{align}
\Big| H_n(\int_{r}^t&  a(r', z, [X^{s_1\vee s_2, \xi , (m)}_{r'}]) \, dr', z-x)  - H_n(\int_{r}^t a(r', z, [X^{s_1\wedge s_2, \xi , (m)}_{r'}]) \, dr', z-x) \Big| \widehat{p}_{m+1}(\mu, s_1\vee s_2, r, t, x, z) \notag\\
& \leq K  \frac{|s_1-s_2|^{\beta}}{(t-r)^{\frac{n}{2}}(r-s_1\vee s_2)^{\beta-\frac{\eta}{2}} } g(c(t-r), z-x) \label{bound:diff:time:hermite:polynomial:Hn}
\end{align}
\noindent so that
$$
|{\rm A}_n| \leq K  \frac{|s_1-s_2|^{\beta}}{(t-r)^{\frac{n}{2}}(r-s_1\vee s_2)^{\beta-\frac{\eta}{2}} } g(c(t-r), z-x).
$$

 In order to deal with ${\rm B}_n$, we use \eqref{bound:hermite:polynomial:Hn}, the estimate \eqref{gaussian:bound:diff:time:hat:pm:different:time} for $n=0$ and finally the space-time inequality \eqref{space:time:inequality}. This yields
 \begin{align*}
| {\rm B}_n | & \leq K \left\{ \frac{|z-x|}{t- r} \textbf{1}_{\left\{ n = 1 \right\}} + \left(\frac{|z-x|^2}{(t- r)^2}+  \frac{1}{t- r} \right) \textbf{1}_{\left\{ n = 2 \right\}} +  \left(\frac{|z-x|^3}{(t- r)^3}+  \frac{|z-x|}{(t- r)^2} \right) \textbf{1}_{\left\{ n = 3 \right\}}\right\}  \\
& \quad \times \frac{ |s_1-s_2|^\beta}{(r-s_1 \vee s_2)^{\beta-\frac{\eta}{2}}} g(c(t-r) , z-x)  \\
& \leq K \frac{|s_1-s_2|^\beta}{(t-r)^{\frac{n}{2}}(r-s_1 \vee s_2)^{\beta-\frac{\eta}{2}}} g(c(t-r) , z-x).
\end{align*}

Gathering the bounds on ${\rm A}_n$ and ${\rm B}_n$ allows to conclude the proof of \eqref{gaussian:bound:diff:time:hat:pm:different:time}. \\

\noindent \emph{Step 7: proof of \eqref{gaussian:bound:diff:time:Hm}.}\\

As in the previous steps, we first observe that the announced estimate directly follows from \eqref{iter:parametrix:kernel} with $k=1$ if $|s_1-s_2| > r-s_1\vee s_2$. From now on we assume that $|s_1-s_2| \leq r-s_1\vee s_2$. In order to prove the estimate \eqref{gaussian:bound:diff:time:Hm}, we use a similar decomposition as the one employed for $\Delta_{\mu,\mu'} \mH_{m+1}(\mu, s, r, t, x, y)$, namely
\begin{equation}
\label{decomposition:delta:time:parametrix:kernel}
 \Delta_{s_1, s_2} \mH_{m+1}(\mu, s, r, t, x, z) = {\rm I} + {\rm II} + {\rm III} + {\rm IV} 
\end{equation}

\noindent with
\begin{align*}
{\rm I} & := -\sum_{i=1}^d \Delta_{s_1, s_2} b_i(r, x, [X^{s, \xi ,(m)}_r])  \, \partial_{x_i} \widehat{p}_{m+1}(\mu, s_1 \vee s_2, r, t, x, z), \\
{\rm II} & := -\sum_{i=1}^d  b_i(r, x, [X^{s_1 \wedge s_2, \xi ,(m)}_r]) \,  \Delta_{s_1, s_2} \partial_{x_i} \widehat{p}_{m+1}(\mu, s_1 \vee s_2, r, t, x, z), \\
{\rm III} & := \frac12 \sum_{i, j=1}^d \Delta_{s_1, s_2}   [a_{i, j}(r, x, [X^{s, \xi , (m)}_r]) - a_{i, j}(r, z, [X^{s, \xi, (m)}_r])] \,  \partial^2_{x_i, x_j} \widehat{p}_{m+1}(\mu, s_1 \vee s_2, r, t, x, z), \\
{\rm IV} & := \frac12 \sum_{i, j=1}^d    [a_{i, j}(r, x, [X^{s_1 \wedge s_2, \xi , (m)}_r]) - a_{i, j}(r, z, [X^{s_1\wedge s_2, \xi, (m)}_r])] \, \Delta_{s_1, s_2} \partial^2_{x_i, x_j} \widehat{p}_{m+1}(\mu, s_1 \vee s_2, r, t, x, z). 
\end{align*}
 
We now prove the following estimates: for any $\beta \in [0,1]$ and any $\alpha \in [0,\eta]$, there exist some positive constants $K:=K(T, \HR, \HE), \, K_\alpha:=K(T,  \HR, \HE, \alpha),\, c:=c(\lambda)$ such that it holds
\begin{align*}
|{\rm I} | & \leq K \frac{|s_1-s_2|^{\beta}}{(t-r)^{\frac12}} \left\{ \frac{1}{(r-s_1)^{\beta-\frac{\eta}{2}}} + \frac{1}{(r-s_2)^{\beta-\frac{\eta}{2}}}  \right\}\, g(c(t-r), z-x), \\
|{\rm II} | & \leq K \frac{|s_1-s_2|^{\beta}}{(t-r)^{\frac12}(r-s_1\vee s_2)^{\beta-\frac{\eta}{2}}} \, g(c(t-r), z-x), \\
|{\rm III} | & \leq K_\alpha  \frac{|s_1-s_2|^{\beta}}{(t-r)^{1-\frac{\alpha}{2}}}\left\{\frac{1}{(r-s_1)^{\beta+\frac{\alpha-\eta}{2}}} + \frac{1}{(r-s_2)^{\beta+\frac{\alpha-\eta}{2}}}\right\} \, g(c(t-r), z-x), \\
|{\rm IV} | & \leq K \frac{|s_1-s_2|^{\beta}}{(t-r)^{1-\frac{\eta}{2}}(r-s_1\vee s_2)^{\beta-\frac{\eta}{2}}} \, g(c(t-r), z-x).
\end{align*}
 
The estimate on ${\rm I}$ follows from \eqref{diff:time:drift:diff:coefficients} and the space-time inequality \eqref{space:time:inequality}. The estimate on ${\rm II}$ follows from \eqref{gaussian:bound:diff:time:hat:pm:different:time} and the boundedness of the drift coefficient. The estimate on ${\rm III}$ is a consequence of \eqref{diff:time:with:holder:reg:space:drift:diff:coefficients} and the space-time inequality \eqref{space:time:inequality}. The estimate on ${\rm IV}$ follows by combining the uniform $\eta$-H\"older regularity of $ a(t, ., m)$, the estimate \eqref{gaussian:bound:diff:time:hat:pm:different:time} and the space-time inequality \eqref{space:time:inequality}. It now suffices to put together the above estimates and to set $\alpha=\eta$. \\

\noindent \emph{Step 8: proof of \eqref{gaussian:bound:diff:time:plus:holder:regularity:Hm}.}\\

We first note that in the off-diagonal regime, that is, if $|y-x|^2 > t-r$, the result directly follows from \eqref{gaussian:bound:diff:time:Hm}. From now on, we assume that the diagonal regime $|y-x|^2 \leq t-r$ holds. We investigate the H\"older regularity of each term appearing in the decomposition \eqref{decomposition:delta:time:parametrix:kernel}.

We write ${\rm I}(x) - {\rm I}(y) = {\rm I}_1(x)-{\rm I}_1(y) + {\rm I}_2(x) - {\rm I}_2(y)$ with
$$
{\rm I}_1(x) - {\rm I}_1(y) = -\sum_{i=1}^d [ \Delta_{s_1, s_2} [b_i(r, x, [X^{s, \xi ,(m)}_r]) - b_i(r, y, [X^{s, \xi ,(m)}_r]) ] ]  \, \partial_{x_i} \widehat{p}_{m+1}(\mu, s_1 \vee s_2, r, t, x, z)
$$
\noindent and 
$$
{\rm I}_2(x) - {\rm I}_2(y) = -\sum_{i=1}^d  \Delta_{s_1, s_2}[ b_i(r, y, [X^{s, \xi ,(m)}_r])]  \, [  \partial_{x_i} \widehat{p}_{m+1}(\mu, s_1 \vee s_2, r, t, x, z) - \partial_{x_i} \widehat{p}_{m+1}(\mu, s_1 \vee s_2, r, t, y, z) ].
$$

From \eqref{diff:time:with:holder:reg:space:drift:diff:coefficients} with $\alpha=\eta$ and the space-time inequality \eqref{space:time:inequality}, we directly obtain
\begin{align*}
|{\rm I}_1(x) - {\rm I}_1(y) | & \leq K (|y-x|^\eta \wedge 1) \frac{|s_1-s_2|^\beta}{(t-r)^\frac12(r-s_1 \vee s_2)^{\beta}} \, g(c(t-r), z-x) \\
& \leq K (|y-x|^\alpha \wedge 1)   \frac{|s_1-s_2|^\beta}{(t-r)^\frac{1+\alpha-\eta}{2}(r-s_1 \vee s_2)^{\beta}} \, g(c(t-r), z-x)
\end{align*}
\noindent for any $\alpha \in [0,\eta]$. From \eqref{diff:time:drift:diff:coefficients} and standard computations on Gaussian kernels, we obtain
\begin{align*}
|{\rm I}_2(x) - {\rm I}_2(y) | & \leq K  (|y-x| \wedge 1)  \frac{|s_1-s_2|^\beta}{(t-r)(r-s_1 \vee s_2)^{\beta}}  \, \left\{ g(c(t-r), z-x)  + g(c(t-r), z-y) \right\}\\
&  \leq K (|y-x|^\alpha \wedge 1) \frac{|s_1-s_2|^\beta}{(t-r)^\frac{1+\alpha}{2}(r-s_1 \vee s_2)^{\beta}} \, \left\{ g(c(t-r), z-x) + g(c(t-r), z-y) \right\}.
\end{align*}

We again write ${\rm II}(x) - {\rm II}(y) = {\rm II}_1(x) - {\rm II}_1(y) + {\rm II}_2(x) - {\rm II}_2(y)$ with
$$
{\rm II}_1(x) - {\rm II}_1(y) := -\sum_{i=1}^d  [ b_i(r, x, [X^{s_1 \wedge s_2, \xi ,(m)}_r]) - b_i(r, y, [X^{s_1 \wedge s_2, \xi ,(m)}_r]) ] \,  \Delta_{s_1, s_2} \partial_{x_i} \widehat{p}_{m+1}(\mu, s, r, t, x, z)
$$
\noindent and 
$$
{\rm II}_2(x) - {\rm II}_2(y) :=  -\sum_{i=1}^d  b_i(r, y, [X^{s_1 \wedge s_2, \xi ,(m)}_r])  \,  \Delta_{s_1, s_2} [\partial_{x_i} \widehat{p}_{m+1}(\mu, s, r, t, x, z) - \partial_{x_i} \widehat{p}_{m+1}(\mu, s, r, t, y, z)].
$$

On the one hand, from \eqref{gaussian:bound:diff:time:hat:pm:different:time} and the uniform boundedness and $\eta$-H\"older regularity of $ b_i(t, ., m)$, we get
\begin{align*}
| {\rm II}_1(x) - {\rm II}_1(y) |&  \leq K (|y-x|^\eta \wedge 1) \frac{|s_1-s_2|^\beta}{(t-r)^{\frac12}(r-s_1\vee s_2)^{\beta}} g(c(t-r), z-x) \\
& \leq K (|y-x|^\alpha \wedge 1) \frac{|s_1-s_2|^\beta}{(t-r)^{\frac{1+\alpha-\eta}{2}}(r-s_1\vee s_2)^{\beta}} g(c(t-r), z-x). 
\end{align*}

On the other hand, combining the mean-value theorem with \eqref{gaussian:bound:diff:time:hat:pm:different:time} and then using the inequality \eqref{diagonal:regime:heat:kernel} in the current diagonal regime $|y-x|^2 \leq t-r$, we obtain
\begin{align*}
| \Delta_{s_1, s_2} & [\partial_{x_i} \widehat{p}_{m+1}(\mu, s, r, t, x, z) - \partial_{x_i} \widehat{p}_{m+1}(\mu, s, r, t, y, z)]| \\
 &= | \int_0^1 \Delta_{s_1, s_2} \partial^2_{x, x_i} \widehat{p}_{m+1}(\mu, s_1 \vee s_2, r, t, \lambda x +(1-\lambda) y, z) . (x-y) \, d\lambda| \\
 & \leq K (|y-x|\wedge 1) \frac{ |s_1-s_2|^\beta}{(t-r) (r-s_1\vee s_2)^\beta} \, g(c(t-r), z-x) \\
 & \leq K (|y-x|^\alpha \wedge 1)  \frac{ |s_1-s_2|^\beta}{(t-r)^{\frac{1+\alpha}{2}} (r-s_1\vee s_2)^\beta} \, g(c(t-r), z-x).
\end{align*}
 The previous estimate and the boundedness of the drift coefficient then yield
\begin{align*}
| {\rm II}_2(x) - {\rm II}_2(y) | & \leq K (|y-x|^\alpha \wedge 1)\frac{|s_1-s_2|^\beta}{(t-r)^{\frac{1+\alpha}{2}}(r-s_1\vee s_2)^{\beta}} g(c(t-r), z-x). 
\end{align*}

For the third term, we use the decomposition ${\rm III}(x) - {\rm III}(y) = {\rm III}_1(x) - {\rm III}_1(y) + {\rm III}_2(x) - {\rm III}_2(y)$ with
$$
{\rm III}_1(x) - {\rm III}_1(y)  := \frac12 \sum_{i, j=1}^d \Delta_{s_1, s_2}   [a_{i, j}(r, x, [X^{s, \xi , (m)}_r]) - a_{i, j}(r, y, [X^{s, \xi, (m)}_r])] \,  \partial^2_{x_i, x_j} \widehat{p}_{m+1}(\mu, s_1 \vee s_2, r, t, x, z) 
$$
\noindent and 
\begin{align*}
{\rm III}_2(x) - {\rm III}_2(y)  & := \frac12 \sum_{i, j=1}^d \Delta_{s_1, s_2}   [a_{i, j}(r, y, [X^{s, \xi , (m)}_r]) - a_{i, j}(r, z, [X^{s, \xi, (m)}_r])] \\
& \quad  \times[ \partial^2_{x_i, x_j} \widehat{p}_{m+1}(\mu, s_1 \vee s_2, r, t, x, z) - \partial^2_{x_i, x_j} \widehat{p}_{m+1}(\mu, s_1 \vee s_2, r, t, y, z)].
\end{align*}

In order to deal with ${\rm III}_1(x) - {\rm III}_1(y)$, we use \eqref{diff:time:with:holder:reg:space:drift:diff:coefficients} with $\alpha = \eta$ and then the space-time inequality \eqref{space:time:inequality}
\begin{align*}
|{\rm III}_1(x) - {\rm III}_1(y) | & \leq K (|y-x|^\eta \wedge 1) \frac{|s_1-s_2|^\beta}{(t-r)(r-s_1\vee s_2)^\beta} \, g(c(t-r), z-x)\\
& \leq K (|y-x|^\alpha \wedge 1) \frac{|s_1-s_2|^\beta}{(t-r)^{1+\frac{\alpha-\eta}{2}}(r-s_1\vee s_2)^\beta} \, g(c(t-r), z-x).
\end{align*}

We deal with the difference ${\rm III}_2(x) - {\rm III}_2(y)$ by using the mean-value theorem together with the inequality \eqref{diagonal:regime:heat:kernel} (recall that $|y-x|^2 \leq t-r$), \eqref{diff:time:with:holder:reg:space:drift:diff:coefficients} with $\alpha = \eta$, the inequality $|y-z|^\eta \leq |y-x|^\eta + |z-x|^\eta \leq (t-r)^{\eta/2} + |z-x|^\eta$ and finally the space-time inequality \eqref{space:time:inequality}. Skipping some technical details, we obtain
\begin{align*}
|{\rm III}_2(x) - {\rm III}_2(y)| & \leq K |y-z|^\eta (|y-x| \wedge 1) \frac{|s_1-s_2|^\beta}{(t-r)^{\frac32}(r-s_1\vee s_2)^\beta} \, g(c(t-r), z-x) \\
& \leq K  (|y-x|^{\alpha} \wedge 1) \frac{|s_1-s_2|^\beta}{(t-r)^{1+\frac{\alpha-\eta}{2}}(r-s_1\vee s_2)^\beta} \, g(c(t-r), z-x).
\end{align*}

Finally, for the last term, we again write ${\rm IV}(x) - {\rm IV}(y) = {\rm IV}_1(x) - {\rm IV}_1(y) + {\rm IV}_2(x) - {\rm IV}_2(y)$ with
$$
{\rm IV}_1(x) - {\rm IV}_1(y)  := \frac12 \sum_{i, j=1}^d   [ a_{i, j}(r, x, [X^{s_1 \wedge s_2, \xi , (m)}_r]) - a_{i, j}(r, y, [X^{s_1\wedge s_2, \xi, (m)}_r]) ] \, \Delta_{s_1, s_2} \partial^2_{x_i, x_j} \widehat{p}_{m+1}(\mu, s, r, t, x, z)
$$
\noindent and 
\begin{align*}
{\rm IV}_2(x) - {\rm IV}_2(y) & :=  \frac12 \sum_{i, j=1}^d   [ a_{i, j}(r, y, [X^{s_1 \wedge s_2, \xi , (m)}_r]) - a_{i, j}(r, z, [X^{s_1\wedge s_2, \xi, (m)}_r]) ]  \\
& \quad \times \Delta_{s_1, s_2} [ \partial^2_{x_i, x_j} \widehat{p}_{m+1}(\mu, s, r, t, x, z) - \partial^2_{x_i, x_j} \widehat{p}_{m+1}(\mu, s, r, t, y, z)].
\end{align*}

From the uniform $\eta$-H\"older regularity of $a(t, ., m)$ and \eqref{gaussian:bound:diff:time:hat:pm:different:time}, we obtain
\begin{align*}
|{\rm IV}_1(x) - {\rm IV}_1(y)| & \leq K (|y-x|^\eta \wedge 1) \frac{|s_1-s_2|^\beta}{(t-r) (r-s_1\vee s_2)^{\beta}} \, g(c(t-r), z-x)\\
& \leq K  (|y-x|^{\alpha} \wedge 1)  \frac{|s_1-s_2|^\beta}{(t-r)^{1+\frac{\alpha-\eta}{2}}(r-s_1\vee s_2)^\beta} \, g(c(t-r), z-x).
\end{align*}

We handle the difference ${\rm IV}_2(x) - {\rm IV}_2(y)$ by employing similar arguments as those used to deal with ${\rm II}_2(x) - {\rm II}_2(y)$. Namely, we combine the mean-value theorem with \eqref{gaussian:bound:diff:time:hat:pm:different:time} and then use the inequality \eqref{diagonal:regime:heat:kernel} in our current diagonal regime $|y-x|^2 < t-r$. Hence,
\begin{align*}
| \Delta_{s_1, s_2} & [\partial^2_{x_i, x_j} \widehat{p}_{m+1}(\mu, s, r, t, x, z) - \partial^2_{x_i, x_j} \widehat{p}_{m+1}(\mu, s, r, t, y, z)]| \\
 &= | \int_0^1 \Delta_{s_1, s_2} \partial^3_{x, x_i, x_j} \widehat{p}_{m+1}(\mu, s, r, t, \lambda x +(1-\lambda) y, z) . (x-y) \, d\lambda| \\
 & \leq K (|y-x| \wedge 1) \frac{ |s_1-s_2|^\beta}{(t-r)^{\frac32} (r-s_1\vee s_2)^\beta} \, g(c(t-r), z-x) \\
 & \leq K (|y-x|^\alpha \wedge 1)  \frac{ |s_1-s_2|^\beta}{(t-r)^{1+\frac{\alpha}{2}} (r-s_1\vee s_2)^\beta} \, g(c(t-r), z-x).
\end{align*}

Now, the previous estimate, the uniform $\eta$-H\"older regularity of $ a(t, ., m)$, the inequality $|z-y|^\eta \leq |z-x|^\eta + |y-x|^\eta \leq |z-x|^\eta + (t-r)^{\frac{\eta}{2}}$ and finally the space-time inequality \eqref{space:time:inequality} imply
$$
|{\rm IV}_2(x) - {\rm IV}_2(y)| \leq K (|y-x|^\alpha \wedge 1)  \frac{ |s_1-s_2|^\beta}{(t-r)^{1+\frac{\alpha-\eta}{2}} (r-s_1\vee s_2)^\beta} \, g(c(t-r), z-x).
$$

We conclude the proof of \eqref{gaussian:bound:diff:time:plus:holder:regularity:Hm} by putting together the previous estimates.\\

\noindent \emph{Step 9: proof of \eqref{gaussian:bound:diff:time:Phim}.}\\

From the identity $\Phi_{m+1}(\mu, s, r, t, x, z) = \mH_{m+1}(\mu, s, r, t, x, z) + \mH_{m+1}\otimes \Phi_{m+1}(\mu, s, r, t, x, z)$, we obtain the following relation 
\begin{align}
\Delta_{s_1,s_2} \Phi_{m+1}(\mu, s, r, t, x, z) & = \Delta_{s_1, s_2} \mH_{m+1}(\mu, s, r, t, x, z) \nonumber  \\
& \quad + \int_r^t \int_{\mathbb{R}^d} \Delta_{s_1, s_2} \mH_{m+1}(\mu, s, r, r', x, y)  \Phi_{m+1}(\mu, s_1 \vee s_2, r', t, y, z) \, dr' \, dy \label{delta:s:Phi:mp1}\\
& \quad + \int_r^t \int_{\mathbb{R}^d}  \mH_{m+1}(\mu, s_1 \wedge s_2, r, r', x, y)  \Delta_{s_1, s_2} \Phi_{m+1}(\mu, s, r', t, y, z) \, dr' \, dy \nonumber
\end{align}
\noindent which in turn, using the estimates \eqref{gaussian:bound:diff:time:Hm} and \eqref{Gaussian:estimate:Phim}, yields
\begin{align*}
| \Delta_{s_1,s_2} \Phi_{m+1}(\mu, s, r, t, x, z) | & \leq K \frac{|s_1-s_2|^\beta}{(t-r)^{1-\frac{\eta}{2}}(r-s_1\vee s_2)^\beta} \, g(c(t-r), z-x) \\
& + K \int_{r}^t \int_{\mathbb{R}^d} \frac{1}{(t-r)^{1-\frac{\eta}{2}}} g(c(r'-r), y-x) \, |\Delta_{s_1, s_2} \Phi_m(\mu, s, r', t, y, z)| \, dy\, dr'. 
\end{align*}

The space-time kernel $[r, t] \times \mathbb{R}^d \ni(r', x) \mapsto (t-r')^{-1+\eta/2} (r'-s_1\vee s_2)^{-\beta} g(c(t-r'), z-x)$ being non-singular, the previous estimate implies
$$
| \Delta_{s_1,s_2} \Phi_{m+1}(\mu, s, r, t, x, z) | \leq K \frac{|s_1-s_2|^\beta}{(t-r)^{1-\frac{\eta}{2}}(r-s_1\vee s_2)^\beta} \, g(c(t-r), z-x).
$$

The proof of \eqref{gaussian:bound:diff:time:Phim} is now complete. \\
 
\noindent \emph{Step 10: proof of \eqref{gaussian:bound:diff:time:plus:holder:regularity:Phim}.}\\

From \eqref{delta:s:Phi:mp1}, \eqref{gaussian:bound:diff:time:plus:holder:regularity:Hm}, \eqref{Gaussian:estimate:Phim}, \eqref{holder:reg:parametrix:kernel} and \eqref{gaussian:bound:diff:time:Phim}, for any $\alpha \in [0,\eta)$, we obtain
\begin{align*}
| & \Delta_{s_1, s_2} [\Phi_{m+1}(\mu, s, r ,t , x ,z) -  \Phi_{m+1}(\mu, s, r ,t , y ,z)] |  \\
&  \leq K_\alpha (|y-x|^\alpha \wedge 1) \frac{|s_1-s_2|^{\beta}}{(t-r)^{1+\frac{\alpha- \eta}{2}}(r-s_1\vee s_2)^{\beta}}  \, \left\{ g(c(t-r), z-x) + g(c(t-r), z-y)\right\}\\
& \quad + K_\alpha (|y-x|^\alpha \wedge 1) \frac{|s_1-s_2|^\beta}{(r-s_1\vee s_2)^{\beta}} \int_{r}^t  \frac{1}{(t-r')^{1-\frac{\eta}{2}}(r'-r)^{1+\frac{\alpha-\eta}{2}}} dr' \left\{ g(c(t-r), z-x) + g(c(t-r), z-y)\right\}\\
&  \leq K_\alpha(|y-x|^\alpha \wedge 1) \frac{|s_1-s_2|^{\beta}}{(t-r)^{1+\frac{\alpha- \eta}{2}}(r-s_1\vee s_2)^{\beta}}  \, \left\{ g(c(t-r), z-x) + g(c(t-r), z-y)\right\}
\end{align*}
\noindent which concludes the proof.
 
\subsection{Proof of Lemma \ref{lem:estimate:reg:mes:L:derivatives:coefficients:densities}.}\label{section:proof:lem:estimate:reg:mes:L:derivatives:coefficients:densities}\quad\\

\noindent \emph{Step 1: proof of \eqref{dec:cross:deriv:diff:mu:a}.}

Below, we prove \eqref{dec:cross:deriv:diff:mu:a} only for the term corresponding to the $L$-derivative of the diffusion coefficient since the second term can be handled in a completely analogous way. We start from the identity \eqref{dec:cross:deriv:a} and deduce the decomposition
\begin{align*}
 &\partial^n_v[\partial_\mu  [a_{i, j }(t, x, [X^{s, \xi , (m)}_t])]](v)  -  \partial^n_v[\partial_\mu [a_{i, j }(t, x, [X^{s, \xi' , (m)}_t])]](v) \\
 & =  \int_{\mathbb{R}^d} \Big[ \frac{\delta a_{i, j}}{\delta m}(t, x, [X^{s,\xi, (m)}_t])(y') - \frac{\delta a_{i, j}}{\delta m}(t, x, [X^{s,\xi', (m)}_t])(y') \Big]  \, \partial^{1+n}_x p_{m}(\mu, s, t, v, y') \, dy' \\
 &  +  \int_{\mathbb{R}^d} \Big[ \frac{\delta a_{i, j}}{\delta m}(t, x, [X^{s,\xi', (m)}_t])(y') - \frac{\delta a_{i, j}}{\delta m}(t, x, [X^{s,\xi', (m)}_t])(v) \Big]  \, \Big[ \partial^{1+n}_x p_{m}(\mu, s, t, v, y') - \partial^{1+n}_x p_{m}(\mu', s, t, v, y') \Big] \, dy' \\
 &  +  \int_{(\mathbb{R}^d)^2} \Big[\frac{\delta a_{i, j}}{\delta m}(t, x, [X^{s,\xi, (m)}_t])(y') - \frac{\delta a_{i, j}}{\delta m}(t, x,  [X^{s, \xi', (m)}_t])(y') \Big] \, \partial^{n}_v[\partial_\mu p_{m}(\mu, s, t, x', y')](v) \, dy' \, \mu(dx') \\
 &  +  \int_{(\mathbb{R}^d)^2} \frac{\delta a_{i, j}}{\delta m}(t, x, [X^{s,\xi', (m)}_t])(y')  \, \partial^{n}_v[\partial_\mu p_{m}(\mu, s, t, x', y')](v) \, dy' \, (\mu -\mu')(dx') \\
 &  +  \int_{(\mathbb{R}^d)^2} \Big[\frac{\delta a_{i, j}}{\delta m}(t, x, [X^{s,\xi', (m)}_t])(y') - \frac{\delta a_{i, j}}{\delta m}(t, x,  [X^{s, \xi', (m)}_t])(x') \Big] \\
 & \quad \quad \times \Big[\partial^{n}_v[ \partial_\mu p_{m}(\mu, s, t, x', y')](v) - \partial^{n}_v[ \partial_\mu p_{m}(\mu', s, t, x', y')](v) \Big] \, dy' \, \mu'(dx')\\
 & =: {\rm A } + {\rm B} + {\rm C} + {\rm  D} + {\rm E}. 
\end{align*}

In order to deal with A, we distinguish the two cases $W^2_2(\mu, \mu')\geq t-s$ and $W^{2}_2(\mu, \mu') \leq t-s$. In the first case, it is enough to remark that
\begin{align*}
{\rm A} & = \int_{\mathbb{R}^d} \Big[ \frac{\delta a_{i, j}}{\delta m}(t, x, [X^{s,\xi, (m)}_t])(y') - \frac{\delta a_{i, j}}{\delta m}(t, x, [X^{s,\xi, (m)}_t])(v) \Big]   \, \partial^{1+n}_x p_{m}(\mu, s, t, v, y') \, dy' \\
& \quad -  \int_{\mathbb{R}^d} \Big[ \frac{\delta a_{i, j}}{\delta m}(t, x, [X^{s,\xi', (m)}_t])(y') - \frac{\delta a_{i, j}}{\delta m}(t, x, [X^{s,\xi', (m)}_t])(v) \Big]   \, \partial^{1+n}_x p_{m}(\mu, s, t, v, y') \, dy' 
\end{align*}
\noindent and then, for both term, to combine the uniform $\eta$-H\"older regularity of the map $ [\delta a_{i, j} / \delta m](t, x, m)(.)$ with the Gaussian estimate \eqref{bound:derivative:heat:kernel} and the space-time inequality \eqref{space:time:inequality}. This yields
$$
|{\rm A}| \leq K \frac{1}{(t-s)^{\frac{1+n-\eta}{2}}} \leq K \frac{W^{\beta}_2(\mu, \mu')}{(t-s)^{\frac{1+n+\beta-\eta}{2}}} 
$$
\noindent for any $\beta \in [0,1]$. In the second case, following similar lines of reasonings as those used to prove \eqref{diff:mes:drift:diff:coefficients}, we deduce that for any $\beta \in [\eta, 1]$, there exists $K^{+}:=K(T, \HRp, \HE)$ such that
\begin{align}
\Big| \frac{\delta a_{i, j}}{\delta m}(t, x, [X^{s,\xi, (m)}_t])(y') - \frac{\delta a_{i, j}}{\delta m}(t, x, [X^{s,\xi', (m)}_t])(y') \Big| \leq K^{+} \frac{W_2^{\beta}(\mu, \mu')}{(t-s)^{\frac{\beta-\eta}{2}}} \label{delta:mes:linear:function:deriv:diff:coeff}
\end{align}
\noindent for any $\beta \in [\eta, 1]$ and since $W_2(\mu, \mu') \leq (t-s)^{1/2}$ the above estimate remains valid for any $\beta \in [0,1]$. Using \eqref{bound:derivative:heat:kernel}, we again conclude
$$
|{\rm A}| \leq K^{+} \frac{W^{\beta}_2(\mu, \mu')}{(t-s)^{\frac{1+n+\beta-\eta}{2}}} 
$$
\noindent for any $\beta \in [0,1]$.
We deal with B by again splitting the computation into the two cases $W^2_2(\mu, \mu')\geq t-s$ and $W^{2}_2(\mu, \mu') \leq t-s$. In the first case, from the Gaussian estimate \eqref{bound:derivative:heat:kernel}, the uniform $\eta$-H\"older regularity of the map $ [\delta a_{i, j} / \delta m](t, x, m)(.)$ and the space-time inequality \eqref{space:time:inequality}, we get
$$
|{\rm B}| \leq K \frac{1}{(t-s)^{\frac{1+n-\eta}{2}}} \leq K \frac{W^{\beta}_2(\mu, \mu')}{(t-s)^{\frac{1+n+\beta-\eta}{2}}}
$$
\noindent for any $\beta \in [0,1]$. In the second case, using \eqref{gaussian:estimate:diff:mes:deriv:heat:kernel} or \eqref{gaussian:estimate:diff:mes:deriv:heat:kernel:part2:statement:proposition} and again the uniform $\eta$-H\"older regularity of the map $ [\delta a_{i, j} / \delta m](t, x, m)(.)$ as well as the space-time inequality \eqref{space:time:inequality}, we obtain
$$
|{\rm B}| \leq K_\beta \frac{W^{\beta}_2(\mu, \mu')}{(t-s)^{\frac{1+n+\beta-\eta}{2}}}
$$
\noindent for any $\beta \in [0,1]$ if $n=0$ or any $\beta \in [0,\eta)$ if $n=1$. 

In order to deal with C, we employ \eqref{first:second:estimate:induction:decoupling:mckean} (recall that $\mathscr{C}^{n, 0}_{\infty}=\lim_{m\uparrow \infty} \mathscr{C}^{n,0}_{m} < \infty$), \eqref{delta:mes:linear:function:deriv:diff:coeff} in the case $W_2^{2}(\mu, \mu') \leq t-s$ or the uniform boundedness of the map $ [\delta a_{i, j} / \delta m](t, x, m)(.)$ in the case $W_2^2(\mu, \mu') \geq t-s$. This yields
$$
|{\rm C}| \leq K \frac{W^{\beta}_2(\mu, \mu')}{(t-s)^{\frac{1+n+\beta-\eta}{2}}}
$$
\noindent for any $\beta \in [0,1]$ if $n=0$ or any $\beta \in [0,\eta)$ if $n=1$. 

In order to deal with D, it suffices to use \eqref{equicontinuity:second:third:estimate:decoupling:mckean} (recall again that $\mathscr{C}^{n, \beta}_{\infty}=\lim_{m\uparrow \infty} \mathscr{C}^{n,\beta}_{m} < \infty$) which combined with the uniform boundedness of the map $ [\delta a_{i, j} / \delta m](t, x, m)(.)$ directly yields
\begin{align*}
 \Big| \int_{\mathbb{R}^d} \frac{\delta a_{i, j}}{\delta m}(t, x, [X^{s,\xi', (m)}_t])(y')  &\, [ \partial^{n}_v[\partial_\mu p_{m}(\mu, s, t, x', y')](v) - \partial^{n}_v[\partial_\mu p_{m}(\mu, s, t, x'', y')](v)] \, dy' \Big| \\
 & \leq K_\beta \frac{|x-x'|^\beta}{(t-s)^{\frac{1+n+\beta-\eta}{2}}}
\end{align*}
\noindent for any $\beta \in [0,1]$ if $n=0$ and any $\beta \in [0,\eta)$ if $n=1$. We thus deduce
$$
|{\rm D}| \leq K_\beta \frac{W_2^{\beta}(\mu, \mu')}{(t-s)^{\frac{1+n+\beta-\eta}{2}}}.
$$

Finally, again from the boundedness and uniform $\eta$-H\"older regularity of $ [\delta a_{i, j} / \delta m](t, x, m)(.)$, we obtain
$$
|{\rm E}| \leq K  \int_{(\mathbb{R}^d)^2} (|y'-x'|^\eta \wedge 1) |\partial^{n}_v[ \partial_\mu p_{m}(\mu, s, r, x', y')](v) - \partial^{n}_v[ \partial_\mu p_{m}(\mu', s, r, x', y')](v)| \, dy' \, \mu(dx').
$$
Gathering the previous estimates allows to conclude the proof. \\

\noindent \emph{Step 2: proof of \eqref{diff:L:deriv:pm:mu:mup}. }\\

From the identity \eqref{representation:formula:deriv:mes:p:hat}, we obtain the following decomposition:
$$
\Delta_{\mu, \mu'} \partial^{n}_v [\partial_\mu \widehat{p}_{m+1}(\mu, s, r, t, x, z)] (v) =   {\rm I} + {\rm II} + {\rm III} + {\rm IV}, 
$$

\noindent with
\begin{align*}
{\rm I} & := -\frac12 \tr\left(\Big[ \left(\int_r^t a(r', y, [X^{s, \xi, (m)}_{r'}])\, dr'\right)^{-1} - \left(\int_r^t a(r', y, [X^{s, \xi', (m)}_{r'}])\, dr'\right)^{-1} \Big]  \right.\\
& \quad \quad\left.\int_r^t \partial^n_v[\partial_\mu [a(r', y, [X^{s, \xi, (m)}_{r'}])]](v) \, dr' \right) \times  \widehat{p}^{y}_{m+1}(\mu, s, r, t, x, z), \\
{\rm II} & := -\frac12   \tr\left(\left(\int_r^t a(r', y, [X^{s, \xi', (m)}_{r'}])\, dr'\right)^{-1}  \int_r^t [ \partial^n_v[\partial_\mu [a(r', y, [X^{s, \xi, (m)}_{r'}])]](v) -  \partial^n_v[\partial_\mu [a(r', y, [X^{s, \xi', (m)}_{r'}])]](v) ]\, dr'  \right) \\
& \quad \times \widehat{p}^{y}_{m+1}(\mu, s, r, t, x, z) \\
{\rm III} & :=  \frac12  (z-x)^{t} \Big[ \left(\int_r^t a(r', y, [X^{s, \xi, (m)}_{r'}])\, dr'\right)^{-1} - \left(\int_r^t a(r', y, [X^{s, \xi', (m)}_{r'}])\, dr'\right)^{-1} \Big] \\
& \quad \quad \int_r^t \partial^n_v[\partial_\mu a(r', y, [X^{s, \xi, (m)}_{r'}])](v) \, dr' \,  \left(\int_r^t a(r', y, [X^{s, \xi, (m)}_{r'}])\, dr'\right)^{-1} (z-x) \times \widehat{p}^{y}_{m+1}(\mu, s, r, t, x, z) \\
& \quad +  \frac12  (z-x)^{t} \left(\int_r^t a(r', y, [X^{s, \xi', (m)}_{r'}])\, dr'\right)^{-1} \int_r^t [ \partial^n_v[\partial_\mu [a(r', y, [X^{s, \xi, (m)}_{r'}])]](v) -  \partial^n_v[\partial_\mu [a(r', y, [X^{s, \xi', (m)}_{r'}])]](v) ]\, dr'  \\
&  \quad \times \left(\int_r^t a(r', y, [X^{s, \xi, (m)}_{r'}])\, dr'\right)^{-1} (z-x) \times \widehat{p}^{y}_{m+1}(\mu, s, r, t, x, z)  \\
& \quad + \frac12  (z-x)^{t} \left(\int_r^t a(r', y, [X^{s, \xi', (m)}_{r'}])\, dr'\right)^{-1} \int_r^t \partial^n_v[\partial_\mu [a(r', y, [X^{s, \xi', (m)}_{r'}])]](v) \, dr'  \\
&  \quad \times \Big[ \left(\int_r^t a(r', y, [X^{s, \xi, (m)}_{r'}])\, dr'\right)^{-1} - \left(\int_r^t a(r', y, [X^{s, \xi', (m)}_{r'}])\, dr'\right)^{-1} \Big] (z-x) \times \widehat{p}^{y}_{m+1}(\mu, s, r, t, x, z), \\
{\rm IV} & :=  -\frac12\left\{ \tr\left(\left(\int_r^t a(r', y, [X^{s, \xi', (m)}_{r'}])\, dr'\right)^{-1} \int_r^t \partial^n_v[\partial_\mu [a(r', y, [X^{s, \xi', (m)}_{r'}])]](v) \, dr' \right) \right. \notag\\
& \quad \left. - (z-x)^{t} \left(\int_r^t a(r', y, [X^{s, \xi', (m)}_{r'}])\, dr'\right)^{-1} \int_r^t \partial^n_v[\partial_\mu [a(r', y, [X^{s, \xi', (m)}_{r'}])]](v) \, dr'  \right. \\ 
& \quad \left. \quad \times \left(\int_r^t a(r', y, [X^{s, \xi', (m)}_{r'}])\, dr'\right)^{-1} (z-x) \right\} [ \widehat{p}^{y}_{m+1}(\mu, s, r, t, x, z) - \widehat{p}^{y}_{m+1}(\mu', s, r, t, x, z) ].
\end{align*}

From the mean-value theorem, the estimate \eqref{recursive:bound:deriv:a:or:b} together with \eqref{first:second:estimate:induction:decoupling:mckean} (recall that $\mathscr{C}^{n, 0}_{\infty}=\lim_{m\uparrow \infty} \mathscr{C}^{n,0}_{m} < \infty$), we obtain
$$
|{\rm I}| \leq \frac{K}{(t-r)^2} \int_r^t \max_{i, j} |a_{i, j}(r', y, [X^{s, \xi, (m)}_{r'}]) - a_{i, j}(r', y, [X^{s, \xi', (m)}_{r'}])| \, dr' \int_r^t (r'-s)^{-\frac{1+n-\eta}{2}} \, dr' \,  g(c(t-r), z-x).
$$
 Now, for the first time integral appearing in the right-hand side of the above inequality, we use \eqref{diff:mes:drift:diff:coefficients} if $W_2^2(\mu, \mu') \leq r'-s$ (which is thus valid for any $\beta \in [0,1]$) or the uniform boundedness of the diffusion coefficient if $W_2^2(\mu, \mu') \geq r'-s$. This yields
 \begin{align*}
 |{\rm I}| & \leq \frac{K}{(t-r)^2} \int_r^t \frac{W^{\beta}_2(\mu, \mu')}{(r'-s)^{\frac{\beta}{2}}} \, dr' \,  \int_r^t (r'-s)^{- \frac{1+n-\eta}{2}} \, dr' \,  g(c(t-r), z-x)\\
 & \leq K W^{\beta}_2(\mu, \mu') \left(\frac{1}{(t-s)^{\frac{1+n+\beta-\eta}{2}}} \I_\seq{r=s} + \frac{1}{(r-s)^{\frac{1+n+\beta-\eta}{2}}} \I_\seq{r>s} \right) \, g(c(t-r), z-x)
 \end{align*}
 \noindent for any $\beta \in [0,1]$.
 
 We deal with ${\rm II}$ by using \eqref{dec:cross:deriv:diff:mu:a} so that
 \begin{align*}
 |{\rm II}| &  \leq K_\beta^{+} \left(W^{\beta}_2(\mu, \mu')  \left(\frac{1}{(t-s)^{\frac{1+n+\beta-\eta}{2}}} \I_\seq{r=s} + \frac{1}{(r-s)^{\frac{1+n+\beta-\eta}{2}}} \I_\seq{r>s}\right) \right. \\
 & \left. \quad + \frac{1}{t-r} \int_r^t   \int_{(\rr^d)^2} (|y'-x'|^\eta \wedge 1) \, | \Delta_{\mu, \mu'} \partial^{n}_v[\partial_\mu p_{m}(\mu, s, r', x', y')](v)| \, dy' \, \mu(dx') \, dr' \right) \, g(c(t-r), z-x)
 \end{align*}
 \noindent for any $\beta \in [0,1]$ if $n=0$ and any $\beta \in [0,\eta)$ if $n=1$.
  
 We deal with ${\rm III}$ by using similar estimates as those employed to deal with ${\rm I}$ and ${\rm II}$. Omitting some technical details, we obtain
 \begin{align*}
 |{\rm III}| &  \leq K_\beta^{+}  \left( W^{\beta}_2(\mu, \mu')\left(\frac{1}{(t-s)^{\frac{1+n+\beta-\eta}{2}}} \I_\seq{r=s} + \frac{1}{(r-s)^{\frac{1+n+\beta-\eta}{2}}} \I_\seq{r>s}\right) \right. \\
 & \left. \quad + \frac{1}{t-r} \int_r^t   \int_{(\rr^d)^2} (|y'-x'|^\eta \wedge 1) \, | \Delta_{\mu, \mu'} \partial^{n}_v[\partial_\mu p_{m}(\mu, s, r', x', y')](v)| \, dy' \, \mu(dx') \, dr' \right) \, g(c(t-r), z-x).
 \end{align*} 
 
 In order to deal with ${\rm IV}$, we use again the estimate \eqref{recursive:bound:deriv:a:or:b} together with \eqref{first:second:estimate:induction:decoupling:mckean}, either the estimates \eqref{gaussian:bound:diff:deriv:hat:pm:different:time},\eqref{gaussian:bound:diff:deriv:hat:pm:same:time}, with $n=0$, if $W_2(\mu, \mu') \leq r-s$ (which are thus valid for any $\beta \in [0,1]$) or the Gaussian upper-estimate for the kernels $z \mapsto \widehat{p}^{y}_{m+1}(\mu, s, r, t, x, z)$ and $z\mapsto \widehat{p}^{y}_{m+1}(\mu', s, r, t, x, z)$ if $W_2(\mu, \mu') \geq r-s$ and finally the space-time inequality \eqref{space:time:inequality}. We thus obtain
 \begin{align*}
 |{\rm IV}| & \leq K W^{\beta}_2(\mu, \mu') \left(\frac{1}{(t-s)^{\frac{1+n+\beta-\eta}{2}}} \I_\seq{r=s} + \frac{1}{(r-s)^{\frac{1+n+\beta-\eta}{2}}} \I_\seq{r>s} \right) \, g(c(t-r), z-x)
 \end{align*}
 \noindent for any $\beta \in [0,1]$.
 
 This last inequality concludes the proof of the estimate \eqref{diff:L:deriv:pm:mu:mup}. \\

 \noindent \emph{Step 3: proof of \eqref{diff:mes:L:deriv:diff:diff:coeff:holder:reg}.} \\

 The relation \eqref{eq:decompJ} directly gives the following decomposition
$$
 \Delta_{\mu, \mu'} \partial^{n}_v[ \partial_\mu [a_{i, j}(t, x, [X^{s, \xi , (m)}_t]) - a_{i, j}(t, z, [X^{s, \xi , (m)}_t]) ]](v)   = {\rm I}_{i, j} + {\rm II}_{i, j} + {\rm III}_{i, j} + {\rm IV}_{i, j} + {\rm V}_{i, j},
$$

\noindent with 
\begin{align*}
{\rm I}_{i, j}   & := \int_{\mathbb{R}^d} \Big[ \frac{\delta a_{i, j}}{\delta m}(t, x, [X^{s, \xi , (m)}_t])(y') - \frac{\delta a_{i, j}}{\delta m}(t, z, [X^{s, \xi , (m)}_t])(y') \\
& \quad - ( \frac{\delta a_{i, j}}{\delta m}(t, x, [X^{s, \xi' , (m)}_t])(y') - \frac{\delta a_{i, j}}{\delta m}(t, z, [X^{s, \xi' , (m)}_t])(y') ) \Big]  \partial^{1+n}_x p_{m}(\mu, s, t, v, y') \,dy',  \\
{\rm II}_{i, j}  & :=  \int_{\mathbb{R}^d} \Big[  \frac{\delta a_{i, j}}{\delta m}(t, x, [X^{s, \xi' , (m)}_t])(y') - \frac{\delta a_{i, j}}{\delta m}(t, z,  [X^{s, \xi' , (m)}_t])(y') \\
& \quad - (\frac{\delta a_{i, j}}{\delta m}(t, x, [X^{s, \xi' , (m)}_t])(v) - \frac{\delta a_{i, j}}{\delta m}(t, z, [X^{s, \xi' , (m)}_t])(v) ) \Big] \, \Delta_{\mu, \mu'} \partial^{1+n}_x p_{m}(\mu, s, t, v, y')  \, dy', \\
{\rm III}_{i, j} & := \int_{(\mathbb{R}^d)^2} \Big[\frac{\delta a_{i, j}}{\delta m}(t, x, [X^{s, \xi , (m)}_t])(y') - \frac{\delta a_{i, j}}{\delta m}(t, z, [X^{s, \xi , (m)}_t])(y') \Big] \partial^{n}_v [\partial_\mu p_{m}(\mu, s, t, x', y')](v) \, dy' \,  (\mu-\mu')(dx'), \\
{\rm IV}_{i, j} & := \int_{(\mathbb{R}^d)^2}  \Big[ \frac{\delta a_{i, j}}{\delta m}(t, x, [X^{s, \xi , (m)}_r])(y') - \frac{\delta a_{i, j}}{\delta m}(t, z,  [X^{s, \xi , (m)}_t])(y') \\
& \quad - ( \frac{\delta a_{i, j}}{\delta m}(t, x, [X^{s, \xi' , (m)}_t])(y') - \frac{\delta a_{i, j}}{\delta m}(t, z,  [X^{s, \xi' , (m)}_t])(y') ) \Big]   \partial^{n}_v [\partial_\mu p_{m}(\mu, s, t, x', y')](v) \, dy' \,  \mu'(dx'),  \\
{\rm V}_{i, j} &  := \int_{(\mathbb{R}^d)^2} \Big[  \frac{\delta a_{i, j}}{\delta m}(t, x, [X^{s, \xi' , (m)}_t])(y') - \frac{\delta a_{i, j}}{\delta m}(t, z, [X^{s, \xi' , (m)}_t])(y') \\
& -(\frac{\delta a_{i, j}}{\delta m}(t, x, [X^{s, \xi' , (m)}_t])(x') - \frac{\delta a_{i, j}}{\delta m}(t, z, [X^{s, \xi' , (m)}_t])(x') ) \Big]  \Delta_{\mu, \mu'} \partial^{n}_v [\partial_\mu p_{m}(\mu, s, t, x', y')](v) \, dy'  \mu'(dx').
\end{align*} 
 
 As previously done, we quantify the contribution of each term in the above decomposition. We establish a bound similar to \eqref{diff:mes:with:holder:reg:space:drift:diff:coefficients} but with the map $[\delta a_{i, j}/\delta m]$ instead of $a_{i, j}$. To be more specific, let $\Theta^{(m)}_{\lambda, t}:= (1-\lambda)[X^{s, \xi, (m)}_t] + \lambda [X^{s, \xi', (m)}_t]$. We write
 \begin{align*}
h(x)& := \frac{\delta a_{i, j}}{\delta m} (t, x, [X^{s, \xi, (m)}_t])(y')  - \frac{\delta a_{i, j}}{\delta m}(t, x , [X^{s, \xi', (m)}_t])(y') \\
& = \int_0^1\int_{\mathbb{R}^d} \frac{\delta^2 a_{i, j}}{\delta m^2}(t, x, \Theta^{(m)}_{t, \lambda})(y', z') (p_m(\mu, s, t, z') - p_{m}(\mu', s, t, z')) \, dz' \, d\lambda.
\end{align*}

From similar arguments as those used to derive \eqref{diff:mes:with:holder:reg:space:drift:diff:coefficients} we get
\begin{equation}
\label{holder:property:h:delta:mes}
|h(x) - h(z)| \leq K^{+} (|z-x|^{\eta-\alpha} \wedge 1) \frac{ W^{\beta}_2(\mu, \mu')}{(t-s)^{\frac{\beta-\alpha}{2}}},
\end{equation}
 \noindent for any $\alpha \in [0,\eta]$ and any $\beta \in [\alpha, 1]$. Hence, taking $\alpha =0$ in the preceding inequality and using \eqref{bound:derivative:heat:kernel}, we derive
%
%
$$
|{\rm I}_{i, j}| \leq K^{+} (|z-x|^\eta \wedge 1) \frac{W^{\beta}_2(\mu, \mu')}{(t-s)^{\frac{1+n+\beta}{2}}}.
$$
Now, if $W_2(\mu, \mu') \leq (t-s)^{1/2}$, we take $\alpha = \eta $ in \eqref{holder:property:h:delta:mes}. This yields
$$
|{\rm I}_{i, j}| \leq K \frac{W^{\beta}_2(\mu, \mu')}{(t-s)^{\frac{1+n+\beta-\eta}{2}}}.
$$
\noindent for any $\beta \in [\eta, 1]$. Since $W_2(\mu, \mu') \leq (t-s)^{1/2}$, the above estimate is still valid for any $\beta \in [0,1]$. Otherwise, if $W_2(\mu, \mu') \geq (t-s)^{1/2}$, we instead write 
\begin{align*}
{\rm I}_{i, j}   & := \int_{\mathbb{R}^d} \Big[ \frac{\delta a_{i, j}}{\delta m}(t, x, [X^{s, \xi , (m)}_t])(y') - \frac{\delta a_{i, j}}{\delta m}(t, x, [X^{s, \xi , (m)}_t])(v) \Big]  \partial^{1+n}_x p_{m}(\mu, s, t, v, y') \,dy' \\
&  \quad - \int_{\mathbb{R}^d} \Big[ \frac{\delta a_{i, j}}{\delta m}(t, z, [X^{s, \xi , (m)}_t])(y') -  \frac{\delta a_{i, j}}{\delta m}(t, z, [X^{s, \xi , (m)}_t])(v) \Big]  \partial^{1+n}_x p_{m}(\mu, s, t, v, y') \,dy' \\
& \quad - \int_{\mathbb{R}^d} \Big[ \frac{\delta a_{i, j}}{\delta m}(t, x, [X^{s, \xi' , (m)}_t])(y')  - \frac{\delta a_{i, j}}{\delta m}(t, x, [X^{s, \xi' , (m)}_t])(v) \Big] \partial^{1+n}_x p_{m}(\mu, s, t, v, y') \,dy' \\
& \quad + \int_{\mathbb{R}^d} \Big[ \frac{\delta a_{i, j}}{\delta m}(t, z, [X^{s, \xi' , (m)}_t])(y') -  \frac{\delta a_{i, j}}{\delta m}(t, z, [X^{s, \xi' , (m)}_t])(v) \Big]  \partial^{1+n}_x p_{m}(\mu, s, t, v, y') \,dy'
\end{align*}
\noindent and use the uniform $\eta$-H\"older regularity of the map $ [\delta a_{i, j} / \delta m](t, z, m)(.)$ together with the space-time inequality \eqref{space:time:inequality}. We thus derive
$$
| {\rm I}_{i, j}| \leq K \frac{W_2^{\beta}(\mu, \mu')}{(t-s)^{\frac{1+n+\beta-\eta}{2}}}
$$

\noindent for any $\beta \in [0,1]$. Gathering the above estimates, we conclude
$$
| {\rm I}_{i, j}| \leq K W^{\beta}_2(\mu, \mu') \left\{ \frac{ (|z-x|^\eta \wedge 1)}{(t-s)^{\frac{1+n+\beta}{2}}} \wedge \frac{1}{(t-s)^{\frac{1+n+\beta-\eta}{2}}} \right\}
$$
\noindent for any $\beta \in [0,1]$.

From the estimates \eqref{gaussian:estimate:diff:mes:deriv:heat:kernel} if $W^2_2(\mu, \mu') \leq t-s$, \eqref{bound:derivative:heat:kernel} if $W_2^{2}(\mu, \mu') \geq t-s$, \eqref{gaussian:estimate:diff:mes:deriv:heat:kernel:part2:statement:proposition}, the uniform $\eta$-H\"older regularity of $ [\delta a_{i, j} / \delta m](t, ., m)(.)$ and finally the space-time inequality \eqref{space:time:inequality}, we obtain
$$
| {\rm II}_{i, j} | \leq K_\beta W_2^{\beta}(\mu, \mu') \left\{ \frac{ (|z-x|^\eta \wedge 1)}{(t-s)^{\frac{1+n+\beta}{2}}} \wedge \frac{1}{(t-s)^{\frac{1+n+\beta-\eta}{2}}} \right\}
$$
\noindent for any $\beta \in [0,1]$ if $n=0$ or any $\beta \in [0,\eta)$ if $n=1$. 

From \eqref{equicontinuity:second:third:estimate:decoupling:mckean} and the uniform $\eta$-H\"older regularity of $ [\delta a_{i, j} / \delta m](t, ., m)(v)$, we deduce
\begin{align*}
\Big| \int_{\mathbb{R}^d} \Big[\frac{\delta a_{i, j}}{\delta m}(t, x, [X^{s, \xi , (m)}_t])(y') & - \frac{\delta a_{i, j}}{\delta m}(t, z, [X^{s, \xi , (m)}_t])(y') \Big]  [ \partial^{n}_v [\partial_\mu p_{m}(\mu, s, t, x', y')](v) - \partial^{n}_v [\partial_\mu p_{m}(\mu, s, t, x'', y')](v) ] \, dy' \Big| \\
& \leq K_\beta (|z-x|^\eta \wedge 1) \frac{|x'-x''|^\beta}{(t-s)^{\frac{1+n+\beta-\eta}{2}}}
\end{align*}
\noindent for any $\beta \in [0,1]$ if $n=0$ or any $\beta \in [0,\eta)$ if $n=1$ so that
$$
| {\rm III}_{i, j} | \leq K_\beta (|z-x|^\eta \wedge 1) \frac{W_2^{\beta}(\mu, \mu')}{(t-s)^{\frac{1+n+\beta-\eta}{2}}}.
$$
%

Then, we use \eqref{holder:property:h:delta:mes} with $\alpha=0$ and \eqref{first:second:estimate:induction:decoupling:mckean} (recall that $\mathscr{C}^{n, 0}_{\infty}=\lim_{m\uparrow \infty} \mathscr{C}^{n,0}_{m} < \infty$) so that similarly to ${\rm I}_{i, j}$ we get
$$
| {\rm IV}_{i, j} | \leq K ( |z-x|^\eta \wedge 1) \frac{W_2^{\beta}(\mu, \mu')}{(t-s)^{\frac{1+n+\beta-\eta}{2}}} 
$$
\noindent for any $\beta \in [0,1]$. 

In order to deal with ${\rm V}_{i, j}$, we either use the uniform $\eta$-H\"older regularity of $ [\delta a_{i, j}/\delta m](t, ., m)](v)$ or the uniform $\eta$-H\"older regularity of $ [\delta a_{i, j}/\delta m](t, x, m)](.)$. We obtain
\begin{align*}
  | {\rm V}_{i, j} | &  \leq K \left\{ (|z-x|^\eta\wedge 1) \int_{(\mathbb{R}^d)^2}  |\Delta_{\mu, \mu'} \partial^{n}_v [\partial_\mu p_{m}(\mu, s, t, x', y')](v)|  \, dy'  \mu'(dx') \right. \\
  & \quad \left. \wedge \, \int_{(\mathbb{R}^d)^2} (|y'-x'|^\eta \wedge 1)  |\Delta_{\mu, \mu'} \partial^{n}_v [\partial_\mu p_{m}(\mu, s, t, x', y')](v)|  \, dy'  \mu'(dx') \right\}.
\end{align*}

Gathering the above estimates completes the proof of \eqref{diff:mes:L:deriv:diff:diff:coeff:holder:reg}. \\

\noindent \emph{Step 4: proof of \eqref{diff:mes:L:deriv:parametrix:kernel:pmp1}.} \\

 From \eqref{deriv:mu:H:mp1}, we easily obtain the following decomposition
$$
\Delta_{\mu, \mu'} \partial^{n}_v [\partial_\mu \mH_{m+1}(\mu, s, r, t, x,  z)] (v)  =   {\rm A} + {\rm B} + {\rm C} +{\rm D }+ {\rm E}
$$

\noindent with
\begin{align*}
{\rm A }& := {\rm A_1 }+ {\rm A_2}, \\
{\rm A_1} & := -\sum_{i=1}^d \Delta_{\mu, \mu'}  \partial^{n}_v [\partial_\mu [b_i(r, x, [X^{s, \xi , (m)}_r])]](v) \, \partial_{x_i} \widehat{p}_{m+1}(\mu, s, r, t , x , z),  \\
{\rm A_2} & :=  -\sum_{i=1}^d \partial^{n}_v [\partial_\mu [b_i(r, x, [X^{s, \xi', (m)}_r])]](v) \, \Delta_{\mu, \mu'}  \partial_{x_i} \widehat{p}_{m+1}(\mu, s,  r, t , x , z),  \\
{\rm B } & := {\rm B_1} + {\rm B_2 }, \\
{\rm B_1} & := \frac12 \sum_{i, j=1}^d \Delta_{\mu, \mu'}  \partial^{n}_v [\partial_\mu [a_{i, j}(r, x, [X^{s, \xi , (m)}_r]) - a_{i, j}(r, z, [X^{s, \xi , (m)}_r]) ]](v) \, \partial^2_{x_i, x_j} \widehat{p}_{m+1}(\mu, s, r, t , x , z),  \\
{\rm B_2} & :=  \frac12 \sum_{i, j=1}^d \partial^{n}_v [\partial_\mu [a_{i, j} (r, x, [X^{s, \xi' , (m)}_r]) - a_{i, j}(r, z, [X^{s, \xi', (m)}_r]) ]](v) \, \Delta_{\mu, \mu'}  \partial^2_{x_i, x_j}\widehat{p}_{m+1}(\mu, s, r, t , x , z),  \\
{\rm C }& := {\rm C_1 }+ {\rm C_2} + {\rm C_3}, \\
{\rm C_1} & := -\sum_{i=1}^d \Delta_{\mu, \mu'}  b_i(r, x [X^{s, \xi , (m)}_r]) \, \partial^{n}_v \left[\partial_\mu H^{i}_1\left(\int_r^t a(r', z, [X^{s, \xi ,(m)}_{r'}]) dr', z-x\right)\right](v) \, \widehat{p}_{m+1}(\mu, s, r, t , x , z),  \\
{\rm C_2 }& :=  -\sum_{i=1}^d b_i(r, x, [X^{s, \xi' , (m)}_r]) \, \Delta_{\mu, \mu'} \partial^{n}_v \left[\partial_\mu H^{i}_1\left(\int_r^t a(r', z, [X^{s, \xi ,(m)}_{r'}]) dr', z-x\right)\right](v) \, \widehat{p}_{m+1}(\mu, s,  r, t , x , z),  \\
{\rm C_3} & := -\sum_{i=1}^d b_i(r, x, [X^{s, \xi', (m)}_r]) \,  \partial^{n}_v \left[\partial_\mu H^{i}_1\left(\int_r^t a(r', z, [X^{s, \xi' ,(m)}_{r'}]) dr', z-x\right)\right](v) \, \Delta_{\mu, \mu'} \widehat{p}_{m+1}(\mu, s, r, t , x , z),
\end{align*}
\begin{align*}
{\rm D} & := {\rm D_1} + {\rm D_2} + {\rm D_3}, \\
{\rm D_1} & := \frac12 \sum_{i, j=1}^d \Delta_{\mu, \mu'} \Big(a_{i, j}(r, x, [X^{s, \xi , (m)}_r]) - a_{i, j}(r, z, [X^{s, \xi , (m)}_r])\Big) \\
& \quad \quad \quad \times \partial^{n}_v \left[\partial_\mu H^{i, j}_2\left(\int_r^t a(r', z, [X^{s, \xi ,(m)}_{r'}]) dr', z-x \right)\right](v) \, \widehat{p}_{m+1}(\mu, s, r, t , x , z),  \\
{\rm D_2} & :=  \frac12 \sum_{i, j=1}^d ( a_{i, j} (r, x, [X^{s, \xi' , (m)}_r]) - a_{i, j}(r, z, [X^{s, \xi', (m)}_r]) ) \\
& \quad\quad \quad \times \Delta_{\mu, \mu'} \partial^{n}_v \left[\partial_\mu H^{i, j}_2\left(\int_r^t a(r', z, [X^{s, \xi ,(m)}_{v'}]) dr', z-x \right)\right](v) \, \widehat{p}_{m+1}(\mu, s, r, t , x , z),  \\
{\rm D_3} & :=  \frac12 \sum_{i, j=1}^d ( a_{i, j}(r, x, [X^{s, \xi' , (m)}_r]) - a_{i, j}(r, z, [X^{s, \xi' , (m)}_r]) ) \\
& \quad \quad \quad \times  \partial^{n}_v \left[\partial_\mu H^{i, j}_2\left(\int_r^t a(r', z, [X^{s, \xi' ,(m)}_{r'}]) dr', z-x\right)\right](v) \,  \Delta_{\mu, \mu'} \widehat{p}_{m+1}(\mu, s, r, t , x , z),
\end{align*}

\noindent and
\begin{align*}
{\rm E }& := {\rm E_1} + {\rm E_2} + {\rm E_3 }\\
{\rm E_1} & := -\sum_{i=1}^d\Delta_{\mu, \mu'} \Big[ b_i(r, x, [X^{s, \xi , (m)}_r])H^{i}_1\left(\int_r^t a(r', z, [X^{s, \xi ,(m)}_{r'}]) dr', z-x \right)\Big] \, \partial^{n}_v [\partial_\mu \widehat{p}_{m+1}(\mu, s,  r, t , x , z)](v),  \\
{\rm E_2} & := \frac12 \sum_{i, j =1}^d \Delta_{\mu, \mu'} \Big[\Big(a_{i, j}(r, x, [X^{s, \xi , (m)}_r]) - a_{i, j}(r, z, [X^{s, \xi , (m)}_r])\Big) \, H^{i, j}_2\left(\int_r^t a(r', z, [X^{s, \xi ,(m)}_{r'}]) dr', z-x \right)\Big] \\
& \quad \quad \times \partial^{n}_v [\partial_\mu \widehat{p}_{m+1}(\mu, s, r, t , x , z)](v), \\
{\rm E_3} & := \left\{-\sum_{i=1}^d  b_i(r, x, [X^{s, \xi , (m)}_r])H^{i}_1\left(\int_r^t a(r', z, [X^{s, \xi ,(m)}_{v'}]) dr', z-x\right) \right. \\
& \quad \left. + \frac12 \sum_{i, j=1}^d \Big(a_{i, j}(r, x, [X^{s, \xi , (m)}_r]) - a_{i, j}(r, z, [X^{s, \xi , (m)}_r])\Big) \, H^{i, j}_2\left(\int_r^t a(r', z, [X^{s, \xi ,(m)}_{r'}]) dr', z-x\right) \right\}\\
& \quad \quad \times \Delta_{\mu, \mu'} \partial^{n}_v [\partial_\mu \widehat{p}_{m+1}(\mu, s, r, t , x , z)](v).    
\end{align*}

\noindent $\bullet $\textbf{ Estimate on ${\rm A}$:}

From \eqref{dec:cross:deriv:diff:mu:a} and \eqref{standard}, we directly obtain 
\begin{align*}
|{\rm A_1}|& \leq \frac{K_\beta^{+}}{(t-r)^{\frac12}} \left(\frac{W^{\beta}_2(\mu, \mu')}{(r-s)^{\frac{1+n+\beta-\eta}{2}}} + \int_{(\mathbb{R}^d)^2} (|y'-x'|^\eta \wedge 1) \, | \Delta_{\mu, \mu'} \partial^{n}_v[\partial_\mu p_{m}(\mu, s, r, x', y')](v)| \, dy' \, \mu(dx') \right) \\
& \quad \times g(c(t-r), z-x).
\end{align*}

For ${\rm A_2}$, we use the estimate \eqref{recursive:bound:deriv:a:or:b} together with \eqref{first:second:estimate:induction:decoupling:mckean} (recall that $\mathscr{C}^{n, 0}_{\infty}=\lim_{m\uparrow \infty} \mathscr{C}^{n,0}_{m} < \infty$) as well as \eqref{gaussian:bound:diff:deriv:hat:pm:different:time} if $W^2_2(\mu, \mu')\leq r-s$ or \eqref{standard} otherwise. We thus deduce
\begin{align*}
|{\rm A_2}| \leq K \frac{W^{\beta}_2(\mu, \mu')}{(t-r)^{\frac12}(r-s)^{\frac{1+n+\beta-\eta}{2}}} \, g(c(t-r), z-x)
\end{align*}
\noindent for any $\beta \in [0,1]$.

Gathering the two previous estimates, we deduce
\begin{align*}
|{\rm A}| & \leq K_\beta^{+} \left\{ \frac{W^{\beta}_2(\mu, \mu')}{(t-r)^{\frac12}(r-s)^{\frac{1+n+\beta-\eta}{2}}}  + \int_{(\mathbb{R}^d)^2} (|y'-x'|^\eta \wedge 1) \, | \Delta_{\mu, \mu'} \partial^{n}_v[\partial_\mu p_{m}(\mu, s, r, x', y')](v)| \, dy' \, \mu(dx') \right\} \\
& \quad \times g(c(t-r), z-x)
\end{align*}
\noindent for any $\beta \in [0,1]$ if $n=0$ and any $\beta \in [0,\eta)$ if $n=1$.

\noindent $\bullet $\textbf{ Estimate on ${\rm B}$:}

In order to deal with ${\rm B_1}$, we use \eqref{diff:mes:L:deriv:diff:diff:coeff:holder:reg} and the space-time inequality \eqref{space:time:inequality}. This yields
\begin{align*}
|{\rm B_1}|&  \leq  K_\beta^{+} \frac{W_2^{\beta}(\mu, \mu')}{t-r} \left\{ \frac{(|z-x|^\eta\wedge 1)}{(r-s)^{\frac{1+n+\beta}{2}}} \wedge \frac{1}{(r-s)^{\frac{1+n+\beta-\eta}{2}}} \right\} \, g(c(t-r), z-x)  \\
& \quad + \frac{K_\beta^{+}}{t-r} \left\{ ( |z-x|^\eta \wedge 1) \int_{(\mathbb{R}^d)^2}  |\Delta_{\mu, \mu'} \partial^{n}_v [\partial_\mu p_{m}(\mu, s, r, x', y')](v)|  \, dy'  \mu'(dx') \right. \notag \\
& \quad \quad \left. \wedge \, \int_{(\mathbb{R}^d)^2} (|y'-x'|^\eta \wedge 1)  |\Delta_{\mu, \mu'} \partial^{n}_v [\partial_\mu p_{m}(\mu, s, r, x', y')](v)|  \, dy'  \mu'(dx') \right\} \, g(c(t-r), z-x) \\
& \leq K_\beta^{+} W_2^{\beta}(\mu, \mu') \left\{ \frac{1}{(t-r)^{1-\frac{\eta}{2}}(r-s)^{\frac{1+n+\beta}{2}}} \wedge \frac{1}{(t-r)(r-s)^{\frac{1+n+\beta-\eta}{2}}} \right\} \, g(c(t-r), z-x)\\
& \quad +  K_\beta^{+} \left\{ \frac{1}{(t-r)^{1-\frac{\eta}{2}}} \int_{(\mathbb{R}^d)^2}  |\Delta_{\mu, \mu'} \partial^{n}_v [\partial_\mu p_{m}(\mu, s, r, x', y')](v)|  \, dy'  \mu'(dx') \right. \notag \\
& \quad \quad \left. \wedge \, \frac{1}{t-r} \int_{(\mathbb{R}^d)^2} (|y'-x'|^\eta \wedge 1)  |\Delta_{\mu, \mu'} \partial^{n}_v [\partial_\mu p_{m}(\mu, s, r, x', y')](v)|  \, dy'  \mu'(dx') \right\} \, g(c(t-r), z-x)
\end{align*}
\noindent for any $\beta \in [0,1]$ if $n=0$ and any $\beta \in [0,\eta)$ if $n=1$. 

 For ${\rm B_2}$, we use \eqref{recursive:bound:deriv:mes:holder:reg:a} with $\beta'= 1$ and $\beta'=0$ combined with \eqref{first:second:estimate:induction:decoupling:mckean} so that
 \begin{equation}
 \label{bound:diff:deriv:cross:mes:aij}
 | \partial^{n}_v [\partial_\mu[ a_{i, j} (r, x, [X^{s, \xi' , (m)}_r]) - a_{i, j}(r, z, [X^{s, \xi', (m)}_r]) ] ] (v)| \leq K \left\{ \frac{|z-x|^{\eta}}{(r-s)^{\frac{1+n}{2}}} \wedge \frac{1}{(r-s)^{\frac{1+n-\eta}{2}}} \right\}.
 \end{equation}
 
 \noindent We also use \eqref{gaussian:bound:diff:deriv:hat:pm:different:time} if $W_2(\mu, \mu') \leq (r-s)^{1/2}$ (which is thus satisfied for any $\beta \in [0,1]$) and \eqref{standard} otherwise. We thus obtain
 \begin{align*}
 |{\rm B_2}|  & \leq K \left\{ \frac{|z-x|^\eta}{(t-r) (r-s)^{\frac{1+n+\beta-\eta}{2 } }} \wedge  \frac{1}{(t-r)(r-s)^{\frac{1+n+\beta}{2}-\eta}} \right\} W^{\beta}_2(\mu, \mu') \, g(c(t-r), z-x) \\
   & \leq K \left\{ \frac{1}{(t-r)^{1-\frac{\eta}{2}}(r-s)^{\frac{1+n+\beta-\eta}{2 } }} \wedge  \frac{1}{(t-r)(r-s)^{\frac{1+n+\beta}{2}-\eta}} \right\} W^{\beta}_2(\mu, \mu') \, g(c(t-r), z-x)
 \end{align*}
 \noindent for any $\beta \in [0,1]$, where we used the space-time inequality \eqref{space:time:inequality} for the last inequality.
 
 
 Gathering the previous estimates on ${\rm B_1}$ and ${\rm B_2}$, we finally deduce
 \begin{align*}
 |{\rm B}| & \leq K_\beta^{+} W_2^{\beta}(\mu, \mu') \left\{ \frac{1}{(t-r)^{1-\frac{\eta}{2}}(r-s)^{\frac{1+n+\beta}{2}}} \wedge \frac{1}{(t-r)(r-s)^{\frac{1+n+\beta-\eta}{2}}} \right\} \, g(c(t-r), z-x)\\
& \quad +  K_\beta^{+} \left\{ \frac{1}{(t-r)^{1-\frac{\eta}{2}}} \int_{(\mathbb{R}^d)^2}  |\Delta_{\mu, \mu'} \partial^{n}_v [\partial_\mu p_{m}(\mu, s, r, x', y')](v)|  \, dy'  \mu'(dx') \right.  \\
& \quad \quad \left. \wedge \, \frac{1}{t-r} \int_{(\mathbb{R}^d)^2} (|y'-x'|^\eta \wedge 1)  |\Delta_{\mu, \mu'} \partial^{n}_v [\partial_\mu p_{m}(\mu, s, r, x', y')](v)|  \, dy'  \mu'(dx') \right\} \, g(c(t-r), z-x)
\end{align*}
\noindent for any $\beta \in [0,1]$ if $n=0$ and any $\beta \in [0,\eta)$ if $n=1$. \\

 \noindent $\bullet $\textbf{ Estimate on ${\rm C}$:}
  
 For ${\rm C_1}$, we use \eqref{diff:mes:drift:diff:coefficients} if $W_2(\mu, \mu') \leq (r-s)^{1/2}$ and the boundedness of the drift coefficient otherwise so that for any $\beta \in [0,1]$ it holds
 $$
 | \Delta_{\mu, \mu'} b_i(r, x, [X^{s, \xi , (m)}_r])| \leq K \frac{W^{\beta}_2(\mu, \mu')}{(r-s)^{\frac{\beta}{2}}}
$$
 
 \noindent and, from \eqref{recursive:bound:deriv:a:or:b} with \eqref{first:second:estimate:induction:decoupling:mckean} and the space-time inequality \eqref{space:time:inequality}
 \begin{align}
 \Big| \partial^{n}_v \Big[\partial_\mu &H^{i}_1\left(\int_r^t a(r', z, [X^{s, \xi ,(m)}_{v'}]) dr', z-y\right)\Big](v)   \widehat{p}_{m+1}(\mu, s, r, t , x , z) \Big| \nonumber\\
 & \leq K \frac{|z-x|}{(t-r)^{2}} \int_r^t \max_{i, j} | \partial^{n}_v [\partial_{\mu} [a_{i, j}(r', z, [X^{s, \xi ,(m)}_{r'}])]](v)| \, dr'  \, g(c(t-r), z-x) \label{bound:deriv:cross:mes:H1}\\
 & \leq \frac{K}{(t-r)^{\frac12} (r-s)^{\frac{1+n-\eta}{2}}}  \, g(c(t-r), z-x).\nonumber
 \end{align}
 
 \noindent Combining the two previous estimates thus yields
 $$
 |{\rm C_1}| \leq K \frac{W^{\beta}_2(\mu, \mu')}{(t-r)^{\frac12} (r-s)^{\frac{1+n+\beta-\eta}{2}}}  \, g(c(t-r), z-x). 
 $$
 
 In order to deal with ${\rm C_2}$, we use the relation \eqref{deriv:mes:cross:smooth:matrix}, the estimates \eqref{dec:cross:deriv:diff:mu:a}, \eqref{recursive:bound:deriv:a:or:b} (together with \eqref{first:second:estimate:induction:decoupling:mckean}) as well as \eqref{diff:mes:drift:diff:coefficients} if $W_2(\mu, \mu') \leq (r-s)^{1/2}$ or the boundedness of the diffusion coefficient otherwise to deduce
 \begin{align*}
 |\Delta_{\mu, \mu'} & \partial^{n}_v \left[\partial_\mu H^{i}_1\left(\int_r^t a(r', z, [X^{s, \xi ,(m)}_{r'}]) dr', z-x\right)\right](v)| \\
 &  \leq K_\beta^{+} \frac{|z-x|}{(t-r)(r-s)^{\frac{1+n+\beta-\eta}{2}}} W_2^{\beta}(\mu, \mu') \\
 & \quad+  K_\beta^{+} \frac{|z-x|}{(t-r)^2} \int_r^t \left(\frac{W^{\beta}_2(\mu, \mu')}{(r'-s)^{\frac{1+n+\beta-\eta}{2}}} + \int_{(\mathbb{R}^d)^2} (|y'-x'|^\eta \wedge 1) \, | \Delta_{\mu, \mu'} \partial^{n}_v[\partial_\mu p_{m}(\mu, s, r', x', y')](v)| \, dy' \, \mu(dx') \right) \, dr'\\
 & \leq  K_\beta^{+} \frac{|z-x|}{(t-r)(r-s)^{\frac{1+n+\beta-\eta}{2}}} W_2^{\beta}(\mu, \mu') \\
 & \quad +  K_\beta^{+} \frac{|z-x|}{(t-r)^2} \int_r^t  \int_{(\mathbb{R}^d)^2} (|y'-x'|^\eta \wedge 1) \, | \Delta_{\mu, \mu'} \partial^{n}_v[\partial_\mu p_{m}(\mu, s, r', x', y')](v)| \, dy' \, \mu(dx') \, dr'
 \end{align*}

\noindent which in turn, by the space-time inequality \eqref{space:time:inequality}, yields 
 \begin{align*}
 |{\rm C_2}| & \leq K_\beta^{+} \left(\frac{W_2^{\beta}(\mu, \mu')}{(t-r)^{\frac12}(r-s)^{\frac{1+n+\beta-\eta}{2}}}+ \frac{1}{(t-r)^{\frac32}} \int_r^t  \int_{(\mathbb{R}^d)^2} (|y'-x'|^\eta \wedge 1) \, | \Delta_{\mu, \mu'} \partial^{n}_v[\partial_\mu p_{m}(\mu, s, r', x', y')](v)| \, dy' \, \mu(dx') \, dr'\right)\\
 & \quad \times g(c(t-r), z-x).
 \end{align*}
 
 For ${\rm C_3}$, similarly to ${\rm B}_2$ and ${\rm C}_1$, we obtain 
 \begin{align*}
 |{\rm C_3}| & \leq K \frac{W^{\beta}_2(\mu, \mu')}{(t-r)^{\frac12} (r-s)^{\frac{1+n+\beta-\eta}{2}}}  \, g(c(t-r), z-x).
  \end{align*}

 Gathering the previous estimates on ${\rm C_1}$, ${\rm C_2}$ and ${\rm C_3}$, we get
 \begin{align*}
 |{\rm C}|  & \leq K_\beta^{+} \left(\frac{W_2^{\beta}(\mu, \mu')}{(t-r)^{\frac12}(r-s)^{\frac{1+n+\beta-\eta}{2}}}+ \frac{1}{(t-r)^{\frac32}} \int_r^t  \int_{(\mathbb{R}^d)^2} (|y'-x'|^\eta \wedge 1) \, | \Delta_{\mu, \mu'} \partial^{n}_v[\partial_\mu p_{m}(\mu, s, r', x', y')](v)| \, dy' \, \mu(dx') \, dr'\right)\\
 & \quad \times g(c(t-r), z-x).
 \end{align*}
 
 \noindent $\bullet $\textbf{ Estimate on ${\rm D}$:}
 
 In order to deal with ${\rm D_1}$, we first use \eqref{deriv:mes:second:order:polyn:hermite} together with \eqref{first:second:estimate:induction:decoupling:mckean} so that
 \begin{align}
\Big| \partial^{n}_v & \left[\partial_\mu H^{i, j}_2\left(\int_r^t a(v', z, [X^{s, \xi ,(m)}_{v'}]) dv', z-x\right)\right](v) \Big|\nonumber \\
 & \leq K \left( \frac{|z-x|^2}{(t-r)^2} + \frac{1}{t-r} \right) \, \frac{1}{(r-s)^{\frac{1+n-\eta}{2}}}. \label{bound:L:deriv:second:order:hermite:polynomial}
\end{align}

We then use \eqref{diff:mes:with:holder:reg:space:drift:diff:coefficients} with $\alpha = 0$ so that for any $\beta \in [0,1]$ one has
\begin{equation}
\label{delta:mes:diff:diff:coeff:reg:holder}
 \Big|  \Delta_{\mu, \mu'} \Big(a_{i, j}(r, x, [X^{s, \xi , (m)}_r]) - a_{i, j}(r, z, [X^{s, \xi , (m)}_r])\Big) \Big| \leq K  \frac{(|z-x|^\eta \wedge 1)}{(r-s)^{\frac{\beta}{2}}} W^{\beta}_2(\mu, \mu').
\end{equation}

Combining the two previous estimates and using the space-time inequality \eqref{space:time:inequality}, we eventually conclude
$$
|{\rm D}_1| \leq K  \frac{W^{\beta}_2(\mu, \mu')}{(t-r)^{1-\frac{\eta}{2}}(r-s)^{\frac{1+n+\beta-\eta}{2}}} \, g(c(t-r), z-x)
$$

\noindent for any $\beta \in [0,1]$.

%

 For ${\rm D_2}$, we handle $\Delta_{\mu, \mu'} \partial^{n}_v [\partial_\mu H^{i, j}_2(\int_r^t a(r', z, [X^{s, \xi ,(m)}_{r'}]) dr', z-x)](v)$ in a similar way as we did for $\Delta_{\mu, \mu'} \partial^{n}_v [\partial_\mu H^{i}_1(\int_r^t a(r', z, [X^{s, \xi ,(m)}_{v'}]) dr', z-x)](v)$, that is, we first use the identity \eqref{deriv:mes:cross:smooth:matrix} and then the mean-value theorem, the estimates \eqref{dec:cross:deriv:diff:mu:a}, \eqref{recursive:bound:deriv:a:or:b} (together with \eqref{first:second:estimate:induction:decoupling:mckean}) as well as \eqref{diff:mes:drift:diff:coefficients} if $W_2(\mu, \mu') \leq (r-s)^{1/2}$ and the boundedness of the diffusion coefficient otherwise. We obtain
  \begin{align*}
 |\Delta_{\mu, \mu'} & \partial^{n}_v \left[\partial_\mu H^{i, j}_2\left(\int_r^t a(r', z, [X^{s, \xi ,(m)}_{r'}]) dr', z-x\right)\right](v)| \\
 &  \leq K_\beta^{+} \left\{\frac{|z-x|^2}{(t-r)^2} + \frac{1}{t-r} \right\} \frac{1}{(r-s)^{\frac{1+n+\beta-\eta}{2}}} W_2^{\beta}(\mu, \mu') +  K_\beta^{+} \left\{\frac{|z-x|^2}{(t-r)^3} +\frac{1}{(t-r)^2} \right\} \\
 & \quad  \times \int_r^t \left(\frac{W^{\beta}_2(\mu, \mu')}{(r'-s)^{\frac{1+n+\beta-\eta}{2}}} + \int_{(\mathbb{R}^d)^2} (|y'-x'|^\eta \wedge 1) \, | \Delta_{\mu, \mu'} \partial^{n}_v[\partial_\mu p_{m}(\mu, s, r', x', y')](v)| \, dy' \, \mu(dx') \right) \, dr'\\
 \end{align*}
 \noindent which in turn, by the uniform $\eta$-H\"older regularity of $a_{i, j}(t, ., m)$ and the space-time inequality \eqref{space:time:inequality}, yields
 \begin{align*}
|{\rm D_2 }| & \leq K_\beta^{+} \left(\frac{W^{\beta}_2(\mu, \mu')}{(t-r)^{1-\frac{\eta}{2}}(r-s)^{\frac{1+n+\beta-\eta}{2}}} \right. \\
& \left. \quad + \frac{1}{(t-r)^{2-\frac{\eta}{2}}} \int_r^t  \int_{(\mathbb{R}^d)^2} (|y'-x'|^\eta \wedge 1) \, | \Delta_{\mu, \mu'} \partial^{n}_v[\partial_\mu p_{m}(\mu, s, r', x', y')](v)| \, dy' \, \mu(dx') \, dr'\right) \\
& \quad \times  g(c(t-r), z-x)
\end{align*}
\noindent for any $\beta \in [0,1]$ if $n=0$ or any $\beta \in [0,\eta)$ if $n=1$.
 
 To deal with ${\rm D_3}$, we use \eqref{gaussian:bound:diff:deriv:hat:pm:different:time} if $W_2(\mu, \mu') \leq (r-s)^{1/2}$ or \eqref{standard} otherwise, \eqref{bound:L:deriv:second:order:hermite:polynomial}, the uniform $\eta$-H\"older regularity of $a_{i, j}(t, ., m)$ and eventually the space-time inequality \eqref{space:time:inequality}. This yields
 \begin{align*}
 |{\rm D_3}|  & \leq K \frac{W^{\beta}_2(\mu, \mu')}{(t-r)^{1-\frac{\eta}{2}}(r-s)^{\frac{1+n+\beta-\eta}{2}}} \, g(c(t-r), z-x)
 \end{align*}
 \noindent for any $\beta \in [0,1]$.
 
 Gathering the previous estimates, we conclude
 \begin{align*}
 |{\rm D}| & \leq  K_\beta^{+} \left(\frac{W^{\beta}_2(\mu, \mu')}{(t-r)^{1-\frac{\eta}{2}}(r-s)^{\frac{1+n+\beta-\eta}{2}}} \right. \\
& \left. \quad + \frac{1}{(t-r)^{2-\frac{\eta}{2}}} \int_r^t  \int_{(\mathbb{R}^d)^2} (|y'-x'|^\eta \wedge 1) \, | \Delta_{\mu, \mu'} \partial^{n}_v[\partial_\mu p_{m}(\mu, s, r', x', y')](v)| \, dy' \, \mu(dx') \, dr'\right) \\
& \quad \times  g(c(t-r), z-x)
\end{align*}
\noindent for any $\beta \in [0,1]$ if $n=0$ or any $\beta \in [0,\eta)$ if $n=1$.

 \noindent $\bullet $\textbf{ Estimate on ${\rm E}$:}
  
For ${\rm E_1}$, we proceed as for the previous terms. To be more specific, we use \eqref{diff:mes:drift:diff:coefficients} if $W_2(\mu, \mu') \leq (r-s)^{1/2}$ or the boundedness of the drift and diffusion coefficients otherwise, the mean-value theorem, \eqref{cross:mes:deriv:p:hat:s:r:t} combined with \eqref{first:second:estimate:induction:decoupling:mckean} (recall that $\mathscr{C}^{n,0}_{\infty} = \lim_{m \uparrow} \mathscr{C}^{n,0}_{\infty} < \infty$). We obtain
$$
|{\rm E_1}| \leq K \frac{W^{\beta}_2(\mu, \mu')}{(t-r)^{\frac{1}{2}}(r-s)^{\frac{1+n+\beta-\eta}{2}}}\, g(c(t-r), z-x).
$$

For ${\rm E_2}$, from \eqref{delta:mes:diff:diff:coeff:reg:holder}, the mean value theorem combined with \eqref{diff:mes:drift:diff:coefficients} if $W_2(\mu, \mu') \leq (r-s)^{1/2}$ and the boundedness of the diffusion coefficient otherwise, \eqref{cross:mes:deriv:p:hat:s:r:t} combined with \eqref{first:second:estimate:induction:decoupling:mckean} as for the previous estimate and the space-time inequality \eqref{space:time:inequality}, we get
$$
|{\rm E_2}| \leq K \frac{W^{\beta}_2(\mu, \mu')}{(t-r)^{1-\frac{\eta}{2}}(r-s)^{\frac{1+n+\beta-\eta}{2}}} \,  g(c(t-r), z - x).
$$ 
  
 For the last term ${\rm E_3}$, from \eqref{diff:L:deriv:pm:mu:mup}, the uniform $\eta$-H\"older regularity of $a_{i, j}(t, ., m)$ and the space-time inequality \eqref{space:time:inequality}, we get
\begin{align*}
|{\rm E_3}| & \leq K_\beta^{+}  \left\{\frac{W^{\beta}_2(\mu, \mu')}{(t-r)^{1-\frac{\eta}{2}} (r-s)^{\frac{1+n+\beta-\eta}{2}}} \right. \\
& \quad \left. + \frac{1}{(t-r)^{2-\frac{\eta}{2}}}   \int_r^t \int_{(\mathbb{R}^d)^2} (|y'-x'|^{\eta} \wedge1) |\Delta_{\mu, \mu'} \partial^{n}_v [\partial_\mu p_{m}(\mu, s, r', x', y')] (v)| \, dy' \, \mu'(dx') \, dv' \right\} \\
& \quad \times \, g(c(t-r), z-x).
\end{align*}

 Gathering the previous estimates, we finally deduce
 \begin{align*}
|{\rm E}| & \leq K_\beta^{+}  \left\{\frac{W^{\beta}_2(\mu, \mu')}{(t-r)^{1-\frac{\eta}{2}} (r-s)^{\frac{1+n+\beta-\eta}{2}}} \right. \\
& \quad \left. + \frac{1}{(t-r)^{2-\frac{\eta}{2}}}   \int_r^t \int_{(\mathbb{R}^d)^2} (|y'-x'|^{\eta} \wedge1) |\Delta_{\mu, \mu'} \partial^{n}_v [\partial_\mu p_{m}(\mu, s, r', x', y')] (v)| \, dy' \, \mu'(dx') \, dr' \right\} \\
& \quad \times \, g(c(t-r), z-x).
\end{align*}

 We now collect all the previous estimates on ${\rm A}$, ${\rm B}$, ${\rm C}$, ${\rm D}$ and ${\rm E}$. We finally obtain the following bound
 \begin{align*}
 & | \Delta_{\mu, \mu'} \partial^{n}_v [\partial_\mu \mH_{m+1}(\mu, s, r, t, x, z)] (v)| \\
 & \leq K_\beta^{+} \left( W_2^{\beta}(\mu, \mu') \left\{ \frac{1}{(t-r)(r-s)^{\frac{1+n+\beta-\eta}{2}}} \wedge \frac{1}{(t-r)^{1- \frac{\eta}{2}}(r-s)^{\frac{1+n+\beta}{2}}} \right\} \right.\\
 & \left. \quad + \left\{ \frac{1}{(t-r)^{1-\frac{\eta}{2}}} \int_{(\mathbb{R}^d)^2}  |\Delta_{\mu, \mu'} \partial^{n}_v [\partial_\mu p_{m}(\mu, s, r, x', y')](v)|  \, dy'  \mu'(dx') \right. \right.\notag \\
& \quad \quad \left. \left. \wedge \, \frac{1}{t-r} \int_{(\mathbb{R}^d)^2} (|y'-x'|^\eta \wedge 1)  |\Delta_{\mu, \mu'} \partial^{n}_v [\partial_\mu p_{m}(\mu, s, r, x', y')](v)|  \, dy'  \mu'(dx') \right\} \right. \\
 & \left. \quad +  \frac{1}{(t-r)^{2-\frac{\eta}{2}}}   \int_r^t \int_{(\rr^d)^2} (|y'-x'|^{\eta} \wedge1) |\Delta_{\mu, \mu'} \partial^{n}_v [\partial_\mu p_{m}(\mu, s, r', x', y')] (v)| \, dy' \, \mu'(dx') \, dr' \right) \\
 & \quad \quad   g(c(t-r), z-x)
 \end{align*}
 \noindent for any $\beta \in [0,1]$ if $n=0$ and any $\beta \in [0,\eta)$ if $n=1$. The proof is now complete.

\subsection{Proof of Lemma \ref{lemme:technical:estimate:diff:time}.\\}\label{section:proof:lemme:technical:estimate:diff:time}

\noindent \emph{Step 1: proof of \eqref{diff:time:cross:deriv:diff:and:deriv:coeff}.}\\

We only prove \eqref{diff:time:cross:deriv:diff:and:deriv:coeff} for the first term namely for the difference of the $L$-derivative of the diffusion coefficient since the second term can be handled in a completely analogous way. We start from the identity \eqref{dec:cross:deriv:a} and deduce the decomposition
\begin{align*}
 &\partial^n_v[\partial_\mu  [a_{i, j }(t, x, [X^{s_1 \vee s_2, \xi , (m)}_t])]](v)  -  \partial^n_v[\partial_\mu [a_{i, j }(t, x, [X^{s_1 \wedge s_2, \xi , (m)}_t])]](v) \\
 & =  \int_{\mathbb{R}^d} \Big[ \frac{\delta a_{i, j}}{\delta m}(t, x, [X^{s_1\vee s_2,\xi, (m)}_t])(y') - \frac{\delta a_{i, j}}{\delta m}(t, x, [X^{s_1\wedge s_2, \xi, (m)}_t])(y') \Big]  \, \partial^{1+n}_x p_{m}(\mu, s_1\vee s_2, t, v, y') \, dy' \\
 &  +  \int_{\mathbb{R}^d} \Big[ \frac{\delta a_{i, j}}{\delta m}(t, x, [X^{s_1\wedge s_2,\xi, (m)}_t])(y') - \frac{\delta a_{i, j}}{\delta m}(t, x, [X^{s_1\wedge s_2, \xi, (m)}_t])(v) \Big]  \\
 & \quad \times [ \partial^{1+n}_x p_{m}(\mu, s_1\vee s_2, t, v, y') - \partial^{1+n}_x p_{m}(\mu, s_1\wedge s_2, t, v, y')] \, dy' \\
 &  +  \int_{(\rr^d)^2} \Big[\frac{\delta a_{i, j}}{\delta m}(t, x, [X^{s_1\vee s_2, \xi, (m)}_t])(y') - \frac{\delta a_{i, j}}{\delta m}(t, x,  [X^{s_1\wedge s_2, \xi, (m)}_t])(y') \Big] \, \partial^{n}_v[\partial_\mu p_{m}(\mu, s_1\vee s_2, t, x', y')](v) \, dy' \, \mu(dx') \\
 &  +  \int_{(\rr^d)^2} \Big[\frac{\delta a_{i, j}}{\delta m}(t, x, [X^{s_1 \wedge s_1,\xi, (m)}_t])(y') - \frac{\delta a_{i, j}}{\delta m}(t, x,  [X^{s_1 \wedge s_2, \xi, (m)}_t])(x') \Big] \\
 & \quad \quad \times [\partial^{n}_v[ \partial_\mu p_{m}(\mu, s_1 \vee s_2, t, x', y')](v) - \partial^{n}_v[ \partial_\mu p_{m}(\mu, s_1 \wedge s_2, t, x', y')](v) ] \, dy' \, \mu(dx')\\
 & =: {\rm A } + {\rm B} + {\rm C} + {\rm  D}.
\end{align*}

We deal with A by distinguishing the two cases $|s_1-s_2| \geq t-s_1 \vee s_2$ and $|s_1- s_2| \leq t-s_1\vee s_2$. In the first case, we simply note that
\begin{align*}
{\rm A} & = \int_{\mathbb{R}^d} \Big[ \frac{\delta a_{i, j}}{\delta m}(t, x, [X^{s_1 \vee s_2 ,\xi, (m)}_t])(y') - \frac{\delta a_{i, j}}{\delta m}(t, x, [X^{s_1 \vee s_2 , \xi, (m)}_t])(v) \Big]   \, \partial^{1+n}_x p_{m}(\mu, s_1 \vee s_2, t, v, y') \, dy' \\
& \quad -  \int_{\mathbb{R}^d} \Big[ \frac{\delta a_{i, j}}{\delta m}(t, x, [X^{s_1 \wedge s_2 , \xi, (m)}_t])(y') - \frac{\delta a_{i, j}}{\delta m}(t, x, [X^{s_1 \wedge s_2 , \xi, (m)}_t])(v) \Big]   \, \partial^{1+n}_x p_{m}(\mu, s_1 \vee s_2, t, v, y') \, dy' 
\end{align*}
\noindent and then, for both term, we combine the uniform $\eta$-H\"older regularity of the map $ [\delta a_{i, j} / \delta m](t, x, m)(.)$ with the Gaussian estimate \eqref{bound:derivative:heat:kernel} and the space-time inequality \eqref{space:time:inequality}. This yields
$$
|{\rm A}| \leq K \frac{1}{(t-s_1 \vee s_2)^{\frac{1+n-\eta}{2}}} \leq K \frac{|s_1 - s_2|^\beta}{(t-s_1 \vee s_2)^{\frac{1+n-\eta}{2}+\beta}} 
$$
\noindent for any $\beta \in [0,1]$. In the second case, similarly to \eqref{diff:time:drift:diff:coefficients}, we get
\begin{align}
\Big| \frac{\delta a_{i, j}}{\delta m}(t, x, [X^{s_1 \vee s_2,\xi, (m)}_t])(y') & - \frac{\delta a_{i, j}}{\delta m}(t, x, [X^{s_1 \wedge s_2, \xi, (m)}_t])(y') \Big| \nonumber\\
&  \leq K^{+} |s_1-s_2|^\beta \left\{\frac{1}{(t-s_1)^{\beta-\frac{\eta}{2}}} + \frac{1}{(t-s_2)^{\beta-\frac{\eta}{2}}}\right\}\label{delta:time:linear:function:deriv:diff:coeff}
\end{align}
\noindent for any $\beta \in [0, 1]$. Now, if $\beta \in [\eta/2,1]$, using \eqref{bound:derivative:heat:kernel}, we conclude
$$
|{\rm A}| \leq K^{+} \frac{|s_1-s_2|^\beta}{(t-s_1 \vee s_2)^{\frac{1+n-\eta}{2}+\beta}}. 
$$
However, since $|s_1-s_2| \leq t-s_1 \vee s_2$, the above estimate remains valid for any $\beta \in [0, 1]$.

We deal with B by  using \eqref{regularity:time:estimate:v1:v2:v3:decoupling:mckean:prop:statement} and the uniform $\eta$-H\"older regularity of the map $ [\delta a_{i, j} / \delta m](t, x, m)(.)$ as well as the space-time inequality \eqref{space:time:inequality}, we obtain
$$
|{\rm B}| \leq K_\beta \left\{\frac{|s_1-s_2|^{\beta}}{(t-s_1)^{\frac{1+n-\eta}{2}+\beta}} + \frac{|s_1-s_2|^{\beta}}{(t-s_2)^{\frac{1+n-\eta}{2}+\beta}} \right\} \leq K \frac{|s_1-s_2|^{\beta}}{(t-s_1 \vee s_2)^{\frac{1+n-\eta}{2}+\beta}}
$$
\noindent for any $\beta \in [0,(1+\eta)/2)$ if $n=0$ or any $\beta \in [0,\eta/2)$ if $n=1$. 

We deal with C by employing \eqref{first:second:estimate:induction:decoupling:mckean} (recall that $\mathscr{C}^{n, 0}_{\infty}=\lim_{m\uparrow \infty} \mathscr{C}^{n,0}_{m} < \infty$), \eqref{delta:time:linear:function:deriv:diff:coeff} in the case $|s_1-s_2| \leq t-s_1\vee s_2$ or the uniform boundedness of the map $ [\delta a_{i, j} / \delta m](t, x, m)(.)$ in the case $|s_1-s_2| \geq t-s_1 \vee s_2$. This yields
$$
|{\rm C}| \leq K \frac{|s_1-s_2|^{\beta}}{(t-s_1 \vee s_2)^{\frac{1+n+\beta-\eta}{2}}}
$$
\noindent for any $\beta \in [0,1]$ if $n=0$ or any $\beta \in [0,\eta)$ if $n=1$. 


Finally, we deal with the last term by using the uniform boundedness and $\eta$-H\"older regularity of $ [\delta a_{i, j} / \delta m](t, x, m)(.)$. We obtain
$$
|{\rm D}| \leq K  \int_{(\mathbb{R}^d)^2} (|y'-x'|^\eta \wedge 1) |\partial^{n}_v[ \partial_\mu p_{m}(\mu, s_1 \vee s_2, r, x', y')](v) - \partial^{n}_v[ \partial_\mu p_{m}(\mu, s_1 \wedge s_2, r, x', y')](v)| \, dy' \, \mu(dx').
$$
Gathering the previous estimates allows to conclude the proof of \eqref{diff:time:cross:deriv:diff:and:deriv:coeff}. \\

\noindent \emph{Step 2: proof of \eqref{diff:time:L:deriv:p:hat}.}\\

We proceed in the same way as for the proof of \eqref{diff:L:deriv:pm:mu:mup}. Namely, from the identity \eqref{representation:formula:deriv:mes:p:hat}, we obtain the following decomposition:
$$
\Delta_{s_1, s_2} \partial^{n}_v [\partial_\mu \widehat{p}_{m+1}(\mu, s, r, t, x, z)] (v) =   {\rm I} + {\rm II} + {\rm III} + {\rm IV}, 
$$

\noindent with
\begin{align*}
{\rm I} & := -\frac12 \tr\left(\Big[ \left(\int_r^t a(r', y, [X^{s_1 \vee s_2, \xi, (m)}_{r'}])\, dr'\right)^{-1} - \left(\int_r^t a(r', y, [X^{s_1 \wedge s_2, \xi, (m)}_{r'}])\, dr'\right)^{-1} \Big] \right. \\
& \quad \left. \times \int_r^t \partial^n_v[\partial_\mu [a(r', y, [X^{s, \xi, (m)}_{r'}])]](v) \, dr' \right)  \widehat{p}^{y}_{m+1}(\mu, s_1 \vee s_2, r, t, x, z), \\
{\rm II} & := -\frac12   \tr\left(\left(\int_r^t a(r', y, [X^{s_1 \wedge s_2, \xi, (m)}_{r'}])\, dr'\right)^{-1} \right. \\
& \quad \left. \times  \int_r^t [ \partial^n_v[\partial_\mu [a(r', y, [X^{s_1 \vee s_2, \xi, (m)}_{r'}])]](v) -  \partial^n_v[\partial_\mu [a(r', y, [X^{s_1 \wedge s_2, \xi, (m)}_{r'}])]](v) ]\, dr'  \right) \widehat{p}^{y}_{m+1}(\mu, s_1 \vee s_2, r, t, x, z), \\
{\rm III} & :=  \frac12  (z-x)^{t} \Big[ \left(\int_r^t a(r', y, [X^{s_1 \vee s_2, \xi, (m)}_{r'}])\, dr'\right)^{-1} - \left(\int_r^t a(r', y, [X^{s_1 \wedge s_2, \xi, (m)}_{r'}])\, dr'\right)^{-1} \Big]\\
& \quad  \times \int_r^t \partial^n_v[\partial_\mu [a(r', y, [X^{s_1 \vee s_2, \xi, (m)}_{r'}])]](v) \, dr'  \left(\int_r^t a(r', y, [X^{s_1 \vee s_2, \xi, (m)}_{r'}])\, dr'\right)^{-1} (z-x) \times \widehat{p}^{y}_{m+1}(\mu, s_1 \vee s_2, r, t, x, z) \\
& \quad +  \frac12  (z-x)^{t} \left(\int_r^t a(r', y, [X^{s_1 \wedge s_2, \xi, (m)}_{r'}])\, dr'\right)^{-1} \\
& \quad  \times \int_r^t [ \partial^n_v[\partial_\mu [a(r', y, [X^{s_1 \vee s_2, \xi, (m)}_{r'}])]](v) -  \partial^n_v[\partial_\mu [a(r', y, [X^{s_1 \wedge s_2, \xi, (m)}_{r'}])]](v) ]\, dr'  \\
&  \quad \times \left(\int_r^t a(r', y, [X^{s_1 \vee s_2, \xi, (m)}_{r'}])\, dr'\right)^{-1} (z-x) \times \widehat{p}^{y}_{m+1}(\mu, s_1 \vee s_2, r, t, x, z)  \\
& \quad + \frac12  (z-x)^{t} \left(\int_r^t a(r', y, [X^{s_1 \wedge s_2, \xi, (m)}_{r'}])\, dr'\right)^{-1} \int_r^t \partial^n_v[\partial_\mu [a(r', y, [X^{s_1 \wedge s_2, \xi, (m)}_{r'}])]](v) \, dr'  \\
&  \quad \times \Big[ \left(\int_r^t a(r', y, [X^{s_1 \vee s_2, \xi, (m)}_{r'}])\, dr'\right)^{-1} - \left(\int_r^t a(r', y, [X^{s_1 \wedge s_2, \xi, (m)}_{r'}])\, dr'\right)^{-1} \Big] (z-x) \times \widehat{p}^{y}_{m+1}(\mu, s_1 \vee s_2, r, t, x, z), \\
{\rm IV} & :=  -\frac12\left\{ \tr\left(\left(\int_r^t a(r', y, [X^{s_1 \wedge s_2, \xi, (m)}_{r'}])\, dr'\right)^{-1} \int_r^t \partial^n_v[\partial_\mu [a(r', y, [X^{s_1 \wedge s_2, \xi, (m)}_{r'}])]](v) \, dr' \right) \right. \notag\\
& \quad \left. - (z-x)^{t} \left(\int_r^t a(r', y, [X^{s_1 \wedge s_2, \xi, (m)}_{r'}])\, dr'\right)^{-1} \int_r^t \partial^n_v[\partial_\mu[ a(r', y, [X^{s_1 \wedge s_2, \xi, (m)}_{r'}])]](v) \, dr'  \right. \\ 
& \quad \left. \quad \times \left(\int_r^t a(r', y, [X^{s_1 \wedge s_2, \xi, (m)}_{r'}])\, dr'\right)^{-1} (z-x) \right\} [ \widehat{p}^{y}_{m+1}(\mu, s_1 \vee s_2, r, t, x, z) - \widehat{p}^{y}_{m+1}(\mu, s_1 \wedge s_2, r, t, x, z) ].
\end{align*}

From the mean-value theorem, the estimate \eqref{recursive:bound:deriv:a:or:b} together with \eqref{first:second:estimate:induction:decoupling:mckean} (recall that $\mathscr{C}^{n, 0}_{\infty}=\lim_{m\uparrow \infty} \mathscr{C}^{n,0}_{m} < \infty$), we obtain
$$
|{\rm I}| \leq \frac{K}{(t-r)^2} \int_r^t \max_{i, j} |a_{i, j}(r', y, [X^{s_1 \vee s_2, \xi, (m)}_{r'}]) - a_{i, j}(r', y, [X^{s_1 \wedge s_2, \xi, (m)}_{r'}])| \, dr' \int_r^t (r'-s_1 \vee s_2)^{-\frac{1+n-\eta}{2}} \, dr' \,  g(c(t-r), z-x).
$$
 Now, for the first time integral appearing in the right-hand side of the above inequality, we use \eqref{diff:time:drift:diff:coefficients} if $|s_1-s_2| \leq r'-s_1 \vee s_2$. This yields
 $$
 \max_{i, j} |a_{i, j}(r', y, [X^{s_1 \vee s_2, \xi, (m)}_{r'}]) - a_{i, j}(r', y, [X^{s_1 \wedge s_2, \xi, (m)}_{r'}])| \leq \frac{|s_1-s_2|^\beta}{(r'-s_1 \vee s_2)^{\beta- \frac{\eta}{2}}}
 $$
 \noindent for any $\beta  \in [\eta/2,1]$ and since $|s_1-s_2| \leq r'-s_1 \vee s_2$, the above estimate is actually valid for any $\beta \in [0,1]$. Otherwise, if $|s_1-s_2| \geq r'-s_1 \vee s_2$, we rather use the uniform boundedness of the diffusion coefficient. We thus obtain
 \begin{align*}
 |{\rm I}| & \leq \frac{K}{(t-r)^2} \int_r^t \frac{|s_1-s_2|^\beta}{(r'-s_1 \vee s_2)^{\beta}} \, dr' \,  \int_r^t (r'-s_1 \vee s_2)^{- \frac{1+n-\eta}{2}} \, dr' \,  g(c(t-r), z-x)\\
 & \leq K \frac{|s_1-s_2|^\beta}{(r-s_1 \vee s_2)^{\frac{1+n-\eta}{2}+ \beta}} \, g(c(t-r), z-x)
 \end{align*}
 \noindent for any $\beta \in [0,1]$.
 
 We deal with ${\rm II}$ by using \eqref{diff:time:cross:deriv:diff:and:deriv:coeff} so that
 \begin{align*}
 |{\rm II}| &  \leq K_\beta^{+} \left( \frac{|s_1-s_2|^\beta}{(r-s_1\vee s_2)^{\frac{1+n-\eta}{2}+\beta}} \right. \\
 & \left. \quad + \frac{1}{t-r} \int_r^t   \int_{(\mathbb{R}^d)^2} (|y'-x'|^\eta \wedge 1) \, | \Delta_{s_1, s_2} \partial^{n}_v[\partial_\mu p_{m}(\mu, s, r', x', y')](v)| \, dy' \, \mu(dx') \, dr' \right) \, g(c(t-r), z-x)
 \end{align*}
 \noindent for any $\beta \in [0,(1+\eta)/2)$ if $n=0$ or any $\beta \in [0,\eta/2)$ if $n=1$.
  
 We deal with ${\rm III}$ by using similar estimates as those employed to deal with ${\rm I}$ and ${\rm II}$. Omitting some technical details, we obtain
 \begin{align*}
 |{\rm III}| &  \leq K_\beta^{+}  \left( \frac{|s_1-s_2|^\beta}{(r-s_1 \vee s_2)^{\frac{1+n-\eta}{2}+\beta}} \right. \\
 & \left. \quad + \frac{1}{t-r} \int_r^t   \int_{(\mathbb{R}^d)^2} (|y'-x'|^\eta \wedge 1) \, | \Delta_{s_1, s_2} \partial^{n}_v[\partial_\mu p_{m}(\mu, s, r', x', y')](v)| \, dy' \, \mu(dx') \, dr' \right) \, g(c(t-r), z-x).
 \end{align*} 
 
 In order to deal with ${\rm IV}$, we use \eqref{gaussian:bound:diff:time:hat:pm:different:time} together with the estimate \eqref{recursive:bound:deriv:a:or:b} combined with \eqref{first:second:estimate:induction:decoupling:mckean} and finally the space-time inequality \eqref{space:time:inequality}. We thus obtain
 \begin{align*}
 |{\rm IV}| & \leq K \frac{|s_1-s_2|^\beta}{(r-s_1 \vee s_2)^{\frac{1+n-\eta}{2}+ \beta}} \, g(c(t-r), z-x)
 \end{align*}
 \noindent for any $\beta \in [0,1]$. Putting together the estimates on ${\rm I}$, ${\rm II}$, ${\rm III}$ and ${\rm IV}$ concludes the proof of \eqref{diff:time:L:deriv:p:hat}. \\

\noindent \emph{Step 3: proof of \eqref{diff:time:L:deriv:p:hat:same:time}.} \\

We first remark that if $|s_1-s_2| \geq t-s_1\vee s_2$ then the announced estimate directly follows from \eqref{cross:mes:deriv:p:hat:s:r:t} (with $r=s$) combined with \eqref{first:second:estimate:induction:decoupling:mckean} recalling that $\mathscr{C}^{n, 0}_{\infty}=\lim_{m\uparrow \infty} \mathscr{C}^{n,0}_{m} < \infty$. From now on and for the rest of the proof, we assume that $|s_1 - s_2| \leq  t-s_1\vee s_2 $. The proof being quite similar to the previous one, we will be short on some arguments and will omit some technical details. From the identity \eqref{representation:formula:deriv:mes:p:hat} with $r=s$, we deduce the following decomposition
$$
\Delta_{s_1, s_2} \partial^{n}_v [\partial_\mu \widehat{p}_{m+1}(\mu, s, t, x, z)] (v) =   {\rm I} + {\rm II} + {\rm III} + {\rm IV}, 
$$

\noindent with
\begin{align*}
{\rm I} & := -\frac12 \tr\left(\Big[ \left(\int_{s_1 \vee s_2}^t a(r', y, [X^{s_1 \vee s_2, \xi, (m)}_{r'}])\, dr'\right)^{-1} - \left(\int_{s_1 \wedge s_2}^t a(r', y, [X^{s_1 \wedge s_2, \xi, (m)}_{r'}])\, dr'\right)^{-1} \Big] \right. \\
& \quad \left. \times \int_{s_1 \vee s_2}^t \partial^n_v[\partial_\mu [a(r', y, [X^{s_1 \vee s_2, \xi, (m)}_{r'}])]](v) \, dr' \right)  \widehat{p}^{y}_{m+1}(\mu, s_1 \vee s_2, t, x, z), \\
{\rm II} & := -\frac12   \tr\left(\left(\int_{s_1 \wedge s_2}^t a(r', y, [X^{s_1 \wedge s_2, \xi, (m)}_{r'}])\, dr'\right)^{-1} \right. \\
& \quad \left. \times  \Big[ \int_{s_1 \vee s_2}^t  \partial^n_v[\partial_\mu [a(r', y, [X^{s_1 \vee s_2, \xi, (m)}_{r'}])]](v) \, dr' - \int_{s_1 \wedge s_2}^{t} \partial^n_v[\partial_\mu [a(r', y, [X^{s_1 \wedge s_2, \xi, (m)}_{r'}])]](v) \, dr' \Big] \right) \widehat{p}^{y}_{m+1}(\mu, s_1 \vee s_2, t, x, z), \\
{\rm III} & :=  \frac12  (z-x)^{t} \Big[ \left(\int_{s_1 \vee s_2}^t a(r', y, [X^{s_1 \vee s_2, \xi, (m)}_{r'}])\, dr'\right)^{-1} - \left(\int_{s_1 \wedge s_2}^t a(r', y, [X^{s_1 \wedge s_2, \xi, (m)}_{r'}])\, dr'\right)^{-1} \Big] \\
 & \quad \int_{s_1 \vee s_2}^t \partial^n_v[\partial_\mu [a(r', y, [X^{s_1 \vee s_2, \xi, (m)}_{r'}])]](v) \, dr'  \left(\int_{s_1 \vee s_2}^t a(r', y, [X^{s_1 \vee s_2, \xi, (m)}_{r'}])\, dr'\right)^{-1} (z-x) \times \widehat{p}^{y}_{m+1}(\mu, s_1 \vee s_2, t, x, z) \\
& \quad +  \frac12  (z-x)^{t} \left(\int_{s_1 \vee s_2}^t a(r', y, [X^{s_1 \wedge s_2, \xi, (m)}_{r'}])\, dr'\right)^{-1} \\
& \quad  \times \Big[ \int_{s_1 \vee s_2}^t \partial^n_v[\partial_\mu [a(r', y, [X^{s_1 \vee s_2, \xi, (m)}_{r'}])]](v) \, dr' -  \int_{s_1 \wedge s_2}^{t} \partial^n_v[\partial_\mu [a(r', y, [X^{s_1 \wedge s_2, \xi, (m)}_{r'}])]](v) \, dr' \Big]  \\
&  \quad \times \left(\int_{s_1 \vee s_2}^t a(r', y, [X^{s_1 \vee s_2, \xi, (m)}_{r'}])\, dr'\right)^{-1} (z-x) \times \widehat{p}^{y}_{m+1}(\mu, s_1 \vee s_2, t, x, z)  \\
& \quad + \frac12  (z-x)^{t} \left(\int_{s_1 \wedge s_2}^t a(r', y, [X^{s_1 \wedge s_2, \xi, (m)}_{r'}])\, dr'\right)^{-1} \int_{s_1 \wedge s_2}^t \partial^n_v[\partial_\mu [a(r', y, [X^{s_1 \wedge s_2, \xi, (m)}_{r'}])]](v) \, dr'  \\
&  \quad \times \Big[ \left(\int_{s_1 \vee s_2}^t a(r', y, [X^{s_1 \vee s_2, \xi, (m)}_{r'}])\, dr'\right)^{-1} - \left(\int_{s_1 \wedge s_2}^t a(r', y, [X^{s_1 \wedge s_2, \xi, (m)}_{r'}])\, dr'\right)^{-1} \Big] (z-x) \\
& \quad \times \widehat{p}^{y}_{m+1}(\mu, s_1 \vee s_2, t, x, z), \\
{\rm IV} & :=  -\frac12\left\{ \tr\left(\left(\int_{s_1 \wedge s_2}^t a(r', y, [X^{s_1 \wedge s_2, \xi, (m)}_{r'}])\, dr'\right)^{-1} \int_{s_1 \wedge s_2}^t \partial^n_v[\partial_\mu [a(r', y, [X^{s_1 \wedge s_2, \xi, (m)}_{r'}])]](v) \, dr' \right) \right. \notag\\
& \quad \left. - (z-x)^{t} \left(\int_{s_1 \wedge s_2}^t a(r', y, [X^{s_1 \wedge s_2, \xi, (m)}_{r'}])\, dr'\right)^{-1} \int_{s_1 \wedge s_2}^t \partial^n_v[\partial_\mu [a(r', y, [X^{s_1 \wedge s_2, \xi, (m)}_{r'}])]](v) \, dr'  \right. \\ 
& \quad \left. \quad \times \left(\int_{s_1 \wedge s_2}^t a(r', y, [X^{s_1 \wedge s_2, \xi, (m)}_{r'}])\, dr'\right)^{-1} (z-x) \right\} [ \widehat{p}^{y}_{m+1}(\mu, s_1 \vee s_2, t, x, z) - \widehat{p}^{y}_{m+1}(\mu, s_1 \wedge s_2, t, x, z) ].
\end{align*}

From \eqref{recursive:bound:deriv:a:or:b} together with \eqref{first:second:estimate:induction:decoupling:mckean}, the mean value theorem together with \eqref{diff:time:drift:diff:coefficients} if $|s_1-s_2| \leq r'-s_1 \vee s_2$ or the uniform boundedness of the diffusion coefficient otherwise, we obtain
\begin{align*}
|{\rm I}| & \leq \frac{K}{(t-s_1 \vee s_2)^2} \Big[ |s_1-s_2| + \int_{s_1 \vee s_2}^t \max_{i, j} |a_{i, j}(r', y, [X^{s_1 \vee s_2, \xi, (m)}_{r'}]) - a_{i, j}(r', y, [X^{s_1 \wedge s_2, \xi, (m)}_{r'}])| \, dr' \Big] \\
& \quad \times \int_{s_1 \vee s_2}^t (r'-s_1 \vee s_2)^{-\frac{1+n-\eta}{2}} \, dr' \,  g(c(t-s_1 \vee s_2), z-x) \\
& \leq  \frac{K}{(t-s_1 \vee s_2)^{1+ \frac{1+n-\eta}{2}}}\Big[ |s_1-s_2| + \int_{s_1 \vee s_2}^{t} \frac{|s_1-s_2|^\beta}{(r'-s_1\vee s_2)^{\beta}} \, dr'\Big] \,  g(c(t-s_1 \vee s_2), z-x) \\
& \leq K\frac{|s_1-s_2|^\beta}{(t-s_1 \vee s_2)^{\frac{1+n-\eta}{2}+\beta}}\,  g(c(t-s_1 \vee s_2), z-x) 
\end{align*}
\noindent for any $\beta \in [0,1)$.
 
 We deal with ${\rm II}$ by using \eqref{diff:time:cross:deriv:diff:and:deriv:coeff} and the estimate \eqref{recursive:bound:deriv:a:or:b} combined with \eqref{first:second:estimate:induction:decoupling:mckean}. We obtain
 \begin{align*}
 |{\rm II}| &  \leq \frac{K_\beta^{+}}{t-s_1 \vee s_2} \left( \int_{s_1 \wedge s_2}^{s_1 \vee s_2} \frac{1}{(r'-s_1 \wedge s_2)^{\frac{1+n-\eta}{2}}} \, dr' +  \int_{s_1 \vee s_2}^{t} \frac{|s_1-s_2|^\beta}{(r'-s_1\vee s_2)^{\frac{1+n-\eta}{2}+\beta}}\, dr'\right. \\
 & \left. \quad + \int_{s_1 \vee s_2}^t   \int_{(\mathbb{R}^d)^2} (|y'-x'|^\eta \wedge 1) \, | \Delta_{s_1, s_2} \partial^{n}_v[\partial_\mu p_{m}(\mu, s, r', x', y')](v)| \, dy' \, \mu(dx') \, dr' \right) \, g(c(t-s_1 \vee s_2), z-x) \\
 & \leq  K_\beta^{+}\left(  \frac{|s_1-s_2|^\beta}{(t-s_1 \vee s_2)^{\frac{1+n-\eta}{2}+\beta}}  \right. \\
 & \left. \quad + \frac{1}{t-s_1\vee s_2}\int_{s_1 \vee s_2}^t   \int_{(\rr^d)^2} (|y'-x'|^\eta \wedge 1) \, | \Delta_{s_1, s_2} \partial^{n}_v[\partial_\mu p_{m}(\mu, s, r', x', y')](v)| \, dy' \, \mu(dx') \, dr' \right) \, g(c(t-s_1 \vee s_2), z-x)
 \end{align*}
 \noindent for any $\beta \in [0,(1+\eta)/2)$ if $n=0$ or any $\beta \in [0,\eta/2)$ if $n=1$.
  
 We deal with ${\rm III}$ by using similar estimates as those employed to deal with ${\rm I}$ and ${\rm II}$. Omitting some technical details, we obtain
 \begin{align*}
 |{\rm III}|  
 & \leq  K_\beta^{+} \left( \frac{|s_1-s_2|^\beta}{(t-s_1 \vee s_2)^{\frac{1+n-\eta}{2}+\beta}} \right. \\
 & \left. \quad +  \frac{1}{t-s_1 \vee s_2}   \int_{s_1 \vee s_2}^t   \int_{(\rr^d)^2} (|y'-x'|^\eta \wedge 1) \, | \Delta_{s_1, s_2} \partial^{n}_v[\partial_\mu p_{m}(\mu, s, r', x', y')](v)| \, dy' \, \mu(dx') \, dr' \right)\\
 & \quad \times g(c(t-s_1 \vee s_2), z-x)
 \end{align*} 
 \noindent for any $\beta \in [0, (1+\eta)/2)$ if $n=0$ or any $\beta \in [0, \eta/2)$ if $n=1$.
 In order to deal with ${\rm IV}$, we use \eqref{gaussian:bound:diff:time:hat:pm:same:time} together with the estimate \eqref{recursive:bound:deriv:a:or:b} combined with \eqref{first:second:estimate:induction:decoupling:mckean} and finally the space-time inequality \eqref{space:time:inequality}. We thus obtain
 \begin{align*}
 |{\rm IV}| & \leq K \left\{\frac{1}{t-s_1 \wedge s_2} + \frac{|z-x|^2}{(t-s_1 \wedge s_2)^2} \right\} \, \int_{s_1 \wedge s_2}^t \frac{1}{(r'-s_1 \wedge s_2)^{\frac{1+n-\eta}{2}}}\, dr' \\
 & \quad \times \left\{ \frac{|s_1-s_2|^\beta}{(t-s_1 \wedge s_2)^\beta} \, g(c(t-s_1 \wedge s_2), z-x) + \frac{|s_1-s_2|^\beta}{(t-s_1 \vee s_2)^{\beta}} \, g(c(t-s_1 \vee s_2), z-x) \right\} \\
 & \leq K \left\{ \frac{|s_1-s_2|^{\beta}}{(t-s_1)^{\frac{1+n-\eta}{2}+\beta}} g(c(t-s_1), z-x) +  \frac{|s_1-s_2|^{\beta}}{(t-s_2)^{\frac{1+n-\eta}{2}+\beta}} g(c(t-s_2), z-x) \right\}
 \end{align*}
 \noindent for any $\beta \in [0,1]$, where we used the fact that $|s_1- s_2| \leq t-s_1\vee s_2$ for the last inequality. Gathering the previous estimates allows to conclude the proof of \eqref{diff:time:L:deriv:p:hat:same:time}. \\

\noindent \emph{Step 4: proof of \eqref{diff:time:L:deriv:diff:diff:coeff:holder:reg}.}\\

 We again follow similar lines of reasonings as those employed to prove the estimate \eqref{diff:mes:L:deriv:diff:diff:coeff:holder:reg}. The relation \eqref{eq:decompJ} gives the following decomposition
$$
 \Delta_{s_1, s_2} \partial^{n}_v[ \partial_\mu [a_{i, j}(t, x, [X^{s, \xi , (m)}_t]) - a_{i, j}(t, z, [X^{s, \xi , (m)}_t]) ]](v)   = {\rm I}_{i, j} + {\rm II}_{i, j} + {\rm III}_{i, j} + {\rm IV}_{i, j} + {\rm V}_{i, j},
$$

\noindent with 
\begin{align*}
{\rm I}_{i, j}   & := \int_{\mathbb{R}^d} \Big[ \frac{\delta a_{i, j}}{\delta m}(t, x, [X^{s_1 \vee s_2, \xi , (m)}_t])(z') - \frac{\delta a_{i, j}}{\delta m}(t, z, [X^{s_1 \vee s_2, \xi , (m)}_t])(z') \\
& \quad - ( \frac{\delta a_{i, j}}{\delta m}(t, x, [X^{s_1 \wedge s_2, \xi , (m)}_t])(z') - \frac{\delta a_{i, j}}{\delta m}(t, z, [X^{s_1 \wedge s_2, \xi , (m)}_t])(z') ) \Big]  \partial^{1+n}_x p_{m}(\mu, s_1 \vee s_2, t, v, z') \,dz',  \\
{\rm II}_{i, j}  & :=  \int_{\mathbb{R}^d} \Big[  \frac{\delta a_{i, j}}{\delta m}(t, x, [X^{s_1 \wedge s_2, \xi , (m)}_t])(z') - \frac{\delta a_{i, j}}{\delta m}(t, z,  [X^{s_1 \wedge s_2, \xi , (m)}_t])(z') \\
& \quad - (\frac{\delta a_{i, j}}{\delta m}(t, x, [X^{s_1 \wedge s_2, \xi , (m)}_t])(v) - \frac{\delta a_{i, j}}{\delta m}(t, z, [X^{s_1 \wedge s_2, \xi , (m)}_t])(v) ) \Big] \, \Delta_{s_1, s_2} \partial^{1+n}_x p_{m}(\mu, s, t, v, z')  \, dz', \\
{\rm III}_{i, j} & := \int_{(\mathbb{R}^d)^2}  \Big[ \frac{\delta a_{i, j}}{\delta m}(t, x, [X^{s_1 \vee s_2, \xi , (m)}_r])(z') - \frac{\delta a_{i, j}}{\delta m}(t, z,  [X^{s_1 \vee s_2, \xi , (m)}_t])(z') \\
& \quad - ( \frac{\delta a_{i, j}}{\delta m}(t, x, [X^{s_1 \wedge s_2, \xi , (m)}_t])(z') - \frac{\delta a_{i, j}}{\delta m}(t, z,  [X^{s_1 \wedge s_2, \xi , (m)}_t])(z') ) \Big]   \partial^{n}_v [\partial_\mu p_{m}(\mu, s_1 \vee s_2, t, x', z')](v) \, dz' \,  \mu(dx'),  \\
{\rm IV}_{i, j} &  := \int_{(\mathbb{R}^d)^2} \Big[  \frac{\delta a_{i, j}}{\delta m}(t, x, [X^{s_1 \wedge s_2, \xi , (m)}_t])(z') - \frac{\delta a_{i, j}}{\delta m}(t, z, [X^{s_1 \wedge s_2, \xi , (m)}_t])(z') \\
& -(\frac{\delta a_{i, j}}{\delta m}(t, x, [X^{s_1 \wedge s_2, \xi , (m)}_t])(x') - \frac{\delta a_{i, j}}{\delta m}(t, z, [X^{s_1 \wedge s_2, \xi , (m)}_t])(x') ) \Big]  \Delta_{s_1, s_2} \partial^{n}_v [\partial_\mu p_{m}(\mu, s, t, x', z')](v) \, dz'  \mu(dx').
\end{align*} 
 
We now quantify the contribution of each term in the above decomposition. We first establish a bound similar to \eqref{diff:time:with:holder:reg:space:drift:diff:coefficients} but with the map $[\delta a_{i, j}/\delta m]$ instead of $a_{i, j}$ or $b_i$. Let $\Theta^{(m)}_{\lambda, t}:= (1-\lambda)[X^{s_1 \vee s_2, \xi, (m)}_t] + \lambda [X^{s_1 \wedge s_2, \xi, (m)}_t]$, $\lambda \in [0,1]$. We write
 \begin{align*}
h(x)& := \frac{\delta a_{i, j}}{\delta m} (t, x, [X^{s_1 \vee s_2, \xi, (m)}_t])(z')  - \frac{\delta a_{i, j}}{\delta m}(t, x , [X^{s_1 \wedge s_2, \xi, (m)}_t])(z') \\
& = \int_0^1\int_{(\mathbb{R}^d)^2} \frac{\delta^2 a_{i, j}}{\delta m^2}(t, x, \Theta^{(m)}_{t, \lambda})(z', y') (p_m(\mu, s_1 \vee s_2, t, x', y') - p_{m}(\mu, s_1 \wedge s_2, t, x', y')) \, dy' \, \mu(dx') \, d\lambda.
\end{align*}

From the uniform $\eta$-H\"older regularity of $ [\delta^2 a_{i, j} /\delta m^2](t, ., m)(.)$, we get
\begin{align*}
\Big| \left[\frac{\delta^2 a_{i, j}}{\delta m}(t, x, \Theta^{(m)}_{\lambda, t})(z', y') -  \frac{\delta^2 a_{i, j}}{\delta m^2}(t, z, \Theta^{(m)}_{\lambda, t})(z', y')\right] & - \left[\frac{\delta^2 a_{i, j}}{\delta m^2}(t, x, \Theta^{(m)}_{\lambda, t})(z', x') -  \frac{\delta^2 a_{i, j}}{\delta m^2}(t, z, \Theta^{(m)}_{\lambda, t})(z', x')\right] \ \Big|  \\
& \leq K^{+} (|z-x|^\eta \wedge |y'-x'|^\eta \wedge 1)\\
& \leq K^{+} |z-x|^\alpha (|y'-x'|^{\eta-\alpha}\wedge 1)
\end{align*}
\noindent for any $\alpha \in [0,\eta]$, which combined with \eqref{diff:time:heat:kernel} and the space-time inequality \eqref{space:time:inequality} yields
\begin{align*}
| h(x) - h(z) |  \leq K^{+} |z-x|^\alpha |s_1-s_2|^\beta \left\{\frac{1}{(t-s_1)^{\beta+\frac{\alpha-\eta}{2}}} + \frac{1}{(t-s_2)^{\beta+\frac{\alpha-\eta}{2}}}\right\}.
\end{align*}

 
 Hence, taking $\alpha = \eta$ in the preceding inequality and using \eqref{bound:derivative:heat:kernel}, we derive
%
%
$$
|{\rm I}_{i, j}| \leq K^{+} (|z-x|^\eta \wedge 1) \frac{|s_1-s_2|^\beta}{(t-s_1 \vee s_2)^{\frac{1+n}{2}+\beta}}.
$$
Now, if $|s_1-s_2| \leq t-s_1\vee s_2$, we rather take $\alpha = 0 $. This yields
$$
|{\rm I}_{i, j}| \leq K^{+} \frac{|s_1-s_2|^\beta}{(t-s_1 \vee s_2)^{\frac{1+n-\eta}{2}+\beta}}.
$$
\noindent for any $\beta \geq \eta/2$. However, since $|s_1-s_2| \leq t-s_1\vee s_2$, the above estimate is actually valid for any $\beta \in [0,1]$.
Otherwise, if $|s_1-s_2| \geq t-s_1 \vee s_2 $, we instead write 
\begin{align*}
{\rm I}_{i, j}   & := \int_{\mathbb{R}^d} \Big[ \frac{\delta a_{i, j}}{\delta m}(t, x, [X^{s_1 \vee s_2, \xi , (m)}_t])(z') - \frac{\delta a_{i, j}}{\delta m}(t, x, [X^{s_1 \vee s_2, \xi , (m)}_t])(v) \Big]  \partial^{1+n}_x p_{m}(\mu, s_1 \vee s_2, t, v, z') \,dz' \\
&  \quad - \int_{\mathbb{R}^d} \Big[ \frac{\delta a_{i, j}}{\delta m}(t, z, [X^{s_1 \vee s_2, \xi , (m)}_t])(z') -  \frac{\delta a_{i, j}}{\delta m}(t, z, [X^{s_1 \vee s_2, \xi , (m)}_t])(v) \Big]  \partial^{1+n}_x p_{m}(\mu, s_1 \vee s_2, t, v, z') \,dz' \\
& \quad - \int_{\mathbb{R}^d} \Big[ \frac{\delta a_{i, j}}{\delta m}(t, x, [X^{s_1 \wedge s_2, \xi , (m)}_t])(z')  - \frac{\delta a_{i, j}}{\delta m}(t, x, [X^{s_1 \wedge s_2, \xi , (m)}_t])(v) \Big] \partial^{1+n}_x p_{m}(\mu, s_1 \vee s_2, t, v, z') \,dz' \\
& \quad + \int_{\mathbb{R}^d} \Big[ \frac{\delta a_{i, j}}{\delta m}(t, z, [X^{s_1 \wedge s_2, \xi , (m)}_t])(z') -  \frac{\delta a_{i, j}}{\delta m}(t, z, [X^{s_1 \wedge s_2, \xi , (m)}_t])(v) \Big]  \partial^{1+n}_x p_{m}(\mu, s_1 \vee s_2, t, v, z') \,dz'
\end{align*}
\noindent and use the uniform $\eta$-H\"older regularity of the map $ [\delta a_{i, j} / \delta m](t, x, m)(.)$ together with \eqref{bound:derivative:heat:kernel} and the space-time inequality \eqref{space:time:inequality}. We thus derive
$$
| {\rm I}_{i, j}| \leq K^{+} \frac{|s_1-s_2|^\beta}{(t-s_1 \vee s_2)^{\frac{1+n-\eta}{2}+\beta}}
$$

\noindent for any $\beta \in [0,1]$. Gathering the above estimates, we conclude
$$
| {\rm I}_{i, j}| \leq K^{+}  |s_1-s_2|^\beta \left\{ \frac{ (|z-x|^\eta \wedge 1)}{(t-s_1 \vee s_2)^{\frac{1+n}{2}+\beta}} \wedge \frac{1}{(t-s_1 \vee s_2)^{\frac{1+n-\eta}{2}+\beta}} \right\}
$$
\noindent for any $\beta \in [0,1]$.

In order to deal with ${\rm II}_{i, j}$, we use the estimates \eqref{regularity:time:estimate:v1:v2:v3:decoupling:mckean:prop:statement}, the uniform $\eta$-H\"older regularity of $ [\delta a_{i, j} / \delta m](t, ., m)(.)$ and finally the space-time inequality \eqref{space:time:inequality} so that
$$
| {\rm II}_{i, j} | \leq K_\beta |s_1-s_2|^\beta \left\{ \frac{ (|z-x|^\eta \wedge 1)}{(t-s_1 \vee s_2)^{\frac{1+n}{2}+\beta}} \wedge \frac{1}{(t-s_1 \vee s_2)^{\frac{1+n-\eta}{2}+\beta}} \right\}
$$
\noindent for any $\beta \in [0, (1+\eta)/2)$ if $n=0$ or any $\beta \in [0,\eta/2)$ if $n=1$. 

Following similar lines of reasonings as those employed to prove \eqref{diff:time:with:holder:reg:space:drift:diff:coefficients} but with the map $[\delta a_{i, j}/ \delta m]$ instead of $a_{i, j}$ or $b_i$, we obtain
\begin{align*}
\Big| \frac{\delta}{\delta m} a_{i, j}& (t, x, [X^{s_1 \vee s_2, \xi, (m)}_t])(y)  -  \frac{\delta}{\delta m}a_{i, j}(t, x, [X^{s_1 \wedge s_2, \xi, (m)}_t])(y) \\
& \quad - \Big( \frac{\delta}{\delta m} a_{i, j} (t, z, [X^{s_1 \vee s_2, \xi, (m)}_t])(y)  -  \frac{\delta}{\delta m}a_{i, j}(t, z, [X^{s_1 \wedge s_2, \xi, (m)}_t])(y) \Big) \Big|  \notag \\
& \quad \quad \leq   K^{+}_\alpha (|z-x|^\alpha \wedge 1) |s_1-s_2|^\beta \left\{\frac{1}{(t-s_1)^{\beta+\frac{\alpha-\eta}{2}}} + \frac{1}{(t-s_2)^{\beta+\frac{\alpha-\eta}{2}}}\right\}, 
\end{align*}

\noindent so that applying the above estimate with $\alpha= \eta$ or $\alpha =0$ and using the estimate \eqref{first:second:estimate:induction:decoupling:mckean} (recall that $\mathscr{C}^{n, 0}_{\infty}=\lim_{m\uparrow \infty} \mathscr{C}^{n,0}_{m} < \infty$) as well as the space-time inequality \eqref{space:time:inequality}, we get
$$
| {\rm III}_{i, j} | \leq K^{+}  |s_1-s_2|^\beta \left\{ \frac{ (|z-x|^\eta \wedge 1)}{(t-s_1 \vee s_2)^{\frac{1+n}{2}+\beta}} \wedge \frac{1}{(t-s_1 \vee s_2)^{\frac{1+n-\eta}{2}+\beta}} \right\}.
$$
In order to handle the last term ${\rm IV}_{i, j}$, we either use the uniform $\eta$-H\"older regularity of $ [\delta a_{i, j}/\delta m](t, ., m)](v)$ or the uniform $\eta$-H\"older regularity of $ [\delta a_{i, j}/\delta m](t, x, m)](.)$. We obtain
\begin{align*}
  | {\rm IV}_{i, j} | &  \leq K \left\{ (|z-x|^\eta\wedge 1) \int_{(\mathbb{R}^d)^2}  |\Delta_{s_1, s_2} \partial^{n}_v [\partial_\mu p_{m}(\mu, s, t, x', z')](v)|  \, dz'  \mu(dx') \right. \\
  & \quad \left. \wedge \, \int_{(\mathbb{R}^d)^2} (|z'-x'|^\eta \wedge 1)  |\Delta_{s_1,  s_2} \partial^{n}_v [\partial_\mu p_{m}(\mu, s, t, x', z')](v)|  \, dz'  \mu(dx') \right\}.
\end{align*}

Gathering the above estimates completes the proof of \eqref{diff:time:L:deriv:diff:diff:coeff:holder:reg}. \\

\noindent \emph{Step 5: proof of \eqref{diff:time:L:deriv:parametrix:kernel:pmp1}.}\\

We proceed as in the proof of \eqref{diff:mes:L:deriv:parametrix:kernel:pmp1}. Namely, from \eqref{deriv:mu:H:mp1} we obtain the following decomposition
$$
\Delta_{s_1, s_2} \partial^{n}_v [\partial_\mu \mH_{m+1}(\mu, s, r, t, x, z)](v) = {\rm A} + {\rm B} + {\rm C} + {\rm D} + {\rm E}, 
$$

\noindent with
\begin{align*}
{\rm A } & = - \sum_{i=1}^d \Delta_{s_1, s_2} \partial^{n}_v [ \partial_\mu [b_i(r, x, [X^{s, \xi , (m)}_r])]](v) \partial_{x_i} \widehat{p}_{m+1}(\mu, s_1 \vee s_2, r, t, x, z) \\
& \quad  - \sum_{i=1}^d \partial^{n}_v [ \partial_\mu [b_i(r, x, [X^{s_1 \wedge s_2, \xi , (m)}_r])]](v)  \Delta_{s_1, s_2}  \partial_{x_i} \widehat{p}_{m+1}(\mu, s, r, t, x, z) \\
& =: {\rm A}_1 + {\rm A}_2,
\end{align*}

\begin{align*}
{\rm B } & = - \sum_{i=1}^d \Delta_{s_1, s_2}  b_i(r, x, [X^{s, \xi , (m)}_r]) \partial^{n}_v [ \partial_\mu H^{i}_1\left(\int_r^t a(r', z, [X^{s_1 \vee s_2, \xi , (m)}_{r'}]) \, dr', z-x\right)](v) \, \widehat{p}_{m+1}(\mu, s_1 \vee s_2, r, t, x, z) \\
& \quad  - \sum_{i=1}^d b_i(r, x, [X^{s_1 \wedge s_2, \xi , (m)}_r])  \Delta_{s_1, s_2} \partial^{n}_v [ \partial_\mu H^{i}_1\left(\int_r^t a(r', z, [X^{s, \xi , (m)}_{r'}]) \, dr', z-x\right)](v) \, \widehat{p}_{m+1}(\mu, s_1 \vee s_2, r, t, x, z) \\
& \quad  - \sum_{i=1}^d b_i(r, x, [X^{s_1 \wedge s_2, \xi , (m)}_r])   \partial^{n}_v [ \partial_\mu H^{i}_1\left(\int_r^t a(r', z, [X^{s_1 \wedge s_2, \xi , (m)}_{r'}]) \, dr', z-x\right)](v) \, \Delta_{s_1, s_2} \widehat{p}_{m+1}(\mu, s, r, t, x, z) \\
& =: {\rm B}_1 + {\rm B}_2 + {\rm B}_3,
\end{align*}

\begin{align*}
{\rm C } & = \frac12 \sum_{i,j=1}^d \Delta_{s_1, s_2}  \partial^{n}_v [ \partial_\mu [a_{i, j}(r, x, [X^{s, \xi , (m)}_r]) - a_{i, j}(r, z, [X^{s, \xi, (m)}_r])] ](v)  \partial^2_{x_i, x_j} \widehat{p}_{m+1}(\mu, s_1 \vee s_2, r, t, x, z) \\
&  + \frac12 \sum_{i, j=1}^d  \partial^{n}_v [ \partial_\mu [a_{i, j}(r, x, [X^{s_1 \wedge s_2, \xi , (m)}_r]) - a_{i, j}(r, z, [X^{s_1 \wedge s_2, \xi, (m)}_r])]](v)   \Delta_{s_1, s_2}  \partial^2_{x_i, x_j}\widehat{p}_{m+1}(\mu, s, r, t, x, z) \\
& =: {\rm C}_1 + {\rm C}_2,
\end{align*}

\begin{align*}
{\rm D } & = \frac12 \sum_{i, j=1}^d \Delta_{s_1,s_2}   [a_{i, j}(r, x, [X^{s, \xi , (m)}_r]) - a_{i, j}(r, z, [X^{s, \xi, (m)}_r])]  \partial^{n}_v [ \partial_\mu H^{i, j}_2\left(\int_r^t a(r', z, [X^{s_1 \vee s_2, \xi , (m)}_{r'}]) \, dr', z-x\right)](v) \\
& \quad \times \widehat{p}_{m+1}(\mu, s_1 \vee s_2, r, t, x, z) \\
&  + \frac12 \sum_{i, j=1}^d   [a_{i, j}(r, x, [X^{s_1 \wedge s_2, \xi , (m)}_r]) - a_{i, j}(r, z, [X^{s_1 \wedge s_2, \xi, (m)}_r])]   \Delta_{s_1, s_2} \partial^{n}_v [ \partial_\mu H^{i, j}_2\left(\int_r^t a(r', z, [X^{s, \xi , (m)}_{r'}]) \, dr', z-x\right)](v) \\
& \quad \quad \times \widehat{p}_{m+1}(\mu, s_1 \vee s_2, r, t, x, z) \\
&  + \frac12 \sum_{i, j=1}^d   [a_{i, j}(r, x, [X^{s_1 \wedge s_2, \xi , (m)}_r]) - a_{i, j}(r, z, [X^{s_1 \wedge s_2, \xi, (m)}_r])]   \partial^{n}_v [ \partial_\mu H^{i, j}_2\left(\int_r^t a(r', z, [X^{s_1 \wedge s_2, \xi , (m)}_{r'}]) \, dr', z-x\right)](v) \\
& \quad \quad \times \Delta_{s_1,s_2} \widehat{p}_{m+1}(\mu, s, r, t, x, z) \\
& =: {\rm D}_1 + {\rm D}_2 + {\rm D}_3,
\end{align*}

\noindent and finally
\begin{align*}
{\rm E } & =  - \sum_{i=1}^d \Delta_{s_1,s_2}  \Big[b_i(r, x, [X^{s, \xi , (m)}_r])  H^{i}_1\left(\int_r^t a(r', z, [X^{s, \xi , (m)}_{r'}]) \, dr', z-x\right)\Big] \partial^{n}_v [ \partial_\mu \widehat{p}_{m+1}(\mu, s_1 \vee s_2, r, t, x, z)](v)\\
& + \frac12 \sum_{i, j=1}^d \Delta_{s_1, s_2}   \Big[a_{i, j}(r, x, [X^{s, \xi , (m)}_r]) - a_{i, j}(r, z, [X^{s, \xi, (m)}_r])   H^{i, j}_2\left(\int_r^t a(r', z, [X^{s, \xi , (m)}_{r'}]) \, dr', z-x\right)\Big] \\
& \quad \times \partial^{n}_v [ \partial_\mu \widehat{p}_{m+1}(\mu, s_1 \vee s_2, r, t, x, z)](v) \\
& \quad + \left\{ - \sum_{i=1}^d   b_i(r, x, [X^{s_1\wedge s_2, \xi , (m)}_r]) H^{i}_1\left(\int_r^t a(r', z, [X^{s_1 \wedge s_2, \xi , (m)}_{r'}]) \, dr', z-x\right) \right. \\
& \quad \left. + \frac12 \sum_{i, j=1}^d   [a_{i, j}(r, x, [X^{s_1 \wedge s_2, \xi , (m)}_r]) - a_{i, j}(r, z, [X^{s_1 \wedge s_2, \xi, (m)}_r])]   H^{i, j}_2\left(\int_r^t a(r', z, [X^{s_1 \wedge s_2, \xi , (m)}_{r'}]) \, dr', z-x\right) \right\}\\
& \quad \quad \times \Delta_{s_1, s_2} \partial^{n}_v [ \partial_\mu \widehat{p}_{m+1}(\mu, s, r, t, x, z)](v) \\
& =: {\rm E}_1 + {\rm E}_2 + {\rm E}_3.
\end{align*}

\noindent $\bullet $\textbf{ Estimate on ${\rm A}$:} \\

From \eqref{diff:time:cross:deriv:diff:and:deriv:coeff} and the space-time inequality \eqref{space:time:inequality}, we obtain
\begin{align*}
| {\rm A}_1 | & \leq K_\beta^{+} \left(\frac{|s_1-s_2|^\beta}{(t-r)^{\frac12} (r-s_1\vee s_2)^{\frac{1+n-\eta}{2}+\beta}} + \frac{1}{(t-r)^{\frac12}} \int_{(\rr^d)^2} (|y'-x'|^\eta \wedge 1) \, | \Delta_{s_1, s_2} \partial^{n}_v[\partial_\mu p_{m}(\mu, s, r, x', y')](v)| \, dy' \, \mu(dx') \right) \\
& \quad \times g(c(t-r), z-x).
\end{align*}

From \eqref{recursive:bound:deriv:a:or:b} combined with \eqref{first:second:estimate:induction:decoupling:mckean} (recall that $\mathscr{C}^{n, 0}_m \leq \mathscr{C}^{n,0}_\infty<\infty)$, \eqref{gaussian:bound:diff:time:hat:pm:different:time} and the space-time inequality \eqref{space:time:inequality}, we get
\begin{align*}
| {\rm A}_2 | & \leq K \frac{|s_1-s_2|^\beta}{(t-r)^{\frac12}(r-s_1\vee s_2)^{\frac{1+n-\eta}{2}+\beta}} \, g(c(t-r), z-x).
\end{align*}


Gathering the two previous estimates yields
\begin{align*}
| {\rm A} | & \leq K_\beta^{+} \left(\frac{|s_1-s_2|^\beta}{(t-r)^{\frac12} (r-s_1\vee s_2)^{\frac{1+n-\eta}{2}+\beta}} + \frac{1}{(t-r)^{\frac12}} \int_{(\rr^d)^2} (|y'-x'|^\eta \wedge 1) \, | \Delta_{s_1, s_2} \partial^{n}_v[\partial_\mu p_{m}(\mu, s, r, x', y')](v)| \, dy' \, \mu(dx') \right) \\
& \quad \times g(c(t-r), z-x).
\end{align*}

\noindent $\bullet $\textbf{ Estimate on ${\rm B}$:} \\

From \eqref{diff:time:drift:diff:coefficients}, the estimate \eqref{bound:deriv:cross:mes:H1} and the space-time inequality \eqref{space:time:inequality}, we obtain
\begin{align*}
| {\rm B}_1| \leq K \frac{|s_1-s_2|^\beta}{(t-r)^{\frac12} (r-s_1\vee s_2)^{\frac{1+n-\eta}{2}+\beta}} \, g(c(t-r), z-x).
\end{align*}

We first use the identity \eqref{deriv:mes:cross:smooth:matrix} and then the estimates \eqref{diff:time:drift:diff:coefficients}, \eqref{diff:time:cross:deriv:diff:and:deriv:coeff}, \eqref{recursive:bound:deriv:a:or:b} (together with \eqref{first:second:estimate:induction:decoupling:mckean}) so that
\begin{align*}
 | \Delta_{s_1, s_2} & \partial^{n}_v [ \partial_\mu H^{i}_1\left(\int_r^t a(r', z, [X^{s, \xi , (m)}_{r'}]) \, dr', z-x\right)](v) | \\
  &  \leq K  \left\{ |s_1-s_2|^\beta \frac{ |z-x|}{(t-r)(r-s_1\vee s_2)^{\frac{1+n-\eta}{2}+\beta}} + \frac{|z-x|}{(t-r)^{2}} \int_r^t \max_{i, j} | \Delta_{s_1 ,s_2}  \partial^{n}_{v} [\partial_\mu [a_{i, j }(r', z, [X^{s, \xi , (m)}_{r'}])]](v) |  \, dr'   \right\} \\
  & \leq K_\beta^{+} \left\{ |s_1-s_2|^\beta \frac{ |z-x|}{(t-r)(r-s_1\vee s_2)^{\frac{1+n-\eta}{2}+\beta}} \right. \\
   & \quad \left. + \frac{|z-x|}{(t-r)^{2}} \int_r^t \int_{(\rr^d)^2} (|y'-x'|^\eta \wedge 1) \, | \Delta_{s_1, s_2} \partial^{n}_v[\partial_\mu p_{m}(\mu, s, r', x', y')](v)| \, dy' \, \mu(dx') \, dr'   \right\} 
\end{align*}
\noindent which in turn, by the space-time inequality \eqref{space:time:inequality}, yields
\begin{align*}
| {\rm B}_2 | & \leq K_\beta^{+} \left\{  \frac{|s_1-s_2|^\beta}{(t-r)^{\frac12}(r-s_1\vee s_2)^{\frac{1+n-\eta}{2}+\beta}} \right. \\
   & \quad \left. + \frac{1}{(t-r)^{\frac32}} \int_r^t \int_{(\rr^d)^2} (|y'-x'|^\eta \wedge 1) \, | \Delta_{s_1, s_2} \partial^{n}_v[\partial_\mu p_{m}(\mu, s, r', x', y')](v)| \, dy' \, \mu(dx') \, dr'   \right\} \, g(c(t-r), z-x).
\end{align*}

From \eqref{gaussian:bound:diff:time:hat:pm:different:time}, the estimate \eqref{recursive:bound:deriv:a:or:b} combined again with \eqref{first:second:estimate:induction:decoupling:mckean} and the space-time inequality \eqref{space:time:inequality}, we get
\begin{align*}
| {\rm B}_3 | \leq K \frac{|s_1-s_2|^\beta}{(t-r)^{\frac12}(r-s_1 \vee s_2)^{\frac{1+n-\eta}{2}+\beta}}  \, g(c(t-r), z-x).
\end{align*}

Gathering the three previous estimates yields
\begin{align*}
| {\rm B} | & \leq K_\beta^{+} \left\{  \frac{|s_1-s_2|^\beta}{(t-r)^{\frac12}(r-s_1\vee s_2)^{\frac{1+n-\eta}{2}+\beta}} \right. \\
   & \quad \left. + \frac{1}{(t-r)^{\frac32}} \int_r^t \int_{(\rr^d)^2} (|y'-x'|^\eta \wedge 1) \, | \Delta_{s_1, s_2} \partial^{n}_v[\partial_\mu p_{m}(\mu, s, r', x', y')](v)| \, dy' \, \mu(dx') \, dr'   \right\} \, g(c(t-r), z-x).
\end{align*}

\noindent $\bullet $\textbf{ Estimate on ${\rm C}$:} \\

From \eqref{diff:time:L:deriv:diff:diff:coeff:holder:reg} and the space-time inequality \eqref{space:time:inequality}, we obtain
\begin{align*}
|{\rm C}_1| & \leq  K_\beta^{+} |s_1-s_2|^\beta \left\{ \frac{1}{(t-r)^{1-\frac{\eta}{2}}(r-s_1 \vee s_2)^{\frac{1+n}{2}+\beta}} \wedge \frac{1}{(t-r)(r-s_1 \vee s_2)^{\frac{1+n-\eta}{2} + \beta }} \right\} \\
& \quad + K_\beta^{+} \left\{ \frac{1}{(t-r)^{1-\frac{\eta}{2}}} \int_{(\mathbb{R}^d)^2} |\Delta_{s_1, s_2}\partial^{n}_v[\partial_\mu p_{m}(\mu, s, r, x', y')](v) |   \, dy' \mu(dx')  \right.  \\
& \quad \quad \left. \wedge \frac{1}{t-r}  \int_{(\mathbb{R}^d)^2} (|y'-x'|^\eta \wedge 1)  |\Delta_{s_1, s_2}\partial^{n}_v[\partial_\mu p_{m}(\mu, s, r, x', y')](v) |   \, dy' \mu(dx') \right\}\, g(c(t-r), z-x). 
\end{align*} 

The estimate \eqref{recursive:bound:deriv:mes:holder:reg:a} with $\beta' =1$ or $\beta' = 0$ together with \eqref{first:second:estimate:induction:decoupling:mckean} implies
$$
| \partial^{n}_v [ \partial_\mu [a_{i, j}(r, x, [X^{s_1 \wedge s_2, \xi , (m)}_r]) - a_{i, j}(r, z, [X^{s_1 \wedge s_2, \xi, (m)}_r])]](v) | \leq K \left\{ \frac{|z-x|^\eta}{(r-s_1 \vee s_2)^{\frac{1+n}{2}}} \wedge \frac{1}{(r-s_1 \vee s_2)^{\frac{1+n-\eta}{2}}} \right\}
$$ 
\noindent which combined with \eqref{gaussian:bound:diff:time:hat:pm:different:time} and the space-time inequality \eqref{space:time:inequality} yields
\begin{align*}
| {\rm C}_2 | & \leq K |s_1-s_2|^\beta \left\{ \frac{1}{(t-r)^{1-\frac{\eta}{2}}(r-s_1 \vee s_2)^{\frac{1+n}{2}+\beta}} \wedge \frac{1}{(t-r)(r-s_1 \vee s_2)^{\frac{1+n-\eta}{2}+\beta}} \right\}\, g(c(t-r), z-x).
\end{align*}

Hence,
\begin{align*}
|{\rm C}| & \leq  K_\beta^{+} |s_1-s_2|^\beta \left\{ \frac{1}{(t-r)^{1-\frac{\eta}{2}}(r-s_1 \vee s_2)^{\frac{1+n}{2}+\beta}} \wedge \frac{1}{(t-r)(r-s_1 \vee s_2)^{\frac{1+n-\eta}{2} + \beta }} \right\} \\
& \quad + K_\beta^{+} \left\{ \frac{1}{(t-r)^{1-\frac{\eta}{2}}} \int_{(\mathbb{R}^d)^2} |\Delta_{s_1, s_2}\partial^{n}_v[\partial_\mu p_{m}(\mu, s, r, x', y')](v) |   \, dy' \mu(dx')  \right.  \\
& \quad \quad \left. \wedge \frac{1}{t-r}  \int_{(\mathbb{R}^d)^2} (|y'-x'|^\eta \wedge 1)  |\Delta_{s_1, s_2}\partial^{n}_v[\partial_\mu p_{m}(\mu, s, r, x', y')](v) |   \, dy' \mu(dx') \right\}\, g(c(t-r), z-x). 
\end{align*}

\noindent $\bullet $\textbf{ Estimate on ${\rm D}$:} \\

We use \eqref{diff:time:with:holder:reg:space:drift:diff:coefficients} with $\alpha=\eta$, \eqref{deriv:mes:second:order:polyn:hermite} as well as the estimate \eqref{recursive:bound:deriv:a:or:b} combined again with \eqref{first:second:estimate:induction:decoupling:mckean} and finally the space-time inequality \eqref{space:time:inequality}
\begin{align*}
|{\rm D}_1 | & \leq K \frac{ |s_1-s_2|^\beta}{(t-r)^{1-\frac{\eta}{2}}(r-s_1 \vee s_2)^{\frac{1+n-\eta}{2}+\beta}} \, g(c(t-r), z-x).
\end{align*}

From the identity \eqref{deriv:mes:cross:smooth:matrix} and the estimates \eqref{diff:time:drift:diff:coefficients}, \eqref{recursive:bound:deriv:a:or:b} (combined again with \eqref{first:second:estimate:induction:decoupling:mckean}), we get
\begin{align*}
 \Big| \Delta_{s_1, s_2} & \partial^{n}_v [ \partial_\mu H^{i, j}_2\left(\int_r^t a(r', z, [X^{s, \xi , (m)}_{r'}]) \, dr', z-x\right)](v) \Big| \\
  &  \leq K |s_1-s_2|^\beta \left\{ \frac{1}{(t-r)(r-s_1\vee s_2)^{\frac{1+n-\eta}{2}+\beta}} + \frac{ |z-x|^2}{(t-r)^2(r-s_1\vee s_2)^{\frac{1+n-\eta}{2}+\beta}}  \right\} \\
  & \quad  + K \left\{  \frac{1}{(t-r)^2} + \frac{|z-x|^2}{(t-r)^{3}}\right\} \int_r^t \max_{i, j} | \Delta_{s_1 ,s_2}  \partial^{n}_{v} [\partial_\mu [a_{i, j }(r', z, [X^{s, \xi , (m)}_{r'}])]](v) |  \, dr'  
\end{align*}

\noindent which combined with \eqref{diff:time:cross:deriv:diff:and:deriv:coeff}, the uniform $\eta$-H\"older regularity of $ a(t, ., m)$ and then the space-time inequality \eqref{space:time:inequality} yields

\begin{align*}
|{\rm D}_2| & \leq K_\beta^{+} \left\{  \frac{|s_1-s_2|^\beta}{(t-r)^{1-\frac{\eta}{2}}(r-s_1\vee s_2)^{\frac{1+n-\eta}{2}+\beta}} \right. \\
& \quad \left.  +  \frac{1}{(t-r)^{2-\frac{\eta}{2}}} \int_r^t   \int_{(\mathbb{R}^d)^2} (|y'-x'|^\eta \wedge 1)  |\Delta_{s_1, s_2}\partial^{n}_v[\partial_\mu p_{m}(\mu, s, r', x', y')](v) |   \, dy' \mu(dx') \, dr'  \right\} \\
& \quad \times  \, g(c(t-r), z-x).
\end{align*}

From  \eqref{deriv:mes:second:order:polyn:hermite} with \eqref{first:second:estimate:induction:decoupling:mckean}, the uniform $\eta$-H\"older regularity of $ a(t, ., m)$, \eqref{gaussian:bound:diff:time:hat:pm:different:time} and the space-time inequality \eqref{space:time:inequality}, we get
\begin{align*}
|{\rm D}_3| & \leq K \frac{ |s_1-s_2|^\beta}{(t-r)^{1-\frac{\eta}{2}}(r-s_1 \vee s_2)^{\frac{1+n-\eta}{2}+\beta}} \, g(c(t-r), z-x).
\end{align*}

We thus conclude
\begin{align*}
|{\rm D}| & \leq K_\beta^{+} \left\{  \frac{|s_1-s_2|^\beta}{(t-r)^{1-\frac{\eta}{2}}(r-s_1\vee s_2)^{\frac{1+n-\eta}{2}+\beta}} \right. \\
& \quad \left. +  \frac{1}{(t-r)^{2-\frac{\eta}{2}}} \int_r^t   \int_{(\mathbb{R}^d)^2} (|y'-x'|^\eta \wedge 1)  |\Delta_{s_1, s_2}\partial^{n}_v[\partial_\mu p_{m}(\mu, s, r', x', y')](v) |   \, dy' \mu(dx') \, dr'  \right\} \\
& \quad \times  \, g(c(t-r), z-x).
\end{align*}

\noindent $\bullet $\textbf{ Estimate on ${\rm E}$:} \\

From \eqref{diff:time:drift:diff:coefficients}, \eqref{cross:mes:deriv:p:hat:s:r:t} (combined again with \eqref{first:second:estimate:induction:decoupling:mckean}) and the space-time inequality \eqref{space:time:inequality}, we obtain
\begin{align*}
| {\rm E}_1 | \leq K \frac{|s_1-s_2|^\beta}{(t-r)^{\frac12}(r-s_1\vee s_2)^{\frac{1+n-\eta}{2}+\beta}} \, g(c(t-r), z-x).
\end{align*}

Using \eqref{diff:time:with:holder:reg:space:drift:diff:coefficients} with $\alpha = \eta$, the mean-value theorem with \eqref{diff:time:drift:diff:coefficients}, the uniform $\eta$-H\"older regularity of $a(t, ., m)$, again \eqref{cross:mes:deriv:p:hat:s:r:t} with \eqref{first:second:estimate:induction:decoupling:mckean} and the space-time inequality \eqref{space:time:inequality}, we obtain
\begin{align*}
|{\rm E}_2|&  \leq K \frac{|s_1-s_2|^\beta}{(t-r)^{1-\frac{\eta}{2}}(r-s_1\vee s_2)^{\frac{1+n-\eta}{2}+\beta}} \, g(c(t-r), z-x).
\end{align*}

From \eqref{diff:time:L:deriv:p:hat}, the uniform boundedness of $b_i$ and the uniform $\eta$-H\"older regularity of $ a(t, ., m)$ with the space-time inequality \eqref{space:time:inequality}, we get
\begin{align*}
|{\rm E}_3| & \leq K_\beta^{+} \left\{ \frac{|s_1-s_2|^\beta}{(t-r)^{1- \frac{\eta}{2}}(r-s_1 \vee s_2)^{\frac{1+n-\eta}{2} + \beta}} \right. \\
& \quad \left.  + \frac{1}{(t-r)^{2-\frac{\eta}{2}}} \int_r^t   \int_{(\mathbb{R}^d)^2} (|y'-x'|^\eta \wedge 1)  |\Delta_{s_1, s_2}\partial^{n}_v[\partial_\mu p_{m}(\mu, s, r', x', y')](v) |   \, dy' \mu(dx') \, dr' \right\}  \\
& \quad \quad \times g(c(t-r), z-x).
\end{align*}

Gathering the three previous estimates, we deduce
\begin{align*}
|{\rm E}| & \leq K_\beta^{+} \left\{ \frac{|s_1-s_2|^\beta}{(t-r)^{1- \frac{\eta}{2}}(r-s_1 \vee s_2)^{\frac{1+n-\eta}{2} + \beta}} \right. \\
& \quad \left.  + \frac{1}{(t-r)^{2-\frac{\eta}{2}}} \int_r^t   \int_{(\mathbb{R}^d)^2} (|y'-x'|^\eta \wedge 1)  |\Delta_{s_1, s_2}\partial^{n}_v[\partial_\mu p_{m}(\mu, s, r', x', y')](v) |   \, dy' \mu(dx') \, dr' \right\}  \\
& \quad \quad \times g(c(t-r), z-x).
\end{align*}

We conclude the proof of \eqref{diff:time:L:deriv:parametrix:kernel:pmp1} by collecting the estimates on ${\rm A}$, ${\rm B}$, ${\rm C}$, ${\rm D}$ and ${\rm E}$.

\section*{Acknowledgments.}
We would like to warmly thank Fran\c{c}ois Delarue for his careful reading of the first version of this work, for pointing out an important issue in the proof of Theorem \ref{thm:martingale:problem} and for making several important comments which led to the improvement of the current manuscript. For the first author, this work has been partially supported by the ANR project ANR-15-IDEX-02.

\bibliographystyle{alpha}
\bibliography{bibli}
\end{document}